\definecolor{green}{rgb}{0,0.8,0} 
\newtheorem{theorem}{Theorem}[section]
\newtheorem{corollary}[theorem]{Corollary}
\newtheorem{lemma}[theorem]{Lemma}
\newtheorem{proposition}[theorem]{Proposition}
\theoremstyle{definition}
\newtheorem{definition}[theorem]{Definition}
\theoremstyle{remark}
\newtheorem{remark}[theorem]{Remark}
\numberwithin{equation}{section}
\newcommand{\tA}{{\tilde A}}
\newcommand{\tO}{{\tilde O}}
\newcommand{\tG}{{\tilde G}}
\newcommand{\tM}{{\tilde M}}
\newcommand{\ta}{{\tilde a}}
\newcommand{\te}{{\tilde e}}
\newcommand{\tF}{{\tilde F}}
\newcommand{\A}{{\mathbf A}}
\renewcommand{\H}{{\mathcal H}}
\newcommand{\nrm}[1]{\Vert#1\Vert}
\newcommand{\abs}[1]{\vert#1\vert}
\newcommand{\Abs}[1]{\left\vert#1\right\vert}
\newcommand{\brk}[1]{\langle#1\rangle}
\newcommand{\set}[1]{\{#1\}}
\newcommand{\dist}{\mathrm{dist}}
\newcommand{\sgn}{{\mathrm{sgn}}}
\newcommand{\tr}{\textrm{tr}}
\newcommand{\supp}{{\mathrm{supp}}}
\renewcommand{\Re}{\mathrm{Re}}
\renewcommand{\Im}{\mathrm{Im}}
\newcommand{\aeq}{\simeq}
\newcommand{\aleq}{\lesssim}
\newcommand{\ageq}{\gtrsim}
\newcommand{\lap}{\Delta}
\newcommand{\ud}{d}
\newcommand{\rd}{\partial}
\newcommand{\nb}{\nabla}
\newcommand{\alp}{\alpha}
\newcommand{\bt}{\beta}
\newcommand{\gmm}{\gamma}
\newcommand{\dlt}{\delta}
\newcommand{\eps}{\epsilon}
\newcommand{\veps}{\varepsilon}
\newcommand{\kpp}{\kappa}
\newcommand{\lmb}{\lambda}
\newcommand{\sgm}{\sigma}
\newcommand{\tht}{\theta}
\newcommand{\omg}{\omega}
\newcommand{\zt}{\zeta}
\newcommand{\bfa}{{\bf a}}
\newcommand{\bfb}{{\bf b}}
\newcommand{\bfw}{{\bf w}}
\newcommand{\bfA}{{\bf A}}
\newcommand{\bfB}{{\bf B}}
\newcommand{\bfD}{{\bf D}}
\newcommand{\bfF}{{\bf F}}
\newcommand{\bfH}{{\bf H}}
\newcommand{\bfN}{{\bf N}}
\newcommand{\bfO}{{\bf O}}
\newcommand{\bfP}{{\bf P}}
\newcommand{\bfQ}{{\bf Q}}
\newcommand{\bfS}{{\bf S}}
\newcommand{\bfW}{{\bf W}}
\newcommand{\bbN}{\mathbb N}
\newcommand{\bbR}{\mathbb R}
\newcommand{\R}{\mathbb R}
\newcommand{\bbS}{\mathbb S}
\newcommand{\bbZ}{\mathbb Z}
\newcommand{\calC}{\mathcal C}
\newcommand{\calD}{\mathcal D}
\newcommand{\calE}{\mathcal E}
\newcommand{\calF}{\mathcal F}
\newcommand{\calH}{\mathcal H}
\newcommand{\calI}{\mathcal I}
\newcommand{\calJ}{\mathcal J}
\newcommand{\calL}{\mathcal L}
\newcommand{\calM}{\mathcal M}
\newcommand{\calN}{\mathcal N}
\newcommand{\calO}{\mathcal O}
\newcommand{\calQ}{\mathcal Q}
\newcommand{\calR}{\mathcal R}
\newcommand{\calS}{\mathcal S}
\newcommand{\calT}{\mathcal T}
\newcommand{\pfstepf}[1]{\noindent {\bf #1.}}
\newcommand{\pfstep}[1]{\vskip.5em \noindent {\bf #1.}}
\setlist[enumerate]{leftmargin=2em, label=(\arabic*)}
\setlist[itemize]{leftmargin=2em}
\newcommand{\covD}{\bfD}
\newcommand{\Diff}{\mathrm{Diff}}
\newcommand{\Rem}{\mathrm{Rem}}
\newcommand{\Str}{\mathrm{Str}}
\newcommand\DA{{\mathbf{DA}}}
\newcommand{\g}{\mathfrak  g}
\newcommand{\G}{\mathbf{G}}
\newcommand{\la}{\langle}
\newcommand{\ra}{\rangle}
\renewcommand{\P}{\mathbf P}
\newcommand{\End}{\mathrm{End}}
\newcommand{\uX}{\underline{X}}
\newcommand{\Xdf}{Z_{p_{0}}}
\newcommand{\uS}{\underline{S}}
\newcommand{\mX}{\tilde{X}}
\newcommand{\mXdf}{\tilde{Z}_{p_{0}}}
\newcommand{\scut}{s_{\ast}}
\newcommand{\dA}{\dlt A}
\newcommand{\Al}{\dA^{low}}
\newcommand{\Ah}{\dA^{high}}
\newcommand{\td}{\tilde\delta}
\newcommand{\med}{\mathrm{med}}
\newcommand{\tZ}{\tilde{Z}}
\newcommand{\tR}{\tilde{R}}
\newcommand{\pfsubstep}[1]{\vskip.5em \noindent {\it #1.}}
\newcommand{\nE}{{\calE}} 
\newcommand{\spE}{\calE_{e}} 
\newcommand{\Egs}{E_{GS}} 
\newcommand{\hM}{\calQ} 
\newcommand{\bhM}{{\bf \calQ}} 
\newcommand{\rc}{r_{c}} 
\newcommand{\dlts}{\dlt_{0}}			
\newcommand{\dlta}{\dlt_{1}}			
\newcommand{\dltb}{\dlt_{2}}			
\newcommand{\dltc}{\dltb}				
\newcommand{\dlte}{\dlt_{3}}			
\newcommand{\dltd}{\dlt_{4}}			
\newcommand{\dltg}{\dltd}				
\newcommand{\dltf}{\dlt_{5}}			
\newcommand{\dlth}{\dlt_{6}}			
\newcommand{\dltp}{\dlt_{p}}			
\newcommand{\dlto}{\dlt_{o}}			
\begin{document}

\title[Energy dispersed solutions]{The hyperbolic Yang--Mills equation in the caloric gauge. Local well-posedness and control of energy dispersed solutions}
\author{Sung-Jin Oh}%
\address{Department of Mathematics, UC Berkeley, Berkeley, CA 94720 and KIAS, Seoul, Korea 02455}%
\email{sjoh@math.berkeley.edu}%

\author{Daniel Tataru}%
\address{Department of Mathematics, UC Berkeley, Berkeley, CA, 94720}%
\email{tataru@math.berkeley.edu}%

\begin{abstract}
  This is the second part in a four-paper sequence, which establishes
  the Threshold Conjecture and the Soliton Bubbling vs.~Scattering Dichotomy for 
  the hyperbolic Yang--Mills equation in the $(4+1)$-dimensional
  space-time. This paper provides the key gauge-dependent analysis of
  the hyperbolic Yang--Mills equation.

  We consider topologically trivial solutions in the caloric gauge,
  which was defined in the first paper \cite{OTYM1} using the
  Yang--Mills heat flow. In this gauge, we establish a strong form of
  local well-posedness, where the time of existence is bounded from
  below by the energy concentration scale. Moreover, we show that
  regularity and dispersive properties of the solution persists as long
  as energy dispersion is small. We also observe that fixed-time regularity 
  (but not dispersive) properties in the caloric gauge may be transferred
  to the temporal gauge without any loss, proving as a consequence
  small data global well-posedness in the temporal gauge.

  The results in this paper are used in the subsequent papers
  \cite{OTYM2.5, OTYM3} to prove the sharp Threshold Theorem in
  caloric gauge in the trivial topological class, and the Dichotomy
  Theorem in arbitrary topological classes.
\end{abstract}


\date{\today}%
\maketitle

\tableofcontents

\section{Introduction}
In this paper, along with the companion papers \cite{OTYM1},
\cite{OTYM2.5} and \cite{OTYM3}, we consider the hyperbolic
Yang--Mills equation in the $(4+1)$-dimensional Minkowski space with a
compact semi-simple structure group.
 
In \cite{OTYM1}, we defined the notion of \emph{caloric gauge} with
the help of the Yang--Mills heat flow on $\bbR^{4}$, and showed that
every subthreshold connection admits a caloric gauge representative
(see Section~\ref{subsec:caloric-first} below for a review). The first
main result of the present paper (Theorem~\ref{thm:wp}) is a strong
form of local well-posedness of the hyperbolic Yang--Mills equation in
the manifold of caloric gauge connections, where the time of existence
is estimated from below by the scale of energy concentration.  The
second main result (Theorem~\ref{thm:ed-first}) asserts that
regularity and dispersive behaviors persist as long as a certain
quantity called \emph{energy dispersion}, which measures a certain
type of non-dispersive concentration, remains small.

While the caloric gauge reveals the fine cancellation structure of the
Yang--Mills equation, and is thus suitable for dispersive analysis at
low regularity, it has the drawback that causality is lost. As a
remedy, we also show that regularity (but not dispersive) properties
in the caloric gauge may be transferred to the temporal gauge. As a
corollary, we also obtain small data global well-posedness of the
hyperbolic Yang--Mills equation in the temporal gauge
(Theorem~\ref{thm:tmp}).

In the subsequent papers in the sequence \cite{OTYM2.5}, \cite{OTYM3},
we use the results proved in this paper to establish the Threshold
Theorem (i.e., global well-posedness and scattering for subthreshold
data) in the caloric gauge, as well as the Soliton Bubbling vs.~Scattering
Dichotomy Theorem for general finite energy solutions, formulated in more
gauge-covariant fashion. An overview of the entire series is provided 
in \cite{OTYM0}.

\subsection{Hyperbolic Yang--Mills equation on
  \texorpdfstring{$\bbR^{1+4}$}{R1+4}} \label{subsec:setup} Our set-up is as
follows. Let $\G$ be a compact noncommutative Lie group and $\g$ its associated Lie algebra. We denote
by $Ad(O) X = O X O^{-1}$ the adjoint (or conjugation) action of $\G$
on $\g$ and  by $ad(X) Y = [X, Y]$ the Lie bracket on $\g$. 
 We use the notation $\brk{X, Y}$ for a bi-invariant inner product on $\g$,
\begin{equation*}
\la [X,Y],Z \ra = \la X, [Y,Z] \ra, \qquad X,Y,Z \in \g, 
\end{equation*}
or equivalently 
\begin{equation*}
\la X,Y \ra = \la Ad(O) X,  Ad(O) Y  \ra, \qquad X,Y \in \g, \quad O \in \G. 
\end{equation*}
If $\G$ is semisimple then one can take 
$\brk{X, Y} = -\tr(ad(X) ad(Y))$ i.e. negative of the Killing form on $\g$, which is then positive definite,
However, a bi-invariant inner product on $\g$ exists for any compact Lie group $\G$.

Let $\bbR^{1+4}$ be the (4+1)-dimensional Minkowski space equipped
with the Minkowski metric, which takes the form $\mathrm{diag}(-1, +1,
\ldots, +1)$ in the rectangular coordinates $(x^{0}, x^{1}, \ldots,
x^{4})$. The coordinate $x^{0}$ serves the role of time, and we will
often write $x^{0} = t$. Throughout this paper, we will use the
standard convention for raising or lowering indices using the
Minkowski metric, and summing up repeated upper and lower indices.

Our objects of study are connection $1$-forms $A$ on $\bbR^{1+4}$
taking values in the Lie algebra $\g$. They define covariant
differentiation operators $\covD_{\mu} = \covD^{(A)}_{\mu}= \rd_{\mu}
+ A_{\mu}$ (in coordinates) acting on sections of any vector bundle
with structure group $\G$. The commutator $\covD_{\mu} \covD_{\nu} -
\covD_{\nu} \covD_{\mu}$ yields the curvature $2$-form $F_{\mu \nu} =
F[A]_{\mu \nu}$, which is given in terms of $A_{\mu}$ by the formula
\begin{equation*}
  F_{\mu \nu} = \rd_{\mu} A_{\nu} - \rd_{\nu} A_{\mu} + [A_{\mu}, A_{\nu}].
\end{equation*}

Given a $\G$-valued function $O$ on $\bbR^{1+4}$, we introduce the
notation
\begin{equation*}
  O_{; \mu} = \rd_{\mu} O O^{-1}.
\end{equation*}
The pointwise action of $O$ on the vector bundle induces a gauge
transformation for $A$ and $F$, namely
\begin{equation*}
  A_{\mu} \mapsto O A_{\mu} O^{-1} - \rd_{\mu} O O^{-1} = Ad(O) A_{\mu} - O_{;\mu}, \qquad
  F_{\mu \nu} \mapsto O F_{\mu \nu} O^{-1} = Ad(O) F_{\mu \nu}.
\end{equation*}
In view of this transformation property, $F$ may be viewed as a
$2$-form taking values in the $\G$-vector bundle with fiber $\g$,
where $\G$ acts on $\g$ by the adjoint action (geometrically, the
adjoint vector bundle). Thus the covariant derivative $\covD_{\mu}$
acts on $F$ by
\begin{equation*}
  \covD_{\mu} F_{\alp \bt} = (\rd_{\mu} + ad(A_{\mu}) )F_{\alp \bt} = \rd_{\mu} F_{\alp \bt} + [A_{\mu}, F_{\alp \bt}].
\end{equation*}

The \emph{hyperbolic Yang--Mills equation} on $\bbR^{1+4}$ is the
Euler--Lagrange equation associated with the formal Lagrangian action
functional
\begin{equation*}
  \calL(A) = \frac{1}{2} \int_{\bbR^{1+4}} \brk{F_{\alp \bt}, F^{\alp \bt}} \, \ud x \ud t,
\end{equation*}
which takes the form
\begin{equation} \label{eq:ym} \covD^{\alp} F_{\alp \bt} = 0.
\end{equation}
Clearly, \eqref{eq:ym} is invariant under gauge transformations. This
equation possesses a conserved energy, given by
\begin{equation}\label{eq:en}
  \nE_{\set{t} \times \bbR^{4}}(A) = \int_{\set{t} \times \bbR^{4}} \sum_{\alp < \bt} \abs{F_{\alp \bt}}^{2} \, \ud x.
\end{equation}
Furthermore, both the equation \eqref{eq:ym} and the energy \eqref{eq:en}  are invariant under the scaling
\begin{equation*}
  A(t, x) \mapsto \lmb A (\lmb t, \lmb x) \qquad (\lmb > 0).
\end{equation*}
Hence, the hyperbolic Yang--Mills equation is energy critical in
dimension (4+1), which is the reason why we focus on this dimension in
the present series of papers.

We are interested in the initial value problem for \eqref{eq:ym}. For
this purpose, we first formulate a gauge-covariant notion of an
initial data set. We say that a pair $(a, e)$ of a connection 1-form
$a$ and a $\g$-valued 1-form $e$ on $\bbR^{4}$ is an initial data set
for a solution $A$ to \eqref{eq:ym} if
\begin{equation*}
  (A_{j}, F_{0j}) \restriction_{\set{t = 0}} = (a_{j}, e_{j}).
\end{equation*}
Here and throughout this paper, roman letter indices stand for the spatial
coordinates $x^{1}, \ldots, x^{4}$. Note that \eqref{eq:ym} with $\bt
= 0$ imposes the condition that
\begin{equation} \label{eq:YMconstraint} \covD^{j} e_{j} = \rd^{j}
  e_{j} + [a^{j}, e_{j}] = 0.
\end{equation}
This equation is the \emph{Gauss} (or the \emph{constraint})
\emph{equation} for \eqref{eq:ym}.

It turns out that \eqref{eq:YMconstraint} characterizes precisely
those pairs $(a, e)$ which can arise as an initial data set. Thus we
make the following definition:
\begin{definition} \label{def:ym-id}
  \begin{enumerate}
  \item A \emph{regular initial data set} for the hyperbolic  Yang--Mills equation
    is a pair $(a,e) \in H^N_{loc} \times H^{N-1}$ $(N \geq 2)$, which
    has finite energy (i.e., $F[a] \in L^{2}$) and satisfies the
    constraint equation \eqref{eq:YMconstraint}.
  \item A \emph{finite energy initial data set}  is a pair $(a,e) \in \dot H_{loc}^1 \times L^2$ which has
    finite energy (i.e., $F[a] \in L^{2}$) and satisfies the
    constraint equation \eqref{eq:YMconstraint}.
  \end{enumerate}
\end{definition}

In this paper, we make an additional assumption that $a$ decays
suitably at infinity:
\begin{equation}
  a \in \dot{H}^{1}.
\end{equation}
This assumption turns out to be equivalent to the requirement that $a$
is topologically trivial \cite{OTYM2.5}. As this property  is conserved under any continuous evolution in
time, this is the natural setting for scattering and thus for the
Threshold Conjecture for \eqref{eq:ym}, which is one main subject of the
final paper \cite{OTYM3} of the series.

The hyperbolic Yang--Mills equation \eqref{eq:ym}, when naively viewed
as an evolution equation for $A$, fails to be locally well-posed; to
restore (at least formally) well-posedness, we need to fix the gauge
invariance.

There are several classical interesting gauge choices which can be 
made here, for instance the Coulomb gauge $\partial^j A_j = 0$, the 
temporal gauge $A_0 = 0$ and the Lorenz gauge 
$\partial^\alpha A_\alpha = 0$.  For a more detailed discussion 
and comparison of these gauges we refer the reader to our first article
\cite{OTYM1}. 

However, the main gauge choice we use in this paper is the so-called
\emph{caloric gauge}, which was defined in the first paper of the
series \cite{OTYM1} with the help of a parabolic analogue of
\eqref{eq:ym}, namely the \emph{Yang--Mills heat flow}. This is the
subject of our next discussion.

\subsection{Yang--Mills heat flow and the caloric
  gauge} \label{subsec:caloric-first} Let $a$ be a connection $1$-form
on $\bbR^{4}$ (in short, a spatial connection). We say that a
connection $A = A(x, s)$ on $\bbR^{4} \times J$ (where $J$ is a
subinterval of $[0, \infty)$) is a (covariant) \emph{Yang--Mills heat
  flow} development of $a$ if it solves
\begin{equation} \label{eq:ymhf} F_{s j} = \covD^{\ell} F_{\ell j},
  \quad A(s=0) = a.
\end{equation}
This equation is invariant under gauge transformations on $\bbR^{4}
\times J$. Under the \emph{local caloric gauge} condition
\begin{equation} \label{eq:loc-cal} A_{s} = 0,
\end{equation}
the forward-in-$s$ initial value problem for \eqref{eq:ymhf} is
locally well-posed \cite[Theorem~2.7]{OTYM1} in $\dot{H}^{1}$. We remark
that the evolution \eqref{eq:ymhf} under the gauge \eqref{eq:loc-cal} is precisely the
gradient flow for the (spatial) energy
\begin{equation*}
  \spE(a) = \frac{1}{2}\int_{\bbR^{4}} \brk{F_{jk}[a], F^{jk}[a]} \, \ud x= \int_{\bbR^{4}} \sum_{j < k} \abs{F_{jk}[a]}^{2} \, \ud x.
\end{equation*}

The key controlling norm for the Yang--Mills heat flow in the local
caloric gauge is $\nrm{F}_{L^{3}_{s}(J; L^{3})}$, which is both scale-
and gauge-invariant.
\begin{theorem}[\cite{OTYM1}]  
 \label{thm:ymhf-structure-loc}
Consider a Yang--Mills
  heat flow $A \in C_{s}(J; \dot{H}^{1})$ in the local caloric gauge
  satisfying
  \begin{equation} \label{eq:ymhf-L3-loc} \nrm{F}_{L^{3}_{s}(J;
      L^{3})} \leq \hM < \infty.
  \end{equation}
  When $J = [0, s_{0})$ for $s_{0} < \infty$, $A$ can be extended past
  $s_{0}$ as a (well-posed) Yang--Mills heat flow. When $J = [0,
  \infty)$, the solution has the property that the limit
  \begin{equation*}
    \lim_{s \to \infty} A(s) = a_{\infty}
  \end{equation*}
  exists in $\dot{H}^{1}$. The limiting connection is flat
  $(F[a_{\infty}] = 0)$ and the map $a \mapsto a_{\infty}$ is locally
  Lipschitz in $\dot{H}^{1}$, $H^{N}$ $(N \geq 1)$ and $\dot{H}^{1}
  \cap \dot{H}^{N}$ $(N \geq 2)$. Denoting by $O(a)$ a gauge
  transformation satisfying $O^{-1} \rd_{j} O = a_{\infty}$, the map
  $a \mapsto O(a)$ is continuous from $\dot{H}^{1}$ to $\dot{H}^{2}$
  up to constant conjugations.
\end{theorem}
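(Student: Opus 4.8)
The plan is to treat the Yang--Mills heat flow in the local caloric gauge as a (covariant) quasilinear parabolic system and run a continuity/bootstrap argument driven by the scale- and gauge-invariant bound \eqref{eq:ymhf-L3-loc}. First I would record the energy identity for the gradient flow: under $A_s=0$, \eqref{eq:ymhf} gives $\frac{d}{ds}\spE(A(s)) = -\int_{\bbR^4} |\covD^{\ell}F_{\ell j}|^2 = -\int |F_{sj}|^2$, so $\spE(A(s))$ is nonincreasing and $\nrm{F_{sj}}_{L^2_s(J;L^2)}^2 = \spE(a) - \spE(A(\sup J)) \le \spE(a)$. Combined with the local well-posedness in $\dot H^1$ from \cite[Theorem~2.7]{OTYM1}, this already controls $\nrm{A(s)}_{\dot H^1}$ uniformly on $J$. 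The point of the hypothesis is that $\nrm{F}_{L^3_{s,x}}$ is exactly the norm at which the nonlinear parabolic smoothing estimates close: using the Duhamel formula for $\partial_s A_j - \lap A_j = \mathcal{N}(A)$ (schematically $\mathcal{N} \sim A\partial A + A^3 + \partial(\partial_j A_j)$-type terms after expanding $\covD^\ell F_{\ell j}$ in the caloric gauge), one obtains parabolic Strichartz/smoothing bounds for $A$ in spaces like $L^{\infty}_s \dot H^1 \cap L^3_s W^{1,3} \cap \cdots$ in terms of the data $\spE(a)$ and the a priori quantity $\hM$.

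\emph{Extension past $s_0$.} Assuming \eqref{eq:ymhf-L3-loc} on $J=[0,s_0)$, the smoothing estimates above give $\sup_{s<s_0}\nrm{A(s)}_{\dot H^1} < \infty$ together with a modulus of continuity, so $A(s)$ converges in $\dot H^1$ as $s\uparrow s_0$; applying local well-posedness with initial data $A(s_0^-)$ extends the flow, and by a persistence-of-regularity argument the extension stays in $C_s(\dot H^1)$. Here I would need to be slightly careful that the gauge condition $A_s=0$ is maintained across the gluing, which is immediate since \eqref{eq:loc-cal} is preserved by the forward evolution.

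\emph{Global behavior and the limiting connection.} For $J=[0,\infty)$ with $\hM<\infty$, the $L^2_s L^2$ bound on $F_{sj}=\partial_s A_j$ shows $\int_0^\infty \nrm{\partial_s A(s)}_{L^2}^2\,ds < \infty$; upgrading this (via the smoothing estimates, which provide $\partial_s A \in L^1_s \dot H^1$ or an equivalent integrability near $s=\infty$ once $\nrm{F}_{L^3_sL^3}$ is small on tails) yields a Cauchy criterion, hence $A(s)\to a_\infty$ in $\dot H^1$. Flatness $F[a_\infty]=0$ follows because $\covD^\ell F_{\ell j} = \partial_s A_j \to 0$ in, say, $L^2$, combined with the energy monotonicity forcing $\spE(A(s))\to 0$ (a compactness/concentration argument, or simply $\spE(A(s))$ monotone and bounded with $\nrm{F_{sj}}_{L^2_sL^2}<\infty$ ruling out a positive limit). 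The local Lipschitz dependence $a\mapsto a_\infty$ in $\dot H^1$, $H^N$, and $\dot H^1\cap\dot H^N$ is obtained by differencing two flows: the difference $\delta A$ solves a linear parabolic equation with coefficients controlled by the two solutions' norms, so the same smoothing estimates give $\nrm{\delta A(\infty)}\lesssim \nrm{\delta a}$ in each topology, with $H^N$ handled by commuting derivatives through and $\dot H^1\cap\dot H^N$ by interpolation/paraproduct bookkeeping. Finally, since $a_\infty$ is flat on $\bbR^4$ (simply connected), there is a gauge transformation $O(a)$ with $O^{-1}\partial_j O = a_\infty$, unique up to a constant left factor; its regularity ($\dot H^2$ up to constant conjugations) comes from solving the ODE system $\partial_j O = O a_{\infty,j}$ and tracking $\dot H^1$-regularity of $a_\infty$ through one integration, and continuity of $a\mapsto O(a)$ follows from the Lipschitz dependence of $a_\infty$.

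The main obstacle I expect is establishing the nonlinear parabolic smoothing/Strichartz estimates at the critical regularity in the caloric gauge---i.e., showing that $\nrm{F}_{L^3_{s,x}}$ genuinely controls the full set of norms needed to close the bootstrap (including the $\partial_j A_j$ ``de Turck'' term that appears when expanding $\covD^\ell F_{\ell j}$, which is what makes the local-caloric flow parabolic rather than degenerate)---and, relatedly, extracting enough decay of $\partial_s A$ as $s\to\infty$ to get the $\dot H^1$ Cauchy property rather than mere weak convergence. Since the theorem is quoted from \cite{OTYM1}, I would cite the corresponding estimates there rather than reprove them, and spend the bulk of the argument on the continuity-in-data and the construction/regularity of $O(a)$.
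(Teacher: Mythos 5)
This theorem is quoted verbatim with the citation \cite{OTYM1}, so the present paper contains no proof to compare against; I can only comment on the internal coherence of your sketch. The overall scaffolding — local well-posedness in $\dot H^1$, parabolic bootstrap driven by the scale- and gauge-invariant norm $\nrm{F}_{L^3_s L^3}$, energy monotonicity from the gradient-flow structure, flat limit, integration to get $O(a)$ — is the right shape. However, there is a concrete gap in your flatness argument.

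You claim $\spE(A(s))\to 0$ follows ``simply'' from monotonicity of $\spE$ plus $\nrm{F_{sj}}_{L^2_s L^2}<\infty$. This cannot work: the identity
\begin{equation*}
\int_0^\infty \nrm{\partial_s A(s)}_{L^2}^2\,ds \;=\; \spE(a) - \lim_{s\to\infty}\spE(A(s))
\end{equation*}
holds for \emph{any} gradient flow with bounded energy, in particular for Yang--Mills heat flows that converge to a nontrivial harmonic Yang--Mills connection with positive energy. So the $L^2_s L^2$ dissipation bound is perfectly consistent with a positive limiting energy and cannot rule it out. The whole point of the hypothesis \eqref{eq:ymhf-L3-loc} is that it provides the extra input: since $\int_0^\infty \nrm{F(s)}_{L^3}^3\,ds < \infty$, there is a sequence $s_n\to\infty$ with $\nrm{F(s_n)}_{L^3}\to 0$, after which one must invoke the small-$L^3$-data parabolic theory to propagate the smallness forward in $s$, conclude that $\nrm{F(s)}_{L^2}\to 0$ (not automatic from $L^3$ smallness alone on $\bbR^4$, since $L^3$ does not dominate $L^2$ pointwise in time — it requires the heat-flow decay/smoothing estimates), and only then deduce $\spE\to 0$ and flatness. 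Your vaguely alternative ``compactness/concentration argument'' would need to be made concrete along these lines; as written, neither alternative you offer closes the gap. The rest of your sketch (extension past $s_0$, difference estimates for Lipschitz dependence, integration of the flat connection to construct $O(a)$) is plausible, with the caveat that ``one integration'' of $\partial_j O = O\, a_{\infty,j}$ to promote $a_\infty\in\dot H^1$ to $O\in\dot H^2$ deserves care because $O$ is group-valued and $a_\infty\in\dot H^1(\bbR^4)\hookrightarrow L^4$ is only scale-critical, but this is a routine if fiddly point.
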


In the case when the Yang--Mills heat flow with initial data $a$ admits a global solution 
with finite $L^3$  norm for the curvature as in \eqref{eq:ymhf-L3-loc},
we define the \emph{caloric size} $\hM(a)$ of $a$ as
\begin{equation}\label{Q(A)}
\hM(a) = \nrm{F}^3_{L^{3}_{s}(\R^+;L^{3})}
\end{equation}
We note that this is a gauge invariant quantity. 

\begin{remark}\label{re:O-topology}
Here we need to clarify the topology on the (nonlinear) space of gauge
transformations. We will say that a sequence $O^{(n)}$ converges to $O$
if there exists a sequence $\tO^{(n)}$ of gauge transformations so that 
$\tO^{(n)} (O^{(n)})^{-1}$ are constant and so that we have 
\begin{itemize}
\item Pointwise convergence\footnote{The functions $O^{(n)}$ are uniformly bounded in $BMO$ so this property
essentially provides the additional information that in some sense the local averages converge as well.}:
\[
d(\tO^{(n)},O) \to 0 \qquad \text{in } L^2_{loc}
\]
\item Convergence of derivatives
\[
\tO^{(n)}_{;x} \to O_{;x} \qquad  \text{in } \dot H^1
\]
\end{itemize}
\end{remark}

A simple but important case in which \eqref{eq:ymhf-L3-loc} holds with
$J = [0, \infty)$ is when the initial energy $\spE(a)$ is sufficiently
small. The same conclusion holds as long as $\spE(a)$ is below any
nontrivial connection $a \in \dot H^{1}$ satisfying the harmonic
Yang--Mills equation
\begin{equation} \label{eq:har-ym} \covD^{\ell} F_{\ell j} = 0.
\end{equation}

The above assertion is closely related to the topological class of connections. 
Relaxing the requirement $a \in \dot H^1$ to $a \in H^1_{loc}$
allows also topologically nontrivial initial data sets, in which 
case the ground state energy 
\begin{equation}\label{eq:gs}
\Egs = \inf  \set{\spE(a) : a \in H^{1}_{loc} \hbox{ is nontrivial and solves \eqref{eq:har-ym}}}
\end{equation}
is nonzero, and the minimum is attained for a special class of
solutions called instantons. However, within the trivial topological
class we have
\begin{equation}\label{eq:gs0}
2\Egs \leq \inf  \set{\spE(a) : a \in \dot H^{1} \hbox{ is nontrivial and solves \eqref{eq:har-ym}}}.
\end{equation}
We further remark that in order for a connection $a$ to have $\hM(a)$ finite, it must be topologically trivial.
Because of this, the present paper is limited to topologically trivial connections, which are simply 
defined by the requirement that $a \in \dot H^1$ in a suitable gauge.
For an extended discussion and further references we refer the reader to our next article
in the series \cite{OTYM2.5}. 

In view of this discussion, the following result is natural:

\begin{theorem} [Threshold theorem for the Yang--Mills heat flow on
  $\bbR^{4}$~\cite{OTYM1}] \label{thm:ymhf-thr-0} 
  Assume that  $a$ is topologically trivial and that
\[
\spE(a) < 2\Egs.
\]
Then the solution to \eqref{eq:ymhf} exists
  globally on $[0, \infty)$. Moreover, there exists a non-decreasing
  function ${ \bhM(\cdot)} : [0, 2 \Egs) \to [0, \infty)$ such that
  \begin{equation*}
    \hM(a)  \leq \bhM(\spE(a)).
  \end{equation*}
\end{theorem}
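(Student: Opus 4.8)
The plan is to reduce the assertion to an a priori bound on the caloric size, which I would then establish by a concentration--compactness (bubbling) argument combined with the topological gap \eqref{eq:gs0}. First, recall that under the local caloric gauge \eqref{eq:loc-cal} the flow \eqref{eq:ymhf} is the gradient flow of $\spE$, so that $\spE(A(s))$ is non-increasing along $s$ and
\[
  \int_{0}^{s_{1}} \nrm{F_{sj}(s)}_{L^{2}(\bbR^{4})}^{2} \, \ud s \leq \spE(a) \qquad \text{for every } s_{1} \text{ in the flow interval.}
\]
Extending the definition of $\hM(a)$ by $\hM(a) = +\infty$ whenever the flow fails to be global or has $\nrm{F}_{L^{3}_{s}(\bbR^{+}; L^{3})} = \infty$, the continuation criterion in Theorem~\ref{thm:ymhf-structure-loc} shows that it suffices to prove
\[
  \bhM(E) := \sup \set{ \hM(a) : a \text{ topologically trivial}, \ \spE(a) \leq E } < \infty \qquad \text{for every } E < 2 \Egs .
\]
This $\bhM$ is non-decreasing by construction, and finiteness of $\bhM(\spE(a))$ for $\spE(a) < 2\Egs$ then forces the flow to be global with $\hM(a) \leq \bhM(\spE(a))$, which is the stated conclusion.

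To prove finiteness of $\bhM(E)$ I would argue by contradiction. The case of $E$ below the small-data threshold is already covered by the small-energy theory discussed above. For general $E < 2\Egs$, suppose $\bhM(E) = \infty$ and choose topologically trivial $a^{(n)}$ with $\spE(a^{(n)}) \leq E$ and $\hM(a^{(n)}) \to \infty$. The uniform energy bound together with the parabolic smoothing estimates for \eqref{eq:ymhf} (available wherever the local energy stays below the small-data threshold) would yield a uniform bound on $\hM(a^{(n)})$ if the curvature did not concentrate; hence there are points $x_{n} \in \bbR^{4}$, times $s_{n}$ and scales $\lmb_{n}$ at which a fixed fraction of curvature concentrates, so that after the parabolic rescaling $\hat{A}^{(n)}(x,s) = \lmb_{n} A^{(n)}(x_{n} + \lmb_{n} x, s_{n} + \lmb_{n}^{2} s)$ (and a translation) the curvature of $\hat{A}^{(n)}$ has unit size on the unit parabolic cylinder while staying bounded in energy by $E$.

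Next I would extract a bubble. Applying Uhlenbeck's local gauge fixing and weak compactness to the rescaled flows, and using that the total dissipation of $A^{(n)}$ is $\leq E$ so that of $\hat{A}^{(n)}$ tends to $0$ on compact sets, one obtains in the limit an eternal solution of \eqref{eq:ymhf} with $F_{sj} \equiv 0$, i.e.\ a nontrivial, finite-energy connection $Q$ solving the harmonic Yang--Mills equation \eqref{eq:har-ym}. By Uhlenbeck's removable singularity theorem $Q$ extends to a smooth harmonic Yang--Mills connection over $S^{4}$, carrying an integer topological charge $q(Q)$, with $\spE(Q) \geq \Egs$ by \eqref{eq:gs}. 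Iterating (Struwe-type energy quantization for the Yang--Mills heat flow) one produces all bubbles $Q_{i}$ together with a residual harmonic Yang--Mills connection $Q_{\infty}$ satisfying $\spE(Q_{\infty}) + \sum_{i} \spE(Q_{i}) \leq E$ and $q(Q_{\infty}) + \sum_{i} q(Q_{i}) = 0$, the latter because the topological charge is a homotopy invariant conserved along the flow and $a^{(n)}$ is topologically trivial. Now the contradiction: if some bubble has $q(Q_{i}) \neq 0$, charge cancellation forces either a second bubble or $q(Q_{\infty}) \neq 0$, so at least two of the energies among $\spE(Q_{\infty})$ and the $\spE(Q_{i})$ are $\geq \Egs$, whence $E \geq 2\Egs$; if instead every bubble has $q(Q_{i}) = 0$, then each $Q_{i}$ is topologically trivial, hence after a gauge transformation lies in $\dot{H}^{1}$ and solves \eqref{eq:har-ym} nontrivially, so $\spE(Q_{i}) \geq 2\Egs$ by \eqref{eq:gs0} and again $E \geq 2\Egs$. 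Either way this contradicts $E < 2\Egs$, so $\bhM(E) < \infty$.

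The hard part will be the bubbling analysis on the \emph{noncompact} base $\bbR^{4}$: choosing the correct parabolic rescaling at the concentration scale, applying Uhlenbeck gauge fixing uniformly in $n$ while controlling the energy that can escape to spatial infinity, and proving exact energy quantization together with charge conservation through the weak limits. These ingredients are precisely what is needed in the borderline situation $E$ close to $2\Egs$, in order to rule out a single topologically trivial bubble of energy strictly below $2\Egs$. A second, more routine but still substantial point is the rigidity step asserting that $\hM(a^{(n)}) \to \infty$ forces genuine curvature concentration rather than a slow (e.g.\ logarithmic) pile-up; this is the perturbative ``induction on energy'' component of the argument, built on the small-energy local theory.
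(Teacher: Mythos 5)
This theorem is quoted from the first paper of the series~\cite{OTYM1} and is not proved in the present paper, so there is no in-text proof for you to match; your proposal has to be judged on its own merits as a blind reconstruction.

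The concentration--compactness / bubbling framework you sketch, with the topological gap \eqref{eq:gs0} as the source of the $2\Egs$ threshold, is the natural route, and the charge-accounting dichotomy you run at the end is internally consistent given its inputs. The load-bearing steps you would still need to supply, beyond those you flag, are: (i) the charge identity $q(Q_\infty)+\sum_i q(Q_i)=0$ can fail through a weak limit on the non-compact base --- a single charge-one instanton peeling off to $\abs{x}\to\infty$ along the flow, with the remainder dissipating, costs only $\Egs$ and by itself does not contradict $\spE(a) < 2\Egs$; ruling this out is exactly where the distinction between $\dot H^1$ (your hypothesis of topological triviality) and $\dot H^1_{loc}$ does real work, and you should not treat it as a technicality; (ii) you tacitly assume the residual $Q_\infty$ is itself a harmonic Yang--Mills connection carrying a well-defined charge, but in Struwe-type arguments the ``body'' is only a weak limit, and promoting it to a critical point (or discarding it when its energy drops below the small-data threshold) is a separate step that feeds back into the charge bookkeeping; (iii) your reduction conflates two ways $\hM(a^{(n)})$ can diverge: finite-$s$ curvature concentration, which your parabolic rescaling addresses, and divergence of the $L^3_s L^3_x$ norm through slow decay as $s\to\infty$, which is not a concentration phenomenon and requires a separate convergence-to-flat argument from energy monotonicity and the small-data theory. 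None of these is a dealbreaker for the strategy, but each is substantive, and together they make up most of a real proof; you are right that the non-compactness is ``the hard part,'' but it is the crux rather than a uniformity issue.
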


We now return to the discussion of an arbitrary (not necessarily
subthreshold) spatial connection $a$, whose Yang--Mills heat flow
development satisfies \eqref{eq:ymhf-L3-loc} with $J = [0,
\infty)$. Since the limiting connection $a_{\infty}$ is flat, it must
be gauge equivalent to the zero connection. This motivates the
following definition of the \emph{caloric gauge}:
\begin{definition} [Caloric gauge]\label{def:C}
  We say that a connection $a_{j} \in \dot{H}^{1}$ is \emph{caloric}
  if $J = [0, \infty)$ and $a_{\infty}$ in
  Theorem~\ref{thm:ymhf-structure-loc} is equal to zero. We denote the
  set of all such connections by $\calC$. More quantitatively, we
  denote by $\calC_{\hM}$ the set of all caloric connections whose
  Yang--Mills heat flow development satisfies
  \begin{equation} \label{eq:ymhf-L3} \hM(a) \leq \hM.
  \end{equation}
\end{definition}
Given a connection $a \in \dot{H}^{1}$ satisfying
\eqref{eq:ymhf-L3-loc} with $J=[0, \infty)$, note that
\begin{equation*}
  Cal(a)_{j} = Ad(O(a)) a_{j} - O(a)_{;j}
\end{equation*}
is its caloric representative, which is unique up to constant conjugations.

To solve the Yang--Mills equation in the caloric gauge, we need to view
the family $\calC$ of the caloric gauge connections as an infinite
dimensional manifold. Here the $\dot{H}^1$ topology is no longer
sufficient, so we introduce the slightly stronger topology
\begin{equation*}
  \bfH = \set{a \in \dot{H}^{1} : \nrm{a}_{\bfH} < \infty}, \hbox{ where } 
  \nrm{a}_{\bfH} := \nrm{a}_{\dot{H}^{1}} + \sum_{j} \nrm{P_{j} (\rd^{\ell} a_{\ell})}_{L^{2}}.
\end{equation*}
Here, $\set{P_{j}}$ refer to the standard Littlewood--Paley projections to dyadic frequency annuli on $\bbR^{4}$.
It turns out that every caloric connection belongs to $\bfH$, which
reflects the fact, to be discussed in Section~\ref{sec:caloric} in
greater detail, that caloric connections satisfy a nonlinear form of
the Coulomb gauge condition. Moreover, the following theorem holds.
\begin{theorem} \label{thm:bfH}
  \begin{enumerate}
  \item For a connection $a \in \calC$ with energy $\nE$ and caloric size $\hM$ we have
    \begin{equation*}
      \nrm{a}_{\bfH} \aleq_{\nE, \hM} 1. 
    \end{equation*}
  \item Consider a connection $a \in \bfH$ (not necessarily caloric)
    satisfying \eqref{eq:ymhf-L3}. Then $O(a)$ in
    Theorem~\ref{thm:ymhf-structure-loc} may be uniquely fixed by
    imposing $\lim_{\abs{x} \to \infty} O(a) = I$. Such a map $a
    \mapsto O(a)$ is locally $C^{1}$ from $\bfH$ to $\dot{H}^{2} \cap
    C^{0}$, and also from $H^{N}$ to $\dot{H}^{2} \cap \dot{H}^{N+1}$
    $(N \geq 2)$.
  \end{enumerate}
\end{theorem}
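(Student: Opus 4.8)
\medskip
\noindent\emph{Proof strategy.}
\medskip

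For part (1) the plan is to use the fact that a caloric connection satisfies a nonlinear Coulomb gauge condition. Let $A(s)$ be the Yang--Mills heat flow development of $a$ in the local caloric gauge $A_{s} = 0$; by hypothesis it is global with $\nrm{F}_{L^{3}_{s} L^{3}}^{3} = \hM(a) = \hM$, and since $a \in \calC$ its limit $a_{\infty}$ vanishes. Using $\rd_{s} A_{j} = F_{sj} = \covD^{\ell} F_{\ell j}$ (from \eqref{eq:ymhf}, \eqref{eq:loc-cal}) and integrating in $s$ over $[0, \infty)$ we get $a_{j} = A_{j}(0) - A_{j}(\infty) = - \int_{0}^{\infty} \covD^{\ell} F_{\ell j}(s) \, \ud s$. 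Taking the spatial divergence, the linear principal term $\rd^{j} \rd^{\ell} F_{\ell j}$ vanishes identically by antisymmetry of $F$ in $(\ell, j)$, leaving
\begin{equation*}
  \rd^{j} a_{j} = - \int_{0}^{\infty} \rd^{j} [A^{\ell}, F_{\ell j}](s) \, \ud s,
\end{equation*}
a convergent $s$-integral of a genuinely quadratic expression. It then remains to bound the right-hand side in $\ell^{1} L^{2}$: one Littlewood--Paley decomposes and, for each dyadic frequency $2^{k}$, estimates $\nrm{\int_{0}^{\infty} P_{k} \rd^{j} [A^{\ell}, F_{\ell j}](s) \, \ud s}_{L^{2}}$ using the parabolic smoothing and decay bounds for $A(s)$, $F(s)$ and their frequency projections from \cite{OTYM1}, controlled by $\nE$ and $\hM$; the $2^{-2k}$-scale frequency localization of the heat semigroup makes the resulting bounds summable in $k$, yielding $\nrm{\rd^{j} a_{j}}_{\ell^{1} L^{2}} \aleq_{\nE, \hM} 1$. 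The complementary bound $\nrm{a}_{\dot{H}^{1}} \aleq_{\nE, \hM} 1$ then follows from the elliptic relation $\lap a_{j} = \rd^{\ell} F_{\ell j} + \rd_{j} (\rd^{\ell} a_{\ell}) - \rd^{\ell} [a_{\ell}, a_{j}]$, together with $\nrm{F}_{L^{2}}^{2} = \nE$, the bound just obtained on $\rd^{\ell} a_{\ell}$, and a continuity argument in $s$ along the heat flow (using $\spE(A(s)) \leq \nE$ and $\hM(A(s)) \leq \hM$) to absorb the quadratic term $[a,a] \in L^{2}$; alternatively it can be read off directly from the structure theory of \cite{OTYM1}.

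For part (2) the starting point is Theorem~\ref{thm:ymhf-structure-loc}: the map $a \mapsto O(a)$ exists, up to the residual freedom $O(a) \mapsto c \, O(a)$ with $c \in \G$ constant, and is continuous from $\dot{H}^{1}$ to $\dot{H}^{2}$, where $O(a)$ is characterized by $O(a)^{-1} \rd_{j} O(a) = (a_{\infty})_{j}$. The first task is to show that, when $a \in \bfH$, one may take $O(a) - I$ continuous and vanishing at spatial infinity, which pins $c$ by imposing $\lim_{\abs{x} \to \infty} O(a) = I$. For this I would show that $a_{\infty}$ inherits the $\bfH$ control: the divergence computation of part (1) gives $\rd^{j} (a_{\infty})_{j} = \rd^{j} a_{j} + \int_{0}^{\infty} \rd^{j} [A^{\ell}, F_{\ell j}](s) \, \ud s \in \ell^{1} L^{2}$, while flatness of $a_{\infty}$ gives $\rd_{\ell} (a_{\infty})_{j} - \rd_{j} (a_{\infty})_{\ell} = - [(a_{\infty})_{\ell}, (a_{\infty})_{j}]$; combining divergence and curl (with a bootstrap against the quadratic term, again run along the heat flow) yields $a_{\infty} \in \bfH$. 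Then from $\rd_{j} O(a) = O(a) \, (a_{\infty})_{j}$ one propagates this to $\rd^{2} O(a) \in \ell^{1} L^{2}$, i.e.\ $O(a) - I \in \dot{B}^{2}_{2, 1}(\bbR^{4}) \hookrightarrow C^{0}$ with decay at infinity, which simultaneously upgrades the target of the map to $\dot{H}^{2} \cap C^{0}$.

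The $C^{1}$ regularity is obtained by linearizing this construction: the heat flow $a \mapsto A(s)$ depends smoothly on initial data, so $a \mapsto a_{\infty}$ is $C^{1}$ (differentiate the fixed point defining the limit, as in \cite{OTYM1}), and linearizing $\rd_{j} O = O \, a_{\infty}$ in $a_{\infty}$ gives a linear elliptic--transport equation for $\dlt O$ whose solution depends continuously on $a_{\infty}$, hence on $a$; difference estimates of the same shape as the a priori bounds show the derivative varies continuously. The $H^{N} \to \dot{H}^{2} \cap \dot{H}^{N+1}$ statement follows by propagating $H^{N}$ regularity from $a$ to $a_{\infty}$ through the heat flow (persistence plus parabolic smoothing, as in \cite{OTYM1}) and then gaining one derivative via $\rd O = O \, a_{\infty}$.

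The main obstacle I anticipate is the endpoint $C^{0}$ control in part (2): $\dot{H}^{2}(\bbR^{4})$ is precisely the scaling-critical Sobolev space that fails to embed into $L^{\infty}$, so the continuity of $O(a)$ cannot come from Sobolev embedding but must be extracted from the $\ell^{1}$-summable structure built into $\bfH$ -- concretely, from the antisymmetry cancellation in the heat-flow representation of $\rd^{j} (a_{\infty})_{j}$. The closely related technical nuisance, pervasive in both parts, is that all constants must depend only on $\nE$ and $\hM$; this forces the various bootstrap arguments to be run not at $s = 0$ but as continuity arguments in $s$ along the heat flow, where energy monotonicity and the $L^{3}_{s} L^{3}$ bound on the curvature are available. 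By comparison, the algebraic identities and the $C^{1}$ upgrade are routine.
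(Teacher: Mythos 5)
The paper does not itself prove Theorem~\ref{thm:bfH}: like Theorems~\ref{thm:ymhf-structure-loc}, \ref{thm:ymhf-thr-0}, \ref{thm:tangent} and \ref{thm:data}, it is a result recalled from the companion paper \cite{OTYM1}, so I cannot compare your argument line-by-line against a proof that appears here. What I can say is that your strategy matches the mechanism the authors advertise. The remark preceding the statement -- that membership of caloric connections in $\bfH$ ``reflects the fact\ldots that caloric connections satisfy a nonlinear form of the Coulomb gauge condition'' -- is exactly the structural observation you discover: integrating $\rd_s A_j = \covD^\ell F_{\ell j}$ in $s$ and using $a_\infty = 0$ gives $a_j = -\int_0^\infty \covD^\ell F_{\ell j}\,\ud s$, and taking $\rd^j$ makes the linear piece $\rd^j\rd^\ell F_{\ell j}$ drop out by antisymmetry, leaving $\rd^j a_j$ as the $s$-integral of a genuinely quadratic expression. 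Your sign bookkeeping in part~(2), $\rd^j(a_\infty)_j = \rd^j a_j + \int_0^\infty\rd^j[A^\ell,F_{\ell j}]\,\ud s$, is also correct.

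A few points where the sketch glosses over something nontrivial, though nothing that derails the approach. First, the $\ell^1$-summability of the $s$-integral needs the parabolic concentration of the heat flow at $s\sim 2^{-2k}$ dyad-by-dyad (the $\langle 2^{2k}s\rangle^{-N}$ weights in Proposition~\ref{prop:ymhf-fe}); you invoke this correctly but it is the quantitative heart of the estimate. Second, passing from $\rd^\ell(a_\infty)_\ell\in\ell^1L^2$ and flatness to $\nb a_\infty\in\ell^1L^2$ is not ``routine bootstrap'': it requires the paraproduct estimate $\dot H^1(\R^4)\times\dot H^1(\R^4)\to\dot B^0_{2,1}(\R^4)$, which is true but relies on the slowly-varying property of frequency envelopes rather than on smallness (so no continuity-in-$s$ bootstrap is actually needed at this step -- the estimate closes outright, with constant depending on $\|a_\infty\|_{\dot H^1}\aleq_{\nE,\hM}1$). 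Third, in deducing $\rd^2O\in\ell^1L^2$ from $\rd O=O\,a_\infty$, the $\G$-valued factor $O$ is bounded but not a priori a Fourier multiplier; one should argue either via $O_{;x}=Ad(O)\,a_\infty$ and the $Ad$-invariance of the norm together with a paradifferential splitting of $Ad(O)$, or set it up as a continuity argument in the Yang--Mills heat flow parameter $h$ used to define $O$ in this paper's Section~\ref{sec:paradiff}. Finally, you are right that the $C^0$ endpoint must come from $\dot B^2_{2,1}(\R^4)\hookrightarrow C^0$ rather than from $\dot H^2$; this is precisely why $\bfH$ is designed to carry $\ell^1$ information at the level of $\rd^\ell a_\ell$.
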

Essentially as a corollary, we have:
\begin{theorem} \label{thm:C-mfd} The set $\calC$ is an infinite
  dimensional $C^{1}$ submanifold of $\bfH$.
\end{theorem}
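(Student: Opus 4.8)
The plan is to realize $\calC$ as a graph over a linear subspace of $\bfH$, using the Yang--Mills heat flow map to produce the relevant chart. Recall that $\calC$ consists of connections $a \in \dot H^1$ whose heat flow is global with $a_\infty = 0$. The natural candidate for a coordinate system is the following: a caloric connection satisfies a nonlinear Coulomb-type condition, so one expects that locally the map $a \mapsto \rd^\ell a_\ell$ (valued in $\ell^1 L^2$, which is why the topology $\bfH$ was introduced) vanishes on $\calC$ to leading order, and that the "transverse" directions are exactly the pure gauge directions. Concretely, I would consider the map $\Phi : \bfH \to \bfH$ (or into an appropriate space) given by $\Phi(a) = Cal(a) = Ad(O(a)) a - O(a)_{;x}$, the caloric representative map, which by Theorem~\ref{thm:bfH}(2) is well-defined and $C^1$ on the open set $\calU \subset \bfH$ where \eqref{eq:ymhf-L3} holds (after normalizing $O(a) \to I$ at spatial infinity). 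Since $Cal$ is a projection onto $\calC$ in the sense that $Cal \circ Cal = Cal$ (the caloric representative of a caloric connection is itself, up to the fixed normalization) and $Cal$ is $C^1$, its image $\calC \cap \calU$ is a $C^1$ submanifold provided $d Cal$ has closed, complemented range at each point of $\calC$ — i.e. $d Cal|_a$ is a $C^1$ projection operator on $T_a \bfH = \bfH$ for each $a \in \calC$.

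The key steps, in order: (1) Verify the idempotency $Cal \circ Cal = Cal$ on $\calU$, which follows from uniqueness of the caloric representative up to constant conjugation in Theorem~\ref{thm:ymhf-structure-loc} together with the normalization in Theorem~\ref{thm:bfH}(2); a constant conjugation fixing $I$ at infinity is the identity, so the representative is genuinely unique. (2) Differentiate: since $Cal$ is $C^1$ (composition of the $C^1$ map $a \mapsto O(a)$ from Theorem~\ref{thm:bfH}(2) with the smooth algebraic operations $Ad(\cdot)$ and $(\cdot)_{;x}$, all of which map the relevant spaces continuously — here one uses the Moser/product estimates in $\bfH$ and $\dot H^2 \cap C^0$), the chain rule gives that $P_a := d Cal|_{Cal(a)} \circ d Cal|_a = d(Cal \circ Cal)|_a = d Cal|_a$; evaluating at $a \in \calC$ so that $Cal(a) = a$, we get $P_a^2 = P_a$ where $P_a = d Cal|_a \in \mathcal{L}(\bfH)$. (3) A bounded idempotent on a Banach space has closed range and a closed complement (namely $\ker P_a$), and $a \mapsto P_a$ is continuous (indeed $C^0$, since $Cal$ is $C^1$). (4) Invoke the standard fact that the image of a $C^1$ map which is locally a projection (retraction) onto its image is a $C^1$ submanifold: near $a \in \calC$, the map $b \mapsto (Cal(b), b - Cal(b))$ gives, after restricting to a neighborhood and using that $P_a$ splits $\bfH = \mathrm{Ran}\, P_a \oplus \ker P_a$, a $C^1$ chart in which $\calC$ is the graph $\{(\xi, \eta) : \eta = g(\xi)\}$ for a $C^1$ function $g$ with $g(Cal(a)) = $ appropriate value and $dg = 0$ at that point — concretely, apply the inverse function theorem in Banach spaces to $b \mapsto Cal(b) + (I - P_a)(b - a)$, whose differential at $a$ is $P_a + (I - P_a) = I$. (5) Finally, $\calC$ is covered by such neighborhoods (every caloric connection satisfies \eqref{eq:ymhf-L3} for some $\hM$, hence lies in $\calU$), and the charts are $C^1$-compatible since they all come from the single $C^1$ map $Cal$, so $\calC$ is a $C^1$ submanifold of $\bfH$.

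The main obstacle I expect is Step (2)–(3): establishing that $d Cal$ is a bounded operator on $\bfH$ with continuous dependence on the base point, which requires that the differential of $a \mapsto O(a)$, known to be $C^1$ from $\bfH$ to $\dot H^2 \cap C^0$ by Theorem~\ref{thm:bfH}(2), interacts correctly with the algebraic operations when one reassembles $Cal(a)$ in the $\bfH$ norm rather than merely in $\dot H^1$. In particular one must check that the $\ell^1 L^2$ component of $\rd^\ell (Cal(a))_\ell$ — the extra ingredient in $\bfH$ beyond $\dot H^1$ — depends continuously differentiably on $a \in \bfH$; this is where the specific structure of the caloric/Coulomb condition enters, and presumably leans on estimates developed in Section~\ref{sec:caloric}. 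The idempotency argument and the abstract submanifold conclusion are then formal. One subtlety worth flagging: because the representative is only unique "up to constant conjugations," strictly speaking $\calC$ as literally defined (with $a_\infty = 0$ but no normalization of $O$) is a quotient; the cleanest reading, consistent with the normalization $\lim_{|x|\to\infty} O(a) = I$ singled out in Theorem~\ref{thm:bfH}(2), is that $\calC$ is parametrized by these normalized representatives, and it is that slice on which the chart construction above literally runs — so I would state the manifold structure with respect to that normalization, the constant-conjugation ambiguity being a finite-dimensional group action that does not affect the $C^1$ submanifold claim.
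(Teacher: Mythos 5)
Your retraction strategy is the natural one and is almost certainly what the paper has in mind: Theorem~\ref{thm:bfH}(2) supplies the $C^1$ regularity of $a \mapsto O(a)$ exactly so that $Cal$ becomes a $C^1$ retraction of a neighborhood in $\bfH$ onto $\calC$, and the submanifold structure then follows from the standard fact that the image of a $C^1$ idempotent map is a $C^1$ submanifold. But your chart formula would fail as written. The map $\Phi(b) = Cal(b) + (I - P_a)(b - a)$ is indeed a local diffeomorphism near $a$ (its differential is $I$), yet it does \emph{not} carry $\calC$ to the affine slice $a + \mathrm{Ran}\,P_a$: for $b \in \calC$ one computes $\Phi(b) - a = P_a(b-a) + 2(I - P_a)(b-a)$, which lies in $\mathrm{Ran}\,P_a$ only if $(I-P_a)(b-a)$ already vanishes. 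The sign must be flipped. Set $\Psi(b) = Cal(b) - (I - P_a)(b - a)$, so that $d\Psi_a = 2P_a - I$ (invertible, being its own inverse) and, for $b \in \calC$, $\Psi(b) - a = P_a(b-a) \in \mathrm{Ran}\,P_a$. The reverse inclusion — that $(I-P_a)\bigl(Cal(b) - b\bigr) = 0$ forces $Cal(b) = b$ for $b$ near $a$ — is then a short mean value argument: put $y = Cal(b)$, so $Cal(y) = y$ and $0 = Cal(b) - Cal(y) = \int_0^1 dCal_{y+t(b-y)}(b-y)\,dt$; since $b-y \in \mathrm{Ran}\,P_a$ we have $P_a(b-y) = b-y$, and $C^0$ dependence of $dCal$ on the base point gives $\|b - y\| \leq \sup_t\|dCal_{y+t(b-y)} - P_a\|\,\|b-y\| \leq \tfrac12\|b-y\|$ once $b$ is close enough to $a$, whence $b = y$. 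Two smaller remarks: the quotient worry in your final paragraph is unfounded — by Definition~\ref{def:C}, $\calC$ is an honest subset of $\dot H^1$ (connections with $a_\infty = 0$); the constant-conjugation ambiguity affects only the map $a \mapsto O(a)$, and the normalization $\lim_{|x|\to\infty} O(a) = I$ in Theorem~\ref{thm:bfH}(2) removes it, making $Cal$ a genuine map with $Cal|_\calC = \mathrm{id}$. And the gap you flag — $C^1$ regularity of $Cal$ into the $\bfH$ topology rather than merely $\dot H^1$, i.e. control of $\rd^\ell Cal(a)_\ell$ in $\ell^1 L^2$ — is real; it is exactly the content packaged into Theorem~\ref{thm:bfH}, whose proof lives in \cite{OTYM1}, which is why the present paper states the manifold theorem as a corollary rather than proving it from scratch.
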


The spatial components of a finite energy Yang--Mills waves will be
continuous functions of time which take values into $\calC$.  They are
however not $C^1$ in time; instead their time derivative will merely
belong to $L^2$.  Because of this, we need to take the closure of its
tangent space $T\calC$ (which a-priori is a closed subspace of $\bfH$)
in $L^2$.  This is denoted by $T_a^{L^2}\calC$. It is also convenient
to have a direct way of characterizing this space; that is naturally
done via the linearization of \eqref{eq:ymhf}:

\begin{definition}\label{def:tangent}
  For a caloric gauge connection $a \in \calC$, we say that $ L^2 \ni
  b \in T_a^{L^2} \calC$ iff the solution to the linearized local
  caloric gauge Yang--Mills heat flow equation
  \begin{equation} \label{eq:lin-ymhf}
    \partial_s B_k = [B^j,F_{kj}] + \covD^j ( \covD_j B_k - \covD_k B_j ), \qquad B_k(s=0) = b_k,
  \end{equation}
  (where $\covD =\covD^{(a)}$) satisfies
  \[
  \lim_{s \to \infty} B(s) = 0.
  \]
  We say that $(a, b) \in T^{L^{2}} \calC_{\hM}$ (resp. $T^{L^{2}}
  \calC$) if $a \in \calC_{\hM}$ (resp. $\calC$) and $b \in
  T^{L^{2}}_{a} \calC$.
\end{definition}

A key property of the tangent space $T_a^{L^2} \calC$ is the following
nonlinear div-curl type decomposition:

\begin{theorem} \label{thm:tangent} Let $a \in \calC_{\hM}$ with energy $\nE$. Then for
  each $\g$-valued $1$-form $e \in L^2$ there exists a unique decomposition
  \begin{equation}
    e = b - \covD^{(a)} a_0, \qquad b \in T_a^{L^2} \calC, \qquad a_0 \in \dot H^1,
  \end{equation}
  where $b$ is a $\g$-valued $1$-form and $a_{0}$ is a $\g$-valued function, with the corresponding bound
  \begin{equation}
    \| b\|_{L^2} +\|a_0\|_{\dot H^1} \lesssim_{\, \nE, \hM} \|e \|_{L^2}.
  \end{equation}
\end{theorem}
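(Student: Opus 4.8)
The content of the theorem is that $L^2 = T_a^{L^2}\calC \oplus \covD^{(a)} \dot H^1$ as a \emph{topological} direct sum, with projections controlled by $\nE$ and $\hM$; the plan is to realize this splitting through the limit map of the linearized Yang--Mills heat flow, which simultaneously detects membership in $T_a^{L^2}\calC$ and pins down the transversal gauge direction $\covD^{(a)} a_0$. Throughout, fix $a \in \calC_{\hM}$ of energy $\nE$, write $A(s)$ for its Yang--Mills heat flow development, so $A(0) = a$ and $A(s) \to 0$ in $\dot H^1$ as $s \to \infty$ by Theorem~\ref{thm:ymhf-structure-loc} and the caloric hypothesis, and abbreviate $\covD = \covD^{(a)}$.

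The first and essentially only substantive step is to establish the linearized analogue of Theorem~\ref{thm:ymhf-structure-loc}: for $b \in L^2$, the solution $B(s)$ of \eqref{eq:lin-ymhf} with $B(0) = b$ exists for all $s \geq 0$, the limit $\ell_a(b) := \lim_{s \to \infty} B(s)$ exists in $L^2$, and the resulting linear map satisfies $\nrm{\ell_a b}_{L^2} \lesssim_{\nE, \hM} \nrm{b}_{L^2}$. The equation \eqref{eq:lin-ymhf} is a linear parabolic equation whose coefficients are built from $A(s)$, $\nb A(s)$ and $F(s)$ and are therefore scaling-critical for the $L^2$ theory; closing the estimate requires the curvature control encoded in $\hM$, exactly as in the nonlinear problem, and this is where the real work lies. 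Two structural facts accompany it. First, by Definition~\ref{def:tangent}, $T_a^{L^2}\calC = \ker \ell_a$, a closed subspace of $L^2$. Second, $\ell_a$ takes values in the closed subspace $\calN := \set{\rd \chi : \chi \in \dot H^1}$ of curl-free $L^2$ one-forms --- this is the linearization at $A(\infty) = 0$ of the identity $F[A(\infty)] = 0$ valid along every convergent heat flow --- and $\rd : \dot H^1 \to \calN$ is an isometric isomorphism, whose inverse I denote $\rd^{-1}$.

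The algebraic core is the identity $\ell_a \circ \covD = \rd$ on $\dot H^1$. A gauge transformation on $\bbR^4 \times [0, \infty)$ preserving the local caloric gauge $A_s = 0$ must be $s$-independent; since \eqref{eq:ymhf} is gauge covariant, conjugating $A(s)$ by the one-parameter group $e^{\tau a_0}$ (for fixed $a_0 \in \dot H^1$) yields, for each $\tau$, another local caloric gauge heat flow, and differentiating at $\tau = 0$ shows --- up to an overall sign absorbed by linearity of \eqref{eq:lin-ymhf} --- that $s \mapsto \covD^{(A(s))} a_0$ solves \eqref{eq:lin-ymhf} with $B(0) = \covD a_0$. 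Therefore, using $A(s) \to 0$ in $\dot H^1 \hookrightarrow L^4$ so that $\nrm{[A(s), a_0]}_{L^2} \lesssim \nrm{A(s)}_{\dot H^1} \nrm{a_0}_{\dot H^1} \to 0$,
\begin{equation*}
  \ell_a(\covD a_0) = \lim_{s \to \infty} \covD^{(A(s))} a_0 = \lim_{s \to \infty} \bigl( \rd a_0 + [A(s), a_0] \bigr) = \rd a_0 .
\end{equation*}
In particular $\ell_a \circ \covD : \dot H^1 \to \calN$ is an isometric isomorphism, and in particular $\covD : \dot H^1 \to L^2$ is injective.

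The decomposition now follows by linear algebra. Given $e \in L^2$, set $a_0 := -\rd^{-1} \ell_a(e) \in \dot H^1$, which is well defined since $\ell_a(e) \in \calN$ and satisfies $\nrm{a_0}_{\dot H^1} = \nrm{\ell_a(e)}_{L^2} \lesssim_{\nE, \hM} \nrm{e}_{L^2}$, and set $b := e + \covD a_0$. Then $\ell_a(b) = \ell_a(e) + \rd a_0 = 0$, so $b \in \ker \ell_a = T_a^{L^2}\calC$, while $e = b - \covD a_0$; moreover $\nrm{\covD a_0}_{L^2} \leq \nrm{\rd a_0}_{L^2} + \nrm{[a, a_0]}_{L^2} \lesssim (1 + \nrm{a}_{\dot H^1}) \nrm{a_0}_{\dot H^1} \lesssim_{\nE, \hM} \nrm{e}_{L^2}$ by Sobolev embedding and Theorem~\ref{thm:bfH}(1), giving $\nrm{b}_{L^2} + \nrm{a_0}_{\dot H^1} \lesssim_{\nE, \hM} \nrm{e}_{L^2}$. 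For uniqueness, if $e = b - \covD a_0 = b' - \covD a_0'$ with $b, b' \in T_a^{L^2}\calC$ and $a_0, a_0' \in \dot H^1$, then applying $\ell_a$ and using $\ell_a(b) = \ell_a(b') = 0$ together with $\ell_a \circ \covD = \rd$ gives $\rd(a_0 - a_0') = 0$, so $a_0 = a_0'$ and hence $b = b'$. The single non-routine ingredient is the linearized heat flow analysis of the second paragraph; everything else is soft.
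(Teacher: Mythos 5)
Your proposal exhibits what I believe is the natural structure underlying this result, and the soft linear-algebraic part of the argument is correct and cleanly executed. Since this theorem is merely recalled in the present paper from \cite{OTYM1} (like the other results of Section~\ref{subsec:caloric-first}, it carries no proof here), I cannot line it up against an in-paper proof; I evaluate it on its own terms.

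The algebraic core --- the identity $\ell_a \circ \covD^{(a)} = \rd$ via gauge covariance of the heat flow under the $\tau$-family $e^{\tau a_0}$, the characterization $T_a^{L^2}\calC = \ker \ell_a$ from Definition~\ref{def:tangent}, and the resulting splitting $L^2 = \ker\ell_a \oplus \covD^{(a)}\dot H^1$ with the projection $a_0 = -\rd^{-1}\ell_a(e)$ --- is correct, and the verification that $\covD^{(A(s))}a_0$ solves \eqref{eq:lin-ymhf} and converges in $L^2$ to $\rd a_0$ is sound. The uniqueness argument via applying $\ell_a$ to the difference of two decompositions is clean.

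However, the argument rests on two substantive analytic inputs that are left as black boxes. First, you assume global well-posedness for $L^2$ data of the linearized caloric YM heat flow \eqref{eq:lin-ymhf}, existence of the $s\to\infty$ limit $\ell_a(b)$, and the operator bound $\nrm{\ell_a}_{L^2 \to L^2} \lesssim_{\nE,\hM} 1$; you explicitly flag this as ``where the real work lies.'' This is the genuine content of the theorem --- it is a linearized version of Theorem~\ref{thm:ymhf-structure-loc}, and one must show the scaling-critical coefficients built from $A(s), F(s)$ can be controlled by $\hM$ over the whole heat flow trajectory. Proposition~\ref{prop:ymhf-fe} gives frequency-envelope bounds for $B(s)$ at finite $s$ but does not by itself yield the $s\to\infty$ limit or its boundedness; you would need to extract those. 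Second, the claim that the image of $\ell_a$ lies in $\calN = \rd\,\dot H^1$ is asserted by a one-line heuristic (linearizing $F[A(\infty)]=0$). That heuristic is correct in outcome --- the limit must satisfy $\rd^j(\rd_k B_{\infty,j} - \rd_j B_{\infty,k}) = 0$, so its divergence-free part is $L^2$-harmonic, hence zero --- but it again requires control of $\partial_s B(s)$ and of the coefficients as $s\to\infty$, and so it is part of the same analytic package, not a separate soft fact. In short, the proposal correctly reduces the theorem to the existence and boundedness of the linearized limit map, but does not supply that reduction's engine; as submitted, it is a proof modulo the hardest step.
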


A hyperbolic Yang--Mill connection consists not only of spatial
components (the sole subject of discussion so far), but also of a
temporal component. As in the Coulomb gauge, we will consider the
spatial components of the connection as the dynamic variables, which
satisfy a system of wave equations. The temporal components, on the
other hand, will be viewed as an auxiliary variable determined from
the spatial components.  This point of view motivates the following
definition.

\begin{definition} [Initial data in the caloric gauge]
  An initial data for the Yang--Mills equation in the caloric gauge is
  a pair $(a,b)$ where $(a, b) \in T^{L^{2}} \calC$.
\end{definition}

The notion of covariant Yang--Mills initial data
(Definition~\ref{def:ym-id}) is connected to the preceding definition
by the following result proved in  \cite{OTYM1} (which motivates the notation in
Theorem~\ref{thm:tangent}):
\begin{theorem}\label{thm:data}
  \begin{enumerate}
  \item Given any Yang--Mills initial data pair $(a, e) \in \dot H^1
    \times L^2$ such that the Yang--Mills heat flow development of $a$
    satisfies \eqref{eq:ymhf-L3}, there exists a caloric gauge
    Yang--Mills data $(\tilde a, b) \in T^{L^{2}} \calC$ and $a_0 \in
    \dot H^1$, so that the initial data pair $(\tilde a, \tilde e)$ is
    gauge equivalent to $(a,e)$, where
    \[
    \tilde e_k = b_k - \covD^{(\tilde{a})}_k a_0 .
    \]
    In addition, $(\ta, b)$ and $a_0$ are unique up to constant
    conjugations, and depend continuously on $(a,e)$ in the
    corresponding quotient topology.  Further, the map $(a,e) \mapsto
    (\ta,b)$ is locally $C^1$ in the stronger topology\footnote{Here
      we impose again the condition $\lim_{\abs{x} \to \infty} O(a) =
      I$ in order to fix the choice of $O(a)$.}  $\bfH \times L^2 \to
    \bfH \times L^2$, as well as in more regular spaces $H^{N} \times H^{N-1} \to H^{N}
    \times H^{N-1}$ $(N \geq 2)$.

  \item Given any caloric gauge data $(a, b) \in T^{L^{2}}
    \calC$, there exists an unique $a_0 \in \dot H^1$, with Lipschitz
    dependence on $(a, b) \in \dot H^1 \times L^2$, so that
    \[
    e_k = b_k - \covD^{(a)}_k a_0
    \]
    satisfies the constraint equation
    \eqref{eq:YMconstraint}. Further, the map $(a,b) \to a_0$ is also
    Lipschitz from $H^N \times H^{N-1}$ to $H^{N}$ for $N \geq 3$.
  \end{enumerate}
\end{theorem}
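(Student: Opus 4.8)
The statement has two parts. Part (2) is the easier direction: given caloric data $(\ta, b) \in T^{L^2}\calC$, we must produce $a_0 \in \dot H^1$ so that $e_k = b_k - \covD^{(a)}_k a_0$ satisfies the Gauss constraint $\covD^j e_j = 0$. Substituting the ansatz, the constraint becomes the covariant elliptic equation $\covD^j \covD_j a_0 = \covD^j b_j$. The plan is to solve this by a variational/Lax--Milgram argument: the covariant Laplacian $\covD^j \covD_j$ associated to a connection $a \in \calC_\hM$ of energy $\nE$ is (after the gauge-covariant Hodge theory already developed around Theorem~\ref{thm:tangent}) an isomorphism $\dot H^1 \to \dot H^{-1}$, with operator norm controlled by $\nE$ and $\hM$; since $b \in L^2$ we have $\covD^j b_j \in \dot H^{-1}$, so $a_0$ exists, is unique, and obeys $\|a_0\|_{\dot H^1} \lesssim_{\nE,\hM} \|b\|_{L^2}$. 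Lipschitz dependence on $(a,b)$ follows by differencing the equations for two data sets and reusing the same invertibility estimate, treating the difference of connections perturbatively; the higher-regularity statement $H^N \times H^{N-1} \to H^N$ ($N\ge 3$) is obtained by commuting derivatives through the covariant Laplacian and bootstrapping, using the algebra/product estimates for $H^N$ in dimension $4$ when $N \ge 3$.

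Part (1) is the substantive direction: one is handed covariant data $(a,e)$ with $F[a] \in L^2$ and $\covD^j e_j = 0$, and must gauge-transform it to caloric form. The plan is: (i) apply Theorem~\ref{thm:ymhf-structure-loc} and Theorem~\ref{thm:bfH} to the spatial connection $a$ to produce the gauge transformation $O(a)$ with $\lim_{|x|\to\infty} O(a) = I$, and set $\ta = \Cal(a) = Ad(O(a))a - O(a)_{;x}$, which lies in $\calC \cap \bfH$ with norm controlled by $\nE$ and $\hM$ by Theorem~\ref{thm:bfH}(1) and~(2); (ii) transport $e$ by the same gauge transformation, $\te := Ad(O(a)) e \in L^2$, which still satisfies the (now $\ta$-covariant) constraint since the constraint equation is gauge covariant; (iii) invoke Theorem~\ref{thm:tangent} with this $\te$ in place of $e$ to get the unique splitting $\te = b - \covD^{(\ta)} a_0$ with $b \in T^{L^2}_\ta \calC$ and $a_0 \in \dot H^1$, with the stated bounds. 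This produces the required triple $(\ta, b, a_0)$, and $(\ta, \te)$ is gauge equivalent to $(a,e)$ by construction. Uniqueness up to constant conjugations follows from the uniqueness clauses already built into Theorem~\ref{thm:ymhf-structure-loc} (for $O(a)$, up to constant conjugation and the normalization at infinity) and Theorem~\ref{thm:tangent} (for the splitting).

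For the regularity and continuity assertions of Part~(1), the plan is to compose the regularity statements of the ingredient theorems. Continuity of $(a,e) \mapsto (\ta, b)$ in the quotient topology is a matter of tracking that each step — forming $O(a)$, conjugating $e$, applying the div-curl splitting — is continuous in the appropriate topology (here the subtlety is the topology on gauge transformations clarified in Remark~\ref{re:O-topology}, so one must check that $a \mapsto O(a)$ is continuous into that space and that conjugation by such $O$ is continuous on $L^2$ modulo constants). The $C^1$ statement in the stronger norms follows because Theorem~\ref{thm:bfH}(2) gives $a \mapsto O(a)$ locally $C^1$ from $\bfH$ to $\dot H^2 \cap C^0$ (resp.\ $H^N \to \dot H^2 \cap \dot H^{N+1}$), conjugation is smooth once $O(a)$ lives in a Banach algebra like $\dot H^2 \cap C^0$ in dimension $4$, and the splitting map of Theorem~\ref{thm:tangent} is linear in $e$ with coefficients depending on $a$; chaining these $C^1$ maps yields a $C^1$ composite.

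The main obstacle I expect is Part~(1) step~(iii) combined with the continuity bookkeeping: one must be sure that the $\ta$-covariant object $\te$ lands in exactly the hypothesis class of Theorem~\ref{thm:tangent} (finite $L^2$ norm, controlled by $\nE,\hM$), that the constants in all estimates depend only on energy and caloric size (which are gauge invariant, so this is morally automatic but must be verified through the conjugation step), and — most delicately — that the composition is $C^1$ rather than merely continuous, which requires the $C^1$ regularity of $a \mapsto O(a)$ from Theorem~\ref{thm:bfH}(2) to survive conjugation of an $L^2$ object; this is where the interplay between the $\bfH$ topology, the $\dot H^2 \cap C^0$ target for $O(a)$, and multiplication estimates in $\R^4$ has to be handled with care.
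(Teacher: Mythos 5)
The paper does not include its own proof of Theorem~\ref{thm:data}; it is cited as a result established in the companion paper \cite{OTYM1}, so there is no in-paper argument to compare against. Your outline nevertheless assembles exactly the ingredients the present paper does state: Theorem~\ref{thm:ymhf-structure-loc} and Theorem~\ref{thm:bfH} to produce the caloric gauge transformation $O(a)$ with the required continuity and $C^1$ regularity, and Theorem~\ref{thm:tangent} for the div--curl splitting. Your scheme for Part~(1) --- normalize $a$ via $O(a)$, transport $e$ by $Ad(O(a))$, then split the transported $e$ --- and for Part~(2) --- solve the covariant Poisson equation $\covD^{j}\covD_{j}a_{0}=\covD^{j}b_{j}$ --- is the natural route, and the uniqueness and regularity clauses do reduce to chaining the cited maps as you describe.

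One step in Part~(2) should be made explicit rather than left under the umbrella of ``gauge-covariant Hodge theory.'' The Lax--Milgram coercivity $\|a_{0}\|_{\dot H^{1}}\lesssim\|\covD^{(a)}a_{0}\|_{L^{2}}$ is \emph{not} automatic for large connections: the naive bound $\|\nabla a_{0}\|_{L^{2}}\le\|\covD^{(a)} a_{0}\|_{L^{2}}+\|a\|_{L^{4}}\|a_{0}\|_{L^{4}}$ only closes perturbatively when $\|a\|_{L^{4}}$ is small, whereas here $a$ may have large energy. What rescues the argument is precisely Theorem~\ref{thm:tangent}: applying its unique decomposition to $e=-\covD^{(a)}a_{0}$ forces $b=0$ (since $0-\covD^{(a)}a_{0}$ is already a valid decomposition of this $e$), and the stated bound then reads $\|a_{0}\|_{\dot H^{1}}\lesssim_{\nE,\hM}\|\covD^{(a)}a_{0}\|_{L^{2}}$. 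So Theorem~\ref{thm:tangent} is what actually carries Part~(2); it should be invoked as the source of coercivity, not merely as background.
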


\begin{remark}
The caloric gauge just described is a global version of a local caloric gauge previously introduced
by the first author \cite{Oh1, Oh2}, and is based on an idea by Tao~\cite{Tao-caloric}
in his study of the energy critical wave maps into the hyperbolic space \cite{Tao:2008wn,Tao:2008tz,Tao:2008wo,Tao:2009ta,Tao:2009ua}. 
\end{remark}

\subsection{The main results} \label{subsec:results} The first main
result is a strong gauge-dependent local well-posedness theorem for
the Yang--Mills equation as an evolution in the manifold of caloric
connections. To state this result, we define the \emph{energy
  concentration scale} $\rc$ of a Yang--Mills initial data set $(a,
e)$ with threshold $\eps_{\ast}$ (or the $\eps_{\ast}$-energy concentration scale) to be
\begin{equation*}
  \rc^{\eps_{\ast}} = \rc^{\eps_{\ast}}[a, e] = \sup \set{r > 0: \nE_{B_{r}(x)}(a, e) \leq \eps^{2}_{\ast} \, \hbox{ for all $x \in \bbR^{4}$}}.
\end{equation*}
\begin{theorem}[Local well-posedness in caloric gauge] \label{thm:wp}
  There exists a non-increasing function $\eps_{\ast}(\nE, \hM) > 0$
  and a non-decreasing function $M_{\ast}(\nE, \hM) > 0$ such that the
  Yang--Mills equation in the caloric gauge is locally well-posed on
  the time interval of length $\rc =\rc^{\eps_{\ast}}(\nE, \hM)$
  for initial data $(a, e)$ with energy $\leq \nE$ and $a \in
  \calC_{\hM}$. More precisely, the following statements hold.
  \begin{enumerate}
  \item (Regular data) Let $(a, e) $ be a smooth initial data set with
    energy $\leq \nE$, where $a \in \calC_{\hM}$. Then there exists a
    unique smooth solution $A_{t,x}$ to the Yang--Mills equation in
    caloric gauge on $I = [- \rc, \rc]$ such that $(A_{j}, F_{0j})
    \restriction_{\set{t = 0}} = (a_{j}, e_{j})$.

  \item (Rough data) The data-to-solution map admits a continuous
    extension
    \begin{equation*}
      \calC \times L^{2} \ni (a, e) \mapsto (A_{x}, \rd_{t} A_{x}) \in C(I, T^{L^{2}} \calC)
    \end{equation*}
    in the class of initial data with energy $\leq \nE$, $a \in
    \calC_{\hM}$ and energy concentration scale $\geq \rc$.

  \item (A-priori bound) The solution defined as above obeys the
    a-priori bound
    \begin{equation*}
      \nrm{A_{x}}_{S^{1}[I]} \leq M_{\ast}(\nE, \hM).
    \end{equation*}

  \item (Weak Lipschitz dependence) Let $(a', e') \in \calC \times
    L^{2}$ be another initial data set with energy concentration scale
    $\geq \rc$. For $\sgm < 1$ close to $1$, we have the global
    bound
    \begin{equation*}
      \nrm{A_{x} - A'_{x}}_{S^{\sgm}[I]} \aleq_{M_{\ast}(\nE, \hM), \sgm} \nrm{(a, e) - (a', e')}_{\dot{H}^{\sgm} \times \dot{H}^{\sgm-1}}.
    \end{equation*}
  \end{enumerate}
\end{theorem}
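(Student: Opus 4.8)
\emph{Proof sketch (plan).} The plan is to run a contraction/bootstrap argument for the spatial connection $A_{x}$, viewed as a curve in $\calC$ solving a nonlinear wave equation with good null structure, while all of the caloric-gauge bookkeeping (the temporal component $A_{0}$, the Yang--Mills heat flow development of $A_{x}(t)$, the gauge transformation $O$) is treated as an auxiliary elliptic/parabolic subsystem slaved to $A_{x}$. First I would reduce to unit scale: by the scaling invariance of \eqref{eq:ym}, of the energy, and of $\hM$, we may assume $\rc = 1$, so that $\nE_{B_{1}}(a,e) \leq \eps_{\ast}^{2}$ on every unit ball while the total energy is only $\leq \nE$, and the target interval is $I = [-1,1]$. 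Using the results of \cite{OTYM1} recalled above (Theorems~\ref{thm:bfH}--\ref{thm:data}), I would then write the equations of motion in the caloric gauge: $A_{x}$ satisfies a covariant wave equation of schematic form $\Box A_{x} = A_{x}\,\partial A_{x} + A_{x}^{3} + (\text{heat-flow remainder})$ with the dangerous interactions carrying a null structure, together with an elliptic recovery $A_{0} = A_{0}[A_{x}, \partial_{t} A_{x}]$ (a nonlinear Coulomb-type relation encoded in Theorems~\ref{thm:tangent}, \ref{thm:data}) and the parabolic relations defining the heat-flow auxiliary fields. Finally I would set up the iteration space $S^{1}[I]$ (energy $+$ Strichartz $+$ $X^{s,b}$ $+$ null-frame/square-function components, as in the Yang--Mills and wave maps literature) and its companion $N^{1}[I]$.

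The heart of the matter is the a-priori bound $\nrm{A_{x}}_{S^{1}[I]} \leq M_{\ast}(\nE,\hM)$ together with the associated Lipschitz-in-$S^{\sgm}$ estimates for differences, obtained by a continuity/bootstrap argument. This rests on two families of estimates: (i) parabolic--elliptic estimates showing that the heat-flow development, $O(a(t))$, $A_{0}$, and the various remainders are controlled in the relevant norms by $\nE$, $\hM$, and $\nrm{A_{x}}_{S^{1}[I]}$ — these are essentially fixed-time statements built on Theorems~\ref{thm:ymhf-structure-loc}--\ref{thm:data}; and (ii) the core multilinear wave estimates $\nrm{\Box A_{x} - (\text{free part})}_{N^{1}[I]} \aleq (\text{small factor}) \cdot \nrm{A_{x}}_{S^{1}[I]} + \cdots$, in which the null structure of the caloric gauge is used to extract the gain and the smallness $\eps_{\ast}$ of the energy at unit scale is what makes the bootstrap close for the high-frequency part. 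I expect \textbf{this step to be the main obstacle}: unlike in the Coulomb or temporal gauge, the caloric gauge is nonlocal, so one cannot invoke finite speed of propagation to localize to $B_{\rc}$ and thereby literally reduce to small-energy data; instead one must run the large-energy estimate directly on the short interval $I$, handling the low-frequency ``background'' (frequencies $\aleq 1$, which need not carry small energy) perturbatively and phrasing everything with frequency envelopes so that the bootstrap constant stays uniform.

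With the a-priori $S^{1}$ bound in hand, the remaining assertions follow by now-standard arguments. For smooth data, persistence of regularity — running the same estimates in $H^{N} \times H^{N-1}$ and propagating the parabolic/elliptic bounds in regular spaces via Theorems~\ref{thm:bfH}(2), \ref{thm:data}(1) — yields a smooth solution on $I$, and uniqueness follows from the difference estimates. For rough data, I would approximate $(a,e)$ by smooth caloric data with comparable energy and energy concentration scale, apply the weak Lipschitz bound at a subcritical exponent $\sgm < 1$ together with a frequency-envelope argument at the critical level to show the smooth solutions form a Cauchy sequence in $C(I, T^{L^{2}} \calC)$, and pass to the limit; the same mechanism gives continuous dependence. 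The weak Lipschitz estimate itself comes from writing the difference of two solutions as the solution of a linear wave equation with source quadratic/cubic in $(A_{x}, A'_{x})$ and estimating in $S^{\sgm}$, the loss from $\sgm = 1$ to $\sgm < 1$ being the familiar price of criticality (the data-to-solution map is continuous but not uniformly Lipschitz at the scaling-critical regularity). Throughout one must also check that $(A_{x}(t), \partial_{t} A_{x}(t))$ genuinely lies in $T^{L^{2}} \calC$, i.e.\ that the caloric constraint is preserved in time; this is guaranteed by the parabolic characterization in Definition~\ref{def:tangent} together with Theorem~\ref{thm:tangent}.
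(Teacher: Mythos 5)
There is a genuine gap in your proposed proof of the central a-priori bound (item (3)), which is the crux of the theorem. You propose to close a direct contraction/bootstrap for $A_{x}$ in $S^{1}[I]$ on a unit-scale interval, using the smallness of the energy above unit frequency to handle high frequencies and treating the low-frequency background ``perturbatively.'' This will not close. After peeling off the genuinely perturbative pieces of the nonlinearity, the residual paradifferential interaction $\Diff_{\P A}^{\kpp}$ between the large-energy low-frequency background $\P A_{< k-\kpp}$ and high-frequency outputs $P_{k} A$ is \emph{not} perturbative; it has to be absorbed into the linear operator and inverted by a parametrix (Theorem~\ref{thm:paradiff}). Even granting that parametrix, the resulting a-priori estimate does not iterate cleanly because $S^{1}[I]$ is far from divisible, so one cannot recover a uniform $S^{1}$ bound for arbitrary energy $\leq \nE$ simply by subdividing the time interval. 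The paper's route is fundamentally different: Theorem~\ref{thm:wp} is \emph{derived from} the energy-dispersed a-priori bound (Theorem~\ref{thm:ed-inhom}, restated as Theorem~\ref{thm:ed}), which in turn is established by an induction-on-energy argument (Section~\ref{sec:induction}, following Sterbenz--Tataru and the MKG case) comparing $A$ to the caloric Yang--Mills wave launched from a heat-flow regularization $A(\scut)$ of the data and using weak divisibility (Proposition~\ref{prop:paradiff-weakdiv}) rather than true divisibility of $S^{1}$. Your plan omits this mechanism entirely, and a direct bootstrap has no substitute for it.

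A second, related gap: you correctly observe that the caloric gauge is nonlocal and so one cannot localize $A_{x}$ itself to $B_{\rc}$ by finite speed of propagation. But you then conclude that one ``must run the large-energy estimate directly,'' missing the key structural observation the paper uses. Finite speed of propagation \emph{does} hold at the level of the gauge-covariant curvature $F$ (the localized energy estimate \eqref{eq:ec-ed-loc-en}), and this is precisely what Lemma~\ref{lem:ec-ed} exploits: small energy at scale $\rc$ in the initial data propagates to uniform-in-$t$ small energy dispersion $\nrm{F}_{ED_{\geq m}[I]}$ for $2^{m} \approx \eps \rc^{-1}$, which is exactly the hypothesis of Theorem~\ref{thm:ed-inhom}. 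Without this link between the energy concentration scale and energy dispersion you have no way of feeding the scale $\rc$ into a quantitative a-priori bound. Finally, a self-contained proof must also propagate the caloric size bound $\hM(A(t)) \aleq \hM$ in time, which your sketch does not address; the paper handles it via the approximate conservation of $\hM$ for energy-dispersed solutions (Theorem~\ref{thm:structure-ed}.(6)) inside a continuity argument. Your treatment of regular-to-rough data via heat-flow regularization and frequency envelopes, and of weak Lipschitz dependence via difference estimates at $\sgm < 1$, is consistent with the paper, but these steps presuppose the a-priori bound and the structure theorem that the above ingredients provide.
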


The a-priori bound (3) is highly gauge-dependent and has strong
consequences. The $S^{1}$-norm, which is essentially the same as in
\cite{KST} and is recalled in Section~\ref{subsec:ftn-sp} below,
serves the role of a controlling (or scattering) norm for the
Yang--Mills equation in the caloric gauge. As we will see in
Section~\ref{sec:structure}, finiteness of the $S^{1}$-norm implies
fine properties of the solution itself, such as frequency envelope
control, persistence of regularity, continuation and scattering
towards endpoints of $I$, and also for those nearby, such as weak
Lipschitz dependence and local-in-time continuous dependence.

Theorem~\ref{thm:wp} implies small energy global well-posedness in the
caloric gauge, analogous to the similar Coulomb gauge result in \cite{KT}:
\begin{corollary} \label{cor:wp-small} If the energy of the initial
  data set is smaller than $\eps^{2}_{\ast} := \min \set{1,  \eps^{2}_{\ast}(1, \hM(1))}$,
 then the corresponding solution  $A_{t,x}$ in the caloric gauge exists globally and obeys
\[
 \nrm{A_{x}}_{S^{1}[(-\infty, \infty)]} \leq M_{\ast}(\nE).
\]
\end{corollary}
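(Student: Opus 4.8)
The plan is to read the statement off from Theorem~\ref{thm:wp}: when the total energy is globally small there is no room for energy concentration at any scale, so the existence time furnished by that theorem is infinite. First I would check the hypotheses. Passing to the caloric representative if the data is not already in caloric form (Theorem~\ref{thm:data}, which preserves the energy), we may assume $a \in \calC$, with $\nE = \nE(a,e) < \eps^{2}_{\ast} \le 1$. Since $\spE(a) \le \nE$, the connection $a$ is sub-threshold for the Yang--Mills heat flow, so Theorem~\ref{thm:ymhf-thr-0} yields a global heat flow with $\hM(a) \le \bhM(\spE(a)) \le \bhM(1) = \hM(1)$; in particular $a \in \calC_{\hM(1)}$. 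Thus $(a,e)$ is admissible for Theorem~\ref{thm:wp} with energy bound $\nE \le 1$ and caloric size bound $\hM(1)$, and the relevant threshold and a-priori constant are $\eps_{\ast}(1,\hM(1))$ and $M_{\ast}(\nE,\hM(1))$ respectively.

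Next I would compute the corresponding energy concentration scale. By definition $\rc^{\eps_{\ast}(1,\hM(1))}[a,e] = \sup\set{r : \nE_{B_{r}}(a,e) \le \eps^{2}_{\ast}(1,\hM(1))}$, and since $\nE_{B_{r}}(a,e) \le \nE < \eps^{2}_{\ast} \le \eps^{2}_{\ast}(1,\hM(1))$ for \emph{every} $r > 0$, this supremum equals $+\infty$. Therefore, for each finite $T$, Theorem~\ref{thm:wp} applies on the interval $I = (-T,T)$ (legitimate, as $T$ is below the energy concentration scale) and produces a solution $A_{t,x}$ in the caloric gauge with $(A_{j},F_{0j}) \restriction_{\set{t=0}} = (a_{j},e_{j})$ and the a-priori bound $\nrm{A_{x}}_{S^{1}[(-T,T)]} \le M_{\ast}(\nE,\hM(1))$, the right-hand side being independent of $T$.

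Finally I would patch these together: uniqueness in Theorem~\ref{thm:wp} forces the solutions on the intervals $(-T,T)$ to agree on overlaps, so they define a single solution on $(-\infty,\infty)$, and letting $T \to \infty$ the uniform bound gives $\nrm{A_{x}}_{S^{1}[(-\infty,\infty)]} \le M_{\ast}(\nE,\hM(1)) =: M_{\ast}(\nE)$. The only step that is more than bookkeeping is this passage to the infinite time interval, i.e.\ the fact that finiteness of the $S^{1}$-norm is a continuation criterion ruling out any breakdown; this is among the structural consequences of the a-priori bound (3) recorded in Section~\ref{sec:structure}, and is the sole non-routine ingredient. Everything else --- gauge fixing, monotonicity of $\bhM$ and $\eps_{\ast}$, and the trivial observation that globally small energy leaves no room for concentration at any scale --- is immediate, so I do not anticipate a genuine obstacle here.
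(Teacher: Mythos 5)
Your proof is correct and follows the route the paper clearly intends (the paper offers no explicit proof of the corollary, just flags it as a consequence of Theorem~\ref{thm:wp}). The chain is exactly right: small global energy forces $a \in \calC_{\hM(1)}$ via Theorem~\ref{thm:ymhf-thr-0}, monotonicity of $\eps_{\ast}$ and $\bhM$ gives $\nE < \eps_{\ast}^{2}(\nE,\hM)$, so the energy concentration scale is $+\infty$, and Theorem~\ref{thm:wp} then yields the global solution with the uniform $S^{1}$ bound. One minor streamlining: since $\rc = +\infty$, parts (1) and (3) of Theorem~\ref{thm:wp} already apply directly on $I = (-\infty,\infty)$ (the continuity argument in its proof shows $T_{0} = r_{c}$, hence $T_{0} = \infty$), so the exhaustion by finite $T$, the patching via uniqueness, and the appeal to the continuation criterion of Section~\ref{sec:structure} are not strictly needed --- though they certainly do no harm.
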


Moreover, if the initial data set $(a, e)$ has subthreshold energy, then
by Theorem~\ref{thm:ymhf-thr-0} we have $a \in \calC_{\hM}$ with $\hM
\leq \hM(\nE)$. Therefore, we immediately obtain:
\begin{corollary} \label{cor:wp-subthr} For initial data with
  subthreshold energy, the conclusions of Theorem~\ref{thm:wp} hold
  with $\eps_{\ast}$, $M_{\ast}$ and $\rc$ depending only on the
  energy $\nE$.
\end{corollary}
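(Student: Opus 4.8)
The plan is to feed the conclusion of the threshold theorem for the Yang--Mills heat flow (Theorem~\ref{thm:ymhf-thr-0}) directly into the caloric-gauge local well-posedness statement (Theorem~\ref{thm:wp}); the only work is the bookkeeping of which quantities may be absorbed into a dependence on $\nE$ alone, together with the observation that the standing hypothesis $a \in \calC_{\hM}$ of Theorem~\ref{thm:wp} becomes automatic in the subthreshold regime.

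First I would fix a subthreshold energy level, i.e. $0 \le \nE < 2\Egs$, and take any admissible initial data set $(a, e)$ whose energy is at most $\nE$. Since $\spE(a)$ is the magnetic part of that energy it satisfies $\spE(a) \le \nE < 2\Egs$, and $a$ is topologically trivial (it lies in $\dot{H}^{1}$), so Theorem~\ref{thm:ymhf-thr-0} applies: the Yang--Mills heat flow development of $a$ is global on $[0,\infty)$ and $\hM(a) \le \bhM(\spE(a)) \le \bhM(\nE)$, the last step using that $\bhM(\cdot)$ is non-decreasing. Hence $a \in \calC_{\hM_{0}}$ with $\hM_{0} := \bhM(\nE)$, a number depending only on $\nE$, and in particular the assumption $a \in \calC_{\hM}$ required by Theorem~\ref{thm:wp} holds automatically.

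Second, I would invoke Theorem~\ref{thm:wp} with energy bound $\nE$ and caloric size bound $\hM_{0}$, and set
\[
  \eps_{\ast}(\nE) := \eps_{\ast}(\nE, \bhM(\nE)), \qquad M_{\ast}(\nE) := M_{\ast}(\nE, \bhM(\nE)),
\]
letting $\rc$ be the energy concentration scale of $(a,e)$ at the threshold $\eps_{\ast}(\nE)$. Because $\bhM(\nE)$ is a function of $\nE$ alone, so are $\eps_{\ast}(\nE)$ and $M_{\ast}(\nE)$, and they inherit the non-increasing, respectively non-decreasing, monotonicity asserted in Theorem~\ref{thm:wp}. With these choices all four conclusions of Theorem~\ref{thm:wp} — existence and uniqueness for regular data, the continuous data-to-solution map for rough data, the a-priori bound $\nrm{A_{x}}_{S^{1}[I]} \le M_{\ast}(\nE)$, and the weak Lipschitz dependence — hold verbatim. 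I do not expect any genuine obstacle here: the statement is an immediate combination of the two quoted theorems, the only points to watch being that ``subthreshold'' is measured by the full energy while Theorem~\ref{thm:ymhf-thr-0} is phrased via $\spE(a)$ (harmless since $\spE(a) \le \nE$), and that the monotonicity of $\bhM$ is what allows one to replace the data-dependent bound $\bhM(\spE(a))$ by the clean $\nE$-dependent quantity $\bhM(\nE)$.
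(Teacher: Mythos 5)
Your proposal is correct and takes exactly the same route as the paper: the paper's entire justification is the one-sentence remark that subthreshold energy plus Theorem~\ref{thm:ymhf-thr-0} gives $a \in \calC_{\hM}$ with $\hM \leq \bhM(\nE)$, whence Theorem~\ref{thm:wp} applies with $\hM$ replaced by this $\nE$-dependent quantity. Your added bookkeeping (that $\spE(a) \leq \nE$ so that Theorem~\ref{thm:ymhf-thr-0} is applicable, and that the monotonicity of $\bhM$, $\eps_{\ast}$, $M_{\ast}$ lets one pass to the uniform bound) is just spelling out what the paper leaves implicit.
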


The local well-posedness result (Theorem~\ref{thm:wp}) provides a
basic framework for considering dynamics of the Yang--Mills equation
in the manifold of caloric connections $\calC$. The second main
result, which we now state, is a continuation/scattering criterion for
this equation in terms of smallness of a quantity called \emph{energy
  dispersion} (denoted by $ED[I]$ below).
\begin{theorem} [Regularity and scattering of energy dispersed
  YM solutions] \label{thm:ed-first} There exists a non-increasing
  function $\eps(\nE,\hM) > 0$ and a non-decreasing function $M(\nE,\hM)$ such
  that if $A_{t,x}$ is a solution (in the sense of
  Theorem~\ref{thm:wp}) to the Yang--Mills equation in caloric gauge
  on $I$ with energy $\leq \nE$ and with initial caloric size $\hM$ that obeys
  \begin{equation*}
    \nrm{F}_{ED[I]} = \sup_{k \in \bbZ} 2^{-2k} \nrm{P_{k} F}_{L^{\infty} (I \times \bbR^{4})}\leq \eps(\nE,\hM), 
  \end{equation*}
  then it satisfies the a-priori bound
  \begin{equation*}
    \nrm{A_{x}}_{S^{1}[I]} \leq M(\nE,\hM),	
  \end{equation*}
as well as 
 \begin{equation*}
    \sup_{t \in I} \hM(A(0)) \ll 1.	
  \end{equation*}

\end{theorem}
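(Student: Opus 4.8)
The plan is to establish the $S^{1}$ bound as an a-priori estimate for smooth solutions and then read off the rough-data and scattering-type statements from the continuation/perturbation machinery of Section~\ref{sec:structure}. By Theorem~\ref{thm:wp}(1) a smooth caloric data set produces a smooth solution with $\nrm{A_{x}}_{S^{1}[J]}$ finite on every compact $J \subset I$, and by the rough-data and weak-Lipschitz parts of Theorem~\ref{thm:wp} it suffices to bound $\nrm{A_{x}}_{S^{1}[I]}$ by $M(\nE,\hM)$ for such solutions. This I would do by a continuity/bootstrap argument: fix a large $C_{0} = C_{0}(\nE,\hM)$, let $I' \subseteq I$ be a subinterval containing the initial time on which $\nrm{A_{x}}_{S^{1}[I']} \leq 2 C_{0}$, and show that this improves to $\nrm{A_{x}}_{S^{1}[I']} \leq C_{0}$. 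Since $\nrm{A_{x}}_{S^{1}[I']}$ is continuous in the endpoint of $I'$ and tends to the (small) data size as $\abs{I'} \to 0$ by the small-data theory underlying Corollary~\ref{cor:wp-small}, the improvement forces $I' = I$ and hence the claimed bound.

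The improvement step rests on a conditional nonlinear estimate for the caloric-gauge system written schematically as a wave equation $\bx A_{i} = \N_{i}(A)$ for the spatial components. The nonlinearity splits into the non-perturbative \emph{magnetic} interaction $2 A^{j} \rd_{j} A_{i}$ (and its curl-type analogue), a genuine trilinear term, and milder pieces coming from the caloric constraints and from the elliptic auxiliary variable $A_{0}$. The magnetic term is not perturbative at bounded energy, so one conjugates the d'Alembertian by a renormalization operator (a microlocal parametrix adapted to the connection), exposing the null structure of that term, and estimates the resulting errors in the $S^{1}$, energy and null-frame spaces recalled in Section~\ref{subsec:ftn-sp}. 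The decisive point is that, after renormalization, every remaining interaction producing a high-output frequency from resonant (nearly parallel, comparable-frequency) inputs — exactly the ones that are merely $O(1)$, not small, at fixed energy — carries a gain measured by $\nrm{F}_{ED[I]}$; hence under $\nrm{F}_{ED[I]} \leq \eps(\nE,\hM)$ these contribute $\aleq \eps(\nE,\hM)\,(1 + \nrm{A_{x}}_{S^{1}[I']})^{k}$, while all non-resonant interactions are absorbed perturbatively using a divisible norm bounded by the energy $\nE$, which can be made small on each of a bounded number $N(\nE,\hM)$ of subintervals of $I$. On each subinterval one then gets $\nrm{A_{x}}_{S^{1}} \aleq_{\nE,\hM} 1$, and summing (together with the frequency-envelope gluing of Section~\ref{sec:structure}) yields $\nrm{A_{x}}_{S^{1}[I]} \leq M(\nE,\hM)$, closing the bootstrap with $C_{0}$ chosen appropriately.

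For the second conclusion, the $S^{1}$ bound provides frequency-envelope control of $F[A](t)$ at each fixed time and control of $A(t)$ in $\bfH$; combining this with the pointwise smallness of $\nrm{F}_{ED[I]}$ forces the Yang--Mills heat flow development of each slice $A(t)$ to have small curvature in the scale-invariant $L^{3}_{s}L^{3}$ norm, uniformly in $t \in I$ — using the small-data theory for the heat flow and Theorem~\ref{thm:ymhf-structure-loc}. This is precisely the asserted smallness of the caloric size $\hM(A(t))$ of the time slices. The remaining scattering and continuation assertions attached to finiteness of $\nrm{A_{x}}_{S^{1}}$ are then quoted from the structural results of Section~\ref{sec:structure}.

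The main obstacle is the non-perturbative magnetic interaction together with the loss of causality in the caloric gauge: one cannot localize to light cones or exploit finite speed of propagation to reduce the bounded energy to small-energy pieces, so the dependence on $(\nE,\hM)$ must be extracted purely from global frequency-envelope bounds and from the quantitative stability of the renormalization. Constructing this renormalization for the non-abelian covariant wave operator (with $A_{0}$ simultaneously solved as an elliptic auxiliary field), and proving that all of its error terms — and the trilinear and constraint-induced terms — obey the resonant-gain estimate uniformly as $\nrm{F}_{ED[I]} \to 0$, is the crux; it is precisely here that the fine cancellation structure of the caloric gauge, unavailable in the temporal or Coulomb gauges, is indispensable.
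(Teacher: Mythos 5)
Your proposal takes a genuinely different route from the paper, and there is a gap in it that the paper's actual proof is designed to circumvent.

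The paper does not prove Theorem~\ref{thm:ed-first} by a fresh bootstrap on the $S^1$ norm. It derives it as a short corollary of the central a-priori estimate, Theorem~\ref{thm:ed-inhom} (equivalently Theorem~\ref{thm:ed}), which was already proved in Section~\ref{sec:induction} by an \emph{induction on the energy}. The derivation has two steps, and their logical order is the reverse of yours. First, one shows $\sup_{t\in I}\hM(A(t))\ll 1$: by Lemma~\ref{lem:hM-ed}(b) applied at a time of near-minimal caloric size, full energy dispersion forces $\hM(A(t_0))\aleq_{\nE,\hM}\eps^c$, and then a continuity-in-time argument propagates $\hM(A(t))\leq 1$ (hence $\aleq_\nE\eps^c$) to all of $I$. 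This is a fixed-time statement about the heat flow, not a consequence of any $S^1$ bound. Second, with a uniform small $\hM(A(t))$ in hand, one applies Theorem~\ref{thm:ed} with $m$ chosen arbitrarily negative: since $\nrm{F}_{ED_{\geq m}[I]}\leq\nrm{F}_{ED[I]}\leq\eps$ for every $m$, and $|I|\leq 2^{-m}T(\nE,\hM)$ for $m$ negative enough, the a-priori $S^1$ bound follows with no restriction on $|I|$.

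The gap in your bootstrap is quantitative and cannot be repaired without the induction on energy. The non-perturbative term $\Diff^\kpp_{\P A}A$ is handled via the paradifferential parametrix of Theorem~\ref{thm:paradiff}, which gives $\nrm{u}_{S^1[I]}\aleq_{M}\nrm{u[0]}_{\dot H^1\times L^2}+\nrm{f}_{N\cap L^2\dot H^{-1/2}[I]}$ \emph{with a constant depending on $M=\nrm{A_x}_{S^1[I]}$}; the constant is universal only under the extra hypothesis $\nrm{\P A_x}_{\ell^\infty S^1}<\dlto(M)$ (Theorem~\ref{thm:paradiff-core}(2), Theorem~\ref{thm:renrm-bnd}(3)), which energy dispersion does not give you directly. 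In your bootstrap $\nrm{A_x}_{S^1[I']}\leq 2C_0\Rightarrow\nrm{A_x}_{S^1[I']}\leq C_0$, the linear estimate contributes $C(2C_0)\sqrt\nE$ from the fixed-time data alone, with no smallness, while $C(2C_0)$ grows rapidly with $C_0$; the $\eps$- and divisibility-small terms cannot compensate this, so the implication fails for large $\nE$. Moreover the divisibility argument you invoke (cf.~Theorem~\ref{thm:structure}(6) and Proposition~\ref{prop:paradiff-weakdiv}) produces a number of subintervals depending on $\nrm{A_x}_{S^1[I]}$ itself, so it is circular when used to prove the $S^1$ bound. The paper breaks this circle by the induction on energy of Section~\ref{sec:induction}: compare $A$ with the heat-flowed, lower-energy solution $\tA$, whose $S^1$ bound is supplied by the inductive hypothesis, and estimate the high-frequency difference $A-\tA$, whose initial data have energy only $c(E)$; \emph{there} the renormalized estimate closes because the data norm is small. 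Your write-up describes the correct ingredients at the level of individual multilinear estimates, but omits this central mechanism; without it the bound $\nrm{A_x}_{S^1[I]}\leq M(\nE,\hM)$ cannot be obtained by the argument you sketch.
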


By finiteness of the $S^{1}$-norm, $A_{t,x}$ may be continued as a
solution to the Yang--Mills equation in the caloric gauge past finite
endpoints of $I$, and scatters in some sense towards the infinite
endpoints; see Remarks~\ref{rem:cont} and \ref{rem:scatter}.

\begin{remark} \label{rem:ymhf-L3-ed} In contrast to
  Theorem~\ref{thm:wp}, in Theorem~\ref{thm:ed-first} the dependence
  on $\hM$ is very mild.  This feature is due to the fact that
  small energy dispersion, combined with the energy bound, implies
  that $\hM$ must be either very large or very small; see
  Lemma~\ref{lem:hM-ed} below.  In particular if $\nE$ is subthreshold
  then the dependence on  $\hM$ above can be omitted altogether.
\end{remark}

While powerful conclusions about the solution (represented by the
$S^{1}$-norm bound) can be made in the caloric gauge, it has the
disadvantage that the causality (or the finite speed of propagation)
property is lost. To remedy this, we also establish small data
well-posedness result in the temporal gauge $A_{0} = 0$:
\begin{theorem} \label{thm:tmp} If the energy of the initial data set
  is smaller than $\eps^{2}_{\ast}$ (as in
  Corollary~\ref{cor:wp-small}), then the corresponding solution
  $(A_{t,x}, \rd_{t} A_{t,x})$ in the temporal gauge $A_{0} = 0$
  exists globally in $C_{t}(\bbR; \dot{H}^{1} \times L^{2})$. The
  solution is unique among the local-in-time limits of smooth
  solutions, and it depends continuously on data $(a, e) \in
  \dot{H}^{1} \times L^{2}$.
\end{theorem}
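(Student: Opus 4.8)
The plan is to \emph{build} the temporal-gauge solution out of the caloric-gauge solution produced by Corollary~\ref{cor:wp-small}, by transporting it along a time-dependent gauge transformation, and then to carry the fixed-time regularity and continuity statements back along this transformation; causality is not used. For smooth data first: given a smooth initial data set $(a,e)$ with energy $\nE < \eps_{\ast}^{2}$, use Theorem~\ref{thm:data}(1) to obtain the associated $a_{0} \in \dot{H}^{1}$, caloric data $(\ta, b) \in T^{L^{2}} \calC$, and a gauge transformation $U = U(a,e)$ (normalized by $U \to I$ at spatial infinity) for which $(\ta, \te)$, $\te_{k} = b_{k} - \covD^{(\ta)}_{k} a_{0}$, is gauge equivalent to $(a,e)$ via $U$; all of these are smooth because the data is. Since $\nE$ is small, Corollary~\ref{cor:wp-small} gives a global smooth caloric-gauge solution $A^{cal}_{t,x}$ with $(A^{cal}_{j}, F^{cal}_{0j})\restriction_{t=0} = (\ta_{j}, \te_{j})$ and $\nrm{A^{cal}_{x}}_{S^{1}[\bbR]} \aleq_{\nE} 1$; in particular $A^{cal}_{0}$ is smooth, is recovered at each time from $(A^{cal}_{x}(t), \rd_{t} A^{cal}_{x}(t))$ via Theorem~\ref{thm:data}(2), and obeys $\nrm{A^{cal}_{0}(t)}_{\dot{H}^{1}} \aleq_{\nE} 1$ uniformly, in addition to the finer space-time bounds coming from the structure theory of Section~\ref{sec:structure}. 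Let $V = V(t,x)$ be the unique smooth global $\G$-valued solution of the linear ODE $\rd_{t} V = V A^{cal}_{0}$, $V(0, \cdot) = U^{-1}$, and set $A^{tmp}_{\mu} = Ad(V) A^{cal}_{\mu} - V_{;\mu}$. Then $A^{tmp}_{0} = 0$ by construction, $A^{tmp}$ solves the hyperbolic Yang--Mills equation \eqref{eq:ym} by its gauge invariance, and $(A^{tmp}_{j}, F^{tmp}_{0j})\restriction_{t=0} = (a_{j}, e_{j})$ by the defining property of $U$.

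Two quantitative facts are then needed. First, from $F^{tmp}_{0j} = Ad(V) F^{cal}_{0j}$ and $A^{tmp}_{0} = 0$ one gets $\rd_{t} A^{tmp}_{j} = Ad(V) F^{cal}_{0j}$, so by the $Ad$-invariance of $\brk{\cdot, \cdot}$ and conservation of energy $\nrm{\rd_{t} A^{tmp}_{j}(t)}_{L^{2}} = \nrm{F^{cal}_{0j}(t)}_{L^{2}} \leq \nE^{1/2}$, uniformly in $t$. Second, and more seriously, $A^{tmp}_{j}(t)$ must stay in $\dot{H}^{1}$ with a bound locally uniform in $t$ and uniform on bounded sets of data. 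For this, a short computation from the ODE gives the identity $\rd_{t} V_{;j} = Ad(V) \rd_{j} A^{cal}_{0}$, hence $V_{;j}(t) = (U^{-1})_{;j} + \int_{0}^{t} Ad(V(s)) \rd_{j} A^{cal}_{0}(s) \, \ud s$; feeding in the caloric-gauge bounds on $A^{cal}_{0}$ and running a bootstrap/Gronwall argument one shows $V(t)$ lies in the class of gauge transformations with $V(t)_{;x} \in \dot{H}^{1}$ (and $H^{N-1}$ when the data is in $H^{N}$), with norms controlled by $\nE$, $\nrm{A^{cal}_{x}}_{S^{1}}$ and $\abs{t}$. Since such gauge transformations map $\dot{H}^{1}$ connections to $\dot{H}^{1}$ connections, $A^{tmp}_{j}(t) = Ad(V(t)) A^{cal}_{j}(t) - V(t)_{;j} \in \dot{H}^{1}$ with the asserted control, and continuity of $t \mapsto (A^{tmp}_{x}, \rd_{t} A^{tmp}_{x}) \in \dot{H}^{1} \times L^{2}$ follows from that of $(A^{cal}_{x}, \rd_{t} A^{cal}_{x})$, of $A^{cal}_{0}$ and of $V$. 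This is the precise content of the slogan that fixed-time regularity passes from the caloric to the temporal gauge without loss.

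For general data $(a,e) \in \dot{H}^{1} \times L^{2}$ with $\nE < \eps_{\ast}^{2}$, approximate it by smooth data $(a^{(n)}, e^{(n)})$ satisfying the constraint \eqref{eq:YMconstraint} and converging in $\dot{H}^{1} \times L^{2}$, with corresponding smooth temporal-gauge solutions $A^{tmp, (n)}$ built as above. By Theorem~\ref{thm:data}, $(\ta^{(n)}, b^{(n)})$ and $U^{(n)}$ converge; by Theorem~\ref{thm:wp}(2) (which applies globally here, since small energy forces a large energy concentration scale and a controlled caloric size) the caloric solutions $A^{cal, (n)}$ converge in $C([-T,T], T^{L^{2}} \calC)$ for every $T$, hence so do the $A^{cal, (n)}_{0}$; continuous dependence for the ODE together with the uniform estimates above then gives convergence of $V^{(n)}$ and hence $A^{tmp, (n)} \to A^{tmp}$ in $C([-T,T], \dot{H}^{1} \times L^{2})$. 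The limit is a global temporal-gauge solution with data $(a,e)$, independent of the approximating sequence; moreover any local-in-time limit of smooth temporal-gauge solutions whose data converges to $(a,e)$ equals it, since gauge-transforming such a solution to the caloric gauge (normalized at $t=0$) and invoking the uniqueness and continuity statements in Theorem~\ref{thm:wp} pins down the corresponding caloric solution uniquely. Running the same limiting argument along an arbitrary convergent sequence of data gives continuous dependence of $(a,e) \mapsto A^{tmp}$, modulo the residual constant gauge freedom of the temporal gauge.

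The main obstacle is the second step above: showing that the transporting gauge transformation $V(t)$ genuinely stays in the class $\set{V : V_{;x} \in \dot{H}^{1}}$, with bounds uniform on bounded sets of data and locally uniform in time. The estimate that uses only the fixed-time bound $A^{cal}_{0}(t) \in \dot{H}^{1}$ loses derivatives; instead one must exploit the finer space-time structure of $A^{cal}_{0}$ (its bilinear/null-form character and its integrability in time) furnished by the $S^{1}$-structure theory of Section~\ref{sec:structure}, together with the identity $\rd_{t} V_{;j} = Ad(V) \rd_{j} A^{cal}_{0}$. Ensuring that these bounds remain uniform along the smooth approximation, so that the limiting argument closes in $C([-T,T], \dot{H}^{1} \times L^{2})$, is the other point requiring care; by comparison, the uniform $L^{2}$ bound on $\rd_{t} A^{tmp}_{x}$ and global existence are immediate once the caloric-gauge solution is in hand.
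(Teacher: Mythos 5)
Your overall strategy coincides with the paper's: produce the caloric solution from Corollary~\ref{cor:wp-small}, transport it to the temporal gauge via a gauge transformation $V$ (the paper calls it $O$) solving the ODE $\partial_t V = V A^{cal}_0$, carry the fixed-time regularity across, and pass to rough data by approximation; the uniqueness-among-limits-of-smooth-solutions claim is handled by the same reverse-gauge-transformation argument. You have also correctly isolated \emph{the} obstacle: showing $V(t)_{;x} \in \dot H^1$ with good control.

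Where the proposal is incomplete is precisely in how you propose to close that step. Writing $V_{;j}(t) = (U^{-1})_{;j} + \int_0^t Ad(V(s))\,\partial_j A^{cal}_0(s)\,ds$ and running Gronwall in $\dot H^1$ requires something like $\nabla A^{cal}_0 \in L^1_t \dot H^1_x$ (together with $\nabla A^{cal}_0 \in L^1_t L^4_x$ for the commutator term). The $S^1$-structure theory does not supply these $L^1_t(\cdot)_x$ Bochner bounds: the elliptic-component bounds \eqref{eq:fe-Y-bnd} put $A_0$ in $Y^1$, whose time exponents are $p_0 < \infty$ and $2$, not $1$. The paper's actual input is Theorem~\ref{thm:structure}.(9), namely $A_0 \in \ell^1\,|D|^{-2} L^2_x L^1_t$. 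Note the reversed order: $L^2_x L^1_t$ is \emph{weaker} than $L^1_t L^2_x$ by Minkowski, so knowing the former does not give the latter, and a Gronwall argument in $\dot H^1_x$ cannot be run directly from it. Instead, Lemma~\ref{lem:ODE} solves the ODE \emph{pointwise in $x$}: by Bernstein, $A_0 \in L^\infty_x L^1_t$ gives the pointwise Lipschitz bound \eqref{dO-uniform} and continuity of $O$; differentiating the ODE gives $\partial_t(O^{-1}\partial_x O) = \partial_x A_0 - [A_0, O^{-1}\partial_x O]$, which with $\partial_x A_0 \in L^4_x L^1_t$ yields $O_{;x} \in L^4_x C_t$, and a second differentiation with $\partial^2_x A_0 \in L^2_x L^1_t$ yields $\partial_x O_{;x} \in L^2_x C_t$, i.e.\ $O_{;x} \in C_t\dot H^1$. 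The bound $A_0 \in \ell^1\,|D|^{-2}L^2_x L^1_t$ is itself not free: for the quadratic part $\bfA_0^2$ of $A_0$ it is Proposition~\ref{prop:A02}, which uses the square-function component $S^{sq}_k$ of the $S$-norm — a component which, as the paper remarks, was added to the norm \emph{only} to make this step work, and which plays no role in Theorems~\ref{thm:wp} and~\ref{thm:ed-first}. Your proposal, relying on the ``$S^1$-structure theory'' in the abstract, does not identify this extra ingredient and thus leaves a genuine gap. (A minor point: since the $S^1$ bound from Corollary~\ref{cor:wp-small} is global and the $L^1_t$ is taken over all of $\bbR$, one gets uniform-in-$t$ control of $O_{;x}$ in $\dot H^1$, not merely bounds that degrade with $|t|$.)
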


In fact, Theorem~\ref{thm:tmp} is a consequence of
Corollary~\ref{cor:wp-small}, after the observation that the gauge
transformation from the caloric gauge to the temporal gauge obeys
optimal regularity bounds; see Theorem~\ref{thm:structure} (10)
below. We note that the strong dispersive $S^{1}$-norm bound for $A$
is generally lost in the temporal gauge, as some part of the solution
is merely transported (instead of solving a wave equation).

Theorems~\ref{thm:tmp} is used in the third paper \cite{OTYM2.5} of
the sequence to establish the large data local theory for the
$(4+1)$-dimensional Yang--Mills equation in arbitrary topological
classes. Then in the fourth paper \cite{OTYM3}, this theory is put
together with Theorems~\ref{thm:wp} and \ref{thm:ed-first} to
establish global well-posedness and scattering in the caloric gauge
for data with subthreshold energy (often called the \emph{threshold
  theorem} in the literature), as well as a bubbling vs. scattering
dichotomy for arbitrary finite-energy solutions, formulated in a gauge
covariant sense.

\begin{remark}
Within the setup of this paper, one could in effect easily relax the hypothesis
of the above theorem, and show that temporal gauge solutions exist
for as long as caloric solutions exist. We do not pursue this, as our 
primary interest in terms of the temporal gauge is to use it for solutions 
which are not necessarily caloric. These matters are further discussed in 
our third and fourth papers \cite{OTYM2.5, OTYM3}.
\end{remark}

The overall strategy for the  proofs originated from the work of Sterbenz and
the second author on the energy critical wave maps \cite{ST1, ST2},
and was adapted to the case of the energy critical
Maxwell--Klein--Gordon (MKG) equation, which is a simpler model for
Yang--Mills, in the authors' previous works \cite{OT1, OT2, OT3}.
We also note an alternative independent approach for the energy critical
wave maps \cite{KriSch} and MKG \cite{KL} based on the Kenig--Merle method \cite{MR2461508,MR2257393}.
A more extensive historical perspective is provided in the fourth paper \cite{OTYM3}.

In \cite{OT1} and \cite{OT2}, the analogues of Theorems~\ref{thm:wp}
and \ref{thm:ed-first} (respectively) were proved using distinct
strategies. However, here we derive both main results (see
Section~\ref{sec:pfs} for details) from the following single a-priori
estimate concerning regular solutions, whose proof is the central goal
of this paper:

\begin{theorem} \label{thm:ed-inhom} There exist non-increasing
  functions $\eps(\nE, \hM), T(\nE, \hM) > 0$ as well as a non-decreasing
  function $M(\nE, \hM)$ such that if $A_{t,x}$ is a regular solution
  to the Yang--Mills equation in caloric gauge on $I$ with energy
  $\leq \nE$ such that $A_{x} \in \calC_{\hM}$ for all $t \in I$, and moreover
  \begin{equation*}
    \sup_{k \geq m} 2^{-2k} \nrm{P_{k} F}_{L^{\infty} (I \times \bbR^{4})}\leq \eps(\nE, \hM) \quad \hbox{ and } \quad
    \abs{I} \leq 2^{-m} T(\nE, \hM)
  \end{equation*}
  for some $m \in \bbZ$, then it satisfies the a-priori bound
  \begin{equation*}
    \nrm{A_{x}}_{S^{1}[I]} \leq M(\nE, \hM).	
  \end{equation*}
\end{theorem}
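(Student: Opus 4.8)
The plan is to run a bootstrap (continuity) argument on the interval $I$, working with regular solutions so that all quantities are finite a priori. First I would set up the appropriate division of $I$ into a controlled number of subintervals $I_j$ on each of which the $S^1$-norm of $A_x$ is small; the number of such subintervals must be bounded by a function of $\nE$ and $\hM$ only, which is where the smallness of energy dispersion and the shortness condition $\abs{I} \leq 2^{-m} T$ enter. The key mechanism is that, since the solution lives in the caloric gauge, one has a \emph{parabolic companion}: at each time $t$ the spatial connection $A(t) \in \calC_{\hM(t)}$, and one must first show that the caloric size $\hM(t)$ stays comparable to the initial $\hM$ throughout $I$ (this is the second conclusion of Theorem~\ref{thm:ed-first}, which I would prove as a preliminary step using the energy dispersion hypothesis together with Theorem~\ref{thm:ymhf-structure-loc} and the dichotomy alluded to in Remark~\ref{rem:ymhf-L3-ed} — small energy dispersion forces $\hM$ to be very large or very small, and continuity in $t$ plus the initial value pins it down).

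Next I would exploit the caloric gauge structure to write the wave equation satisfied by the dynamic variable $A_x$ in the schematic paradifferential form $\Box_p^A A_x = \text{(good nonlinearity)}$, where $\Box_p^A$ is a suitable covariant/magnetic wave operator and the right-hand side exhibits the null structure that the caloric gauge reveals. The temporal component $A_0$ is recovered from $A_x$ via the elliptic div-curl system of Theorem~\ref{thm:tangent} and Theorem~\ref{thm:data}(2), with bounds depending only on $\nE$ and $\hM$; this must be threaded through the estimates so that $A_0$ never costs more than the controlling norm allows. The core of the argument is then the collection of multilinear/bilinear estimates — null-form bounds, the parametrix for $\Box_p^A$ in the presence of a rough magnetic potential, and trilinear estimates for the cubic terms — all measured in the $S^1$ and companion $N$-norms as in \cite{KST, OT1, OT2}. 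On each short subinterval these give $\nrm{A_x}_{S^1[I_j]} \lesssim \nrm{(A_j,\rd_t A_j)}_{\dot H^1 \times L^2} + (\text{controlling norm})^2 + (\text{controlling norm})^3$, and smallness of the controlling norm on $I_j$ closes the bootstrap there; summing over the finitely many $j$ yields the global bound $M(\nE,\hM)$.

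The main obstacle — and the place where essentially all the work lies — is constructing and controlling the parametrix for the magnetic wave operator $\Box_p^A$ and proving the associated renormalized bilinear estimates at the critical regularity, i.e. the analogue of the core analysis in \cite{OT2} adapted to the non-abelian setting and to the caloric (rather than Coulomb) gauge. In the caloric gauge the "connection" seen by the wave operator is not literally $A_x$ but a heat-flow-regularized version, so one must carry along the Yang--Mills heat flow estimates of Theorem~\ref{thm:ymhf-structure-loc} and Theorem~\ref{thm:bfH} uniformly in the $s$-variable and transfer them to fixed-time bounds; keeping the dependence on $\hM$ tracked correctly here (so that it enters only through $\eps$, $T$ and $M$, consistently with Theorem~\ref{thm:wp} where the $\hM$-dependence is strong) is delicate. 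A secondary difficulty is the bookkeeping of frequency envelopes: one proves the $S^1$ bound in envelope form to obtain the persistence-of-regularity and continuation statements that Theorems~\ref{thm:wp} and \ref{thm:ed-first} are later deduced from in Section~\ref{sec:pfs}, and this requires the multilinear estimates to be frequency-envelope-compatible, not merely scalar. Once the single short-interval estimate is in hand in envelope form, the passage to Theorem~\ref{thm:ed-inhom} on all of $I$ is the routine subdivision-and-summation argument sketched above.
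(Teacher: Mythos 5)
Your proposal is based on a bootstrap-plus-subdivision scheme: partition $I$ into a controlled number of subintervals on which the $S^1$-norm is small, close the estimate on each by perturbation, and sum. This cannot work as stated, and the gap is not a technical detail but the central obstruction that the actual proof is organized to overcome. The problem is that $S^1[I]$ is not a divisible norm: you cannot make $\nrm{A_x}_{S^1[I_j]}$ small by shrinking $I_j$, no matter how finely you subdivide. The paper flags this explicitly in Section~\ref{sec:ests} ("our main function space $S^{1}[I]$ is far from satisfying such a property\ldots which causes considerable difficulty"). The weak divisibility in Theorem~\ref{thm:structure}.(6) only yields subintervals on which $\nrm{A}_{S^1[I_i]} \aleq_{\nE} 1$, and — crucially — the number of such subintervals depends on the global $S^1$-bound $M$ that you are trying to prove, so invoking it is circular. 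The short-interval condition $\abs{I} \leq 2^{-m}T$ only gives smallness of genuinely divisible \emph{Strichartz-type} norms ($DS^1$, $\Str^1$, $L^2\dot H^{-1/2}$), which is nowhere near enough to close a perturbative argument at critical regularity.

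The missing idea is an \emph{induction on energy}, as in \cite{ST1, OT2}. The paper restates the theorem as Theorem~\ref{thm:ed} in terms of linear energy and proves it by induction: the base case (Proposition~\ref{prop:small-en}) handles small energy, and the induction step assumes the result for energy $\leq E$ and extends it to $E + c(E)$. The extension is done by flowing $A_{t,x}$ along the dynamic Yang--Mills heat flow to a heat-time $\scut$ at which the linear energy drops to $E$, taking $(A,\rd_t A)(0,\scut)$ as data for a comparison caloric wave $\tA$ (to which the induction hypothesis applies), and then controlling $A - \tA$ as a high-frequency perturbation of $\tA$ via the paradifferential wave equation (Propositions~\ref{prop:ind-low} and \ref{prop:ind-high}). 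Approximate conservation of linear energy (Theorem~\ref{thm:structure-ed}.(5)), approximate conservation of $\hM$ (Theorem~\ref{thm:structure-ed}.(6)), and the frequency-localized energy dispersion all enter in making this comparison viable, and the interval subdivision you describe appears only \emph{inside} that argument to exploit divisible norms after the comparison solution exists. The technical core you identify (parametrix for the paradifferential $\Box$, multilinear null-form estimates, frequency-envelope bookkeeping) is correct and necessary, but without the induction-on-energy scaffold the pieces do not assemble into a proof.
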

In words, for a regular solution with small energy dispersion only at
certain frequency $2^{m}$ and above, an a-priori $S^{1}$-norm bound
holds on time intervals of the corresponding scale $O(2^{-m})$.

\subsection{Overview of the paper} \label{subsec:overview}
\begin{itemize}
\item {\bf Section~\ref{sec:prelim}.} In this section, we collect some
  notation and conventions used throughout this paper for the reader's
  convenience. Some basic concepts, such as disposability, dyadic
  function spaces, frequency envelopes, etc, are also described.
\end{itemize}

After Section~\ref{sec:prelim}, the paper is organized into two tiers.
The first tier consists of Sections~\ref{sec:caloric} to
\ref{sec:pfs}, and its goal is to describe the large-scale proof of
the main results, assuming the validity of certain linear and multilinear
estimates collected in Section~\ref{sec:ests}.
\begin{itemize}
\item {\bf Section~\ref{sec:caloric}.} Here, we recall from
  \cite{OTYM1} further results concerning the Yang--Mills heat flow
  and the caloric gauge. First, we state some quantitative bounds for
  the Yang--Mills heat flow and its linearization in the caloric
  gauge, using the language of frequency envelopes
  (Section~\ref{subsec:caloric-fe}). Next, we derive the wave equation
  satisfied by $A_{x}$ and $A_{x}(s)$ $(s > 0)$ in the caloric gauge
  (Section~\ref{subsec:wave-eq}). In this process we use the
  \emph{dynamic Yang--Mills heat flow} \eqref{eq:dYMHF}, which is the
  Yang--Mills heat flow augmented with a heat evolution (in $s$) for
  the temporal component.

\item {\bf Section~\ref{sec:ests}.} We first describe the fine
  function space framework for analyzing the hyperbolic Yang--Mills
  equation in the caloric gauge (Section~\ref{subsec:ftn-sp}). The
  main function spaces are identical to those in \cite{KST, OT2, KT},
  which in turn have their roots in the works on wave maps \cite{Tat,
    Tao2}. We also explain the three main sources of smallness in our
  analysis: divisibility, small energy dispersion and short time
  interval. Then we state the linear and multilinear estimates needed
  for the proof of the main theorems (Sections~\ref{subsec:ests-2} and
  \ref{subsec:ests-covwave}); it is the goal of the second tier of the
  paper (described below) to prove them. The primary estimates 
here are the bilinear null form estimates, which in the context of our 
function spaces have their origin in  \cite{KST, OT2, KT}. The 
bilinear null structure of the Yang--Mills nonlinearities was first
described in \cite{KlMa1}; a secondary trilinear null structure, which 
also play a role here, was discovered in \cite{MS} in the (MKG) context.

\item {\bf Section~\ref{sec:structure}.}  We prove a strong
  \emph{structure theorem} for a solution to the hyperbolic
  Yang--Mills equation in the caloric gauge with finite $S^{1}$-norm
  (Section~\ref{subsec:structure}). In particular, it reduces the
  tedious task of controlling various parts of a solution $A_{t,x}$ to
  proving a single $S^{1}$-norm bound for the spatial components
  $A_{x}$. We also consider the effect of small inhomogeneous energy
  dispersion on a correspondingly short time interval
  (Section~\ref{subsec:structure-ed}). The analysis is repeated for
  the dynamic Yang--Mills heat flow of a solution
  (Section~\ref{subsec:structure-heat}).

\item {\bf Section~\ref{sec:induction}.}  We prove the central result,
  Theorem~\ref{thm:ed-inhom}, by an induction on energy argument. The
  argument is similar to \cite{OT2}, which in turn was based on the
  work \cite{ST1}, with modifications to handle the low frequencies
  with possibly large energy dispersion with the short length of the
  time interval (see, in particular, Scenario~(1) in
  Section~\ref{subsec:ind-low}).

\item {\bf Section~\ref{sec:pfs}.}  Here, we derive the main theorems
  stated in Section~\ref{subsec:results} from
  Theorem~\ref{thm:ed-inhom}. The key point in the derivation of
  Theorem~\ref{thm:wp} is the simple fact that energy dispersion is
  small for frequencies above the inverse of the energy-concentration
  scale (Section~\ref{subsec:wp-thm}). Theorem~\ref{thm:ed-first}
  follows essentially by scaling (Section~\ref{subsec:ed-thm}).
\end{itemize}

The second tier consists of Sections~\ref{sec:multi} to
\ref{sec:paradiff-err}. Here, we provide proofs of the estimates
stated in Section~\ref{sec:ests}.
\begin{itemize}
\item {\bf Section~\ref{sec:multi}.} The goal of this section is to
  prove all multilinear estimates stated in
  Section~\ref{sec:ests}. The proofs proceed in two stages: In the
  first stage, we assume global-in-time dyadic (in spatial frequency)
  estimates (Section~\ref{subsec:dyadic-ests}), and derive the
  interval-localized frequency envelope bounds stated in
  Section~\ref{sec:ests} (Section~\ref{subsec:ests-pf}). A key
  technical issue in interval localization is to deal with modulation
  projections, which are non-local in time. In the second stage, we
  establish the global-in-time dyadic estimates
  (Section~\ref{subsec:dyadic-ests-pf}). Much is borrowed from the
  previous works \cite{KST, OT2, KT}.

\item {\bf Section~\ref{sec:paradiff}.}  We begin this section by
  reducing the proof of the key linear estimates in
  Section~\ref{sec:ests} to construction of a parametrix for the
  paradifferential d'Alembertian $\Box + 2 \sum_{k} ad (P_{< k - \kpp}
  \P_{\alp} A) \rd^{\alp} P_{k} $
  (Section~\ref{subsec:parametrix}). As in \cite{KT}, the parametrix
  is constructed via conjugation of the free wave propagator by a
  pseudodifferential renormalization operator. We define and state the
  key properties of the renormalization operator
  (Section~\ref{subsec:renrm}), and establish the desired estimates
  for the parametrix assuming these properties
  (Section~\ref{subsec:parametrix-pf}).

\item {\bf Section~\ref{sec:paradiff-renrm}.} Here, we prove the
  mapping properties of the renormalization operator claimed in
  Section~\ref{sec:paradiff}. The key difference from \cite{KT} lies
  in the source of smallness: Whereas smallness of the $S^{1}$-norm of
  $A$ was used in \cite{KT}, in this paper we rely instead on
  largeness of the frequency gap $\kpp$ in the paradifferential
  d'Alembertian. The idea of exploiting a large frequency gap was used
  in \cite{ST1, OT2}.

\item {\bf Section~\ref{sec:paradiff-err}.} Finally, we estimate the
  error for conjugation of the paradifferential d'Alembertian by the
  renormalization operator claimed in Section~\ref{sec:paradiff},
  thereby completing our parametrix construction. One aspect of our
  proof that differs from the previous works \cite{ST1, OT2} is that,
  in addition to the large frequency gap $\kpp$, we need to use
  smallness of a divisible norm (weaker than $S^{1}$) of $A$, which
  requires a careful interval localization procedure
  (Sections~\ref{subsec:err-main} and \ref{subsec:err-sub}).
\end{itemize}

\addtocontents{toc}{\protect\setcounter{tocdepth}{-1}}
\subsection*{Acknowledgments} 
Part of the work was carried out during the semester program ``New
Challenges in PDE'' held at MSRI in Fall 2015. S.-J. Oh was supported
by the Miller Research Fellowship from the Miller Institute, UC
Berkeley and the TJ Park Science Fellowship from the POSCO TJ Park
Foundation. D. Tataru was partially supported by the NSF grant
DMS-1266182 as well as by the Simons Investigator grant from the
Simons Foundation.

\addtocontents{toc}{\protect\setcounter{tocdepth}{2}}

\section{Notation, conventions and other preliminaries} \label{sec:prelim}
\subsection{Notation and conventions}
Here we collect some notation and conventions used in this paper.
\begin{itemize}
\item The symbols $\aleq$, $\ageq$, $\ll$ and $\gg$ are defined with their usual meanings, where the implicit constants in these notations are allowed to vary from line to line.
\item By $A \aleq_{E} B$ and $A \ll_{E} B$,
  we mean that $A \leq C_{E} B$ and $A \leq c_{E} B$,
  respectively, where $C_{E} = C_{0} (1 + E)^{C_{1}}$ and $c_{E} = C_{0}^{-1} (1+E)^{-C_{1}}$ for some
  constants $C_{0}, C_{1} > 0$ that are again allowed to vary from line to line.

\item For $u \in \g$ and $O \in \G$, define $ad(u) = [u,
  \cdot]$ and $Ad(O) = O (\cdot) O^{-1}$, both of which are in
  $\End(\g)$. Recall the minus Killing form, which is invariant under
  $Ad(O)$ and $ad(X)$. On $\g$, define $\abs{\cdot}_{\g}$ on $\g$ by
  the minus Killing form.  On $\End(\g)$, use the induced metric
  $\abs{a}_{\End(\g)} = \sup_{\abs{u}_{\g} \leq 1} \abs{a u}_{\g}$. By
  $Ad$-invariance, $\abs{Ad(O)a}_{\End(\g)} = \abs{a
    Ad(O^{-1})}_{\End(\g)} = \abs{a}_{\End(\g)}$.

\item We use the notation $B_{r}(x)$ for the ball of radius $r$ centered at $x$.
We write $\abs{\angle(\xi, \eta)}$ for the angular distance $\abs{\frac{\xi}{\abs{\xi}} - \frac{\eta}{\abs{\eta}}}$,
and $\abs{\angle(\calC, \calC')}$ for $\inf_{\xi \in \calC, \, \eta \in \calC'} \abs{\angle(\xi, \eta)}$.

\item We use the notation $\nb = \rd_{t,x}$, $D_{\mu} = i^{-1} \rd_{\mu}$.
Also, for $D$ and $A$ we often suppress the subscript $x$ and write $D =
  D_{x}$ and $A = A_{x}$.

\item We say that a multilinear operator $\calO(u_{1}, \ldots, u_{m})$
  is \emph{disposable} if its kernel is translation invariant and has mass
  $\aleq 1$. In particular, we have
  \begin{equation*}
    \nrm{\calO(u_{1}, \ldots, u_{m})}_{Y} \aleq \nrm{u_{1}}_{X_{1}} \cdots \nrm{u_{m}}_{X_{m}}
  \end{equation*}
  for any translation invariant spaces $X_{1}, \ldots, X_{m}, Y$ provided that a product estimate
  \begin{equation*}
	\nrm{u_{1} \cdots u_{m}}_{Y} \aleq \nrm{u_{1}}_{X_{1}} \cdots \nrm{u_{m}}_{X_{m}}
\end{equation*}
  holds for any functions $u_{1} \in X_{1}, \ldots, u_{m} \in X_{m}$.

\item We often use the `duality' pairing
  \begin{equation*}
    \iint u_{0} \calO(u_{1}, \ldots, u_{m}) \, \ud x \ud t 
  \end{equation*}
  so as to have symmetry among $u_{0}$ and the inputs. Indeed, we have
  \begin{equation*}
    \iint u_{0} \calO(u_{1}, \ldots, u_{m}) \, \ud x \ud t 
    = \iint_{\Xi^{0} + \Xi^{1} + \cdots + \Xi^{m} = 0} \calO(\Xi^{1}, \ldots, \Xi^{m}) \widetilde{u_{0}}(\Xi^{0}) \widetilde{u_{1}}(\Xi^{1}) \cdots \widetilde{u_{m}}(\Xi^{m}) \, \ud \Xi \ud t
  \end{equation*}

\item We define $\calO^{\ast_{i}}$ as
  \begin{equation*}
    \iint u_{0} \calO^{\ast_{i}}(u_{1}, \ldots, u_{i}, \ldots, u_{m}) \, \ud t \ud x
    = \iint u_{i} \calO(u_{1}, \ldots, \overbrace{u_{0}}^{\hbox{$i$-th entry}}, \ldots, u_{m}) \, \ud t \ud x
  \end{equation*}
  
  \item 
By a \emph{bilinear operator (of $\g$-valued functions)
    with symbol} $m(\xi, \eta) = m^{\bfa \bfb}(\xi, \eta)$ (which is a
  complex-valued $4 \times 4$-matrix), we mean an expression of the
  form
\begin{equation*}
	\mathfrak{L}(a, b) = 
	\iint 
	\left( m^{\bfa \bfb}(\xi, \eta) [\hat{a}_{\bfa}(\xi), \hat{b}_{\bfb}(\eta)] \right)
	e^{i (\xi + \eta) \cdot x}  \, \frac{\ud \xi \, \ud \eta}{(2 \pi)^{8}}.
\end{equation*}
For a scalar-valued symbol $m(\xi, \eta)$, we implicitly associate the corresponding multiple of the identity $m^{\bfa \bfb}(\xi, \eta) = m(\xi, \eta) \dlt^{\bfa \bfb}$.

If $\mathfrak{L}$ were symmetric, then the symbol $m(\xi, \eta)$ is 
\emph{anti-symmetric in $\xi, \eta$}, in the sense that 
$m^{\bfa \bfb}(\xi, \eta) = - m^{\bfb \bfa}(\eta, \xi)$; this is due to the antisymmetry of the Lie bracket.
\end{itemize}

\subsection{Basic multipliers and function spaces}
Here we provide the definitions of basic multipliers and function spaces. For the more elaborate frequency projections and function spaces for the hyperbolic Yang--Mills equation, see Section~\ref{subsec:ftn-sp}. 
\begin{itemize}
\item Given a function space $X$ (on either $\bbR^{d}$ or
  $\bbR^{1+d}$), we define the space $\ell^{p} X$ by
  \begin{equation*}
    \nrm{u}_{\ell^{p}X}^{p} = \sum_{k} \nrm{P_{k} u}^{p}_{X}
  \end{equation*}
  (with the usual modification for $p = \infty$), where $P_{k}$ $(k
  \in \bbZ)$ are the usual Littlewood--Paley projections to dyadic
  frequency annuli.

\item For a spatial $1$-form $A$, we define $\P A$ to be its \emph{Leray projection}, i.e., the $L^{2}$-projection to divergence-free vector fields:
\begin{equation*}
	\P_{j} A = A_{j} + (-\lap)^{-1} \rd_{j} \rd^{\ell} A_{\ell}.
\end{equation*}
We write $\P^{\perp}_{j} A = A_{j} - \P_{j} A$.

\item For a space-time 1-form $A_{\alp}$, we introduce the notation $\P_{\alp} A = (\P A)_{\alp}$ by defining
\begin{equation*}
	\P_{\alp} A = \left\{ 
	\begin{array}{cl} 
	\bfP_{j} A_{x} & \alp = j \in \set{1, \ldots, 4}, \\ 
	A_{0} & \alp = 0. \\
	\end{array}
	\right.
\end{equation*}
We also define $\P^{\perp}_{\alp} A = (\P^{\perp} A)_{\alp} = A_{\alp} - \P_{\alp} A$. 

\item We denote by $\dot{W}^{\sgm, p}$ the homogeneous $L^{p}$-Sobolev space with regularity $\sgm$. In the case $p = 2$, we simply write $\dot{H}^{\sgm} = \dot{W}^{\sgm, 2}$. 

\item The mixed space-time norm $L^{q}_{t} \dot{W}^{\sgm, r}_{x}$ of
  functions on $\bbR^{1+d}$ is often abbreviated as $L^{q} \dot{W}^{\sgm, r}$.

\end{itemize}

\subsection{Frequency envelopes}
To provide more accurate versions of many of our estimates and results we use the 
language of frequency envelopes. 

Given a sequence $c_{k}$ $(k \in \bbZ)$ of positive numbers and a translation invariant norm $\nrm{\cdot}_{X}$, we introduce the shorthand
\begin{equation*}
	\nrm{u}_{X_{c}} := \sup_{k} \frac{\nrm{P_{k} u}_{X}}{c_{k}}.
\end{equation*}

\begin{definition}
Given a translation invariant space of functions $X$, we  say that a sequence 
$c_k$ of positive numbers is a frequency envelope for a function $u \in X$ if 
\begin{enumerate}[label=(\roman*)]
\item The dyadic pieces of $u$ satisfy
\[
\nrm{u}_{X_{c}} \leq 1, \hbox{ or equivalently, } \| P_k u\|_{X} \leq c_k
\]
\item The sequence $c_k$ is slowly varying,
\[
	2^{-\delta(j-k)} \aleq \frac{c_{k}}{c_{j}} \aleq 2^{\delta(j-k)}, \qquad j > k.
\]
\end{enumerate}
\end{definition}
Here $\delta$ is a small positive universal constant. For some of the results we need to relax 
the slowly varying property in a quantitative way. Fixing a universal small constant $0 < \eps \ll 1$,
we set
\begin{definition} Let $\sgm_1,\sgm_2 > 0$. A frequency envelope $c_k$ is called 
$(-\sgm_{1}, \sgm_{2})$-admissible if
\begin{equation*}
	2^{-\sgm_{1} (1-\eps)(j-k)} \aleq \frac{c_{k}}{c_{j}} \aleq 2^{\sgm_{2} (1-\eps) (j-k)}, \qquad j > k.
\end{equation*}
\end{definition}
When $\sgm_{1} = \sgm_{2}$, we simply say that $c_{k}$ is $\sgm$-admissible.

Another situation that will occur frequently is that where we have a reference frequency envelope 
$c_k$, and then a secondary envelope $d_k$ describing properties which apply on a background 
controlled by $c_k$. In this context the envelope $d_k$ often cannot be chosen arbitrarily
but instead must be in a constrained range depending on $c_k$. To address such matters we 
 set:

\begin{definition} \label{def:fe-compat}
We say that the envelope $d_k$ is \emph{$\sigma$-compatible} with $c_k$ if 
we have
\[
c_k \sum_{j < k} 2^{\sigma(1-\eps) (j-k)} d_j \lesssim d_k.
\]
\end{definition}

We will often replace envelopes $d_k$ which do not satisfy the above compatibility condition
by slightly larger envelopes that do:

\begin{lemma}[{\cite[Lemma~3.5]{OTYM1}}]
Assume that $c_k$ and $d_k$ are $(-\sgm_{1}, S)$ envelopes, and also that $c_{k}$ is bounded. Then for $\tilde{\sigma} < \sgm(1-\eps)$ the envelope
\[
e_k = d_k + c_k \sum_{j<k} 2^{\tilde{\sigma} (j-k)} d_{j}
\]
is $\sgm$-compatible with $c_k$. The implicit constant in Definition~\ref{def:fe-compat} is bounded above by $1+C_{\sgm(1-\eps) - \tilde{\sgm}} \nrm{c}_{\ell^{\infty}}$. 
\end{lemma}

Finally we need the following additional frequency envelope notation:
\begin{align*}
(c \cdot d)_{k} =  \ c_{k} d_{k}, \qquad & \qquad  
a_{\leq k} =  \ \sum_{j \leq k} a_{j}, \\
c_{k}^{[\sgm]} =  \ \sup_{j < k} 2^{(1-\eps) \sgm (j-k)} c_{j} &  \qquad (\sigma > 0).
\end{align*}

\subsection{Global small constants}
In this paper, we use a string of global small constants $\dlta, \ldots, \dlth, \dlt_{7}$ with the following hierarchy:
\begin{equation} \label{eq:deltas}
	0 < \dlt_{\ast} = \dlt_{7} \ll \dlth \ll \dltf \ll  \dltd \ll \dlte \ll \dltc  \ll \dlta \ll \dlts \ll 1.
\end{equation}
These are fixed from right to left, so that
\begin{equation*}
	\dlt_{i+1} \ll \dlt_{i}^{100}.
\end{equation*}
The role of each constant is roughly as follows:
\begin{itemize}
\item $\dlts$: For definition of functions spaces, such as $\Str^{1}$ and $b_{0}, b_{1}, p_{0}$ in Section~\ref{sec:ests}.
\item $\dlta$: For all bounds from other papers, such as \cite{OTYM1, KT, OT2}; also for all dyadic gains in explicit nonlinearities (Section~\ref{sec:multi}) and for energy dispersion gains in the $\Str^{1}$ norm \eqref{eq:ed-str}.
\item $\dltb$: For energy dispersion,  frequency gap and off-diagonal gains in Sections~\ref{sec:ests}.
\item $\dlte$: For frequency envelope admissibility range in Sections~\ref{sec:ests}.
\item $\dltd$: For energy dispersion and frequency gap gains in Sections~\ref{sec:structure}.
\item $\dltf$: For frequency envelope admissibility range in Sections~\ref{sec:structure}.
\item $\dlth$: For energy dispersion and frequency gap gains in Sections~\ref{sec:induction}.
\item $\dlt_{\ast}$: For frequency envelope admissibility range in Sections~\ref{sec:induction}.
\end{itemize}

We use an additional set of small constants in our parametrix construction (Sections~\ref{sec:paradiff}--\ref{sec:paradiff-err}), which are fixed after $\dlta$ but before $\dltb$.

\section{Yang--Mills heat flow and the caloric
  gauge} \label{sec:caloric} 
  In this section, which is a continuation
of Section~\ref{subsec:caloric-first}, we recall the results from the
first paper \cite{OTYM1} that are needed in the present paper.

In Section~\ref{subsec:caloric-fe}, we state quantitative bounds for
the Yang--Mills heat flow (and its linearization) in the caloric
gauge, using the language of frequency
envelopes. Section~\ref{subsec:wave-eq} is concerned with the task of
interpreting the hyperbolic Yang--Mills equation in the caloric gauge
as a system of nonlinear wave equations for $A_{x}$.

\subsection{Frequency envelope bounds in the caloric
  gauge} \label{subsec:caloric-fe} We begin with frequency envelope
bounds for the caloric gauge Yang--Mills heat flow and its
linearization.
\begin{proposition}[{\cite[Proposition~7.27]{OTYM1}}] \label{prop:ymhf-fe} Let $(a, b) \in T^{L^{2}}
  \calC_{\hM}$ with $\nE = \spE(a)$, and let $(A, B)$ be the solution to \eqref{eq:ymhf}
  and \eqref{eq:lin-ymhf} with $(a,b)$ as data. Let $c_{k}$ be a $(-\dlta,
  S)$-frequency envelope in $\dot{H}^{1} \times L^{2}$ for $(a, b)$,
  and let $c^{\sgm, p}_{k}$ be a $(-\dlta, S)$-frequency envelope in
  $\dot{W}^{\sgm, p} \times \dot{W}^{\sgm-1, p}$ for $(a, b)$ which is
  $\dlta$-compatible with $c_{k}$. Define
  \begin{equation} \label{eq:AB-def} \bfA(s) = A(s) - e^{s \lap} a,
    \qquad \bfB(s) = B(s) - e^{s \lap} b.
  \end{equation}
  Then the following properties hold.
  \begin{enumerate}
  \item We have
    \begin{equation} \label{eq:ymhf-fe-0} \nrm{P_{k}
        \bfA(s)}_{\dot{H}^{1}} + \nrm{P_{k} \bfB(s)}_{L^{2}}
      \aleq_{\, \nE, \hM, N} \brk{2^{-2k}
        s^{-1}}^{-\dlta} \brk{2^{2k} s}^{-N} c^{2}_{k}
    \end{equation}

  \item For $(\sgm, p)$ and $(\sgm_{1}, p_{1})$ satisfying
    \begin{equation} \label{eq:sp-hyp} c_{\dlta} \leq \sgm \leq \frac{4}{p} - c_{\dlta}, \quad 2
      + c_{\dlta} \leq p \leq c_{\dlta}^{-1}, \quad 0 \leq \sgm_{1} \leq \sgm - c_{\dlta}, \quad
      \frac{4}{p_{1}} - \sgm_{1} = 2 \left(\frac{4}{p} - \sgm \right),
    \end{equation}
    we have
    \begin{equation} \label{eq:ymhf-fe-sp} \nrm{P_{k}
        \bfA(s)}_{\dot{W}^{\sgm_{1}+1, p_{1}}} + \nrm{P_{k}
        \bfB(s)}_{\dot{W}^{\sgm_{1}, p_{1}}}
      \aleq_{\nE, \hM, N} \brk{2^{-2k}
        s^{-1}}^{-\dlta} \brk{2^{2k} s}^{-N} (c_{k}^{\sgm,
        p})^{2} .
    \end{equation}
  \end{enumerate}
\end{proposition}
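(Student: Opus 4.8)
The plan is to run the standard Duhamel/parabolic-regularity analysis on the nonlinear parts $\bfA(s)$, $\bfB(s)$; this reproduces \cite[Proposition~7.27]{OTYM1}. First I would recall from \cite{OTYM1} the parabolic form of the caloric-gauge Yang--Mills heat flow and of its linearization --- schematically $\rd_s A = \lap A + \calN[A]$ with $\calN[A] \sim \nb(A\, A) + A^{3}$, and $\rd_s B = \lap B + \calN'[A, B]$ with $\calN'$ linear in $B$ and at most quadratic in $A$, schematically $\nb(A\, B) + (\nb A)\, B + A^{2} B$ (the gauge-degenerate second-order term being handled, as in \cite{OTYM1}, by de Turck's trick together with the nonlinear Coulomb condition satisfied by caloric connections). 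Subtracting the free heat evolutions $e^{s\lap} a$, $e^{s\lap} b$ then yields the closed representations
\[
\bfA(s) = \int_0^s e^{(s-s')\lap}\calN[A](s')\,\ud s', \qquad \bfB(s) = \int_0^s e^{(s-s')\lap}\calN'[A,B](s')\,\ud s'.
\]
The a priori inputs I would take for granted are the frequency-envelope bounds for $A(s)$ and $B(s)$ themselves from \cite{OTYM1} (valid since $a \in \calC_{\hM}$): $\nrm{P_k A(s)}_{\dot{H}^1} \aleq_{\nE,\hM,N} \brk{2^{2k}s}^{-N} c_k$, its $\dot{W}^{\sgm,p}$-analogue controlled by $c_k^{\sgm,p}$, and likewise for $B$ and $b$.

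The core ingredient is a \emph{nonlinear heat-smoothing estimate}. Fix a dyadic output frequency $2^{k}$ and run the Littlewood--Paley trichotomy on $\calN[A](s')$ (and $\calN'[A,B](s')$). The two mechanisms are: (i) the parabolic multiplier bound $\nrm{P_k e^{(s-s')\lap}}_{\dot{H}^1 \to \dot{H}^1} \aleq \brk{2^{2k}(s-s')}^{-N}$, which absorbs any spatial derivative in the nonlinearity at cost $2^k$ and, on integrating in $s'$, furnishes the high-frequency parabolic decay $\brk{2^{2k}s}^{-N}$; and (ii) in the opposite regime $2^{2k}s \ll 1$ the propagator is essentially the identity at frequency $2^k$, so $P_k \bfA(s) \approx \int_0^s P_k \calN[A](s')\,\ud s'$, and since $\nrm{P_k\calN[A](s')}$ is integrable in $s'$ over $[0,\infty)$ (with constant $\aleq 2^{2k}c_k^2$) while the integration here runs only over $[0,s]$, one gains a positive power of $2^{2k}s$; for uniformity this is stated in the weak form $\brk{2^{-2k}s^{-1}}^{-\dlta}$. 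In every piece of the trichotomy the two slots are filled by envelope-controlled factors, producing the square $c_k^2$ (resp. $(c_k^{\sgm,p})^2$) --- reflecting that $\bfA$, $\bfB$ are at least quadratic in the data; in high--high interactions the sum over the comparable input frequencies $2^m \gtrsim 2^k$ is closed using the slow variation of $c_m$, and in high--low/low--high interactions one input is placed in $L^\infty$ via Bernstein. The cubic/higher terms are treated identically, the surplus low-frequency factors being estimated in $L^\infty$ via the energy and $\hM$.

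For part (1) one then closes the estimate in $\dot{H}^1 \times L^2$. For part (2) the only new point is the $x$-side Hölder bookkeeping: the hypotheses \eqref{eq:sp-hyp} are tailored so that this goes through --- the scaling relation $\frac{4}{p_1} - \sgm_1 = 2(\frac{4}{p} - \sgm)$ records that two factors at $\dot{W}^{\sgm,p}$-scaling combine to $\dot{W}^{\sgm_1,p_1}$-scaling, while $c_\dlta \leq \sgm \leq \frac{4}{p} - c_\dlta$ and $0 \leq \sgm_1 \leq \sgm - c_\dlta$ keep every intermediate exponent (and the Strichartz pair used in the parabolic bounds of \cite{OTYM1}) admissible; and the $\dlta$-compatibility of $c_k^{\sgm,p}$ with $c_k$ is precisely what lets the low-frequency tails of the trichotomy sums --- where a $\dot{W}^{\sgm,p}$-factor meets an $\dot{H}^1$-factor --- be absorbed cleanly into $(c_k^{\sgm,p})^2$.

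The main obstacle, I expect, is bookkeeping rather than a single hard estimate: one must track the two distinct gain mechanisms through all trichotomy cases and both the base ($\dot{H}^1 \times L^2$) and the Sobolev ($\dot{W}^{\sgm,p}$) settings while keeping the envelope sums convergent. For the linearized flow there is the extra constraint that $B$ carries only $L^2$ (resp. $\dot{W}^{\sgm-1,p}$) regularity, so in each term the unique $B$-slot must be arranged so its envelope enters only to first power, the companion $A$-factor supplying the missing derivative in $\calN'$. Minor care is also needed at the endpoints $s' \to s$ (integrability of derivative-induced singularities) and $s' \to 0$ (where $A(s')$ is controlled only in $\dot{H}^1$, resp. $\dot{W}^{\sgm,p}$), but this is routine.
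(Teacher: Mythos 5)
This proposition is not proved in the present paper; it is imported verbatim from \cite[Proposition~7.27]{OTYM1}, so there is no in-paper proof to compare against. That said, your sketch is the expected reconstruction: Duhamel against $e^{s\lap}$ for $\bfA$ and $\bfB$, parabolic smoothing of $P_k e^{(s-s')\lap}$ to produce the $\brk{2^{2k}s}^{-N}$ weight, the short-time $s'$-integration to produce the $\brk{2^{-2k}s^{-1}}^{-\dlta}$ gain, Littlewood--Paley trichotomy with the a priori linear-envelope bounds for $A(s)$ and $B(s)$ plugged into the nonlinearity, and de Turck plus the caloric (nonlinear Coulomb) condition to make the linearized flow genuinely parabolic. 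The points you flag — the $B$-slot may carry only $L^2$ regularity with the companion $A$-factor supplying the derivative, and the Sobolev exponents in \eqref{eq:sp-hyp} are fixed precisely so the Hölder/Bernstein bookkeeping closes — are indeed the main places where the argument has to be threaded carefully.

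One thing worth stating more explicitly: the factor $\brk{2^{-2k}s^{-1}}^{-\dlta}$ is not only a record of the low-frequency smallness but is also what makes the envelope summation in part~(1) close. Because $c_{k}$ is only a $(-\dlta,S)$ envelope, $c_{j}$ may grow like $2^{\dlta(k-j)}$ for $j<k$, so the raw high--low tail $\sum_{j<k} c_{j}\cdot c_{k}$ does not converge to $\aleq c_{k}^{2}$; the off-diagonal parabolic gain in $s$ (which is what produces $\brk{2^{-2k}s^{-1}}^{-\dlta}$) is what compensates. In part~(2) the analogous role is played by the $\dlta$-compatibility hypothesis on $c^{\sgm,p}_{k}$, as you noted, but in part~(1) there is no compatibility assumption and the $s$-weight does the work. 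With that caveat supplied, your outline is sound and aligned with what \cite{OTYM1} does.
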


A central object of the remainder of this section is the \emph{dynamic
  Yang--Mills heat flow} for space-time connections, which is an
augmentation of \eqref{eq:ymhf} with an equation for the temporal
component. More precisely, we say that a pair $(A_{0}, A)$ of a
$\g$-valued function $A_{0}$ and a connection $A$ on $\bbR^{4} \times
J$ (where $J$ is a subinterval of $[0, \infty)$) is the dynamic
Yang--Mills heat flow development of $(a_{0}, a)$ if
\begin{equation} \label{eq:dYMHF} F_{s \alp} = \covD^{\ell} F_{\ell
    \alp}, \qquad (A_{0}, A)(s=0) = (a_{0}, a).
\end{equation}
This flow is well-defined as long as the spatial and $s$-components
$A$ are well-defined as a solution to \eqref{eq:ymhf}. In particular,
if $a \in \calC$, then $(A_{0}, A)$ exists on $[0, \infty)$, $\lim_{s
  \to \infty} A_{0} = 0$ in $\dot{H}^{1}$ and $\lim_{s \to \infty}
F_{0 j} = 0$ in $L^{2}$.  Moreover, the following proposition holds.
\begin{proposition}[{\cite[Propositions~7.7 and 8.9]{OTYM1}}] \label{prop:ymhf-ed} Let $a \in \calC_{\hM}$ and
  $e \in L^{2}$ satisfy $\nrm{(f, e)}_{L^{2}}^{2} \leq \nE$.  Consider
  also $a_{0} \in \dot{H}^{1}$ and $b \in T^{L^{2}}_{a} \calC$ which
  obeys $e = b - \covD a_{0}$ (cf. Theorem~\ref{thm:tangent}), and let
  $(A_{0}, A)$ be a caloric gauge solution to \eqref{eq:dYMHF} with
  data $(a_{0}, a)$. Then the following properties hold.
  \begin{enumerate}
  \item The spatial 1-form $B_{j}(s) = F_{0j}(s) - \covD_{j} A_{0}(s)$
    obeys the linearized Yang--Mills heat flow in the caloric gauge
    with $B_{j}(0) = b_{j}$. Moreover,
    \begin{equation} \label{eq:ymhf-en} \nrm{A(s)}_{\dot{H}^{1}} +
      \nrm{B(s)}_{L^{2}} \aleq_{\nE, \hM} \nrm{(f, e)}_{L^{2}}.
    \end{equation}
  \item Let $d_{k}$ be a $\dlta$-frequency envelope for $(f, e)$ in
    $\dot{W}^{-2, \infty}$. Then
    \begin{equation} \label{eq:ymhf-ed-lin}
      \begin{aligned}
        2^{-k} \nrm{P_{k} A(s)}_{L^{\infty}} + 2^{-2k} \nrm{P_{k}
          B(s)}_{L^{\infty}} \aleq_{\nE, \hM, N} \brk{2^{2k} s}^{-N}
        (d_{k})^{\frac{1}{2}}.
      \end{aligned}
    \end{equation}
  \item Let $c_{k}$ be a $(-\dlta, S)$-frequency envelope for $(a, b)$ in
    $\dot{H}^{1} \times L^{2}$. Then
    \begin{equation} \label{eq:ymhf-ed-nonlin}
      \begin{aligned}
        \nrm{P_{k} \bfA(s)}_{\dot{H}^{1}} + \nrm{P_{k}
          \bfB(s)}_{L^{2}} \aleq_{\nE, \hM, N} \brk{2^{-2k}
          s^{-1}}^{-\dlta} \brk{2^{2k} s}^{-N}
        (d_{k})^{\frac{1}{2}} c_{k} ,
      \end{aligned}
\end{equation}
    \begin{equation} \label{eq:ymhf-ed-da}
      \begin{aligned}
        \nrm{P_{k} \partial^j A_j(s)}_{L^2} + \nrm{P_{k}
         \partial^j B_j(s)}_{\dot H^{-1}} \aleq_{\nE, \hM, N} \brk{2^{-2k}
          s^{-1}}^{-\dlta} \brk{2^{2k} s}^{-N}
        (d_{k})^{\frac{1}{2}} c_{k} ,
      \end{aligned}
 \end{equation}
    where $\bfA$, $\bfB$ are as in \eqref{eq:AB-def}.
  \end{enumerate}
\end{proposition}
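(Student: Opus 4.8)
The plan is to read all three assertions as refinements of the heat-flow frequency-envelope bounds already supplied by Proposition~\ref{prop:ymhf-fe}, the only genuinely new ingredient being the propagation of the $\dot W^{-2,\infty}$ energy-dispersion envelope $d_k$ through the flow; this is exactly the content of \cite[Propositions~7.7 and 8.9]{OTYM1}, and below we indicate how the argument is organized. For claim~(1) the structural point is that the dynamic heat flow \eqref{eq:dYMHF} decouples: since $A_s = 0$, the spatial equation $F_{sj} = \covD^\ell F_{\ell j}$ does not involve $A_0$, so for each fixed $t$ the $1$-form $A_j(s)$ is exactly the caloric Yang--Mills heat flow of $a_j(t)$, while $A_0(s)$ solves a covariant-linear parabolic equation driven by $A_j$. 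Differentiating the spatial flow in $t$ and using $\delta F_{\ell j} = \covD_\ell(\partial_t A_j) - \covD_j(\partial_t A_\ell)$ shows that $\partial_t A_j(s)$ solves \eqref{eq:lin-ymhf}; unwinding the definitions of $F_{0j}$, $\covD_j$ and $F_{s0} = \covD^\ell F_{\ell 0}$ then identifies the $1$-form $B_j(s)$ of the statement with this $\partial_t A_j(s)$, whence $B_j(0) = b_j$ is precisely the relation $e = b - \covD a_0$. The energy bound \eqref{eq:ymhf-en} would then be split: $\nrm{A(s)}_{\dot H^1}\aleq_{\nE,\hM}\nrm{f}_{L^2}$ follows from the gradient-flow monotonicity of $\spE$ along the caloric flow (Theorem~\ref{thm:ymhf-structure-loc}) together with the nonlinear-Coulomb elliptic estimates of Theorem~\ref{thm:bfH}, while $\nrm{B(s)}_{L^2}\aleq_{\nE,\hM}\nrm{b}_{L^2}\aleq_{\nE,\hM}\nrm{e}_{L^2}$ follows from the linearized $L^2$ energy estimate for \eqref{eq:lin-ymhf} combined with the div--curl bound of Theorem~\ref{thm:tangent}.

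For claim~(3) I would rerun the Duhamel/fixed-point iteration underlying \eqref{eq:ymhf-fe-0}. Writing $\partial_s\bfA = \lap\bfA + \calN_A$ and $\partial_s\bfB = \lap\bfB + \calN_B$, with $\calN_A,\calN_B$ collecting all subprincipal and nonlinear terms (in particular the non-elliptic piece $\partial_k\partial^\ell B_\ell$, which is of lower order by \eqref{eq:ymhf-ed-da}), one estimates a \emph{single} factor in each such term by the $\dot W^{-2,\infty}$-controlled quantity of claim~(2) (at comparable frequency) and the remaining factors by the energy-level norms of Proposition~\ref{prop:ymhf-fe}; the parabolic smoothing $\brk{2^{2k}s}^{-N}$ together with the $\dlta$-compatibility of $c_k$ and $c_k^{\sgm,p}$ make the $s$- and frequency-summations converge. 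Interpolating $d_k$ geometrically against $c_k$ --- which is $\aleq_{\nE,\hM}1$, being an envelope for a connection of bounded energy --- upgrades the $c_k^2$ of \eqref{eq:ymhf-fe-0} to $(d_k)^{1/2}c_k$, which is \eqref{eq:ymhf-ed-nonlin}. For \eqref{eq:ymhf-ed-da} one uses in addition that for caloric connections $\partial^\ell a_\ell$ (and likewise $\partial^\ell b_\ell$) sits at the purely nonlinear level --- this is the nonlinear Coulomb condition of the caloric gauge --- so $e^{s\lap}a$ contributes nothing to $\partial^\ell A_\ell(s)$ modulo the heat-flow correction, and that correction carries the extra power of $2^{-k}$ for free.

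Claim~(2) for the nonlinear parts $\bfA,\bfB$ of $A,B$ then follows from claim~(3) and Bernstein's inequality, using $c_k\aleq_{\nE,\hM}1$ and $\brk{2^{-2k}s^{-1}}^{-\dlta}\aleq 1$. The remaining --- and main --- point is the linear contribution $e^{s\lap}a$, $e^{s\lap}b$: one must show $2^{-k}\nrm{P_k a}_{L^\infty} + 2^{-2k}\nrm{P_k b}_{L^\infty}\aleq_{\nE,\hM}(d_k)^{1/2}$. The $b$-term follows from $b = e + \covD a_0$ and the elliptic (energy-dispersion) control of $a_0$, but the $a$-term requires reconstructing $a$ from its curvature: since $a$ is caloric it solves a nonlinear div--curl (nonlinear Coulomb) system, schematically $a_j = (-\lap)^{-1}\partial^\ell F_{\ell j}[a] + (\text{bilinear in }a)$. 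Bounding the linear term by the $\dot W^{-2,\infty}$ envelope $d_k$ of $f = F[a]$ and the quadratic corrections by $(\text{energy})\times(\text{energy dispersion of }a)$, one closes a bootstrap whose output is controlled by $\min(d_k,\,\text{energy envelope})\le (d_k)^{1/2}(\text{energy envelope})^{1/2}\aleq_{\nE,\hM}(d_k)^{1/2}$, which is the source of the square root in \eqref{eq:ymhf-ed-lin}.

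The hard part is precisely closing this last bootstrap and its counterpart for the curvature along the heat flow: since the energy $\nE$ is only $O(1)$ and not small, the quadratic self-interaction in the div--curl system (and in the parabolic equation for $F$) cannot be absorbed outright, so the requisite smallness has to be manufactured --- either from the parabolic smoothing restricted to short $s$-intervals, after a dyadic decomposition in $2^{2k}s$, or from high-low frequency gaps in the paraproduct expansion of the nonlinearities. It is the interplay of these two mechanisms with the slowly-varying and $\dlta$-compatibility hypotheses on $c_k$ and $d_k$ that constitutes the real work; everything else is bookkeeping already carried out in \cite{OTYM1}.
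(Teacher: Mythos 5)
This proposition is imported verbatim from \cite[Propositions~7.7 and 8.9]{OTYM1} and is not proved in the present paper, so there is no local proof to compare your sketch against; at best one can check your reconstruction for internal coherence with the surrounding statements.

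One point needs flagging in part~(1). You identify $B_j(s)$ with $\partial_t A_j(s)$, which is the natural object satisfying \eqref{eq:lin-ymhf}, but this does not literally match the displayed formula $B_j = F_{0j} - \covD_j A_0$. With the paper's convention $F_{0j} = \partial_0 A_j - \partial_j A_0 + [A_0, A_j]$ and $\covD_j A_0 = \partial_j A_0 + [A_j, A_0]$, one computes $\partial_t A_j = F_{0j} + \covD_j A_0$, i.e.\ the \emph{opposite} sign. The displayed formula as written is also inconsistent with the claim $B_j(0) = b_j$ once one substitutes $F_{0j}(0) = e_j$ and $e = b - \covD a_0$ from Theorem~\ref{thm:tangent}: this gives $B_j(0) = b_j - 2\covD_j a_0$, not $b_j$. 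Your reading — namely $B_j = F_{0j} + \covD_j A_0 = \partial_t A_j$ — is the one that reconciles both conditions, and so the minus sign in the stated formula is presumably a typo; but a blind proof attempt should notice the discrepancy rather than silently assume the convention that makes the statement true. Beyond this, your identification of the three ingredients — decoupling of the spatial and temporal parts of the dynamic heat flow, Duhamel-type propagation of the $\dot W^{-2,\infty}$ envelope through parabolic smoothing, and $L^2$--$L^\infty$ interpolation producing the square root $(d_k)^{1/2}$ — is a plausible reconstruction of the cited proof, although, as you concede, it deliberately stops short of closing the bootstrap for the linear piece $e^{s\lap}a$, which is precisely the nontrivial step being delegated to \cite{OTYM1}.
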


\subsection{Wave equation for \texorpdfstring{$A$}{A} in caloric
  gauge} \label{subsec:wave-eq} Here, and in the rest of this paper,
we shift the notation and denote by $A_{t,x} = A_{t,x}(t, x)$, instead
of $(a_{0}, a)$, the space-time connection on $I \times \bbR^{4}$
(viewed as $\set{s = 0}$). For the spatial components, we omit the
subscript $x$ and write $A_{x} (t,x) = A(t, x)$. We write $A_{t,x,
  s}(s) = A_{t,x,s}(t, x, s)$ for the dynamic Yang--Mills heat flow of
$A_{t,x}(t,x)$.

In this subsection, we recall from \cite{OTYM1} the interpretation of
the hyperbolic Yang--Mills equations for a space-time connection
$A_{t,x}$ in the caloric gauge as a hyperbolic evolution for the
spatial components $A$ augmented with nonlinear expressions of
$\rd^{\ell} A_{\ell}$, $A_{0}$ and $\rd_{0} A_{0}$ in terms of $(A,
\rd_{t} A)$; see Theorem~\ref{thm:main-eq}. An analogous hyperbolic
equation holds for the dynamic Yang--Mills heat flow development
$A_{t,x}(s)$ of $A_{t,x}$ in the caloric gauge, which may be thought
of as a gauge-covariant regularization of $A$; see
Theorem~\ref{thm:main-eq-s}.

We present explicit expressions for the quadratic nonlinearities, for
which we need to reveal the null structure in order to handle them, and state stronger
bounds for the remaining higher order nonlinearities. For economy of
notation in the latter task, we introduce the following definition:
\begin{definition} \label{def:env-pres} Let $X, Y$ be dyadic norms.
  \begin{itemize}
  \item A map $\bfF : X \to Y$ is said to be \emph{envelope-preserving
      of order $\geq n$} ($n \in \bbN$ with $n \geq 2$) if the
    following property holds: Let $c$ be a $(-\dlta, S)$
    frequency envelope for $a$ in $X$. Then
    \begin{equation*}
      \nrm{\bfF(a)}_{Y_{(c^{[\dlta]})^{n-1} c}} \aleq_{\nrm{a}_{X}} 1.
    \end{equation*}
  \item A map $\bfF: X \to Y$ is said to be \emph{Lipschitz
      envelope-preserving of order $\geq n$} if, in addition to being
    envelope preserving of order $\geq n$, the following additional
    property holds: Let $c$ be a common $\dlta$-frequency
    envelopes for $a_{1}$ and $a_{2}$ in $X$, and let $d$ be a
    $\dlta$-frequency envelope for $a_{1} - a_{2}$ in $X$
    that is $\dlta$-compatible with $c$. Then
    \begin{equation*}
      \nrm{P_{k}(\bfF(a_{1}) - \bfF(a_{2}))}_{Y_{k}} 
      \aleq_{\nrm{a_{1}}_{X}, \nrm{a_{2}}_{X}}  c_{k}^{n-2} e_{k},
    \end{equation*}
    where $e_{k} = d_{k} + c_{k} (c \cdot d)_{\leq k}$.
  \end{itemize}
\end{definition}
\begin{remark}
  The modified envelope $e$ appears since the maps $\bfF$ that arise
  below are defined on a nonlinear manifold, namely, spatial
  connections $a$ on a time interval $I$ such that $(a, \rd_{t} a)(t)
  \in T^{L^{2}} \calC$ for each fixed time. We remark moreover that if
  the frequency envelopes $c$ and $d$ are $\ell^{2}$-summable, which
  is usually the case in practice, then $\bfF(a)$ and $\bfF(a_{1}) -
  \bfF(a_{2})$ belong to $\ell^{1} Y$.
\end{remark}

We also need to introduce the non-sharp Strichartz spaces $\Str$ and $\Str^{1}$, which scale like $L^{\infty} L^{2}$ and $L^{\infty} \dot{H}^{1}$, respectively. We define
\begin{equation} \label{eq:Str-def}
\| u\|_{\Str} = \sup\{ \| u \|_{L^p \dot W^{\sigma,q}} : \tfrac{1}{q} + \tfrac{4}{p} = 2, \
\dlts \leq \tfrac1p  \leq \tfrac{1}{2} - \dlts, \   \tfrac{2}{p}+ \tfrac{3}{q} \leq \tfrac32 - \dlts \},
\end{equation}
as well as 
\begin{equation}
\| u\|_{\Str^1} = \| \nabla u\|_{\Str}.
\end{equation}
Conditions in \eqref{eq:Str-def} insure that the $(p, q, \sgm)$'s are Strichartz exponents, but away from the sharp endpoints.
These norms have two key properties:
\begin{itemize}
\item They are divisible in time, i.e. can be made small by subdividing the time interval. 
\item Saturating the associated Strichartz inequalities requires strong pointwise concentration (i.e., small energy dispersion).
\end{itemize}

In \cite{OTYM1}, we have shown that the spatial components of the
Yang--Mills equation $\covD^{\alp} F_{j \alp} = 0$ $(j \in \set{1, 2,
  3, 4})$ may be interpreted as a system of wave equation for the
spatial components $A = A_{x}$, where the temporal component $A_{0}$
is determined in terms of $(A, \rd_{t} A)$, as follows:

\begin{theorem} [{\cite[Theorem~9.1]{OTYM1}}] \label{thm:main-eq} Let $A_{t,x} = (A_{0}, A) \in
  C_{t}(I; \dot{H}^{1} \times \calC_{\hM})$ with $(\rd_{t} A_{0},
  \rd_{t} A) \in C_{t}(I; L^{2} \times T^{L^{2}}_{A(t)} \calC_{\hM})$
  be a solution to \eqref{eq:ym} with energy $\nE$. Then its spatial components $A =
  A_{x}$ satisfy an equation of the form
  \begin{equation}\label{eq:main-wave}
    \Box_{A} A_{j} = \P_{j} [A, \rd_{x} A] + 2\Delta^{-1} \partial_{j} \bfQ (\rd^{\alp} A, \rd_{\alp} A) + R_{j}(A),
  \end{equation}
  together with a compatibility condition
  \begin{equation}\label{eq:main-compat}
    \partial^{\ell} A_{\ell} = \DA(A) :=  \bfQ(A, A) + \DA^{3}(A).
  \end{equation}
  Moreover, the temporal component $A_{0}$ and its time derivative
  $\rd_{t} A_{0}$ admit the expressions
  \begin{align}
    A_{0} =& \bfA_{0}(A) := \lap^{-1}[A, \rd_{t} A] + 2 \lap^{-1} \bfQ(A, \rd_{t} A) + \bfA_{0}^{3}(A), \label{eq:main-A0} \\
    \rd_{t} A_{0} =& \DA_{0}(A) := - 2 \lap^{-1} \bfQ (\rd_{t} A,
    \rd_{t} A) + \DA_{0}^{3}(A). \label{eq:main-DA0}
  \end{align}
  Here $\P$ is the Leray projector, and $\bfQ$ is a symmetric\footnote{Observe here that the symbol of $\bfQ$ is odd,
but this is combined with the antisymmetry of the Lie brackets appearing in the bilinear form.} 
 bilinear form with
  symbol
  \begin{equation}
    \bfQ(\xi,\eta) = \frac{\abs{\xi}^2 - \abs{\eta}^2}{2(\abs{\xi}^2+ \abs{\eta}^2)}.
  \end{equation}
  Moreover, $R_{j}(t)$, $\DA^{3}(t)$, $\bfA_{0}^{3}(t)$ and
  $\DA_{0}^{3}(t)$ are uniquely determined by $(A, \rd_{t} A)(t) \in
  T^{L^{2}} \calC$, and are Lipschitz envelope preserving maps of
  order $\geq 3$ on the following spaces:
  \begin{align}
    R_{j}(t): &\ \dot{H}^{1} \to \dot{H}^{-1}, \label{eq:Rj-t} \\
    \DA^3(t): &\ \dot{H}^{1} \to L^2, \label{eq:DA-tri-t} \\
    \bfA_{0}^{3}(t): &\ \dot{H}^{1} \to \dot{H}^{1}, \label{eq:A0-tri-t} \\
    \DA_{0}^{3}(t) : &\ \dot{H}^{1} \to L^{2}. \label{eq:DA0-tri-t}
  \end{align}
  Finally, on any interval $I \subseteq \bbR$, $R_{j}$, $\DA^{3}$,
  $\bfA_{0}^{3}$ and $\DA_{0}^{3}$ are Lipschitz envelope preserving
  maps of order $\geq 3$ (with bounds independent of $I$) on the
  following spaces:
  \begin{align}
    R_{j}: &\ \Str^{1}[I] \to L^{1} L^{2} \cap L^{2} \dot{H}^{-\frac{1}{2}}[I], \label{eq:Rj} \\
    \DA^3: &\ \Str^1[I] \to L^1 \dot H^1 \cap L^{2} \dot{H}^{\frac{1}{2}}[I], \label{eq:DA-tri} \\
    \bfA_{0}^{3}: &\ \Str^1[I] \to  L^1 \dot H^2 \cap L^{2} \dot{H}^{\frac{3}{2}}[I], \label{eq:A0-tri} \\
    \DA_{0}^{3} : &\ \Str^{1}[I] \to L^1 \dot H^1 \cap L^{2} \dot{H}^{\frac{1}{2}}[I]. \label{eq:DA0-tri}
  \end{align}
  All implicit constants depend on $\hM$ and $\nE$.
\end{theorem}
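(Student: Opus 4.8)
\emph{Strategy.} The statement is an exact rewriting of the hyperbolic Yang--Mills system in the caloric gauge, followed by quantitative control of the resulting nonlinearities. The plan is: (i) expand $\covD^{\alp}F_{\bt\alp}=0$ in terms of $A$ and separate the principal part; (ii) use the defining property of the caloric gauge, $a_{\infty}=0$ along the Yang--Mills heat flow, to produce closed expressions for $\rd^{\ell}A_{\ell}$, $A_{0}$ and $\rd_{t}A_{0}$ as quadratic-and-higher functionals of $(A,\rd_{t}A)$; (iii) substitute these back and organize the surviving quadratic terms into the three advertised null structures; and (iv) obtain the mapping properties of the cubic-and-higher remainders from the heat flow bounds of Propositions~\ref{prop:ymhf-fe} and~\ref{prop:ymhf-ed}.

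\emph{The raw equation and the caloric identities.} Expanding $F$ in the spatial equations $\covD^{\alp}F_{j\alp}=0$ and isolating the second-order part yields, modulo cubic and higher terms,
\[
\Box A_{j}=\rd_{j}\bigl(\rd^{\ell}A_{\ell}-\rd_{0}A_{0}\bigr)-2[A^{\ell},\rd_{\ell}A_{j}]+[A^{\ell},\rd_{j}A_{\ell}].
\]
The low--high divergence-free part of $-2[A^{\ell},\rd_{\ell}A_{j}]$ is moved to the left to build $\Box_{A}$, while $[A^{\ell},A_{\ell}]=0$ and divergence-freeness of $\P A$ let the remaining brackets combine, up to cubic errors, into $\P_{j}[A,\rd_{x}A]$. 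For the auxiliary quantities, in the local caloric gauge $A_{s}=0$ the heat flow obeys $\rd_{s}A_{j}=\covD^{\ell}F_{\ell j}$; since $\rd^{\ell}\rd^{j}F_{\ell j}=0$ by antisymmetry of $F_{\ell j}$ against the symmetric $\rd^{\ell}\rd^{j}$, taking $\rd^{j}$ of the flow gives $\rd_{s}(\rd^{\ell}A_{\ell})=\rd^{\ell}[A^{m},F_{m\ell}]$, and integrating over $s\in[0,\infty)$ with $a_{\infty}=0$ yields $\rd^{\ell}a_{\ell}=-\int_{0}^{\infty}\rd^{\ell}[A^{m},F_{m\ell}](s)\,\ud s$ --- quadratic-and-higher, its leading part (extracted by linearizing the flow, $A(s)\mapsto e^{s\lap}a$, and doing the elementary $s$-integral) carrying precisely the symbol $\bfQ(\xi,\eta)=\frac{\abs{\xi}^{2}-\abs{\eta}^{2}}{2(\abs{\xi}^{2}+\abs{\eta}^{2})}$; this is~\eqref{eq:main-compat}. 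Along the dynamic heat flow~\eqref{eq:dYMHF} the temporal component obeys $\rd_{s}A_{0}=\covD^{\ell}F_{\ell 0}$ with $A_{0}(s=\infty)=0$, hence $A_{0}=-\int_{0}^{\infty}\covD^{\ell}F_{\ell 0}(s)\,\ud s$, whose quadratic part is $\lap^{-1}\bigl([A^{\ell},\rd_{t}A_{\ell}]+2\bfQ(A,\rd_{t}A)\bigr)$ once~\eqref{eq:main-compat} is inserted into $\rd_{t}\rd^{\ell}A_{\ell}$; this is~\eqref{eq:main-A0}. Differentiating~\eqref{eq:main-A0} in $t$ and eliminating $\rd_{t}^{2}A$ by means of~\eqref{eq:main-wave} itself produces~\eqref{eq:main-DA0}, the $\bfQ(\rd_{t}A,\rd_{t}A)$ term arising both from $\rd_{t}$ landing on the two arguments of $2\bfQ(A,\rd_{t}A)$ and from the re-substituted elliptic expression for $\rd_{t}^{2}A$.

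\emph{Assembly and remainder estimates.} Substituting~\eqref{eq:main-compat}--\eqref{eq:main-DA0} into the raw equation, the combination $\rd_{j}(\rd^{\ell}A_{\ell}-\rd_{0}A_{0})=\rd_{j}(\rd^{\alp}A_{\alp})$ --- the Lorenz-gauge defect --- becomes $2\lap^{-1}\rd_{j}\bfQ(\rd^{\alp}A,\rd_{\alp}A)$ modulo cubic terms, the spacetime contraction $\rd^{\alp}A\cdot\rd_{\alp}A$ being exactly what upgrades $\bfQ$ to a genuine null form; everything of cubic or higher order --- the $\P^{\perp}$-parts of the brackets, the cubic tails of each $s$-integral, the $A_{0}\rd_{0}$ remainder from completing $\Box_{A}$, and the error from eliminating $\rd_{t}^{2}A$ --- is collected into $R_{j}(A)$, and $\DA^{3}$, $\bfA_{0}^{3}$, $\DA_{0}^{3}$ are read off the same way. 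Each remainder is a finite sum of $s$-integrals of multilinear expressions in $A(s)$, the linearized flow $B(s)$ and the heat corrections $\bfA(s),\bfB(s)$; feeding in the pointwise-in-$s$ bounds $\brk{2^{-2k}s^{-1}}^{-\dlta}\brk{2^{2k}s}^{-N}$ times products of frequency envelopes from Propositions~\ref{prop:ymhf-fe} and~\ref{prop:ymhf-ed}, the $s$-integral converges and redistributes the envelope weights to give the envelope-preserving-of-order-$\geq3$ estimates~\eqref{eq:Rj-t}--\eqref{eq:DA0-tri-t}, the order $\geq3$ reflecting that at least one factor is a heat correction or carries the quadratic factor $\rd^{\ell}A_{\ell}$. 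The space-time bounds~\eqref{eq:Rj}--\eqref{eq:DA0-tri} follow from the same input combined with H\"older in $t$ and Strichartz-scaling heat estimates, uniformly in $I$, and the Lipschitz statements from the linearized-flow part of those propositions.

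\emph{Main obstacle.} The conceptual heart, and the bulk of the work, is steps (ii)--(iii): cleanly peeling $\Box_{A}$ off the source, and checking that \emph{every} surviving quadratic term lands in one of the admissible null structures --- which forces the simultaneous use of Lie-bracket antisymmetry, divergence-freeness of $\P A$, and the quadratic nature of the caloric Coulomb defect $\rd^{\ell}A_{\ell}$ --- together with the heat-kernel computation pinning down the symbol $\bfQ$. Among the quantitative claims the most delicate is the uniform-in-$I$ envelope preservation on the Strichartz space $\Str^{1}[I]$, which genuinely couples the parabolic variable $s$ with the hyperbolic variable $t$.
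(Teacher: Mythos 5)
This theorem is cited from \cite{OTYM1} (Theorem~9.1 there) and is not proved in the present paper, so there is no local proof to compare against. Evaluated on its own terms, your sketch captures the correct high-level strategy: expand $\covD^{\alp}F_{j\alp}=0$, use the caloric condition $A(s)\to 0$ together with the dynamic heat flow to write $\rd^{\ell}A_{\ell}$, $A_0$ and $\rd_t A_0$ as quadratic-plus-higher functionals of $(A,\rd_t A)$, substitute back, and control the cubic-and-higher tails using the parabolic bounds of Propositions~\ref{prop:ymhf-fe} and~\ref{prop:ymhf-ed}. The derivation of the divergence identity from $\rd_s(\rd^{\ell}A_{\ell})=\rd^{j}[A^{\ell},F_{\ell j}]$ and the origin of the symbol $\bfQ$ from the $s$-integral of the linearized flow are both the right ideas.

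There is, however, one step that is stated imprecisely enough to count as a small gap. You write that ``the low--high divergence-free part of $-2[A^{\ell},\rd_{\ell}A_{j}]$ is moved to the left to build $\Box_{A}$''. What actually happens is purely algebraic and exact: the identity
\begin{equation*}
\Box_{A}A_{j}=\rd_{j}(\rd^{\alp}A_{\alp})+[A^{\alp},\rd_{j}A_{\alp}]
\end{equation*}
holds identically for any solution of $\covD^{\alp}F_{j\alp}=0$, with no paradifferential truncation and no restriction to the divergence-free part; one moves \emph{all} of $2[A^{\alp},\rd_{\alp}A_{j}]$, $[\rd^{\alp}A_{\alp},A_{j}]$ and $[A^{\alp},[A_{\alp},A_{j}]]$ to the left. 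The low--high splitting of $[\P A_{\alp},\rd^{\alp}\cdot]$ is a separate device (the definition of $\Diff^{\kpp}_{\P A}$ in~\eqref{eq:diff-def}) that belongs to the analysis of $\Box_{A}$ as a linear operator, not to the derivation of~\eqref{eq:main-wave}. Conflating these two moves would leave you without a clean candidate for $R_{j}$: if you only peeled off a paraproduct piece, the remainder would contain unbounded quadratic high--high and balanced pieces that cannot be cubic. Once you take the exact identity, the decomposition $[A^{\ell},\rd_{j}A_{\ell}]=\P_{j}[A^{\ell},\rd_{x}A_{\ell}]+\P^{\perp}_{j}[A^{\ell},\rd_{x}A_{\ell}]$ and the substitution of~\eqref{eq:main-compat}--\eqref{eq:main-DA0} into $\rd_{j}(\rd^{\alp}A_{\alp})$ do produce the stated form, with the $\P^{\perp}$ piece and the cross terms combining into $2\lap^{-1}\rd_{j}\bfQ(\rd^{\alp}A,\rd_{\alp}A)$ plus genuinely cubic errors. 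Aside from this, the sketch is sound but omits the symbol computation verifying $\bfQ$ and the bookkeeping showing each remainder has at least one factor drawn from a quadratic-or-higher caloric quantity, which is what makes the remainder order~$\geq 3$.
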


Next, we consider the dynamic Yang--Mills heat flow $A_{t,x}(s)$ of
$A_{t,x}$ in the caloric gauge. For $s > 0$, we have $\covD^{\bt}
F_{\alp \bt}(s) = w_{\alp} \neq 0$ in general. We expect the
``heat-wave commutator'' $w_{\alp}$ (called the Yang--Mills tension
field) to be concentrated primarily at frequency comparable to
$s^{-\frac{1}{2}}$. Indeed, the following theorem holds.

\begin{theorem} [{\cite[Theorem~9.3]{OTYM1}}] \label{thm:main-eq-s} Let $A_{t,x} = (A_{0}, A) \in
  C_{t}(I; \dot{H}^{1} \times \calC_{\hM})$ with $(\rd_{t} A_{0},
  \rd_{t} A) \in C_{t}(I; L^{2} \times T^{L^{2}}_{A(t)} \calC_{\hM})$
  be a solution to \eqref{eq:ym} with energy $\nE$. Let $A_{t,x}(s) = A_{t,x}(t, x, s)$
  be the dynamic Yang--Mills heat flow development of $A_{t,x}$ in the
  caloric gauge. Then the spatial components $A(s) = A_{x}(s)$ of
  $A_{t,x}(s)$ satisfy an equation of the form
  \begin{equation}\label{eq:main-wave-s}
    \begin{aligned}
      \Box_{A(s)} A_{j}(s) =& \P_{j} [A(s), \rd_{x} A(s)] + 2\Delta^{-1} \partial_{j} \bfQ (\rd^{\alp} A(s), \rd_{\alp} A(s)) + R_{j}(A(s)) \\
      & + \P_{j} \bfw_{x}^{2}(\rd_{t} A, \rd_{t} A, s) + R_{j; s}(A)
    \end{aligned}
  \end{equation}
  together with the compatibility condition
  \begin{equation}\label{eq:main-compat-s}
    \partial^{\ell} A_{\ell}(s) = \DA(A(s)).
  \end{equation}
  Moreover, the temporal component $A_{0}(s)$ and its time derivative
  $\rd_{t} A_{0}(s)$ admit the expansions
  \begin{align}
    & \begin{aligned}
      A_{0}(s) =& \bfA_{0}(A(s)) + \bfA_{0; s}(A) \\
      :=&\bfA_{0}(A(s)) + \lap^{-1} \bfw_{0}^{2}(A, A, s) + \bfA_{0;
        s}^{3}(A),
    \end{aligned} \\
    & \begin{aligned} \rd_{t} A_{0}(s) = \DA_{0}(A(s)) + \DA_{0; s}(A)
    \end{aligned}
  \end{align}
  Here $\P$, $\bfQ$, $R_{j}$, $\DA$, $\bfA_{0}$ and $\DA_{0}$ are as
  before, and $\bfw_{\alp}^{2}$ are defined as
  \begin{align}
    \bfw_{0}^{2}(A, B, s) =& - 2 \bfW(\rd_{t} A, \lap B, s), \label{eq:w2-0-def}\\
    \bfw_{j}^{2}(A, B, s) =& - 2 \bfW(\rd_{t} A, \rd_{j} \rd_{t} B - 2
    \rd_{x} \rd_{t} B_{j}, s), \label{eq:w2-x-def}
  \end{align}
  where $\bfW(\cdot, \cdot, s)$ is a bilinear form with symbol
  \begin{equation}
    \begin{aligned}
      \bfW (\xi, \eta, s)
      = & - \frac{1}{2 \xi \cdot \eta} e^{ - s \abs{\xi + \eta}^{2}}
      \left( 1 - e^{2 s (\xi \cdot \eta)} \right).
    \end{aligned}
  \end{equation}
  Moreover, $R_{j; s}(t)$, $\bfA_{0; s}^{3}(t)$ and $\DA_{0;s}(t)$ are
  uniquely determined by $(A, \rd_{t} A)(t) \in T^{L^{2}} \calC$ for
  each $s > 0$, and satisfy the following properties
  \begin{itemize}
  \item $R_{j; s}(t) : \dot{H}^{1} \to \dot{H}^{-1}$ is a Lipschitz
    map with output concentrated at frequency $s^{-\frac{1}{2}}$. More
    precisely,
    \begin{equation} \label{eq:Rj-s-t} (1- s \lap)^N R_{j; s}(t) :
      \dot{H}^{1} \to 2^{-\dlta k(s)} \dot{H}^{-1-\dlta}.
    \end{equation}

  \item $\bfA_{0; s}^{3}(t): \dot{H}^{1} \to \dot{H}^{1}$ is a
    Lipschitz map with output concentrated at frequency
    $s^{-\frac{1}{2}}$, i.e.,
    \begin{equation} \label{eq:A0-tri-s-t} (1- s \lap)^N \bfA_{0;
        s}^3(t) : \dot{H}^{1} \to 2^{-\dlta k(s)}
      \dot{H}^{1-\dlta}
    \end{equation}

  \item $\DA_{0; s}(t): \dot{H}^{1} \to L^{2}$ is a Lipschitz map with
    output concentrated at frequency $s^{-\frac{1}{2}}$, i.e.,
    \begin{equation} \label{eq:DA0-tri-s-t} (1- s\lap)^N \DA_{0; s}(t)
      : \dot{H}^{1} \to 2^{-\dlta k(s)} \dot{H}^{-\dlta}.
    \end{equation}
  \end{itemize}
  Finally, on any time interval $I \subseteq \bbR$ (with bounds
  independent of $I$), $R_{j; s}$, $\bfA_{0; s}^{3}$ and $\DA_{0; s}$
  satisfy the following properties:
  \begin{itemize}
  \item $R_{j; s} : \Str^{1}[I] \to L^{1} L^{2} \cap L^{2}
    \dot{H}^{-\frac{1}{2}} [I]$ is a Lipschitz map with output
    concentrated at frequency $s^{-\frac{1}{2}}$, i.e.,
    \begin{equation} \label{eq:Rj-s} (1- s \lap)^N R_{j; s} : \Str^1
      [I] \to 2^{-\dlta k(s)} (L^1 \dot H^{-\dlta} \cap
      L^{2} \dot{H}^{-\frac{1}{2}-\dlta})[I]
    \end{equation}

  \item $\bfA_{0; s}^{3}: \Str^1 [I] \to L^1 \dot H^{2} \cap L^{2}
    \dot{H}^{\frac{3}{2}} [I]$ is a Lipschitz map with output
    concentrated at frequency $s^{-\frac{1}{2}}$, i.e.,
    \begin{equation} \label{eq:A0-tri-s} (1- s \lap)^N \bfA_{0; s}^3 :
      \Str^1[I] \to 2^{-\dlta k(s)} (L^{1} \dot
      H^{2-\dlta} \cap L^{2} \dot{H}^{\frac{3}{2} -
        \dlta})[I]
    \end{equation}

  \item $\DA_{0; s}: \Str^1[I] \to L^{2} \dot H^{\frac{1}{2}} [I]$ is
    a Lipschitz map with output concentrated at frequency
    $s^{-\frac{1}{2}}$, i.e.,
    \begin{equation} \label{eq:DA0-tri-s} (1- s\lap)^N \DA_{0; s} :
      \Str^1[I] \to 2^{-\dlta k(s)} L^{2}
      \dot{H}^{\frac{1}{2}-\dlta}[I]
    \end{equation}
  \end{itemize}
  All implicit constants depend on $\hM$ and $\nE$.
\end{theorem}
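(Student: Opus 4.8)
The plan is to treat Theorem~\ref{thm:main-eq-s} as a perturbation of Theorem~\ref{thm:main-eq}, which already settles the $s = 0$ case. The first point I would establish is that for each fixed $s > 0$ the spatial connection $A(s) = A_{x}(s)$ is \emph{itself} caloric: running the Yang--Mills heat flow in the local caloric gauge $A_{s'} = 0$ from the datum $A(s)$ merely reproduces the tail $s' \mapsto A(s + s')$, which tends to $0$ in $\dot{H}^{1}$ because $A(0) = a$ is caloric; moreover $\hM(A(s)) \leq \hM(a) \leq \hM$. Together with Proposition~\ref{prop:ymhf-ed} (which controls the linearized heat flow and the energy of $A_{t,x}(s)$), this gives $(A, \rd_{t} A)(t, s) \in T^{L^{2}} \calC_{\hM}$ for every $(t, s)$, with energy $\aleq_{\nE, \hM} \nE$. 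Hence all the ``$s = 0$-type'' content of the statement --- the compatibility condition \eqref{eq:main-compat-s}, the first-line terms of \eqref{eq:main-wave-s}, the pieces $\bfA_{0}(A(s))$ and $\DA_{0}(A(s))$, and the appearance of $\P$, $\bfQ$ and $R_{j}(A(s))$ --- would follow verbatim from Theorem~\ref{thm:main-eq} \emph{if} $A_{t,x}(s)$ solved \eqref{eq:ym}. The sole obstruction is the Yang--Mills tension field: for $s > 0$ the dynamic heat flow \eqref{eq:dYMHF} yields $\covD^{\bt} F_{\alp \bt}(s) = w_{\alp}(s)$ with $w_{\alp}(0) = 0$, so the genuinely new work is (i) to understand $w_{\alp}(s)$, and (ii) to track how it propagates into the wave equation for $A_{j}(s)$ and into the formulas for $A_{0}(s)$ and $\rd_{t} A_{0}(s)$.

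For (i), I would differentiate $w_{\alp} = \covD^{\bt} F_{\alp \bt}(s)$ in $s$, using $\rd_{s} A_{\mu} = \covD^{\ell} F_{\ell \mu}$ (the caloric condition $A_{s} = 0$), the differential Bianchi identity $\covD_{s} F_{\alp \bt} + \covD_{\alp} F_{\bt s} + \covD_{\bt} F_{s \alp} = 0$, and the curvature commutation relations, so as to obtain a forced covariant heat equation for $w_{\alp}$ in $s$ with vanishing data at $s = 0$ and with forcing that is quadratic and higher in the curvature (built primarily from the dynamic part $F_{0j} \sim \rd_{t} A$). Duhamel's formula then represents $w_{\alp}(s)$ as an $s'$-integral over $[0, s]$ of such expressions propagated by the covariant heat semigroup; the heat smoothing localizes $w_{\alp}(s)$ at the dyadic frequency $k(s)$ (with $2^{2 k(s)} \sim s^{-1}$), with rapid decay and a $2^{-\dlta k(s)}$ gain above that scale. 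Extracting the leading bilinear term --- replacing $\covD$ by $\rd$ and keeping the lowest-order bracket --- reduces to convolving two heat flows in $s'$, which reproduces precisely the bilinear form $\bfW(\cdot, \cdot, s)$ with symbol $\bfW(\xi, \eta, s) = \frac{1}{2 \xi \cdot \eta} \left( e^{-s(\abs{\xi}^{2} + \abs{\eta}^{2})} - e^{-s \abs{\xi + \eta}^{2}} \right)$; the apparent singularity at $\xi \cdot \eta = 0$ is cancelled by the vanishing of the numerator there, so $\bfW(\cdot, \cdot, s)$ is a genuine heat-truncated multiplier. The specific contractions $\bfw_{0}^{2}$, $\bfw_{j}^{2}$ of \eqref{eq:w2-0-def}--\eqref{eq:w2-x-def} are then the ones that survive the Leray/Coulomb bookkeeping of step (ii); the cubic and higher part of $w_{\alp}$, together with cross terms between $w_{\alp}$ and the $s = 0$ nonlinearities, are collected into the remainders $R_{j;s}$, $\bfA_{0;s}^{3}$, $\DA_{0;s}$, which I would check are uniquely determined --- through the heat flow --- by $(A, \rd_{t} A)(t) \in T^{L^{2}} \calC$ for each $s > 0$.

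For (ii), I would rerun the derivation behind Theorem~\ref{thm:main-eq}. Expanding the spatial component $\covD^{0} F_{j0}(s) + \covD^{\ell} F_{j \ell}(s) = w_{j}(s)$ of the deformed equation through $A(s)$, isolating $\Box A_{j}(s)$, converting $\rd_{j} \rd^{\ell} A_{\ell}(s)$ by the compatibility condition \eqref{eq:main-compat-s}, and absorbing the connection terms into $\Box_{A(s)}$ recovers the first-line terms of \eqref{eq:main-wave-s} plus the extra source $w_{j}(s)$; its leading bilinear piece --- the one built from $\rd_{t} A$ and surviving the projection $\P_{j}$ --- is $\P_{j} \bfw_{x}^{2}(\rd_{t} A, \rd_{t} A, s)$, and the rest is absorbed into $R_{j;s}(A)$. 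Similarly, solving the elliptic constraint $\covD^{\ell} F_{0 \ell}(s) = w_{0}(s)$ for $A_{0}(s)$ (using $F_{00} = 0$) and once more eliminating $\rd_{t} \rd^{\ell} A_{\ell}(s)$ via \eqref{eq:main-compat-s} yields $A_{0}(s) = \bfA_{0}(A(s)) + \lap^{-1} \bfw_{0}^{2}(A, A, s) + \bfA_{0;s}^{3}(A)$; differentiating in $t$ and using the equation once more produces the claimed expansion of $\rd_{t} A_{0}(s)$. Finally, I would read off the regularity and divisibility properties \eqref{eq:Rj-s-t}--\eqref{eq:DA0-tri-s-t} and \eqref{eq:Rj-s}--\eqref{eq:DA0-tri-s} by combining the $s = 0$ estimates of Theorem~\ref{thm:main-eq} (both for the cross terms and for the Lipschitz envelope-preserving structure of the cubic remainders) with the heat-kernel factor $(1 - s \lap)^{-N}$ from step (i), which supplies the concentration at frequency $k(s)$ together with the $2^{-\dlta k(s)}$ off-scale gain; the explicit bilinear terms $\bfw_{\alp}^{2}$ are then controlled directly by Strichartz and bilinear estimates, exploiting that their symbols are heat-truncated multipliers.

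The step I expect to be the main obstacle is (i): correctly setting up the covariant heat equation for $w_{\alp}$ and pushing the Duhamel expansion far enough to pin down the exact leading symbol $\bfW$, while simultaneously verifying that every discarded term genuinely inherits the localization near frequency $k(s)$ and the $\dlta$-gains --- in particular, that the cubic and higher remainders remain Lipschitz envelope-preserving maps of order $\geq 3$ with bounds uniform in $s$. Once that structural input is in hand, steps (i)--(ii) amount to bookkeeping parallel to the $s = 0$ analysis of Theorem~\ref{thm:main-eq}.
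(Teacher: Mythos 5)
The paper does not actually prove Theorem~\ref{thm:main-eq-s}: it is recalled verbatim from \cite[Theorem~9.3]{OTYM1}, and all of Section~\ref{sec:caloric} is explicitly a recollection of results from the first paper of the series. There is therefore no argument in the present source against which to compare your proposal, so what follows is an assessment of your sketch on its own merits.

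The structural points you make are sound. By the semigroup property, $A(t,\cdot,s)\in\calC_{\hM}$ with $\hM(A(t,s))\leq\hM$ for each fixed $(t,s)$, so \eqref{eq:main-compat-s} and the $\bfA_{0}(A(s))$, $\DA_{0}(A(s))$, $R_{j}(A(s))$ pieces really are inherited from Theorem~\ref{thm:main-eq}, and the one new ingredient is the tension field $w_{\alp}(s)=\covD^{\bt}F_{\alp\bt}(s)$, which vanishes only at $s=0$. Your identification of the multiplier $\bfW$ as the Duhamel convolution of two heat propagators also checks out: since $\abs{\xi+\eta}^{2}=\abs{\xi}^{2}+2\xi\cdot\eta+\abs{\eta}^{2}$,
\begin{equation*}
\int_{0}^{s} e^{-(s-s')\abs{\xi+\eta}^{2}}\, e^{-s'(\abs{\xi}^{2}+\abs{\eta}^{2})}\,\ud s'
= e^{-s\abs{\xi+\eta}^{2}}\,\frac{e^{2s\,\xi\cdot\eta}-1}{2\,\xi\cdot\eta}
= -\frac{1}{2\,\xi\cdot\eta}\,e^{-s\abs{\xi+\eta}^{2}}\bigl(1-e^{2s\,\xi\cdot\eta}\bigr),
\end{equation*}
which is the stated $\bfW(\xi,\eta,s)$; your rewriting $\frac{1}{2\,\xi\cdot\eta}\bigl(e^{-s(\abs{\xi}^{2}+\abs{\eta}^{2})}-e^{-s\abs{\xi+\eta}^{2}}\bigr)$ is the same expression, with the removable singularity at $\xi\cdot\eta=0$ handled correctly.

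What you leave as a promissory note is exactly where the real work in \cite{OTYM1} must lie: verifying that the cubic-and-higher remainders $R_{j;s}$, $\bfA_{0;s}^{3}$, $\DA_{0;s}$ are Lipschitz with the advertised $(1-s\lap)^{N}$ off-scale decay and the $2^{-\dlta k(s)}$ gain uniformly in $s$, simultaneously at fixed time (\eqref{eq:Rj-s-t}--\eqref{eq:DA0-tri-s-t}) and in the $\Str^{1}[I]$ framework (\eqref{eq:Rj-s}--\eqref{eq:DA0-tri-s}). This requires propagating frequency localization through the \emph{covariant} (not flat) heat semigroup in the Duhamel tail, and reconciling the two regimes $2^{2k}s\ageq 1$ and $2^{2k}s\ll 1$; you correctly flag this as the main obstacle, but as written the proposal does not discharge it. Since the proof is not reproduced in this paper I cannot say whether your outline matches the actual execution in \cite{OTYM1}, but nothing in your plan is misdirected.
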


\begin{remark}
  Some notable features of Theorem~\ref{thm:main-eq-s} are as follows.
  \begin{itemize}
  \item Compared with the prior result, here we have additional
    contributions $R_{k; s}$, $\bfA_{0; s}$ and $\bfD \bfA_{0; s}$ as
    well as the $\bfw$ terms. These have the downside that they depend
    on $A$ and $\partial_t A$ at $s = 0$ rather than $A(s)$ and
    $\partial_t A(s)$. The redeeming feature is that these terms will
    not only be small due to the energy dispersion, but also,
    critically, concentrated at frequency $s^{-\frac{1}{2}}$.

  \item The other change here is due to the inhomogeneous terms
    $\bfw_{\alp}^2$; these are matched in the $A_k(s)$ and the
    $A_0(s)$ equations, and will interact in the trilinear analysis
    (see Proposition~\ref{prop:diff-tri-nf-w} below).
 
  \item For the new error terms here we do not need to worry about
    difference bounds; see Section~\ref{sec:induction} below.
  \end{itemize}
\end{remark}

\section{Summary of function spaces and estimates} \label{sec:ests} In
this section, we summarize the properties of the function spaces and
the estimates needed to analyze the hyperbolic Yang--Mills equation in
the caloric gauge, as given by Theorems~\ref{thm:main-eq} and
\ref{thm:main-eq-s}.

\subsection{Function spaces} \label{subsec:ftn-sp} 

The aim of this subsection is to give precise definitions of the fine
functions spaces used to analyze caloric Yang--Mills waves.
\subsubsection{Frequency projections}
We start with a brief discussion of various frequency projections. Let
$m_{0} : \bbR \to \bbR$ be a smooth non-negative even bump function
supported on $\set{x \in \bbR : \abs{x} \in (2^{-1}, 2^{2})}$ such
that $\set{m_{k} = m_{0}(\cdot / 2^{k})}_{k \in \bbZ}$ is a partition
of unity on $\bbR$. For $k \in \bbZ$, recall that $P_{k}$ was defined
as the multiplier on $\bbR^{4}$ with symbol $P_{k}(\xi) =
m_{k}(\abs{\xi})$. Given $j \in \bbZ$ and a sign $\pm$, we introduce
the modulation projections $Q^{\pm}_{j}$ and $Q_{j}$, which are
multipliers on $\bbR^{1+4}$ with symbols
\begin{equation*}
  Q_{j}^{\pm}(\tau, \xi) = m_{j}(\tau \mp \abs{\xi}), \quad
  Q_{j}(\tau, \xi) = m_{j}(\abs{\tau} - \abs{\xi}).
\end{equation*}
We also define $Q_{< j}^{\pm}$, $Q_{\geq j}^{\pm}$, $Q_{< j}$,
$Q_{\geq j}$ etc. in the obvious manner.  To connect $Q_{j}^{\pm}$
with $Q_{j}$, we introduce the sharp time-frequency cutoffs $Q^{\pm}$,
which are multipliers on $\bbR^{1+4}$ with symbols
\begin{equation*}
  Q^{\pm}(\tau, \xi) = \chi_{(0, \infty)}(\pm\tau).
\end{equation*}
Note that $P_{k} Q^{\pm} Q_{j} = P_{k} Q^{\pm}_{j}$ for $j < k$.

For $\ell \in -\bbN$, consider a collection of directions $\omg \in
\bbS^{3} \subseteq \bbR^{4}$, which are maximally separated with
distance $\aeq 2^{\ell}$. To each such an $\omg$, we associate a
smooth cutoff function $m^{\omg}_{\ell}$ supported on a cap of radius
$\aeq 2^{\ell}$ centered at $\omg$, with the property that
$\sum_{\omg} m_{\omg} = 1$. Let $P^{\omg}_{\ell}$ be the multiplier on
$\bbR^{4}$ with symbol
\begin{equation*}
  P^{\omg}_{\ell}(\xi) = m^{\omg}_{\ell}\left( \frac{\xi}{\abs{\xi}} \right).
\end{equation*}

Given $k' \in \bbZ$ and $\ell' \in -\bbN$, consider rectangular boxes
$\calC_{k'}(\ell')$ of dimensions $2^{k'} \times (2^{k' + \ell'})^{3}$
(where the $2^{k'}$-side lies along the radial direction), which cover
$\bbR^{4} \setminus \set{\abs{x} \aleq 2^{k'}}$ and have finite
overlap with each other. Let $m_{\calC_{k'}(\ell')}$ b a partition of
unity adapted to $\set{\calC_{k'}(\ell')}$, and we define the
multiplier $P_{\calC_{k'}(\ell')}$ on $\bbR^{4}$ with symbol
\begin{equation*}
  P_{\calC_{k'}(\ell')}(\xi) = m_{\calC_{k'}(\ell')}(\xi).
\end{equation*}
For convenience, when $k' = k$, we choose the covering and the
partition of unity so that $P_{k} P^{\omg}_{\ell} = P_{k}
P_{\calC_{k}(\ell)}$.

We now discuss the boundedness properties of the frequency
projections. For any $k \in \bbZ$, let $P_{k / <k}$ denote one of the
dyadic frequency projections $\set{P_{k}, P_{<k}}$. Let $Q_{j /
  <j}^{\Box}$ denote one of the modulation projections $Q^{\pm}_{j}$,
$Q_{<j}^{\pm}$, $Q_{j}$ or $Q_{<j}$. Let $\omg$ be an angular sector
of size $\aeq 2^{\ell}$ $(\ell \in - \bbN)$, and $\calC$ a rectangular
box of the form $\calC_{k'}(\ell')$ $(k' \in \bbZ, \, \ell' \in -
\bbN)$. Then the following statements hold:
\begin{itemize}
\item The multipliers $P_{k / <k}$, $P_{k / <k} P^{\omg}_{\ell}$ and
  $P_{\calC}$ are disposable.
\item The multiplier $P_{k / < k} Q_{j /< j}^{\Box}$ is disposable if
  $j \geq k + O(1)$; see \cite[Lemma~3]{Tao2}. For general $j, k \in
  \bbZ$, it is straightforward to check that $P_{k / < k} Q_{j /<
    j}^{\Box}$ has a kernel with mass $O(2^{4(k-j)_{+}})$.
\item The multiplier $P_{k / < k} Q_{j /< j}^{\Box}$ is bounded on
  $L^{p} L^{2}$ for any $1 \leq p \leq \infty$; see
  \cite[Lemma~4]{Tao2}.
\item The multiplier $P_{k / < k} P^{\omg}_{\ell} Q_{j /< j}^{\Box} $
  is disposable if $j \geq k + 2 \ell + O(1)$; see
  \cite[Lemma~6]{Tao2}.
\end{itemize}

\subsubsection{Function spaces on the whole space-time}
Here, we define the global-in-time function spaces used in this
work. Unless otherwise stated, all spaces below are defined for
functions on $\bbR^{1+4}$. We remark that all of them are
translation-invariant.

We first define the space $X^{\sgm, b}_{r}$, equipped with the norm
\begin{equation*}
  \nrm{u}_{X^{\sgm, b}_{r}}^{2} = \sum_{k} 2^{2 \sgm k} \Big( \sum_{j} (2^{b j} \nrm{P_{k} Q_{j} u}_{L^{2} L^{2}})^{r} \Big)^{\frac{2}{r}}
\end{equation*}
when $1 \leq r < \infty$. As usual, we replace the $\ell^{r}$-sum by
the supremum in $j$ when $r = \infty$. The spaces $X^{\sgm, b}_{\pm,
  r}$ are defined similarly, with $Q_{j}$ replaced by $Q_{j}^{\pm}$.

We are now ready to introduce the function spaces in earnest, which
are all defined in terms of (semi-)norms.

\pfstep{Core nonlinearity norm $N$} We define
\begin{equation*}
  N = L^{1} L^{2} + X^{0, -\frac{1}{2}}_{1}.
\end{equation*}
This norm scales like $L^{1} L^{2}$. We also define $N_{\pm} = L^{1}
L^{2} + X^{0, -\frac{1}{2}}_{\pm, 1}$. Note that $N = N_{+} \cap
N_{-}$. Moreover, we have the embeddings
\begin{equation*}
  X^{0, -\frac{1}{2}}_{1} \subseteq N \subseteq X^{0, -\frac{1}{2}}_{\infty}, \quad
  X^{0, -\frac{1}{2}}_{\pm, 1} \subseteq N \subseteq X^{0, -\frac{1}{2}}_{\pm, \infty}.
\end{equation*}
The inclusions on the left are obvious, whereas the inclusions on the
right follow from Bernstein in time. We omit the proofs.

\pfstep{Core solution norm $S$} We define
\begin{equation*}
  \nrm{u}_{S}^{2} = \sum_{k} \nrm{P_{k} u}_{S_{k}}^{2}, \quad S_{k} = S^{str}_{k} \cap X^{0, \frac{1}{2}}_{\infty} \cap S^{ang}_{k} \cap S^{sq}_{k},
\end{equation*}
where $S^{sq}_{k}$ is related to square function bounds,
\begin{equation*}
 \nrm{u}_{S^{sq}_{k}} = 2^{-\frac{3}{10} k} \| u\|_{L^{\frac{10}3}_x L^2_t}
\end{equation*}
and  $S^{str}_{k}$ and $ S^{ang}_{k}$ are essentially as in
\cite[Eqs.~(6)--(8)]{KST}:
\begin{align*}
  \nrm{u}_{S^{str}_{k}}
  = & \sup_{(p, q) : \frac{1}{p} + \frac{3}{2 q} \leq \frac{3}{4}} 2^{-(2 - \frac{1}{p} - \frac{4}{q}) k} \nrm{u}_{L^{p} L^{q}}, \\
  \nrm{u}_{S_k^{ang}}^2=& \sup_{\ell<0} \sum_{\omega} \nrm{P_{\ell}^{\omega} Q_{< k+2 \ell} u}_{S_k^{\omega}(\ell)}^2,  \\
  \nrm{u}_{S_k^{\omega}(\ell)}^2= & \nrm{u}_{S_k^{str}}^2+ 2^{-2k} \nrm{u}_{NE}^2+2^{-3k} \sum_{\pm} \nrm{Q^{\pm} u}_{PW_{\omega}^{\mp}(\ell)}^2 \\
  & + \sup_{\substack{k' \leq k, \ \ell' \leq 0 \\ k + 2 \ell \leq k' + \ell' \leq k+ \ell}}  \sum_{\calC_{k'}(\ell')} \Big( \nrm{P_{\calC_{k'}(\ell')} u}_{S_k^{str}}^{2}  + 2^{-2k}  \nrm{P_{\calC_{k'}(\ell')} u}_{NE}^{2} \\
  & + 2^{-2k'-k} 2^{-\ell'} \nrm{P_{\calC_{k'}(\ell')} u}_{L^{2}
    L^{\infty}}^{2}+ 2^{-3(k'+\ell')} \sum_{\pm} \nrm{Q^{\pm}
    P_{\calC_{k'}(\ell')} u}_{PW_{\omega}^{\mp}(\ell) }^{2} \Big).
\end{align*}
Here, the $NE$ and $PW_{\omg}^{\mp}(\ell)$ are the \emph{null frame
  spaces} \cite{Tat, Tao2}, defined by
\begin{align*}
  \nrm{u}_{PW^{\mp}_{\omg}(\ell)}
  = &\inf_{u = \int u^{\omg'}} \int_{\abs{\omg - \omg'} \leq 2^{\ell}} \nrm{u^{\omg'}}_{L^{2}_{\pm \omg'} L^{\infty}_{(\pm \omg')^{\perp}}} \, \ud \omg', \\
  \nrm{u}_{NE} = & \sup_{\omg} \nrm{\! \not \! \nb_{\omg}
    u}_{L^{\infty}_{\omg} L^{2}_{\omg^{\perp}}},
\end{align*}
where the $L^{q}_{\omg}$ norm is with respect to the variable
$t_{\omg}^{\pm} = t \pm \omg \cdot x$, the $L^{r}_{\omg^{\perp}}$ norm
is defined on each $\set{t^{\pm}_{\omg} = const}$, and
$\displaystyle{\! \not \!  \nb_{\omg}}$ denotes the tangential
derivatives to $\set{t^{\pm}_{\omg} = const}$.

In the last two lines of the definition of $S^{\omg}_{k}(\ell)$, the
restrictions $k' \leq k$, $\ell' \leq 0$ and $k' + \ell' \leq k +
\ell$ ensure that rectangular boxes of the form $\calC_{k'}(\ell')$
fit in the frequency support of $P^{\omg}_{\ell}$. The restriction $k
+ 2 \ell \leq k' + \ell'$ is imposed by the main parametrix estimate
(see Section~\ref{subsec:para-disp} or \cite[Section~11]{KST}), to
ensure square-summability in $\calC_{k'}(\ell')$.

The null frame spaces in $S^{\omg}_{k}(\ell)$ allow one to exploit
transversality in frequency space, and play an important role in the
proof of the trilinear null form estimate; see
\cite[Eqs.~(136)--(138)]{KST} and Proposition~\ref{prop:tri-nf-core}
below. On the other hand, the $L^{2} L^{\infty}$-norm for
$P_{\calC_{k'}(\ell')} u$ allows us to gain the dimensions of
$\calC_{k'}(\ell')$.

\begin{remark}
  For the reader who is familiar with the function space framework in
  \cite{KST}, we point out that our $S^{\omg}_{k}(\ell)$ is slightly
  stronger compared to that in \cite{KST}. More precisely, instead of
  $2^{-k' -\frac{1}{2} k} 2^{-\frac{1}{2}\ell'} \nrm{P_{\calC_{k'}(\ell')} u}_{L^{2}
    L^{\infty}}$ as in our definition, it is $2^{-k' - \frac{1}{2} k}
  \nrm{P_{\calC_{k'}(\ell')} u}_{L^{2} L^{\infty}}$ in
  \cite{KST}. However, we note that the extra factor $2^{-\frac{1}{2}\ell'}$ is
  actually present in the main parametrix estimate in
  \cite[Subsection~11.3]{KST}.
\end{remark}

\begin{remark} 
The square function norm $S^{sq}_k$ is new here in the structure of the $S$ norm. It   plays no role in the study 
of the solutions for the hyperbolic Yang--Mills equation in the caloric gauge, i.e. in Theorems~\ref{thm:wp} and \ref{thm:ed-first}.
Instead, it is only needed in order to justify the transition to the temporal gauge in Theorem~\ref{thm:tmp}.
\end{remark}

This norm scales like $L^{\infty} L^{2}$. Moreover, it obeys the
embeddings
\begin{equation*}
  P_{k} X^{0, \frac{1}{2}}_{1} \subseteq S_{k}, \quad S_{k} \subseteq X^{0, \frac{1}{2}}_{\infty}.
\end{equation*}
Indeed, the latter embedding is trivial. The former embedding has essentially been proved in \cite{Tat, Tao2}; we sketch its proof as follows. It suffices to show that any $u = P_{k} Q_{j} u$ satisfies $\nrm{u}_{S_{k}} \aleq 2^{\frac{j}{2}} \nrm{u}_{L^{2} L^{2}}$. We claim that
\begin{equation*}
	\nrm{P^{\omg}_{\ell} u}_{S_{k}} \aleq 2^{\frac{j}{2}} \nrm{P^{\omg}_{\ell} u}_{L^{2} L^{2}} \quad \hbox{ for $\ell = \tfrac{j-k}{2} + O(1)$ and $2^{\ell}$-separated $\omg$'s on $\bbS^{3}$}.
\end{equation*}
Recalling that $S_{k} = S_{k}^{str} \cap X^{0, \frac{1}{2}}_{\infty} \cap S_{k}^{ang} \cap S^{sq}_{k}$, the desired conclusion would follow from the claim after square-summing in $\omg$. 

Note that $P^{\omg}_{\ell} P_{k} Q_{j}$ with the above value of $\ell$ is multiplication on the Fourier side by a bump function adapted to a parallelepiped of dimensions $2^{j} \times 2^{k} \times (2^{k+\ell})^{3}$, where the $2^{j}$- and $2^{k}$-sides lie along the $\tau$- and the radial (in $\xi$) directions, respectively. The claim is straightforward for $S_{k}^{str} \cap X^{0, \frac{1}{2}}_{\infty} \cap S^{sq}_{k}$ by appropriate versions of Bernstein's inequality. For $S_{k}^{ang}$, by orthogonality, we need to show that
\begin{align*}
	& \nrm{P^{\omg}_{\ell} P_{\calC_{k'}(\ell')} u}_{S_{k}^{str}}
	+ 2^{-k} \nrm{P^{\omg}_{\ell} P_{\calC_{k'}(\ell')} u}_{NE} \\
	& + 2^{-k'-\frac{1}{2}k-\frac{1}{2} \ell'}\nrm{P^{\omg}_{\ell} P_{\calC_{k'}(\ell')} u}_{L^{2} L^{\infty}}
	+ 2^{-\frac{3}{2}(k'+\ell')}\nrm{P^{\omg}_{\ell} P_{\calC_{k'}(\ell')} u}_{PW^{\mp}_{\omg}(\ell)}
	\aleq 2^{\frac{j}{2}} \nrm{P^{\omg}_{\ell} P_{\calC_{k'}(\ell')} u}_{L^{2} L^{2}}.
\end{align*}
for each $\calC_{k'}(\ell')$ arising in $S^{\omg}_{k}(\ell)$.
We remark that the parts of $S_{k}^{ang}$ that do not involve $\calC_{k'}(\ell')$ are handled in a similar but simpler manner. The $S_{k}^{str}$, $NE$ and $L^{2}L^{\infty}$ norms are handled via Bernstein's inequality as before, where we note that $P_{\calC_{k'}(\ell')} P_{k} Q_{j}$ is multiplication on the Fourier side by a bump function adapted to a parallelepiped of dimensions $2^{j} \times 2^{k'} \times (2^{k'+\ell'})^{3}$ with the same orientation as before. For the $PW_{\omg}^{\mp}(\ell)$ norm, we decompose $Q^{\pm} P^{\omg}_{\ell} P_{\calC_{k'}(\ell')} u = \int u^{\pm, \omg'} \, \ud \omg'$ with 
\begin{equation*}
u^{\pm, \omg'} = (2 \pi)^{-5} \int_{\abs{a} = O(2^{j})} e^{i a t}  \int_{0}^{\infty}  \calF (P^{\omg}_{\ell} P_{\calC_{k'}(\ell')} u)(\lmb+a, \lmb \omg') e^{i \lmb (\pm t - \omg' \cdot x)} \lmb^{3} \, \ud \lmb \ud a.
\end{equation*}
Indeed, this decomposition is nothing but the Fourier inversion formula written in polar coordinates. 
Note that, thanks to the projections $P^{\omg}_{\ell}$ and $P_{\calC_{k'}(\ell')}$, $u^{\omg'}$ is zero for $\omg'$ outside either $\set{\omg' : \abs{\omg' - \omg} \leq 2^{\ell}}$ or an angular sector of radius $O(2^{k'+\ell'-k})$. Therefore, by Cauchy--Schwarz and the Fourier inversion formula (in $\lmb$ and in $\tau, \xi$), we have
\begin{align*}
	& \nrm{P^{\omg}_{\ell} P_{\calC_{k'}(\ell')} u}_{PW_{\omg}^{\mp}(\ell)} \\
	& \aleq \int \nrm{\int_{\abs{a} = O(2^{j})} e^{i a t}  \int_{0}^{\infty}  \calF (P^{\omg}_{\ell} P_{\calC_{k'}(\ell')} u)(\lmb+a, \lmb \omg') e^{i \lmb (\pm t - \omg' \cdot x)} \lmb^{3} \, \ud \lmb \ud a}_{L^{2}_{\mp\omg'} L^{\infty}_{(\mp \omg')^{\perp}}} \, \ud \omg' \\
	& \aleq 2^{\frac{3}{2}(k'+\ell'-k)} 2^{\frac{j}{2}} \nrm{\int_{0}^{\infty} \calF (P^{\omg}_{\ell} P_{\calC_{k'}(\ell')} u)(\lmb+a, \lmb \omg') e^{\pm i \lmb (t \mp \omg' \cdot x)} \lmb^{3} \, \ud \lmb}_{L^{2}(\ud (t \mp \omg' \cdot x) \ud a \ud \omg')} \\
	& \aleq 2^{\frac{3}{2}(k'+\ell')} 2^{\frac{j}{2}} \nrm{\calF (P^{\omg}_{\ell} P_{\calC_{k'}(\ell')} u)(\lmb+a, \lmb \omg') }_{L^{2}(\lmb^{3} \ud \lmb \ud a \ud \omg')} = 2^{\frac{3}{2}(k'+\ell')} 2^{\frac{j}{2}} \nrm{P^{\omg}_{\ell} P_{\calC_{k'}(\ell')} u}_{L^{2}L^{2}},
\end{align*}
as desired.

For $k, k' \in \bbZ$ satisfying $k' \leq k$ and $\ell' < -5$, we
define
\begin{align*}
  \nrm{u}_{S_{k}[\calC_{k'}(\ell')]}^{2} =& 2^{-\frac{5}{3} k}
  \nrm{u}_{L^{2} L^{6}}^{2}
  + 2^{-2 k' - k} 2^{-\ell'} \nrm{u}_{L^{2} L^{\infty}}^{2}  \\
  & + \sup_{j: \, \abs{j - (k' + 2 \ell')} \leq 5} \Big( \nrm{Q_{<j}
    u}_{L^{\infty} L^{2}}^{2}
  + 2^{-2k} \nrm{Q_{<j} u}_{NE}^{2} \\
  & \phantom{+ \sup_{j : j \leq k' + \ell'} \Big(} +
  2^{-3(k'+\ell')} \sum_{\omg} \sum_{\pm} \nrm{P^{\omg}_{\frac{j-k}{2}} Q^{\pm}_{<j}
    u}_{PW^{\mp}_{\omg}(\frac{j-k}{2})}^{2} \Big),
\end{align*}
where the $\omg$-summation runs over the $O(2^{\frac{j-k}{2}})$-separated subset of $\bbS^{3}$ associated with the projections $P^{\omg}_{\frac{j-k}{2}}$. We note that $S_{k}[\calC_{k'}(\ell')]$ depends only on the parameters $k, k', \ell'$ (in particular, no particular choice of a rectangular box is involved), and the notation $\calC_{k'}(\ell')$ is meant to suggest that it will be measured for $P_{\calC} u$ with $\calC$ a rectangular box of the form $\calC_{k'}(\ell')$.
The virtue of this norm is that it is square-summable in boxes of the form $\calC_{k'}(\ell')$:

\begin{lemma} \label{lem:S-box-sum} For any $k, k', \ell'$ such that
  $k' \leq k$ and $\ell' \leq 0$, we have
  \begin{equation} \label{eq:S-box-sum} \sum_{\calC \in
      \set{\calC_{k'}(\ell')}} \nrm{P_{\calC}
      u}_{S_{k}[\calC_{k'}(\ell')]}^{2} \aleq \nrm{u}_{S_{k}}^{2}.
  \end{equation}
\end{lemma}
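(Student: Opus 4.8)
The plan is to prove \eqref{eq:S-box-sum} by a direct term-by-term comparison, exploiting the fact that $\nrm{P_{\calC} u}_{S_{k}[\calC_{k'}(\ell')]}$ is, up to $O(1)$ losses, precisely the frequency-box-localized version of the norms comprising $\nrm{u}_{S_{k}}$, tailored so as to match the box-summable pieces that are already built into $S^{ang}_{k}$. We may assume $\ell' < -5$, since for $\ell' \geq -5$ only $O(1)$ boxes $\calC_{k'}(\ell')$ meet the annulus $\set{\abs{\xi} \aeq 2^{k}}$ and the claim is immediate from finite overlap. For a box $\calC = \calC_{k'}(\ell')$ and a modulation exponent $j$ with $\abs{j - (k' + 2\ell')} \leq 5$, I would take the angular scale $\ell = \ell(j) \aeq \tfrac12\big((k' + 2\ell') - k\big) \in -\bbN$. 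The first step is to check that this $\ell$ is admissible in the definition of $S^{\omg}_{k}(\ell)$: using $k' \leq k$ and $\ell' \leq 0$ one verifies the constraints $k + 2\ell \leq k' + \ell' \leq k + \ell$ as well as the geometric fact that $\calC_{k'}(\ell')$ fits inside an angular sector of size $2^{\ell}$; moreover, with this choice the outer modulation cutoff $Q_{< k + 2\ell}$ agrees with $Q_{<j}$, and the null-frame angular scale $\ell$ agrees with $\tfrac{j - k}{2}$, both up to $O(1)$.

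Next I would group the boxes according to which $2^{\ell}$-angular sector $\omg$ they lie in (each box belonging, up to bounded overlap, to one sector), so that $P_{\calC} u = P_{\calC} P^{\omg}_{\ell} Q_{< k + 2\ell} u$ up to $O(1)$ neighboring sectors, using disposability of $P_{\calC}$ and of $P_{\calC} P^{\omg}_{\ell} Q_{< k + 2\ell}$ together with the standard mapping properties of the null-frame norms $NE$, $PW^{\mp}_{\omg}$ from \cite{KST, Tao2}. For the three terms of $S_{k}[\calC_{k'}(\ell')]$ carrying the $Q_{<j}$ cutoff — namely $\nrm{Q_{<j} \cdot}^{2}_{L^{\infty} L^{2}}$, $2^{-2k} \nrm{Q_{<j} \cdot}^{2}_{NE}$ and $2^{-3(k'+\ell')} \sum_{\pm} \nrm{Q^{\pm}_{<j} \cdot}^{2}_{PW^{\mp}_{\omg}(\frac{j-k}{2})}$ — the scaling coefficients match those of the box-summed terms inside $S^{\omg}_{k}(\ell)$ by construction, so that summing over $\calC$, over the $O(1)$ admissible values of $j$, and over neighboring sectors would give a bound by $\sup_{\ell < 0} \sum_{\omg} \nrm{P^{\omg}_{\ell} Q_{< k + 2\ell} u}^{2}_{S^{\omg}_{k}(\ell)} = \nrm{u}^{2}_{S^{ang}_{k}} \leq \nrm{u}^{2}_{S_{k}}$. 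For the two remaining, modulation-free terms $2^{-\frac{5}{3}k} \nrm{P_{\calC} u}^{2}_{L^{2} L^{6}}$ and $2^{-2k' - k} 2^{-\ell'} \nrm{P_{\calC} u}^{2}_{L^{2} L^{\infty}}$, I would split $u = Q_{< k + 2\ell} u + Q_{\geq k + 2\ell} u$: the low-modulation part would be absorbed into the box-summed $S^{str}_{k}$, resp.\ $L^{2} L^{\infty}$, components of $S^{\omg}_{k}(\ell)$ exactly as above, while the high-modulation tail would be treated by Bernstein on the box $\calC_{k'}(\ell')$ (of volume $\aeq 2^{4k' + 3\ell'}$) combined with the elementary bound $\nrm{Q_{\geq m} u}_{L^{2} L^{2}} \aleq 2^{-m/2} \nrm{u}_{X^{0, \frac{1}{2}}_{\infty}}$ and $L^{2}$-orthogonality of the boxes; since $m = k + 2\ell \aeq k' + 2\ell'$ and $k' \leq k$, the surviving power of $2$ is $\leq 1$, so this contribution is controlled by $\nrm{u}^{2}_{X^{0, \frac{1}{2}}_{\infty}} \leq \nrm{u}^{2}_{S_{k}}$.

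I expect the main obstacle to be the bookkeeping itself: once the correct $\ell(j)$ has been identified, each of the five constituent norms of $S_{k}[\calC_{k'}(\ell')]$ should fall onto the matching term of $S^{\omg}_{k}(\ell)$, but one must verify the exact agreement of the $2^{k'}$- and $2^{\ell'}$-powers, and, for the two Strichartz-type terms, confirm that the Bernstein loss from the small box volume is genuinely beaten by the modulation gain $2^{-m/2}$. A secondary subtlety is that the null-frame norms $NE$ and $PW^{\mp}_{\omg}$ interact with the box projections $P_{\calC}$ in a slightly delicate way; this is handled exactly as in the square-summability estimates of \cite{KST, Tao2} for the corresponding KST spaces, after which our construction of $S_{k}[\calC_{k'}(\ell')]$ is modeled.
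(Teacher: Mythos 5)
Your proof is correct and follows essentially the same strategy as the paper's: the three modulation-localized components ($L^{\infty}L^{2}$, $NE$, $PW^{\mp}_{\omg}$) are read off directly from the box-summed terms in $S^{ang}_{k}$, and for the two Strichartz-type components one splits $u$ at modulation $\aeq 2^{k'+2\ell'}$, handling the low-modulation piece via $S^{ang}_{k}$ and the high-modulation piece via box Bernstein together with $X^{0,\frac{1}{2}}_{\infty}$. The extra detail you give — spelling out the choice $\ell \aeq \tfrac12(k'+2\ell'-k)$, checking the admissibility constraints $k+2\ell \leq k'+\ell' \leq k+\ell$, and noting the trivial $\ell'\geq -5$ case — makes explicit what the paper leaves implicit, but introduces no new idea.
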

\begin{proof}
  The desired square-summability estimate for the $L^{\infty} L^{2}$,
  $NE$ and $PW_{\omg}^{\mp}$ components follow immediately from the
  definition of $S_{k}^{ang} \supseteq S_{k}$. For the $L^{2} L^{6}$
  and $L^{2} L^{\infty}$ components, we split
  \begin{equation*}
    u = Q_{<k' + 2 \ell'} u + Q_{\geq k' + 2 \ell'} u.
  \end{equation*}
  For the former we use $S_{k}^{ang}$, and for the latter we simply
  note that, by Bernstein,
  \begin{align*}
    2^{-\frac{5}{6} k} \nrm{Q_{\geq k' + 2 \ell'}
      P_{\calC_{k'}(\ell')} u}_{L^{2} L^{6}} + 2^{-k' - \frac{1}{2} k}
    2^{-\frac{1}{2} \ell'} \nrm{Q_{\geq k' + 2 \ell'}
      P_{\calC_{k'}(\ell')} u}_{L^{2} L^{\infty}}
    \aleq \nrm{P_{\calC_{k'}(\ell')} u}_{X^{0, \frac{1}{2}}_{\infty}},
  \end{align*}
  which is clearly square-summable. \qedhere
\end{proof}

\pfstep{Sharp solution norm $S^{\sharp}$} We define
\begin{align*}
  \nrm{u}_{S^{\sharp}_{k}} =& 2^{-k} (\nrm{\nb u}_{L^{\infty} L^{2}} + \nrm{\Box u}_{N}), \\
  \nrm{u}_{(S^{\sharp}_{\pm})_{k}} =& \nrm{u}_{L^{\infty} L^{2}} +
  \nrm{(D_{t} \mp \abs{D}) u}_{N_{\pm}}.
\end{align*}
both of which scale like $L^{\infty} L^{2}$. These norms are used in
the parametrix construction in Section~\ref{sec:paradiff}.

\begin{remark}
  Again for the reader familiar with \cite{KST}, we note that our
  definition of $S^{\sharp}_{k}$ differs from that in \cite{KST} by a
  factor of $2^{k}$ (in \cite{KST}, $S^{\sharp}_{k}$ scales like
  $L^{\infty} \dot{H}^{1}$).
\end{remark}

\pfstep{Scattering (or controlling) norm $S^{1}$} Given any $\sgm \in
\bbR$, we define $S^{\sgm} = \ell^{2} S^{\sgm}$, i.e.,
\begin{equation} \label{eq:S1-def} \nrm{u}_{S^{\sgm}}^{2} = \sum_{k}
  \nrm{P_{k} u}_{S^{\sgm}_{k}}^{2}, \qquad \nrm{u}_{S^{\sgm}_{k}} =
  2^{(\sgm -1) k} \left( \nrm{\nb u}_{S} + \nrm{\Box u}_{L^{2}
      \dot{H}^{-\frac{1}{2}}} \right).
\end{equation}
This norm scales like $L^{\infty} \dot{H}^{\sgm}$. The norm $S^{1}$
will be the main scattering (or controlling) norm, in the sense that
finiteness of this norm for a caloric Yang--Mills wave would imply
finer properties of the solution itself and those nearby (see
Theorem~\ref{thm:structure} below).

\pfstep{$X^{\sgm, b, p}_{r}$-type norms} To close the estimates for
caloric Yang--Mills waves, we need norms which give additional
control\footnote{In particular, with $\ell^{1}$-summability in dyadic
  frequencies.} off the characteristic cone (i.e., ``high'' modulation
regime). We use an $L^{p} L^{p'}$ generalization of the usual $L^{2}
L^{2}$-based $X^{\sgm, b}$-norm, defined as follows: For $\sgm, b \in
\bbR$, $1 \leq p, r < \infty$, let
\begin{equation} \label{eq:Xsbp-def} \nrm{u}_{(X^{\sgm, b,
      p}_{r})_{k}} = 2^{\sgm k} \bigg( \sum_{j} \Big( 2^{b j} \big(
  \sum_{\omg} \nrm{P_{k} Q_{j} P^{\omg}_{\frac{j-k}{2}} u}_{L^{p}
    L^{p'}}^{2} \big)^{\frac{1}{2}} \Big)^{r} \bigg)^{\frac{1}{r}},
\end{equation}
where $p' = \frac{p}{p-1}$ is the dual Lebesgue exponent of $p$. The
cases $p = \infty$ or $r = \infty$ are defined in the obvious
manner. We also define the dyadic norm $(X^{\sgm, b, p}_{\pm, r})_{k}$
by replacing $Q_{j}$ by $Q^{\pm}_{j}$ in the above definition.

When $p = 2$, by orthogonality we have
\begin{equation*}
  \nrm{u}_{(X^{\sgm, b, 2}_{r})_{k}} = 2^{\sgm k} \Big( \sum_{j}  \big( 2^{b j}  \nrm{P_{k} Q_{j} u}_{L^{2} L^{2}} \big)^{r} \Big)^{\frac{1}{r}}.
\end{equation*}
Analogous identities hold for $X^{\sgm, b, 2}_{\pm, r}$. To be
consistent with the usual notation, we will often omit the exponents
$p$ and $r$ when they are equal to $2$, i.e., $X^{\sgm, b}_{r} =
X^{\sgm, b, 2}_{r}$, $X^{\sgm, b} = X^{\sgm, b, 2}_{2}$, $X^{\sgm,
  b}_{\pm, r} = X^{\sgm, b, 2}_{\pm, r}$ and $X^{\sgm, b}_{\pm} =
X^{\sgm, b, 2}_{\pm, 2}$.

Before we introduce the specific norms we use, for logical clarity, we
first fix the parameters that will be used.  We introduce $b_{0}$,
$b_{1}$ and $p_{0}$, which are smaller than but
close to $\frac{1}{4}$, $\frac{1}{2}$ and $\infty$,
respectively. More precisely, we fix
\begin{equation*}
	b_{0} = \frac{1}{4} - \dlts, \qquad 
	b_{1} = \frac{1}{2} - 10 \dlts, \qquad
	1 - \frac{1}{p_{0}} = 5 \dlts,
\end{equation*}
so that
\begin{gather}
  0 < \frac{1}{4} - b_{0} < \frac{1}{48}, \quad
  2 \left(\frac{1}{4} - b_{0}\right) < 1- \frac{1}{p_{0}} < \frac{1}{24}, \label{eq:p0-b} \\
  \frac{1}{4} < b_{1} < \frac{1}{2} - \left(1 - \frac{1}{p_{0}}\right). \label{eq:b2}
\end{gather}

We define
\begin{align*}
  \nrm{f}_{\Box Z^{1}_{k}} =& \nrm{Q_{<k+C} f}_{X^{-\frac{5}{4} - b_{0}, -\frac{3}{4} + b_{0}, 1}_{1}}, \\
  \nrm{u}_{Z^{1}_{k}} =& \nrm{\Box u}_{\Box Z^{1}_{k}} = \nrm{Q_{<k+C}
    u }_{X^{-\frac{1}{4} - b_{0}, \frac{1}{4} + b_{0}, 1}_{1}}.
\end{align*}
Note that the $Z^{1}_{k}$-norm scales like $L^{\infty}
\dot{H}^{1}$. As in \cite{KST, KT}, this norm is used as an auxiliary
device to control the bulk of nonlinearities (i.e., the part where the
secondary null structure is \emph{not} necessary) when re-iterating
the Yang--Mills equations; see the proofs of
Propositions~\ref{prop:diff-single}--\ref{prop:diff-tri-nf-w} in
Section~\ref{sec:multi}.

\begin{remark} \label{rem:Z-old} The $Z^{1}$-norm used in \cite{KST}
  corresponds to the case $b_{0} = 0$. Therefore, our $Z^{1}$-norm is
  \emph{weaker} than the $Z^{1}$-norm in \cite{KST}. This modification
  is made to handle the contribution of $\Box^{-1} \P [A^{\alp},
  \rd_{\alp} A]$ in the re-iteration procedure; see
  Proposition~\ref{prop:paradiff-df-himod}.
\end{remark}

Next, we also define
\begin{equation*}
  \nrm{f}_{(\Box \Xdf^{1})_{k}} = \nrm{Q_{<k+C} f}_{X^{\frac{3}{2} - \frac{3}{p_{0}} + (\frac{1}{4} - b_{0}) \tht_{0}, -\frac{1}{2} -(\frac{1}{4} - b_{0}) \tht_{0}, p_{0}}_{\infty}}
\end{equation*}
where $\tht_{0} = 2 (\frac{1}{p_{0}} - \frac{1}{2})$, as well as the
intermediate norm
\begin{equation*}
  \nrm{f}_{(\Box \mXdf^{1})_{k}} 
  = \nrm{Q_{<k+C} f}_{X^{\frac{5}{4} - \frac{3}{p_{0}} + (\frac{1}{4} - b_{0}) \tht_{0}, -\frac{1}{4} - (\frac{1}{4} - b_{0}) \tht_{0}, p_{0}}_{1}}.
\end{equation*}
These norms scale like $L^{1} L^{2}$. Clearly, $(\Box \Xdf^{1})_{k}
\subseteq (\Box \mXdf^{1})_{k}$.  Given any caloric Yang--Mills wave
$A$ with a finite $S^{1}$-norm, we will put $\Box \P A$ in $\ell^{1}
\Box \mXdf^{1}$ and $\Box \P A \in \ell^{1} \Box \Xdf^{1}$; see
Proposition~\ref{prop:uS-bnd}.

Note that the following embeddings hold:
\begin{align}
  P_{k} Q_{j} L^{1} L^{2}
  \subseteq & 2^{\frac{1}{4}(j-k)} \Box Z^{1}_{k},  \label{eq:L1L2-Z-j} \\
  X^{0, -\frac{1}{2}}_{\infty} \cap \Box Z^{1}_{k} \subseteq & (\Box
  \Xdf^{1})_{k} \subseteq (\Box \mXdf^{1})_{k}. \label{eq:N-Z-Xdf}
\end{align}
Estimate \eqref{eq:L1L2-Z-j} follows from Bernstein, whereas the first
embedding in \eqref{eq:N-Z-Xdf} follows by a simple interpolation
argument. We omit the straightforward proofs.

Finally, as in \cite{KT}, we also need to use the function space
\begin{equation*}
  \ell^{1} X^{-\frac{1}{2} + b_{1} , -b_{1}},
\end{equation*}
which also scales like $L^{1} L^{2}$. Given any caloric Yang--Mills
wave $A$ with a finite $S^{1}$-norm, we will be able to place $\Box \P
A$ in $\ell^{1} X^{-\frac{1}{2} + b_{1} , -b_{1}}$. This bound, in
turn, is used crucially in the parametrix construction.

\pfstep {High modulation norms $\uX^{1}$ and $\mX^{1}$ for $1$-forms}
In our analysis below, we need to use different high modulation norms
for the Leray projection $\P A$ than for the general components of a
caloric Yang--Mills wave. Hence it is convenient to define norms for
$1$-forms with this distinction built in.

Let $A$ and $G$ be spatial $1$-forms on $\bbR^{1+4}$. We define
\begin{equation*}
  \nrm{G}_{\Box \uX^{1}_{k}} 
  = \nrm{G}_{L^{2} \dot{H}^{-\frac{1}{2}}} 
  + \nrm{G}_{L^{\frac{9}{5}} \dot{H}^{-\frac{4}{9}}} 
  + \nrm{\P G}_{(\Box \Xdf^{1})_{k}}.
\end{equation*}
For any $\sgm \in \bbR$, we define
\begin{equation*}
  \nrm{G}_{\Box \uX^{\sgm}_{k}} = 2^{(\sgm-1) k} \nrm{G}_{\Box \uX^{1}_{k}}, \qquad
  \nrm{A}_{\uX^{\sgm}_{k}} = \nrm{\Box A}_{\Box \uX^{\sgm}_{k}}.
\end{equation*}
Similarly, we define
\begin{equation*}
  \nrm{G}_{\Box \mX^{1}_{k}} 
  = \nrm{G}_{L^{2} \dot{H}^{-\frac{1}{2}}} 
  + \nrm{G}_{L^{\frac{9}{5}} \dot{H}^{-\frac{4}{9}}} 
  + \nrm{\P G}_{(\Box \mXdf^{1})_{k}},
\end{equation*}
as well as $\Box \mX^{\sgm}_{k}$ and $\mX^{\sgm}_{k}$. Given any
caloric Yang--Mills wave $A$ with a finite $S^{1}$-norm, we will place
$\Box A$ successively in $\ell^{1} \Box \mX^{1}$ and $\Box A \in \ell^{1} \Box
\uX^{1}$; see Proposition~\ref{prop:uS-bnd}.

We have the embeddings
\begin{equation*}
  P_{k} (L^{1} L^{2} \cap L^{2} \dot{H}^{-\frac{1}{2}})
  \subseteq (\Box \uX^{1})_{k} \subseteq (\Box \mX^{1})_{k}.
\end{equation*}
Since $L^{1} L^{2} \subseteq N$, it follows that
\begin{equation} \label{eq:L1L2-X} \nrm{G}_{N \cap \Box \uX^{1}} \aleq
  \nrm{G}_{L^{1} L^{2} \cap L^{2} \dot{H}^{-\frac{1}{2}}}.
\end{equation}

\pfstep{Strengthened solution norm $\uS^{1}$} Putting together $S^{1}$
and $\uX^{1}$, for a $1$-form $A$ on $\bbR^{1+4}$, we define
\begin{equation*}
  \nrm{A}_{\uS^{\sgm}_{k}} = \nrm{A}_{S^{\sgm}_{k}} + \nrm{\Box A}_{\Box \uX^{\sgm}_{k}} .
\end{equation*}

\pfstep{Core elliptic norm $Y$} We return to functions $u$ on
$\bbR^{1+4}$. We define
\begin{equation*}
  \nrm{u}_{Y_{k}} = \nrm{u}_{L^{2} \dot{H}^{\frac{1}{2}}} + \nrm{u}_{L^{p_{0}} \dot{W}^{2 - \frac{3}{p_{0}}, p_{0}'}},
\end{equation*}
where $p_{0}$ was fixed in \eqref{eq:p0-b} above. This norm scales
like $L^{\infty} L^{2}$.

\pfstep{Main elliptic norm $Y^{1}$} For $\sgm \in \bbR$, we define
\begin{equation*}
  \nrm{u}_{Y^{\sgm}}^{2} = \sum_{k} \nrm{P_{k} u}_{Y^{\sgm}_{k}}^{2}, \quad
  \nrm{u}_{Y^{\sgm}_{k}} = 2^{\sgm k} \left( \nrm{u}_{Y_{k}} + 2^{-k} \nrm{\rd_{t} u}_{L^{2} \dot{H}^{\frac{1}{2}}} \right).
\end{equation*}
This norm scales like $L^{\infty} \dot{H}^{\sgm}$. We will put the
elliptic components $A_{0}$ and $\P^{\perp} A = \lap^{-1} \rd_{x}
\rd^{\ell} A_{\ell}$ of a caloric Yang--Mills wave in $Y^{1}$.

\subsubsection{Interval localization and extension}
So far, the function spaces have been defined over the whole
space-time $\bbR^{1+4}$. In our analysis, we also need to consider
localization of these spaces on finite time intervals. We use the same
set-up as \cite{OT2, KT}.

For most of our function spaces (with the important exceptions of
$\Xdf^{1}$, $\mXdf^{1}$, $\uX^{1}$ and $\mX^{1}$; see below), we take
a simple route and define the interval-localized counterparts by
restriction. In particular, given a time interval $I \subseteq \bbR$,
we define
\begin{equation} \label{eq:int-loc-def} \nrm{u}_{S^{\sgm}[I]} =
  \inf_{\tilde{u} \in S^{\sgm} : u = \tilde{u} \restriction_{I}}
  \nrm{\tilde{u}}_{S^{\sgm}}, \quad \nrm{u}_{S[I]} = \inf_{\tilde{u}
    \in S : u = \tilde{u} \restriction_{I}} \nrm{\tilde{u}}_{S}, \quad
  \nrm{f}_{N[I]} = \inf_{\tilde{f} \in N : f = \tilde{f}
    \restriction_{I}} \nrm{\tilde{f}}_{N},
\end{equation}

An important technical question then is that of finding a common
extension procedure outside $I$ which preserve these norms. The
following proposition provides an answer.
\begin{proposition} \label{prop:int-loc} Let $I$ be a time interval.
  \begin{enumerate}
  \item Let $\chi_{I}$ be the characteristic function of $I$. Then we
    have the bounds
    \begin{align}
      \nrm{\chi_{I} u}_{S} \aleq & \nrm{u}_{S}, \quad \nrm{\chi_{I}
        f}_{N} \aleq \nrm{f}_{N}. \label{eq:int-loc-S-N} \end{align}
    For a fixed function $f$ on $\bbR^{1+4}$, the norms $\nrm{\chi_{I}
      f}_{N}$ and $\nrm{f}_{N[I]}$ are also continuous as a function
    of the endpoints of $I$. We also have the linear estimates
    \begin{align}
      \nrm{\nb u}_{S[I]} \aleq & \nrm{\nb u(0)}_{L^{2}} + \nrm{\Box u}_{N[I]}, \label{eq:lin-est-S} \\
      \nrm{u}_{S^{1}[I]} \aleq & \nrm{\nb u(0)}_{L^{2}} + \nrm{\Box
        u}_{N \cap L^{2} \dot{H}^{-\frac{1}{2}}
        [I]}. \label{eq:lin-est-S1}
    \end{align}

  \item Consider any partition $I = \cup_{k} I_{k}$. Then the $N$ and
    $L^{2} \dot{H}^{-\frac{1}{2}}$ are interval square divisible,
    i.e.,
    \begin{equation} \label{eq:int-div-N} \sum_{k}
      \nrm{f}_{N[I_{k}]}^{2} \aleq \nrm{f}_{N[I]}^{2}, \quad \sum_{k}
      \nrm{f}_{L^{2} \dot{H}^{-\frac{1}{2}}[I_{k}]}^{2} \aleq
      \nrm{f}_{L^{2} \dot{H}^{-\frac{1}{2}}[I]}^{2}, \quad
    \end{equation}
    and the $S$ and $S^{1}$ are interval square summable, i.e.,
    \begin{equation} \label{eq:int-sum-S} \nrm{u}_{S[I]}^{2} \aleq
      \sum_{k} \nrm{u}_{S[I_{k}]}^{2}, \quad \nrm{u}_{S^{1}[I]}^{2}
      \aleq \sum_{k} \nrm{u}_{S^{1}[I_{k}]}^{2}.
    \end{equation}
  \end{enumerate}
\end{proposition}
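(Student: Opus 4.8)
The plan is to adapt the interval-localization machinery of \cite{KST}, as streamlined in \cite{OT2, KT}, organizing the argument around the four assertions. \emph{Boundedness of sharp cutoffs.} For \eqref{eq:int-loc-S-N} I would decompose $u$ and $f$ into Littlewood--Paley pieces and estimate multiplication by $\chi_{I}$ on each dyadic summand of $S$ and of $N$ separately. For the Lebesgue-type components --- the Strichartz norms $S^{str}_{k}$, the square-function norm $S^{sq}_{k}$, and the $L^{1} L^{2}$ summand of $N$ --- the bound is immediate from $\abs{\chi_{I} u} \le \abs{u}$ pointwise. The content is in the modulation-localized components ($X^{0, \pm\frac{1}{2}}_{1}$, $X^{0, \frac{1}{2}}_{\infty}$) and the null-frame components ($NE$, $PW^{\mp}_{\omega}(\ell)$): there I would use that $\chi_{I}$, as a Fourier multiplier in $t$, has kernel of mass $O(1)$ and symbol decaying like $\abs{\tau}^{-1}$, so that it essentially preserves modulation localization up to a logarithmic loss that is absorbed by the $\ell^{1}$-in-modulation structure, while the part pushed to higher modulation lands in $L^{1} L^{2} \subseteq N$ by Bernstein in time. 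These are exactly the sharp-cutoff lemmas of \cite[Lemmas~4 and 6]{Tao2} and \cite[Section~11]{KST}; the only new components here, $S^{sq}_{k}$ and the weakened $Z^{1}$, are handled the same way. Endpoint-continuity of $\nrm{\chi_{I} f}_{N}$ and $\nrm{f}_{N[I]}$ then follows by dominated convergence, since $\chi_{I} f - \chi_{I'} f$ is supported on the symmetric difference of the intervals.

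\emph{Linear estimates.} For \eqref{eq:lin-est-S}--\eqref{eq:lin-est-S1}, given $u$ on $I$ I would choose a near-optimal extension $\tilde{f}$ of $\Box u \rst I$ in $N$ (respectively in $N \cap L^{2} \dot{H}^{-\frac{1}{2}}$), solve $\Box v = \tilde{f}$ globally with $\nb v(0) = \nb u(0)$, apply the global energy estimate $\nrm{\nb v}_{S} \aleq \nrm{\nb v(0)}_{L^{2}} + \nrm{\Box v}_{N}$ built into the construction of the core space $S$, and observe that $v$ and $u$ satisfy the same wave equation on $I$ with the same Cauchy data, hence coincide there; the $L^{2} \dot{H}^{-\frac{1}{2}}$ term of $S^{1}$ is then added directly. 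This step only uses the globally-defined $S$ and is logically prior to everything concerning $S[I]$.

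\emph{Divisibility and summability.} Square-divisibility of $L^{2} \dot{H}^{-\frac{1}{2}}$ is the genuine locality identity $\sum_{k} \nrm{f}^{2}_{L^{2} \dot{H}^{-\frac{1}{2}}[I_{k}]} = \nrm{f}^{2}_{L^{2} \dot{H}^{-\frac{1}{2}}[I]}$, and for the $L^{1} L^{2}$ summand of $N$ it is the elementary inequality $\sum_{k} a_{k}^{2} \le (\sum_{k} a_{k})^{2}$ applied to $a_{k} = \nrm{f}_{L^{1} L^{2}[I_{k}]}$; for the $X^{0, -\frac{1}{2}}_{1}$ summand I would combine the cutoff bound for $\chi_{I_{k}}$ from the first step with almost-orthogonality of modulation-localized pieces whose time supports lie in disjoint $I_{k}$, which promotes the uniform bound to the square-summable bound \eqref{eq:int-div-N}. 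For the square-\emph{summability} \eqref{eq:int-sum-S} of $S$ and $S^{1}$, I would reduce to the individual Littlewood--Paley components and, on each $I_{m}$, take near-optimal extensions $v_{m}$ of $u \rst{I_{m}}$ and form the global function $\sum_{m} \chi_{I_{m}} v_{m}$, which equals $u$ on all of $I$; bounding its $S$-norm by $(\sum_{m} \nrm{v_{m}}_{S}^{2})^{\frac{1}{2}}$ once more reduces to the sharp-cutoff estimates of the first step, now exploiting the disjointness of the $I_{m}$, exactly as in \cite{OT2, KT}.

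The \emph{main obstacle} is the first step for the non-local components, together with the quantitative refinement of it used in the third step: understanding precisely how a sharp temporal cutoff redistributes modulation and interacts with the $NE$ and $PW^{\mp}_{\omega}(\ell)$ null-frame structures, and doing so with square-summability over an \emph{arbitrary} partition rather than merely a uniform bound. This is the genuine technical core of the interval-localization framework, imported from \cite{Tao2, KST} with only cosmetic modifications for the present function spaces.
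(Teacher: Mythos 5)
Your proposal is essentially the approach the paper takes: the paper does not give a proof of Proposition~\ref{prop:int-loc} but simply refers to \cite[Proposition~3.3]{OT2}, and the argument there (sharp time-cutoff bounds for the modulation-localized and null-frame components following \cite{Tao2,KST}, a global energy estimate combined with near-optimal extensions for the linear bounds, and orthogonality of disjointly supported cutoffs plus gluing of local extensions for divisibility/summability) is exactly the machinery you outline. The only cosmetic remark is that $Z^{1}$ is not a component of $S$ or $N$, so it plays no role in this proposition; the genuinely new piece is $S^{sq}_{k}$, which, as you say, is an $L^{p}_{x} L^{2}_{t}$ norm and hence trivial under sharp cutoffs and partitions.
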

For a proof, we refer to \cite[Proposition~3.3]{OT2}.
\begin{remark} \label{rem:ext-S1} As a consequence of part~(1), up to
  equivalent norms, we can replace the arbitrary extension in
  \eqref{eq:int-loc-def} by the zero extension in the case of $S$ and
  $N$, and by the homogeneous waves with $(\phi, \rd_{t} \phi)$ at each
  endpoint as data outside $I$ in the case of $S^{1}$.
\end{remark}

The elliptic norms $Y$ and $Y^{1}$ only involve spatial multipliers
and norms of the form $L^{p} L^{q}$, so their interval-localization
$Y[I]$ and $Y^{1}[I]$ are obviously defined (either by restriction, or
using the $L^{p} L^{q}[I]$-norm; both are equivalent). In particular,
in the case of $Y$, observe that
\begin{equation*}
  \nrm{u}_{Y[I]} = \nrm{\chi_{I} u}_{Y} \leq \nrm{u}_{Y},
\end{equation*}
so the zero extension can be used.

On the other hand, given a function $u$ on $I$, we \emph{directly}
define the $\nrm{u}_{(\Xdf^{1})_{k}[I]}$
[resp. $\nrm{u}_{(\mXdf)^{1}_{k}[I]}$] to be
$\nrm{u^{ext}}_{(\Xdf^{1})_{k}[I]}$
[resp. $\nrm{u^{ext}}_{(\mXdf)^{1}_{k}[I]}$, where $u^{ext}$ is the
extension of $u$ outside $I$ by homogeneous waves. Equivalently, for
$(\Box \Xdf^{1})_{k}$ and $(\Box \mXdf^{1})_{k}$, we define
\begin{equation*}
  \nrm{f}_{(\Box \Xdf^{1})_{k}[I]} = \nrm{\chi_{I} f}_{(\Box \Xdf^{1})_{k}}, \quad
  \nrm{f}_{(\Box \mXdf^{1})_{k}[I]} = \nrm{\chi_{I} f}_{(\Box \Xdf^{1})_{k}}.
\end{equation*}
Accordingly, we define
\begin{equation*}
  \nrm{G}_{\Box \uX^{1}_{k}[I]} = \nrm{G}_{L^{2} \dot{H}^{-\frac{1}{2}}[I]} + \nrm{G}_{L^{\frac{9}{5}} \dot{H}^{-\frac{4}{9}}[I]} + \nrm{\chi_{I} \P G}_{(\Box \Xdf^{1})_{k}}, \quad
  \nrm{A}_{\uX^{1}_{k}[I]} = \nrm{\Box A}_{\Box \uX^{1}_{k}[I]},
\end{equation*}
and similarly for $\Box \mX^{1}[I]$ and $\mX^{1}[I]$.

The advantage of this definition is clear: We may thus use a common
extension procedure (namely, by homogeneous waves) for $S^{1}$ and
$\uX^{1}$. The price we pay is that in estimating the $\Box \Xdf^{1}$-
and the $\Box \mXdf^{1}$-norms, we need to carefully absorb the sharp
time cutoff $\chi_{I}$.

\subsubsection{Sources of smallness: Divisibility, energy dispersion 
and short time interval}
In this work, we rely on several sources of smallness for analysis of
caloric Yang--Mills waves.

One important source of smallness is divisibility, which refers to the
property of a norm on an interval that it can be made arbitrarily
small by splitting the interval into a controlled number of
pieces. Unfortunately, our main function space $S^{1}[I]$ is far from
satisfying such a property (see, however,
Theorem~\ref{thm:structure}.(6) below), which causes considerable
difficulty. Our workaround, as in \cite{OT2}, is to utilize a weaker
yet divisible norm
\begin{equation} \label{eq:DS1-def} \nrm{u}_{DS^{1}[I]} =
  \nrm{\abs{D}^{-\frac{5}{6}} \nb u}_{L^{2} L^{6}[I]} + \nrm{\nb
    u}_{\Str^{0}[I]} + \nrm{\Box u}_{L^{2} \dot{H}^{-\frac{1}{2}}[I]}.
\end{equation}

Another important source of smallness is \emph{energy dispersion}:
\begin{definition} \label{def:ed-1} Given any $m \in \bbZ$, we define
  the \emph{energy dispersion below scale $2^{-m}$} (or above
  frequency $2^{m}$) of $u$ of order $0$ and $1$ to be, respectively,
  \begin{equation} \label{eq:ed} \nrm{u}_{ED_{\geq m}[I]} := \sup_{k
      \in \bbZ} 2^{-\dltb (m-k)_{+}} 2^{-2k} \nrm{P_{k}
      u}_{L^{\infty} L^{\infty}[I]},
  \end{equation}
  and
  \begin{equation} \label{eq:ed-1} \nrm{u}_{ED^{1}_{\geq m}[I]} :=
    \sup_{k \in \bbZ} 2^{-\dltb (m-k)_{+}} 2^{-2k} \nrm{\nb
      P_{k} u}_{L^{\infty} L^{\infty}[I]}.
  \end{equation}
\end{definition}
The quantity $\nrm{\cdot}_{ED_{\geq m}[I]}$
(resp. $\nrm{\cdot}_{ED^{1}_{\geq m}[I]}$) is used at the level of
the curvature $F$ (resp. the connection $A$). As we work mostly at the
level of the connection, unless stated otherwise, by energy dispersion
we usually refer to the order $1$ case.

Clearly, $ED^{1}_{\geq m}[I]$ fails to be useful at frequencies below
$O(2^{m})$. In this regime, we exploit instead the \emph{length}
$\abs{I}$ of the time interval as a source of smallness. Due to the
scaling property of $\Box$, we must require $2^{m} \abs{I}$ to be
sufficiently small. To conveniently pack together the previous two
concepts, we introduce the notion of an \emph{$(\veps, M)$-energy
  dispersed function on an interval}.
\begin{definition} [$(\veps, M)$-energy dispersed function on an
  interval]\label{def:eps-disp}
  Let $I$ be a time interval, and let $u \in S^{1}[I]$. For $\veps > 0$ and $M > 0$, we will say
  that the pair $(u, I)$ is \emph{$(\veps, M)$-energy dispersed} if
  there exists some $m \in \bbZ$ such that the following
  properties hold:
  \begin{itemize}
  \item ($S^{1}$-norm bound)
    \begin{equation} \label{eq:eps-disp-S1} \nrm{u}_{S^{1}[I]} \leq M;
    \end{equation}
  \item (small energy dispersion)
    \begin{equation} \label{eq:eps-disp-ed} \nrm{u}_{ED^{1}_{\geq
          m}[I]} \leq \veps M;
    \end{equation}

  \item (high modulation bound)
    \begin{equation} \label{eq:eps-disp-himod} \nrm{\Box u}_{L^{2}
        \dot{H}^{-\frac{1}{2}}[I]} \leq \veps M;
    \end{equation}
  \item (short time interval) $\abs{I} \leq \veps 2^{-m}$.
  \end{itemize}
\end{definition}
Observe (by interpolation) that if $(u, I)$ is $(\veps, M)$-energy
dispersed, then
\begin{equation} \label{eq:ed-str} \sup_{k} \nrm{P_{k}
    u}_{\Str^{1}[I]} \leq C \veps^{\dlta} M.
\end{equation}

Finally, we state a proposition showing how the norms $DS^{1}[I]$ and
$ED^{1}_{\geq m}[I]$ behave under the extension procedure described
above.  Given an interval $I$, we denote by $\chi_{I}^{k}$ a
generalized cutoff function adapted to the scale $2^{-k}$:
\begin{equation} \label{eq:gen-cutoff} \chi_{I}^{k}(t) = (1 + 2^{k}
  \dist(t, I))^{-N},
\end{equation}
where $N$ is a sufficiently large number. Let us recall
\cite[Proposition~3.4]{OT2}\footnote{To be pedantic,
  \cite[Proposition~3.4]{OT2} only corresponds to the case $\kpp =
  0$. However, the required modification of the proof is
  straightforward.}:

\begin{proposition} \label{prop:ext} Let $k \in \bbZ$, $\kpp \geq 0$
  and $I$ be a time interval such that $\abs{I} \geq
  2^{-k-\kpp}$. Consider a function $u_{I}$ on $I$ localized at
  frequency $2^{k}$, and denote by $u_{I}^{ext}$ its extension outside
  $I$ as homogeneous waves. Then we have
  \begin{align}
    2^{-k} \nrm{\chi_{I}^{k} \nb u_{I}^{ext}}_{L^{q} L^{r}}
    \aleq_{N}  & \ 2^{C \kpp} \left( \nrm{u_{I}}_{L^{q} L^{r}[I]} + 2^{(\frac{1}{2} - \frac{1}{q} - \frac{4}{r})} \nrm{\Box u_{I}}_{L^{2} L^{2} [I]} \right), \label{eq:ext-str}\\
    2^{-2k} \nrm{\chi_{I}^{k} \nb u^{ext}_{I}}_{L^{\infty} L^{\infty}}
    \aleq_{N} & \ 2^{-2k} \nrm{\nb u_{I}}_{L^{\infty} L^{\infty}[I]}
    , \label{eq:ext-ed}
  \end{align}
  where $(q, r)$ is any pair of admissible Strichartz exponents on
  $\bbR^{1+4}$.
\end{proposition}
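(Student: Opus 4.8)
The statement is \cite[Proposition~3.4]{OT2} in the special case $\kpp = 0$ (with the hypothesis $\abs{I} \geq 2^{-k}$), so the plan is to reproduce that argument while tracking the one place where the weaker hypothesis $\abs{I} \geq 2^{-k-\kpp}$ costs a power of $2$. Two preliminary facts organize everything. First, the free wave propagators $\cos(\tau \abs{D})$ and $\abs{D}^{-1} \sin(\tau \abs{D})$ are spatial Fourier multipliers, so the homogeneous-wave extension does not disturb the spatial frequency support: $u_I^{ext}(t, \cdot)$ remains supported in $\abs{\xi} \aeq 2^k$ for every $t$, which licenses spatial Bernstein throughout. Second, since the matched Cauchy data makes $u_I^{ext}$ and $\nb u_I^{ext}$ continuous across the endpoints $t = t_\pm$ of $I$, one has $\Box u_I^{ext} = \chi_I \Box u_I$ as distributions on $\bbR^{1+4}$; Duhamel's formula then yields the splitting $u_I^{ext} = v^{hom} + v^{inh}$, where $v^{hom}$ is the global free wave agreeing with $u_I^{ext}$ on $(-\infty, t_-]$ (i.e.\ the free evolution of $(u_I, \rd_t u_I)(t_-)$) and $v^{inh} = \Box^{-1}(\chi_I \Box u_I)$ with vanishing data at $-\infty$; uniqueness on $I$ and free propagation outside confirm this identity on all of $\bbR$.

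For \eqref{eq:ext-ed} I would argue directly. On $I$ the weight satisfies $\chi_I^k \leq 1$, so nothing is needed; for $t \notin I$, $u_I^{ext}$ is the free evolution of $(u_I, \rd_t u_I)(t_\pm)$ from the nearer endpoint over elapsed time $\tau = \dist(t, I)$, and the crude dispersive bound for the frequency-$2^k$ half-wave propagator on $\bbR^4$ — $\nrm{e^{\pm i \tau \abs{D}} P_k g}_{L^\infty} \aleq \langle 2^k \tau \rangle^{\frac{3}{2}} \nrm{g}_{L^\infty}$, which follows from the $O(\langle 2^k \tau \rangle^{3/2})$ bound on the $L^1$-mass of its kernel — gives $\nrm{\nb u_I^{ext}(t)}_{L^\infty} \aleq \langle 2^k \dist(t, I) \rangle^{\frac{3}{2}} \nrm{\nb u_I(t_\pm)}_{L^\infty}$. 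Multiplying by $\chi_I^k(t) = (1 + 2^k \dist(t, I))^{-N}$ with $N \gg \tfrac{3}{2}$ and taking the supremum over $t$ absorbs that factor, proving \eqref{eq:ext-ed} with no loss in $\kpp$.

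For \eqref{eq:ext-str} I would treat $v^{hom}$ and $v^{inh}$ separately, both frequency-localized at $2^k$. For $v^{inh}$: the linear wave estimate $\nrm{w}_{L^q L^r} \aleq 2^{(1 - \frac1q - \frac4r) k} (\nrm{\nb w(0)}_{L^2} + \nrm{\Box w}_{L^2 \dot{H}^{-\frac{1}{2}}})$ applied to $v^{inh}$ (which has zero data), together with $\chi_I^k \leq 1$ and $\nrm{\chi_I \Box u_I}_{L^2 \dot{H}^{-\frac{1}{2}}} \aleq 2^{-\frac{k}{2}} \nrm{\Box u_I}_{L^2 L^2[I]}$, gives $2^{-k} \nrm{\chi_I^k \nb v^{inh}}_{L^q L^r} \aleq 2^{(\frac12 - \frac1q - \frac4r) k} \nrm{\Box u_I}_{L^2 L^2[I]}$, the second term on the right of \eqref{eq:ext-str}; no $\kpp$-loss arises here. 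For $v^{hom}$: over $I$ itself I write $v^{hom} = u_I - v^{inh}$, so $\nrm{v^{hom}}_{L^q L^r[I]} \leq \nrm{u_I}_{L^q L^r[I]} + \nrm{v^{inh}}_{L^q L^r[I]}$ with the last term controlled as above; outside $I$ I split into dyadic distance-shells from $I$, on the $j$-th of which $\chi_I^k \aleq 2^{-jN}$ while the global-in-time Strichartz estimate for the free wave $v^{hom}$ controls its $L^q L^r$-norm there, and summing in $j$ reduces matters to bounding the endpoint energy $\nrm{\nb u_I(t_\pm)}_{L^2}$. This last quantity is estimated from the energy identity on $I$, an averaging/pigeonhole argument over base times, spatial Bernstein, and a careful treatment of the sharp cutoff $\chi_I$ in the $v^{inh}$-piece — precisely the computation of \cite[Proposition~3.4]{OT2}. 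The sole effect of assuming $\abs{I} \geq 2^{-k-\kpp}$ in place of $\abs{I} \geq 2^{-k}$ is that the averaging step produces factors $\abs{I}^{-c} \aleq 2^{ck + c\kpp}$ rather than $2^{ck}$, which is exactly the stated loss $2^{C\kpp}$.

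The main obstacle — and the step I would execute most carefully — is this last one: passing from an $L^2$-based endpoint quantity $\nrm{\nb u_I(t_\pm)}_{L^2}$ to a bound by the $L^q L^r[I]$-norm of $u_I$, which forces one to separate out the free-wave part of $u_I$ on $I$ before applying Bernstein and Strichartz and to accommodate the sharp time cutoff $\chi_I$ in the complementary Duhamel piece. Since this is carried out in full in \cite[Proposition~3.4]{OT2} for $\kpp = 0$, and the $\kpp$-dependence enters only through the elementary bookkeeping $\abs{I}^{-1} \aleq 2^{k+\kpp}$, I would invoke that argument with this modification rather than redo it from scratch.
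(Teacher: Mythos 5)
Your proposal is correct and matches the paper's approach: the paper itself cites \cite[Proposition~3.4]{OT2} and remarks in a footnote that the $\kpp$-modification is a straightforward adaptation, which is precisely your strategy. Your additional sketch of the $v^{hom}/v^{inh}$ splitting, the dispersive $L^\infty$ bound with $\langle 2^k\tau\rangle^{3/2}$ growth, and the identification of $\abs{I}^{-1}\aleq 2^{k+\kpp}$ as the sole source of the $2^{C\kpp}$ loss are all accurate.
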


\begin{remark} 
  Since $2^{-k} [\chi_{I}^{k}, \nb] = 2^{-k} (\nb \chi_{I}^{k})$ is
  simply multiplication by another generalized cutoff function adapted
  to the frequency scale $2^{k}$, the conclusions of
  Proposition~\ref{prop:ext} also hold with $\chi_{I}^{k} 2^{-k} \nb
  u_{I}^{ext}$ replaced by $2^{-k} \nb (\chi_{I}^{k} u_{I}^{ext})$ on
  the LHSs.
\end{remark}

\subsection{Estimates for quadratic
  nonlinearities} \label{subsec:ests-2} Here we state estimates for
the quadratic nonlinearities in Theorems~\ref{thm:main-eq} and
\ref{thm:main-eq-s}. All estimates stated here are proved in
Section~\ref{subsec:ests-pf}.

Throughout this and the next subsections, we will denote by $A$ a
$\g$-valued \emph{spatial} 1-form $A = A_{j} \, \ud x^{j}$ on $I
\times \bbR^{4}$ for some time interval $I$. To denote a $\g$-valued
\emph{space-time} 1-form, we use the notation $A_{t,x} = A_{\alp} \,
\ud x^{\alp}$. We will use $B$ [resp. $B_{t,x}$] to
denote\footnote{Note that this convention is different from
  \cite{OTYM1} and Section~\ref{sec:caloric}, where $B$ was reserved
  for caloric gauge linearized Yang--Mills heat flows.} another
$\g$-valued spatial [resp. space-time] 1-form on $I \times
\bbR^{4}$. Unless otherwise stated, all frequency envelopes will be
assumed to be $\dlte$-admissible.

We begin with the quadratic nonlinearities in the equations for
$A_{0}$, $\rd_{t} A_{0}$ and $\rd^{\ell} A_{\ell}$. We introduce the
notation
\begin{align}
  \calM^{2}_{0}(A, B) =& \ [A_{\ell}, \rd_{t} B^{\ell}], \label{eq:A0-bi-def} \\
  \calD \calM^{2}_{0}(A, B) =& - 2 \bfQ(\rd_{t} A, \rd_{t}
  B). \label{eq:DA0-bi-def}
\end{align}
These are the main quadratic nonlinearities in the $\lap A_{0}$ and
$\lap \rd_{t} A_{0}$ equations, respectively. The estimates that we
need for these nonlinearities are as follows.
\begin{proposition} \label{prop:ell-bi} We have the fixed-time bounds
  \begin{align}
    \nrm{\abs{D}^{-1} \calM^{2}_{0}(A, B)(t)}_{L^{2}_{cd}}
    \aleq &\  \nrm{A(t)}_{\dot{H}^{1}_{c}} \nrm{\rd_{t} B(t)}_{L^{2}_{d}}, \label{eq:ell-bi-L2} \\
    \nrm{\abs{D}^{-2}\calD \calM^{2}_{0}(A, B)(t)}_{L^{2}_{cd}} \aleq
    & \ \nrm{\rd_{t} A(t)}_{L^{2}_{c}} \nrm{\rd_{t}
      B(t)}_{L^{2}_{d}}, \label{eq:d-ell-bi-L2}
  \end{align}
  and the space-time bounds
  \begin{align}
    \nrm{\abs{D}^{-1} \calM^{2}_{0}(A, B)}_{Y_{cd}[I]}
    \aleq &\ \nrm{A}_{S^{1}_{c}[I]} \nrm{B}_{S^{1}_{d}[I]}, \label{eq:ell-bi} \\
    \nrm{\abs{D}^{-1} \calM^{2}_{0}(A, B)}_{L^{2}
      \dot{H}^{\frac{1}{2}}_{cd}[I]} + \nrm{\abs{D}^{-2} \calD
      \calM^{2}_{0}(A, B)}_{L^{2} \dot{H}^{\frac{1}{2}}_{cd}[I]} \aleq
    & \ \nrm{A}_{\Str^{1}_{c}[I]}
    \nrm{B}_{\Str^{1}_{d}[I]}. \label{eq:ell-bi-L2L2}
  \end{align}
  Moreover, for any $\kpp > 0$, the nonlinearity $\calM^{2}_{0}(A, B)$
  admits the splitting
  \begin{equation*}
    \calM^{2}_{0}(A, B) = \calM^{\kpp, 2}_{0, small}(A, B) + \calM^{\kpp, 2}_{0, large}(A, B)
  \end{equation*}
  where the small part obeys the improved bound
  \begin{align}
    \nrm{\abs{D}^{-1} \calM^{\kpp, 2}_{0, small}(A, B)}_{Y_{cd}[I]}
    \aleq & \ 2^{- \dltb \kpp} \nrm{A}_{S^{1}_{c}[I]}
    \nrm{B}_{S^{1}_{d}[I]}, \label{eq:ell-bi-small}
  \end{align}
  and the large part is bounded by divisible norms of $A$ and $B$:
  \begin{align}
    \nrm{\abs{D}^{-1} \calM^{\kpp, 2}_{0, large}(A, B)}_{Y_{cd}[I]}
    \aleq & \ 2^{C \kpp} \nrm{A}_{D S^{1}_{c}[I]} \nrm{B}_{D
      S^{1}_{d}[I]}. \label{eq:ell-bi-large}
  \end{align}
  Finally, if either
  \begin{align*}
    & \nrm{A}_{S^{1}_{c}[I]} \leq 1 \quad \hbox{ and } \quad (B, I) \hbox{ is } (\veps, M)\hbox{-energy dispersed, or } \\
    & \nrm{B}_{S^{1}_{c}[I]} \leq 1 \quad \hbox{ and } \quad (A, I)
    \hbox{ is } (\veps, M)\hbox{-energy dispersed,}
  \end{align*}
  then we have
  \begin{align}
    \nrm{\abs{D}^{-1} \calM^{2}_{0}(A, B)}_{Y_{c}[I]}
    \aleq & \ \veps^{\dltc} M, \label{eq:ell-bi-ed} \\
    \nrm{\abs{D}^{-2} \calD \calM^{2}_{0}(A, B)}_{L^{2}
      \dot{H}^{\frac{1}{2}}_{c}[I]} \aleq & \ \veps^{\dltc}
    M. \label{eq:d-ell-bi-ed}
  \end{align}
\end{proposition}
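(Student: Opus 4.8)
\emph{Overall scheme and fixed-time bounds.}
Every bound reduces, after a Littlewood--Paley decomposition of both inputs, to estimating the high-high, high-low and low-high dyadic interactions in $\calM^{2}_{0}(A,B) = [A_{\ell}, \rd_{t} B^{\ell}]$ and $\calD\calM^{2}_{0}(A,B) = -2\bfQ(\rd_{t} A, \rd_{t} B)$, with the slow variation of the frequency envelopes used to sum the resulting series. For the fixed-time bound \eqref{eq:ell-bi-L2}, in the high-low and low-high regimes one places the high-frequency factor in $L^{4}$ (via $\dot{H}^{1}(\bbR^{4}) \hookrightarrow L^{4}$ for $A$, or Bernstein for $\rd_{t} B$) and the low factor in $L^{2}$, multiplies into $L^{4/3}$, applies the Sobolev bound $\abs{D}^{-1} : L^{4/3} \to L^{2}$, and collapses the low-frequency sum by slow variation; in the high-high regime one uses $L^{2} \times L^{2} \to L^{1}$, Bernstein in the output, and the fact that $A$ carries one derivative more than $\rd_{t} B$, together with the extra $\abs{D}^{-1}$, to produce a factor $2^{-(1 - 2\dlta)(j-k)}$ which sums over $j \geq k$. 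Bound \eqref{eq:d-ell-bi-L2} is the same once one observes that the symbol of $\bfQ$ is bounded, so $\calD\calM^{2}_{0}$ is handled as a product (the low factor placed in $L^{\infty}$ by Bernstein, the resulting $2^{2k}$ killed by $\abs{D}^{-2}$); the single new point is the high-high regime, where one exploits the null gain $\abs{\bfQ(\xi,\eta)} \aleq 2^{k-j}$ (output frequency $\sim 2^{k}$, input frequencies $\sim 2^{j}$) to restore summability.

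\emph{Space-time bounds.}
Since $A, B \in S^{1}[I]$, the derivatives $\nb A$, $\nb B$ --- in particular $\rd_{t} B$ --- are controlled in the full Strichartz range carried by $S^{str}_{k}$ (essentially the norms of \cite{KST}), including the $L^{\infty} L^{2}$ endpoint. For \eqref{eq:ell-bi} one decomposes $\calM^{2}_{0}$ dyadically and runs bilinear H\"older estimates pairing two such Strichartz norms, choosing the exponents so that, after $\abs{D}^{-1}$, the output lands in each of the two constituents of $Y_{k}$, namely $L^{2} \dot{H}^{\frac{1}{2}}$ and $L^{p_{0}} \dot{W}^{2 - \frac{3}{p_{0}}, p_{0}'}$; the high-high regime closes exactly as in the fixed-time case. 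Interval localization is inherited from the restriction definitions of $S^{1}[I]$ and $Y[I]$ together with Proposition~\ref{prop:int-loc}. Estimate \eqref{eq:ell-bi-L2L2}, which controls both $\abs{D}^{-1}\calM^{2}_{0}$ and $\abs{D}^{-2}\calD\calM^{2}_{0}$ in $L^{2}\dot{H}^{\frac{1}{2}}$, is the same argument with $S^{1}$ replaced by the weaker $\Str^{1}$ throughout, which is admissible because $L^{2}\dot{H}^{\frac{1}{2}}$ sits safely below the Strichartz endpoint.

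\emph{Frequency-gap splitting and energy dispersion.}
For \eqref{eq:ell-bi-small}--\eqref{eq:ell-bi-large}, split $\calM^{2}_{0}(A,B)$ according to whether the ratio of the two input frequencies exceeds $2^{\kpp}$: the ``large'' part collects the $O(\kpp)$ nearly balanced dyadic pairs and is estimated one pair at a time in the divisible norms defining $DS^{1}$, at the cost of a harmless $2^{C\kpp}$ from the counting and Bernstein, while the genuinely off-diagonal ``small'' part is summed using the same derivative gain that makes the high-low/low-high series converge above --- now run over separations $\geq \kpp$ --- and hence produces the stated factor $2^{-\dltb\kpp}$. For the energy-dispersed bounds \eqref{eq:ell-bi-ed}--\eqref{eq:d-ell-bi-ed} we invoke \eqref{eq:ed-str}: if $(B,I)$ is $(\veps, M)$-energy dispersed then $\nrm{P_{k} B}_{\Str^{1}[I]} \aleq \veps^{\dlta} M$ uniformly in $k$ (and symmetrically if $(A,I)$ is energy dispersed). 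Rerunning the bilinear estimate with the energy-dispersed factor in $\Str^{1}$ gains a power of $\veps$ at each frequency, while $\ell^{2}$-summability over frequencies of the other factor in $S^{1}$ closes the remaining sum, producing the bound $\veps^{\dltc} M$.

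\emph{Main obstacle.}
The delicate point is the high-high summability for \eqref{eq:ell-bi} (and for the small part in \eqref{eq:ell-bi-small}): one must fit $\abs{D}^{-1}\calM^{2}_{0}$ into \emph{both} components of $Y$ at once, yet at a fixed high frequency the only control available for $\rd_{t} B$ is of $L^{2}$ type (the energy bound, or the $X^{0,\frac{1}{2}}_{\infty}$ estimate implied by $S_{k} \subseteq X^{0,\frac{1}{2}}_{\infty}$), so convergence of the dyadic sum rests entirely on squeezing a derivative's worth of decay out of $\abs{D}^{-1}$ together with Bernstein, while the small/large split must be arranged so that the large part is genuinely bounded by the divisible norm $DS^{1}$ (whose admissible Strichartz exponents are restricted). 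Keeping all of these gains strictly positive after interval localization is where most of the bookkeeping goes.
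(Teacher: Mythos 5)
Your overall scheme for the fixed-time and space-time bounds is in line with the paper's proof (which first rewrites each expression, using the disposability of $\bfQ$ from Lemma~\ref{lem:q-disp}, as a weight times $P_{k}\bfO(P_{k_{1}}\cdot,P_{k_{2}}\cdot)$, then uses the single atomic estimate $\nrm{P_{k}\bfO(P_{k_{1}}u',P_{k_{2}}v')}_{L^{2}}\aleq 2^{2k_{\min}}\nrm{u'}_{L^{2}}\nrm{v'}_{L^{2}}$ for the fixed-time part, the dyadic estimates of Proposition~\ref{prop:bi-ell} for the space-time part, and the interpolation $\nrm{P_{k}u}_{L^{p_{0}}\dot{W}^{2-3/p_{0},p_{0}'}}\aleq\nrm{P_{k}u}_{L^{2}\dot{H}^{1/2}}^{1-\tht_{0}}\nrm{P_{k}u}_{L^{1}\dot{W}^{-1,\infty}}^{\tht_{0}}$ to reach the second component of $Y$). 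Your more case-by-case route to the fixed-time bounds is fine.

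However, there is a genuine gap in your small/large split. You split according to whether the ratio of the \emph{two input} frequencies exceeds $2^{\kpp}$, i.e.\ $\abs{k_{1}-k_{2}}\geq\kpp$ or $<\kpp$, and count $O(\kpp)$ balanced pairs in the large part. The paper's split is instead by $\abs{k_{\max}-k_{\min}}$ where $k_{\max}=\max\set{k,k_{1},k_{2}}$ and $k_{\min}=\min\set{k,k_{1},k_{2}}$, which \emph{includes the output frequency}. The two agree in the high-low and low-high regimes but differ in the high-high to low interaction $k_{1}\approx k_{2}=j\gg k$: there $\abs{k_{1}-k_{2}}=O(1)<\kpp$ always, yet $\abs{k_{\max}-k_{\min}}=j-k$ can be arbitrarily large. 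Under your split every such triple lands in the large part, which then contains infinitely many dyadic pieces per output frequency $k$ (not the $O(\kpp)$ you claim), and the only available divisible-norm bilinear estimate (eq.~\eqref{eq:qf-L2L6-L1Linfty}, with prefactor $2^{\frac{2}{3}k_{\min}}2^{-\frac{4}{3}k}2^{-\frac{1}{6}k_{1}}2^{\frac{5}{6}k_{2}}$) produces a factor that \emph{grows} like $2^{\frac{2}{3}(j-k)}$ in this range, so the $j$-sum diverges. In the paper these high-high-to-low terms with $j-k\geq\kpp$ belong to the small part, where the $S^{1}$ off-diagonal gain of \eqref{eq:qf-L1Linfty} and \eqref{eq:qf-L2L2} sums them and yields the $2^{-\dltb\kpp}$ factor; the large part then genuinely has $O(\kpp)$ terms and the $2^{C\kpp}$ prefactor absorbs the loss. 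To repair your argument you must define the split by $\abs{k_{\max}-k_{\min}}$ (equivalently, add the high-high-to-low-output triples with $j-k\geq\kpp$ to the small part and sum them via the $S^{1}$ decay).
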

The remaining quadratic nonlinearities in the equations for $A_{0}$
and $\rd^{\ell} A_{\ell}$ involve $\bfQ$, and they obey simpler
estimates.
\begin{proposition} \label{prop:Q-bi} For $\sgm = 0$ or $1$, we have
  the fixed-time bound
  \begin{align}
    \nrm{\abs{D}^{-\sgm} \bfQ(A, \rd_{t}^{\sgm} B)(t)}_{L^{2}_{cd}}
    \aleq & \ \nrm{A(t)}_{\dot{H}^{1}_{c}} \nrm{\rd_{t}^{\sgm}
      B(t)}_{\dot{H}^{1-\sgm}_{d}}, \label{eq:Q-L2}
  \end{align}
  and the space-time bounds
  \begin{align}
    \nrm{\abs{D}^{-\sgm} \bfQ(A, \rd_{t}^{\sgm} B)}_{L^{2}
      \dot{H}^{\frac{1}{2}}_{cd}[I]}
    \aleq & \ \nrm{A}_{\Str^{1}_{c}[I]} \nrm{B}_{\Str^{1}_{d}[I]}, \label{eq:Q-L2L2} \\
    \nrm{\abs{D}^{-\sgm} \bfQ(A, \rd_{t}^{\sgm} B)}_{Y_{cd}[I]} +
    \nrm{\abs{D}^{-\sgm-1} \bfQ(A, \rd_{t}^{\sgm} B)}_{L^{1}
      L^{\infty}_{cd}[I]} \aleq & \ \nrm{A}_{DS^{1}_{c}[I]}
    \nrm{B}_{DS^{1}_{d}[I]}. \label{eq:Q}
  \end{align}
  Finally, if either
  \begin{align*}
    & \nrm{A}_{S^{1}_{c}[I]} \leq 1 \quad \hbox{ and } \quad (B, I) \hbox{ is } (\veps, M)\hbox{-energy dispersed, or } \\
    & \nrm{B}_{S^{1}_{c}[I]} \leq 1 \quad \hbox{ and } \quad (A, I)
    \hbox{ is } (\veps, M)\hbox{-energy dispersed,}
  \end{align*}
  then
  \begin{align}
    \nrm{\abs{D}^{-\sgm} \bfQ(A, \rd_{t}^{\sgm} B)}_{Y_{c}[I]} \aleq & \ 
    \veps^{\dltc} M. \label{eq:Q-ed}
  \end{align}
\end{proposition}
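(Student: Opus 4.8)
The plan is to treat $\bfQ$ as a bounded, null-structured bilinear Fourier multiplier. The starting point is the elementary factorization of its symbol,
\[
  \bfQ(\xi, \eta) = \frac{(\xi + \eta)_{j} (\xi - \eta)^{j}}{2(\abs{\xi}^{2} + \abs{\eta}^{2})},
\]
which exhibits $\bfQ(A, B) = \rd_{j} \calL^{j}(A, B)$, where $\calL^{j}$ is the bilinear operator with symbol $\frac{(\xi - \eta)^{j}}{2(\abs{\xi}^{2} + \abs{\eta}^{2})}$. After a Littlewood--Paley decomposition of the two inputs, $\calL^{j}$ restricted to input frequencies $\aeq 2^{k_{1}}$ and $\aeq 2^{k_{2}}$ is, up to the normalization $2^{-\max(k_{1}, k_{2})}$, disposable in the sense of Section~\ref{sec:prelim}; in particular $\abs{\bfQ} \aleq 1$ always, and the output derivative $\rd_{j}$ supplies a genuine gain $2^{k - \max(k_{1}, k_{2})}$ in the high $\times$ high regime (output frequency $2^{k}$). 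Each bound below I would prove by running the standard frequency trichotomy --- high $\times$ low, low $\times$ high, high $\times$ high --- with this structural information, performing all dyadic summations against the products $c_{k} d_{k}$ of the $\dlte$-admissible envelopes.

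For the fixed-time bounds \eqref{eq:Q-L2}, take first $\sgm = 0$. In the high $\times$ high case I would use the $2^{k - k_{1}}$ gain together with the Sobolev embedding $\dot{H}^{1}(\bbR^{4}) \hookrightarrow L^{4}$ and Bernstein (to dispose of the output projection), reaching $\nrm{P_{k} \bfQ(P_{k_{1}} A, P_{k_{2}} B)(t)}_{L^{2}} \aleq 2^{k - k_{1}} c_{k_{1}} d_{k_{2}}$ with $k_{1} = k_{2} + O(1) \geq k$, which sums in $k_{1}$ since the envelopes are slowly varying with exponent $\dlte(1 - \eps) \ll 1$. In the high $\times$ low case ($k_{1} = k + O(1)$, $k_{2} < k - O(1)$), where $\bfQ$ is only $O(1)$, I would pair $P_{k_{1}} A$ in $L^{2}$ with $P_{<k-5} B$ in $L^{\infty}$ and use Bernstein plus a geometric-series summation, $\nrm{P_{<k-5} B(t)}_{L^{\infty}} \aleq \sum_{k_{2} < k} 2^{2 k_{2}} \nrm{P_{k_{2}} B(t)}_{L^{2}} \aleq 2^{k} d_{k}$; the low $\times$ high case is symmetric. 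For $\sgm = 1$ I would dualize: write $\nrm{\abs{D}^{-1} \bfQ(A, \rd_{t} B)(t)}_{L^{2}}$ as a supremum of $\brk{\bfQ(A, \rd_{t} B)(t), \abs{D}^{-1} \phi}$ over $\nrm{\phi}_{L^{2}} = 1$, transfer $\rd_{t} B$ out of the pairing through the partial adjoint $\bfQ^{\ast_{2}}$ (whose symbol $\bfQ(\xi_{1}, -\xi_{1} - \xi_{2})$ is again bounded and admits the same output-derivative factorization), and apply the $\sgm = 0$ bound to $\bfQ^{\ast_{2}}$, paired against $\rd_{t} B(t) \in L^{2}$.

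The space-time bounds \eqref{eq:Q-L2L2} and \eqref{eq:Q} I would obtain by the same trichotomy, now with the fixed-time Sobolev/Bernstein inputs replaced by the frequency-localized, non-sharp wave Strichartz estimates packaged in the $\Str^{1}$ and $DS^{1}[I]$ norms; the extra time derivative in the case $\sgm = 1$ is compensated by the degree $-1$ of $\calL^{j}$ together with the $\abs{D}^{-1}$ in front (recall $\nb = \rd_{t,x}$, so $\rd_{t} B$ is controlled by $\nrm{B}_{\Str^{1}}$). For \eqref{eq:Q-L2L2}, for each dyadic piece I would pick a pair of Strichartz exponents strictly inside the range \eqref{eq:Str-def} so that a H\"older inequality in $(t, x)$ lands the product in $L^{2} \dot{H}^{\frac{1}{2}}$ at the correct scaling (e.g.\ an $L^{4}_{t} L^{q}_{x}$--$L^{4}_{t} L^{q}_{x}$ pairing); the high $\times$ high sum converges by the output-derivative gain and the high $\times$ low sum converges after one Bernstein step on the (here divisible) low input. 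For \eqref{eq:Q}, the $Y$ norm carries the components $L^{2} \dot{H}^{\frac{1}{2}}$ and $L^{p_{0}} \dot{W}^{2 - \frac{3}{p_{0}}, p_{0}'}$: the first is treated as above using the $L^{2} \dot{H}^{-\frac{1}{2}}$ and $L^{2} L^{6}$ content of $DS^{1}$, while for the second and for the $L^{1} L^{\infty}$ term in \eqref{eq:Q} I would pass, via Bernstein, from the $L^{2} L^{6}$ piece of $DS^{1}$ to an $L^{2} L^{\infty}$-type bound --- the resulting scaling loss being absorbed by the extra $\abs{D}^{-\sgm - 1}$ --- and then use an $L^{2}_{t} \times L^{2}_{t} \to L^{1}_{t}$ H\"older together with the boundedness of $\bfQ$ in the spatial variables.

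Finally, the energy-dispersed bound \eqref{eq:Q-ed} should follow by revisiting the space-time estimates and inserting a gain for the energy-dispersed factor. Assuming by symmetry that $\nrm{A}_{S^{1}_{c}[I]} \leq 1$ and that $(B, I)$ is $(\veps, M)$-energy dispersed, one has $\sup_{k} \nrm{P_{k} B}_{\Str^{1}[I]} \aleq \veps^{\dlta} M$ by \eqref{eq:ed-str}, and in addition $\nrm{\Box B}_{L^{2} \dot{H}^{-\frac{1}{2}}[I]} \leq \veps M$ from the definition of $(\veps, M)$-energy dispersed, so that the whole $DS^{1}[I]$-norm of $B$ is $\aleq \veps^{\dlta} M$; the non-dispersed factor obeys $\nrm{A}_{DS^{1}[I]} \aleq \nrm{A}_{S^{1}[I]} \leq 1$, since $S^{1}$ dominates both the Strichartz and the $\Box$-content of $DS^{1}$. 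Running the proof of \eqref{eq:Q} with these inputs, and keeping one explicit factor $\veps^{\dlta} M$ from the $B$-inputs while the $A$-inputs carry the envelope $c$, yields $\aleq \veps^{\dlta} M$, which is in particular $\aleq \veps^{\dltc} M$ since $\dltc \leq \dlta$. I expect the main difficulty here to be bookkeeping rather than conceptual: making the frequency-envelope summation close in the high $\times$ low interaction, where $\bfQ$ furnishes no gain and everything rests on the Bernstein estimate for the low input, and matching the bilinear H\"older estimate to the somewhat unusual component $L^{p_{0}} \dot{W}^{2 - \frac{3}{p_{0}}, p_{0}'}$ of $Y$; neither requires any idea beyond those already present in \cite{KST, OT2, KT}.
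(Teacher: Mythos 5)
Your structural observation — writing $\bfQ(\xi,\eta)=\frac{(\xi+\eta)\cdot(\xi-\eta)}{2(|\xi|^2+|\eta|^2)}$, hence $\bfQ(A,B)=\rd_j\calL^j(A,B)$ with $\calL^j$ of order $-1$ — is exactly the content of the paper's Lemma~\ref{lem:q-disp} (that $2^{k_{\max}-k}P_k\bfQ(P_{k_1}\cdot,P_{k_2}\cdot)$ is disposable), and your plan for \eqref{eq:Q-L2}, \eqref{eq:Q-L2L2} and the $L^1L^\infty$ part of \eqref{eq:Q} (Littlewood--Paley trichotomy, Bernstein/Sobolev at fixed time, non-sharp Strichartz and the $L^2L^6$ piece of $DS^1$ for the space-time bounds) is essentially the paper's Steps~1--2, which funnel everything through the dyadic estimates \eqref{eq:ell-bi-L2-atom}, \eqref{eq:qf-L2L2}, \eqref{eq:qf-L1Linfty}, \eqref{eq:qf-L2L6-L1Linfty}. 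Your choice to estimate the $L^{p_0}\dot W^{2-3/p_0,p_0'}$ component of $Y$ directly rather than via the interpolation \eqref{eq:Y-int} between $L^2\dot H^{1/2}$ and $L^1\dot W^{-1,\infty}$ is a cosmetic variant for \eqref{eq:Q}, but it foreshadows a real problem in \eqref{eq:Q-ed}.

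The gap is in \eqref{eq:Q-ed}. You assert that $(B,I)$ being $(\veps,M)$-energy dispersed forces $\nrm{B}_{DS^1[I]}\aleq\veps^{\dlta}M$, and that this fact is simply ``inserted'' into the proof of \eqref{eq:Q}. That is not what energy dispersion gives. What \eqref{eq:ed-str} gives is $\sup_k\nrm{P_kB}_{\Str^1[I]}\aleq\veps^{\dlta}M$ — a bound dyadic piece by dyadic piece, in a strictly non-sharp Strichartz norm, obtained by interpolating the $L^\infty L^\infty$ energy-dispersion control against $L^\infty L^2$. This says nothing about the full $\Str^0$ piece of $DS^1$ after $\ell^2_k$-summation, and, more importantly, it says nothing at all about the sharp-endpoint piece $\nrm{\abs{D}^{-5/6}\nb B}_{L^2L^6[I]}$ of $DS^1[I]$, which sits at $2/p+3/q=3/2$ and has no interpolation room toward $L^\infty L^\infty$; that piece is only controlled by $\nrm{B}_{S^1[I]}\leq M$. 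Since \eqref{eq:qf-L2L6-L1Linfty}, the estimate underlying the $L^1L^\infty$ (and hence $L^{p_0}$) component, really does consume $L^2L^6$ of \emph{both} inputs, your direct route to the $Y$-norm does not produce the claimed $\veps$-power. The paper's proof is structured to avoid exactly this: it proves the $L^2\dot H^{1/2}$ component of $Y$ with the $\veps^{\dlta}$-gain (via \eqref{eq:qf-L2L2}, which only sees non-sharp $\Str^0$ norms, hence \eqref{eq:ed-str}), proves the $L^1\dot W^{-1,\infty}$ component without any gain (bounded by $DS^1\aleq S^1\leq M$), and then interpolates via \eqref{eq:Y-int} with exponent $\tht_0\in(0,1)$ to get $Y$-norm $\aleq\veps^{(1-\tht_0)\dlta}M\aleq\veps^{\dltc}M$. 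If you replace ``$\nrm{B}_{DS^1}$ is small'' by ``the $L^2\dot H^{1/2}$ subnorm carries the $\veps$-gain and the $Y$-norm inherits a fractional power by \eqref{eq:Y-int},'' the rest of your argument closes.
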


Also for the quadratic part $\bfA_0^2$ of   $A_0$, given by
\[
\bfA_0^2(A,A) = \Delta^{-1}([A,\partial_t A]+2Q(A,\partial_t A)
\]
we have the following additional property, which will be used in the proof of Theorem~\ref{thm:tmp}:
\begin{proposition} \label{prop:A02}
For the quadratic form $\bfA_0^2$ we have 
\begin{equation}\label{eq:sf-a0}
\||D|^{2} \bfA_0^2(A,B)\|_{(L^2_x L^1_t)_{cd} [I]} \lesssim \| \nabla A\|_{S^{sq}_c} \| \nabla B\|_{S^{sq}_d}.
\end{equation}
\end{proposition}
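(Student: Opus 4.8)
The starting point is the trivial identity $|D|^{2} \lap^{-1} = - \mathrm{Id}$, which removes the inverse Laplacian in the definition of $\bfA_{0}^{2}$ and turns \eqref{eq:sf-a0} into a dyadic bilinear estimate. Recalling from \eqref{eq:A0-bi-def} and \eqref{eq:main-A0} that $\bfA_{0}^{2}(A,A) = \lap^{-1}\bigl( \calM^{2}_{0}(A,A) + 2\bfQ(A, \rd_{t} A)\bigr)$ with $\calM^{2}_{0}(A,B) = [A_{\ell}, \rd_{t} B^{\ell}]$, polarization followed by $|D|^{2}$ gives
\begin{equation*}
  |D|^{2} \bfA_{0}^{2}(A,B) = - \tfrac12 \bigl( \calM^{2}_{0}(A,B) + 2\bfQ(A, \rd_{t} B)\bigr) - \tfrac12 \bigl( \calM^{2}_{0}(B,A) + 2\bfQ(B, \rd_{t} A)\bigr).
\end{equation*}
The key algebraic fact is that $\calM^{2}_{0}$ has symbol $\equiv 1$ while $\bfQ$ has symbol $\tfrac12 (|\xi|^{2}-|\eta|^{2})/(|\xi|^{2}+|\eta|^{2})$, so each grouped term is the bilinear operator with symbol $1 + \frac{|\xi|^{2}-|\eta|^{2}}{|\xi|^{2}+|\eta|^{2}} = \frac{2|\xi|^{2}}{|\xi|^{2}+|\eta|^{2}}$, where $\xi$ is the frequency of the \emph{undifferentiated} input and $\eta$ that of the $\rd_{t}$-differentiated one. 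Hence $|D|^{2}\bfA_{0}^{2}(A,B) = \mathfrak{L}(A,\rd_{t} B) + \mathfrak{L}(B,\rd_{t} A)$ for the fixed bilinear operator $\mathfrak{L}$ with symbol $-|\xi|^{2}/(|\xi|^{2}+|\eta|^{2})$ (first variable $=$ frequency of the undifferentiated input), which is bounded by $1$ everywhere and, when the undifferentiated input carries the \emph{lower} frequency, bounded by $C 2^{2(k_{\mathrm u}-k_{\mathrm d})}$. After standard dyadic frequency localizations $\mathfrak{L}$ is disposable up to these constants.

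\textbf{Trichotomy and Hölder--Bernstein.} I would then run the Littlewood--Paley trichotomy on $P_{k}\mathfrak{L}(P_{k_{1}}A, P_{k_{2}}\rd_{t} B)$, using only the bound $\nrm{P_{j}\nb A}_{L^{10/3}_{x}L^{2}_{t}} \lesssim 2^{\frac{3}{10}j}c_{j}$ coming from $\nrm{\nb A}_{S^{sq}_{c}}$ (and similarly for $B$ with $d_{j}$); this gives in particular $\nrm{P_{j}A}_{L^{10/3}_{x}L^{2}_{t}} \lesssim 2^{-\frac{7}{10}j}c_{j}$ (using $|\rd_{x}| \sim 2^{j}$ on spatial frequency $2^{j}$) and $\nrm{P_{j}\rd_{t}B}_{L^{10/3}_{x}L^{2}_{t}} \lesssim 2^{\frac{3}{10}j}d_{j}$. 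In every regime the output is estimated by Hölder with $\tfrac{1}{r_{1}}+\tfrac{1}{r_{2}}=\tfrac12$ in $x$ and Cauchy--Schwarz in $t$, placing each input in $L^{10/3}_{x}L^{2}_{t}$ and adjusting spatial exponents by Bernstein, together with a Bernstein step on the $P_{k}$-output (costing $2^{\frac{2}{5}k}$) whenever the product only lands in $L^{5/3}_{x}$. For the high$\times$high interactions $k_{1}\approx k_{2}=:k'\geq k$ and for the high$\times$low interactions in which the undifferentiated factor carries the high frequency $\approx 2^{k}$, the symbol is $O(1)$ and this produces gains $2^{-\frac{2}{5}(k'-k)}$ and $2^{-\frac{7}{10}(k-k_{\mathrm{low}})}$ respectively, both summable against the slowly varying $c,d$. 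The delicate case is high$\times$low with the \emph{undifferentiated} factor at the low frequency $2^{k_{1}}$ (differentiated factor at $2^{k_{2}}\approx 2^{k}$): here one invokes the symbol bound $2^{2(k_{1}-k)}$, and the same Hölder--Bernstein estimate yields a bound $\lesssim 2^{\frac{13}{10}(k_{1}-k)}c_{k_{1}}d_{k}$, again summable over $k_{1}\leq k$. Summing the three regimes, together with the symmetric $\mathfrak{L}(B,\rd_{t}A)$ contribution (obtained by interchanging $c\leftrightarrow d$), gives $\nrm{P_{k}|D|^{2}\bfA_{0}^{2}(A,B)}_{L^{2}_{x}L^{1}_{t}} \lesssim (c\cdot d)_{k}$, which is \eqref{eq:sf-a0}. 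Since every step is pointwise in $t$, the restriction to the interval $I$ is automatic.

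\textbf{Main obstacle.} The only point that is not routine bookkeeping with Bernstein's inequality and the envelope calculus is the last interaction. With only the square-function norm $S^{sq}$ on the right-hand side (rather than the full $S^{1}$-norm) an undifferentiated low-frequency factor cannot be absorbed by a naive product estimate: for the spatial exponents forced by $\rd_{t}B\in L^{10/3}_{x}L^{2}_{t}$ one is left with $\nrm{P_{k_{1}}A}_{L^{r}_{x}L^{2}_{t}} \lesssim 2^{4k_{1}(\frac{3}{10}-\frac{1}{r})-\frac{7}{10}k_{1}}c_{k_{1}}$, whose exponent is negative as $k_{1}\to-\infty$ unless $r\geq 8$, a range incompatible with $\tfrac1r = \tfrac12-\tfrac{3}{10}$. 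What rescues the estimate is precisely the quadratic vanishing of the symbol $2|\xi|^{2}/(|\xi|^{2}+|\eta|^{2})$ as $|\xi|/|\eta|\to 0$, i.e.\ the cancellation built into the definition $\bfA_{0}^{2} = \lap^{-1}(\calM^{2}_{0} + 2\bfQ)$ between the two quadratic terms of $A_{0}$. Recognizing and exploiting this cancellation is the heart of the argument.
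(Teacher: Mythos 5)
Your proposal is correct and takes essentially the same route as the paper's proof: both hinge on the observation that the net symbol of $\Delta \bfA_0^2$ is $\sim |\xi|^2/(|\xi|^2+|\eta|^2)$ (the cancellation between $\calM_0^2$ and $2\bfQ$), and both run the Littlewood--Paley trichotomy by placing each factor in $L^{10/3}_x L^2_t$ via the $S^{sq}$ component and upgrading with Bernstein. The only cosmetic difference is that you always Bernstein the output (paying $2^{2k/5}$), whereas the paper Bernsteins at the lowest of the three frequencies (paying $2^{2k_{\min}/5}$); both yield summable off-diagonal decay, with the paper's version slightly sharper in the undifferentiated-low case ($2^{-17(k-k_1)/10}$ vs.\ your $2^{-13(k-k_1)/10}$), so the difference is immaterial.
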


For the quadratic nonlinearity in the $\Box_{A} A_{j}$ equation, we
introduce the notation
\begin{align*}
  \P_{j} \calM^{2} (A, B) =& \ \P_{j} [A_{\ell}, \rd_{x} B^{\ell}], \\
  \P^{\perp}_{j} \calM^{2} (A, B) =& \ 2 \lap^{-1} \rd_{j} \bfQ
  (\rd^{\alp} A, \rd_{\alp} A),
\end{align*}
so that \eqref{eq:main-wave} becomes
\begin{equation*}
  \Box_{A} A_{j} = \P_{j} \calM(A, A) + \P^{\perp}_{j} \calM(A, A) + R_{j}(A, \rd_{t} A).
\end{equation*}

\begin{proposition} \label{prop:Ax-bi} We have the fixed time bounds
  \begin{align}
    \nrm{\P \calM^{2}(A, B)(t)}_{\dot{H}^{-1}_{cd}}
    \aleq & \ \nrm{A(t)}_{\dot{H}^{1}_{c}} \nrm{B(t)}_{\dot{H}^{1}_{d}}, \label{eq:Ax-bi-df-L2} \\
    \nrm{\P^{\perp} \calM^{2}(A, B)(t)}_{\dot{H}^{-1}_{cd}} \aleq & \
    \nrm{\nb A(t)}_{L^{2}_{c}} \nrm{\nb
      B(t)}_{L^{2}_{d}}. \label{eq:Ax-bi-cf-L2}
  \end{align}
  and space-time bounds
  \begin{align}
    \nrm{\P \calM^{2}(A, B)}_{(N \cap \Box \uX^{1})_{cd}[I]}
    \aleq & \  \nrm{A}_{S^{1}_{c}[I]} \nrm{B}_{S^{1}_{d}[I]}, \label{eq:Ax-bi-df} \\
    \nrm{\P^{\perp} \calM^{2}(A, B)}_{(N \cap \Box \uX^{1})_{cd}[I]}
    \aleq & \ \nrm{A}_{S^{1}_{c}[I]}
    \nrm{B}_{S^{1}_{d}[I]}. \label{eq:Ax-bi-cf}
  \end{align}
  In particular, the $L^{2} \dot{H}^{-\frac{1}{2}}$-norms are bounded
  by the $\Str^{1}$-norms of $A$ and $B$:
  \begin{align}
    \nrm{\P \calM^{2}(A, B)}_{L^{2} \dot{H}^{-\frac{1}{2}}_{cd}[I]}
    \aleq & \ \nrm{A}_{\Str^{1}_{c}[I]} \nrm{B}_{\Str^{1}_{d}[I]}, \label{eq:Ax-bi-df-himod} \\
    \nrm{\P^{\perp} \calM^{2}(A, B)}_{L^{2}
      \dot{H}^{-\frac{1}{2}}_{cd}[I]} \aleq & \
    \nrm{A}_{\Str^{1}_{c}[I]}
    \nrm{B}_{\Str^{1}_{d}[I]}. \label{eq:Ax-bi-cf-himod}
  \end{align}
  Moreover, for any $\kpp > 0$, the terms $\P_{j} \calM^{2}(A, B)$ and
  $\P^{\perp}_{j} \calM^{2}(A, B)$ admit the splittings
  \begin{align*}
    \P_{j} \calM^{2}(A, B) =& \ \P_{j} \calM^{\kpp, 2}_{small}(A, B) + \P_{j} \calM^{\kpp, 2}_{large}(A, B), \\
    \P^{\perp}_{j} \calM^{2}(A, B) =& \  \P^{\perp}_{j} \calM^{\kpp,
      2}_{small}(A, B) + \P^{\perp}_{j} \calM^{\kpp, 2}_{large}(A, B),
  \end{align*}
  so that the $N$-norm of the small parts obey the improved bounds
  \begin{align}
    \nrm{\P \calM^{\kpp, 2}_{small} (A, B)}_{N_{cd}[I]}
    \aleq & \ 2^{-\dltb \kpp} \nrm{A}_{S^{1}_{c}[I]} \nrm{B}_{S^{1}_{d}[I]}, \label{eq:Ax-bi-df-small} \\
    \nrm{\P^{\perp} \calM^{\kpp, 2}_{small} (A, B)}_{N_{cd}[I]} \aleq
    & \ 2^{-\dltb \kpp} \nrm{A}_{S^{1}_{c}[I]}
    \nrm{B}_{S^{1}_{d}[I]}, \label{eq:Ax-bi-cf-small}
  \end{align}
  and that of the large parts are bounded by divisible norms of $A$
  and $B$:
  \begin{align}
    \nrm{\P \calM^{\kpp, 2}_{large}(A, B)}_{N_{cd}[I]}
    \aleq & \  2^{C \kpp} \nrm{A}_{D S^{1}_{c}[I]} \nrm{B}_{D S^{1}_{d}[I]}, \label{eq:Ax-bi-df-large} \\
    \nrm{\P^{\perp} \calM^{\kpp, 2}_{large}(A, B)}_{N_{cd}[I]} \aleq & \
    2^{C \kpp} \nrm{A}_{D S^{1}_{c}[I]} \nrm{B}_{D
      S^{1}_{d}[I]}. \label{eq:Ax-bi-cf-large}
  \end{align}
  Finally, if either
  \begin{align*}
    & \ \nrm{A}_{\uS^{1}_{c}[I]} \leq 1 \quad \hbox{ and } \quad (B, I) \hbox{ is } (\veps, M)\hbox{-energy dispersed, or } \\
    & \ \nrm{B}_{\uS^{1}_{c}[I]} \leq 1 \quad \hbox{ and } \quad (A, I)
    \hbox{ is } (\veps, M)\hbox{-energy dispersed,}
  \end{align*}
  then
  \begin{align}
    \nrm{\P \calM^{2}(A, B)}_{(N \cap L^{2}
      \dot{H}^{-\frac{1}{2}})_{c}[I]}
    \aleq & \ \veps^{\dltc} M, 	\label{eq:Ax-bi-df-ed} \\
    \nrm{\P^{\perp} \calM^{2}(A, B)}_{(N \cap L^{2}
      \dot{H}^{-\frac{1}{2}})_{c}[I]} \label{eq:Ax-bi-cf-ed} \aleq & \
    \veps^{\dltc} M.
  \end{align}
\end{proposition}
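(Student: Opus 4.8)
The plan is to deduce all of \eqref{eq:Ax-bi-df-L2}--\eqref{eq:Ax-bi-cf-ed} from the core dyadic bilinear estimates collected in Section~\ref{subsec:dyadic-ests} (null-form, $\bfQ$-type and Strichartz interactions) together with the interval-localization apparatus of Section~\ref{subsec:ests-pf}; the only genuinely structural input is the extraction of the null structure hidden in $\calM^{2}$. So the first step is a Hodge (Leray) decomposition of the inputs: writing $A_{\ell} = \P_{\ell} A + \P^{\perp}_{\ell} A$ and likewise for $B$, the nonlinearity $\P_{j} \calM^{2}(A,B) = \P_{j}[A_{\ell}, \rd_{x} B^{\ell}]$ breaks into a genuine null-form contribution $\P_{j}[\P_{\ell} A, \rd_{x} \P^{\ell} B]$ --- which, using $\rd^{\ell}(\P A)_{\ell} = 0$ to antisymmetrize, is rewritten as a combination of null forms of Klainerman type \cite{KlMa1} --- and remainder terms in which at least one factor is the elliptic piece $\P^{\perp}_{\ell} A = \lap^{-1} \rd_{\ell} \rd^{m} A_{m}$; these I would control elliptically, using that $\rd^{\ell} A_{\ell} = \DA(A)$ is quadratically small and sits in $Y^{1}$ (see \eqref{eq:main-compat}), so they are never worse than the $\P$-null form. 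For $\P^{\perp}_{j} \calM^{2}(A,B) = 2 \lap^{-1} \rd_{j} \bfQ(\rd^{\alp} A, \rd_{\alp} A)$ I would use instead the explicit symbol $\bfQ(\xi,\eta) = \tfrac{\abs{\xi}^{2} - \abs{\eta}^{2}}{2(\abs{\xi}^{2} + \abs{\eta}^{2})}$, which vanishes on the diagonal and is thus governed by the $\bfQ$-type dyadic bounds; since $\P \P^{\perp} \calM^{2} = 0$, for this term only the $L^{2} \dot{H}^{-\frac{1}{2}}$ and $L^{\frac{9}{5}} \dot{H}^{-\frac{4}{9}}$ pieces of $\Box \uX^{1}$ must be produced.

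With this reduction in place I would dispatch the claims in order. The fixed-time inequalities \eqref{eq:Ax-bi-df-L2}--\eqref{eq:Ax-bi-cf-L2} are elementary: Littlewood--Paley trichotomy, H\"older and $\dot{H}^{1} \hookrightarrow L^{4}$ on $\R^{4}$ put $A \, \rd_{x} B$ in $L^{\frac{4}{3}} \hookrightarrow \dot{H}^{-1}$, while diagonal cancellation of $\bfQ$ with Cauchy--Schwarz handles the $\P^{\perp}$ piece. For the spacetime bounds \eqref{eq:Ax-bi-df}--\eqref{eq:Ax-bi-cf} the $N$-component is exactly the core null-form (resp. $\bfQ$-null-form) estimate, and the $L^{2}\dot{H}^{-\frac{1}{2}}$, $L^{\frac{9}{5}}\dot{H}^{-\frac{4}{9}}$ and (for $\P\calM^{2}$) $\Xdf^{1}$-components --- the last being the $Z^{1}$-type high-modulation refinement required by the parametrix --- follow from the corresponding high-modulation dyadic estimates. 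Then \eqref{eq:Ax-bi-df-himod}--\eqref{eq:Ax-bi-cf-himod} are just the restriction of the same $N \cap L^{2}\dot{H}^{-\frac{1}{2}}$ bounds to their divisible part, using only $\Str^{1}$ norms. For the frequency-gap splitting I would insert $1 = P_{<k-\kpp} + P_{\geq k-\kpp}$ on the low-frequency input at each output frequency $2^{k}$: the small part $\calM^{\kpp,2}_{small}$ keeps the high--low (or low--high) configuration, where the null and $\bfQ$ symbols carry an off-diagonal angular gain $2^{-\dltb\kpp}$, while the large part is controlled purely by $DS^{1}$ norms via the non-sharp Strichartz estimates at a cost $2^{C\kpp}$. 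Finally, for \eqref{eq:Ax-bi-df-ed}--\eqref{eq:Ax-bi-cf-ed} I would use the energy-dispersed refinement of the core null-form estimate --- which converts the $ED^{1}$-smallness of one input into a power gain in $N$ --- for the $N$ component, and for the $L^{2}\dot{H}^{-\frac{1}{2}}$ component use that \eqref{eq:ed-str} makes one factor $\Str^{1}$-small by $C\veps^{\dlta}M$; in both cases the surviving factor is bounded by its $\uS^{1}\leq 1$ norm, giving the claimed $\veps^{\dltc}M$ (recall $\dltc \ll \dlta$).

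The hard part will be the interval localization in the presence of modulation projections, which are nonlocal in time. For the $N = L^{1}L^{2} + X^{0,-\frac{1}{2}}_{1}$ and $\Xdf^{1}$ outputs one has to bound $\chi_{I}$ times the nonlinearity, and truncating a bilinear null form by the sharp cutoff $\chi_{I}$ redistributes modulation exactly in the delicate high$\times$high$\to$low regime, where the output already sits at low modulation; this error must be reabsorbed using the $L^{p}L^{2}$-boundedness of the $Q_{j}$'s recalled in Section~\ref{subsec:ftn-sp}, as in the bookkeeping of Section~\ref{subsec:ests-pf} and Proposition~\ref{prop:int-loc}. A secondary subtlety is that the $\P^{\perp}$ contributions close only by invoking the caloric-gauge compatibility identity \eqref{eq:main-compat}, which is why the hypotheses are stated in terms of $\uS^{1}$ and $(\veps,M)$-energy dispersion rather than raw bilinear product bounds.
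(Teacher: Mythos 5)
The central problem is with your extraction of the null structure in $\P_j \calM^2(A,B) = \P_j [A_\ell, \rd_x B^\ell]$. You propose a Hodge decomposition of the \emph{inputs} $A_\ell = \P_\ell A + \P^\perp_\ell A$ (and similarly for $B$), claim the divergence-free piece $\P_j[\P_\ell A, \rd_x \P^\ell B]$ is the genuine null form via $\rd^\ell (\P A)_\ell = 0$, and plan to control the $\P^\perp$-remainders ``elliptically'' via the caloric compatibility identity $\rd^\ell A_\ell = \DA(A)$. This is mistaken on two counts. First, the null structure here comes from the \emph{output} Leray projection $\P_j$, not from any property of the inputs: by \eqref{eq:nf-N} and \eqref{eq:nf-N-bare}, $\P_j \calO(u, \rd_x v) = \abs{D}^{-1}\calN(u,v)$ is a Klainerman--Machedon null form for \emph{arbitrary} $u,v$. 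This is exactly the identification \eqref{eq:Ax-bi-df-nf} in the paper's proof, followed by Lemma~\ref{lem:nf-bi}, and it makes your input Hodge splitting a red herring --- all four sub-pieces carry the null structure identically. Second, Proposition~\ref{prop:Ax-bi} is a raw bilinear estimate for arbitrary $\g$-valued $1$-forms $A,B$ on $I\times\bbR^4$; it carries no caloric-gauge hypothesis, so invoking \eqref{eq:main-compat} to render $\P^\perp A$ quadratically small is not available and would in any case destroy the bilinear character of the statement. Your closing remark --- that the $\uS^1$/energy-dispersed hypotheses of \eqref{eq:Ax-bi-df-ed}--\eqref{eq:Ax-bi-cf-ed} are there because ``the $\P^\perp$ contributions close only by invoking the caloric-gauge compatibility identity'' --- is therefore also wrong: the $\uS^1$-norm enters because the energy-dispersed argument needs the $\Box A \in \Box\uX^1$ high-modulation control (cf.\ the choice $q=\tfrac{9}{4}$ in Step~3 of the proof), not anything involving $\P^\perp A$.

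Apart from this, your reduction of $\P^\perp_j\calM^2$ via the $\bfQ$-symbol (Lemma~\ref{lem:q-disp} together with the $\rd^\alp u\,\rd_\alp v$ null form \eqref{eq:nf-N0}) matches the paper, and the overall ordering of the argument --- fixed time by H\"older/Bernstein, spacetime $N$-bounds from the core null-form estimates, high-modulation pieces from the Strichartz estimates, interval localization via $\chi_I$ and $\chi_I^k$ --- is sound in outline. Two smaller imprecisions: the frequency-gap splitting in the paper is finer than a single $P_{<k-\kpp}+P_{\geq k-\kpp}$ insertion --- in the balanced-frequency case it also inserts a $Q_{<k_{\min}-\kpp}$ modulation projection, uses the low-modulation gain of Proposition~\ref{prop:nf-core}, and then treats the high-modulation leftover either by short time interval or by $\Str^1$/$DS^1$ divisibility; and for $\P\calM^2$ one really does need the $\Xdf^1$-component of $\Box\uX^1$, which requires the $Z^1$-type estimates \eqref{eq:nf-Z}--\eqref{eq:nf-1-H-Z}, not merely the $L^2\dot H^{-1/2}$ and $L^{9/5}\dot H^{-4/9}$ pieces.
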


We end this subsection with bilinear estimates for $\bfw^{2}_{0}$ and
$\bfw^{2}_{x}$, which arise in the equation for a dynamic Yang--Mills
heat flow of a caloric Yang--Mills wave.
\begin{proposition} \label{prop:w0-bi} For any $s > 0$, we have the
  fixed-time bound
  \begin{align}
    \nrm{\abs{D}^{-1} P_{k} \bfw^{2}_{0} (A, B, s)(t)}_{L^{2}} \aleq
    \brk{2^{2k} s}^{-10} \brk{2^{-2k}s^{-1}}^{-\dltb} c_{k} d_{k}
    \nrm{\rd_{t}
      A(t)}_{L^{2}_{c}}\nrm{B(t)}_{\dot{H}^{1}_{d}}, \label{eq:w0-bi-L2}
  \end{align}
  and the space-time bounds
  \begin{align}
    \nrm{\abs{D}^{-1} P_{k} \bfw^{2}_{0}(A, B, s)}_{L^{2}
      \dot{H}^{\frac{1}{2}} [I]}
    \aleq & \ \brk{2^{2k} s}^{-10} \brk{2^{-2k}s^{-1}}^{-\dltb} c_{k} d_{k} \nrm{A}_{\Str^{1}_{c}[I]} \nrm{B}_{\Str^{1}_{d}[I]}, \label{eq:w0-bi-L2L2} \\
    \nrm{\abs{D}^{-1} P_{k} \bfw^{2}_{0}(A, B, s)}_{Y [I]} \aleq & \
    \brk{2^{2k} s}^{-10} \brk{2^{-2k}s^{-1}}^{-\dltb} c_{k} d_{k}
    \nrm{A}_{S^{1}_{c}[I]} \nrm{B}_{S^{1}_{d}[I]}. \label{eq:w0-bi}
  \end{align}
  Moreover, if $(B, I)$ is $(\veps, M)$-energy dispersed, then
  \begin{align}
    \nrm{\abs{D}^{-1} P_{k} \bfw^{2}_{0}(A, B, s)}_{Y[I]} \aleq & \
    \veps^{\dltc} \brk{2^{2k} s}^{-10}
    \brk{2^{-2k}s^{-1}}^{-\dltb} c_{k} \nrm{A}_{S^{1}_{c}[I]}
    M. \label{eq:w0-bi-ed}
  \end{align}
\end{proposition}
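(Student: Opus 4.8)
The plan is to exploit the fact that, in contrast to the genuine Yang--Mills nonlinearities $\P \calM^{2}$ and $\calM^{2}_{0}$, the bilinear form $\bfw^{2}_{0}$ carries a \emph{bounded} symbol together with a built-in parabolic localization, so that its estimates reduce --- with no null-frame input whatsoever --- to the corresponding bounds for the core bilinear forms $\calM^{2}_{0}$ (Proposition~\ref{prop:ell-bi}) and $\bfQ$ (Proposition~\ref{prop:Q-bi}), the parabolic factors $\brk{2^{2k}s}^{-10}\brk{2^{-2k}s^{-1}}^{-\dltb}$ being extracted from a pointwise bound on the symbol. Concretely, $\bfw^{2}_{0}(A,B,s) = \sum_{j} \calO_{s}([\rd_{t} A_{j}, B_{j}])$ where $\calO_{s}$ is the Fourier multiplier with symbol $-2\abs{\eta}^{2}\bfW(\xi,\eta,s)$, and the first task is to show
\[
  2\abs{\eta}^{2}\abs{\bfW(\xi,\eta,s)} \aleq \min\!\big(1,\, s(\abs{\xi}^{2}+\abs{\eta}^{2})\big)\,\brk{s\abs{\xi+\eta}^{2}}^{-N}, \qquad N \text{ arbitrary.}
\]
For this I would use the Duhamel-type identity $\bfW(\xi,\eta,s) = \int_{0}^{s} e^{-(s-s')(\abs{\xi}^{2}+\abs{\eta}^{2})} e^{-s'\abs{\xi+\eta}^{2}}\,\ud s'$ (which one checks by differentiating the integrand in $s'$ and using $\abs{\xi}^{2}+\abs{\eta}^{2}-\abs{\xi+\eta}^{2} = -2\xi\cdot\eta$): bounding the integrand by $1$ gives the factor $\min(1, s(\abs{\xi}^{2}+\abs{\eta}^{2}))$ after $\abs{\eta}^{2} \le \abs{\xi}^{2}+\abs{\eta}^{2}$, while $\abs{\xi}^{2}+\abs{\eta}^{2} \ge \tfrac12\abs{\xi+\eta}^{2}$ together with a split of $[0,s]$ at $s/2$ yields the decay $\brk{s\abs{\xi+\eta}^{2}}^{-N}$ along with $\abs{\eta}^{2}/(\abs{\xi}^{2}+\abs{\eta}^{2}) \aleq 1$.

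Given this symbol bound, the fixed-time estimate \eqref{eq:w0-bi-L2} follows by a Littlewood--Paley decomposition $A = \sum_{k_{1}} A_{k_{1}}$, $B = \sum_{k_{2}} B_{k_{2}}$: in each frequency interaction one disposes of $\calO_{s}$ (its symbol, after restriction to the relevant cube, is disposable) up to the scalar weight $\min(1, 2^{2\max(k_{1},k_{2})}s)\brk{2^{2k}s}^{-N}$, estimates the product $[\rd_{t} A_{k_{1}}, B_{k_{2}}]$ at $P_{k}$ by Bernstein and Hölder (using $\rd_{t} A \in L^{2}$, $B \in \dot{H}^{1} \hookrightarrow L^{4}$), and sums the dyadic pieces. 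The weight $\brk{2^{2k}s}^{-N}$ handles output frequencies above the heat scale $s^{-1/2}$; for $2^{2k}s < 1$ the low$\times$high, high$\times$low and diagonal interactions produce $\min(1, 2^{2k}s) = 2^{2k}s \le (2^{2k}s)^{\dltb}$ directly, while in the high$\times$high interaction the sum $2^{k}\sum_{k'>k} 2^{-k'} c_{k'} d_{k'} \min(1, 2^{2k'}s)$, split at $k' \sim -\tfrac12\log_{2} s$ and estimated via the slow variation of $c,d$, yields $(2^{2k}s)^{\frac12-\dlta} c_{k} d_{k} \le (2^{2k}s)^{\dltb} c_{k} d_{k} = \brk{2^{-2k}s^{-1}}^{-\dltb} c_{k} d_{k}$ (valid since $\dltb \ll \dlta < \tfrac12$).

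The space-time bounds \eqref{eq:w0-bi-L2L2} and \eqref{eq:w0-bi} are proved by the same scheme, with the fixed-time product estimates replaced by the global-in-time dyadic bilinear estimates underlying \eqref{eq:ell-bi-L2L2}, \eqref{eq:ell-bi} (and \eqref{eq:Q-L2L2}, \eqref{eq:Q}), established in Section~\ref{subsec:dyadic-ests-pf}, again weighted by the symbol bound and summed dyadically; the passage to the interval-localized versions uses the extension procedure of Section~\ref{subsec:ftn-sp} (Proposition~\ref{prop:int-loc}) exactly as in the proofs of Propositions~\ref{prop:ell-bi} and \ref{prop:Q-bi}, with no new feature since $\bfw^{2}_{0}$ requires no high-modulation or angular refinement. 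For the energy-dispersed bound \eqref{eq:w0-bi-ed}, I would place the energy-dispersed factor $B$ into $\sup_{k}\nrm{P_{k} B}_{\Str^{1}[I]}$, which is $\le C\veps^{\dlta} M$ by \eqref{eq:ed-str}, keep $A$ in $S^{1}_{c}[I]$, and repeat the argument as in the proof of \eqref{eq:ell-bi-ed}; since $\dltc = \dltb \ll \dlta$ and $\veps < 1$ we have $\veps^{\dlta} \le \veps^{\dltc}$, giving the claim.

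The technically heaviest step is the last part of the second paragraph: organizing the high$\times$high frequency sum so that the gain $\min(1, 2^{2k'}s)$, which degrades as $k'$ grows, nonetheless survives summation against the slowly varying envelopes to leave a genuine power of $2^{2k}s$. This is exactly where the precise form of the symbol bound --- in particular the appearance of $\abs{\xi}^{2}+\abs{\eta}^{2}$ (rather than $\abs{\xi+\eta}^{2}$) inside the gain --- is essential. Everything else is parabolic bookkeeping layered on top of estimates already in hand.
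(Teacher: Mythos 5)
Your overall strategy — factor out the parabolic weight from $\bfW$ and then hand the remaining bilinear operator to the dyadic estimates underlying Propositions~\ref{prop:ell-bi} and \ref{prop:Q-bi} — is the same as the paper's, which first invokes Lemma~\ref{lem:w-disp} to write $P_{k}\bfW(P_{k_{1}}\rd_{t}A, P_{k_{2}}\lap B, s)$ as $\brk{s2^{2k}}^{-10}\brk{s^{-1}2^{-2k_{\max}}}^{-1}2^{-2k_{\max}}2^{2k_{2}}$ times a disposable operator and then proceeds exactly as in the proof of Proposition~\ref{prop:ell-bi}. Your Duhamel representation $\bfW(\xi,\eta,s)=\int_{0}^{s}e^{-(s-s')(\abs{\xi}^{2}+\abs{\eta}^{2})}e^{-s'\abs{\xi+\eta}^{2}}\,\ud s'$ is correct (both sides equal $\frac{e^{-sb}-e^{-sa}}{a-b}$ with $a=\abs{\xi}^{2}+\abs{\eta}^{2}$, $b=\abs{\xi+\eta}^{2}$) and is a genuinely cleaner route to the pointwise symbol bound than the direct case-split computation in Lemma~\ref{lem:w-disp}; the paper does not use it. Your high$\times$high frequency-envelope summation and the passage to the interval-localized/energy-dispersed bounds via $\Str^{1}$ and \eqref{eq:ed-str} also match the paper's argument.

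The gap is in the parenthetical ``(its symbol, after restriction to the relevant cube, is disposable)''. What you actually prove is the $L^{\infty}$ symbol bound $\abs{\eta}^{2}\abs{\bfW}\aleq \min(1,s(\abs{\xi}^{2}+\abs{\eta}^{2}))\brk{s\abs{\xi+\eta}^{2}}^{-N}$, but H\"older/Bernstein on the dyadic pieces needs the cube-restricted multiplier to be \emph{disposable} — i.e.\ to have a kernel of bounded mass — and this requires bounding the $\xi,\eta$-derivatives of the symbol on the scale of the cube, not just its sup. This is precisely the content of Lemma~\ref{lem:w-disp}, which occupies a full page in the paper with a case analysis in $k,k_{1},k_{2}$. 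Your Duhamel formula should help here as well (each derivative brings down factors like $(s-s')\xi$, $s'(\xi+\eta)$ or $s'$ under the integral, all controllable against the weight in each regime of $s(\abs{\xi}^{2}+\abs{\eta}^{2})$), but you must actually carry out those derivative estimates — particularly across the transition $s2^{2k_{\max}}\sim 1$ — before the rest of your argument is licensed.
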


\begin{proposition} \label{prop:wx-bi} For any $s > 0$, we have the
  fixed-time bound
  \begin{equation} \label{eq:wx-bi-L2} \nrm{P_{k} \P \bfw_{x}^{2}(A,
      B, s)(t)}_{\dot{H}^{-1}} \aleq \brk{2^{2k} s}^{-10}
    \brk{2^{-2k}s^{-1}}^{-\dltb} c_{k} d_{k} \nrm{\nb A(t)}_{L^{2}_c}
    \nrm{\nb B(t)}_{L^{2}_{d}}.
  \end{equation}
  and the space-time bounds
  \begin{align}
    & \begin{aligned}
      & \nrm{P_{k} \P \bfw_{x}^{2}(A, B, s)}_{L^{2} \dot{H}^{-\frac{1}{2}}[I]}  \\
      & \ \ \ \ \ \ \aleq \brk{2^{2k} s}^{-10} \brk{2^{-2k}s^{-1}}^{-\dltb} c_{k}
      d_{k} \nrm{(\nb A, \nb \P^{\perp} A)}_{(\Str^{0} \times L^{2}
        \dot{H}^{\frac{1}{2}})_{c}[I]} \nrm{B}_{\Str^{1}_d[I]},
    \end{aligned} \label{eq:wx-bi-L2L2} \\
    & \begin{aligned}
      & \nrm{P_{k} \P \bfw_{x}^{2}(A, B, s)}_{N \cap \Box \uX^{1}[I]}  \\
      & \ \ \ \ \ \ \aleq \brk{2^{2k} s}^{-10} \brk{2^{-2k}s^{-1}}^{-\dltb} c_{k}
      d_{k} \nrm{(A, \P^{\perp} A)}_{(S^{1} \times Y^{1})_{c}[I]}
      \nrm{B}_{S^{1}_{d}[I]}.
    \end{aligned} \label{eq:wx-bi}
  \end{align}
  Moreover, if $(B, I)$ is $(\veps, M)$-energy dispersed, then
  \begin{equation} \label{eq:wx-bi-ed}
    \begin{aligned}
      & \nrm{P_{k} \P \bfw_{x}^{2}(A, B, s)}_{N \cap L^{2} \dot{H}^{-\frac{1}{2}} [I]}  \\
      & \ \ \ \ \ \ \aleq \brk{2^{2k} s}^{-10} \brk{2^{-2k}s^{-1}}^{-\dltb} c_{k}
      (\veps^{\dltc}  \nrm{A}_{\uS^{1}_{c}[I]} + \nrm{\nb
        \P^{\perp} A}_{L^{2} \dot{H}^{\frac{1}{2}}_{c}[I]}) M.
    \end{aligned}
  \end{equation}
\end{proposition}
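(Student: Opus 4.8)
The plan is to reduce Proposition~\ref{prop:wx-bi} to the bilinear estimates for $\calM^{2}$ and $\bfQ$ already recorded in Propositions~\ref{prop:ell-bi}, \ref{prop:Q-bi} and \ref{prop:Ax-bi}, by unpacking the heat factor $\bfW$. First I would rewrite its symbol as $\bfW(\xi,\eta,s) = -\,s\, g(2s\,\xi\cdot\eta)\, e^{-s\abs{\xi+\eta}^{2}}$, where $g(z) = (1-e^{z})/z$ is smooth with $\abs{g(z)} \aleq 1 + \chi_{\{z>0\}}\, e^{z}$ and with symbol-type bounds on its derivatives, so that $g$ is harmless and the apparent singularity at $\xi\cdot\eta = 0$ is spurious. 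Consequently $\bfW(\cdot,\cdot,s)$ is, up to the scalar weight $s$, a disposable bilinear multiplier whose output is localized by the Gaussian $e^{-s\abs{\xi+\eta}^{2}}$ to frequencies $\aleq s^{-\frac{1}{2}}$, with rapidly decaying tails; this produces the factor $\brk{2^{2k}s}^{-N}$ for any $N$, in particular $N = 10$. Moreover, on the region $\xi\cdot\eta < 0$ — which governs the high$\times$high$\to$low interactions, the only ones in which the second argument $\rd_{j}\rd_{t}B - 2\rd_{x}\rd_{t}B_{j}$ carries a dangerous excess of derivatives — one has $s\,\abs{g(2s\,\xi\cdot\eta)} \aleq \min(s, \abs{\xi\cdot\eta}^{-1}) \aleq 2^{-2\max(k_{A},k_{B})}$, i.e.\ $\bfW$ behaves there like $\lap^{-1}$ applied to the highest input, precisely compensating the two extra spatial derivatives; on the complementary region the output frequency is comparable to the largest input frequency, so the Gaussian (sharpened by the $e^{-s(\abs{\xi}^{2}+\abs{\eta}^{2})}$ tail coming from $g$) already supplies the needed localization and decay. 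A dyadic bookkeeping of these two mechanisms, combined with the scalar weight $s$, yields the remaining factor $\brk{2^{-2k}s^{-1}}^{-\dltb}$: in the regime $2^{2k}s \ll 1$ one actually gains a positive power of $2^{2k}s$ close to $1 \gg \dltb$, and the high$\times$high sum over $k_{A},k_{B}$ converges.

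With the $\bfW$-weight in hand, I would carry out the algebraic reduction. Applying $\P_{j}$ and contracting the spatial index, $\rd_{t}A^{\ell}(\rd_{j}\rd_{t}B_{\ell} - 2\rd^{\ell}\rd_{t}B_{j})$ is rearranged by the Leibniz rule — moving one derivative off the twice-differentiated factor, and using that $\P$ annihilates gradients so that the leading $\rd_{j}[\,\cdot\,]$ piece drops out — into a combination of commutator terms of the form $\P_{j}[\,\cdot\,,\rd_{x}\,\cdot\,]$ (the structure of $\P\calM^{2}$), terms of the form $\lap^{-1}\rd_{j}$ applied to a bilinear form in $\rd_{t}A$, $\rd_{t}B$ with symbol comparable to that of $\bfQ$ (the structure of $\P^{\perp}\calM^{2}$ and of $\calD\calM^{2}_{0}$), and remainders which are either of strictly lower order or involve a $\Box$--$\lap$ exchange and are absorbed into the previous two after paying an acceptable power of the modulation. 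Composing each piece with the disposable, $s^{-\frac{1}{2}}$-localized multiplier extracted from $\bfW$, the space-time bounds \eqref{eq:wx-bi-L2L2} and \eqref{eq:wx-bi} follow from \eqref{eq:Q-L2L2}, \eqref{eq:Q}, \eqref{eq:Ax-bi-df}--\eqref{eq:Ax-bi-cf} and \eqref{eq:Ax-bi-df-himod}--\eqref{eq:Ax-bi-cf-himod}, the extra weight $c_{k}d_{k}\brk{2^{2k}s}^{-10}\brk{2^{-2k}s^{-1}}^{-\dltb}$ being harmless by slow variation of the envelopes and summability over the heat-localized frequency. This also requires keeping the curl-free part of the first argument separate, since $\rd_{t}\P^{\perp}A$ is controlled only in $L^{2}\dot{H}^{\frac{1}{2}}$-type norms — hence the appearance of $\nrm{(A,\P^{\perp}A)}_{(S^{1}\times Y^{1})}$ (and of $\nrm{(\nb A,\nb\P^{\perp}A)}_{(\Str^{0}\times L^{2}\dot{H}^{\frac{1}{2}})}$) on the right. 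The fixed-time bound \eqref{eq:wx-bi-L2} is the easiest: after the same symbol reduction it is a direct Bernstein and Cauchy--Schwarz count in $\dot{H}^{-1}$, with the high$\times$high$\to$low output producing a summable gain $2^{k-\max(k_{A},k_{B})}$, exactly as for $\bfw_{0}^{2}$ in Proposition~\ref{prop:w0-bi}.

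For the energy-dispersed estimate \eqref{eq:wx-bi-ed} I would again split $A = \P A + \P^{\perp}A$ in the first slot. The divergence-free part $\P A$ obeys the same dispersive bounds as a free wave, so applying the energy-dispersed inequalities \eqref{eq:Q-ed}, \eqref{eq:Ax-bi-df-ed}--\eqref{eq:Ax-bi-cf-ed} with $B$ as the energy-dispersed input yields the $\veps^{\dltc}$ gain; the curl-free part $\P^{\perp}A = \lap^{-1}\rd_{x}\rd^{\ell}A_{\ell}$ enjoys no improvement from small energy dispersion, so its contribution is simply estimated by its elliptic norm, producing the term $\nrm{\nb\P^{\perp}A}_{L^{2}\dot{H}^{\frac{1}{2}}_{c}}$. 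The main obstacle, in my view, is the reduction of the previous paragraph: identifying the secondary null structure hidden in $\rd_{j}\rd_{t}B - 2\rd_{x}\rd_{t}B_{j}$ after Leray projection so that the $\P\calM^{2}$- and $\bfQ$-estimates genuinely apply, while simultaneously tracking the $s$-weights and absorbing the sharp time cutoff $\chi_{I}$ built into the interval-localized $(\Box\uX^{1})_{k}[I]$-norm; once the $\bfW$-weight has been reorganized, the high$\times$high$\to$low regime — although it is precisely what forces moving a derivative off the high factor — is otherwise routine.
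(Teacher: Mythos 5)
Your reduction has a genuine gap at the point you yourself flag as the "main obstacle." The paper does not move derivatives by the Leibniz rule at all; it decomposes $\bfw_{j}^{2}$ directly. Using \eqref{eq:w2-x-def}, the paper splits
\begin{equation*}
P_k\P_j\bfw^2(P_{k_1}A,P_{k_2}B,s)
= -2\,P_k\P_j\bfW(P_{k_1}\rd_t A^\ell,\rd_x P_{k_2}\rd_t B_\ell,s)
+ 4\,P_k\P_j\bfW(P_{k_1}\P\rd_t A^\ell,\rd_\ell P_{k_2}\rd_t B,s)
+ 4\,P_k\P_j\bfW(P_{k_1}\P^\perp\rd_t A^\ell,\rd_\ell P_{k_2}\rd_t B,s),
\end{equation*}
observes via Lemma~\ref{lem:nf-bi} that the first two are \emph{already} null forms of type $\calN$ (no rearrangement needed: $\P_j\bfO(u^\ell,\rd_x v_\ell)$ matches \eqref{eq:nf-N}, and $\bfO(\P^\ell A,\rd_\ell v)$ matches \eqref{eq:nf-N*}), that the third is favourable because $\rd_t\P^\perp A$ carries $L^2\dot H^{1/2}$ control, and then extracts the heat weight via Lemma~\ref{lem:w-disp}. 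The estimates \eqref{eq:wx-bi-L2}, \eqref{eq:wx-bi-L2L2}, \eqref{eq:wx-bi} then come from the dyadic null-form bounds \eqref{eq:nf-core}, \eqref{eq:nf-uX} and the elliptic bound \eqref{eq:qf-L2L2}, and \eqref{eq:wx-bi-ed} from Remark~\ref{rem:nf-bi}, with the same small-$\kpp$/large-$\kpp$ dichotomy used in Propositions~\ref{prop:Ax-bi}, \ref{prop:rem2}.

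Your proposed Leibniz rearrangement does not produce the structures you claim. After $\P_j\bfW(\rd_t A^\ell,\rd_j\rd_t B_\ell,s)=\P_j\rd_j\bfW(\rd_t A^\ell,\rd_t B_\ell,s)-\P_j\bfW(\rd_j\rd_t A^\ell,\rd_t B_\ell,s)$ and dropping the gradient, the surviving term has the Leray index $j$ on $\rd_j A$, i.e.\ it is of the schematic form $\P_j[\rd_x A,\cdot]$ rather than $\P_j[\cdot,\rd_x B]$ (the $\P\calM^2$ structure with derivative on the \emph{second} slot). Worse, for the piece $\rd^\ell\rd_t B_j$, moving the contracted $\rd^\ell$ out leaves $\P_j\bfW(\rd_t A_\ell,\rd_t B_j,s)$ with the free index $j$ sitting on $B$ directly, not on a derivative, so that $\P_j$ yields no null-form gain at all; the $\bfQ$-type structure you assert does not emerge from this computation. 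You would need further work to recombine terms into genuine null forms—work that the paper avoids entirely because the expression before any rearrangement is already in null-form shape. The underlying confusion may be the count of extra derivatives: the weight $2^{-2k_{\max}}$ from Lemma~\ref{lem:w-disp} (which you re-derive correctly) already removes the two extra derivatives of $\rd_x\rd_t B$, so there is no derivative to move. Your treatment of the $\bfW$ symbol, the Gaussian localization, the $\P/\P^\perp$ split in the first slot, and the energy-dispersed case are all consistent with the paper; only the Leibniz step is both unnecessary and incorrect as stated.
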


\subsection{Estimates for the covariant wave
  operator} \label{subsec:ests-covwave} We now state estimates
concerning the covariant wave operator $\Box_{A}$.  All estimates
stated here without proofs are proved in Section~\ref{subsec:ests-pf},
with the exceptions of Theorem~\ref{thm:paradiff} and
Proposition~\ref{prop:paradiff-weakdiv}, which are proved in
Section~\ref{sec:paradiff}.

We begin by expanding $\Box_{A} B$ to
\begin{align*}
  \Box_{A} B = & \Box B + 2 [A_{\alp}, \rd^{\alp} B] + [\rd^{\alp}
  A_{\alp}, B] + [A^{\alp}, [A_{\alp}, B]].
\end{align*}
We have the following simple fixed-time estimates for $\Box_{A} -
\Box$.
\begin{proposition} \label{prop:Box-A-L2} For any $\alp, \bt, \gmm \in
  \set{0, 1, \ldots, 4}$, we have the fixed-time bounds
  \begin{align}
    \nrm{[A_{\alp}, \rd^{\alp} B](t)}_{\dot{H}^{-1}_{cd}}
    \aleq & \ \nrm{(A_{0}, A)(t)}_{\dot{H}^{1}_{c}} \nrm{\nb B(t)}_{L^{2}_{d}}, \label{eq:ADA-L2} \\
    \nrm{[\rd^{\alp} A_{\alp}, B](t)}_{\dot{H}^{-1}_{cd}}
    \aleq & \ (\nrm{A(t)}_{\dot{H}^{1}_{c}} + \nrm{\rd_{t} A_{0}(t)}_{L^{2}_{c}}) \nrm{B(t)}_{\dot{H}^{1}_{d}}, \label{eq:divAA-L2} \\
    \nrm{[A^{(1)}_{\alp}, [A^{(2)\alp}, B]](t)}_{\dot{H}^{-1}_{cde}}
    \aleq & \ \nrm{(A^{(1)}_{0}, A^{(1)})(t)}_{\dot{H}^{1}_{c}}
    \nrm{(A^{(2)}_{0}, A^{(2)})(t)}_{\dot{H}^{1}_{d}}
    \nrm{B(t)}_{\dot{H}^{1}_{e}}, \label{eq:AAA-L2}
  \end{align}
  and the space-time bounds
  \begin{align}
    \nrm{[A_{\ell}, \rd^{\ell} B]}_{L^{2}
      \dot{H}^{-\frac{1}{2}}_{cd}[I]}
    \aleq & \ \nrm{A}_{\Str^{1}_{c}[I]} \nrm{B}_{\Str^{1}_{d}[I]}, \label{eq:ADA-L2L2} \\
    \nrm{[A_{0}, \rd_{0} B]}_{L^{2} \dot{H}^{-\frac{1}{2}}_{cd}[I]}
    \aleq & \ \nrm{\nb A_{0}}_{L^{2} \dot{H}^{\frac{1}{2}}_{c}[I]} \nrm{B}_{\Str^{1}_{d}[I]}, \label{eq:A0DA-L2L2} \\
    \nrm{[\rd^{\alp} A_{\alp}, B]}_{L^{2}
      \dot{H}^{-\frac{1}{2}}_{cd}[I]}
    \aleq & \ \nrm{(\nb A_{0}, \nb \P^{\perp} A)}_{L^{2} \dot{H}^{\frac{1}{2}}_{c}[I]} \nrm{B}_{\Str^{1}_{d}[I]}, \label{eq:divAA-L2L2} \\
    \nrm{[A^{(1)}_{\alp}, [A^{(2)\alp}, B]](t)}_{L^{2}
      \dot{H}^{-\frac{1}{2}}_{cde}[I]}
    \aleq & \  \nrm{(\nb A^{(1)}_{0}, \nb A^{(1)})(t)}_{L^{2} \dot{H}^{\frac{1}{2}} \times \Str^{0}_{c}[I]} \notag \\
    & \times \nrm{(\nb A^{(2)}_{0}, \nb A^{(2)})(t)}_{L^{2}
      \dot{H}^{\frac{1}{2}} \times \Str^{0}_{c}[I]}
    \nrm{B}_{\Str^{1}_{e}[I]}. \label{eq:AAA-L2L2}
  \end{align}
\end{proposition}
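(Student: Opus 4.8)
\emph{Proof proposal.} The plan is to treat Proposition~\ref{prop:Box-A-L2} as a collection of non-resonant (i.e., ``easy'') bilinear and trilinear estimates: none of the three expressions making up $\Box_{A} - \Box$ here carries a null interaction, so only the standard toolkit is needed --- Littlewood--Paley decomposition and frequency trichotomy (high-high~$\to$~low, high-low, low-high), Bernstein, H\"older, the Sobolev embedding $\dot{H}^{1}(\bbR^{4}) \hookrightarrow L^{4}$ together with its dual $L^{4/3} \hookrightarrow \dot{H}^{-1}$, non-sharp Strichartz exponents, and envelope summation via $\dlte$-admissibility. For the fixed-time bounds \eqref{eq:ADA-L2}--\eqref{eq:AAA-L2} I would first rewrite $\rd^{\alp} A_{\alp} = \rd^{\ell} A_{\ell} - \rd_{t} A_{0}$ in \eqref{eq:divAA-L2}, so that in all three cases the inputs are controlled either in $\dot{H}^{1}$ or (if carrying a $\nb$) in $L^{2}$. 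In the high-low and low-high regimes I put the strictly higher-frequency factor in $L^{4}$ (or $L^{2}$), the lower in $L^{4}$, and land the product in $L^{4/3} \hookrightarrow \dot{H}^{-1}$, while replacing the \emph{lowest}-frequency factor by $L^{\infty}$ via Bernstein $\nrm{P_{k} u}_{L^{\infty}} \aleq 2^{2k} \nrm{P_{k} u}_{L^{2}}$ to extract a dyadic gain $2^{-\dlt'\abs{k-m}}$ that dominates the slow variation of the envelopes. In the high-high regime I instead use Bernstein on the low-frequency output, $\nrm{P_{m}(fg)}_{L^{2}} \aleq 2^{2m}\nrm{f}_{L^{2}}\nrm{g}_{L^{2}}$. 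Summing against the admissible envelopes reproduces the asserted output envelopes $cd$ and $cde$; the cubic term \eqref{eq:AAA-L2} is identical with three $L^{4}$ factors.

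The space-time bounds \eqref{eq:ADA-L2L2}--\eqref{eq:AAA-L2L2} follow the same architecture, now with H\"older in both $t$ and $x$. For the factors controlled purely in $\Str^{1}$ one simply selects a pair (or triple) of Strichartz exponents $(p_{i}, q_{i}, \sigma_{i})$ lying in the admissible window of \eqref{eq:Str-def} --- there is always $O(\dlts)$ of slack --- so that the mixed-norm H\"older lands the product in $L^{2} \dot{H}^{-\frac{1}{2}}$, again inserting Bernstein in the off-diagonal regimes to recover square-summability over frequencies. For \eqref{eq:A0DA-L2L2} and \eqref{eq:divAA-L2L2}, where one factor is controlled only in the elliptic norm $L^{2} \dot{H}^{\frac{1}{2}}$, the key preliminary step is the energy-flux / Gagliardo--Nirenberg upgrade
\[
  \nrm{P_{k} u}_{L^{\infty}_{t} \dot{H}^{1}_{x}} \aleq \nrm{P_{k} u}_{L^{2}_{t} \dot{H}^{\frac{3}{2}}_{x}}^{\frac{1}{2}} \nrm{P_{k} \rd_{t} u}_{L^{2}_{t} \dot{H}^{\frac{1}{2}}_{x}}^{\frac{1}{2}} \aleq \nrm{P_{k} \nb u}_{L^{2}_{t} \dot{H}^{\frac{1}{2}}_{x}},
\]
valid since $\nb = \rd_{t,x}$; interpolating this with $u \in L^{2}_{t} \dot{H}^{\frac{3}{2}}_{x}$ gives a one-parameter family of bounds $u \in L^{p_{1}}_{t} \dot{H}^{s_{1}}_{x}$, $p_{1} \in [2, \infty]$, and one pairs the appropriate member with a non-endpoint Strichartz exponent of the wave factor so that the time-H\"older $\tfrac{1}{p_{1}} + \tfrac{1}{p_{2}} = \tfrac12$ closes with both exponents finite. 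In \eqref{eq:divAA-L2L2} I again write $\rd^{\alp} A_{\alp} = \rd^{\ell} \P^{\perp}_{\ell} A - \rd_{t} A_{0}$, so that both summands are governed by $\nb \P^{\perp} A$ and $\nb A_{0}$ as stated; in \eqref{eq:AAA-L2L2} I estimate two of the three factors by Strichartz norms and the third by the upgraded $L^{\infty}_{t} \dot{H}^{1}_{x}$ bound. Interval localization is harmless here: all left-hand norms are of restriction type and restriction-compatible ($L^{p} L^{q}$, $L^{2} \dot{H}^{\pm \frac{1}{2}}$, $L^{2} L^{2}$), and $\Str^{1}[I]$ is itself defined by restriction, so it suffices to extend $A$ and $B$ to $\bbR^{1+4}$ with comparable norms and invoke the global estimates.

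The only real work is the bookkeeping, and I expect it to be the main (technical, not conceptual) obstacle: (i) verifying that each Strichartz exponent one needs genuinely lies in the admissible non-sharp set of \eqref{eq:Str-def} --- the $\dlts$-margins make this routine but must be checked; (ii) extracting in each off-diagonal frequency regime a genuine Bernstein-induced gain, of size dominating $2^{\dlte\abs{k-m}}$, so that the final sums against the $\dlte$-admissible envelopes $c, d, e$ converge and reproduce the stated output envelopes; and (iii) threading the Gagliardo--Nirenberg upgrade and the subsequent interpolation for the elliptic factors through all three frequency regimes so that the mixed-norm H\"older inequalities close --- this last point is the fiddliest. None of this is deep; the arguments parallel the corresponding estimates in \cite{KST, OT2, KT}, so I would present the high-high case and one representative off-diagonal case in detail and leave the remaining cases to the reader.
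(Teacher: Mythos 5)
Your proposal is correct and, in substance, is the same argument the paper uses: it all reduces to Littlewood--Paley trichotomy, Bernstein, H\"older, non-sharp Strichartz, and envelope summation. The paper's own proof is a one-line citation of Proposition~\ref{prop:bi-ell} (the dyadic bilinear bounds \eqref{eq:qf-L2L2}, \eqref{eq:qf-Y-L2L2}, etc.\ proved in Section~\ref{subsec:dyadic-ests-pf}) together with the observation $\nrm{\abs{D} u}_{\Str^{0}} \aleq \nrm{\nb u}_{L^{2} \dot{H}^{\frac{1}{2}}}$; your ``Gagliardo--Nirenberg upgrade'' $\nrm{P_k u}_{L^\infty \dot H^1}\aleq \nrm{P_k\nb u}_{L^2\dot H^{1/2}}$ is exactly this embedding in disguise, so the only difference is that you re-derive the dyadic bounds from scratch rather than invoking the already-available black box.
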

In order to proceed, we recall the notation $\P_{\alp} A = (\P
A)_{\alp}$ for a space-time 1-form $A_{t,x}$:
\begin{equation*}
  \P_{\alp} A = \left\{ 
    \begin{array}{cl} 
      \bfP_{j} A_{x} & \alp = j \in \set{1, \ldots, 4}, \\ 
      A_{0} & \alp = 0. \\
    \end{array}
  \right.
\end{equation*}
We also write $\P^{\perp}_{\alp} A = (\P^{\perp} A)_{\alp} = A_{\alp}
- \P_{\alp} A$.

Given a parameter $\kpp \in \bbN$, we furthermore decompose $2
[A_{\alp}, \rd^{\alp} B]$ so that
\begin{equation} \label{eq:Box-A-decomp}
  \begin{aligned}
    \Box_{A} B
    =& \ \Box B+ 2[A_{\alp}, \rd^{\alp} B] + \Rem^{3}_{A} B \\
    =& \ \Box B + \Diff^{\kpp}_{\P A} B + \Diff^{\kpp}_{\P^{\perp} A} B +
    \Rem^{\kpp, 2}_{A} B + \Rem^{3}_{A} B,
  \end{aligned}\end{equation}
where\footnote{Although the definition depends on the whole space-time
  connection $A_{t,x}$, we deviate from our convention and simply
  write $\Diff^{\kpp}_{\P A}$, $\Diff^{\kpp}_{\P^{\perp} A}$,
  $\Rem^{\kpp, 2}_{A}$ etc. to avoid cluttered notation.}
\begin{align}
  \Diff^{\kpp}_{\P A} = & \sum_{k} 2 [ P_{< k - \kpp} \P_{\alp} A, \rd^{\alp} P_{k} B] , \label{eq:diff-def} \\
  \Diff^{\kpp}_{\P^{\perp} A} = & \sum_{k} 2 [ P_{< k - \kpp} \P^{\perp}_{\alp} A, \rd^{\alp} P_{k} B] , \label{eq:diff-cf-def} \\
  \Rem^{\kpp, 2}_{A} = & \sum_{k} 2 [ P_{\geq k - \kpp} A_{\alp}, \rd^{\alp} P_{k} B] , \label{eq:rem2-def} \\
  \Rem^{3}_{A} B = & [\rd^{\alp} A_{\alp}, B] + [A^{\alp}, [A_{\alp},
  B]]. \label{eq:rem3-def}
\end{align}

We now turn to the bounds for each part of the decomposition
\eqref{eq:Box-A-decomp}. For a fixed $B \in S^{1}[I]$, we introduce
the nonlinear maps
\begin{align}
  & \begin{aligned}
    \Rem^{3}(A) B =& - [\DA_{0}(A), B] + [\DA(A), B] \\
    & - [\bfA_{0}(A), [ \bfA_{0}(A), B]]+ [A^{\ell}, [A_{\ell}, B]],
  \end{aligned} \label{eq:rem3-map-def} \\
  & \Rem^{3}_{s}(A) B = - [\DA_{0; s}(A), B]- [\bfA_{0;s}(A), [
  \bfA_{0;s}(A), B]], \label{eq:rem3-map-s-def}
\end{align}
defined for spatial connections $A$ on $I$ such that $(A, \rd_{t}
A)(t) \in T^{L^{2}} \calC$ for each fixed time $t \in I$.  In view of
Theorems~\ref{thm:main-eq} and \ref{thm:main-eq-s}, for a caloric
Yang--Mills wave $A$ we have
\begin{align*}
  \Rem^{3}_{A} B =& \Rem^{3}(A) B, \\
  \Rem^{3}_{A(s)} B =& \Rem^{3}(A(s)) B + \Rem^{3}_{s}(A) B.
\end{align*}
The nonlinear maps $\Rem^{3}(A) B$ and $\Rem^{3}_{s}(A) B$ are
well-behaved:
\begin{proposition} \label{prop:rem-3} Suppose that $A(t) \in
  \calC_{\hM}$ for every $t \in I$. Then the following properties hold
  with bounds depending on $\hM$, but otherwise independent of $I$:
  \begin{itemize}
  \item Let $c$ and $d$ be $(-\dltb, S)$-frequency envelopes
    for $A$ and $B$ in $\Str^{1}[I]$, respectively. Then
    \begin{align} \label{eq:rem3-fe} \nrm{P_{k} (\Rem^{3} (A)
        B)}_{L^{1} L^{2} \cap L^{2} \dot{H}^{-\frac{1}{2}}[I]} &
      \aleq_{\hM, \nrm{A}_{\Str^{1}[I]}} (c^{[\dltb]}_{k})^{2}
      d_{k} + c_{k} c^{[\dltb]}_{k} d^{[\dltb]}_{k}.
    \end{align}
  \item For a fixed $A \in \Str^{1}[I]$, $\Rem^{3}(A) B$ is linear in
    $B$. On the other hand, for a fixed $B$ with
    $\nrm{B}_{\Str^{1}[I]} \leq 1$, $\Rem^{3}(\cdot) B : \Str^{1}[I]
    \to L^{1} L^{2} \cap L^{2} \dot{H}^{-\frac{1}{2}}[I]$ is Lipschitz
    envelope-preserving.

  \item For a fixed $A \in \Str^{1}[I]$, $\Rem^{3}_{s}(A) B$ is linear
    in $B$. On the other hand, for a fixed $B \in S^{1}[I]$ with
    $\nrm{B}_{\Str^{1}[I]} \leq 1$, $\Rem^{3}_{s} (A) B$ is a
    Lipschitz map
    \begin{equation} \label{eq:rem3-map-s} \Rem^{3}_{s} (A) B :
      \Str^{1}[I] \to L^{1} L^{2} \cap L^{2} \dot{H}^{-\frac{1}{2}}[I]
    \end{equation}
    with output concentrated at frequency $s^{-\frac{1}{2}}$,
    \begin{equation} \label{eq:rem3-map-s-freq} (1 - s \lap)^{N}
      \Rem^{3}_{s} (A) B : \Str^{1}[I] \to 2^{-\dltb k(s)} L^{1}
      \dot{H}^{-\dltb} \cap L^{2} \dot{H}^{-\frac{1}{2} -
        \dltb}[I].
    \end{equation}
  \end{itemize}
\end{proposition}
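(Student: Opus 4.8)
The plan is to treat $\Rem^{3}(A)B$ and $\Rem^{3}_{s}(A)B$ term by term, reducing each bracket to the explicit multilinear estimates of Section~\ref{subsec:ests-2} together with the structural bounds for $\DA^{3}$, $\DA_{0}^{3}$, $\bfA_{0}^{3}$, $\DA_{0;s}$ and $\bfA_{0;s}^{3}$ furnished by Theorems~\ref{thm:main-eq} and \ref{thm:main-eq-s}.

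First I would handle $\Rem^{3}(A)B$. Substituting $\DA(A)=\bfQ(A,A)+\DA^{3}(A)$, $\DA_{0}(A)=-2\lap^{-1}\bfQ(\rd_{t}A,\rd_{t}A)+\DA_{0}^{3}(A)$ and $\bfA_{0}(A)=\lap^{-1}[A,\rd_{t}A]+2\lap^{-1}\bfQ(A,\rd_{t}A)+\bfA_{0}^{3}(A)$ from \eqref{eq:main-compat}, \eqref{eq:main-A0}, \eqref{eq:main-DA0} into \eqref{eq:rem3-map-def} splits $\Rem^{3}(A)B$ into: (i) brackets of $B$ with the genuinely higher-order pieces $\DA^{3}(A)$, $\DA_{0}^{3}(A)$, $\bfA_{0}^{3}(A)$; (ii) brackets of $B$ with the explicit quadratic expressions $\bfQ(\cdot,\cdot)$ and $\lap^{-1}[A,\rd_{t}A]$; and (iii) the purely cubic $[A^{\ell},[A_{\ell},B]]$. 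For (i) I would feed the interval bounds $\DA^{3}:\Str^{1}[I]\to L^{1}\dot{H}^{1}\cap L^{2}\dot{H}^{1/2}[I]$ (and \eqref{eq:DA0-tri}, \eqref{eq:A0-tri} for $\DA_{0}^{3}$, $\bfA_{0}^{3}$) into a Sobolev product estimate on $\bbR^{4}$ against the factor(s) of $B\in\Str^{1}[I]$: in four dimensions $\dot{H}^{1}\cdot\dot{H}^{1}\hookrightarrow L^{2}$ and $\dot{H}^{1/2}\cdot\dot{H}^{1}\hookrightarrow\dot{H}^{-1/2}$, which (after absorbing the $\Str^{1}$ loss via Bernstein) land the product in $L^{1}L^{2}\cap L^{2}\dot{H}^{-1/2}[I]$; the same template with $\bfA_{0}^{3}:\Str^{1}[I]\to L^{1}\dot{H}^{2}\cap L^{2}\dot{H}^{3/2}[I]$ handles the quintic $[\bfA_{0}^{3}(A),[\bfA_{0}^{3}(A),B]]$ and the mixed pieces from expanding $[\bfA_{0}(A),[\bfA_{0}(A),B]]$. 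For (ii) I would use Propositions~\ref{prop:ell-bi} and \ref{prop:Q-bi} to place the inner quadratic factor in $Y[I]$ (in particular $L^{2}\dot{H}^{1/2}[I]$, giving the $L^{2}\dot{H}^{-1/2}$ output after bracketing with $B$) and, after one inverse derivative, in $L^{1}L^{\infty}$-type spaces (via \eqref{eq:Q} and the $|D|^{-1}\calM^{2}_{0}\in Y$ bound), from which the $L^{1}L^{2}$ output follows by routing the surviving derivative through the Littlewood--Paley trichotomy. Term (iii) is controlled directly from $A\in\Str^{1}[I]$ by iterated Sobolev products. The frequency-envelope bookkeeping is the standard dyadic trichotomy for each multilinear expression, in which the configurations where a low-frequency input sits at the output frequency produce the factors $c_{k}^{[\dltb]}$; combined with the envelope-preservation of order $\geq 3$ of $\DA^{3}$, $\DA_{0}^{3}$, $\bfA_{0}^{3}$ this yields exactly $(c_{k}^{[\dltb]})^{2}d_{k}+c_{k}c_{k}^{[\dltb]}d_{k}^{[\dltb]}$, i.e.\ \eqref{eq:rem3-fe}. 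Linearity in $B$ is manifest from \eqref{eq:rem3-map-def}; the Lipschitz envelope-preserving property in $A$ follows by differencing, telescoping each multilinear expression so that one factor is $A_{1}-A_{2}$ (respectively $\DA^{3}(A_{1})-\DA^{3}(A_{2})$, etc.), and invoking the \emph{Lipschitz} envelope-preservation of $\DA^{3}$, $\DA_{0}^{3}$, $\bfA_{0}^{3}$ from Theorem~\ref{thm:main-eq} together with the (automatic) Lipschitz dependence of the bilinear product estimates used above.

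For $\Rem^{3}_{s}(A)B$ the architecture is identical, now with $\DA_{0;s}(A)$ and $\bfA_{0;s}^{3}(A)$ as the only gauge-potential inputs. From \eqref{eq:DA0-tri-s} and \eqref{eq:A0-tri-s} we have $\DA_{0;s}:\Str^{1}[I]\to L^{2}\dot{H}^{1/2}[I]$ and $\bfA_{0;s}^{3}:\Str^{1}[I]\to L^{1}\dot{H}^{2}\cap L^{2}\dot{H}^{3/2}[I]$ with the crucial $2^{-\dltb k(s)}$ gain, and from \eqref{eq:DA0-tri-s-t}, \eqref{eq:A0-tri-s-t} the dyadic pieces $P_{k}\DA_{0;s}(A)$, $P_{k}\bfA_{0;s}^{3}(A)$ decay like $\brk{2^{2k}s}^{-N}$ for every $N$. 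Inserting these into the bracket products, and distributing $(1-s\lap)^{N}$ through the scaling-balanced Leibniz expansion $(1-s\lap)^{N}(fg)=\sum_{a+b\leq 2N}c_{ab}(s^{a/2}\nb^{a}f)(s^{b/2}\nb^{b}g)$, yields both the mapping \eqref{eq:rem3-map-s} into $L^{1}L^{2}\cap L^{2}\dot{H}^{-1/2}[I]$ and the concentration statement \eqref{eq:rem3-map-s-freq}: in the trichotomy for the product, a high output frequency $2^{k}\gg s^{-1/2}$ routes a comparable frequency onto the $\DA_{0;s}$ or $\bfA_{0;s}$ factor, whose $\brk{2^{2k}s}^{-N}$ decay absorbs the $\brk{2^{2k}s}^{N}$ produced by $(1-s\lap)^{N}$, while at low output frequencies $(1-s\lap)^{N}$ is harmless and the $2^{-\dltb k(s)}$ gain survives. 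Lipschitz dependence in $A$ is again obtained by differencing; since these new error terms do not require difference bounds (cf.\ the remark after Theorem~\ref{thm:main-eq-s}), only linearity in $B$ and the plain Lipschitz dependence in $A$ are needed here.

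The main obstacle I anticipate is twofold and purely technical: extracting an honest $L^{1}L^{2}$ bound (rather than the softer $N=L^{1}L^{2}+X^{0,-1/2}_{1}$) for the contributions containing the quadratic null forms $\bfQ$, which forces one to exploit the $L^{1}L^{\infty}$-after-one-derivative bounds of Proposition~\ref{prop:Q-bi} and carefully track the leftover derivative through the frequency interactions; and, for $\Rem^{3}_{s}$, the bookkeeping that propagates the $s^{-1/2}$-frequency concentration of $\DA_{0;s}$ and $\bfA_{0;s}$ through the brackets to the output under the de-smoothing operator $(1-s\lap)^{N}$. Both are resolved by the same dyadic trichotomy arguments already used for the analogous statements \eqref{eq:Rj} and \eqref{eq:Rj-s} concerning $R_{j}$ and $R_{j;s}$.
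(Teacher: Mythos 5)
Your overall architecture matches the paper's: expand $\Rem^{3}(A)B$ and $\Rem^{3}_{s}(A)B$ into brackets of $B$ against the auxiliary maps $\DA$, $\DA_{0}$, $\bfA_{0}$ (resp.~$\DA_{0;s}$, $\bfA_{0;s}$), split each into its quadratic piece and its cubic-and-higher piece, bound the latter using the Theorem~\ref{thm:main-eq}/\ref{thm:main-eq-s} mapping properties, and bound the former using the bilinear estimates of Section~\ref{subsec:ests-2}.

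However, there is a genuine gap in your treatment of the quadratic pieces, point~(ii). You propose to place $\DA_{0}(A)$, $\DA(A)$, $\bfA_{0}(A)$ in $Y[I]$ and in $L^{1}L^{\infty}$-type spaces, quoting \eqref{eq:ell-bi} ($\nrm{|D|^{-1}\calM^{2}_{0}}_{Y}\aleq\nrm{A}_{S^{1}}\nrm{B}_{S^{1}}$) and \eqref{eq:Q} ($\nrm{|D|^{-\sgm-1}\bfQ}_{L^{1}L^{\infty}}\aleq\nrm{A}_{DS^{1}}\nrm{B}_{DS^{1}}$), and you anticipate getting the $L^{1}L^{2}$ output from these. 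But the proposition's hypotheses only furnish control in $\Str^{1}[I]$ (plus the fixed-time caloric bound $A(t)\in\calC_{\hM}$), and neither $\nrm{A}_{S^{1}[I]}$ nor $\nrm{A}_{DS^{1}[I]}$ is controlled by $\nrm{A}_{\Str^{1}[I]}$: the $Y$-component $L^{p_{0}}\dot{W}^{2-3/p_{0},p_{0}'}$ is reached in the paper only via the $L^{1}\dot{W}^{-1,\infty}$ interpolation \eqref{eq:Y-int}, which in turn goes through \eqref{eq:qf-L2L6-L1Linfty} and therefore requires the $L^{2}L^{6}$ component of $DS^{1}$ (excluded from the deliberately non-endpoint Strichartz space $\Str$), and the $DS^{1}$-norm also contains the high-modulation term $\nrm{\Box u}_{L^{2}\dot{H}^{-1/2}}$, which $\Str^{1}$ does not see at all. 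So the two ingredients you lean on for the $L^{1}L^{2}$ output are simply not available under the stated hypotheses, and the ``route the surviving derivative through the trichotomy'' step cannot be executed.

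The paper's proof avoids this by choosing different intermediate spaces: it shows, via small perturbations of the $\Str^{1}$-based estimates \eqref{eq:ell-bi-L2L2}, \eqref{eq:Q-L2L2} and \eqref{eq:w0-bi-L2L2} (and \eqref{eq:DA-tri-t}, \eqref{eq:DA0-tri}, \eqref{eq:A0-tri}, \eqref{eq:A0-tri-s}, \eqref{eq:DA0-tri-s} for the cubic pieces), that $A\mapsto(\DA_{0},\DA)\in L^{2-}\dot{H}^{1/2+}\cap L^{2+}\dot{H}^{1/2-}$ and $A\mapsto\bfA_{0}\in L^{2}\dot{H}^{3/2}$ are Lipschitz envelope-preserving on $\Str^{1}[I]$. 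The slightly shifted Lebesgue exponent $2\mp\eps$ in time (as opposed to $L^{1}L^{\infty}$) is then matched against a Strichartz exponent $2\pm\tilde\eps$ of $\nb B$ by H\"older, and Bernstein handles the spatial regularity bookkeeping, producing the $L^{1}L^{2}\cap L^{2}\dot{H}^{-1/2}$ output without ever touching $Y$ or $L^{1}L^{\infty}$. This is the specific mechanism you need, and it explains why the proposition is stated with $\Str^{1}[I]$ rather than $S^{1}[I]$ control: the statement is used later precisely when the full $S^{1}$ bound is not yet available, so that distinction is not a cosmetic one. Your handling of the cubic-and-higher pieces and of the $\Rem^{3}_{s}$ concentration via $(1-s\lap)^{N}$ is otherwise sound and consistent with the paper's intent.
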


Next, we consider the term $2 [A_{\alp}, \rd^{\alp} B] = \Diff_{\P
  A}^{\kpp} B + \Diff_{\P^{\perp} A}^{\kpp} B + \Rem_{A}^{\kpp, 2} B$.
We begin with $\Rem_{A}^{\kpp, 2} B$, which obeys analogous bounds as
$\P \calM^{2}(A, B)$ and $\P^{\perp} \calM^{2}(A, B)$
(cf. Proposition~\ref{prop:Ax-bi}).
\begin{proposition} \label{prop:rem2} For any $\kpp > 0$, the term
  $\Rem_{A}^{\kpp, 2} B$ obeys the bound
  \begin{equation} \label{eq:rem2} \nrm{\Rem_{A}^{\kpp, 2} B}_{(N \cap
      \Box \uX^{1})_{cd}[I]} \aleq 2^{C \kpp} \left(
      \nrm{A}_{S^{1}_{c}[I]} + \nrm{(\P^{\perp} A,
        A_{0})}_{Y^{1}_{c}[I]} \right) \nrm{B}_{S^{1}_{d}[I]}.
  \end{equation}
  In particular, its $L^{2} \dot{H}^{-\frac{1}{2}}$-norm is bounded
  by:
  \begin{align}
    \nrm{\Rem_{A}^{\kpp, 2} B}_{L^{2} \dot{H}^{-\frac{1}{2}}_{cd}[I]}
    \aleq & \left( \nrm{A}_{\Str^{1}_{c}[I]} + \nrm{(\nb \P^{\perp} A,
        \nb A_{0})}_{(L^{2} \dot{H}^{\frac{1}{2}})_{c}[I]} \right)
    \nrm{B}_{\Str^{1}_{d}[I]}. \label{eq:rem2-himod}
  \end{align}
  Furthermore, $\Rem_{A}^{\kpp, 2} B$ admits the splitting
  \begin{equation*}
    \Rem_{A}^{\kpp, 2} B = \Rem_{A, small}^{\kpp, 2} B + \Rem_{A, large}^{\kpp, 2} B
  \end{equation*}
  so that the $N$-norm of the small part obeys the improved bound
  \begin{equation} \label{eq:rem2-small} \nrm{\Rem_{A, small}^{\kpp,
        2} B}_{N_{cd}[I]} \aleq 2^{-\dltb \kpp}
    \nrm{A}_{S^{1}_{c}[I]} \nrm{B}_{S^{1}_{d}[I]},
  \end{equation}
  and that of the large part is bounded by a divisible norm of
  $(A_{0}, A)$:
  \begin{equation} \label{eq:rem2-large} \nrm{\Rem_{A, large}^{\kpp,
        2} B}_{N_{cd}[I]} \aleq 2^{C \kpp} \left(
      \nrm{A}_{DS^{1}_{c}[I]} + \nrm{(\nb \P^{\perp} A, \nb
        A_{0})}_{(L^{2} \dot{H}^{\frac{1}{2}})_{c}[I]} \right)
    \nrm{B}_{S^{1}_{d}[I]}.
  \end{equation}
  Finally, if $(B, I)$ is $(\veps, M)$-energy dispersed, then
  \begin{equation} \label{eq:rem2-ed}
    \begin{aligned}
      \nrm{\Rem_{A}^{\kpp, 2} B}_{(N \cap L^{2}
        \dot{H}^{-\frac{1}{2}})_{c}[I]}
      \aleq  & \  (2^{-\dltb \kpp} + 2^{C \kpp} \veps^{\dltc} ) \nrm{A}_{\uS^{1}_{c}[I]} M \\
      & + 2^{C \kpp} \nrm{(\nb \P^{\perp} A, \nb A_{0})}_{(L^{2}
        \dot{H}^{\frac{1}{2}})_{c}[I]} M.
    \end{aligned}
  \end{equation}
\end{proposition}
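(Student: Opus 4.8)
\emph{Plan.} Decompose the space-time connection as $A_{\alp} = \P_{\alp} A + \P^{\perp}_{\alp} A$, with $\P^{\perp}_{0} A = 0$, so that
\begin{align*}
  \Rem_{A}^{\kpp, 2} B
  &= \sum_{k} 2 [ P_{\geq k - \kpp} \P_{\alp} A, \rd^{\alp} P_{k} B]
  + \sum_{k} 2 [ P_{\geq k - \kpp} \P^{\perp}_{\ell} A, \rd^{\ell} P_{k} B] \\
  &=: \calR_{\P} B + \calR_{\P^{\perp}} B ,
\end{align*}
and further split $\P_{\alp} A = (A_{0}, \bfP_{j} A_{x})$ when convenient. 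The structural point is that in both terms the connection input carries frequency $\gtrsim 2^{k - \kpp}$; consequently none of the low-connection-frequency interactions occur (these are precisely the ones needing the parametrix, and they are already extracted into $\Diff^{\kpp}_{\P A} + \Diff^{\kpp}_{\P^{\perp} A}$). Only the \emph{balanced} regime $2^{k - \kpp} \aleq 2^{k_{1}} \aleq 2^{k}$, which consists of $O(\kpp)$ dyadic scales---the source of the $2^{C \kpp}$ loss---and the \emph{connection-high} regime $2^{k_{1}} \gg 2^{k}$, which sums geometrically without loss, remain.

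\emph{The part $\calR_{\P} B$.} The input $\bfP_{j} A_{x}$ has the regularity of a wave (controlled in $S^{1}$), and $\calR_{\P} B$ restricted to this input is a frequency-restricted subsum of the $Q_{0}$-null form $[A_{\alp}, \rd^{\alp} B]$ built from two wave-like factors; hence it is estimated by exactly the dyadic bilinear null form estimates that underlie the bounds for $\P \calM^{2}$ in Proposition~\ref{prop:Ax-bi}, applied in the balanced and connection-high regimes only. The $\P$-projection of the output is placed in $(\Box \Xdf^{1})_{k}$ via the $Z^{1}$/$\Xdf^{1}$ machinery exactly as in the proof of Proposition~\ref{prop:Ax-bi} (cf.\ Remark~\ref{rem:Z-old}), while the $N$- and $L^{2} \dot{H}^{-\frac{1}{2}}$-components use Strichartz. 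Summing over the $O(\kpp)$ balanced scales yields \eqref{eq:rem2} and \eqref{eq:rem2-himod} with the $\nrm{A}_{S^{1}_{c}}$ resp.\ $\nrm{A}_{\Str^{1}_{c}}$ factors; the splitting \eqref{eq:rem2-small}--\eqref{eq:rem2-large} is inherited from that of $\P \calM^{2}$, with $\Rem^{\kpp, 2}_{A, small} B$ collecting the modulation- and angular-separation sub-regimes, which carry a gain $2^{-\dltb \cdot}$ in the relevant gap; since already the innermost scale $k_{1} = k - \kpp$ has gap $\geq \kpp$, these sum to a net $2^{-\dltb \kpp}$, while $\Rem^{\kpp, 2}_{A, large} B$ is bounded by $2^{C \kpp} \nrm{A}_{DS^{1}_{c}} \nrm{B}_{S^{1}_{d}}$. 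The $A_{0}$-derivative contribution of $\calR_{\P} B$ has no such null structure available and is handled together with $\calR_{\P^{\perp}} B$.

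\emph{The part $\calR_{\P^{\perp}} B$ (and the $A_{0}$-derivative).} The elliptic components $\P^{\perp} A$ and $A_{0}$ do not solve wave equations, so we estimate these contributions directly and assign them entirely to the ``large'' part. We use that $Y^{1}$ controls $\P^{\perp} A$ and $A_{0}$ in $L^{2} \dot{H}^{\frac{3}{2}}$ (hence, by Sobolev, in $L^{2} L^{8}_{x}$) and in the Strichartz component $L^{p_{0}} \dot{W}^{3 - \frac{3}{p_{0}}, p_{0}'}$ (hence in $L^{p_{0}}_{t} L^{q}_{x}$ with $q$ near $\infty$), together with $\rd_{t} A_{0} \in L^{2} \dot{H}^{\frac{1}{2}}$, while the factor $\nb B$ lies in $L^{\infty} L^{2}$ and in Strichartz norms. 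By Hölder and Bernstein (with the usual frequency-envelope bookkeeping in the two regimes), these products lie in $L^{1} L^{2} \subseteq N$, in $L^{2} \dot{H}^{-\frac{1}{2}}$ and $L^{\frac{9}{5}} \dot{H}^{-\frac{4}{9}}$, and their $\P$-projections in $(\Box \Xdf^{1})_{k}$; summing the $O(\kpp)$ balanced scales produces the $2^{C \kpp} \nrm{(\nb \P^{\perp} A, \nb A_{0})}_{L^{2} \dot{H}^{\frac{1}{2}}_{c}} \nrm{B}_{S^{1}_{d}}$ contribution in \eqref{eq:rem2} and \eqref{eq:rem2-large}, and the analogous $\Str^{1}_{d}$-contribution in \eqref{eq:rem2-himod}. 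No $\veps$-smallness is generated here, and none is needed.

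\emph{The energy-dispersed bound, and the main obstacle.} For \eqref{eq:rem2-ed}, the small part contributes $2^{-\dltb \kpp} \nrm{A}_{S^{1}_{c}} M \leq 2^{-\dltb \kpp} \nrm{A}_{\uS^{1}_{c}} M$ by \eqref{eq:rem2-small}; for the $\bfP A_{x}$-type large part, since $(B, I)$ being $(\veps, M)$-energy dispersed forces $\nb B$ to be $\veps^{\dlta} M$-small in the relevant Strichartz norms by \eqref{eq:ed-str}, the energy-dispersed versions of the underlying dyadic bilinear estimates (the analogues, with the factors exchanged, of \eqref{eq:Ax-bi-df-ed}--\eqref{eq:Ax-bi-cf-ed}) give $2^{C \kpp} \veps^{\dltc} \nrm{A}_{\uS^{1}_{c}} M$; and the $\P^{\perp} A$, $A_{0}$ contribution retains the bound $2^{C \kpp} \nrm{(\nb \P^{\perp} A, \nb A_{0})}_{L^{2} \dot{H}^{\frac{1}{2}}_{c}} M$ from the previous paragraph. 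The step I expect to be the main obstacle is the high-modulation ($\Box \uX^{1}$) estimate, namely placing $\P(\Rem_{A}^{\kpp, 2} B)$ into $(\Box \Xdf^{1})_{k}$ and $(\Box \mXdf^{1})_{k}$: as in \cite{KST, KT} this requires the auxiliary $Z^{1}$-type norms and a delicate treatment of the output near the light cone, which here must be carried out both with the $\kpp$-localization in place (so the $2^{C \kpp}$ loss is correctly accounted for) and, for $\calR_{\P^{\perp}}$ and the $A_{0}$-derivative, using only the weaker $Y^{1}$ control, forcing one to exploit the Strichartz component of $Y^{1}$ rather than just its $L^{2} \dot{H}^{\frac{3}{2}}$ part.
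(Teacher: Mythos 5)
Your proposal is substantively the same route as the paper's: decompose $\Rem^{\kpp,2}_A B$ into the divergence-free spatial piece, the curl-free spatial piece, and the $A_0$ piece (the paper's $\Rem^{\kpp,2}_{\P_x A}$, $\Rem^{\kpp,2}_{\P^\perp A}$, $\Rem^{\kpp,2}_{A_0}$), treat the first by the bilinear null form estimates underlying Proposition~\ref{prop:Ax-bi} and the latter two by H\"older/Bernstein using the $Y^1$ control, then import the small/large splitting from the proof of Proposition~\ref{prop:Ax-bi} and add \eqref{eq:ed-str} for the energy-dispersed case. This matches what the paper does.

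One point of the reasoning is, however, misstated and would mislead a reader trying to fill in the details. You justify the $2^{-\dltb\kpp}$ gain in \eqref{eq:rem2-small} by writing ``since already the innermost scale $k_1 = k-\kpp$ has gap $\geq \kpp$, these sum to a net $2^{-\dltb\kpp}$.'' But $\Rem^{\kpp,2}$ does \emph{not} impose a frequency gap: the constraint $k_1 \geq k_2 - \kpp$ explicitly admits the balanced regime $k_1 \approx k_2 \approx k$, where the off-diagonal gain of \eqref{eq:nf-core} is useless. For those balanced terms the $\kpp$-gain comes from a completely different source: one artificially inserts the modulation cutoff $Q_{<k_{\min}-C_0\kpp}$, so that the gain is a modulation-off-cone gain from \eqref{eq:nf-core-lomod} (with the cutoff deliberately placed at distance $C_0\kpp$), while the residual high-modulation part has no gain at all and must go into $\Rem^{\kpp,2}_{A,large}$. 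Moreover that residual high-modulation piece can only be estimated in divisible norms after a careful interval-localized time-cutoff argument, and the paper's definition of small/large is therefore \emph{interval-length dependent} (a distinct definition for $|I| \lessgtr 2^{-k+C\kpp}$). You do implicitly import this machinery by pointing at the $\P\calM^2$ splitting, so the proof goes through, but the sentence as written suggests the gain is a frequency-gap gain uniformly across the sum, which is false. Relatedly, the $\Box\uX^1$ estimate you flag as the ``main obstacle'' is in fact handled quite directly from the collected $X^{s,b,p}_r$ bounds (Propositions~\ref{prop:nf-Z} and \ref{prop:qf-X}); the real delicacy lies precisely in the balanced-frequency small/large decomposition.
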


It remains to consider the paradifferential terms. The term
$\Diff^{\kpp}_{\P^{\perp} A} B$ can be handled using the following
estimate, in combination with \eqref{eq:DA-tri} and
Proposition~\ref{prop:ell-bi}:
\begin{proposition} \label{prop:paradiff-cf} For any $\kpp > 0$, we
  have
  \begin{align} \label{eq:diff-cf-X} \nrm{\Diff_{\P^{\perp} A}^{\kpp}
      B}_{(X^{-\frac{1}{2} + b_{1}, -b_{1}} \cap \Box
      \uX^{1})_{cd}[I]} \aleq \nrm{\P^{\perp} A}_{Y^{1}_{c}[I]}
    \nrm{B}_{S^{1}_{d}[I]}.
  \end{align}
  Moreover, we have
  \begin{equation} \label{eq:diff-cf-N} \nrm{\Diff_{\P^{\perp}
        A}^{\kpp} B}_{L^{1} L^{2}_{f}[I]} \aleq \nrm{\P^{\perp}
      A}_{L^{1} L^{\infty}_{a}[I]} \nrm{B}_{S^{1}_{e}[I]}
  \end{equation}
  where $f_{k} = \left( \sum_{k' < k - \kpp} a_{k'} \right) e_{k}$.
\end{proposition}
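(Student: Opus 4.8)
The plan is to treat $\Diff_{\P^{\perp} A}^{\kpp} B$ as a paraproduct and exploit the fact that $\P^{\perp} A$ is purely elliptic. Since $\P_{0} A = A_{0}$ we have $\P^{\perp}_{0} A = 0$, so only the spatial sum survives and $\P^{\perp}_{j} A = -(-\lap)^{-1} \rd_{j} \rd^{\ell} A_{\ell}$ is a spatial gradient; there is no null structure here, and the entire estimate must be extracted from the fact that $\P^{\perp} A \in Y^{1}$ is far better than a generic free wave. Concretely, $Y^{1}$ controls $\P^{\perp} A$ and $\rd_{t} \P^{\perp} A$ in $L^{2} \dot{H}^{\frac{1}{2}}$- and $L^{p_{0}}$-type Strichartz norms, so that at frequency $2^{k'}$ the dyadic piece $P_{k'} \P^{\perp} A$ enjoys a full range of $L^{r}_{t} L^{q}_{x}$ bounds with envelope $c_{k'}$; Bernstein-ing up to a large enough Lebesgue exponent $q$ makes the associated scaling exponent \emph{strictly positive} (e.g.\ $\tfrac12 - \tfrac4q > \dlte$ for an $L^{2}_{t} L^{q}_{x}$ bound once $q$ slightly exceeds $8$), which is precisely what makes the low-frequency summation below convergent, in fact with a gain $2^{-c \kpp}$ from the frequency gap. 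After discarding $O(1)$ frequency shifts the output of $\sum_{k} 2 [P_{<k-\kpp} \P^{\perp}_{j} A, \rd^{j} P_{k} B]$ is localized to frequency $\sim 2^{k}$, so for each target norm it suffices to bound a single dyadic piece $G_{k} := 2 [P_{<k-\kpp} \P^{\perp}_{j} A, \rd^{j} P_{k} B]$ and sum the low factor over $k' < k - \kpp$.

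The bound \eqref{eq:diff-cf-N} is the easy one: apply Hölder placing the low factor in $L^{1} L^{\infty}$, so that $\nrm{P_{<k-\kpp} \P^{\perp} A}_{L^{1} L^{\infty}} \aleq \sum_{k' < k - \kpp} a_{k'}$, and $\rd^{j} P_{k} B$ in $L^{\infty} L^{2}$, controlled by $\nrm{P_{k} B}_{S^{1}_{k}} \leq e_{k}$; this produces exactly the asserted weight $f_{k} = \big( \sum_{k' < k - \kpp} a_{k'} \big) e_{k}$.

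For \eqref{eq:diff-cf-X} I would split the target into its $L^{2}$-based pieces --- the $X^{-\frac12+b_{1}, -b_{1}}$-norm and the $L^{2} \dot{H}^{-\frac12}$ summand of $\Box \uX^{1}$ --- and its non-$L^{2}$ pieces --- the $L^{\frac{9}{5}} \dot{H}^{-\frac{4}{9}}$ summand and the $\nrm{\P(\cdot)}_{\Box \Xdf^{1}}$ summand. For the $L^{2}$-based pieces one has orthogonality, and I would run a dichotomy on the output modulation $2^{j}$. When $j \gtrsim k$ the negative modulation exponent ($-b_{1}$ in $X^{-\frac12+b_{1},-b_{1}}$, and the embedding $S_{k} \subseteq X^{0,\frac12}_{\infty}$ in the $L^{2} \dot{H}^{-\frac12}$ case) makes a crude $L^{2}_{t} L^{\infty}_{x} \times L^{\infty}_{t} L^{2}_{x}$ Hölder estimate --- $\nrm{P_{k'} \P^{\perp} A}_{L^{2} L^{\infty}} \aleq 2^{k'/2} c_{k'}$ against $\nrm{\rd^{j} P_{k} B}_{L^{\infty} L^{2}} \aleq d_{k}$, summed over $k' < k - \kpp$ --- already sufficient, and the $2^{-b_{1} j}$-weighted modulation sum for $j \gtrsim k$ together with the prefactor $2^{(-\frac12+b_{1})k}$ produces the envelope $c_{k} d_{k}$. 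When $j \lesssim k$ the output lies near the cone, forcing one input to carry modulation $\gtrsim 2^{k}$: this is controlled either by $\rd_{t} P_{k'} \P^{\perp} A \in L^{2} L^{2}$ (a consequence of the $Y^{1}$-norm) for the low factor, or by $\nrm{Q_{\gtrsim k} P_{k} B}_{L^{2} L^{2}} \aleq 2^{-k/2} \nrm{P_{k} B}_{X^{0,\frac12}_{\infty}}$ for the high factor, with the residual low-modulation output estimated using the $NE$/$PW$ components of $S$; summing against $2^{-b_{1} j}$ (resp.\ checking the $L^{2}\dot H^{-1/2}$ scaling) closes these pieces.

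\textbf{Main obstacle.} The hard part is the non-$L^{2}$ pieces, where orthogonality is unavailable and one must use the precise Strichartz exponents. For $L^{\frac{9}{5}} \dot{H}^{-\frac{4}{9}}$ I would estimate $\nrm{P_{k} G_{k}}_{L^{9/5}_{t} L^{2}_{x}}$ by Hölder, placing $P_{k'} \P^{\perp} A$ in an $L^{2}_{t} L^{q_{1}}_{x}$- or $L^{p_{0}}_{t} L^{q_{1}}_{x}$-norm with $q_{1}$ just large enough that the scaling exponent strictly exceeds the envelope slope $\dlte$ (so that $\sum_{k' < k - \kpp}$ converges with a gain in $\kpp$), and $\rd^{j} P_{k} B$ in the conjugate Strichartz norm $L^{r_{2}}_{t} L^{q_{2}}_{x}$ drawn from $S^{str}_{k}$; here the $\dlts$-room built into the definition of $S^{str}_{k}$ and into $p_{0}$ is exactly what keeps us off the forbidden $(2,\infty)$ endpoint and lets the exponents match the $2^{-4k/9}$ weight. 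For the $\nrm{\P(\cdot)}_{\Box \Xdf^{1}}$ piece one argues similarly after splitting off the high-modulation part of $B$, but must additionally absorb the sharp cutoff $\chi_{I}$, since $\Xdf^{1}$ is interval-localized by the cutoff rather than by extension (cf.\ Proposition~\ref{prop:int-loc} and the surrounding discussion). The delicate bookkeeping of these Lebesgue and regularity exponents, together with the $\chi_{I}$-absorption, is where the real work lies; the remaining ingredients are routine Hölder/Bernstein estimates and the frequency-envelope summation lemmas, with the $\kpp$-gap supplying convergence (and, where needed, smallness) of the low-frequency sums.
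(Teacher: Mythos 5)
Your treatment of \eqref{eq:diff-cf-N} is correct and matches the paper: H\"older $L^{1}L^{\infty}\times L^{\infty}L^{2}\to L^{1}L^{2}$ with frequency envelopes, nothing more.

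For \eqref{eq:diff-cf-X} your overall reduction (split $\Box\uX^{1}$ into its components, extend so the cutoff lands on $B$, exploit the $\dlts$-room) is sound, but the mechanism you propose for the low-modulation part of $X^{-\frac12+b_{1},-b_{1}}$ does not work. You assert that when the output modulation $2^{j}$ is $\lesssim 2^{k}$ ``one input is forced to carry modulation $\gtrsim 2^{k}$''. In a low--high interaction this is false: with $P_{k'}\P^{\perp}A$ at frequency $2^{k'}\ll 2^{k}$ and $B$ essentially on the cone, the output modulation can be as small as $\sim 2^{k'}$, so there is no forced large modulation and nothing for $Q_{\gtrsim k}$ to catch. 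Worse, the fallback you suggest for the residual low-modulation output — the $NE/PW$ components of $S$ — has no purchase here, since $[P_{k'}\P^{\perp}A,\rd^{\ell}P_{k}B]$ carries no null form and $\P^{\perp}A$ has no cone-adapted angular localization to feed into a null-frame estimate. So the $j\lesssim k$ regime is genuinely unclosed in your write-up.

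What the paper actually does is simpler and avoids modulation geometry entirely. After extending $\P^{\perp}A$ by zero (which preserves $Y^{1}[I]$ since $Y$ is built from $L^{p}L^{q}$-norms) and $B$ by free waves, the proposition reduces to the pre-proved dyadic bilinear bounds \eqref{eq:qf-Y-L2L2}, \eqref{eq:qf-Y-L9/5L2}, \eqref{eq:qf-uX}, \eqref{eq:qf-Xsb}, each established by bare H\"older/Bernstein. The ingredient you are missing is the second component of the $Y$-norm, $L^{p_{0}}\dot W^{2-3/p_{0},p_{0}'}$ (with $p_{0}$ close to $1$): putting $P_{k'}\P^{\perp}A$ there against $\rd P_{k}B\in L^{\infty}L^{2}$ yields an $L^{p_{0}}L^{2}$ bound on the output at frequency $2^{k}$, and Bernstein in time then gives $\nrm{Q_{j}\cdot}_{L^{2}L^{2}}\lesssim 2^{(\frac{1}{p_{0}}-\frac12)j}\nrm{\cdot}_{L^{p_{0}}L^{2}}$; since $b_{1}<\frac{1}{p_{0}}-\frac12$ this embeds the low-modulation range into $X^{-\frac12+b_{1},-b_{1}}$ (and into $\Box\Xdf^{1}$) with room to spare, with no recourse to the cone. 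If you want a self-contained argument, the thing to prove is exactly this $L^{p_{0}}L^{2}$ dyadic bound together with the embedding \eqref{eq:LpL2-Xsbp-<j}; the modulation dichotomy and the $NE/PW$ appeal should be deleted.
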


The only remaining term is the paradifferential term $\Diff^{\kpp}_{\P
  A} B$.  We first state the high modulation bounds.
\begin{proposition} \label{prop:paradiff-df-himod} For any $\kpp > 0$,
  consider the splitting $\Diff_{\P A}^{\kpp} = \Diff_{A_{0}}^{\kpp} +
  \Diff_{\P_{x} A}^{\kpp}$, where
  \begin{align*}
    \Diff_{A_{0}}^{\kpp} B = - \sum_{k} 2 [P_{<k - \kpp} A_{0},
    \rd_{t} P_{k} B], \qquad \Diff_{\P_{x} A}^{\kpp} B = \sum_{k} 2
    [P_{<k - \kpp} \P_{\ell} A, \rd^{\ell} P_{k} B].
  \end{align*}
  For $\Diff_{A_{0}} B$, we have the bound
  \begin{align}
    \nrm{\Diff_{A_{0}}^{\kpp} B}_{(X^{-\frac{1}{2} + b_{1}, -b_{1}}
      \cap \Box \uX^{1})_{cd}[I]} \aleq & \ \nrm{A_{0}}_{Y^{1}_{c}[I]}
    \nrm{B}_{S^{1}_{d}[I]}. \label{eq:diff-a0-X}
  \end{align}
  On the other hand, for $\Diff_{\P_{x} A} B$, we have the bounds
  \begin{align}
    \nrm{\Diff_{\P_{x} A}^{\kpp} B}_{(\Box \mX^{1})_{cd}[I]} \aleq & \nrm{A_{x}}_{S^{1}_{c}[I]} \nrm{B}_{S^{1}_{d}[I]}, \label{eq:diff-ax-mX}\\
    \nrm{\Diff_{\P_{x} A}^{\kpp} B}_{(\Box \uX^{1})_{cd}[I]} \aleq & \nrm{A_{x}}_{(S^{1} \cap \mX^{1})_{c}[I]} \nrm{B}_{S^{1}_{d}[I]}, \label{eq:diff-ax-uX}\\
    \nrm{\Diff_{\P_{x} A}^{\kpp} B}_{(X^{-\frac{1}{2} + b_{1},
        -b_{1}})_{cd}[I]} \aleq & \nrm{A_{x}}_{(S^{1} \cap
      \uX^{1})_{c}[I]} \nrm{B}_{S^{1}_{d}[I]}. \label{eq:diff-ax-Xsb}
  \end{align}
\end{proposition}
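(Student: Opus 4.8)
The plan is to follow the two-stage scheme outlined for Section~\ref{sec:multi}: first reduce the interval-localized frequency-envelope bounds \eqref{eq:diff-a0-X}--\eqref{eq:diff-ax-Xsb} to global-in-time dyadic bilinear estimates, and then prove the latter by a dyadic-plus-modulation decomposition. For the reduction, recall that the $\Box \mX^{1}$, $\Box \uX^{1}$ and $X^{-\frac12+b_1,-b_1}$ norms on an interval are defined by extending $B$ (and, where needed, the inputs) outside $I$ by homogeneous waves and inserting the sharp cutoff $\chi_I$; so I would invoke Propositions~\ref{prop:int-loc} and \ref{prop:ext} together with the disposability of the relevant frequency and modulation projections to pass to the whole line, the only delicate point being the absorption of $\chi_I$ against the negative-modulation weights, handled exactly as in \cite{OT2, KT}. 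As dictated by the splitting in the statement, I would treat $\Diff^{\kpp}_{A_0}B$ and $\Diff^{\kpp}_{\P_x A}B$ separately.

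The term $\Diff^{\kpp}_{\P_x A}B = \sum_{k} 2[P_{<k-\kpp}\P_{\ell}A,\rd^{\ell}P_{k}B]$ is the main one. Since $\P A$ is divergence-free, its Fourier transform at frequency $\xi$ is orthogonal to $\xi$, so the bilinear symbol contracts $\widehat{\P A}(\xi)\perp\xi$ against the frequency $\eta$ of $B$, producing the null factor $\abs{\angle(\xi,\eta)}$ (equivalently, $\abs{\xi}^{-1}\abs{\eta}^{-1}$ times the standard null symbol). The plan is then to run the usual modulation trichotomy for the output $P_{k}(\cdots)$ (localized at frequency $2^{k}$): (i) when the output modulation $2^{j}$ is large, the weights $2^{-b_1 j}$, $2^{(-\frac34+b_0)j}$, etc.\ are favorable and one closes by Bernstein and the Strichartz norms in $S^{str}_{k}$ together with $\nrm{\nb B}_{X^{0,\frac12}_{\infty}}$; (ii) when $2^{j}$ is small and all inputs lie near the cone, the geometry forces small angular separation, and the null factor $\abs{\angle(\xi,\eta)}$ supplies the gain, matched against the null-frame / $PW$--$NE$ components of $S_{k}$ as in \cite{KST}; (iii) when $2^{j}$ is small but some input is off the cone, one spends its high-modulation control --- $\nrm{\nb B}_{X^{0,\frac12}_{\infty}}$ for $B$ (contained in $\nrm{B}_{S^{1}}$), or $\nrm{\Box A}_{\Box\mX^{1}}$ resp.\ $\nrm{\Box A}_{\Box\uX^{1}}$ for $A$. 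Case (iii) is precisely why the three bounds are staged as a finite bootstrap chain $S^{1}\rightsquigarrow \mX^{1}\rightsquigarrow \uX^{1}\rightsquigarrow X^{-\frac12+b_1,-b_1}$: \eqref{eq:diff-ax-mX} uses only $\nrm{A}_{S^{1}}$, \eqref{eq:diff-ax-uX} reinvests the resulting $\mX^{1}$ bound, and \eqref{eq:diff-ax-Xsb} reinvests the $\uX^{1}$ bound (no circularity), while the parameter choices \eqref{eq:p0-b}--\eqref{eq:b2} are exactly what makes the numerology in each step close with room to spare; the sum over the low input frequency $<k-\kpp$ is summable and produces the envelope factor of $c^{[\dltb]}_{k}$-type.

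The term $\Diff^{\kpp}_{A_0}B = -\sum_{k}2[P_{<k-\kpp}A_0,\rd_{t}P_{k}B]$ is softer: $A_0$ carries no wave structure and is measured only in $Y^{1}$, so I would estimate it by H\"older in mixed $L^{p}_{t}L^{q}_{x}$ norms, pairing the $L^{2}\dot H^{\frac12}$ and $L^{p_0}\dot W^{2-\frac{3}{p_0},p_0'}$ pieces of $Y^{1}$ against the Strichartz components of $\nrm{B}_{S^{1}}$ and against $\nrm{\rd_{t}B}_{L^{\infty}L^{2}}$. Because the $A_0$-input has frequency below $2^{k-\kpp}$ it cannot raise the output modulation beyond that of $\rd_{t}P_{k}B$ up to $O(2^{k-\kpp})$, which with the elliptic gain built into $Y^{1}$ places the output in $X^{-\frac12+b_1,-b_1}\cap\Box\uX^{1}$; the $\P(\cdot)\in(\Box \Xdf^{1})_{k}$ component then follows from the same Strichartz bounds since $\P$ is disposable. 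I expect the main obstacle to be case (iii) for $\Diff^{\kpp}_{\P_x A}$ --- arranging the high--high-to-low modulation interactions so that the staged inputs $S^{1}\cap\mX^{1}$ and $S^{1}\cap\uX^{1}$ genuinely suffice at each rung --- together with the bookkeeping of the sharp cutoff $\chi_I$ against the negative modulation weights in the interval-localization step; both are technical rather than conceptual, and closely follow \cite{KST, OT2, KT}.
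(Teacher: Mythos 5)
Your plan matches the paper's proof in all its essential features: the interval localization via extension (homogeneous waves for $A,B$, zero extension for $A_0$), the observation that divergence-freeness of $\P A$ produces a null form (the paper invokes Lemma~\ref{lem:nf-bi} to write $\chi_I\Diff^{\kpp}_{\P_x A}B=\sum_k\calN(\abs{D}^{-1}P_{<k-\kpp}\P A,\chi_I P_k B)$), the staged bootstrap chain $S^1\rightsquigarrow\mX^1\rightsquigarrow\uX^1\rightsquigarrow X^{-\frac12+b_1,-b_1}$ for the $\P_x A$ term (encoded in the dyadic bounds \eqref{eq:nf*-mX}--\eqref{eq:nf*-Xsb}), and the softer H\"older-plus-Bernstein treatment of $\Diff^{\kpp}_{A_0}B$ via the $Y^1$-norm of $A_0$ (the paper's \eqref{eq:qf-Y-L2L2}, \eqref{eq:qf-Y-L9/5L2}, \eqref{eq:qf-uX}, \eqref{eq:qf-Xsb}). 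At the level of this proposition the paper's argument is a short reduction to those dyadic estimates, so your sketch is really a sketch of \emph{both} this proposition and of Propositions~\ref{prop:nf*-X}, \ref{prop:qf-X}, but the logic is the same; the only imprecision is that the delicate case is not really ``large output modulation'' but the $\calH^{*}_{k_1}$ regime in which the low-frequency input $A$ carries the dominant modulation, which is what your case (iii) is in effect describing.
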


Next, we consider the $N \cap L^{2} \dot{H}^{\frac{1}{2}}$ norm of
$\Diff_{\P A} B$. The contribution of each Littlewood-Paley projection
$P_{k_{0}} \P A$ is perturbative, as the following proposition states:
\begin{proposition} \label{prop:diff-single} Let $A_{t,x}$ be a
  caloric Yang--Mills wave on an interval $I$ obeying
  \begin{equation} \label{eq:S-bnd-hyp:pre} \nrm{A}_{S^{1}[I]} \leq M.
  \end{equation}
  Then for any $\kpp > 0$ and $k_{0} \in \bbZ$, we have
  \begin{equation}
    \nrm{\Diff_{P_{k_{0}} \P A}^{\kpp} B}_{(N \cap L^{2} \dot{H}^{-\frac{1}{2}})_{d}[I]} \aleq_{M} \nrm{B}_{S^{1}_{d}[I]}. \label{eq:diff-single}
  \end{equation}
\end{proposition}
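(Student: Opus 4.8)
The plan is to reduce everything to a single dyadic estimate, to exploit that the low‑frequency factor $P_{k_0}\P A$ sits at one dyadic scale (so that no logarithmic summation over low frequencies occurs — this is exactly what distinguishes one Littlewood--Paley piece of $\P A$ from the full operator $\Diff^{\kpp}_{\P A}$, which instead requires the renormalization parametrix of Section~\ref{sec:paradiff}), and to recycle the bilinear null‑form analysis already used for $\P \calM^{2}(A,B)$ in Proposition~\ref{prop:Ax-bi}. First, since all norms involved are scale invariant and $\kpp$ is dimensionless, I may rescale so that $k_{0}=0$. As $P_{<k-\kpp}P_{0}$ equals $P_{0}$ for $k>\kpp$ and vanishes otherwise, $\Diff^{\kpp}_{P_{0}\P A}B=\sum_{k>\kpp}2[P_{0}\P_{\alp}A,\partial^{\alp}P_{k}B]$, whose $k$‑th term is frequency localized at $|\xi|\sim 2^{k}$; hence it suffices to prove $\nrm{[P_{0}\P_{\alp}A,\partial^{\alp}P_{k}B]}_{(N\cap L^{2}\dot H^{-\frac12})[I]}\aleq_{M}d_{k}$ for $k>\kpp$. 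From $\nrm{A}_{S^{1}[I]}\leq M$ together with the caloric structure (Theorem~\ref{thm:main-eq}) I record, with norms $\aleq_{M}1$ and uniformly in $I$: $P_{0}\P_{x}A\in S^{1}$; $A_{0}=\bfA_{0}(A)\in Y^{1}$, with in addition $A_{0}\in L^{\infty}_{t}\dot H^{1}$ by the fixed‑time bounds \eqref{eq:ell-bi-L2}, \eqref{eq:Q-L2}; and $\partial_{t}A_{0}=\DA_{0}(A)\in L^{2}\dot H^{\frac12}[I]$ by \eqref{eq:ell-bi-L2L2}, \eqref{eq:Q-L2L2}, \eqref{eq:DA0-tri}.

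Next I would split off the temporal component, $[P_{0}\P_{\alp}A,\partial^{\alp}P_{k}B]=[P_{0}\bfP_{\ell}A,\partial^{\ell}P_{k}B]-[P_{0}A_{0},\partial_{t}P_{k}B]$, and treat the two terms differently. For the spatial term, the crucial point is that $\partial^{\ell}\bfP_{\ell}A_{x}=0$ identically, so $[\bfP_{\ell}A,\partial^{\ell}P_{k}B]=\partial^{\ell}[\bfP_{\ell}A,P_{k}B]$ is a perfect divergence; running the standard Littlewood--Paley and angular‑sector analysis (identical to that for $\P\calM^{2}$ and to \cite{KST,KT}), one decomposes the output and $P_{k}B$ into the modulation ranges $Q_{<k+C},Q_{\geq k+C}$: when either has high modulation the outer derivative produces a power of the modulation, placing the term in $X^{0,-\frac12}_{1}\subseteq N$ via the $X^{0,\frac12}_{\infty}\supseteq S_{k}$ control, and when both are on low modulation the divergence combines with transversality of the two high‑frequency inputs to yield the null gain $\angle(\xi,\zeta)$, absorbed by the $S_{k}^{ang}$ and null‑frame ($NE$, $PW$) components of $S_{k}$ as in \cite[Section~11]{KST}; the $L^{2}\dot H^{-\frac12}$‑component is in every case bounded by the Strichartz estimate \eqref{eq:ADA-L2L2}. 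Because $P_{0}\bfP_{x}A$ contributes only the fixed weight $\aleq_{M}1$ with no summation, each bilinear piece is $\aleq_{M}d_{k}$. For the temporal term $[P_{0}A_{0},\partial_{t}P_{k}B]$, the $L^{2}\dot H^{-\frac12}$‑part follows from \eqref{eq:A0DA-L2L2} using $\partial_{t}A_{0}\in L^{2}\dot H^{\frac12}$ and $P_{k}B\in\Str^{1}$; for the $N$‑part one exploits the ellipticity of $A_{0}$ (so that, at the fixed frequency $0$, $P_{0}A_{0}\in L^{\infty}_{t,x}$ with norm $\aleq_{M}1$ by Bernstein applied to $A_{0}\in L^{\infty}_{t}\dot H^{1}$, and $P_{0}\partial_{t}A_{0}\in L^{2}_{t}L^{\infty}_{x}$ similarly), then decomposes $P_{k}B$ by modulation, placing the off‑cone part into $X^{0,-\frac12}_{1}$ using the $2^{-3j/2}$‑decay of $\nrm{Q_{j}P_{k}B}_{L^{2}L^{2}}$ inherited from $X^{0,\frac12}_{\infty}$ (which is more than enough to absorb the $\partial_{t}\sim 2^{j}$), and handling the near‑cone part by trading $\partial_{t}$ for $\mp|D|$ and invoking the bilinear/null‑frame estimates as in the spatial term, the low frequency of $A_{0}$ providing the requisite orthogonality gain. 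Finally, the interval‑localized statements follow by the extension and cutoff procedure of Section~\ref{subsec:ests-pf}, which is needed only to absorb the non‑local‑in‑time modulation projections, exactly as for Proposition~\ref{prop:Ax-bi}.

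The step I expect to be the main obstacle is the $N$‑norm bound (as opposed to the comparatively easy $L^{2}\dot H^{-\frac12}$‑norm) for the temporal interaction $[A_{0},\partial_{t}B]$ and, within the null form, for the near‑cone output: $\partial_{t}P_{k}B$ is controlled in no single norm on its own, so one must carefully balance the $\partial_{t}$ either against the modulation (gaining in $X^{0,-\frac12}_{1}$) or against the cone (reducing to a spatial derivative and a genuine null structure), all while avoiding any dependence on $|I|$ — which is why the ellipticity of $A_{0}$ and the fixed‑frequency Bernstein improvements, rather than crude Hölder in time, must be used. The remaining angular and Littlewood--Paley bookkeeping required to extract the null gain is routine and is imported verbatim from the proof of Proposition~\ref{prop:Ax-bi}.
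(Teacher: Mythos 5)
Your reduction to the single dyadic estimate $\nrm{[P_{0}\P_{\alp}A,\rd^{\alp}P_{k}B]}_{(N\cap L^{2}\dot H^{-1/2})}\aleq_{M}d_{k}$ is correct, and the $L^{2}\dot H^{-1/2}$ half and the split into spatial and temporal pieces match the paper's first steps. The genuine gap is in the modulation analysis for the $N$-norm. You decompose only the output and $P_{k}B$ into $Q_{<k+C}$ and $Q_{\geq k+C}$, but the hard case — and the one that makes this proposition need $A$ to be a \emph{caloric Yang--Mills wave} rather than merely an element of $S^{1}[I]$ — is the $\calH^{\ast}$ configuration, in which $P_{0}\P A$ carries the \emph{dominant} modulation $2^{j}$ (for some $j\lesssim 0$) while both the output and $P_{k}B$ live at modulation $\ll 2^{j}$. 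In that regime the null-form gain $\tht\sim 2^{j/2}$ is not by itself enough: one must sum over $j\to-\infty$, and the $S^{1}$ control of $A$ only gives the $\ell^{\infty}_{j}$ information $\nrm{Q_{j}P_{0}\P A}_{X^{0,1/2}_{\infty}}\aleq 1$. The null-frame components $NE$, $PW$ that you invoke sit on the \emph{high-frequency} factor, not on a low-frequency input carrying dominant modulation, so they do not close this sum. What the paper actually does (via Proposition~\ref{prop:H*}, estimates \eqref{eq:nf*-H*} and \eqref{eq:qf-H*}) is absorb the $\calH^{\ast}$ contribution against the $Z^{1}$-norm of $P_{0}\P_{x}A$ and the $\lap^{-1/2}\Box^{1/2}Z^{1}$-norm of $P_{0}A_{0}$, both of which encode $\ell^{1}$-in-modulation near-cone decay. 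And those bounds are \emph{not} consequences of $A\in S^{1}$ alone — they are established by writing out the caloric Yang--Mills equations for $\Box P_{k_{0}}\P_{x}A$ and $\lap P_{k_{0}}A_{0}$ (the expressions coming from Theorem~\ref{thm:main-eq}) and estimating the right-hand sides via Lemma~\ref{lem:Z-prelim}, \eqref{eq:nf-Z}, \eqref{eq:nf*-Z}, \eqref{eq:qf-Z}. Your proposal invokes the equation only to control $A_{0}$ and $\rd_{t}A_{0}$ in energy-type norms; the crucial use of the $\Box\P A$ equation to get $Z^{1}$ control is missing. Without it the $\calH^{\ast}$ case does not close.

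A secondary but related issue: your claim that the argument is ``imported verbatim from the proof of Proposition~\ref{prop:Ax-bi}'' is not right structurally. In $\P\calM^{2}(A,B)$ both inputs range over all dyadic scales and the outcome is summed in $\ell^{2}$ against the two $S^{1}$ frequency envelopes, so a bounded per-scale loss is acceptable. Here only one dyadic piece $P_{k_{0}}\P A$ is present; any loss in the $j$-summation, however mild, cannot be swept into a frequency envelope and would be lethal. This is exactly why the single-piece estimate needs the stronger $Z^{1}$ information rather than a crude $Q_{<k+C}/Q_{\geq k+C}$ split.
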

However, we \emph{cannot} sum up in $k_{0}$.  The proper way to handle
$\Diff^{\kpp}_{\P A}$ is not to regard it as a perturbative
nonlinearity, but rather as a part of the underlying linear
operator. Indeed, for the operator $\Box + \Diff^{\kpp}_{\P A}$, we
have the following well-posedness result:

\begin{theorem} \label{thm:paradiff} Let $A_{t,x}$ be a caloric
  Yang--Mills wave on an interval $I$ obeying \eqref{eq:S-bnd-hyp:pre}.
  Consider the following initial value problem on $I \times \bbR^{4}$:
  \begin{equation} \label{eq:paradiff-wave} \left\{
      \begin{aligned}
	\Box B + \Diff_{\P A}^{\kpp} B =& G, \\
	(B, \rd_{t} B)(t_{0}) =& (B_{0}, B_{1}),
      \end{aligned}
    \right.
  \end{equation}
  for some $\g$-valued spatial 1-form $G \in N \cap L^{2}
  \dot{H}^{-\frac{1}{2}}[I]$, $(B_{0}, B_{1}) \in \dot{H}^{1} \times
  L^{2}$ and $t_{0} \in I$.

  Then for $\kpp \geq \kpp_{1}(M)$, where $\kpp_{1}(M) \gg 1$ is some
  function independent of $A_{t,x}$, there exists a unique solution $B
  \in S^{1}[I]$ to \eqref{eq:paradiff-wave}. Moreover, for any
  admissible frequency envelope $c$, the solution obeys the bound
  \begin{equation} \label{eq:paradiff-wp-est} \nrm{B}_{S_{c}^{1}[I]}
    \aleq_{M} \nrm{(B_{0}, B_{1})}_{(\dot{H}^{1} \times L^{2})_{c}} +
    \nrm{G}_{(N \cap L^{2} \dot{H}^{-\frac{1}{2}})_{c}[I]}.
  \end{equation}
\end{theorem}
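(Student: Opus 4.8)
# Proof proposal for Theorem~\ref{thm:paradiff}

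\textbf{Overall strategy.} The plan is to construct the solution $B$ via a parametrix for the paradifferential operator $\Box + \Diff_{\P A}^{\kpp}$, obtained by conjugating the free wave propagator by a pseudodifferential renormalization operator, in the spirit of \cite{KT}. The key conceptual point—emphasized in the introduction—is that here the source of smallness is not the $S^{1}$-norm of $A$ (which is only bounded by $M$, not small), but the \emph{largeness of the frequency gap $\kpp$}, exploited in the manner of \cite{ST1, OT2}. So the first step is to reduce \eqref{eq:paradiff-wp-est} to a parametrix statement: it suffices to produce an approximate solution operator $T_{A}^{\kpp}$ such that, for homogeneous data and zero forcing, $B^{ap} = T_{A}^{\kpp}(B_0, B_1)$ satisfies the $S^{1}_c$-bound on the right of \eqref{eq:paradiff-wp-est} together with the error estimate $\nrm{(\Box + \Diff_{\P A}^{\kpp}) B^{ap}}_{(N \cap L^2 \dot H^{-1/2})_c[I]} \ll_M \nrm{(B_0,B_1)}_{(\dot H^1 \times L^2)_c}$, and similarly a Duhamel parametrix for the inhomogeneous problem with forcing $G \in (N \cap L^2 \dot H^{-1/2})_c[I]$. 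Given both, one iterates: the difference between the true solution and the parametrix output is again governed by $\Box + \Diff_{\P A}^{\kpp}$ with small ($\ll_M$) data/forcing, and a Neumann-series / fixed-point argument in $S^{1}_c[I]$ closes, using also the linear estimate \eqref{eq:lin-est-S1} and divisibility to absorb the remaining non-paradifferential pieces. Uniqueness follows from the same contraction.

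\textbf{The renormalization and its construction.} The renormalization operator is defined (following Section~\ref{subsec:renrm}, to be stated below) as a conjugation $e^{-\Psi} \Box_{\text{free}} e^{\Psi}$-type object, where $\Psi = \Psi_{<k-\kpp}$ depends on $P_{<k-\kpp}\P_\alpha A$ and is constructed frequency-band by frequency-band; for each output frequency $2^k$ the symbol involves only the frequencies of $A$ below $2^{k-\kpp}$. I would first record the mapping properties of this operator: boundedness on $S^{1}_c$, on $N_c$, on $L^2 \dot H^{-1/2}$, and on the auxiliary spaces $\Box \uX^{1}$, $X^{-\frac12+b_1,-b_1}$, all with constants depending on $M$ but \emph{not} shrinking—this is the content of the renormalization estimates in Section~\ref{sec:paradiff-renrm}. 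The essential gain comes at the \emph{conjugation error} stage: the operator $(\Box + \Diff_{\P A}^{\kpp}) \circ (\text{renorm}) - (\text{renorm}) \circ \Box$ must be small, and this is where both the large $\kpp$ and the smallness of a \emph{divisible} norm (weaker than $S^1$, e.g. $DS^1$) of $A$ enter. Concretely, I would split the error into: (i) a principal term that cancels $\Diff_{\P A}^{\kpp}$ by construction; (ii) a commutator/quadratic-in-$\Psi$ remainder controlled by $2^{-\dltb\kpp}$ gains from the frequency gap plus $2^{C\kpp}$ times a divisible norm of $A$ on a subdivided interval; (iii) genuinely lower-order symbol-calculus errors. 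For (ii), one subdivides $I$ into $O_M(1)$ pieces on which $\nrm{A}_{DS^1}$ is small enough to beat the $2^{C\kpp}$, then square-sums over the subintervals using \eqref{eq:int-div-N}, \eqref{eq:int-sum-S}—this interval-localization bookkeeping, handled carefully in Section~\ref{subsec:err-main} and \ref{subsec:err-sub}, is the technically delicate part because modulation projections in the parametrix are non-local in time and the sharp cutoffs $\chi_I$ interact badly with the high-modulation norms $\uX^1$.

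\textbf{Assembling the estimates.} With the parametrix and its error bound in hand, I would run the following scheme. Extend $G$ outside $I$ preserving $N \cap L^2\dot H^{-1/2}$ (Proposition~\ref{prop:int-loc}.(1)). Write $B = B^{ap} + B^{err}$ where $B^{ap}$ solves the parametrix problem; then $B^{err}$ solves $\Box B^{err} + \Diff_{\P A}^{\kpp} B^{err} = G^{err}$ with $\nrm{G^{err}}_{(N\cap L^2\dot H^{-1/2})_c[I]} \leq \tfrac12 (\text{RHS of \eqref{eq:paradiff-wp-est}})$ by the conjugation-error bound once $\kpp \geq \kpp_1(M)$. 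Now treat $\Diff_{\P A}^{\kpp} B^{err}$ as forcing: by Proposition~\ref{prop:paradiff-df-himod} and the $S^1 \cap \uX^1 \cap \mX^1$ bounds available for the caloric wave $A$ (Proposition~\ref{prop:uS-bnd}), together with \eqref{eq:diff-a0-X} using $A_0 \in Y^1$, one controls $\Diff_{\P A}^{\kpp} B^{err}$ in $N \cap L^2\dot H^{-1/2}$ by $\nrm{B^{err}}_{S^1_c[I]}$—but \emph{not} with a small constant, so a straight contraction fails. This is precisely why $\Diff_{\P A}^{\kpp}$ must stay inside the linear operator; so instead I iterate the \emph{whole} parametrix on $B^{err}$: $B^{err} = (B^{err})^{ap} + (B^{err})^{err}$, and the sequence of errors contracts geometrically in $S^1_c[I]$ with ratio $\ll_M 1$ coming again from $2^{-\dltb\kpp}$ plus divisibility. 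Summing the Neumann series gives existence and the bound \eqref{eq:paradiff-wp-est}; uniqueness in $S^1[I]$ follows since the difference of two solutions has zero data and zero forcing and hence, by the same argument, vanishes. I expect the main obstacle to be step (ii) of the conjugation error—getting a clean $2^{-\dltb\kpp} + 2^{C\kpp}(\text{divisible})$ bound while correctly localizing to $I$ in the presence of the non-local modulation cutoffs and the sharp-cutoff-sensitive norms $\Xdf^1, \uX^1$—which is exactly the novelty flagged relative to \cite{ST1, OT2}.
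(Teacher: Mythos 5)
Your proposal follows essentially the same route as the paper: reduce to a frequency-localized parametrix (Theorem~\ref{thm:paradiff-core}) built from the pseudodifferential renormalization operator, derive smallness from the frequency gap $\kpp$ and divisibility of $DS^1$ rather than from smallness of $\nrm{A}_{S^1}$, iterate within $O_M(1)$ subintervals, and invoke Proposition~\ref{prop:uS-bnd} (not Theorem~\ref{thm:structure}) to avoid circularity. The one technical step you pass over — the commutator error produced when summing the frequency-band parametrix outputs $u_k$ against the $k$-dependent threshold $P_{<k-\kpp}$ in $\Diff_{\P A}^{\kpp}$ — is handled in the paper's Lemma~\ref{lem:paradiff-comm} via Propositions~\ref{prop:diff-aux} and~\ref{prop:rem2}, and fits naturally into the scheme you describe.
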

As a quick corollary of Propositions~\ref{prop:rem-3}--\ref{prop:rem2}
and Theorem~\ref{thm:paradiff}, we obtain well-posedness of the
initial value problem associated to $\Box_{A}$; see
Theorem~\ref{thm:structure}.(1) below.

Theorem~\ref{thm:paradiff} is proved in Sections~\ref{sec:paradiff},
\ref{sec:paradiff-renrm} and \ref{sec:paradiff-err}.  The main
ingredient for the proof is construction of a parametrix for $\Box +
\Diff^{\kpp}_{\P A}$ by renormalization with a pseudodifferential
gauge transformation; for a more detailed discussion, see
Section~\ref{sec:paradiff}.

The paradifferential wave equation \eqref{eq:paradiff-wave} leads to
the following \emph{weak divisibility} property of the $S^{1}$ norm,
which will later play an important role in the energy induction
argument.
\begin{proposition} \label{prop:paradiff-weakdiv} Let $A_{t,x}$ be a
  caloric Yang--Mills wave on an interval $I$ which obeys
  \eqref{eq:S-bnd-hyp:pre} for some $M > 0$.  Let $B \in S^{1}[I]$ be
  a solution to the paradifferential wave equation
  \eqref{eq:paradiff-wave} with the source $G \in N \cap L^{2}
  \dot{H}^{-\frac{1}{2}}[I]$, which obeys the bound
  \begin{equation}
    \sup_{t \in I} \nrm{(B, \rd_{t} B)(t)}_{L^{2}} \leq E
  \end{equation}
  for some $E > 0$. Then there exists a partition $I = \cup_{i \in
    \calI} I_{i}$ such that
  \begin{equation} \label{eq:paradiff-weakdiv} \nrm{B}_{S^{1}[I_{i}]}
    \aleq_{E} 1 \qquad \hbox{ for } i \in \calI
  \end{equation}
  where
  \begin{equation*}
    \# \calI \aleq_{E, M, \nrm{B}_{S^{1}[I]}, \nrm{G}_{N \cap L^{2} \dot{H}^{-\frac{1}{2}}[I]}} 1.
  \end{equation*}
\end{proposition}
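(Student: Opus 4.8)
The proof rests on two facts. First, $DS^1$ of $A$ and of $B$, and $N \cap L^2 \dot H^{-\frac12}$ of $G$, are all divisible norms (the $L^2_t$‑based pieces of \eqref{eq:DS1-def} and $N$, $L^2\dot H^{-\frac12}$ are square‑divisible by Proposition~\ref{prop:int-loc}.(2), and the $\Str^0$‑piece is a supremum of $L^p_t L^q_x$ norms over a compact range of $p$, hence divisible with a fixed power), while all of them are dominated by the $S^1$‑norm. Second — and this is the crucial point — the parametrix for $\Box + \Diff_{\P A}^{\kpp}$ constructed in Sections~\ref{sec:paradiff}--\ref{sec:paradiff-err} obeys its mapping bounds with \emph{universal} implicit constants, rather than $M$‑dependent ones, once a \emph{divisible} norm of $A$ weaker than $S^1[I]$ (e.g.\ $DS^1$) is below a universal small threshold $\eps_0$; this is exactly the role that divisible smallness of $A$ plays in the conjugation‑error estimates of Section~\ref{sec:paradiff-err} (the renormalization operator bounds themselves being $M$‑free already, as they use only the frequency gap $\kpp$; see Section~\ref{sec:paradiff-renrm}). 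So the plan is: subdivide $I$ so that these divisible quantities are all $\leq \eps_0$, then on each piece run the parametrix with $M$‑free constants and read off \eqref{eq:paradiff-weakdiv} from the endpoint energy together with the small source.

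\textbf{Step 1: the partition.} Using $\nrm{A}_{DS^1[I]} \aleq M$, $\nrm{B}_{DS^1[I]} \aleq \nrm{B}_{S^1[I]}$, divisibility of these norms and of $\nrm{G}_{N \cap L^2 \dot H^{-\frac12}[I]}$, a greedy subdivision produces a partition $I = \cup_{i \in \calI} I_i$ with
\[
\nrm{A}_{DS^{1}[I_i]} + \nrm{B}_{DS^{1}[I_i]} + \nrm{G}_{N \cap L^{2} \dot{H}^{-\frac{1}{2}}[I_i]} \leq \eps_0, \qquad i \in \calI,
\]
and $\# \calI \aleq_{M, \nrm{B}_{S^{1}[I]}, \nrm{G}_{N \cap L^{2} \dot{H}^{-\frac{1}{2}}[I]}} 1$, the counting exponents being $2$ for the square‑divisible $N$, $L^2\dot H^{-\frac12}$ pieces and a fixed power (controlled by $\dlts^{-1}$) for the $\Str^0$ piece of $DS^1$. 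Here $\eps_0$ is the universal threshold supplied by Step 2.

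\textbf{Step 2: the per‑interval bound.} Fix $i \in \calI$ and let $t_i$ be an endpoint of $I_i$. Since $\nrm{A}_{DS^1[I_i]} \leq \eps_0$, all estimates underlying Theorem~\ref{thm:paradiff} — in particular the conjugation‑error bounds of Section~\ref{sec:paradiff-err}, which are the only source of $M$‑dependence — hold on $I_i$ with universal constants (we may take $\kpp \geq \kpp_1(M)$, as assumed so that $B$ is the unique $S^1$ solution of \eqref{eq:paradiff-wave}; this only helps). Re‑running the fixed‑point argument of Theorem~\ref{thm:paradiff} on $I_i$ with data $(B, \rd_t B)(t_i)$ and source $G \restriction_{I_i}$ — or equivalently splitting $B$ on $I_i$ into the homogeneous solution with that data plus the Duhamel term with source $G$ and invoking Theorem~\ref{thm:paradiff} twice — and using uniqueness to identify the output with $B \restriction_{I_i}$, we obtain, in place of \eqref{eq:paradiff-wp-est},
\[
\nrm{B}_{S^{1}[I_i]} \aleq \nrm{(B, \rd_t B)(t_i)}_{\dot{H}^{1} \times L^{2}} + \nrm{G}_{N \cap L^{2} \dot{H}^{-\frac{1}{2}}[I_i]} \leq E + \eps_0
\]
with a universal implicit constant, using the hypothesis $\sup_{t \in I}\nrm{(B,\rd_t B)(t)}_{\dot H^1 \times L^2} \leq E$ at $t = t_i$. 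Since $\eps_0 \leq 1$, this is $\aleq_E 1$, which is \eqref{eq:paradiff-weakdiv}. Note we use the energy bound directly at $t_i$, so no (potentially $M$‑dependent) approximate energy conservation for the paradifferential flow is needed.

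\textbf{Main obstacle.} The real work is the assertion in Step 2 that the parametrix estimates become $M$‑free on a subinterval once $\nrm{A}_{DS^1[I_i]} \leq \eps_0$; concretely, one must revisit Section~\ref{sec:paradiff-err} and verify that, with the subdivision of Step 1 already in place, every implicit constant there can be taken universal — the divisible‑norm smallness of $A$ having been arranged in advance rather than produced during the proof. This is primarily bookkeeping once the interval‑localization machinery (absorbing the sharp cutoff $\chi_{I_i}$ into the modulation‑localized norms $\Box \Xdf^1$, $\Box \uX^1$) of Section~\ref{sec:ests} and Proposition~\ref{prop:int-loc} is invoked, but it is the step that requires care.
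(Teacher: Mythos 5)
Your Step 2 contains a genuine gap: you claim that once $\nrm{A}_{DS^1[I_i]} \leq \eps_0$, the parametrix bounds from Theorem~\ref{thm:paradiff-core} hold with \emph{universal} (i.e.\ $M$-free) constants. This is not what the paper establishes. The hypotheses that upgrade the parametrix constants to universal ones are \eqref{eq:paradiff-core:S-bnd-hyp-imp} and \eqref{eq:renrm:imp-hyp}, both of which require smallness of the $\ell^\infty S^1$-norm of $A$ (i.e.\ $\sup_k \nrm{P_k A}_{S^1}$ small), not smallness of any divisible $L^2_t$-based norm. Indeed, the symbol constant $M_\sgm$ that propagates through Sections~\ref{sec:paradiff-renrm} and \ref{sec:paradiff-err} is controlled (cf.\ \eqref{eq:Msgm}) by $\nrm{A}_{\ell^\infty S^1} + \nrm{A_0}_{\ell^\infty Y^1}$, so the renormalization-operator bounds of Theorem~\ref{thm:renrm-bnd} are \emph{not} $M$-free merely by choosing $\kpp$ large — item~(1) there is explicitly $\aleq_{M_0}$, and only item~(3) is universal, under the extra hypothesis $\nrm{\bfP A_x}_{\ell^\infty S^1[I]} < \dlto(M_0)$. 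Crucially, $\nrm{A}_{\ell^\infty S^1[I_i]}$ is not a time-divisible quantity: a single dyadic block $P_{k_0} A$ with large $S^1$-norm remains large on every subinterval, no matter how finely you partition $I$. So your greedy subdivision controls $\nrm{A}_{DS^1[I_i]}$, $\nrm{B}_{DS^1[I_i]}$, and $\nrm{G}_{N \cap L^2 \dot H^{-1/2}[I_i]}$, but leaves $\nrm{A}_{\ell^\infty S^1[I_i]}$ as large as it was on all of $I$, and the $M$-dependence in \eqref{eq:paradiff-core:bdd} survives.

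The paper's proof handles exactly this point by a frequency decomposition of $A$, not a time decomposition: write $A_{t,x} = A^{pert}_{t,x} + A^{nonpert}_{t,x}$ where $A^{pert} = \sum_{k \in K} P_k A$ collects the $O_{\dlto(M)^{-1} M}(1)$ dyadic blocks with $\nrm{P_k A}_{S^1[I]} \geq \dlto(M)$, so that $\nrm{A^{nonpert}}_{\ell^\infty S^1[I]} < \dlto(M)$ by construction. The contribution of each single block of $A^{pert}$ to $\Diff^\kpp_{\P A}$ is perturbative (Proposition~\ref{prop:diff-single}), so $B$ also solves the paradifferential equation with background $A^{nonpert}$ and a modified source $\tilde G$ with $\nrm{\tilde G}_{N \cap L^2 \dot H^{-1/2}[I]} \aleq_M \nrm{G}_{N \cap L^2 \dot H^{-1/2}[I]} + \nrm{B}_{S^1[I]}$. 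Only then does one subdivide $I$ (to make the divisible hypotheses of Lemma~\ref{lem:paradiff-comm} hold and to make $\tilde G$ small per piece), and apply the universal-constant version of the parametrix — part~(b) of Lemma~\ref{lem:paradiff-comm}, available because $A^{nonpert}$ satisfies \eqref{eq:paradiff-core:S-bnd-hyp-imp}. Your proposal is missing the frequency decomposition step entirely and incorrectly treats $\ell^\infty S^1$-smallness as a consequence of interval smallness of $DS^1$.
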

The proof of this proposition also involves the parametrix construction
(cf. Sections~\ref{sec:paradiff}, \ref{sec:paradiff-renrm} and
\ref{sec:paradiff-err}), as well as
Proposition~\ref{prop:diff-single}.

We now state additional estimates satisfied by $\Diff_{\P A}^{\kpp}$,
which are needed to analyze the difference of two solutions (or even
approximate solutions). For this purpose, it is necessary to exploit
the so-called secondary null structure of the Yang--Mills equation,
which becomes available after reiterating the equations for $\P A$.

We begin with simple bilinear estimates, which allows us to peel off
the non-essential parts (in particular, the contribution of the cubic and
higher order nonlinearities) of $A_{0}$ and $\P A$.
\begin{proposition} \label{prop:diff-bi} We have
  \begin{align}
    \nrm{\Diff^{\kpp}_{A_{0}} B}_{(N \cap L^{2}
      \dot{H}^{-\frac{1}{2}})_{f}[I]}
    \aleq & \nrm{A_{0}}_{(L^{1} L^{\infty} \cap L^{2} \dot{H}^{\frac{3}{2}})_{a}[I]} \nrm{B}_{S^{1}_{e}[I]}, \label{eq:diff-bi-ell} \\
    \nrm{\Diff^{\kpp}_{\P_{x} A} B}_{(N \cap L^{2}
      \dot{H}^{-\frac{1}{2}})_{f}[I]} \aleq & (\nrm{\P
      A[t_{0}]}_{(\dot{H}^{1} \times L^{2})_{a}} + \nrm{\Box \P
      A}_{L^{1} L^{2}_{a}[I]})
    \nrm{B}_{S^{1}_{e}[I]}, \label{eq:diff-bi-holder}
  \end{align}
  where
  \begin{equation*}
    f_{k} = \Big( \sum_{k' < k - \kpp} a_{k'} \Big) e_{k}.
  \end{equation*}
\end{proposition}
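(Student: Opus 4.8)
The plan is to estimate both paradifferential operators block by block in the output frequency, using crucially the high--low frequency separation built into $\Diff^{\kpp}$. We may assume $\kpp$ exceeds an absolute constant, so that $P_{k}(\Diff^{\kpp}_{\cdot} B)$ is, up to $O(1)$ frequency shifts, the sum over $k' < k - \kpp$ of the pieces $[P_{k'}(\cdot), \rd P_{k} B]$; below $a$, $e$ denote the envelopes appearing in the hypotheses.

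For $\Diff^{\kpp}_{A_{0}}$ the argument is elementary H\"older and Bernstein. The $N$-component is placed in $L^{1}L^{2} \subseteq N$ via $\nrm{[P_{k'} A_{0}, \rd_{t} P_{k} B]}_{L^{1}L^{2}[I]} \leq \nrm{P_{k'} A_{0}}_{L^{1}L^{\infty}[I]} \nrm{\nb P_{k} B}_{L^{\infty}L^{2}[I]} \aleq a_{k'} e_{k}$, which sums over $k' < k - \kpp$ to $f_{k}$. For the $L^{2}\dot{H}^{-1/2}$-component, the output sitting at frequency $\aeq 2^{k}$ reduces matters to $2^{-k/2}\nrm{[P_{k'} A_{0}, \rd_{t} P_{k} B]}_{L^{2}L^{2}[I]}$; estimating $\nrm{P_{k'} A_{0}}_{L^{\infty}_{x}} \aleq 2^{2k'} \nrm{P_{k'} A_{0}}_{L^{2}_{x}} = 2^{k'/2} \nrm{P_{k'} A_{0}}_{\dot{H}^{3/2}}$ by Bernstein on $\R^{4}$ and using $\nrm{\nb P_{k} B}_{L^{\infty}L^{2}} \aleq e_{k}$ gives $\aleq (\sum_{k' < k - \kpp} 2^{(k'-k)/2} a_{k'}) e_{k} \aleq f_{k}$, since $k' - k < -\kpp < 0$. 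Square-summing in $k$ yields \eqref{eq:diff-bi-ell}.

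For $\Diff^{\kpp}_{\P_{x} A}$ I would use two structural facts. First, since $\P_{\ell} A$ is divergence free, $\rd^{\ell} P_{< k - \kpp} \P_{\ell} A = 0$, so $[P_{< k - \kpp} \P_{\ell} A, \rd^{\ell} P_{k} B] = \rd^{\ell}[P_{< k - \kpp} \P_{\ell} A, P_{k} B]$: each output block is a spatial divergence at frequency $\aeq 2^{k}$, and the contracted index together with the divergence-free condition exhibits $[\P_{\ell} A, \rd^{\ell}(\cdot)]$ as a bilinear null form in the pair $(\P A, B)$. Second, the hypothesis and the linear estimate \eqref{eq:lin-est-S} (together with $L^{1}L^{2} \subseteq N$) give $\nrm{\nb P_{k'} \P A}_{S_{k'}[I]} \aleq a_{k'}$, so $\P A$ enjoys all the Strichartz, square-function and null-frame bounds encoded in $\nb \P A \in S[I]$ --- but not the full $S^{1}$ bound, as $\Box \P A$ is assumed only in $L^{1}L^{2} \not\subseteq L^{2}\dot{H}^{-1/2}$. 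The $L^{2}\dot{H}^{-1/2}$-component then needs no fine null structure: it is $\aeq 2^{k/2}\nrm{[P_{< k - \kpp} \P A, P_{k} B]}_{L^{2}L^{2}[I]}$, and by H\"older with $\nrm{P_{k'} \P A}_{L^{2}L^{\infty}[I]} \aleq 2^{k'/2} a_{k'}$ and $\nrm{P_{k} B}_{L^{\infty}L^{2}[I]} \aleq 2^{-k} e_{k}$ this is $\aleq (\sum_{k' < k - \kpp} 2^{(k'-k)/2} a_{k'}) e_{k} \aleq f_{k}$.

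The hard part will be the $N$-component of $\Diff^{\kpp}_{\P_{x} A} B$: a bare H\"older--Strichartz placement into $L^{1}L^{2}$ is unavailable (unlike $A_{0}$, the Leray part $\P A$ has no $L^{1}L^{\infty}$-type control), and an unlocalized placement into $X^{0,-1/2}_{1} \subseteq N$ fails because the part of the output near the light cone has low modulation. The resolution is the standard dyadic bilinear null form estimate: after angular and modulation decomposition of the inputs, the high-output-modulation region gains from the $2^{-j/2}$ weight of $X^{0,-1/2}_{1}$, while near the cone the spatial frequencies of $\P A$ and $B$ are nearly parallel, where the null symbol vanishes to first order in the angle and produces the gain that lets the near-cone output be placed in $L^{1}L^{2}$ via angularly improved Strichartz and Bernstein estimates. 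This is exactly the estimate already established in Section~\ref{sec:multi} (it underlies \eqref{eq:Ax-bi-df} and uses only the $S$-norm of $\nb \P A$), applied to each pair $(P_{k'} \P A, P_{k} B)$ with $k' < k - \kpp$; it yields a dyadic bound $\aleq a_{k'} e_{k}$ (in fact with a harmless off-diagonal gain in $k - k'$), which sums over $k' < k - \kpp$ to $f_{k}$. Together with the $L^{2}\dot{H}^{-1/2}$-component and square-summation in $k$, this proves \eqref{eq:diff-bi-holder}.
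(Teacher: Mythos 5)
Your proof of \eqref{eq:diff-bi-ell} and of the $L^{2}\dot{H}^{-\frac{1}{2}}$ component of \eqref{eq:diff-bi-holder} is fine and matches the paper's intent, and your identification of $[\P_{\ell}A,\rd^{\ell}(\cdot)]$ as a null form is correct. But the $N$-component of \eqref{eq:diff-bi-holder} has a genuine gap. You claim the dyadic null form estimate that underlies \eqref{eq:Ax-bi-df}, namely \eqref{eq:nf-core}, gives a per-block bound $\aleq a_{k'}e_{k}$ with a ``harmless off-diagonal gain''. This is not so, and the issue is where the $\abs{D}^{-1}$ sits. By \eqref{eq:nf-N*}, the paradifferential block is $P_{k}\calN(\abs{D}^{-1}P_{k'}\P A,P_{k}B)$ with the $\abs{D}^{-1}$ on the \emph{low-frequency input}; applying \eqref{eq:nf-core} with $u_{k_{1}}=\abs{D}^{-1}P_{k'}\P A$ gives $\nrm{Du_{k_{1}}}_{S}\aeq\nrm{P_{k'}\P A}_{S}\aeq 2^{-k'}\nrm{\nb P_{k'}\P A}_{S}\aleq 2^{-k'}a_{k'}$, so the right-hand side of \eqref{eq:nf-core} becomes $2^{-\dlta(k-k')}\cdot 2^{k}\cdot 2^{-k'}a_{k'}\cdot e_{k}=2^{(1-\dlta)(k-k')}a_{k'}e_{k}$ --- an exponential \emph{loss} in $k-k'$, and $\sum_{k'<k-\kpp}2^{(1-\dlta)(k-k')}a_{k'}$ diverges for any admissible envelope. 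The balance is genuinely different from $\P\calM^{2}$ in \eqref{eq:Ax-bi-df}: there, by \eqref{eq:nf-N}, the Leray projector acts on the \emph{output}, producing $\abs{D}^{-1}\calN(A_{k'},B_{k})$ whose outer $\abs{D}^{-1}$ contributes $2^{-k}$ and leaves a net gain $2^{-\dlta(k-k')}$. Those two bilinear reductions are the two distinct identities \eqref{eq:nf-N*} and \eqref{eq:nf-N} in Lemma~\ref{lem:nf-bi}, and the corresponding dyadic estimates are not interchangeable.

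The paper instead invokes the specialized low-frequency-input estimates \eqref{eq:nf*-1-H*} and \eqref{eq:nf*-H*} of Proposition~\ref{prop:H*}, splitting by the operator $\calH^{\ast}_{k_{1}}$ according to whether the low-frequency factor $P_{k'}\P A$ carries the dominant modulation. The $1-\calH^{\ast}$ piece is handled by \eqref{eq:nf*-1-H*} using only $\nrm{\nb P_{k'}\P A}_{S}$, but the $\calH^{\ast}$ piece requires the $Z^{1}$-norm of $P_{k'}\P A$ via \eqref{eq:nf*-H*}. That $Z^{1}$ control is precisely what the hypothesis $\Box\P A\in L^{1}L^{2}$ buys, through the embedding $L^{1}L^{2}\subseteq\Box Z^{1}$ and the linear estimate $\nrm{P_{k'}\P A}_{S^{1}\cap Z^{1}}\aleq\nrm{P_{k'}\nb\P A(t_{0})}_{L^{2}}+\nrm{P_{k'}\Box\P A}_{L^{1}L^{2}[I]}$. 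Notice that your argument for the $N$-component never actually uses the assumption $\Box\P A\in L^{1}L^{2}_{a}[I]$, only the $S$-norm of $\nb\P A$ --- that is the tell-tale sign that the $\calH^{\ast}$ region (the part of $\P A$ not asymptotically free) has been missed.
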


The contribution of the quadratic nonlinearities $\calM^{2}_{0}$ and
$\calM^{2}$ in the equations for $A_{0}$ and $A_{x}$, respectively,
cannot be treated separately. This is precisely where we exploit the
secondary null structure, which only manifests itself after combining
the contribution of these nonlinearities in $\Diff_{\P A}^{\kpp}$.
\begin{proposition} \label{prop:diff-tri-nf} Let
  \begin{align}
    \lap A_{0} =& [B^{(1) \ell}, \rd_{t} B^{(2)}_{\ell}], \label{eq:diff-tri-nf-A0} \\
    \Box \P A =& \P [B^{(1) \ell}, \rd_{x} B^{(2)}_{\ell}], \qquad \P
    A[t_{0}] = 0. \label{eq:diff-tri-nf-Ax}
  \end{align}
  where $B^{(1)}, B^{(2)} \in S^{1}[I]$. Then we have
  \begin{equation} \label{eq:diff-tri-nf} \nrm{\Diff^{\kpp}_{\P A}
      B}_{(N \cap L^{2} \dot{H}^{-\frac{1}{2}})_{f}[I]} \aleq_{\tM}
    \nrm{B^{(1)}}_{S^{1}_{c}[I]} \nrm{B^{(2)}}_{S^{1}_{d}[I]}
    \nrm{B}_{S^{1}_{e}[I]}
  \end{equation}
  where
  \begin{equation*}
    f_{k} = \Big( \sum_{k' < k - \kpp} c_{k'} d_{k'} \Big) e_{k}.
  \end{equation*}
\end{proposition}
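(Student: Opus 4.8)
The plan is to establish the dyadic estimate underlying \eqref{eq:diff-tri-nf} at each fixed output frequency $2^{k}$ and then sum, invoking the interval square-summability of $N$ and $L^{2} \dot{H}^{-\frac{1}{2}}$ (Proposition~\ref{prop:int-loc}) together with the $\ell^{1}$-structure of $f_{k}$. Using \eqref{eq:diff-tri-nf-A0}--\eqref{eq:diff-tri-nf-Ax} we write $A_{0} = \lap^{-1} \calM^{2}_{0}(B^{(1)}, B^{(2)})$ and, on $I$ with vanishing data at $t_{0}$, $\Box \P A = \P \calM^{2}(B^{(1)}, B^{(2)})$, so that
\[ \Diff^{\kpp}_{\P A} B = \sum_{k} 2 \left( -[P_{<k-\kpp} A_{0}, \rd_{t} P_{k} B] + [P_{<k-\kpp} \bfP^{\ell} A, \rd_{\ell} P_{k} B] \right). \]
After expanding $B^{(1)} = \sum_{k_{1}} P_{k_{1}} B^{(1)}$, $B^{(2)} = \sum_{k_{2}} P_{k_{2}} B^{(2)}$, the inner bilinear interaction producing $P_{k_{0}} A$ $(k_{0} < k - \kpp)$ splits into the \emph{unbalanced} regimes (low--high, $\min(k_{1}, k_{2}) = k_{0} + O(1)$; or high--high away from the output, $k_{1} = k_{2} + O(1) \geq k_{0}$ with $|k_{1} - k| \geq C$) and the \emph{balanced} regime $k_{1} = k_{2} + O(1) = k + O(1)$.

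In the unbalanced regimes the quadratic null form estimates of Propositions~\ref{prop:ell-bi}, \ref{prop:Q-bi} and \ref{prop:Ax-bi} carry an off-diagonal gain $2^{-\dltb|k_{1}-k_{0}|}$ or a high--high-to-low gain $2^{-\dltb|k_{1}-k|}$; since $\dltb \gg \dlte$, these outweigh the slow variation of the $\dlte$-admissible envelopes and may be summed over $k_{1}$ to place $P_{k_{0}} A_{0}$ in $(L^{1} L^{\infty} \cap L^{2} \dot{H}^{\frac{3}{2}})_{a}[I]$ and $P_{k_{0}} \Box \P A$ in $L^{1} L^{2}_{a}[I]$ with $a_{k_{0}} \aleq c_{k_{0}} d_{k_{0}}$ (the hypothesis $\P A[t_{0}] = 0$ disposing of the homogeneous part). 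Feeding these into the simple bilinear bound of Proposition~\ref{prop:diff-bi} and summing over $k_{0} < k - \kpp$ yields the contribution to $\Diff^{\kpp}_{\P A} B$ at frequency $2^{k}$ dominated by $\big( \sum_{k_{0} < k - \kpp} c_{k_{0}} d_{k_{0}} \big) e_{k}$.

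The balanced regime, where $B^{(1)}$, $B^{(2)}$ and $B$ all lie at frequency $\sim 2^{k}$ and the quadratic estimates no longer supply a summable gain, is the heart of the matter. Here we run the usual modulation--angle dichotomy of \cite{KST, OT2}: when one of the three inputs or the output is at modulation above the resonance scale, the frequency gap $\kpp$ together with the bilinear $L^{2}$ and $X^{\sgm, b}$-type estimates from Section~\ref{subsec:dyadic-ests-pf} absorb the term into $X^{0, -\frac{1}{2}}_{1} \subseteq N$ and into $L^{2} \dot{H}^{-\frac{1}{2}}$. In the remaining low-modulation, small-angle piece, a direct symbol computation shows that the combination $-[A_{0}, \rd_{t} B] + [\bfP^{\ell} A, \rd_{\ell} B]$, with $A_{0}$ and $\P A$ carrying the specific symbols $\lap^{-1} \calM^{2}_{0}$ and $\Box^{-1} \P \calM^{2}$, exhibits cancellation when all three space-time frequencies are null and collinear; equivalently, modulo disposable operators it is a combination of iterated classical null forms acting on $(B^{(1)}, B^{(2)}, B)$ --- the analogue of the Machedon--Sterbenz secondary null structure \cite{MS}, and precisely the property that distinguishes $\calM^{2}_{0}$ and $\P \calM^{2}$ from a generic quadratic nonlinearity. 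Having reduced to such a trilinear null form, we conclude with the trilinear null form estimate in the null frame spaces $S^{\omg}_{k}(\ell)$, $PW^{\mp}_{\omg}(\ell)$ and $NE$, i.e.\ Proposition~\ref{prop:tri-nf-core} (cf.\ \cite[Eqs.~(136)--(138)]{KST}), placing the near-cone output in $N$ and the remainder in $L^{2} \dot{H}^{-\frac{1}{2}}$ via bilinear Strichartz; the gap gain $2^{-\dltb \kpp}$ extracted from the cone geometry and the slow variation of the envelopes then recombine to give the envelope $f_{k}$ upon summing over $k_{0} < k - \kpp$.

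The principal obstacle is the balanced regime: isolating and exploiting the secondary null structure, i.e.\ tracking how the symbol of $\lap^{-1} \calM^{2}_{0}$ entering $[A_{0}, \rd_{t} B]$ cancels against that of $\Box^{-1} \P \calM^{2}$ entering $[\bfP^{\ell} A, \rd_{\ell} B]$, in the presence of the paradifferential truncation $P_{<k-\kpp}$, and packaging the result into genuine null forms amenable to the null-frame-space machinery of \cite{KST}. A secondary, pervasive nuisance (as throughout Section~\ref{sec:multi}) is the interval localization of the time-nonlocal modulation projections $Q_{j}$, which is handled as in \cite{OT2} via the extension procedure of Section~\ref{subsec:ftn-sp} and the commutator bounds for $Q_{j}$ against sharp time cutoffs.
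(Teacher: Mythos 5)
Your global structure is partly right—you correctly identify that the off-cone and genuinely cross-frequency pieces are handled by bilinear/dyadic estimates while a trilinear secondary null structure must be exploited somewhere—but the specific regime decomposition you choose does not close, and the "unbalanced, high--high away from the output" case contains a real gap.

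The issue is that your decomposition is taken with respect to the frequency $k$ of $B$, whereas the summable gains in the quadratic estimates are with respect to the frequencies $(k_0, k_1, k_2)$ internal to the bilinear producing $P_{k_0} A$. You claim the high--high regime with $k_1 = k_2 + O(1)$ and $|k_1 - k| \geq C$ carries "a high--high-to-low gain $2^{-\dltb|k_1 - k|}$" from Propositions~\ref{prop:ell-bi}, \ref{prop:Q-bi}, \ref{prop:Ax-bi}; no such gain exists, since those bilinear estimates never see the frequency $k$ of $B$. The true $hh$-to-low gains there are in $(k_1 - k_0)$, which is exactly what you would need to sum $k_1$, but they are not available in the norms required by Proposition~\ref{prop:diff-bi}: the dyadic $L^1 L^\infty$ bound \eqref{eq:qf-L1Linfty} decays as $2^{-\dlta|k_1 - k_2|}$, which is $O(1)$ when $k_1 = k_2 + O(1)$, so $P_{k_0} A_0^{hh}$ cannot be placed in $L^1 L^\infty$ with envelope $c_{k_0} d_{k_0}$. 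Similarly, the $hh$ $N$-bound on $\Box \P A$ (\eqref{eq:nf-core}) has a $X^{0,-\frac12}_1$ component that is not $L^1 L^2$, so you cannot feed $\P A^{hh}$ into Proposition~\ref{prop:diff-bi} either. In short, restricting the trilinear estimate to the "balanced" cell $k_1 = k_2 + O(1) = k + O(1)$ discards the vast majority of the $hh$ low-modulation interactions that it is precisely designed to handle.

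The paper's actual cut is not by whether $k_1 \sim k$, but by whether the bilinear output $P_{k_0} A$ is a high--high-to-low interaction with dominant output modulation, i.e.\ it isolates $\calH A_0^{hh}$ and $\calH \P A^{hh}$ (using the $\calH_k$ projections defined before Proposition~\ref{prop:H*}) as the sole "main part." The complementary non-$hh$ piece is placed in $L^1 L^\infty$ (for $A_0$, via the $2^{-\dlta|k_1-k_2|}$ gain) and $S^1$ (for $\P A$), while the $hh$-but-high-modulation piece $A_0^{hh} - \calH A_0^{hh}$, $\P A^{hh} - \calH \P A^{hh}$ is placed in $\lap^{-\frac12}\Box^{\frac12} Z^1$ and $Z^1$ respectively and fed into \eqref{eq:nf*-H*}, \eqref{eq:qf-H*}, \eqref{eq:nf*-1-H*}, \eqref{eq:qf-1-H*}. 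Only then is the trilinear null form estimate \eqref{eq:tri-nf}--- whose factor $2^{-\dlta(k_1-k)}$ is the gain you were missing---applied to the entire $\calH$-part, with no restriction relating $k_1$ to $k$. You would need to redraw the boundary of your decomposition along these lines for the argument to work.
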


Next, we turn to the contribution of terms of the form $[A_{\alp},
\rd^{\alp} A]$ in the equation for $\P_{x} A$. The frequency envelope
bound for this term is slightly involved, because it does not obey a
good $N$-norm estimate.
\begin{proposition} \label{prop:diff-tri} Let $A_{0} = 0$ and
  \begin{equation} \label{eq:diff-tri-Ax} \Box \P A_{j} =
    \sum_{n=1}^{N} \P [B^{n (1)}_{\alp}, \rd^{\alp} B^{n (2)}_{j}],
    \quad \P A[t_{0}] = 0,
  \end{equation}
  where
  \begin{equation} \label{eq:diff-tri-fe-hyp-B12} \nrm{B^{n
        (1)}}_{\uS^{1}_{c^{n}}[I]} + \nrm{(B^{n (1)}_{0}, \P^{\perp}
      B^{n (1)})}_{Y^{1}_{c^{n}}[I]} \leq 1, \quad \nrm{B^{n
        (2)}}_{S^{1}_{d^{n}}[I]} \leq 1.
  \end{equation}
  Assume furthermore that
  \begin{equation} \label{eq:diff-tri-fe-hyp-AB} \nrm{\P
      A}_{S^{1}_{a}[I]} \leq 1, \quad \nrm{B}_{S^{1}_{e}[I]} \leq 1.
  \end{equation}
  Then we have
  \begin{equation} \label{eq:diff-tri} \nrm{\Diff^{\kpp}_{\P_{x} A}
      B}_{(N \cap L^{2} \dot{H}^{-\frac{1}{2}})_{f}[I]} \aleq 1,
  \end{equation}
  where
  \begin{equation*}
    f_{k} = \Big( \sum_{k' < k - \kpp} (a_{k'} + \sum_{n=1}^{N}c^{n}_{k'} d^{n}_{k'}) \Big) e_{k}.
  \end{equation*}

\end{proposition}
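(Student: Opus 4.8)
\emph{Plan.} The idea is to exploit the equation for $\P A$ itself. Since $\P A[t_0] = 0$ we may write $\P A = \Box_{t_0}^{-1}\big(\sum_n \P_j[B^{n(1)}_\alpha, \partial^\alpha B^{n(2)}_j]\big)$, where $\Box_{t_0}^{-1}$ inverts $\Box$ with zero Cauchy data at $t_0$; I then split this source into a genuinely hyperbolic part, carrying the secondary null structure, and elliptic/remainder parts that can be treated perturbatively. Concretely, decompose each $B^{n(1)}_\alpha = \P_\alpha B^{n(1)} + \P^\perp_\alpha B^{n(1)}$ and separate the contracted derivative into temporal and spatial pieces; since $\P^\perp_0 = 0$ this gives
\[
  \Box\P A = \sum_n \Big( \P_j[\P_\ell B^{n(1)}, \partial^\ell B^{n(2)}_j] + \P_j[\P^\perp_\ell B^{n(1)}, \partial^\ell B^{n(2)}_j] - \P_j[B^{n(1)}_0, \partial_t B^{n(2)}_j] \Big),
\]
and correspondingly $\P A = \P A^h + \P A^e$, each summand defined by inverting $\Box$ with zero data at $t_0$. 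In $\P A^h$ both factors are spatially divergence free, hence subject to the Klainerman--Machedon/Machedon--Selberg null structure; in $\P A^e$ the factors $\P^\perp B^{n(1)}$ and $B^{n(1)}_0$ are controlled by the $Y^1$ norm.

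For $\P A^h$, the plan is to reduce to Proposition~\ref{prop:diff-tri-nf}. Commuting derivatives and using the divergence-free conditions to align the index positions with the source appearing there (the lower-order corrections so generated are divergence/elliptic expressions which may be absorbed into $\P A^e$), $\Box \P A^h$ becomes a sum of terms $\P[\tilde B^{n(1)\ell}, \partial_x \tilde B^{n(2)}_\ell]$ with $\tilde B^{n(i)} \in S^1$ of envelopes $\aleq c^n$, $d^n$. Pairing each such term with the auxiliary potential $A^n_0$ solving $\lap A^n_0 = [\tilde B^{n(1)\ell}, \partial_t \tilde B^{n(2)}_\ell]$ places us exactly under the hypotheses of Proposition~\ref{prop:diff-tri-nf}, which yields $\Diff^\kpp_{(A^n_0, \P A^h_n)} B \in (N \cap L^2 \dot H^{-\frac12})$ with envelope $(\sum_{k' < k - \kpp} c^n_{k'} d^n_{k'}) e_k$. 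Since $A_0 = 0$ in our situation, I must then subtract $\Diff^\kpp_{A^n_0} B$; this term is handled by Proposition~\ref{prop:diff-bi}, after placing the auxiliary $A^n_0$ in the space required there using the elliptic bilinear bounds of Propositions~\ref{prop:ell-bi} and \ref{prop:Q-bi}. Summing over $n$ produces the $\sum_n c^n_{k'} d^n_{k'}$ contribution to $f_k$.

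For $\P A^e$, the plan is to use the $Y^1$-control on $\P^\perp B^{n(1)}$ and $B^{n(1)}_0$ together with the $\uS^1$- (resp.\ $S^1$-)control on $B^{n(1)}$ (resp.\ $B^{n(2)}$) to estimate $\Box\P A^e$ by perturbative norms --- its $N$-component, its $L^2\dot H^{-\frac12}$-component, and its $\Box\uX^1$/$\mX^1$-components --- via the bilinear estimates of Propositions~\ref{prop:Box-A-L2}, \ref{prop:ell-bi} and \ref{prop:rem2} (integrating by parts against the divergence-free factor where needed). Then $\Diff^\kpp_{\P_x A^e} B$ is estimated through Proposition~\ref{prop:diff-bi} for the $N \cap L^2 \dot H^{-\frac12}$-output and Proposition~\ref{prop:paradiff-df-himod} for the high-modulation output, the strengthened $\uS^1$-hypothesis on $B^{n(1)}$ being precisely what yields the $\uX^1$/$\mX^1$-control of $\P A$ needed in \eqref{eq:diff-ax-uX}--\eqref{eq:diff-ax-Xsb}. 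Finally one collects all contributions and checks that the resulting frequency envelope is dominated by $\big(\sum_{k'<k-\kpp}(a_{k'} + \sum_n c^n_{k'} d^n_{k'})\big) e_k$ --- the $a_{k'}$ term accommodating exactly those pieces of $\P A^e$ for which only the a-priori bound $\nrm{\P A}_{S^1_a[I]} \leq 1$ is invoked, and the off-diagonal gains from the bilinear estimates restoring the slowly-varying/admissibility conventions for $f_k$.

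\emph{Main obstacle.} The delicate point is the elliptic part $\P A^e$: unlike in Proposition~\ref{prop:diff-tri-nf}, the factors $\P^\perp B^{n(1)}$ and $B^{n(1)}_0$ lie only in $Y^1$, not in $S^1$, so one must carefully exploit the divergence-free structure of the remaining factor, the explicit $\bfQ$-type null form, and the precise scaling of $Y^1$ (in particular its $L^2\dot H^{\frac12}$-with-$\partial_t$ and $L^{p_0}\dot W^{\cdot,p_0'}$ components) to put $\Box\P A^e$ into a space strong enough to feed into Proposition~\ref{prop:diff-bi} without losing the frequency-envelope structure. Closely tied to this is the bookkeeping of envelopes through the reiteration --- keeping track of the $c^{[\dltb]}$-type modifications, the $\sum_{k'<k-\kpp}$ summation, and $\dlte$-admissibility --- so that the final output genuinely has the stated envelope $f_k$.
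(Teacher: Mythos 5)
Your plan takes a genuinely different — and considerably longer — route than the paper, and it contains one imprecision worth flagging.

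The paper's proof avoids the trilinear secondary null structure entirely. It extends $B$, $B^{(1)}$, $B^{(2)}$ by homogeneous waves outside $I$ and $B^{(1)}_0$, $\P^\perp B^{(1)}$ by zero (denoting the latter $G^{(1)}$), so that globally
\begin{equation*}
  \Box \P A = \P\bigl([\P_\ell B^{(1)}, \chi_I \rd^\ell B^{(2)}] + [B^{(1)}_0, \chi_I \rd^0 B^{(2)}] + [G^{(1)}_\ell, \chi_I \rd^\ell B^{(2)}]\bigr).
\end{equation*}
Each term on the right is placed in $\Box Z^1_{cd}$ by Lemma~\ref{lem:Z-prelim}, using the $\uS^1$ bound for the $\P_\ell B^{(1)}$ factor and the $Y^1$ bounds for the other two; this yields $\nrm{\P A}_{Z^1_{cd}} \aleq 1$ directly. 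Combined with the hypothesis $\nrm{\P A}_{S^1_a} \leq 1$, the paradifferential term is then handled by the bilinear estimates \eqref{eq:nf*-1-H*} (for $1-\calH^*$) and \eqref{eq:nf*-H*} (for $\calH^*$, which is where $Z^1$ enters), together with \eqref{eq:qf-L2L2} for the $L^2\dot H^{-1/2}$-component. Neither Proposition~\ref{prop:diff-tri-nf} nor any auxiliary $A^n_0$ appears. The key observation is that the specific source structure $\P_j[B^{(1)}_\alp, \rd^\alp B^{(2)}_j]$ — with $\rd^\alp$ contracted against the $B^{(1)}$ component — makes $\Box\P A$ a bilinear null form of exactly the $\calN(\abs{D}^{-1}\cdot,\cdot)$ type controlled by \eqref{eq:nf*-Z}, so $Z^1$-control of $\P A$ is obtained without any reiteration. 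This is precisely why the paper states Proposition~\ref{prop:diff-tri} separately from Proposition~\ref{prop:diff-tri-nf}: the secondary null structure is unnecessary here.

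Two concrete concerns with your route. First, the claim that the corrections generated by "commuting derivatives to align index positions" are "divergence/elliptic" is not accurate: rewriting $[\P_\ell B^{(1)}, \rd^\ell B^{(2)}_j]$ in terms of $[B^{(1)\ell}, \rd_j B^{(2)}_\ell]$ produces, besides the genuinely elliptic $\P^\perp B^{(1)}$ contribution, a curl term $[\P_\ell B^{(1)}, \rd^\ell B^{(2)}_j - \rd_j B^{(2)\ell}]$. This is a $\bfN_{\ell j}$ null form — perturbative, yes, but not elliptic and not absorbable into what you call $\P A^e$; it must be tracked as a separate category (e.g.\ via \eqref{eq:nf-Z}). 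Second, the detour through Proposition~\ref{prop:diff-tri-nf} followed by subtracting $\Diff^\kpp_{A^n_0}$ requires $A^n_0 \in L^1 L^\infty \cap L^2 \dot H^{3/2}$; this does hold (the $L^1L^\infty$ piece comes from the second inequality in Lemma~\ref{lem:Z-prelim}, not from $Y^1$ alone), but establishing it is exactly the kind of bilinear null-form work that the paper's direct route does once and for all in Lemma~\ref{lem:Z-prelim}, applied to the actual equation satisfied by $\P A$. So your plan is workable in principle, but is essentially proving the harder Proposition~\ref{prop:diff-tri-nf} as a subroutine of the easier Proposition~\ref{prop:diff-tri}, which is backwards.
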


Next, we state a trilinear estimate for $\Diff_{\P A}^{\kpp}$ in the
presence of $\bfw^{2}_{\mu}$, which is analogous to
Proposition~\ref{prop:diff-tri-nf}. This is needed for analyzing the
dynamic Yang--Mills heat flow of a caloric Yang--Mills wave.
\begin{proposition} \label{prop:diff-tri-nf-w} Let
  \begin{align}
    \lap A_{0} =& \bfw_{0}^{2}(B^{(1)}, B^{(2)}, s), \label{eq:diff-tri-nf-w-A0} \\
    \Box \P A = &\P \bfw_{x}^{2}(B^{(1)}, B^{(2)}, s), \quad \P
    A[t_{0}] = 0, \label{eq:diff-tri-nf-w-Ax}
  \end{align}
  where $B^{(1)} \in S^{1}[I]$, $\P^{\perp} B^{(1)} \in Y^{1}[I]$ and
  $B^{(2)} \in S^{1}[I]$.  Then we have
  \begin{equation} \label{eq:diff-tri-nf-w}
    \begin{aligned}
      \nrm{\Diff^{\kpp}_{\P A} B}_{(N \cap L^{2}
        \dot{H}^{-\frac{1}{2}})_{f}[I]} \aleq_{\tM}
      (\nrm{B^{(1)}}_{S^{1}_{c}[I]} + \nrm{\P^{\perp}
        B^{(1)}}_{Y^{1}_{c}[I]}) \nrm{B^{(2)}}_{S^{1}_{d}[I]}
      \nrm{B}_{S^{1}_{e}[I]},
    \end{aligned}
  \end{equation}
  where
  \begin{equation*}
    f_{k} = \Big( \sum_{k' < k - \kpp} \brk{s 2^{2k'}}^{-10} \brk{s^{-1} 2^{-2k'}}^{-\dltb} c_{k'} d_{k'} \Big) e_{k}.
  \end{equation*}

\end{proposition}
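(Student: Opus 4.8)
The plan is to mirror the proof of Proposition~\ref{prop:diff-tri-nf}, treating $\bfw_{\alp}^{2}$ as a heat-flow--regularized version of the quadratic nonlinearities $\calM^{2}_{0}$, $\calM^{2}$, and to exploit the fact that the symbol $\bfW(\xi, \eta, s)$ is, after dyadic localization, essentially disposable up to the two scalar weights appearing in $f_{k}$. As noted in the remark following Theorem~\ref{thm:main-eq-s}, no difference bounds are required here, which removes the Lipschitz bookkeeping present in Proposition~\ref{prop:diff-tri-nf}. First I would reduce to the case in which $B^{(1)}$, $B^{(2)}$ and $B$ are single Littlewood--Paley pieces at frequencies $2^{k_{1}}, 2^{k_{2}}, 2^{k}$, with the envelopes $c, d, e$ reconstructed at the end by the standard summation; since $\Diff^{\kpp}_{\P A}$ is paradifferential, $A_{0}$ and $\P A$ live at an output frequency $2^{k'} \aleq 2^{k - \kpp}$, and the weight $\brk{s2^{2k'}}^{-10}\brk{s^{-1}2^{-2k'}}^{-\dltb}$ is the one carried by this frequency $2^{k'}$ piece.

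The first genuinely new point is the analysis of the symbol $\bfW$. Writing $\mu = \xi + \eta$ with $\abs{\mu} \aeq 2^{k'}$, I would split according to whether $s2^{2k'} \gtrsim 1$ or $s2^{2k'} \ll 1$. In the former regime the factor $e^{-s\abs{\mu}^{2}}$ gives arbitrarily fast decay in $s2^{2k'}$, dominating the $\brk{2^{2k'}s}^{-10}$ weight, and the remaining symbol $\tfrac{1-e^{2s(\xi\cdot\eta)}}{-2\xi\cdot\eta}$ is handled by Fourier expansion as in \cite{OTYM1}. In the latter regime one Taylor-expands $1 - e^{2s(\xi\cdot\eta)} = -2s(\xi\cdot\eta) + O((s\xi\cdot\eta)^{2})$, so that $\bfW \aeq s \cdot(\text{disposable})$, yielding a gain of $(s2^{2k'})^{1} \leq (s2^{2k'})^{\dltb}$, i.e. exactly the $\brk{s^{-1}2^{-2k'}}^{-\dltb}$ factor; interpolating the two regimes produces the stated combined weight. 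In all cases the bilinear operator defined by the residual part of $\bfW$ is disposable at the relevant scales, so from this point onward one may pretend $\bfW$ is a (disposable) replacement for the Lie bracket in the inner bilinear expressions $[B^{(1)\ell}, \rd_{t}B^{(2)}_{\ell}]$ and $\P[B^{(1)\ell}, \rd_{x}B^{(2)}_{\ell}]$, noting that the first input always carries $\rd_{t}$ and the second carries $\rd_{t}$ together with a curl-type combination $\rd_{j}\rd_{t} - 2\rd_{x}\rd_{t}(\cdot)_{j}$.

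With $\bfW$ out of the way, the core of the argument is identical in spirit to Proposition~\ref{prop:diff-tri-nf}: one substitutes the wave/elliptic representations $\lap A_{0} = \bfw^{2}_{0}(B^{(1)}, B^{(2)}, s)$ and $\Box \P A = \P\bfw^{2}_{x}(B^{(1)}, B^{(2)}, s)$, $\P A[t_{0}] = 0$, into the decomposition $\Diff^{\kpp}_{\P A} = \Diff^{\kpp}_{A_{0}} + \Diff^{\kpp}_{\P_{x} A}$ and combines the two pieces. The contraction of the $\rd_{t}$ on $\Diff^{\kpp}_{A_{0}}$ against the $\rd^{\ell}$ on $\Diff^{\kpp}_{\P_{x} A}$, once the two source terms are matched, reproduces the secondary (trilinear) null structure, which is then estimated by the core trilinear null form bound (Proposition~\ref{prop:tri-nf-core}); the non-null, high-modulation portions are absorbed using Propositions~\ref{prop:diff-bi} and \ref{prop:diff-single} as in the $\bfw = 0$ case. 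The appearance of $\nrm{\P^{\perp}B^{(1)}}_{Y^{1}_{c}[I]}$ alongside $\nrm{B^{(1)}}_{S^{1}_{c}[I]}$ reflects that $\bfw^{2}_{0}$ and $\bfw^{2}_{x}$ see $B^{(1)}$ through $\rd_{t}B^{(1)}$ without a Leray projection, so the curl-free component must be controlled by the elliptic $Y^{1}$ norm, exactly as in the $\Rem^{\kpp,2}_{A}$ and $\Diff_{\P^{\perp}A}^{\kpp}$ estimates (Propositions~\ref{prop:rem2}, \ref{prop:paradiff-cf}).

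I expect the main obstacle to be the bookkeeping of the symbol $\bfW$ inside the null-form frequency decomposition: the weight $\brk{s2^{2k'}}^{-10}\brk{s^{-1}2^{-2k'}}^{-\dltb}$ must be propagated through the angular and modulation cutoffs used in the trilinear null form estimate without destroying the summability in $k'$, and one must check that the disposability of the residual symbol survives localization to caps $P^{\omg}_{\ell}$ and boxes $\calC_{k'}(\ell')$. A secondary technical point, as for the other estimates in this section, is the interval localization of the modulation projections $Q_{j}$ appearing in the $N \cap L^{2}\dot{H}^{-\frac{1}{2}}$ norm, which is handled by the procedure of Section~\ref{subsec:ests-pf}.
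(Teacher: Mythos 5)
Your overall plan — peel off the symbol $\bfW$ via Lemma~\ref{lem:w-disp}, then feed the remainder into the secondary null-structure machinery of Proposition~\ref{prop:diff-tri-nf} — is the right one, and the observation that $\nrm{\P^{\perp}B^{(1)}}_{Y^{1}}$ appears because $\rd_{t}B^{(1)}$ enters without a Leray projection is correct. But there is a genuine gap in the reduction step. You assert that once $\bfW$ is ``out of the way'' one may pretend the inner bilinears are $[B^{(1)\ell},\rd_t B^{(2)}_\ell]$ and $\P[B^{(1)\ell},\rd_x B^{(2)}_\ell]$, i.e. the same bilinears as in Proposition~\ref{prop:diff-tri-nf}. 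This is not the case: by \eqref{eq:w2-0-def}, $\bfw^2_0(A,B,s) = -2\,\bfW(\rd_t A,\lap B,s)$, so the second slot carries $\lap B$ rather than $\rd_t B$. Pairing the $\rd^0$ from $\Diff^{\kpp}_{A_0}$ against this second slot therefore does not directly produce the $\rd^\alp(\cdot)\,\rd_\alp(\cdot)$ contraction that drives the trilinear null form.

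What the paper does, and what is missing from your argument, is the algebraic identity $\lap B = \rd_t^2 B + \Box B$, which gives
\begin{equation*}
\bfw^2_0(A,B,s) = -2\,\bfW(\rd_t A,\rd_t^2 B,s) - 2\,\bfW(\rd_t A,\Box B,s).
\end{equation*}
The first piece is the one that combines with the $\P$-part of $\bfw^2_x$ to reproduce the secondary null structure of Lemma~\ref{lem:nf-tri} and is then estimated via Proposition~\ref{prop:tri-nf-core}. The second piece is {\it not} null but is perturbative because $\Box B^{(2)}$ is controlled in $L^2 \dot H^{-1/2}$; it is estimated directly by H\"older and Bernstein. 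Similarly, $\bfw^2_x$ is not a single term: after the Leray split of $\rd_t B^{(1)}$ as in \eqref{eq:w2-x-decompose}, one obtains three bilinears of differing strength, of which only one contributes to the trilinear null form; the other two (a $\bfP\times$null-form term and a $\P^\perp$ term) are estimated bilinearly via $Z^1$-norm bounds, \eqref{eq:nf*-1-H*} and \eqref{eq:nf*-H*-lomod}, exactly as $\bfA_0^3$-type terms are handled in Proposition~\ref{prop:diff-bi}. Without this five-fold decomposition (the paper's $\calI_{(1)},\ldots,\calI_{(5)}$), the matching of the two paradifferential source terms cannot produce the required cancellation, and the reduction to Proposition~\ref{prop:tri-nf-core} does not go through.

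A secondary, minor point: your attempt to rederive the disposability of $\bfW$ by Taylor expansion in the $s2^{2k'}\ll 1$ regime is both unnecessary (Lemma~\ref{lem:w-disp} already provides the precise statement) and imprecise — the Taylor expansion gives a gain $s\,\xi\cdot\eta \aeq s\,2^{2k_{\max}}$, not $s\,2^{2k'}$, and it is the interplay between the output frequency $2^{k'}$ and the max input frequency $2^{k_{\max}}$ encoded in Lemma~\ref{lem:w-disp} that yields the stated weight in $f_k$. It would be cleaner and safer to cite that lemma directly.
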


Finally, we end this subsection with auxiliary estimates for
$\Diff^{\kpp}_{\P A}$, which are needed to justify approximate linear
energy conservation for the paradifferential wave equation.
\begin{proposition} \label{prop:diff-aux} Let $\kpp \geq 10$. We have
  \begin{align}
    \nrm{\abs{D}^{-1} [\nb, \Diff^{\kpp}_{\P A}] B}_{N_{cd}} \aleq & \
    2^{-\dltb \kpp} (\nrm{\P A_x}_{S^{1}_{c}[I]} + \nrm{D A_{0}}_{L^2 \dot H^\frac12_{c}[I]}) \nrm{B}_{S^{1}_{d}[I]}
    . \label{eq:diff-aux-d}
  \end{align}
  Moreover, consider the $L^{2}$-adjoint of $\Diff^{\kpp}_{\P A}$,
  which is given by
  \begin{equation*}
    (\Diff^{\kpp}_{\P A})^{\ast} B = \sum_{k} P_{k} \rd^{\alp} [\P_{\alp} A_{<k - \kpp}, B].
  \end{equation*}
  Then we have
  \begin{align}
    \nrm{ (\Diff^{\kpp}_{\P A})^{\ast} B - \Diff^{\kpp}_{\P A}
      B}_{N_{cd}[I]} \aleq & \ 2^{-\dltb \kpp} (\nrm{\P A_x}_{S^{1}_{c}[I]} + \nrm{D A_{0}}_{L^2 \dot H^\frac12_{c}[I]}) \nrm{B}_{S^{1}_{d}[I]} 
  \end{align}
\end{proposition}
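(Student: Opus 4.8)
The plan is to reduce both estimates to elementary algebraic identities that transfer a derivative onto the low-frequency coefficient, thereby producing the frequency-gap smallness $2^{-\kpp}$, and then invoke bilinear estimates that are already available.

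For the first bound, note that $P_k$ is a spatial Fourier multiplier and hence commutes with $\nb = \rd_{t,x}$; thus the part of $\nb(\Diff^\kpp_{\P A}B)$ in which $\nb$ falls on $\rd^\alp P_k B$ reproduces $\Diff^\kpp_{\P A}(\nb B)$ and cancels, leaving
\begin{equation*}
[\nb, \Diff^\kpp_{\P A}] B = \sum_k 2\,[\nb(P_{<k-\kpp}\P_\alp A), \rd^\alp P_k B].
\end{equation*}
Each summand is supported at spatial frequency $\aeq 2^k$, so $\abs{D}^{-1}$ is a bounded multiplier of size $\aeq 2^{-k}$; meanwhile $\nb$ applied to the frequency-$2^{k'}$ piece of $\P_\alp A$ (with $k' < k - \kpp$) contributes at most the factor $2^{k'}$ --- using a Bernstein bound for $\rd_t A_0$ together with the hypothesis $DA_0\in L^2\dot{H}^{\frac12}$ to handle the time derivative of the temporal component, and the plain gain $2^{k'}$ for the remaining derivatives. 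The resulting net factor $2^{k'-k}\leq 2^{-\kpp}$, summed as $\sum_{k'<k-\kpp}2^{k'-k}c_{k'}\aleq 2^{-(1-\dlte)\kpp}c_k$ against the $\dlte$-admissible envelope, leaves the geometric decay $2^{-(1-\dlte)\kpp}\leq 2^{-\dltb\kpp}$.

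For the adjoint defect, commuting $\rd^\alp$ past the self-adjoint projection $P_k$ and into the bracket gives
\begin{equation*}
\begin{aligned}
(\Diff^\kpp_{\P A})^{\ast} B - \Diff^\kpp_{\P A}B
= {}& \sum_k 2\,P_k[\rd^\alp \P_\alp A_{<k-\kpp}, B] \\
& + \sum_k 2\,\big( P_k[\P_\alp A_{<k-\kpp}, \rd^\alp B] - [\P_\alp A_{<k-\kpp}, \rd^\alp P_k B]\big).
\end{aligned}
\end{equation*}
Since $\P_j A$ is divergence free, $\rd^\alp\P_\alp A = -\rd_t A_0$, so the first sum equals $-\sum_k 2\,P_k[\rd_t A_{0,<k-\kpp}, B]$; here the gain comes from the positive Sobolev room of $L^2\dot{H}^{\frac12}$, which (via Bernstein) forces the low-frequency truncation $\rd_t A_{0,<k-\kpp}$ to be dominated by its top frequency $\aeq 2^{k-\kpp}$ and hence small, and this smallness passes to the $N\cap L^2\dot{H}^{-\frac12}$-bound of the product against $B\in S^1_d[I]$. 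The second sum is a sum of standard Littlewood--Paley commutators $[P_k, \P_\alp A_{<k-\kpp}](\rd^\alp B)$ (plus the easy off-diagonal tails with $|k'-k|=O(1)$, $k'\ne k$); expanding the commutator kernel and using $|\P_\alp A_{<k-\kpp}(x)-\P_\alp A_{<k-\kpp}(y)|\aleq 2^{k-\kpp}|x-y|\,\sup|\P_\alp A_{<k-\kpp}|$ against $\int |x-y|\,|K_{P_k}(x-y)|\,dy\aeq 2^{-k}$ again produces the factor $2^{-\kpp}$.

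In all cases, once the $2^{-\kpp}$ (or $2^{-(\frac32-\dlte)\kpp}$) gain is extracted, the remaining expression is bilinear of High$\times$Low$\to$High type, with the low-frequency input a component of $\P A_x$ or $A_0$ accessed through Strichartz-type norms and the high-frequency input $B\in S^1_d[I]$; the required $N\cap L^2\dot{H}^{-\frac12}$ bound then follows from (weaker versions of) the bilinear estimates already proved in Propositions~\ref{prop:Box-A-L2}, \ref{prop:diff-bi} and \ref{prop:paradiff-df-himod}, and the sum over $k$ closes against the envelopes $c,d$. As always in this paper, the one genuinely delicate point is the interval localization: the $X^{0,-\frac12}_1$ component of $N$ involves modulation projections nonlocal in time, so the sharp cutoff $\chi_I$ must be inserted and commuted past these projections while preserving the $2^{-\dltb\kpp}$ gain --- this is the main technical obstacle and is handled exactly as in Section~\ref{sec:multi}; a secondary point requiring care is the temporal-divergence term $[\rd_t A_0, B]$, which, being of mixed (not pure High$\times$Low) type, must be treated via the Bernstein argument above together with the $DA_0\in L^2\dot{H}^{\frac12}$ hypothesis rather than by a direct paraproduct estimate.
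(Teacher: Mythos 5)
There is a genuine gap. You correctly extract the frequency-gap factor $2^{k'-k}\leq 2^{-\kpp}$ (for the $[\nb,\Diff]$ commutator) and, for the adjoint defect, the decomposition via $\rd^\alp\P_\alp A=-\rd_t A_0$ together with the Littlewood--Paley commutator is a workable reorganization that matches the paper in spirit. But after extracting that factor you are left with the dyadic bilinear expression $\calO(P_{k'}\P_\alp A,\rd^\alp P_k B)$ ($k'<k-\kpp$), and you never explain why this lands in $N$. For the \emph{spatial} part $\calO(P_{k'}\P_\ell A,\rd^\ell P_k B)$ this is precisely the nonperturbative $\Diff$-type interaction that forced the parametrix construction in the first place: with only $\P A_x\in S^1$ and $B\in S^1$ there is no Strichartz pairing that closes globally in time (you cannot put $\rd^\ell P_k B$ in $L^p_tL^2_x$ for any $p<\infty$, nor $P_{k'}\P A_x$ in $L^1_tL^\infty_x$, from $S^1$ alone). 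The paper's proof is short exactly because it invokes the null structure $\calO(\P^\ell A,\rd_\ell u)=\calN(\abs{D}^{-1}\P A,u)$ from Lemma~\ref{lem:nf-bi} (\eqref{eq:nf-N*}) and then the core null-form bound \eqref{eq:nf-core}; that is the step that produces the missing $N$ estimate with the needed off-diagonal decay. Your plan omits this entirely, and the propositions you do cite do not supply it: Proposition~\ref{prop:Box-A-L2} only gives $L^2\dot H^{-\frac12}$ bounds, Proposition~\ref{prop:paradiff-df-himod} only gives $X^{\sigma,b}$-type and $\uX$-type bounds, and Proposition~\ref{prop:diff-bi} requires $\Box\P A\in L^1 L^2$ (resp.\ $A_0\in L^1 L^\infty\cap L^2\dot H^{3/2}$), which is not part of the hypotheses here.

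The same issue infects your Littlewood--Paley commutator term for the adjoint defect: the mean-value/kernel argument does produce the $2^{-\kpp}$ factor, but the residual expression $\aleq A_{<k-\kpp}\,\rd^\alp P_k B$ is again $\Diff$-like and must be bounded in $N$ through the null form (spatial part) and \eqref{eq:qf-rem-ell} (temporal part). Conversely, your emphasis on interval localization of $N$ as ``the main technical obstacle'' overstates it: the paper handles this in one line by extending $\P A_x$, $B$ by free waves and $A_0$ by zero, and working globally on $\bbR^{1+4}$. To repair your argument, keep your algebraic reductions (which are fine), but then identify the spatial part as a null form via \eqref{eq:nf-N*} and apply \eqref{eq:nf-core}, and estimate the $A_0$ part via \eqref{eq:qf-rem-ell}; the $2^{-\dlta(k-k')}$ off-diagonal gain built into both bounds, combined with your $2^{k'-k}$ factor, gives the stated $2^{-\dltb\kpp}$.
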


\section{Structure of caloric Yang--Mills waves} \label{sec:structure}
In this section, we use the results stated in Section~\ref{sec:ests} to study properties of subthreshold caloric Yang--Mills waves satisfying an a-priori $S^{1}$-norm bound on an interval. 

\subsection{Structure of a caloric Yang--Mills wave with finite
 \texorpdfstring{ $S^{1}$}{S1}-norm} \label{subsec:structure} The following theorem
provides detailed properties of a caloric Yang--Mills wave with finite
$S^{1}$-norm. It will be useful for the proof of the key regularity
result (Theorem~\ref{thm:ed}), as well as the main results stated in
Section~\ref{subsec:results}.

For a regular solution to the Yang--Mills equation in the caloric
gauge, we have seen in Theorem~\ref{thm:main-eq} that
\eqref{eq:main-wave}, \eqref{eq:main-compat}, \eqref{eq:main-A0} and
\eqref{eq:main-DA0} are satisfied. More generally, we say that a
one-parameter family $A(t)$ $(t \in I)$ of connections in $\calC$
(which is quite rough in general) solves the Yang--Mills equation in
the caloric gauge, or in short that $A$ is a \emph{caloric Yang--Mills
  wave}, if $(A, \rd_{t} A) \in L^{\infty}(I; T^{L^{2}} \calC)$ and
satisfies \eqref{eq:main-wave}, \eqref{eq:main-compat},
\eqref{eq:main-A0} and \eqref{eq:main-DA0}.

\begin{theorem} \label{thm:structure}
  \newcounter{c-structure} 
  Let $A$ be a caloric Yang--Mills wave on a time interval $I$ with energy $\nE$ obeying
  \begin{equation} \label{eq:cal-bnd} A(t) \in \calC_{\hM} \quad
    \hbox{ for all } t \in I,
  \end{equation}
  \begin{equation} \label{eq:S-bnd} \nrm{A}_{S^{1}[I]} \leq M
  \end{equation}
  for some $0 < \hM, M < \infty$. Let $c$ be a $\dltf$-frequency
   envelope for the initial data $(A, \rd_{t} A)(t_{0})$
  $(t_{0} \in I)$ in $\dot{H}^{1} \times L^{2}$.  Then the following
  properties hold:
  \begin{enumerate}
  \item (Linear well-posedness for $\Box_{A}$) The initial value
    problem for the linear equation
    \begin{equation} \label{eq:Box-A-u=f} \Box_{A} u = f
    \end{equation}
    is well-posed. Moreover,
    \begin{equation} \label{eq:Box-A-wp-est} \nrm{u}_{S^{1}_{d}[I]}
      \aleq_{M, \hM} \nrm{(u, \rd_{t} u)(t_{0})}_{(\dot{H}^{1} \times
        L^{2})_{d}} + \nrm{f}_{(N \cap L^{2}
        \dot{H}^{-\frac{1}{2}})_{d}[I]}
    \end{equation}
    for any $\dltf$-frequency envelope $d$.

  \item (Frequency envelope bound)
    \begin{equation} \label{eq:fe-S-bnd} \nrm{A}_{S^{1}_{c}[I]} +
      \nrm{\Box_{A} A}_{(N \cap L^{2}
        \dot{H}^{-\frac{1}{2}})_{c^{2}}[I]} \aleq_{M, \hM} 1.
    \end{equation}

  \item (Elliptic component bounds)
    \begin{align}
      \nrm{A_{0}}_{Y^{1}_{c^{2}}[I]} + \nrm{\P^{\perp}
        A}_{Y^{1}_{c^{2}}[I]} \aleq_{M, \hM} & 1, \label{eq:fe-Y-bnd}
    \end{align}

  \item (High modulation bounds)
    \begin{equation} \label{eq:fe-X-bnd} \nrm{\Box A}_{\Box
        \uX^{1}_{c^{2}}[I]} + \nrm{\Box A}_{X^{-\frac{1}{2} + b_{1},
          -b_{1}}_{c^{2}}[I]} \aleq_{M, \hM} 1.
    \end{equation}

  \item (Paradifferential formulation) For any $\kpp \geq 10$,
    \begin{equation}\label{eq:para-f}
      \nrm{\Box A + \Diff_{\P A}^{\kpp} A}_{(N \cap L^{2} \dot{H}^{-\frac{1}{2}})_{c^{2}}[I]} \aleq_{M, \hM} 2^{C \kpp}.
    \end{equation}

  \item (Weak divisibility) There exists a partition $I = \cup_{i \in
      \calI} I_{i}$ so that $\# \calI \aleq_{M, \hM} 1$ and
    \begin{equation} \label{eq:weak-div} \nrm{A}_{S^{1}[I_{i}]}
      \aleq_{\nE} 1.
    \end{equation}

  \item (Persistence of regularity) If $(A, \rd_{t} A)(t_{0}) \in
    \dot{H}^{N} \times \dot{H}^{N-1}$ $(N \geq 1)$, then $A \in S^{N}
    \cap S^{1}[I]$ and $A_{0} \in Y^{N} \cap Y^{1}[I]$. Moreover,
    \begin{equation} \label{eq:high-reg} \nrm{A}_{S^{N} \cap S^{1}[I]}
      + \nrm{A_{0}}_{Y^{N} \cap Y^{1}[I]} \aleq_{M, \hM, N} \nrm{(A,
        \rd_{t} A)(t_{0})}_{(\dot{H}^{N} \times \dot{H}^{N-1}) \cap
        (\dot{H}^{1} \times L^{2})}.
    \end{equation}
    \setcounter{c-structure}{\value{enumi}}
  \end{enumerate}
  For the subsequent properties, let $\tilde{A}$ be another caloric
  Yang--Mills wave on $I$ obeying the same conditions
  \eqref{eq:cal-bnd} and \eqref{eq:S-bnd}.
  \begin{enumerate}
    \setcounter{enumi}{\value{c-structure}}
  \item (Weak Lipschitz dependence on data) For $\sgm < 1$
    sufficiently close to $1$, we have
    \begin{equation} \label{eq:weak-lip} \nrm{A -
        \tilde{A}}_{S^{\sgm}[I]} \aleq_{M, \hM} \nrm{(A - \tilde{A},
        \rd_{t} (A - \rd_{t} \tilde{A})) (t_{0})}_{\dot{H}^{\sgm}
        \times \dot{H}^{\sgm-1}} .
    \end{equation}

  \item (Elliptic component bound for the transport equation)
    \begin{equation} \label{eq:fe-transport-bnd}
      \nrm{A_{0}}_{(\abs{D}^{-2} L^{2}_{x} L_{t}^{1})_{c^{2}}[I]}
      \aleq_{M, \hM} 1.
    \end{equation}
    Moreover, if $d_{k}$ is a $\dltf$-frequency envelope for $A
    - \tilde{A}$ in $S^{1}[I]$, then
    \begin{equation} \label{eq:fe-tranposrt-bnd-diff} \nrm{A_{0} -
        \tA_{0}}_{(\abs{D}^{-2} L^{2}_{x} L_{t}^{1})_{c e}[I]}
      \aleq_{M, \hM} 1,
    \end{equation}
    where $e_{k} = c_{k} + c_{k} (c \cdot d)_{\leq k}$.
  \end{enumerate}
\end{theorem}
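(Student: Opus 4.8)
The plan is to extract every item of the theorem from the structural identities \eqref{eq:main-wave}--\eqref{eq:main-DA0} together with the a priori bound \eqref{eq:S-bnd} and the function-space estimates of Section~\ref{sec:ests}, in roughly the order the items are listed. The organizing idea is that with $\kpp = \kpp_{1}(M)$ fixed as in Theorem~\ref{thm:paradiff}, the operator $\Box + \Diff_{\P A}^{\kpp}$ is the correct ``linear part'' of the equation for $A$: the paradifferential term $\Diff_{\P A}^{\kpp} A$ must be kept on the left, while everything else is treated perturbatively.

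For item~(1) I would rewrite $\Box_{A} u = f$ as $\Box u + \Diff_{\P A}^{\kpp} u = f - \Diff_{\P^{\perp}A}^{\kpp} u - \Rem_{A}^{\kpp,2} u - \Rem^{3}(A) u$, bound the corrections in $N \cap L^{2}\dot{H}^{-\frac12}[I]$ via Propositions~\ref{prop:paradiff-cf}, \ref{prop:rem2}, \ref{prop:rem-3} (after controlling $A_{0}, \P^{\perp}A$ in $Y^{1}[I]$ through \eqref{eq:main-A0}, \eqref{eq:main-compat} and Propositions~\ref{prop:ell-bi}, \ref{prop:Q-bi}, \eqref{eq:A0-tri}, \eqref{eq:DA-tri}), each with a smallness gain, and run a fixed point on top of Theorem~\ref{thm:paradiff} to get \eqref{eq:Box-A-wp-est}. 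For item~(2) one specializes $u = A$: first partition $I$ into finitely many subintervals on which $\nrm{A}_{DS^{1}[\cdot]}$ is small (possible since $A \in S^{1}[I]$), on each use the small/large splittings of Propositions~\ref{prop:Ax-bi}, \ref{prop:ell-bi}, \ref{prop:rem2} (the divisible factor chosen small after $\kpp$) to place $\Box_{A}A + \Diff_{\P A}^{\kpp}A$ in $(N \cap L^{2}\dot{H}^{-\frac12})_{c^{2}}$, feed this into Theorem~\ref{thm:paradiff} for $\nrm{A}_{S^{1}_{c}[\cdot]} \aleq_{M,\hM} 1$, and concatenate. Items~(3)--(5) are then read off: \eqref{eq:fe-Y-bnd} from \eqref{eq:main-A0}, \eqref{eq:main-compat} and Propositions~\ref{prop:ell-bi}, \ref{prop:Q-bi}, \eqref{eq:A0-tri}, \eqref{eq:DA-tri}; \eqref{eq:fe-X-bnd} from the $\Box\uX^{1}$ and $X^{-\frac12+b_{1},-b_{1}}$ parts of Propositions~\ref{prop:Ax-bi}, \ref{prop:paradiff-df-himod}, \ref{prop:paradiff-cf}, \ref{prop:rem2}; and \eqref{eq:para-f} by moving $\Diff_{\P A}^{\kpp}A$ across. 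Item~(6) is Proposition~\ref{prop:paradiff-weakdiv} with $B = A$, $G = \Box_{A}A + \Diff_{\P A}^{\kpp}A$; item~(7) is the same scheme run with a higher-regularity envelope, all inputs and Theorem~\ref{thm:paradiff} being envelope-wise; and items~(8) and the difference part of~(9) use the Lipschitz-envelope-preserving clauses of Propositions~\ref{prop:rem-3}, \ref{prop:Ax-bi} etc.\ for the bilinear terms and the difference estimates Propositions~\ref{prop:diff-bi}--\ref{prop:diff-tri-nf-w} for $\Diff_{\P A}^{\kpp}$, which is where the secondary null structure enters.

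For item~(9) in detail: \eqref{eq:main-A0} gives $A_{0} = \bfA_{0}^{2}(A,A) + \bfA_{0}^{3}(A)$. For the quadratic part, Proposition~\ref{prop:A02} places $\abs{D}^{2}\bfA_{0}^{2}(A,A)$ in $(L^{2}_{x}L^{1}_{t})_{c^{2}}[I]$ controlled by $\nrm{\nb A}_{S^{sq}_{c}}^{2}$; since $S^{sq}_{k}$ is one of the intersectands in $S_{k}$ we have $\nrm{\nb A}_{S^{sq}_{c}} \leq \nrm{\nb A}_{S_{c}} \aleq \nrm{A}_{S^{1}_{c}[I]} \aleq_{M,\hM} 1$ by item~(2), hence $\nrm{\bfA_{0}^{2}(A,A)}_{(\abs{D}^{-2}L^{2}_{x}L^{1}_{t})_{c^{2}}[I]} \aleq_{M,\hM} 1$. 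For the cubic-and-higher part, \eqref{eq:A0-tri} places $\bfA_{0}^{3}(A)$ in $L^{1}\dot{H}^{2} \cap L^{2}\dot{H}^{\frac32}[I]$ envelope-wise (using $S^{1}[I] \hookrightarrow \Str^{1}[I]$), and Minkowski's integral inequality gives the embedding $L^{1}_{t}\dot{H}^{2}_{x} \hookrightarrow \abs{D}^{-2}L^{2}_{x}L^{1}_{t}$; since $\bfA_{0}^{3}$ is envelope-preserving of order $\geq 3$ its output sits inside the $c^{2}$ envelope. The difference bound follows by expanding bilinearly, $\bfA_{0}^{2}(A,A) - \bfA_{0}^{2}(\tA,\tA) = \bfA_{0}^{2}(A-\tA, A) + \bfA_{0}^{2}(\tA, A-\tA)$, applying Proposition~\ref{prop:A02} to each piece, plus the Lipschitz-envelope-preserving clause of \eqref{eq:A0-tri} for $\bfA_{0}^{3}(A) - \bfA_{0}^{3}(\tA)$; the modified difference envelope $e$ arises solely from this cubic term.

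The main obstacle is the circular coupling between $A$, its elliptic companions $A_{0}, \P^{\perp}A$, and the operator $\Box + \Diff_{\P A}^{\kpp}$, which cannot be regarded as a perturbation of $\Box$. This dictates the order above: the a priori $S^{1}$ bound must first be upgraded to a frequency-envelope bound through Theorem~\ref{thm:paradiff} (itself resting on the parametrix construction of Sections~\ref{sec:paradiff}--\ref{sec:paradiff-err}) before any finer conclusion can be drawn. Secondary technical points are the interval localization of the high-modulation spaces $\uX^{1}, \mX^{1}$, where the sharp cutoff $\chi_{I}$ must be absorbed, and the bookkeeping that collapses the various modified envelopes (such as $(c^{[\dltb]})^{2}c$ and $e_{k} = d_{k} + c_{k}(c\cdot d)_{\leq k}$) into the forms displayed in the theorem.
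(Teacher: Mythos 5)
Your proposal follows the paper's approach essentially unchanged: rewrite $\Box_A$ around the paradifferential operator $\Box + \Diff^{\kpp}_{\P A}$, invoke Theorem~\ref{thm:paradiff} on subintervals after choosing $\kpp$ large and subdividing $I$ so the divisible norms of $A$ and its elliptic companions become small, reiterate with frequency envelopes for items~(2) and~(7), and use the secondary null structure through Propositions~\ref{prop:diff-tri-nf}--\ref{prop:diff-tri} for the difference estimate~(8). The only imprecision is in your item~(1), where you describe each correction term as carrying an intrinsic smallness gain: in fact $\Rem^{\kpp,2}_A u$ and $\Diff^{\kpp}_{\P^{\perp}A} u$ become small only after the interval subdivision you perform explicitly for item~(2) --- the paper isolates exactly this split between the $2^{-\dltb\kpp}$-small part and the $2^{C\kpp}$-times-divisible part as Lemma~\ref{l:Rkappa}, and that lemma is what makes the perturbation argument close.
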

\begin{remark} \label{rem:cont} The frequency envelope bound
  \eqref{eq:fe-S-bnd} implies a uniform-in-time positive lower bound
  on the energy concentration scale $\rc$; see Lemma~\ref{lem:ec-fe}
  below. As a consequence, once Theorem~\ref{thm:wp} is proved,
  finiteness of the $S^{1}$-norm would imply that solution can be
  continued past finite endpoints of $I$ (We note, however, that
  Theorem~\ref{thm:structure} will be used in the proof of
  Theorem~\ref{thm:wp}).
\end{remark}
\begin{remark} \label{rem:scatter} Combination of (1), (2) and
  divisibility of the norm $N \cap L^{2} \dot{H}^{-\frac{1}{2}}[I]$
  (cf. Proposition~\ref{prop:int-loc}) show that a finite $S^{1}$-norm
  Yang--Mills wave on $I$ exhibits some modified scattering behavior,
  i.e., that each $A_{j}$ tends to a homogeneous solution to the
  equation $\Box_{A} u = 0$ towards infinite endpoints of $I$.
\end{remark}
We start by establishing some weaker derived bounds.
\begin{proposition} \label{prop:uS-bnd} Let $A$ be a caloric
  Yang--Mills wave on a time interval $I$, which obeys $A(t) \in
  \calC_{\hM}$ for all $t \in I$ and $\nrm{A}_{S^{1}[I]} \leq M$. Let
  $c$ be a $C\dltf$-frequency envelope for $A$ in $S^{1}[I]$,
  i.e., $\nrm{A}_{S^{1}_{c}[I]} \leq 1$.
  \begin{enumerate}
  \item The following derived bounds for $A_{t,x}$ hold:
    \begin{align}
      \nrm{A_{0}}_{Y^{1}_{c^{2}}[I]}
      + \nrm{\P^{\perp} A}_{Y^{1}_{c^{2}}[I]} \aleq_{M, \hM} & 1, \label{eq:fe-Y-bnd-weak} \\
      \nrm{\Box A}_{\Box \uX^{1}_{c^{2}}[I]} + \nrm{\Box
        A}_{X^{-\frac{1}{2} + b_{1}, -b_{1}}_{c^{2}}[I]} \aleq_{M,
        \hM} & 1. \label{eq:fe-X-bnd-weak}
    \end{align}
  \item Let $\tA$ be another caloric Yang--Mills wave on $I$ that also
    obeys $\nrm{\tA}_{S^{1}[I]} \leq M$. Let $d$ be a $\dltf$-frequency
    envelope for the difference $A - \tA$ in $S^{1}[I]$,
    i.e., $\nrm{A - \tA}_{S^{1}_{d}[I]} \leq 1$. Then we have
    \begin{align}
      \nrm{A_{0} - \tA_{0}}_{Y^{1}_{e}[I]}
      + \nrm{\P^{\perp} A - \P^{\perp} \tA}_{Y^{1}_{e}[I]} \aleq_{M, \hM} & 1, \label{eq:fe-Y-bnd-diff} \\
      \nrm{\Box (A - \tA)}_{\Box \uX^{1}_{e}[I]} + \nrm{\Box (A -
        \tA)}_{X^{-\frac{1}{2} + b_{1}, -b_{1}}_{e}[I]} \aleq_{M, \hM}
      & 1, \label{eq:fe-X-bnd-diff}
    \end{align}
    where $e_{k} = d_{k} + c_{k} (c \cdot d)_{\leq k}$.
  \end{enumerate}
\end{proposition}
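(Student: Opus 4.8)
\emph{Strategy.} The plan is to insert the explicit expansions of Theorem~\ref{thm:main-eq} for $A_0$, $\rd_t A_0$, $\rd^\ell A_\ell$ and $\Box A_j$ into the quantities being estimated, and then control each resulting term by the multilinear estimates of Section~\ref{subsec:ests-2} (Propositions~\ref{prop:ell-bi}, \ref{prop:Q-bi}, \ref{prop:Ax-bi}, \ref{prop:Box-A-L2}, \ref{prop:rem2}, \ref{prop:paradiff-df-himod}, \ref{prop:paradiff-cf}) together with the envelope-preserving and Lipschitz envelope-preserving properties recorded in Theorem~\ref{thm:main-eq} and Proposition~\ref{prop:rem-3}. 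Throughout I normalize $\nrm{A}_{S^1_c[I]} \le 1$ and use freely that $\nrm{A}_{DS^1_c[I]} + \sup_k \nrm{P_k A}_{\Str^1_c[I]} \aleq 1$, that a $C\dltf$-admissible envelope is both $\dlta$- and $\dltb$-admissible (by the hierarchy \eqref{eq:deltas}), and the embeddings collected in Section~\ref{subsec:ftn-sp}, in particular $P_k(L^1 L^2 \cap L^2 \dot{H}^{-\frac12}) \subseteq (\Box \uX^1)_k \subseteq (\Box \mX^1)_k$. A key structural point is that there is no circularity: the elliptic bounds \eqref{eq:fe-Y-bnd-weak} are derived from $S^1$-control of $A$ alone, and the wave bounds \eqref{eq:fe-X-bnd-weak} then use \eqref{eq:fe-Y-bnd-weak} as an input.

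\emph{Elliptic bounds \eqref{eq:fe-Y-bnd-weak}.} By \eqref{eq:main-A0}, $A_0 = \lap^{-1} \calM^{2}_{0}(A,A) + 2 \lap^{-1} \bfQ(A, \rd_t A) + \bfA_0^3(A)$; I would control the $Y$-component of $\nrm{A_0}_{Y^1_{c^2}}$ by \eqref{eq:ell-bi}, \eqref{eq:Q} (with $\sgm = 1$) and \eqref{eq:A0-tri} (using $L^1 \dot{H}^2 \cap L^2 \dot{H}^{3/2}[I] \subseteq Y^1[I]$ and that $\bfA_0^3$ is envelope-preserving of order $\ge 3$), and the $\rd_t A_0$-component directly from \eqref{eq:main-DA0} by \eqref{eq:ell-bi-L2L2} and \eqref{eq:DA0-tri}, which place $\rd_t A_0 \in L^2 \dot{H}^{1/2}$. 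For $\P^\perp A = \lap^{-1} \rd_x \rd^\ell A_\ell = \lap^{-1} \rd_x (\bfQ(A,A) + \DA^3(A))$ I would use \eqref{eq:Q} ($\sgm = 0$) and \eqref{eq:DA-tri} for the $Y$-component, and \eqref{eq:Q-L2L2} together with the $t$-derivative of the compatibility relation \eqref{eq:main-compat} (reducing, by bilinearity of $\bfQ$ and the trilinear estimate behind \eqref{eq:DA-tri}, to expressions with one factor carrying $\rd_t$) for the $\rd_t$-component. The envelope $c^2$ is produced by the bilinear estimates with $c = d$ and by the order-$\ge 3$ property of the cubic maps.

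\emph{Wave bounds \eqref{eq:fe-X-bnd-weak}.} Expanding $\Box_A = \Box + 2[A_\alp, \rd^\alp \cdot] + [\rd^\alp A_\alp, \cdot] + [A^\alp, [A_\alp, \cdot]]$ in \eqref{eq:main-wave} and eliminating $A_0 = \bfA_0(A)$, $\rd_t A_0 = \DA_0(A)$, $\rd^\ell A_\ell = \DA(A)$ via Theorem~\ref{thm:main-eq}, one obtains
\[
\Box A_j = \P_j \calM^2(A,A) + \P^{\perp}_{j} \calM^2(A,A) - 2[A_\alp, \rd^\alp A_j] + R_j(A) - \Rem^3(A) A_j,
\]
with $\Rem^3(\cdot)$ the map \eqref{eq:rem3-map-def}; I would then split $2[A_\alp, \rd^\alp A_j] = \Diff^{\kpp}_{A_{0}} A_j + \Diff^{\kpp}_{\P_{x} A} A_j + \Diff^{\kpp}_{\P^{\perp} A} A_j + \Rem_{A}^{\kpp, 2} A_j$ for a fixed (large) frequency gap $\kpp$. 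Now $\P^{\perp}\calM^2(A,A)$, $R_j(A)$, $\Rem^3(A) A_j$, $\Rem_{A}^{\kpp, 2} A_j$, $\Diff^{\kpp}_{\P^{\perp} A} A_j$ and $\Diff^{\kpp}_{A_{0}} A_j$ all lie in $(\Box \uX^1)_{c^2}[I]$ by \eqref{eq:Ax-bi-cf}, \eqref{eq:Rj}, \eqref{eq:rem3-fe}, Proposition~\ref{prop:rem2}, \eqref{eq:diff-cf-X} and \eqref{eq:diff-a0-X} respectively (feeding the $\nb A_0$, $\nb \P^\perp A$ inputs from \eqref{eq:fe-Y-bnd-weak}, and the embedding above for the $L^1 L^2 \cap L^2 \dot{H}^{-\frac12}$ terms), $\P \calM^2(A,A) \in (\Box \uX^1)_{c^2}[I]$ by \eqref{eq:Ax-bi-df}, and $\Diff^{\kpp}_{\P_{x} A} A \in (\Box \mX^1)_{c^2}[I]$ by \eqref{eq:diff-ax-mX}. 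This first yields $\Box A \in (\Box \mX^1)_{c^2}[I]$; feeding this back into \eqref{eq:diff-ax-uX} upgrades $\Diff^{\kpp}_{\P_{x} A} A$ to $(\Box \uX^1)_{c^2}[I]$, hence $\Box A \in (\Box \uX^1)_{c^2}[I]$; and then \eqref{eq:diff-ax-Xsb}, \eqref{eq:diff-cf-X}, \eqref{eq:diff-a0-X} together with the $X^{-\frac12 + b_1, -b_1}$-estimates for $\P\calM^2$ and the remainders from Section~\ref{subsec:ests-2} give $\Box A \in X^{-\frac12 + b_1, -b_1}_{c^2}[I]$.

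\emph{Difference bounds and main obstacle.} Part (2) repeats these computations for $A - \tA$: quadratic forms are expanded bilinearly, e.g. $\calM^{2}_{0}(A,A) - \calM^{2}_{0}(\tA, \tA) = \calM^{2}_{0}(A - \tA, A) + \calM^{2}_{0}(\tA, A - \tA)$, and estimated with one input at envelope $d$, giving a bound at envelope $(c \cdot d)_k \le e_k$; the cubic maps and $\Rem^3(\cdot)$ are handled by their Lipschitz envelope-preserving property (Theorem~\ref{thm:main-eq}, Proposition~\ref{prop:rem-3}), contributing $c_k e_k \le e_k$ with $e_k = d_k + c_k (c \cdot d)_{\le k}$; and the elliptic inputs $\nb A_0 - \nb \tA_0$, $\nb \P^\perp A - \nb \P^\perp \tA$ use their own difference bound, proved in the same step as \eqref{eq:fe-Y-bnd-diff}. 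I expect the main obstacle to be organizational rather than analytic: (i) carrying out the algebraic reduction so that every summand of $\Box A_j$ and of its difference analogue matches a term for which Section~\ref{subsec:ests-2} or Theorem~\ref{thm:main-eq}/Proposition~\ref{prop:rem-3} supplies an estimate, and propagating the frequency envelopes through the envelope calculus so that all contributions are absorbed into $c^2$ (resp. $e_k$); and (ii) the interval-localized versions of the non-restriction norms $\Xdf^1$, $\mXdf^1$, $\uX^1$, $\mX^1$, $X^{-\frac12 + b_1, -b_1}$, whose definitions involve extension by homogeneous waves and the sharp cutoff $\chi_I$, so that the relevant estimates must be applied after carefully absorbing $\chi_I$ as in Section~\ref{subsec:ftn-sp}. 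No analytic input beyond the estimates of Section~\ref{subsec:ests-2} is required.
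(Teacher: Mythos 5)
Your overall strategy — plug the expansions of Theorem~\ref{thm:main-eq} into each quantity, control the quadratic pieces via the estimates of Section~\ref{subsec:ests-2}, control the cubic and higher maps via their envelope-preserving mapping properties, and bootstrap $\Box A$ from $\mX^1$ to $\uX^1$ to $X^{-\frac12+b_1,-b_1}$ via Proposition~\ref{prop:paradiff-df-himod} — is exactly the paper's, and the part (1) $Y$-component argument, the part (1) wave-equation argument, and the part (2) difference-bound outline all match what the proof actually does.

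There is, however, a genuine gap in your treatment of the $\rd_t$-component of $\nrm{\P^\perp A}_{Y^1_{c^2}}$. Recall $Y^1_k$ includes the $\nrm{\rd_t u}_{L^2\dot H^{1/2}}$ component. You propose to obtain $\rd_t \P^\perp A$ by differentiating the compatibility relation $\rd^\ell A_\ell = \bfQ(A,A) + \DA^3(A)$ in $t$ and claim this reduces, via "the trilinear estimate behind \eqref{eq:DA-tri}, to expressions with one factor carrying $\rd_t$." But $\DA^3(A)(t)$ is a nonlinear map determined by $(A,\rd_t A)(t) \in T^{L^2}\calC$ through the Yang--Mills heat flow, not a fixed trilinear form in $A$; differentiating it in $t$ produces $\rd_t^2 A$ via the chain rule, and Theorem~\ref{thm:main-eq} supplies no estimate for $\rd_t \DA^3(A)$ (only fixed-time and space-time bounds for $\DA^3$ itself). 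The paper avoids this by computing $\rd_t \P^\perp_j A$ directly: writing $\rd_t A_\ell = F_{0\ell} + \rd_\ell A_0 + [A_\ell, A_0]$, applying $\lap^{-1}\rd_j \rd^\ell$, and invoking the Gauss constraint $\covD^\ell F_{0\ell} = 0$ to eliminate the dangerous term. The result is an explicit identity (equation \eqref{eq:dt-divA} in the source) expressing $\rd_t \P^\perp_j A$ purely in terms of $A$, $\rd_t A$ and $A_0$, with at most one time derivative per factor — all of which are controlled by the $S^1$ bound and the already-established $Y$-bound for $A_0$. Without this step, your argument for the $\rd_t \P^\perp A$ piece does not close; you need to derive and use that constraint-based identity (or an equivalent). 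Everything else in the proposal is correct and follows the paper's proof.
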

As a quick consequence of Proposition~\ref{prop:uS-bnd}, we see that
any caloric Yang--Mills wave $A$ with $A(t) \in \calC_{\hM}$ for all
$t \in I$ and $\nrm{A}_{S^{1}[I]} \leq M$ obeys
\begin{equation*}
  \nrm{A}_{\uS^{1}[I]} \aleq_{M, \hM} 1.
\end{equation*}

\begin{remark} \label{rem:uS-bnd} The reason why we state these weaker
  bounds as a separate proposition is for logical clarity.  As it will
  be evident, the proof of Proposition~\ref{prop:uS-bnd} depends
  \emph{only} on
  Propositions~\ref{prop:ell-bi}--\ref{prop:paradiff-df-himod}. In
  fact, after these propositions are established in
  Section~\ref{sec:multi}, Proposition~\ref{prop:uS-bnd} will be used
  in the proofs of Proposition~\ref{prop:diff-single},
  Theorem~\ref{thm:paradiff} and
  Proposition~\ref{prop:paradiff-weakdiv} in Sections~\ref{sec:multi}
  and \ref{sec:paradiff}.
\end{remark}
\begin{proof}[Proof of Proposition~\ref{prop:uS-bnd}]
  Since $A$ is a caloric Yang--Mills wave, Theorem~\ref{thm:main-eq}
  determines $A_{0}$, $\rd_{0} A_{0}$ and $\P^{\perp}_{j} A =
  \lap^{-1} \rd_{j} \rd^{\ell} A_{\ell}$ in terms of $A$. To derive
  the equation for $\rd_{t} \P^{\perp} A$, we first compute
  \begin{align*}
    \rd_{t} \P^{\perp} A = & \rd_{t} \frac{\rd_{x} \rd^{\ell}}{\lap}
    A_{\ell}
    = \lap^{-1} \rd_{x} \rd^{\ell} (F_{0 \ell} + \rd_{\ell} A_{0} + [A_{\ell}, A_{0}]) \\
    = & \lap^{-1} \rd_{x} (\covD^{\ell} F_{0 \ell} + \lap A_{0} +
    \rd^{\ell} [A_{\ell}, A_{0}] - [A^{\ell}, F_{0 \ell}]).
  \end{align*}
  By the constraint equation, we have $\covD^{\ell} F_{0 \ell} =
  0$. Expanding $F_{0 \ell}$ in terms of $A_{t,x}$, we arrive at
  \begin{equation} \label{eq:dt-divA} \rd_{t} \P^{\perp}_{j} A =
    \rd_{j} A_{0} + \lap^{-1} \rd_{j} (\rd^{\ell} [A_{\ell}, A_{0}] -
    [A^{\ell}, \rd_{t} A_{\ell}] + [A^{\ell}, \rd_{\ell} A_{0}] -
    [A^{\ell}, [A_{0}, A_{\ell}]]).
  \end{equation}

  The rest of the proof consists of combining
  Theorem~\ref{thm:main-eq} with Propositions~\ref{prop:ell-bi},
  \ref{prop:Q-bi} and \ref{prop:paradiff-df-himod} in the right
  order. We first sketch the proof of the non-difference bounds
  \eqref{eq:fe-Y-bnd-weak}--\eqref{eq:fe-X-bnd-weak}. We begin by
  verifying that
  \begin{align*}
    \nrm{\abs{D} A_{0}}_{Y_{c^{2}}[I]} + \nrm{\abs{D} \P^{\perp}
      A}_{Y_{c^{2}}[I]} \aleq_{M, \hM} & 1.
  \end{align*}
  Indeed, by the mapping properties in Theorem~\ref{thm:main-eq} and
  the embeddings
  \begin{align*}
    L^{1} \dot{H}^{1} \cap L^{2} \dot{H}^{\frac{1}{2}} \subseteq Y,
  \end{align*}
  the contribution of $\bfA_{0}^{3}$ in $A_{0}$ and $\DA^{3}$ in
  $\P^{\perp} A$ are handled easily. For the quadratic nonlinearities,
  we apply \eqref{eq:ell-bi} for $A_{0}$, \eqref{eq:Q} with $\sgm = 0$
  for $\P^{\perp} A$ and $\sgm = 1$ for $A_{0}$.

  Next, we show that
  \begin{align*}
    \nrm{\rd_{t} A_{0}}_{L^{2} \dot{H}^{\frac{1}{2}}_{c^{2}}[I]} +
    \nrm{\rd_{t} \P^{\perp} A}_{L^{2} \dot{H}^{\frac{1}{2}}_{c^{2}}[I]}  \aleq_{M, \hM} & 1.
  \end{align*}
  For $\rd_{t} A_{0}$, we use Theorem~\ref{thm:main-eq} for $\DA_{0}^{3}$ and \eqref{eq:ell-bi-L2L2} for the quadratic
  nonlinearity. For $\rd_{t} \P^{\perp} A$, we estimate the RHS of
  \eqref{eq:dt-divA}, where we use the $Y[I]$-norm bound for $A_{0}$
  that was just established.

  We now consider $\Box A$. We first prove the weaker bound
  \begin{equation} \label{eq:fe-mX-bnd-diff} \nrm{\Box A}_{\Box
      \mX^{1}_{c^{2}}[I]} \aleq_{M, \hM} 1.
  \end{equation}
  By the mapping properties in Theorem~\ref{thm:main-eq} and the
  embeddings
  \begin{align*}
    L^{1} L^{2} \cap L^{2} \dot{H}^{-\frac{1}{2}} \subseteq \Box
    \uX^{1} \cap X^{-\frac{1}{2} + b_{1}, -b_{1}} \subseteq \Box
    \mX^{1}
  \end{align*}
  the contribution of $R_{j}$ is acceptable in both cases. For the
  quadratic nonlinearities $\P \calM^{2} + \P^{\perp} \calM^{2}$, and
  the contribution of $\Box A - \Box_{A} A$, we apply
  \eqref{eq:Ax-bi-df}, \eqref{eq:Ax-bi-cf}, \eqref{eq:rem3-fe},
  \eqref{eq:rem2}, \eqref{eq:diff-a0-X} and \eqref{eq:diff-ax-mX};
  note that we need to use \eqref{eq:fe-Y-bnd-weak} in both  \eqref{eq:rem2} and \eqref{eq:diff-a0-X}.

  We are ready to prove \eqref{eq:fe-X-bnd-diff}. The desired estimate
  for the $\Box \uX^{1}[I]$-norm follows by repeating the preceding
  argument with \eqref{eq:diff-ax-mX} replaced by
  \eqref{eq:diff-ax-uX}, and using \eqref{eq:fe-mX-bnd-diff}. On the
  other hand, for the $\Box X^{-\frac{1}{2} + b_{1}, -b_{1}}[I]$-norm,
  we replace \eqref{eq:diff-ax-mX} by \eqref{eq:diff-ax-Xsb} instead,
  and use the $\Box \uX^{1}[I]$-norm bound that we have just proved.

  Finally, the proof of the difference bounds
  \eqref{eq:fe-Y-bnd-diff}--\eqref{eq:fe-X-bnd-diff} proceeds
  similarly, taking the difference of each of the equations
  \eqref{eq:main-wave}--\eqref{eq:main-DA0}. We leave the details to
  the reader.  \qedhere
\end{proof}

We now prove Theorem~\ref{thm:structure}, using the estimates stated
in Section~\ref{sec:ests}.
\begin{proof}[Proof of Theorem~\ref{thm:structure}]
Throughout this proof, we omit the dependence of constants on $\hM$.

  \pfstep{Proof of (1)} 
We begin with a $\Box_A$ decomposition which will be repeatedly used in the sequel.
Given $\kappa > 10$, we write
\[
\Box_A = \Box + \Diff^\kappa_{\P A} - R^{\kappa}_A
\]
where,  using the decomposition in \eqref{eq:Box-A-decomp}, the remainder $R^{\kappa}_A$
is given by 
\[
R^{\kappa}_A = \Diff^\kappa_{\P^\perp A}  -  \Rem^{\kappa,2}_A  - \Rem^{\kappa,3}_A 
\]

\begin{lemma}\label{l:Rkappa}
Let $J \subset I$. Let $d$ be a $\dltf$-frequency envelope for $u$ in $S^1[J]$. Then we have 
\begin{equation}\label{err(kappa)full}
\|  R^{\kappa}_A u\|_{(N \cap L^{2} \dot{H}^{-\frac{1}{2}})_{d}[J]}  \lesssim_{M}  
\left( 2^{-\dltb \kappa} \|A\|_{S^1[J]} + 2^{C\kappa} C(A,J)\right) \| u\|_{S_d^1[J]}
\end{equation}
with
\begin{equation}\label{C-def}
\begin{split}
C(A,J) =  \|\P^\perp A\|_{Y^1[J]} +  
 \|\P^\perp A\|_{\ell^1 L^1 L^\infty[J]} +      \|A\|_{\Str^1[J]} + \| (\nabla \P^\perp A, \nabla A_0)\|_{L^2 \dot H^\frac12[J]}
\end{split}
\end{equation}
\end{lemma}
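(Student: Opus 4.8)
The plan is to bound each of the three constituent pieces of $R^{\kappa}_A = \Diff^{\kappa}_{\P^{\perp} A} - \Rem^{\kappa,2}_A - \Rem^{3}_A$ (with $\Diff^{\kappa}_{\P^{\perp} A}$, $\Rem^{\kappa,2}_A$, $\Rem^{3}_A$ as in \eqref{eq:diff-cf-def}, \eqref{eq:rem2-def}, \eqref{eq:rem3-def}) separately on $J$ and then to add them up. Throughout, $d$ denotes the given $\dltf$-envelope for $u$; we fix an admissible envelope $c$ for $A$ in $\Str^{1}[J]$ realizing the norm $\|A\|_{\Str^{1}[J]}$ (with $c \in \ell^{2}$, using $\|A\|_{S^{1}[J]} \le M$), and adopt the analogous normalization for the remaining fixed-time and elliptic norms of $A_{t,x}$ occurring in \eqref{C-def}.

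First I would dispose of the cubic-and-higher remainder $\Rem^{3}_A u$. Since $A$ is a caloric Yang--Mills wave, Theorem~\ref{thm:main-eq} gives $\Rem^{3}_A u = \Rem^{3}(A) u$, so Proposition~\ref{prop:rem-3} applies: $\Rem^{3}(A)$ is linear in $u$ and, by \eqref{eq:rem3-fe} together with the standard frequency-envelope manipulations (which reduce the output envelope to a $\dltf$-admissible one, using $\dltf \ll \dltb$), maps $S^{1}_{d}[J] \to (N \cap L^{2}\dot{H}^{-\frac{1}{2}})_{d}[J]$ with norm $\aleq_{M} \|A\|_{\Str^{1}[J]}$ (one of the two factors of $A$ being absorbed into the $M$-dependent constant). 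Hence $\|\Rem^{3}_A u\|_{(N \cap L^{2}\dot{H}^{-\frac{1}{2}})_{d}[J]} \aleq_{M} \|A\|_{\Str^{1}[J]}\,\|u\|_{S^{1}_{d}[J]} \le C(A,J)\,\|u\|_{S^{1}_{d}[J]}$, with no $\kappa$-dependence.

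Next, the off-diagonal paradifferential term $\Diff^{\kappa}_{\P^{\perp} A} u$ involves only the spatial components $\P^{\perp}_{j} A = \lap^{-1} \rd_{j} \rd^{\ell} A_{\ell}$, since $\P^{\perp}_{0} A = 0$. For the $L^{2}\dot{H}^{-\frac{1}{2}}$ slot --- which is one of the summands making up $\Box\uX^{1}_{k}$ --- I would invoke \eqref{eq:diff-cf-X}, producing a bound by $\|\P^{\perp} A\|_{Y^{1}_{c}[J]}\,\|u\|_{S^{1}_{d}[J]}$; for the $L^{1}L^{2}$ (hence $N$) slot I would use \eqref{eq:diff-cf-N}, whose output envelope $f_{k} = \big(\sum_{k' < k - \kappa} a_{k'}\big) d_{k}$ is $\le \|\P^{\perp} A\|_{\ell^{1} L^{1} L^{\infty}[J]}\, d_{k}$. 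Both contributions are therefore $\aleq C(A,J)\,\|u\|_{S^{1}_{d}[J]}$, matching the corresponding terms in \eqref{C-def}.

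Finally, the quadratic remainder $\Rem^{\kappa,2}_A u$ is the source of both the gain $2^{-\dltb\kappa}$ and the loss $2^{C\kappa}$. Using the splitting $\Rem^{\kappa,2}_A u = \Rem^{\kappa,2}_{A,small} u + \Rem^{\kappa,2}_{A,large} u$ of Proposition~\ref{prop:rem2}: the small part is bounded in $N$ by $2^{-\dltb\kappa}\|A\|_{S^{1}_{c}[J]}\|u\|_{S^{1}_{d}[J]}$ via \eqref{eq:rem2-small}, giving the first term in \eqref{err(kappa)full}; the large part is bounded in $N$ by \eqref{eq:rem2-large}, and the whole term in $L^{2}\dot{H}^{-\frac{1}{2}}$ by \eqref{eq:rem2-himod}, which together produce the factors $\|A\|_{DS^{1}[J]}$, $\|(\nb \P^{\perp} A, \nb A_{0})\|_{L^{2}\dot{H}^{\frac{1}{2}}[J]}$ and $\|A\|_{\Str^{1}[J]}$. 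Summing the three pieces yields \eqref{err(kappa)full}. The main obstacle, requiring the most care, is the interval localization: (i) the right-hand sides above are the $J$-restricted norms, whereas several of the invoked estimates (in particular those touching the modulation-projected $X^{\sigma,b}$-type norms hidden inside $\Box\uX^{1}$ and inside the $N = L^{1}L^{2} + X^{0,-\frac{1}{2}}_{1}$ slot) are proved globally, so one must pass through the sharp cutoff $\chi_{J}$ and the homogeneous-wave extension exactly as in Proposition~\ref{prop:int-loc}; and (ii) the factor $\|A\|_{DS^{1}[J]}$ output by \eqref{eq:rem2-large} must be absorbed into $2^{C\kappa}C(A,J)$, which is legitimate because $DS^{1}[J]$ differs from the $\Str^{1}[J]$ of \eqref{C-def} only by the two $L^{2}_{t}$-type quantities $\|\Box A\|_{L^{2}\dot{H}^{-\frac{1}{2}}[J]}$ and $\|\abs{D}^{-\frac{5}{6}}\nb A\|_{L^{2}L^{6}[J]}$, both $\aleq_{M} 1$ on $I$ (e.g.\ by Proposition~\ref{prop:uS-bnd}) and interval-square-divisible, hence negligible once $J$ is taken from the finite partition employed in the proof of Theorem~\ref{thm:structure}.
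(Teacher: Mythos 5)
Your proof is correct and follows the paper's approach exactly: the same decomposition of $R^{\kappa}_A$ into $\Diff^{\kappa}_{\P^{\perp} A}$, $\Rem^{\kappa,2}_A$, $\Rem^{3}_A$, estimated respectively via \eqref{eq:diff-cf-X}/\eqref{eq:diff-cf-N}, \eqref{eq:rem2-small}/\eqref{eq:rem2-large}/\eqref{eq:rem2-himod}, and \eqref{eq:rem3-fe}. One remark on your closing paragraph: you are right that \eqref{eq:rem2-large} produces $\|A\|_{DS^{1}[J]}$ rather than the $\|A\|_{\Str^{1}[J]}$ appearing in \eqref{C-def}, but the cleanest reconciliation is simply to read $C(A,J)$ as including $\|A\|_{DS^{1}[J]}$ --- it is a divisible, $M$-controlled quantity exactly like the other summands, so the downstream argument in Theorem~\ref{thm:structure} is unchanged --- rather than to argue that the extra pieces are negligible on the specific partitions used later, which would establish the inequality only in its applications and not for arbitrary $J \subset I$ as the lemma asserts.
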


\begin{proof} 
 We successively  bound the  three terms in $R^{\kappa}_A$  as follows. For the first of them
we have
\[
\| \Diff^\kappa_{\P^\perp A} u\|_{(N \cap L^{2} \dot{H}^{-\frac{1}{2}})_{d}[J]}  \aleq_{M} ( \|\P^\perp A\|_{Y^1[J]} +  
 \|\P^\perp A\|_{\ell^1 L^1 L^\infty[J]}    ) \| u\|_{S_d^1[J]}
\]
using the bounds \eqref{eq:diff-cf-X} and \eqref{eq:diff-cf-N}, and noting that the second norm of $A$ is estimated
using \eqref{eq:Q} for the quadratic part and \eqref{eq:DA-tri} by 
\[
\|\P^\perp A\|_{\ell^1 L^1 L^\infty[J]} \aleq_{M} 1
\]
For the  second term in $R^{\kappa}_A$  in \eqref{para-dec} we have
\[
\|  \Rem^{\kappa,2}_A u \|_{(N \cap L^{2} \dot{H}^{-\frac{1}{2}})_{d}[J]}  \aleq_{M}  (2^{-\dltb \kappa} \|A\|_{S^1[J]} + 2^{C\kappa} C(A,J)) 
\| u\|_{S_d^1[J]},
\]
as a consequence of \eqref{eq:rem2-himod}, \eqref{eq:rem2-small} and \eqref{eq:rem2-large}.

Finally, for the third  term in $R^{\kappa}_A$ we have
\[
\|  \Rem^{\kappa,3}_A u\|_{(N \cap L^{2} \dot{H}^{-\frac{1}{2}})_{d}[J]}  \aleq_{M}  \|A\|_{\Str^1[J]}   \| u\|_{S_d^1[J]}
\]
due to \eqref{eq:rem3-fe}.
\end{proof}

To prove (1) we rewrite the equation \eqref{eq:Box-A-u=f} 
in the form 
\begin{equation}\label{para-dec}
(\Box + \Diff^\kappa_{\P A})u = f  - R^\kappa_A u
\end{equation}

The important fact is that all the $A$ norms in $C(A,J)$ 
except for $S^1$ are divisible norms, and also controlled by $M$.  On
the other hand the $S^1$ norm of $A$ has the redeeming $2^{-\dltb  \kappa}$ factor. 
To proceed we choose  $\kappa$ large enough, 
\[
\kappa \ll_{M,\hM} 1
\]
Then we can subdivide the interval $I = \cup_{j \in \calJ} J_k$ so that $\# \calJ
\lesssim_M 1$, and so that in each interval $J_j$ we have smallness,
\begin{equation}\label{err(kappa)small}
\| R^{\kappa}_A u\|_{(N \cap L^{2} \dot{H}^{-\frac{1}{2}})_{d}[J_j]}  \ll_{M}  
 \| u\|_{S_d^1[J_j]}
\end{equation}

A second consequence of our choice for $\kappa$  is that Theorem~\ref{thm:paradiff} applies. Then
we can successively apply Theorem~\ref{thm:paradiff} in each interval $J_k$, treating  $R^\kappa_A$ perturbatively.

\pfstep{Proof of (2)} 
The argument here is similar to the previous one. For any interval $J \subset I$ and any $(-\dltf, N)$ frequency 
envelope $d$ for $A$ in $S^1[J]$ we can use the bounds \eqref{eq:Ax-bi-df-himod}-\eqref{eq:Ax-bi-cf-large} and  \eqref{eq:Rj}
to estimate
\begin{equation}\label{boxA-A}
\| \Box_A A\|_{(N \cap L^{2} \dot{H}^{-\frac{1}{2}})_{d}[J]}  \lesssim_{M} \left(2^{-\dltb \kappa} \|A\|_{S^1[J]} + 
2^{C\kappa} \|A\|_{DS^1[J]}\right) \| A\|_{S^1_d[J]}
\end{equation}
As before we  use the divisibility of the $DS^1$ norm to partition the interval $I$
into finitely many subintervals $J_k$, whose number depends only on $M$, and so that in each subinterval
we have 
\[
2^{-\dltb \kappa} \|A\|_{S^1[J]} +  2^{C\kappa} \|A\|_{DS^1[J]} \leq \epsilon \ll_{M,\hM} 1.
\]

We now specialize the choice of $d$, choosing it to be a minimal $\dltf$-frequency envelope for $A$ in 
the first interval $J_1$. Applying the result in part (1) in  $J_1$ we conclude that 
\[
d \lesssim_{M,\hM} c+ \epsilon d  
\]
which by the smallness of $\epsilon$ implies that $d \lesssim_{M,\hM} c$. Then we reiterate.

\pfstep{Proofs of (3) and (4)} These follow from \eqref{eq:fe-S-bnd}
  and Proposition~\ref{prop:uS-bnd}.

  \pfstep{Proof of (5)} This is obtained by combining the bound \eqref{err(kappa)full} for $J = I$ and $u = A$
with the bound \eqref{boxA-A}.

  \pfstep{Proof of (6)} In view of (5), this is a direct consequence of 
    Proposition~\ref{prop:paradiff-weakdiv}.

 \pfstep{Proof of (7)}  We use frequency envelopes. It suffices to show that if 
$c_k$ is a $(-\dltf, S)$-frequency envelope for the initial data in the energy space
then $C(M) c_k$ is a  frequency envelope for $A$ in $S^1$ and $A_0$ in $Y^1$.
We begin with  a version of Lemma~\ref{l:Rkappa}:
\begin{lemma}\label{l:Rkappa+}
Let $J \subset I$. Let $d = d(J)$ be a $(-\dltf, S)$-frequency envelope for $A$ in $S^1[J]$. Then we have 
\begin{equation}\label{err(kappa)full+}
\|  R^{\kappa}_A A\|_{(N \cap L^{2} \dot{H}^{-\frac{1}{2}})_{d}[J]}  \lesssim_{M}  
\left( 2^{-\dltb \kappa} \|A\|_{S^1[J]} + 2^{C\kappa} C(A,J)\right) \| A\|_{S_d^1[J]}.
\end{equation}
\end{lemma}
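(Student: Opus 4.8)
The plan is to repeat the proof of Lemma~\ref{l:Rkappa} essentially verbatim, now with $u = A$ and with the output frequency envelope $d$ allowed to lie in the wider $(-\dltf, S)$-admissible class. Writing $R^{\kappa}_A = \Diff^{\kappa}_{\P^\perp A} - \Rem^{\kappa,2}_A - \Rem^{\kappa,3}_A$ as before, I would bound each of the three pieces applied to $A$, treating the inner coefficient $A$ and the outer input $A$ as independent factors. For the coefficient I would use only the crude norms that appear on the right side of \eqref{err(kappa)full+}, namely $\|A\|_{S^1[J]}$ and the divisible norms packaged into $C(A,J)$ of \eqref{C-def}; concretely, this amounts to equipping the coefficient with a \emph{constant} frequency envelope at the height of the relevant norm, which is trivially admissible. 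Then: $\Diff^{\kappa}_{\P^\perp A} A$ is controlled by \eqref{eq:diff-cf-X} and \eqref{eq:diff-cf-N}, with $\P^\perp A$ placed in $Y^1[J]$ and in $\ell^1 L^1 L^\infty[J]$ (the latter being $\aleq_M 1$ by \eqref{eq:Q} and \eqref{eq:DA-tri}, exactly as in the proof of Lemma~\ref{l:Rkappa}); $\Rem^{\kappa,2}_A A$ is controlled by \eqref{eq:rem2-himod}, \eqref{eq:rem2-small} and \eqref{eq:rem2-large}, which produce the $2^{-\dltb \kappa}\|A\|_{S^1[J]} + 2^{C\kappa} C(A,J)$ coefficient structure; and $\Rem^{\kappa,3}_A A$ is controlled by \eqref{eq:rem3-fe}, contributing $\|A\|_{\Str^1[J]} \le C(A,J)$. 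In each case the resulting output envelope is the product of the constant coefficient envelope with the envelope $d$ of the input factor, i.e.\ a constant multiple of $d$, which is $(-\dltf, S)$-admissible precisely when $d$ is.

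The only point that genuinely needs a word of justification is that the multilinear bounds just invoked --- stated in Section~\ref{sec:ests} for $\dlte$-admissible envelopes --- continue to hold when the input/output envelope $d$ is merely $(-\dltf, S)$-admissible. Since $\dltf \ll \dlte$, the constraint governing the low-frequency side of $d$ (growth toward higher frequency at rate at most $\dltf$) sits comfortably inside the range absorbed by the off-diagonal dyadic gains built into the proofs in Section~\ref{sec:multi} (all of size $\gtrsim \dlta \gg \dltf$), while relaxing the high-frequency side to rate $S$ is the favorable direction: in the high$\times$high$\to$low interactions a more rapidly decaying envelope only improves the dyadic summation, and in the low$\times$high$\to$high paradifferential interactions the output envelope is comparable to that of the high-frequency input and is therefore unaffected. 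Hence the estimates go through with the same constants, up to the harmless factor introduced by the constant coefficient envelopes.

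I expect no serious obstacle here, only bookkeeping. The two things to double-check are: first, that with $u = A$ the high$\times$high interactions, in which \emph{both} factors now carry the envelope $d$, still close --- they do, using $d_k \aleq_M 1$ after the usual normalization together with the intrinsic off-diagonal gains, at the cost of an $M$-dependent constant that is already allowed by the $\aleq_M$ in \eqref{err(kappa)full+}; and second, that the $2^{-\dltb \kappa}$ gains in $\Rem^{\kappa,2}_{A,small}$ and in $\Diff^{\kappa}_{\P^\perp A}$, as well as the $\ell^1$-summability implicit in the $\ell^1 L^1 L^\infty[J]$, $\Box \uX^1$ and $X^{-\frac12 + b_1, -b_1}$ norms, are preserved under restriction to $J$ --- but these are precisely the mechanisms already handled in the proofs of Lemma~\ref{l:Rkappa} and Proposition~\ref{prop:uS-bnd}, so nothing new is required.
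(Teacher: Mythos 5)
Your constant-envelope device for the coefficient copy of $A$ — which is what makes the proof of Lemma~\ref{l:Rkappa} go — does not survive the widening from $\dltf$-admissible to $(-\dltf,S)$-admissible $d$, and handling that widening is the entire content of this lemma. The problem arises in the remainder terms $\Rem^{\kappa,2}_A A$ and $\Rem^{\kappa,3}_A A$: the output at frequency $k$ receives contributions from interactions in which the \emph{coefficient} copy of $A$ sits at the dominant frequency $\sim k$ while the $u$-slot copy sits at a much lower frequency $k_3$. Giving the coefficient a constant envelope and summing $k_3 < k - \kappa$ against the off-diagonal gain, this piece of the output is controlled by a multiple of $d^{[\dltb]}_k$, not of $d_k$; concretely it is the second term $c_k c^{[\dltb]}_k d^{[\dltb]}_k$ in the bound \eqref{eq:rem3-fe} of Proposition~\ref{prop:rem-3}, evaluated with $c$ constant (and an analogous contribution appears in Proposition~\ref{prop:rem2}). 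For $\dltf$-admissible $d$ one has $d^{[\dltb]}_k\lesssim d_k$ since $\dltf<\dltb$, so the constant-coefficient reduction closes; but a $(-\dltf,S)$-admissible $d$ may decay at rate up to $S\gg\dltb$ on its high-frequency side, in which case $d^{[\dltb]}_k/d_k$ grows like $2^{(S-\dltb)(1-\eps)(k-k_0)}$ above the peak $k_0$ of $d$, and the asserted bound $\lesssim_{M} C(A,J)\,d_k$ simply does not follow from this route. Your discussion of ``high$\times$high'' and ``low$\times$high paradifferential'' interactions does not cover this case, which is a non-paradifferential (coefficient high)$\times$(input low)$\to$high interaction.

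The fix is precisely what the paper's terse proof is flagging. Since both the coefficient and the $u$-input are the same $A$ carrying the same envelope $d$, one must apply Propositions~\ref{prop:rem-3} and~\ref{prop:rem2} with $c=d$, not with $c$ constant. Then whichever of the copies of $A$ holds the highest-frequency dyadic piece contributes a plain factor $d_{k_{\max}}\sim d_k$ (up to the permitted $\dltf$-growth, which the off-diagonal gain absorbs), and the remaining slots contribute only factors $\lesssim \sup_j d_j\lesssim_{M} 1$; e.g.\ in \eqref{eq:rem3-fe} with $c=d$ the right side becomes $2(d^{[\dltb]}_k)^2\, d_k\lesssim_{M} d_k$, with the single $d_k$ always coming from the dominant-frequency copy, whichever slot it occupies. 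This is exactly ``the frequency envelope $d$ is inherited from the highest frequency input.'' Your treatment of $\Diff^{\kappa}_{\P^\perp A}A$ is fine, since there the paradifferential structure pins the output frequency to the $u$-input frequency (the paper's other explicit remark) and no $d^{[\cdot]}$ factor enters; the fix is needed only in the remainder terms, where the asymmetric coefficient/input decomposition with a constant coefficient envelope is the wrong move.
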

\begin{proof}
  The same argument as in the proof of \eqref{eq:para-f} applies for
  the first term in $ R^{\kappa}_A$, as there the output frequency and
  the $u$ input frequency are the same. On the other hand for the two
  remaining terms, the frequency envelope $d$ is inherited from the highest 
frequency input, see Propositions~\ref{prop:rem-3}, \ref{prop:rem2}.
\end{proof}

Combining the bound in the lemma with \eqref{boxA-A} we obtain the estimate
\begin{equation}
\| \Box A +\Diff^\kappa_{\P A} A \|_{(N \cap L^{2} \dot{H}^{-\frac{1}{2}})_{d}[J]}  \lesssim_{M} 
\left( 2^{-\dltb \kappa} \|A\|_{S^1[J]} + 2^{C\kappa} C(A,J)\right) \| A\|_{S^1_d[J]}.
\end{equation}

Now we can conclude as in the proof of (2). We first  choose $\kappa$ large enough so that 
Theorem~\ref{thm:paradiff} applies, and also so that 
\[
2^{-\dltb \kappa} \|A\|_{S^1[I]} \ll_{M} 1.
\]
Then we  divide the interval $I$ into finitely many subintervals
(again, depending only on $M$ and $\hM$) so that for each subinterval $J$ we have 
\[
2^{C\kappa} \|A\|_{DS^1[J]} \ll_{M} 1.
\]
Thus, for each subinterval $J$ we have insured that
\[
\| \Box A +\Diff^\kappa_{\P A} A \|_{(N \cap L^{2} \dot{H}^{-\frac{1}{2}})_{d}[J]}  \ll_{M}  \| A\|_{S^1_d[J]}.
\]

Let $c_k$ be a $(-\dltf,S)$-frequency envelope for the initial data in the energy space,
Then applying Theorem~\ref{thm:paradiff} in the first interval $J_1$ we conclude that 
\begin{equation}
\| P_k A\|_{S^1[J_1]} \lesssim_{M,\hM} c_k + \epsilon d_k, \qquad \epsilon   \ll_{M} 1.
\end{equation}
for any $(-\dltf,S)$ frequency envelope $d_k$ for $A$ in $S^1[J_1]$. In particular 
if $d_k$ is a minimal $(-\dltf, S)$ frequency envelope for $A$ in $S^1[J_1]$ then we 
obtain 
\[
d_k  \lesssim_{M} c_k + \epsilon d_k,
\]
which leads to
\[
d_k  \lesssim_{M,\hM} c_k,
\]
i.e., the desired bound in $J_1$. We now reiterate this bound in successive intervals $J_j$.
Finally, the $Y$ bound follows as in (3).

  \pfstep{Proof of (8)} Assume $0 < 1 - \sgm \ll \dltf$. We write the equation for $\dA = A-\tA$ in the 
form 
\[
(\Box + \Diff^\kappa_{\P \tA}) \dA = F^\kappa,
\]
where 
\begin{equation}\label{Fkappa}
F^\kappa =  \Diff^\kappa_{\P A -\P\tA} A + (R^\kappa_A A - R^\kappa_\tA \tA) + (\Box_A A - \Box_A \tA).
\end{equation}
We claim that we can estimate the terms in $F^{\kappa}$ as follows:
\begin{equation}\label{fk-main}
\| \Diff^\kappa_{\P A -\P\tA} A \|_{N^{\sigma-1} \cap L^2 \dot H^{\sigma -1 - \frac12}[J]} \lesssim_{M} 
2^{-c_{\sgm} \kappa} (\| A\|_{S^1} + \| \tA\|_{S^1}) \| \dA\|_{S^\sigma[J]}, 
\end{equation}
\begin{equation}\label{fk-2}
\| R^\kappa_A A - R^\kappa_\tA \tA   \|_{N^{\sigma-1} \cap L^2 \dot H^{\sigma -1 - \frac12}[J]} \lesssim_{M}  2^{C\kappa} (C(A,J)+ C(\tA,J))
\| \dA\|_{S^\sigma[J]}, 
\end{equation}
\begin{equation}\label{fk-3}
\|\Box_A A - \Box_A \tA \|_{N^{\sigma-1} \cap L^2 \dot H^{\sigma -1 - \frac12}[J]} \lesssim_{M}  (C(A,J)+ C(\tA,J)) 
\| \dA\|_{S^\sigma[J]}.
\end{equation}
We first  show how to conclude the proof of (8) using \eqref{fk-main}, \eqref{fk-2} and \eqref{fk-3}. 
As in the proofs of (1),(2) and (7), we  first choose $\kappa$ large enough, $\kappa \gg_{M} 1$.
Then we use divisibility for the expressions $C(A,J)$ and $ C(\tA,J)$ in order to divide the interval $I$ 
into subintervals $J_j$ so that on each subinterval $F^{\kappa}$ is perturbative, i.e.
\[
\| F^\kappa \|_{N^{\sigma-1} \cap L^2 \dot H^{\sigma -1 - \frac12}[J_j]} \ll_{M,\kappa}  \| \dA\|_{S^\sigma[J_j]} 
\]
Finally, we apply Theorem~\ref{thm:paradiff} successively on the intervals $J_j$; then (8) follows.

It remains to prove the bounds \eqref{fk-main}, \eqref{fk-2} and
\eqref{fk-3}.  The bounds \eqref{fk-2} and \eqref{fk-3} are the
difference counterparts of \eqref{err(kappa)full+}, respectively
\eqref{boxA-A}, and are proved in a very similar fashion.  Details are
omitted. We only remark that the requirement $\sigma < 1$ is not
needed here, and that these bounds hold for any $\dltf$-admissible
frequency envelope $c_k$ for $\dA$ in $S^1$.

We now turn our attention to the novel part of the argument, which is the bound for 
$ \Diff^\kappa_{\P A -\P\tA} A $. It is here that the condition $\sigma < 1$ pays a critical role.
This is done in the next lemma.  For later use we state the result in a more general fashion.
This will be needed again in the proof of Proposition~\ref{prop:ind-high}. A variation of the same argument will also be 
needed in Proposition~\ref{prop:ind-low}.

\begin{lemma} \label{l:box(a-ta)}
Let $J \subset I$. Let $c_k$, $d_k$, $b_k$ be frequency envelopes for $A,\tA$, respectively $\dA$ and $B$  in $S^1[J]$.
Then the expression  $\Diff^\kappa_{\P A -\P\tA} B$ can be estimated as follows:
\begin{equation}
\|  \Diff^\kappa_{\P A -\P\tA} B\|_{(N \cap L^2 \dot H^{ - \frac12})_f[J]} \lesssim_{M,\hM} 
2^{-c_{\sgm} \kappa} \| \dA\|_{S^\sigma_d[J]} \|B\|_{S^1_b[J]}, 
\end{equation}
where $f_k$ is given by
\begin{equation}
f_k = \left( \sum_{k' \leq k-\kappa}  d_{k'} + c_{k'} (c \cdot d)_{\leq k'} \right) b_k.
\end{equation}
\end{lemma}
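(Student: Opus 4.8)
The plan is to isolate the genuinely new ingredient — the gain $2^{-c_\sigma \kappa}$ — and to obtain it by revisiting the proofs of the paradifferential difference estimates of Section~\ref{subsec:ests-covwave} while keeping careful track of the regularity of the low-frequency input. Writing $\dA = A-\tA$, we have $\Diff^\kappa_{\P A -\P\tA}B = \Diff^\kappa_{\P \dA}B = \sum_k 2[P_{<k-\kappa}\P_\alpha \dA, \rd^\alpha P_k B]$, which splits as $\Diff^\kappa_{\dA_0}B + \Diff^\kappa_{\P_x \dA}B$ (the temporal part with $\P_0\dA = \dA_0$, and the Leray-projected spatial part). First I would dispose of the temporal term: since $A$ and $\tA$ are caloric Yang--Mills waves, Theorem~\ref{thm:main-eq} expresses $A_0$ and $\tA_0$ as envelope-preserving maps of order $\ge 2$ of the spatial data, so taking differences and estimating the bilinear pieces with the $\sigma$-graded versions of Propositions~\ref{prop:ell-bi} and \ref{prop:Q-bi} (together with Proposition~\ref{prop:uS-bnd}.(2)) places $\dA_0$ at the $\sigma$-level with the modified envelope $e_k = d_k + c_k(c\cdot d)_{\le k}$; feeding this into the difference analogues of \eqref{eq:diff-a0-X} and \eqref{eq:diff-bi-ell} (Proposition~\ref{prop:diff-bi}) yields an acceptable bound, in which the required $\kappa$-gain is automatic since these inputs are supported off the light cone. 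This is precisely the source of the $c_{k'}(c\cdot d)_{\le k'}$ term in $f_k$.

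It remains to treat the main term $\Diff^\kappa_{\P_x \dA}B$. Here I would reuse, essentially verbatim, the case analysis behind Propositions~\ref{prop:diff-single}, \ref{prop:diff-tri-nf} and \ref{prop:diff-tri}: these bound the contribution of each dyadic block $P_{k'}\P_x \dA$ with $k' < k-\kappa$, at output frequency $2^k$, using the secondary null structure obtained by reiterating the (difference of the) $\P A$ equation \eqref{eq:main-wave}, with no decay in $k-k'$ but each block contributing $O(1)$ in $S^1$-units — which is why one cannot sum when $\sigma = 1$. The new point is that $\dA$ is measured in $S^\sigma$: at frequency $2^{k'}$ one has $\nrm{P_{k'}\dA}_{S^1_{k'}} = 2^{(1-\sigma)(k'-k)}\, 2^{(1-\sigma)k} \nrm{P_{k'}\dA}_{S^\sigma_{k'}}$, so relative to the $\sigma$-normalization of the output at scale $2^k$ each low block is smaller by a factor $2^{-(1-\sigma)(k-k')}$ than a genuine $S^1$ coefficient would be. Since $k-k' > \kappa$, summing this geometric factor over $k' < k-\kappa$ produces the gain $2^{-c_\sigma\kappa}$ with $c_\sigma$ a positive multiple of $1-\sigma$ (say $c_\sigma = (1-\sigma)(1-\eps)$, the $(1-\eps)$ absorbing the slow-variation loss of the envelopes), and the remaining bookkeeping assembles the stated $f_k$, the $d_{k'}$ term accounting for the $\P_x\dA$ inputs. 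The high-modulation components ($X^{-\frac12+b_1,-b_1}$ and $\Box\uX^1$) are handled by the difference versions of Proposition~\ref{prop:paradiff-df-himod}, which are easier as no null structure is needed.

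The main obstacle is verifying that the regularity gain survives the secondary-null-form step: in reiterating the difference of \eqref{eq:main-wave} for $\P\dA$ one must apportion the bilinear inputs consistently — one factor carrying the common $S^1$ envelope $c$ and one carrying the difference envelope $d$ at the $\sigma$-level — and re-run the trilinear null-form estimates \eqref{eq:diff-tri-nf}, \eqref{eq:diff-tri} so that the $2^{-c_\sigma\kappa}$ frequency separation is drawn from the $\kappa$-gap built into $\Diff^\kappa$ and is not spent inside the null-form bounds themselves. This is exactly where $\sigma < 1$ is essential. A secondary, bookkeeping-level point is that $c_\sigma$ depends on $\sigma$, so in the applications (where $\kappa \gg_{M,\hM} 1$ is chosen afterwards, as in the proofs of parts (2), (7) and (8) of Theorem~\ref{thm:structure}) one fixes $\sigma<1$ first and then takes $\kappa$ large, allowing the implicit constants to depend on $\sigma$ as well. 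The same scheme applies mutatis mutandis when $B$ is a third function distinct from $A$, which is the generality needed in Propositions~\ref{prop:ind-high} and \ref{prop:ind-low}.
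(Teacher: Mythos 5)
Your high-level reading of the lemma is right: the gain comes from the frequency gap $k'<k-\kappa$ combined with $\sigma<1$ in the envelope computation, and you correctly recognize that the heavy lifting is done by the difference analogues of the paradifferential estimates of Section~\ref{subsec:ests-covwave}, with the secondary null structure playing the central role. But the proposed organization of the proof goes off the rails at the first step, in a way that would actually kill the argument if followed literally.

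You open by splitting $\Diff^\kappa_{\P\dA}B = \Diff^\kappa_{\dA_0}B + \Diff^\kappa_{\P_x\dA}B$ and "disposing of the temporal term" first, claiming the $\kappa$-gain there is "automatic since these inputs are supported off the light cone." This is the gap. The quadratic main part of $\dA_0$ is (schematically) $\lap^{-1}\bigl([\dA,\rd_t A]+[\tA,\rd_t\dA]\bigr)$; it is \emph{not} off the cone and it does not admit a $\kappa$-gain by itself when you feed it into \eqref{eq:diff-bi-ell}. It is exactly this piece that must be kept together with the matching quadratic main part of $\P_x\dA$ (namely $\Box^{-1}\P\bigl([\dA,\rd_xA]+\cdots\bigr)$) so that the Lorentz contraction in Lemma~\ref{lem:nf-tri} can be formed; that pairing is what Proposition~\ref{prop:diff-tri-nf} encodes, and it is the \emph{only} place in the argument where the "$\calH$" region (both the low-frequency input and the inner bilinear output at low modulation) is controllable. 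Your later invocation of Proposition~\ref{prop:diff-tri-nf} for the spatial piece alone is therefore inconsistent with your earlier temporal/spatial split — that proposition is stated and proved for $\Diff^\kappa_{\P A}B$ with \emph{both} $A_0$ and $\P_x A$ fed in as a pair.

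The paper's proof of the lemma uses a genuinely different decomposition that you should adopt: one splits the whole pair $\P\dA=(\dA_0,\P_x\dA)$ into three pieces, $\dA^{main}$ (the $[A,\rd_\alpha A]$ quadratic contribution, kept temporal-and-spatial together and estimated via Proposition~\ref{prop:diff-tri-nf}), $\dA^2$ (the non-null quadratic term $[A_\alpha,\rd^\alpha A]$ in the $\P_x A$ equation, handled via Proposition~\ref{prop:diff-tri} after first verifying \eqref{eq:diff-tri-fe-hyp-B12}--\eqref{eq:diff-tri-fe-hyp-AB} using Proposition~\ref{prop:uS-bnd}.(2) and the bounds for $\dA^{main}$ and $\dA^3$), and $\dA^3$ (cubic and higher, plus the $\bfQ$-terms, plus the Cauchy data of $\P_xA$, handled via Proposition~\ref{prop:diff-bi} with \eqref{eq:Q}, \eqref{eq:A0-tri}, \eqref{eq:Rj}). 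Only $\dA^3$ is treated "temporal then spatial" the way you propose to treat everything. The term $c_{k'}(c\cdot d)_{\le k'}$ in $f_k$ arises in the $\dA^2$ step through the modified envelope $e_{k'}$ of Proposition~\ref{prop:uS-bnd}.(2), not from the temporal piece as you assert. One more bookkeeping point: with the paper's statement of the lemma, the $2^{-c_\sigma\kappa}$ factor is what you deduce \emph{afterwards} by choosing $d_k$ so that $2^{(\sigma-1)k}d_k$ is a $(-\dlt,1-\sigma+\dlt)$ admissible envelope and noting $f_k\lesssim d_{k-\kappa}c_k\lesssim 2^{-\frac12(1-\sigma)\kappa}d_k$; the lemma itself produces only the $f_k$ formula, so the real work is in getting the $\sum_{k'\le k-\kappa}$ structure right, which is what the three-piece decomposition accomplishes.
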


Before proving the lemma we show that it implies \eqref{fk-main}. To measure $\dA$ in $S^\sigma$ we can choose the frequency
envelope $d_k$ with the property that $2^{(\sigma -1)k} d_k$ is a $(-\dlt, 1-\sigma+   \dlt)$ admissible 
envelope with $ \dlt < \frac12(1-\sigma)$, $\dlt \ll \dltf$, and so that 
\[
\| \dA \|_{S^\sigma[J]}^2  \approx \sum_k  (2^{(\sigma -1)k} d_k)^2.
\]
 Then we have 
\[
f_k \lesssim_M d_{k-\kappa} c_k \lesssim_M 2^{-\frac12(1-\sigma)\kappa} d_k,
\]
and \eqref{fk-main} follows. We return to the proof of the lemma:

 \begin{proof}[Proof of Lemma~\ref{l:box(a-ta)}]
We first  recall the equations for $\P A_x$ and $A_0$.
Following Theorem~\ref{thm:main-eq}, these have the form:
\begin{equation}
\begin{split}
\Box \P A_x = & \ \P [ A^\ell,\partial_x A_\ell] - 2  \P [ A_{\ell}, \partial^{ell} A_x] + \P (R(A) + [A_\ell,[A^\ell,A_x]]),
\\
\Delta A_0 = & \   [ A^\ell,\partial_x A_\ell] + \bfQ(A,\partial_0 A) + \Delta \A_0^3.
\end{split}
\end{equation}
Based on this equations we consider the following decomposition of $\P A = (\P A_x,A_0)$:
\[
\P A =  (A_x^{main},A_0^{main}) + (A_x^2,0) + (A_x^3, A_0^3),
\]
where the three components are determined by the following three sets of equations:
\[
\begin{split}
\Box A_x^{main} = & \ \P [ A^\ell,\partial_x A_\ell], \qquad  A_x^{main}[0] = 0,
\\
\Delta A_0^{main} = & \   [ A^\ell,\partial_x A_\ell], 
\end{split}
\]
respectively $A_0^2 = 0$ and
\[
\Box  A_x^2 = - 2  \P [ A_{\ell}, \partial^{\ell} A_x] \qquad  A_x^{2}[0] = 0,
\]
and finally
\begin{equation}
\begin{split}
\Box A_x^3 = & \  \P (R(A) + \P [A_\ell,[A^\ell,A_x]]), \qquad  A_x^{3}[0] = \P A[0],
\\
\Delta A_0^3 = & \  \bfQ(A,\partial_0 A) + \Delta \A_0^3.
\end{split}
\end{equation}

We also use the same set of equations and the same decomposition for $\P \tA$, and take
the differences $\dA^{main}$, $\dA^2$ respectively $\dA^3$.   We are now ready to estimate the three contributions.

\pfsubstep{The contribution of $\dA^{main}$} For this we use 
the estimates in Proposition~\ref{prop:diff-tri-nf}, which yield
\begin{equation}\label{diff-k-main}
\|  \Diff^\kappa_{\P A^{main} -\P\tA^{main}} B \|_{(N \cap L^2 \dot H^{- \frac12}[J])_f}
\lesssim_{M}  2^{-\sigma \kappa} \|\dA\|_{S^1_d[J]} \| B\|_{S^1_b [J]}, 
\end{equation}
where 
\[
f_k =  \left( \sum_{k' \leq k - \kappa} c_{k'} d_{k'}  \right) b_k .
\]
which suffices. For later use, we also record the following consequence of Proposition~\ref{prop:Ax-bi},
which provides a bound for $\|\Box \dA_x^{main}\|_{N \cap L^2 \dot H^\frac12}$:
\begin{equation}\label{pa-main-c}
\| \dA_x^{main}\|_{S^{1}_{cd}[J]} \lesssim \| \dA\|_{S^1_d[J]} ( \| A \|_{S^1_c[J]} + \| \tA\|_{S^1_c[J]}).
\end{equation}

\pfsubstep{ The contribution of $\dA^3$} This is more easily dealt with using instead Proposition~\ref{prop:diff-bi}.
We start with $A_0^3 - \tA_0^3$, which is estimated using the bounds \eqref{eq:Q-L2L2} and \eqref{eq:Q} in 
Proposition~\eqref{prop:Q-bi} for the first term, respectively \eqref{eq:A0-tri} for the second, by
\begin{equation}\label{pa-3a}
\| A_0^3 - \tA_0^3\|_{ (L^1 L^\infty \cap L^2 \dot H^\frac32)_{cd}[J]} \lesssim_M \| \dA\|_{S^1_d[J]} ( \| A \|_{S^1_c[J]} + \| \tA\|_{S^1_c[J]}).
\end{equation}
Similarly, for $A_x^3 -  \tA_x^3$ we can apply the difference bound associated to \eqref{eq:Rj} for $R_x$ 
and Strichartz estimates  for the remaining cubic term to obtain
\begin{equation}\label{pa3-b}
\| \Box ( A_x^3 - \tA_x^3)\|_{( L^1 L^2 \cap L^2 \dot H^{-\frac12})_{cd}[J]}  \lesssim_M \| \dA\|_{S^1_d[J]} ( \| A \|_{S^1_c[J]} + \| \tA\|_{S^1_c[J]}).
\end{equation}
As a consequence this also gives
\begin{equation}\label{pa-3c}
\| A_x^3 - \tA_x^3 \|_{S^1_{cd}[J]}  \lesssim_M \| \dA\|_{S^\sigma_d[J]} ( \| A \|_{S^1_c[J]} + \| \tA\|_{S^1_c[J]}).
\end{equation}
Using \eqref{pa-3a} and \eqref{pa-3c} in Proposition~\ref{prop:diff-bi} yields the desired bound
\begin{equation}\label{diff-k-3}
\|  \Diff^\kappa_{\dA^{3}} B \|_{(N \cap L^2 \dot H^{ - \frac12})_f [J]}
\lesssim_{M,\hM}   \|\dA\|_{S^1_d[J]} \| B\|_{S^1_b [J]} (\| A \|_{S^1_c[J]} + \| \tA\|_{S^1_c[J]})
\end{equation}
with the same $f_k$ as in the previous case.

\pfsubstep{ The contribution of $A^2$} Here we will use Proposition~\ref{prop:diff-tri}. For this we need to 
verify its hypotheses. We begin with \eqref{eq:diff-tri-fe-hyp-AB}, for which we combine \eqref{pa-main-c}
and \eqref{pa-3c} to conclude that
\begin{equation}
\|  \dA_x^2 \|_{S^{1}_d[J]} \lesssim_M \| \dA \|_{S^1_d[J]} ,
\end{equation}
Next we consider \eqref{eq:diff-tri-fe-hyp-B12}. Using the second part of Proposition~\ref{prop:uS-bnd} we obtain
\begin{equation}
\nrm{\dA}_{\uS^{1}_{e}[J]} + \nrm{(\dA_{0}, \P^{\perp} \dA)}_{Y^{1}_{e}[J]} \lesssim_M \| \dA \|_{S^1_d[J]}, 
\end{equation}
with
\[
e_k = d_k + c_k (c \cdot d)_{<k}.
\]
The last two bounds allow us to use Proposition~\ref{prop:diff-tri}. This yields
\begin{equation}\label{diff-k-2}
\|  \Diff^\kappa_{\dA^{2}} B \|_{(N \cap L^2 \dot H^{- \frac12})f[J])}
\lesssim_{M,\hM}   \|\dA\|_{S^1_d[J]} \| B\|_{S^1_c [J]} (\| A \|_{S^1_c[J]} + \| \tA\|_{S^1_c[J]})
\end{equation}
where 
\[
f_k =   \left( \sum_{k' \leq k - \kappa}    d_{k'} + e_{k'} d_{k'}  \right) b_k .
\]
The proof of the lemma is now concluded.
\end{proof}

  \pfstep{Proof of (9)} This is a direct consequence of the bounds \eqref{eq:sf-a0} and \eqref{eq:A0-tri}
for the quadratic part $\bfA^2_0$ of $A_0$, respectively its cubic and higher part $\bfA^3_0$. 
\end{proof}

\subsection{Caloric Yang--Mills waves with small energy dispersion 
on a short interval} \label{subsec:structure-ed}
Next, we consider the effect of small inhomogeneous energy dispersion on a time interval with compatible scale.

\begin{theorem}\label{thm:structure-ed}
  Let $A$ be a caloric Yang--Mills wave on a time interval $I$ with energy $\nE$, obeying
  \eqref{eq:cal-bnd}, \eqref{eq:S-bnd} as well as the smallness
  relations
\begin{equation} \label{eq:ed-small} 
\|F\|_{ED_{\geq 0}[I]}  \leq \epsilon, \qquad |I|  \leq \epsilon.
\end{equation}
Let $c$ be a $\dltf$-frequency envelope for $A$ in
$S^{1}[I]$. Then for sufficiently small $\eps > 0$ depending on $M$
and $\hM$, the following properties hold:
\begin{enumerate}
\item (Small energy dispersion below scale $1$ for $A$)
\begin{equation}\label{eq:eps-small}
	\nrm{A}_{ED_{\geq 0}^{1}[I]} \aleq_{\nE, \hM} \eps^{\dltc}
\end{equation}

\item (Elliptic component bounds) 
\begin{equation} \label{eq:Y-small}
	\nrm{A_{0}}_{Y^{1}_{c}[I]}
	+ \nrm{\P^{\perp} A}_{Y^{1}_{c}[I]} \aleq_{M, \hM} \eps^{\dltc}.
\end{equation}

\item (High modulation bounds)
\begin{equation}\label{eq:X-small}
\nrm{\Box A}_{L^{2} \dot{H}^{-\frac{1}{2}}_{c}[I]} \aleq_{M, \hM} \eps^{\dltc}
\end{equation}

\item (Paradifferential formulation)
\begin{equation}\label{eq:para-f-small}
\nrm{ \Box A + \Diff_{\P A}^{\kpp} A }_{(N \cap L^{2} \dot{H}^{-\frac{1}{2}})_{c}[I]} \aleq_{M, \hM}
\eps^{\dltg} 2^{C \kappa} 
\end{equation}

\item (Approximate linear energy conservation) For any $t_{1}, t_{2} \in I$, 
\begin{equation} \label{eq:lin-en-consv}
\Abs{\nrm{\nb A(t_{1})}_{L^{2}}^{2} - \nrm{\nb A(t_{2})}_{L^{2}}^{2}} \aleq_{M, \hM} \eps^{\dltg}
\end{equation}
 
\item (Approximate  conservation of $\hM$) For any $t_{1}, t_{2} \in I$, 
\begin{equation} \label{eq:q-consv}
\Abs{ \hM(A(t_{1}) - \hM(A(t_{2}))} \aleq_{\nE, \hM} \eps^{\dltg}
\end{equation}

\end{enumerate}
\end{theorem}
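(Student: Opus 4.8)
The plan is to obtain (1)--(4) as a single \emph{smallness package} for $A$ --- the energy-dispersed refinements of Proposition~\ref{prop:uS-bnd} and of parts (2),(4),(5) of Theorem~\ref{thm:structure} --- and then to read off (5) and (6) from it. Relative to Theorem~\ref{thm:structure}, only two genuinely new inputs are needed: transferring the energy-dispersion hypothesis from the curvature $F$, where it is assumed, down to the connection $A$, which is (1); and checking that, once (1) and the high-modulation bound (3) hold, the pair $(A,I)$ is $(\eps^{\theta},C(\nE,\hM)(1+M))$-energy dispersed in the sense of Definition~\ref{def:eps-disp} for some universal $\theta>0$ --- this is exactly what activates the ``$\cdots$-ed'' variants of the multilinear estimates of Section~\ref{sec:ests}. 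Throughout, constants depend on $\nE$ and $\hM$.

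\emph{Steps (1) and (3).} For (1), fix $t\in I$ and apply Proposition~\ref{prop:ymhf-ed} to $a=A_x(t)\in\calC_\hM$ with datum $e=F_{0x}(t)$: the hypothesis $\nrm{F}_{ED_{\geq 0}[I]}\leq\eps$ forces the $\dlta$-envelope $d_k$ of that proposition (an envelope for $(f,e)(t)$ in $\dot W^{-2,\infty}$) to be $\lesssim\eps$ up to slowly-varying corrections, so \eqref{eq:ymhf-ed-lin} evaluated at $s=0$ gives $2^{-k}\nrm{P_k A_x(t)}_{L^\infty}+2^{-2k}\nrm{P_k b(t)}_{L^\infty}\lesssim\eps^{1/2}$ uniformly in $t$; combined with $\rd_t A_j=b_j+2\covD_j A_0+\mathcal O([A_0,A])$ and a short bootstrap for the fixed-time smallness of $A_0$ (or, after the fact, (2)), this yields \eqref{eq:eps-small}. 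For (3), write $\Box A_j=\Box_A A_j-(\Box_A-\Box)A_j$ and bound the right side in $L^2\dot H^{-\frac12}[I]$ using \emph{only} the $\Str^1$-based estimates --- \eqref{eq:Ax-bi-df-himod}, \eqref{eq:Ax-bi-cf-himod} and \eqref{eq:Rj} for $\Box_A A$, and \eqref{eq:ADA-L2L2}--\eqref{eq:AAA-L2L2} together with the elliptic bounds \eqref{eq:ell-bi-L2L2}, \eqref{eq:Q-L2L2}, \eqref{eq:DA-tri}, \eqref{eq:A0-tri}, \eqref{eq:DA0-tri} for $(\Box_A-\Box)A$; the outcome is a bound for $\nrm{\Box A}_{L^2\dot H^{-\frac12}[I]}$ by a polynomial in $\sup_k\nrm{P_k A}_{\Str^1[I]}$ vanishing to order $\geq2$. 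One then gets $\sup_k\nrm{P_k A}_{\Str^1[I]}\lesssim\eps^{\dltc}$ by interpolating the sharp-Strichartz control coming from $\nrm{A}_{S^1[I]}\leq M$ against the pointwise bound (1): the non-sharp slack $\dlts$ in the definition of $\Str$ leaves exactly the room required, and the short interval $\abs I\leq\eps$ handles the frequencies $\lesssim1$ where (1) degrades. Plugging back in gives (3); and with \eqref{eq:eps-small}, (3), $\nrm{A}_{S^1[I]}\leq M$ and $\abs I\leq\eps$ all available, $(A,I)$ is $(\eps^\theta,C(1+M))$-energy dispersed, so \eqref{eq:ed-str} also returns $\sup_k\nrm{P_k A}_{\Str^1[I]}\lesssim\eps^{\theta\dlta}$.

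\emph{Steps (2) and (4).} For (2) we run the proof of Proposition~\ref{prop:uS-bnd} with the energy-dispersed estimates \eqref{eq:ell-bi-ed}, \eqref{eq:d-ell-bi-ed}, \eqref{eq:Q-ed} in place of the $\Str^1$-ones for the quadratic parts of \eqref{eq:main-A0}, \eqref{eq:main-DA0} and of \eqref{eq:dt-divA}, handling the cubic-and-higher pieces by \eqref{eq:A0-tri}, \eqref{eq:DA0-tri}, \eqref{eq:DA-tri} with the small $\Str^1$ envelope; this gives \eqref{eq:Y-small}. For (4) we reprise the proof of part (5) of Theorem~\ref{thm:structure}, $\Box A+\Diff^{\kpp}_{\P A}A=\Box_A A+R^{\kpp}_A A$, now estimating each term by its energy-dispersed refinement --- \eqref{eq:Ax-bi-df-ed}, \eqref{eq:Ax-bi-cf-ed} for the $\calM^2$-nonlinearities, \eqref{eq:Rj} for $R_j$, \eqref{eq:diff-cf-X}, \eqref{eq:diff-cf-N} for $\Diff^{\kpp}_{\P^\perp A}$, \eqref{eq:rem2-ed} for $\Rem^{\kpp,2}_A$, \eqref{eq:rem3-fe} for $\Rem^{\kpp,3}_A$ --- and feeding (2) into all elliptic norms that appear. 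Since the two inputs of every quadratic term are the \emph{same} energy-dispersed $A$, each term gains a power of $\eps$, at the cost only of a $2^{C\kpp}$ loss from the $\Diff^\kpp$/$\Rem^\kpp$ bookkeeping, which is \eqref{eq:para-f-small}.

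\emph{Steps (5) and (6), and the main obstacle.} For (5), multiply $\Box A_j+(\Diff^{\kpp}_{\P A}A)_j=G^{\kpp}_j$, where $\nrm{G^{\kpp}}_{(N\cap L^2\dot H^{-\frac12})[I]}\lesssim\eps^{\dltg}2^{C\kpp}$ by (4), by $2\rd_t A_j$ and integrate over $[t_1,t_2]\times\R^4$: the source contributes $\lesssim\nrm{G^\kpp}_{N\cap L^2\dot H^{-\frac12}[I]}\bigl(\nrm{\rd_t A}_{L^\infty L^2}+\nrm{\rd_t A}_{L^2\dot H^{\frac12}}\bigr)\lesssim\eps^{\dltg}2^{C\kpp}$, while the paradifferential term contributes only the fixed-time boundary term $\bigl[\int\langle\Diff^\kpp_{\P A}A,A\rangle\,dx\bigr]_{t_1}^{t_2}$, small because $\P A$ is small pointwise by (1), plus commutator and adjoint errors controlled by $2^{-\dltb\kpp}$ via Proposition~\ref{prop:diff-aux}; choosing $\kpp\simeq c\log_2(1/\eps)$ to balance $\eps^{\dltg}2^{C\kpp}$ against $2^{-\dltb\kpp}$ gives \eqref{eq:lin-en-consv}, with $\dltg$ possibly replaced by a smaller universal power that the hierarchy \eqref{eq:deltas} absorbs. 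For (6), by part (7) of Theorem~\ref{thm:structure} it suffices to treat a regular $A$ and pass to the limit; then $\hM(A(t))=\iint_{\R^4\times(0,\infty)}\abs{F[A_x(t,\cdot,s)]}^3\,dx\,ds$ (cf.\ \eqref{Q(A)}) is differentiable in $t$ with $\rd_t\hM(A(t))=3\iint\abs F\,\langle\rd_t F,F\rangle$, and by gauge invariance of $\hM$ the pure-gauge $\covD a_0$-component of $\rd_t A$ drops out; the surviving variation, governed by the linearized heat flow $B(s)$ and by $A_0(s)$, is controlled --- with both $\eps^{1/2}$-smallness and $\brk{2^{2k}s}^{-N}$ decay --- by \eqref{eq:ymhf-ed-lin}--\eqref{eq:ymhf-ed-nonlin}, which makes the $s$-integral converge and, after integrating in $t$ over $\abs I\leq\eps$, yields \eqref{eq:q-consv}. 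A cleaner alternative is to invoke Lemma~\ref{lem:hM-ed}: the hypotheses give $\hM(A(t))\leq\hM<\infty$ for all $t$, so the ``large'' alternative of the dichotomy is excluded once $\eps$ is small depending on $\hM$, whence $\hM(A(t))\lesssim_\nE\eps^{\dltc}$ uniformly in $t$, and \eqref{eq:q-consv} is immediate since $\dltg\leq\dltc$. The main obstacle is the circularity in (3)/the energy-dispersed claim: no ``$\cdots$-ed'' multilinear estimate is available until $(A,I)$ is known to be energy dispersed, which itself requires (3); the exit is to first extract $\sup_k\nrm{P_k A}_{\Str^1[I]}\lesssim\eps^{\dltc}$ by bare interpolation of (1) against the energy bound, before any nonlinear machinery is turned on. A lesser difficulty is the $s$-integral of $\rd_t F$ along the dynamic heat flow in the direct proof of (6), which is why routing (6) through Lemma~\ref{lem:hM-ed} is preferable.
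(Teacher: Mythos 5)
Your treatment of items (1)--(5) matches the paper's approach in essence: (1) from Proposition~\ref{prop:ymhf-ed} evaluated at $s=0$, (2)--(3) by redoing Proposition~\ref{prop:uS-bnd}/Theorem~\ref{thm:structure}(2) with the $\Str^1$-based and ``-ed'' multilinear estimates, (4) by the same paradifferential decomposition $\Box A+\Diff^\kpp_{\P A}A=\Box_A A+R^\kpp_A A$ with energy-dispersed refinements, and (5) by integrating $\langle(\Box+\Diff^\kpp_{\P A})A,\rd_t A\rangle$, bounding boundary terms and Proposition~\ref{prop:diff-aux} errors, then optimizing $\kpp$. Your observation that the circularity is broken by first extracting $\sup_k\|P_k A\|_{\Str^1[I]}\lesssim\eps^{\dltc}$ from (1) and the energy bound before invoking ``-ed'' estimates is exactly the mechanism the paper relies on (through \eqref{eq:ed-str}). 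A minor inaccuracy: in (1) you write $\rd_t A_j=b_j+2\covD_j A_0+\mathcal O([A_0,A])$, but in the caloric gauge one simply has $\rd_t A=b$ (the tangent to $\calC$), so no bootstrap on $A_0$ is needed there; this does not change the outcome.

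The genuine gap is in (6). You prefer to invoke Lemma~\ref{lem:hM-ed}(b) and the large/small dichotomy to conclude $\hM(A(t))\lesssim_\nE\eps^{c}$ uniformly in $t$. But Lemma~\ref{lem:hM-ed}(b) requires \emph{full} energy dispersion, $\|f\|_{ED}\leq\eps$, obtained by sending $m\to-\infty$ in part~(a). The hypothesis \eqref{eq:ed-small} of this theorem gives only $\|F\|_{ED_{\geq 0}[I]}\leq\eps$, i.e.\ dispersion above the unit frequency scale; low frequencies $k\ll0$ carry weight $2^{\dltb k}$ and are uncontrolled. Thus the dichotomy (and the uniform smallness of $\hM(A(t))$) is simply not available here --- that step is legitimate in the proof of Theorem~\ref{thm:ed-first} in Section~\ref{subsec:ed-thm}, which does assume full $ED$, but not in the present setting. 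You also flag that the direct route (differentiating $\hM(A(t))=\int_0^\infty\|F(s)\|_{L^3}^3\,ds$ in $t$) runs into trouble near $s=0$, where the parabolic decay factor is $\aeq1$, so you cannot fall back on it either. The paper's actual argument resolves both difficulties at once by inserting the $s=1$ slice: writing $\hM(A(t_1))-\hM(A(t_2))$ as the sum of $\hM(A(t_1,1))-\hM(A(t_2,1))$ and two fixed-time differences $\hM(A(t_i))-\hM(A(t_i,1))$. The first piece is a clean time integral at $s\geq1$, where the heat flow has smoothed, and is $\lesssim|I|\lesssim\eps$; the two fixed-time pieces are handled by Lemma~\ref{lem:hM-ed}(a) with $m=0$, which only requires $ED_{\geq 0}$ --- exactly the hypothesis given. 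So neither of your two routes closes (6), and the $s=1$ splitting is the missing idea.
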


\begin{proof}
Again, we omit the dependence of constants on $\hM$. The property that will be used here repeatedly 
is \eqref{eq:ed-str}, which asserts that all non-sharp Strichartz norms are small. We recall it here for convenience:
\begin{equation}
\sup_k \| P_k F \|_{\Str} \lesssim_M \epsilon^{\dlta} \aleq \eps^{\dltc}.
\end{equation}

\pfstep{Proof of (1)} This is a consequence of the caloric bound \eqref{eq:ymhf-ed-lin} applied with 
$d_k = \epsilon$. 

\pfstep{Proof of (2)} We repeat the arguments in the proof of Proposition~\ref{prop:uS-bnd}.(1). The bounds for the cubic 
and higher terms in Theorem~\ref{thm:main-eq} use only the Strichartz $\Str^1$ norms, so the contributions 
of $\A_0^3$ in $A_0$, $\DA^3$ in $\P^\perp A$ and $\DA_0^3$ in $\partial_t A_0$ are easily estimated.
For the quadratic terms we replace \eqref{eq:ell-bi} with  \eqref{eq:ell-bi-ed} in the case of $A_0$, and then  \eqref{eq:Q}
with \eqref{eq:Q-ed} in the case of $\P^\perp A$ and $\partial_t A_0$; again the smallness comes from $\Str^{1}$.

\pfstep{Proof of (3)} We consider the terms in the $A_x$ equation in Theorem~\ref{thm:main-eq}. 
The cubic terms $R_x$ and $[A_\ell,[A^\ell,A]]$ are  estimated only in terms of $\| A\|_{\Str^1}$.
For the quadratic terms we use instead the bounds \eqref{eq:ell-bi-L2L2}, \eqref{eq:Q-L2L2}, \eqref{eq:ADA-L2L2} and 
 \eqref{eq:divAA-L2L2}; all smallness come from $\Str^{1}$.
 
\pfstep{Proof of (4)} 
We first establish the similar bound for $\Box_A A$, which is given by  the equation \eqref{eq:main-wave}. For the quadratic terms
we use \eqref{eq:Ax-bi-df-ed} and \eqref{eq:Ax-bi-cf-ed}.  For the cubic term we use  \eqref{eq:Rj}. Hence it remains to estimate
the difference 
\[
R^{\kappa}_A A= \Diff^\kappa_{\P^\perp A} A  -  \Rem^{\kappa,2}_A A  - \Rem^{\kappa,3}_A A. 
\]
For the first term we use \eqref{eq:diff-cf-N}, where the $\epsilon$
smallness comes from the $L^1 L^\infty$ norm of $\P^\perp A$ due to
the bounds \eqref{eq:Q-ed}, respectively \eqref{eq:DA-tri} for the
quadratic, respectively the cubic part of $A^\perp$.

For the second term we use the bound \eqref{eq:rem2-ed}. The second term on the right is small due to \eqref{eq:Y-small},
so we obtain
\[
\|  \Rem^{\kappa,2}_A A  \|_{(N \cap L^2 \dot H^{-\frac12})_c} \lesssim_M (2^{-\dltb \kappa} + 2^{C\kappa} \epsilon^{\dltc})
\|A\|_{\uS^1_c}. 
\]
Now we observe that on the right we can replace $\kappa$ with any $\kappa' > \kappa$ without any change in the proof.
Then it suffices to optimize with respect to $\kappa'$.

For the third term we use directly \eqref{eq:rem3-fe}.

\pfstep{Proof of (5)} This statement is a corollary of \eqref{eq:para-f-small}. For the proof, we introduce the \emph{linear energy}
\begin{equation*}
	E_{lin}(A)(t) = \frac{1}{2} \int_{\bbR^{4}} \sum_{\mu = 0}^{4} \abs{\rd_{\mu} A(t)}^{2} \, \ud x.
\end{equation*}
Given any interval $I' = (t_{1}, t_{2}) \subseteq I$, we consider
\begin{equation*}
	\calI = \int_{\bbR \times \bbR^{4}} \chi_{I'} \brk{(\Box + \Diff_{\P A}^{\kpp}) A, \rd_{t} A} \, \ud t \ud x.
\end{equation*}
Integrating by parts, we may rewrite
\begin{align*}
	\calI = & E_{lin}(A)(t_{1}) - E_{lin}(A)(t_{2}) \\
	& + \frac{1}{2} \int \brk{\Diff^{\kpp}_{\P A} A, A} (t_{2}) \, \ud x - \frac{1}{2}\int \brk{\Diff^{\kpp}_{\P A} A, A} (t_{1}) \, \ud x \\
	& - \frac{1}{2} \int_{\bbR \times \bbR^{4}} \chi_{I'}  \brk{[\rd_{t}, \Diff_{\P A}^{\kpp}] A, A} \, \ud t \ud x \\
	& + \frac{1}{2} \int_{\bbR \times \bbR^{4}} \chi_{I'}  \brk{(\Diff_{\P A}^{\kpp} - (\Diff_{\P A}^{\kpp})^{\ast}) A, \rd_{t} A} \, \ud t \ud x.	
\end{align*}
By Proposition~\ref{prop:diff-aux} and the straightforward bound 
\begin{equation*}
	\int \brk{\Diff^{\kpp}_{\P A} A, A} (t) \aleq 2^{-\kpp} \nrm{(A, A_{0})(t)}_{\dot{H}^{1}} \nrm{\nb A(t)}_{L^{2}}^{2} \aleq_{M} 2^{-\kpp},
\end{equation*}
we see that
\begin{equation}
	\abs{\calI - (E_{lin}(A)(t_{1}) - E_{lin}(A)(t_{2}))} \aleq_{M} 2^{-c \kpp}.
\end{equation}
On the other hand, by duality, we may put $\chi_{I'} (\Box + \Diff_{\P A}^{\kpp}) A$ and $\chi_{I'} \rd_{t} A$ in $N$ and $N^{\ast}$, respectively. Then by Proposition~\ref{prop:int-loc}, \eqref{eq:S-bnd} and \eqref{eq:para-f-small}, we have
\begin{equation}
	\abs{\calI} \aleq_{M} \eps^{\dltg} 2^{C \kpp}.
\end{equation}
Optimizing the choice of $\kpp$, \eqref{eq:lin-en-consv} follows.

\pfstep{Proof of (6)} We will use the caloric flow in order to compare $\hM(A(t_1))$ and $\hM(A(t_2))$. Denote by $A(t,s)$ 
the caloric flow of $A$. We will split the difference in three as 
\[
\hM(A(t_1))- \hM(A(t_2))\!  = \!  \hM(A(t_1,1) - \hM(A(t_2,1)  + \hM(A(t_1)) - \hM(A(t_1,1))  -  \hM(A(t_2)) + \hM(A(t_2,1)) 
\]
For the first difference we estimate   at parabolic time $s = 1$ as follows:
\[
\begin{split}
|\hM(A(t_1,1)) - \hM(A(t_2,1))| \ & \lesssim   \int_{t_1}^{t_2} \int_{\R^4} \frac{d}{dt} |F(s,t,x)|^3 dx dt
\\
& \lesssim   \int_{t_1}^{t_2} \int_{\R^4}  |F(1,t,x)|^2 | \partial_t F(1,x,t)| dx dt
\\
& \lesssim   \int_{t_1}^{t_2} \int_{\R^4}  |F(s,t,x)|^2 | \partial_t F| dx dt
\\
& \lesssim_{\nE,\hM}   |t_1-t_2| c_1^3,
\end{split}
\]
where at the last step we have simply used the fixed time $L^2$ bounds given by Proposition~\ref{prop:ymhf-fe}(1) and Bernstein's inequality.  Now we gain smallness from the time interval. 

For the remaining two differences we only need fixed time estimates, which for reference we state in the following

\begin{lemma}\label{lem:hM-ed}
Let $a \in \calC$ be a caloric connection with energy $\nE$ and $\hM(A) = \hM$, and $A$ its caloric Yang--Mills 
flow.

a) Assume that $a$ is energy dispersed at high frequencies,
\begin{equation}
\| f \|_{ED_{\geq m} } \leq \eps.
\end{equation}
Then for its caloric Yang--Mills heat flow $A(s)$ we have
\begin{equation}
\hM(a) - \hM(A(2^{-2m})) \lesssim_{\nE,\hM} \eps^{c}.
\end{equation}

b) If $a$ is fully energy dispersed,
\begin{equation}
\| f\|_{ED } \leq \eps,
\end{equation}
then  we have
\begin{equation}
\hM(a)  \lesssim_{\nE,\hM} \eps^{c}.
\end{equation}
\end{lemma}

\begin{proof}
  a) By scaling we can set $m = 0$. Denote by $c_k$ a frequency
  envelope for $f$ in $L^2$, and by $d_k$ a frequency envelope for $f$
  in $\dot W^{-2,\infty}$.  By the energy dispersion bound we have
  $d_k \leq \epsilon$ for $k \geq 0$.  By
  Proposition~\ref{prop:ymhf-ed} we have the $L^2$ bound
\[
\| P_k F \|_{L^2} \lesssim_{\nE,\hM}   c_k \la 2^{2k} s \ra^{-N},
\]
respectively the $L^\infty$ bound
\[
\| P_k F \|_{L^\infty} \lesssim_{\nE,\hM} 2^{2k} d_k^\frac12  \la 2^{2k} s \ra^{-N}.
\]

We use these bounds to estimate the difference
\[
\begin{split}
\calQ(a) - \calQ(A(1)) & \ =   \int_0^1 \int_{\R^4} |F(s,t,x)|^3 dx ds
\\
&\ \lesssim \sum_{k_1 \leq k_2 \leq k_3}  \int_0^1 \int_{\R^4} |P_{k_1} F(s,t,x)||P_{k_2} F(s,t,x)||P_{k_3} F(s,t,x)| dx ds
\\
& \ \lesssim_{\nE,\hM}  \ \sum_{k_1 \leq k_2 \leq k_3}  \frac{1}{1+2^{2k_3}} 2^{2k_1} d_{k_1}^\frac12 c_{k_2}  c_{k_3} 
\\
& \ \lesssim  \sum_{ 1 \leq k_3} d_{k_3}^\frac12  c_{k_3}^2 
\\ 
 & \ \lesssim \epsilon^{\frac12}
\end{split}
\]
where at the next to last step we have used both the low frequency decay and the off-diagonal decay for the summation in 
$k_1$ and $k_2$. 

b) This follows by letting $m \to -\infty$ in part (a). The proof of the Lemma is concluded.
\end{proof}

The proof of \eqref{eq:q-consv} is also concluded.
\end{proof}

\subsection{ The dynamic Yang--Mills heat flow of a caloric Yang--Mills
  wave} \label{subsec:structure-heat} Here we investigate the
structure of the dynamic Yang--Mills heat flow of a caloric
Yang--Mills wave $A$ with finite $S^{1}$-norm. As before, we consider
two cases: (1) when $A$ only obeys a finite $S^{1}$-norm bound; and
(2) when $A$ has small inhomogeneous energy dispersion on a short time
interval of compatible scale.

In the general case, we have the following structure theorem.
\begin{theorem} \label{thm:structure-heat} Let $A$ be a caloric
  Yang--Mills wave with energy $\nE$ on a time interval $I$, obeying \eqref{eq:cal-bnd}
  and \eqref{eq:S-bnd}. Let $A_{t,x}(s)$ be the dynamic Yang--Mills
  heat flow of $A_{t,x}$ at heat-time $s > 0$ in the caloric
  gauge. Then the following properties hold:
  \begin{enumerate}
  \item (Fixed-time bounds) For any $t \in I$, let $c^{(0)}(t)$ be a
    $\dltf$-frequency envelope for $\nb A(t)$ in $L^{2}$. Then
    \begin{align}
      \nrm{P_{k} (\nb A(s) - \nb e^{s \lap} A)(t)}_{L^{2}}
      \aleq_{\nE, \hM} & \brk{2^{-2k} s^{-1}}^{-\dltd} \brk{2^{2k} s}^{-10} c^{(0)}_{k}(t)^{2},		\label{eq:L2-A-s} \\
      \nrm{P_{k} \rd^{\ell} A_{\ell}(t, s)}_{L^{2}}
      \aleq_{\nE, \hM} & \brk{2^{2k} s}^{-10} c^{(0)}_{k}(t)^{2},	\label{eq:L2-DA-s} \\
      \nrm{P_{k} \nb A_{0}(t, s)}_{L^{2}} \aleq_{\nE, \hM} & \brk{2^{2k} s}^{-10} c^{(0)}_{k}(t)^{2}, \\
      \nrm{P_{k} \Box A(t, s)}_{\dot{H}^{-1}} \aleq_{\nE, \hM} & \brk{2^{2k} s}^{-10}
      c^{(0)}_{k}(t)^{2}.
    \end{align}

  \item (Frequency envelope bounds) Let $c$ be a $\dltf$-frequency 
  envelope for $A$ in $S^{1}[I]$. Then
    \begin{align}
      \nrm{P_{k} (A (s) - e^{s \lap} A)}_{\uS^{1}[I]}
      \aleq_{M, \hM} & \brk{2^{-2k} s^{-1}}^{-\dltd} \brk{2^{2k} s}^{-10} c_{k}^{2}, \label{eq:env-s} \\
      \nrm{P_{k} A_{0} (s)}_{Y^{1}[I]}
      \aleq_{M, \hM} & \brk{2^{2k} s}^{-10} c_{k}^{2},  \label{eq:env-0-s} \\
      \nrm{P_{k} \P^{\perp} A (s) }_{Y^{1}[I]} \aleq_{M, \hM} &
      \brk{2^{2k} s}^{-10} c_{k}^{2}.  \label{eq:env-cf-s}
    \end{align}

  \item (Derived difference bounds) Let $\tA$ be a caloric Yang--Mills
    wave on $I$ obeying $\nrm{\tA}_{S^{1}[I]} \leq \tM$, and let $d$
    be a $\dltf$ frequency envelope for the difference $A(s) -
    \tA$ in $S^{1}[I]$. Then
    \begin{align}
      &\nrm{P_{k}(A_{0}(s) - \tA_{0})}_{Y^{1}[I]}
      + \nrm{P_{k}(\P^{\perp} A(s) - \P^{\perp} \tA)}_{Y^{1}_{d}[I]}  \notag \\
      & \aleq_{M, \tM, \hM} e_{k} + \min\set{1, (s^{-\frac{1}{2}} \abs{I})^{\dltd}} \brk{2^{-2k} s^{-1}}^{-\dltd} \brk{2^{2k} s}^{-10} c_{k}^{2}, \label{eq:fe-Y-bnd-diff-heat} \\
      & \nrm{P_{k} \Box (A (s) - \tA)}_{\Box \uX^{1}[I]}
      + \nrm{P_{k} \Box (A (s)- \tA)}_{X^{-\frac{1}{2} + b_{1}, -b_{1}}[I]}   \notag \\
      & \aleq_{M, \tM, \hM} e_{k} + \min\set{1, (s^{-\frac{1}{2}}
        \abs{I})^{\dltd} } \brk{2^{-2k} s^{-1}}^{-\dltd}
      \brk{2^{2k} s}^{-10} c_{k}^{2}, \label{eq:fe-X-bnd-diff-heat}
    \end{align}
    where $e_{k} = d_{k} + c_{k} (c \cdot d)_{\leq k}$.
  \end{enumerate}
\end{theorem}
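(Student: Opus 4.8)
The plan is to read Theorem~\ref{thm:structure-heat} as the heat-time analogue of Theorem~\ref{thm:structure} and Proposition~\ref{prop:uS-bnd}: one feeds the wave equation \eqref{eq:main-wave-s} for $A(s)$, the compatibility relation \eqref{eq:main-compat-s}, and the expansions of $A_0(s),\rd_t A_0(s)$ into the same paradifferential/parametrix machinery, while carrying along the extra heat-time localization factors $\brk{2^{-2k}s^{-1}}^{-\dltd}\brk{2^{2k}s}^{-10}$. Part~(1) is the easiest: at a fixed $t\in I$ we have $(A,\rd_t A)(t)\in T^{L^{2}}\calC_{\hM}$, so \eqref{eq:L2-A-s}, \eqref{eq:L2-DA-s} and the $\nb A_0(s)$ bound follow directly from the fixed-time caloric estimates \eqref{eq:ymhf-fe-0} and \eqref{eq:ymhf-ed-da} of Propositions~\ref{prop:ymhf-fe} and~\ref{prop:ymhf-ed} (applied to the dynamic flow of $(A,\rd_t A)(t)$, with the trivial energy-dispersion envelope), and the $\Box A(t,s)$ bound in $\dot H^{-1}$ is obtained by inserting $A(s)$ into the RHS of \eqref{eq:main-wave-s} and invoking the fixed-time bilinear bounds \eqref{eq:Ax-bi-df-L2}, \eqref{eq:Ax-bi-cf-L2}, the fixed-time $\bfw$-bounds \eqref{eq:w0-bi-L2}, \eqref{eq:wx-bi-L2}, and the fixed-time mapping properties \eqref{eq:Rj-t}, \eqref{eq:Rj-s-t}; the heat decay is inherited from the caloric bounds and the explicit $\bfW$-symbol.

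The core of the work is part~(2). Writing $A(s)=e^{s\lap}A+\bfA(s)$, the bound $\nrm{e^{s\lap}A}_{S^{1}[I]}\le\nrm{A}_{S^{1}[I]}\le M$ is free, so once $\nrm{A(s)}_{S^{1}[I]}\aleq_{M,\hM}1$ is established, Theorem~\ref{thm:paradiff} applies to $\Box+\Diff^{\kpp}_{\P A(s)}$ and the entire argument of Theorem~\ref{thm:structure}(1)--(5) goes through for $A(s)$. To obtain the quantitative estimate \eqref{eq:env-s} for $\bfA(s)$ I would subtract $e^{s\lap}$ times \eqref{eq:main-wave} from \eqref{eq:main-wave-s}: the differences of the quadratic, cubic and $(\Box_{A(s)}-\Box)$ terms between $A(s)$ and $e^{s\lap}A$ are bilinear-or-higher with one factor equal to $\bfA(s)$, and hence — after fixing $\kpp$ large, partitioning $I$ into $\aleq_M 1$ intervals on which the divisible norms in $C(A(s),J)$ (cf. \eqref{C-def}) are small, and bootstrapping against the target envelope $\brk{2^{-2k}s^{-1}}^{-\dltd}\brk{2^{2k}s}^{-10}c_k^2$ — are absorbed; the semigroup commutators of the type $[e^{s\lap}A,\rd e^{s\lap}A]-e^{s\lap}[A,\rd A]$ are handled by splitting into Littlewood--Paley interactions and using $\brk{2^{2k}s}^{-N}$ smoothing on the high-output pieces with a telescoping estimate on the low-output ones; and the genuinely new terms $\P_j\bfw^2_x(\rd_t A,\rd_t A,s)+R_{j;s}(A)$, together with the matched $\lap^{-1}\bfw^2_0$ in $A_0(s)$, already come with the decay $\brk{2^{2k}s}^{-10}\brk{2^{-2k}s^{-1}}^{-\dltb}$ supplied by Propositions~\ref{prop:w0-bi}, \ref{prop:wx-bi}, \ref{prop:diff-tri-nf-w}, \ref{prop:rem-3} and \eqref{eq:Rj-s}. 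The elliptic bounds \eqref{eq:env-0-s}, \eqref{eq:env-cf-s} then follow by rerunning the proof of Proposition~\ref{prop:uS-bnd}(1) for $A(s)$, using \eqref{eq:main-compat-s}, the heat-decay versions of \eqref{eq:ell-bi}, \eqref{eq:Q}, and \eqref{eq:A0-tri-s} for the $s$-dependent cubic piece.

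Part~(3) is the difference analogue, carried out exactly as Theorem~\ref{thm:structure}(8): one takes differences of all equations satisfied by $A(s)$ and $\tA$, applies Lemma~\ref{l:box(a-ta)} (here without needing $\sgm<1$, only the $S^{1}$ difference envelope $d$) to the $\Diff^{\kpp}_{\P A(s)-\P\tA}$ contribution, and uses the difference versions of the bounds in Lemmas~\ref{l:Rkappa}, \ref{l:Rkappa+} and \eqref{boxA-A}. The extra term $\min\set{1,(s^{-1/2}\abs{I})^{\dltd}}\brk{2^{-2k}s^{-1}}^{-\dltd}\brk{2^{2k}s}^{-10}c_k^2$ comes precisely from the $\bfw^2$ and $R_{j;s}$ contributions, which depend on $A$ rather than $A(s)$ and so do not cancel in the difference; they are estimated using interval-length divisibility when $s^{-1/2}\abs{I}\ll1$ and by the absolute heat-decay bound otherwise.

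The step I expect to be the main obstacle is the second one: verifying that $A(s)-e^{s\lap}A$ inherits the \emph{full} heat-time localization $\brk{2^{-2k}s^{-1}}^{-\dltd}\brk{2^{2k}s}^{-10}$ in the space-time $\uS^{1}$ norm and not merely at fixed time. This requires interleaving the frequency-envelope and paradifferential bookkeeping of Section~\ref{sec:structure} with the heat-decay factors throughout, and in particular checking that the parametrix estimate of Theorem~\ref{thm:paradiff} and the finite interval-partition argument remain compatible with the slightly degraded ($\dltd$ rather than $\dltf$) envelope admissibility while the decay factors are propagated through every reiteration.
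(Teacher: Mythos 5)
Your Part~(1) matches the paper's route (Propositions~\ref{prop:ymhf-fe} and \ref{prop:ymhf-ed} plus the fixed-time bilinear estimates from Theorem~\ref{thm:main-eq-s}), so nothing to say there.

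For Part~(2) there is a genuine gap, and it is exactly the one you flag at the end of your proposal. You propose to prove the envelope bound \eqref{eq:env-s} for $\bfA(s)=A(s)-e^{s\lap}A$ by subtracting $e^{s\lap}\cdot$\eqref{eq:main-wave} from \eqref{eq:main-wave-s}, forming a paradifferential wave equation for $\bfA(s)$, and bootstrapping its $S^{1}$-norm. This runs into two obstructions that the paper avoids entirely. First, $e^{s\lap}A$ is \emph{not} a caloric Yang--Mills wave, so neither $\Box_{A(s)}\bfA(s)$ nor $\Box_{A}\bfA(s)$ has the quasi-closed paradifferential structure \eqref{para-dec} that Lemma~\ref{l:Rkappa} and Theorem~\ref{thm:paradiff} feed on: the would-be source contains the semigroup commutators $\calM^{2}(e^{s\lap}A,e^{s\lap}A)-e^{s\lap}\calM^{2}(A,A)$ and similar cubic/paradifferential-coefficient mismatches, which carry \emph{no} factor of $\bfA(s)$ and hence cannot be absorbed into a bootstrap no matter how small $\kpp$ or the divisible norms are made. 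Second — and this is the point you already suspected — your proposed telescoping treatment of those commutators does not obviously produce the low-frequency factor $\brk{2^{-2k}s^{-1}}^{-\dltd}$: for $2^{2k}s\ll 1$ the symbol $e^{-s(\abs\xi^{2}+\abs\eta^{2})}-e^{-s\abs{\xi+\eta}^{2}}$ is $O(1)$ on the high--high into low interaction, so the $\brk{2^{2k}s}^{-N}$ smoothing buys nothing in the regime you need.

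The paper sidesteps both issues. It does \emph{not} run a wave bootstrap on $\bfA(s)$. Instead it invokes the parabolic smoothing estimate \eqref{eq:ymhf-fe-sp} (Proposition~\ref{prop:ymhf-fe}(2)) with $(\sgm,p)=(\tfrac14,4)$, $(\sgm_{1},p_{1})=(\tfrac12,2)$, which gives a fixed-time bound for $\nb P_{k}\bfA(t,s)$ in terms of the $L^{4}\dot{W}^{1/4,4}$-norm of $A(t)$ — a Strichartz norm contained in $S^{1}[I]$ — and then simply takes $L^{2}_{t}[I]$. This yields the space-time bound \eqref{eq:env-s-lomod-1}, $\nrm{\nb P_{k}\bfA(s)}_{L^{2}\dot{H}^{1/2}[I]}\aleq\brk{2^{-2k}s^{-1}}^{-\dlta}\brk{2^{2k}s}^{-10}c_{k}^{2}$, \emph{with} the low-frequency factor already built in (it comes out of the heat flow regularity theory in \cite{OTYM1}, not from the wave equation). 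That bound is strong enough to embed into $S_{k}$, so the wave-propagation component of $S^{1}$ is obtained with no wave parametrix at all. The only thing the wave equations \eqref{eq:Box-As} and \eqref{eq:Box-As-A} are used for is the \emph{derived} quantity $\nrm{\Box P_{k}\bfA(s)}_{L^{2}\dot{H}^{-1/2}[I]}$, and there the estimate is a direct substitution (split $s2^{2k}\ageq 1$ versus $s2^{2k}\ll 1$, in the latter case write $\bfA(s)=(A(s)-A)+(1-e^{s\lap})A$), not a bootstrap. The elliptic bounds \eqref{eq:env-0-s}, \eqref{eq:env-cf-s} then follow as in Proposition~\ref{prop:uS-bnd}(1), as you say. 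The missing idea in your proposal is thus that the $S$-part of $\uS^{1}$ should be pulled straight out of the \emph{parabolic} smoothing, not out of the hyperbolic equation.

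For Part~(3) you are slightly off-target. You propose to apply Lemma~\ref{l:box(a-ta)} to $\Diff^{\kpp}_{\P A(s)-\P\tA}$, but that lemma is a tool for bounding a \emph{difference of solutions} in $S^{1}$. Part~(3) of Theorem~\ref{thm:structure-heat} is not that: it is stated \emph{conditional} on a given $S^{1}$-difference envelope $d$ for $A(s)-\tA$, and only asserts the secondary (elliptic $Y^{1}$ and high-modulation $\Box\uX^{1}$, $X^{-1/2+b_{1},-b_{1}}$) difference bounds. Those are algebraic/elliptic estimates — subtract the formulas for $A_{0}(s)$, $\P^{\perp}A(s)$, $\Box A(s)$ given by Theorem~\ref{thm:main-eq-s} from those for $\tA$, as in Proposition~\ref{prop:uS-bnd}(2), and estimate the extra $s$-dependent terms using Propositions~\ref{prop:w0-bi}, \ref{prop:wx-bi}, \eqref{eq:Rj-s}, \eqref{eq:A0-tri-s}, \eqref{eq:DA0-tri-s} (the $\min\{1,(s^{-1/2}\abs I)^{\dltd}\}$ gain comes from choosing between the fixed-time versions and the $L^{2}_{t}$ versions of these according to the size of $\abs I$ relative to $s^{1/2}$). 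No paradifferential wave solvability enters.
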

\begin{remark} \label{rem:env-s} Combining \eqref{eq:env-s} with the
  obvious bound for $e^{s \lap} A$, we get the simple bound
  \begin{align}
    \nrm{P_{k} A (s)}_{\uS^{1}[I]} \aleq_{M, \hM} & \brk{2^{2k}
      s}^{-10} c_{k}. \label{eq:env-s-lin}
  \end{align}
\end{remark}

Next, we consider the effect of small inhomogeneous energy dispersion
on a time interval of compatible scale.
\begin{theorem}\label{thm:structure-heat-ed}
  Let $A$ be a caloric Yang--Mills wave with energy $\nE$ on a time
  interval $I$, obeying \eqref{eq:cal-bnd}, \eqref{eq:S-bnd} and
  \eqref{eq:ed-small}, and $A_{t,x}(s)$ be the dynamic Yang--Mills
  heat flow of $A_{t,x}$ at heat-time $s > 0$ in the caloric gauge.
  Let $c$ be a $\dltf$-frequency envelope for $A$ in
  $S^{1}[I]$.  Then the following properties hold:
  \begin{enumerate}
  \item (Fixed-time smallness bound)
    \begin{align}
      \nrm{\nb P_{k}(A(s) - e^{s \lap} A)(t)}_{L^{2}}
      & \aleq_{\nE, \hM} 2^{\dltd (m-k)_{+}} \eps^{\dltg} \brk{2^{-2k} s^{-1}}^{-\dltd} \brk{2^{2k} s}^{-10}c^{(0)}_{k}(t), \label{eq:L2-A-s-ed} \\
      \nrm{P_{k} \rd^{\ell} A_{\ell}(t, s)}_{L^{2}} & \aleq_{\nE, \hM} 2^{\dltd (m-k)_{+}}
      \eps^{\dltg} \brk{2^{2k} s}^{-10}
      c^{(0)}_{k}(t). \label{eq:L2-DA-s-ed}
    \end{align}

  \item (Small energy dispersion below scale $1$ for $A(s)$)
    \begin{equation}\label{eq:eps-small-s}
      \nrm{A(s)}_{ED_{\geq 0}^{-1}[I]} \aleq_{\nE, \hM} \eps^{\dltg}.
    \end{equation}

  \item (Frequency envelope bounds)
    \begin{align}
      \nrm{P_{k} (A (s) - e^{s \lap} A)}_{\uS^{1}[I]}
      \aleq_{M, \hM} & \eps^{\dltg} \brk{2^{-2k} s^{-1}}^{-\dltd} \brk{2^{2k} s}^{-10} c_{k}, \label{eq:env-s-small} \\
      \nrm{P_{k} A_{0} (s)}_{Y^{1}[I]}
      \aleq_{M, \hM} & \eps^{\dltg} \brk{2^{2k} s}^{-10} c_{k},  \label{eq:env-0-s-small} \\
      \nrm{P_{k} \P^{\perp} A (s)}_{Y^{1}[I]} \aleq_{M, \hM} &
      \eps^{\dltg} \brk{2^{2k} s}^{-10}
      c_{k}.  \label{eq:env-cf-s-small}
    \end{align}

  \item (Derived difference bounds) Let $\tA$ be a caloric Yang--Mills
    wave on $I$ with $\nrm{\tA}_{S^{1}[I]} \leq \tM$, and let $d$
    be a $\dltf$-frequency envelope for the difference $A(s) -
    \tA$ in $S^{1}[I]$. Then
    \begin{align}
      & \nrm{P_{k}(A_{0}(s) - \tA_{0})}_{Y^{1}[I]}
      + \nrm{P_{k}(\P^{\perp} A(s) - \P^{\perp} \tA)}_{Y^{1}_{d}[I]}  \notag \\
      & \aleq_{M, \tM, \hM} e_{k} + \eps^{\dltg} \brk{2^{-2k} s^{-1}}^{-\dltd} \brk{2^{2k} s}^{-10} c_{k}, \label{eq:fe-Y-bnd-diff-heat+} \\
      & \nrm{P_{k} \Box (A (s) - \tA)}_{\Box \uX^{1}[I]}
      + \nrm{P_{k} \Box (A (s)- \tA)}_{X^{-\frac{1}{2} + b_{1}, -b_{1}}[I]}   \notag \\
      & \aleq_{M, \tM, \hM} e_{k} + \eps^{\dltg} \brk{2^{-2k}
        s^{-1}}^{-\dltd} \brk{2^{2k} s}^{-10}
      c_{k}, \label{eq:fe-X-bnd-diff-heat+}
    \end{align}
    where $e_{k} = d_{k} + c_{k} (c \cdot d)_{\leq k}$.

  \end{enumerate}
\end{theorem}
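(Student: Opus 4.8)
The plan is to mirror closely the proof of Theorem~\ref{thm:structure-heat}, but now at each step replace the generic frequency envelope bounds by the sharpened energy-dispersed versions established in Section~\ref{sec:caloric} (Propositions~\ref{prop:ymhf-fe}, \ref{prop:ymhf-ed}) and in Theorem~\ref{thm:structure-ed}. Concretely, the dynamic Yang--Mills heat flow $A_{t,x}(s)$ satisfies the wave equation \eqref{eq:main-wave-s} together with the compatibility condition \eqref{eq:main-compat-s} and the expansions for $A_0(s)$, $\rd_t A_0(s)$. Just as in the non-dispersed case, one first proves (1) using purely fixed-time estimates, then upgrades to the spacetime frequency envelope bounds (3) by combining (1) with the spacetime multilinear estimates in Section~\ref{subsec:ests-2} and the structure results for $A$ at $s=0$ from Theorem~\ref{thm:structure-ed}; then (2) is a direct consequence of the fixed-time bounds in (1) (via Bernstein, as in the proof of \eqref{eq:q-consv}); and finally the difference bounds (4) follow by taking differences and running the same argument on the difference equations.

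First I would prove (1). For \eqref{eq:L2-A-s-ed}: write $A(s) - e^{s\lap} A = \bfA(s)$ in the notation \eqref{eq:AB-def}, and invoke \eqref{eq:ymhf-ed-nonlin} from Proposition~\ref{prop:ymhf-ed}, which already has the form $\brk{2^{-2k}s^{-1}}^{-\dlta}\brk{2^{2k}s}^{-N} (d_k)^{1/2} c_k$ with $d_k$ a $\dlta$-frequency envelope for $(f,e)$ in $\dot W^{-2,\infty}$. The small-energy-dispersion hypothesis \eqref{eq:ed-small} exactly gives $d_k \lesssim \eps$ for $k \geq m = 0$, and the factor $2^{\dltd(m-k)_+}$ accounts for the frequencies below $2^m$ where we cannot gain smallness. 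Since $\dltd \ll \dlta$ this is a safe trade. Similarly \eqref{eq:L2-DA-s-ed} follows from \eqref{eq:ymhf-ed-da}. These are all essentially quotations from Section~\ref{sec:caloric} with the envelope input specialized; the only care needed is to check the exponent bookkeeping, i.e., that $\eps^{\dltg}$ with $\dltg = \dltd$ and the $2^{\dltd(m-k)_+}$ loss are consistent with the $\brk{2^{-2k}s^{-1}}^{-\dltd}$ weight (they are, since $\dltg$, $\dltd$ are much smaller than $\dltb$, $\dlta$).

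For (3), the argument runs parallel to the proof of \eqref{eq:env-s}, \eqref{eq:env-0-s}, \eqref{eq:env-cf-s} in Theorem~\ref{thm:structure-heat}: one feeds the equation \eqref{eq:main-wave-s} through the well-posedness theory for $\Box_{A(s)}$ (a paradifferential argument as in Theorem~\ref{thm:structure}.(1), with the $\Diff^\kpp_{\P A(s)}$ operator), bounding the right-hand side nonlinearities using Propositions~\ref{prop:Ax-bi}, \ref{prop:w0-bi}, \ref{prop:wx-bi} (for the new $\bfw^2_x$ terms), and the Lipschitz-envelope-preserving error estimates \eqref{eq:Rj-s}, \eqref{eq:rem3-map-s}. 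The essential new feature versus Theorem~\ref{thm:structure-heat} is that every input norm of $A$ at $s=0$ that appears should be taken in a form controlled by $\eps^{\dltg}$, which is precisely the content of Theorem~\ref{thm:structure-ed}: the $\Str^1$ norm (hence all divisible and non-sharp-Strichartz norms) is $O(\eps^{\dltc})$ by \eqref{eq:ed-str}, the $Y^1$ elliptic norms by \eqref{eq:Y-small}, and the high modulation norm by \eqref{eq:X-small}. The $\bfw^2$ terms come with the frequency localization factor $\brk{2^{2k}s}^{-10}\brk{2^{-2k}s^{-1}}^{-\dltb}$ built into Propositions~\ref{prop:w0-bi}, \ref{prop:wx-bi}, and also inherit $\eps^{\dltc}$ smallness from the energy-dispersed bound \eqref{eq:w0-bi-ed} (and \eqref{eq:wx-bi-ed}) applied with the $(\veps, M)$-energy dispersed hypothesis, which is guaranteed by \eqref{eq:ed-small} together with the $S^1$ bound \eqref{eq:S-bnd}. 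Then (2) is read off from (3) and (1) by Bernstein's inequality exactly as in the derivation of \eqref{eq:eps-small}, and (4) is obtained by differencing: the difference of two dynamic heat flows satisfies a difference wave equation whose right-hand side is estimated by the Lipschitz (difference) versions of all the multilinear bounds, and one simply adds the $\eps^{\dltg}$-weighted correction term from the $s=0$ difference structure given by Theorem~\ref{thm:structure}.(8)--(9) and the $e_k = d_k + c_k(c\cdot d)_{\leq k}$ bookkeeping of envelopes.

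The main obstacle I anticipate is not any single estimate but the \emph{uniform-in-$s$ tracking} of the smallness factor $\eps^{\dltg}$ through the paradifferential well-posedness iteration: one must ensure that the number of subintervals into which $I$ is partitioned (in order to make $\Diff^\kpp$-type remainders and divisible norms small, as in the proof of Theorem~\ref{thm:structure}.(1)) can be chosen depending only on $M, \hM$ and \emph{not} on $s$ or on $\eps$, so that the $\eps^{\dltg}$ gain survives summation over the partition. This is exactly the sort of argument carried out in the proof of Theorem~\ref{thm:structure-ed}.(4), where $\kpp$ is optimized against $2^{-\dltb\kpp} + 2^{C\kpp}\eps^{\dltc}$; the same optimization must be done here, simultaneously for all $s > 0$, using that the $s$-dependent factors $\brk{2^{2k}s}^{-10}$ only \emph{help} (they cost nothing on the partition count). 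A secondary but routine nuisance is correctly propagating the two regimes $k \geq m$ versus $k < m$ (reflected in the $2^{\dltd(m-k)_+}$ factor) through the bilinear estimates, which requires checking that the off-diagonal and low-frequency summations in Propositions~\ref{prop:Ax-bi}, \ref{prop:wx-bi} absorb this loss — they do, because $\dltd \ll \dltb$.
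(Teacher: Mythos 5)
Your high-level strategy is sound: mirror the proof of Theorem~\ref{thm:structure-heat}, with the new sources of smallness being energy dispersion at frequencies $\geq 2^m$ and the short interval length at frequencies $< 2^m$. Your treatment of part (1) matches the paper (both invoke \eqref{eq:ymhf-ed-nonlin} and \eqref{eq:ymhf-ed-da}), and your account of part (4) is correct in spirit. Two things are off, though.

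First, a minor point: part (2) is not derived from (1) and (3) by Bernstein. It is a direct consequence of the pointwise caloric bound \eqref{eq:ymhf-ed-lin} of Proposition~\ref{prop:ymhf-ed}, applied with the energy-dispersion envelope $d_k$. Your route (Bernstein from the $L^2$ bounds) would require some further bookkeeping with the $e^{s\lap}A$ piece and is not the intended argument.

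Second, and more substantially, your route to part (3) is wrong. You propose to feed \eqref{eq:main-wave-s} through the paradifferential well-posedness theory for $\Box_{A(s)}$ ``as in Theorem~\ref{thm:structure}.(1)'', and your anticipated obstacle is the uniform-in-$s$ tracking of the $\eps^{\dltg}$ factor through the partition into subintervals. But this is not where the smallness can come from: the initial data for $A(s)$ at a fixed time is \emph{not} $\eps$-small (it has the same energy as $A$), so re-solving the wave equation for $A(s)$ cannot produce a small output. The target $\bfA(s) = A(s) - e^{s\lap}A$ is small precisely because of the cancellation encoded in the caloric heat flow smoothing estimates: the paper's proof of Theorem~\ref{thm:structure-heat}.(2) (which Theorem~\ref{thm:structure-heat-ed}.(3) ``repeats'') obtains the key bound $\nrm{\nb P_k \bfA(s)}_{L^2\dot H^{1/2}[I]}$ directly from Proposition~\ref{prop:ymhf-fe}, estimate \eqref{eq:ymhf-fe-sp}, integrated in time, and the bound on $\Box P_k \bfA(s)$ from the wave equation \eqref{eq:Box-As}/\eqref{eq:Box-As-A} together with $L^2L^2$-type multilinear estimates that only see $\Str^1$ norms. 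No paradifferential iteration enters, and hence no partition is needed: your worry about the partition count is a symptom of choosing the wrong mechanism. In the energy-dispersed setting the input to \eqref{eq:ymhf-fe-sp} and the $\Str^1$-based multilinear bounds is $\eps^{\dltc}$-small by \eqref{eq:ed-str} for $k\geq 0$, which gives the gain; for $k<0$ one instead uses the fixed-time bound \eqref{eq:L2-A-s-ed} and H\"older in time, gaining from $\abs{I}\leq\eps$. You should rework (3) along these lines rather than invoking the parametrix machinery.
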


We now turn to the proof of each theorem.
\begin{proof}[Proof of Theorem~\ref{thm:structure-heat}]
  In the proof, we omit the dependence of constants on $M$ and
  $\hM$. We introduce the notation
  \begin{equation*}
    \bfA(t, s) = A(t, s) - e^{s \lap} A(t).
  \end{equation*}

  \pfstep{Proof of (1)}   By
  \eqref{eq:ymhf-fe-0} in Proposition~\ref{prop:ymhf-fe} (note that
  $\rd_{t} A$ here corresponds to $B$ in the the proposition) we get
  \begin{equation} \label{eq:env-s-lomod-0} \nrm{\nb P_{k} \bfA(t,
      s)}_{L^{2}[I]} \aleq \brk{2^{-2k} s^{-1}}^{-\dlta}
    \brk{2^{2k} s}^{-10} (c_{k}^{(0)})^{2}.
  \end{equation}
Now the second bound follows from \eqref{eq:DA-tri-t} for $\DA^3$ 
and Proposition~\ref{prop:Q-bi} for $Q(A,A)$.

  \pfstep{Proof of (2)} We proceed in several substeps.
  \pfsubstep{Step~(2).1} Our first (and main) goal is to prove
  \begin{equation} \label{eq:env-s-S1} \nrm{P_{k} \bfA(s)}_{S^{1}[I]}
    \aleq \brk{2^{-2k} s^{-1}}^{-c\dlte} \brk{2^{2k} s}^{-10}
    c_{k}^{2}.
  \end{equation}

  We begin by invoking \eqref{eq:ymhf-fe-sp} with $(\sgm, p) =
  (\frac{1}{4}, 4)$ and $(\sgm_{1}, p_{1}) = (\frac{1}{2}, 2)$. Since
  $S^{1}[I] \subseteq \Str^{1}[I] \subseteq L^{4}
  \dot{W}^{\frac{1}{4}, 4}[I]$, we also obtain (after taking
  $L^{2}_{t}[I]$)
  \begin{equation} \label{eq:env-s-lomod-1} \nrm{\nb P_{k}
      \bfA(s)}_{L^{2} \dot{H}^{\frac{1}{2}}[I]} \aleq \brk{2^{-2k}
      s^{-1}}^{-\dlta} \brk{2^{2k} s}^{-10} c_{k}^{2}.
  \end{equation}
  In view of the embedding $P_{k} L^{2} \dot{H}^{\frac{1}{2}}[I]
  \subseteq P_{k} X^{0, \frac{1}{2}}_{1}[I] \subseteq 2^{-k}
  S_{k}[I]$, we have
  \begin{equation} \label{eq:env-s-S} \nrm{\nb P_{k}
      \bfA(s)}_{S_{k}[I]} \aleq \brk{2^{-2k} s}^{-\dlta}
    \brk{2^{2k}}^{-10} c_{k}^{2}.
  \end{equation}
  To complete the proof of \eqref{eq:env-s-S1}, it only remains to
  establish (recall \eqref{eq:S1-def})
  \begin{equation} \label{eq:env-s-Box-L2L2} \nrm{\Box P_{k}
      \bfA(s)}_{L^{2} \dot{H}^{-\frac{1}{2}}[I]} \aleq \brk{2^{-2k}
      s}^{-\dlta} \brk{2^{2k}}^{-10} c_{k}^{2}.
  \end{equation}
  We argue differently depending on $s 2^{2k} \ageq 1$ or $s 2^{2k}
  \ll 1$. In the former case, we consider $e^{s \lap} A$ and $A(s)$
  separately. In view of \eqref{eq:fe-X-bnd}, note that
  \begin{equation*}
    \nrm{\Box P_{k} e^{s \lap} A}_{L^{2} \dot{H}^{-\frac{1}{2}}[I]} \aleq \brk{2^{2k}}^{-10} c_{k}^{2},
  \end{equation*}
  so it suffices to prove
  \begin{equation*}
    \nrm{\Box P_{k} A(s)}_{L^{2} \dot{H}^{-\frac{1}{2}}[I]} \aleq \brk{2^{2k}}^{-10} c_{k}^{2}.
  \end{equation*}
  For this, we need to use the wave equation for $A(s)$
  (cf. Theorem~\ref{thm:main-eq-s}):
  \begin{equation} \label{eq:Box-As}
    \begin{aligned}
      \Box A(s) =& (\Box - \Box_{A(s)}) A(s) + \calM^{2}(A(s), A(s)) + R_{j}(A(s)) \\
      & + \P \bfw_{x}^{2}(A, A, s) + R_{j; s}(A)
    \end{aligned}
  \end{equation}
  As in the proof of Proposition~\ref{prop:uS-bnd}, we note that $\Box
  - \Box_{A(s)}$ contains the terms $A_{0}(s)$, $\rd^{\ell} A(s)$ and
  $\rd_{0} A_{0}(s)$ that are in turn determined by $A, A(s)$
  (cf. Theorem~\ref{thm:main-eq-s}). By \eqref{eq:env-s-S} and an
  obvious bound for $e^{s \lap} A$, we see that $\brk{2^{2k} s}^{-10}
  c_{k}$ is a frequency envelope for $A(s)$ in $\Str^{1}[I]$. The
  desired estimate is proved by applying the $L^{2} L^{2}$-type
  estimates in Section~\ref{sec:ests} (observe that they only involve
  the $\Str^{1}$-norm of $A$!) and Theorem~\ref{thm:main-eq-s}.

  In the case $s 2^{2k} \ll 1$, we begin by writing $\bfA(s) = (A(s) -
  A) + (1-e^{s \lap}) A$. For the second term, again by
  \eqref{eq:fe-X-bnd}, we have
  \begin{align*}
    \nrm{\Box P_{k} (1-e^{s \lap}) A}_{L^{2}
      \dot{H}^{-\frac{1}{2}}[I]} \aleq & \brk{2^{-2k}
      s^{-1}}^{-\dlta} c_{k}^{2}.
  \end{align*}
  Thus, for $s 2^{2k} \ll 1$, it suffices to establish
  \begin{equation} \label{eq:Box-As-A-L2L2} \nrm{\Box P_{k} (A(s) -
      A)}_{L^{2} \dot{H}^{-\frac{1}{2}}[I]} \aleq \brk{2^{-2k}
      s^{-1}}^{-c\dlte} c_{k}^{2}.
  \end{equation}
  Here, we use the equation $\Box (A(s) - A)$ obtained by taking the
  difference of the equations in Theorems~\ref{thm:main-eq} and
  \ref{thm:main-eq-s}:
  \begin{equation} \label{eq:Box-As-A}
    \begin{aligned}
      \Box (A(s) - A) = & (\Box - \Box_{A(s)}) A(s) - (\Box - \Box_{A}) A \\
      & + \calM^{2}(A(s), A(s)) - \calM^{2}(A, A) \\
      & + R_{j}(A(s)) - R_{j}(A) \\
      & + \P_{j} \bfw_{x}^{2}(A, A, s) + R_{j; s}(A).
    \end{aligned}
  \end{equation}
  We note that $(\Box - \Box_{A(s)}) A(s) - (\Box - \Box_{A}) A$
  contains the differences $A_{0}(s) - A_{0}$, $\rd^{\ell}_{\ell} A(s)
  - \rd^{\ell} A_{\ell}$ and $\rd_{0} A_{0}(s) - \rd_{0} A_{0}$, for
  which similar difference equations may be derived from
  Theorems~\ref{thm:main-eq} and \ref{thm:main-eq-s}.

  As before, $c_{k}$ is a $\dltf$-frequency envelope for $A$ and $A(s)$ in
  $\Str^{1}[I]$, whereas $d_{k} = \brk{2^{-2k} s^{-1}}^{-c\dlte}
  c_{k}$ is a $\dlte$-frequency envelope for $A(s)
  - A$ in $\Str^{1}[I]$ by
  \eqref{eq:env-s-S} and an obvious bound for $(1 - e^{s \lap})
  A$. Hence the difference envelope $e_{k}$ in
  Theorem~\ref{thm:main-eq} obeys the bound
  \begin{equation*}
    e_{k} = d_{k} + c_{k} (c \cdot d)_{\leq k} \aleq \brk{2^{-2k} s^{-1}}^{-c\dlte} c_{k}.
  \end{equation*}
  The desired estimate \eqref{eq:Box-As-A-L2L2} is proved by applying
  the $L^{2} L^{2}$-type estimates in Section~\ref{sec:ests} (again,
  they only involve the $\Str^{1}$-norm of $\nb A$, $\nb A(s)$ and
  $\nb (A(s) - A)$) and Theorem~\ref{thm:main-eq-s}.

  \pfsubstep{Step~(2).2} To complete the proof, it remains to show
  that \eqref{eq:env-s-S1} implies
  \eqref{eq:env-s}--\eqref{eq:env-cf-s}. This is proved in a
  completely analogous way as Proposition~\ref{prop:uS-bnd}.(1),
  replacing Theorem~\ref{thm:main-eq} by Theorem~\ref{thm:main-eq-s}
  (where we use Propositions~\ref{prop:w0-bi}, \ref{prop:wx-bi} for
  $\bfw_{0}$ and $\bfw_{x}$, respectively).

  \pfstep{Proof of (3)} This is analogous to the proof of Proposition~\ref{prop:uS-bnd}.(1). 
The only difference in the analysis arises from the extra terms 

\begin{enumerate}[label=(\roman*)]
\item $\P_j w_x^2 (\partial_ t A, \partial_t A, s) + R_{ j;s} (A)$ in $\Box_{A(s)} A(s)$,

\item  $\A_{0;s}=\Delta^{-1} \bfw_0^2(A,A,s)+ \A^3_{0;s}(A)$ in $A_0(s)$,

\item  $\DA_{0;s}(A)$ in $\partial_t A_0(s)$.
\end{enumerate}

For the first term in \eqref{eq:fe-Y-bnd-diff-heat+} we need to estimate
\[
\| |D|^{-1}  \bfw_0^2(A,A,s)   \|_{Y} + \| |D|  \A^3_{0;s}(A)\|_{Y} + \| \DA_{0;s}(A)\|_{Y} 
\]
The last two terms are estimated directly using \eqref{eq:A0-tri-s}
and \eqref{eq:DA0-tri-s} and Bernstein's inequality.  The first term
is estimated via \eqref{eq:w0-bi}. 

For the extra gain when $s^\frac12 > |I|$ we rebalance by using Holder in time $t$ and Bernstein in $x$.
Because of this, in that range it suffices to use $L^\infty L^2$ bounds instead of $Y$, and thus
rely instead on   \eqref{eq:A0-tri-s-t} and \eqref{eq:DA0-tri-s-t}, respectively \eqref{eq:w0-bi-L2}. 

For the second term in \eqref{eq:fe-Y-bnd-diff-heat+} we follow the computation for 
$\partial_t \P^{\perp} A(s) $  in the proof of Proposition~\ref{prop:uS-bnd}.
The extra contributions there are 
\[
\Delta^{-1} \partial_j ( \partial^\ell[ A_\ell(s),\A_{0;s}] + [A^\ell(s), \partial_\ell \A_{0;s}] + [A^\ell,[A_\ell,  \A_{0;s}]]).
\]
For these it suffices to use \eqref{eq:w0-bi-L2L2} and
\eqref{eq:A0-tri-s} for long intervals $I$, respectively
\eqref{eq:w0-bi-L2} and \eqref{eq:w0-bi-L2} and \eqref{eq:A0-tri-s-t} for
  short intervals.

Finally, for the two terms in  \eqref{eq:fe-X-bnd-diff-heat+}
we need to bound
\[
\| \P_j w_x^2 (\partial_ t A, \partial_t A, s)\|_{\Box \uX^1 \cap X^{-\frac12+b+1,-b_1 }} + \| R_{ j;s} (A)\|_{\Box \uX^1 \cap X^{-\frac12+b+1,-b_1 }}
\]
For this it suffices to use the bounds \eqref{eq:wx-bi}  and \eqref{eq:Rj-s} in the range $|I| > s^{\frac12}$, respectively 
 \eqref{eq:wx-bi-L2}  and \eqref{eq:Rj-s-t} in the range $|I| \leq s^{\frac12}$.\qedhere

\end{proof}

\begin{proof}[Proof of Theorem~\ref{thm:structure-heat-ed}]
  As before, we omit the dependence of constants on $M$ and $\hM$.

  \pfstep{Proof of (1) and (2)} The three bounds follow directly from Proposition~\ref{prop:ymhf-ed}, precisely in order from the 
estimates \eqref{eq:ymhf-ed-nonlin}, \eqref{eq:ymhf-ed-da} and \eqref{eq:ymhf-ed-lin}.

 \pfstep{Proof of (3)}  We repeat the arguments in  the proof of
 Theorem~\ref{thm:structure-heat}.(2). The bound  \eqref{eq:env-s-lomod-1} for $P_k \A(s)$ goes through the $\Str^1$ norm 
so by the same proof we also obtain for $k \geq 0$
 \begin{equation} \label{eq:env-s-lomod-1-ed} 
\nrm{\nb P_{k} \bfA(s)}_{L^{2} \dot{H}^{\frac{1}{2}}[I]} \aleq \brk{2^{-2k}
      s^{-1}}^{-c \dlte} \brk{2^{2k} s}^{-10} \eps^{\dltc} c_{k}.
  \end{equation}
On the other hand for $k \leq 0$ we can use \eqref{eq:L2-A-s-ed} and Holder's inequality in time to gain smallness.

Similarly, the bound \eqref{eq:env-s-Box-L2L2} also uses only $\Str^1$ norms so it can be replaced by
 \begin{equation} \label{eq:env-s-Box-L2L2-ed} 
 \nrm{\Box P_{k}      \bfA(s)}_{L^{2} \dot{H}^{-\frac{1}{2}}[I]} \aleq \brk{2^{-2k}
      s^{-1}}^{-c \dlte} \brk{2^{2k}}^{-10} \eps^{\dltc}  c_{k}.
  \end{equation}
for $k \geq 0$. Again for $k \leq 0$ we can use a simpler $L^\infty \dot H^{-1}$ bound and then Holder's inequality in time.
Together, the bounds \eqref{eq:env-s-lomod-1-ed} and \eqref{eq:env-s-Box-L2L2-ed} imply \eqref{eq:env-s-small}.

Finally, it remains to establish \eqref{eq:env-0-s-small} and \eqref{eq:env-cf-s-small}. Here the same considerations 
as in the proof of \eqref{eq:Y-small} apply, but using Theorem~\ref{thm:main-eq-s} instead of Theorem~\ref{thm:main-eq},
as well as Proposition~\ref{prop:w0-bi}.

\pfstep{Proof of (4)} This repeats the proof of Theorem~\ref{thm:structure-heat}.(3), but taking advantage of the $\Str^1$ 
norm in estimating $\A^3_{0;s}$  and $\DA_{0;s}$ and using \eqref{eq:w0-bi-ed} instead of \eqref{eq:w0-bi}. As before,
the $\epsilon$ gain is due to energy dispersion if $k \geq 0$ and to the interval size otherwise.
\end{proof}

\section{Energy dispersed caloric Yang--Mills waves} \label{sec:induction}
The goal of this section is to prove the following key theorem for energy dispersed subthreshold caloric Yang--Mills waves, which is essentially a restatement of Theorem~\ref{thm:ed-inhom} in terms of the linear energy:
 
\begin{theorem} \label{thm:ed}
  There exist a non-decreasing positive functions $M(E, \hM)$ and
non-increasing positive functions $\eps(E, \hM)$ and $T(E, \hM)$ so that the following holds. 
Let $A$ be a regular caloric Yang--Mills wave on a time interval $I$ satisfying
  \begin{equation} \label{eq:ed-hyp-bnd}
  \inf_{t \in I} \nrm{\nb A(t)}_{L^{2}}^{2} \leq E, \quad A(t) \in \calC_{\hM} \quad \hbox{ for all } t \in I.
  \end{equation}
If $A$ moreover obeys the smallness bounds
\begin{equation} \label{eq:ed-hyp}
\nrm{F}_{ED_{\geq m}[I]} \leq \eps(E, \hM), \quad
  \abs{I} \leq 2^{-m} T(E, \hM),
\end{equation}
then we have
\begin{equation} \label{eq:ed-bnd}
\nrm{A}_{S^{1}[I]} \leq M(E, \hM).
\end{equation} 
\end{theorem}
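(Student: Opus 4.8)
\textbf{Proof proposal for Theorem~\ref{thm:ed}.}

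The plan is to establish \eqref{eq:ed-bnd} by an induction-on-energy argument in the spirit of \cite{ST1, OT2}. Fix the energy parameter $E$ and the caloric size $\hM$, and define $M(E, \hM)$ recursively as a function of $M(E', \hM)$ for subthreshold $E' < E$. The base case is small energy $E \leq \eps_{\ast}^{2}(1, \hM)$, where a standard contraction/bootstrap argument using the estimates of Section~\ref{sec:ests} (and the paradifferential well-posedness Theorem~\ref{thm:paradiff}) gives the $S^{1}$ bound directly. For the inductive step, assume the conclusion holds for all energies $E' < E$ with constant $M(E', \hM)$, and suppose for contradiction that there is a regular caloric Yang--Mills wave $A$ on $I$ satisfying \eqref{eq:ed-hyp-bnd}, \eqref{eq:ed-hyp} but with $\nrm{A}_{S^{1}[I]}$ exceeding the intended bound $M(E, \hM)$. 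By continuity of the $S^{1}$-norm (using local well-posedness of the paradifferential equation and Theorem~\ref{thm:structure}.(6)) and a continuity-in-endpoint argument, we may localize to a subinterval $J \subseteq I$ on which $\nrm{A}_{S^{1}[J]}$ equals a critical threshold value $M_{crit}$, which we want to bound in terms of the inductive data.

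The core of the argument is a \emph{frequency decomposition} of the solution: choose a frequency $2^{k_{0}}$ at which a suitable proportion of the energy is concentrated (this is possible precisely because $A$ is \emph{not} globally small in the relevant norm on $J$), and split $A = A_{low} + A_{high}$ accordingly. The high-frequency part $A_{high}$ carries at most $E - c_{0}$ of the energy for some fixed $c_{0} > 0$, so after peeling off its interaction with $A_{low}$ (treated via the bilinear and trilinear estimates of Section~\ref{subsec:ests-2}, and Theorem~\ref{thm:paradiff} to absorb the paradifferential piece $\Diff^{\kpp}_{\P A_{low}}$) one reduces $A_{high}$ to a solution of a perturbed caloric Yang--Mills equation with subthreshold energy, to which the inductive hypothesis applies --- after arranging for the perturbation to be placed in divisible norms and made small by subdividing $J$, using Theorem~\ref{thm:structure}.(6). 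The low-frequency part $A_{low}$ lives at frequencies $\lesssim 2^{k_{0}}$; here the key new point compared to \cite{OT2} (flagged in Scenario~(1) of Section~\ref{subsec:ind-low}) is that for frequencies \emph{below} $2^{m}$ we cannot use the energy-dispersion smallness and must instead exploit the short-time hypothesis $\abs{I} \leq 2^{-m} T(E, \hM)$: via Theorem~\ref{thm:ed-inhom}'s mechanism and Proposition~\ref{prop:ext}, the $\Box$-scaling makes $\abs{J} 2^{k_{0}}$ small, so $\nrm{A_{low}}_{S^{1}[J]}$ is controlled directly by a perturbative/iteration scheme (the relevant nonlinearities, being of quadratic or higher order, all gain a power of $\abs{J} 2^{k_{0}}$). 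Combining the two bounds gives $M_{crit} \aleq_{E, \hM} F(M(E - c_{0}, \hM))$, contradicting the assumed strict inequality once $M(E, \hM)$ is chosen larger than this; note $E$ is subdivided into $O_{E}(1)$ energy-steps of size $c_{0}$, so the recursion terminates.

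Throughout, the estimates needed are: Theorem~\ref{thm:structure} (frequency envelope bounds, persistence of regularity, weak divisibility, weak Lipschitz dependence) to control the full caloric Yang--Mills wave once an $S^{1}$ bound is in hand; Theorem~\ref{thm:structure-ed} to exploit the smallness hypotheses; Theorem~\ref{thm:paradiff} for the paradifferential wave equation; and the multilinear estimates of Section~\ref{subsec:ests-2} (especially the energy-dispersed versions \eqref{eq:Ax-bi-df-ed}, \eqref{eq:Ax-bi-cf-ed}, \eqref{eq:ell-bi-ed}, \eqref{eq:Q-ed} and the large/small splittings) to handle the interaction terms. The translation between Theorem~\ref{thm:ed-inhom} and Theorem~\ref{thm:ed} is purely a matter of replacing $\nE$ (the $L^{2}$ energy of the curvature) by $E$ (the linear energy $\nrm{\nb A}_{L^{2}}^{2}$); on a caloric Yang--Mills wave these are comparable with constants depending on $\hM$ by Proposition~\ref{prop:ymhf-ed} and Theorem~\ref{thm:structure}, and $\inf_{t}$ suffices because the linear energy is approximately conserved on each subinterval of controlled $S^{1}$-norm (by the argument in the proof of Theorem~\ref{thm:structure-ed}.(5)). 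The main obstacle I expect is the low-frequency analysis in the short-time regime: carefully tracking the interplay between the three sources of smallness (energy dispersion above $2^{m}$, short time below $2^{m}$, divisibility of $DS^{1}$) while keeping the frequency-envelope bookkeeping consistent across the induction, and in particular ensuring that the constant $T(E, \hM)$ can be chosen uniformly and that the low-frequency piece does not itself require the inductive hypothesis (which would break the energy descent).
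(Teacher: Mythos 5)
Your overall framework is right --- induction on energy, a small-energy base case (Proposition~\ref{prop:small-en}), a decomposition into a subthreshold piece to which the inductive hypothesis applies and a perturbation handled via Theorem~\ref{thm:paradiff}, with short time playing the role at low frequencies where energy dispersion gives nothing --- but the central mechanism you propose is different from the paper's and, as written, does not close.

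You decompose $A$ by a \emph{sharp frequency cut} $A = A_{low} + A_{high}$ at a frequency $2^{k_{0}}$ and want to apply the inductive hypothesis to $A_{high}$, claiming it carries energy $\leq E - c_{0}$ and solves a "perturbed caloric Yang--Mills equation." There are two genuine gaps here. First, neither $A_{low}$ nor $A_{high}$ is a caloric Yang--Mills wave: the caloric gauge condition, the Gauss constraint, and the equations \eqref{eq:main-wave}--\eqref{eq:main-DA0} are nonlinear and not preserved under Littlewood--Paley truncation, so the inductive hypothesis --- which quantifies over \emph{actual regular caloric Yang--Mills waves} --- simply does not apply to $A_{high}$. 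Saying it solves a "perturbed" version of the equation doesn't help: you would then need a separate induction statement for perturbed equations, and the conclusion would not match the hypothesis of the theorem you are inducting on. Second, the energy-descent claim that $A_{high}$ carries $\leq E - c_{0}$ of the energy is unjustified: if the energy of $A$ is spread across or concentrated at high frequencies, a frequency cut drops no energy at all, and a large $S^{1}$-norm is compatible with a flat energy spectrum, so "$A$ is not globally small" does not produce a single frequency carrying a definite energy fraction $c_{0}$.

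The paper avoids both issues by replacing the frequency cut with the \emph{Yang--Mills heat flow}. It evolves the data $(A, \rd_{t} A)(0)$ under the dynamic heat flow up to a heat time $\scut$, and defines $\tA$ to be the \emph{genuine} caloric Yang--Mills wave launched from $(A, \rd_{t} A)(0, \scut)$ --- so the inductive hypothesis applies to $\tA$ honestly. The heat time $\scut$ is tuned so that the linear energy of the regularized data drops to \emph{exactly} $E$; this is possible because the linear energy is a monotone decreasing function of heat time, a structural fact the frequency cut has no analogue of. (If the energy has not dropped by heat time $1$, the paper switches to Scenario~(1), where the regularization at scale $1$ plus the short time $\abs{I} \aleq T(E, \hM)$ allow a subcritical local argument that does not invoke the inductive hypothesis at all.) Note also that the roles of low and high frequency are the \emph{reverse} of what you proposed: the low-frequency (heat-regularized) piece $\tA$ has energy $E$ and receives the induction, while the high-frequency remainder $A - \tA$ is shown --- by approximate linear energy conservation, Theorem~\ref{thm:structure-ed}.(5) --- to have \emph{small} energy $\aeq c(E)$, and is then controlled perturbatively through Theorem~\ref{thm:paradiff}, weak divisibility (Theorem~\ref{thm:structure}.(6)), and Lemma~\ref{l:box(a-ta)}. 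A further ingredient you do not have access to with a bare frequency cut is the comparison $\nrm{\tA - A(\scut)}_{\uS^{1}} \aleq \eps^{\dlth}$ (Proposition~\ref{prop:ind-low}, via Theorems~\ref{thm:structure-heat} and \ref{thm:structure-heat-ed}), which is what lets one pass between the heat-flow of the full solution $A(\scut)$ and the auxiliary subthreshold wave $\tA$. Without a gauge-covariant regularization with these structural properties, the decomposition step of the induction cannot be executed.
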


We next show that Theorem~\ref{thm:ed-first} immediately follows. Indeed, for caloric waves 
we have (see Theorem~\ref{thm:bfH})
\[
\| \nabla A\|_{L^2} \lesssim_{\nE,\calQ} 1
\]
as well as 
\[
\nE \lesssim_{\| \nabla A\|_{L^2}} 1.
\]
Thus the linear and nonlinear energy are interchangeable in the statement of the theorem.
The (minor) difference is that the nonlinear energy is exactly conserved, whereas the linear 
energy is only approximately conserved for energy dispersed Yang--Mills waves, see Theorem~\ref{thm:structure-ed}.(5).

For the remainder of this section, we fix $\hM$. We omit any dependence of constants on $\hM$ and write 
$\eps(E) = \eps(E, \hM)$, $T(E) = T(E, \hM)$, $M = M(E, \hM)$ etc.

Theorem~\ref{thm:ed} is proved by an induction on energy argument of similar structure to \cite{ST1} and \cite{OT2}. For the initial step, we show that it holds for small $E$ (Proposition~\ref{prop:small-en}). For the induction step, we assume that the 
result holds for all solutions with $\inf_{I} E_{lin}(A) \leq E$, and we seek to show that it 
holds up to $\inf_{I} E_{lin}(A) \leq E+c(E)$ for some small $c(E) > 0$. Notably, in order to continue the induction argument, 
we do not want $c(E)$ to depend on $F(E)$ or $\epsilon(E)$.

\subsection{Induction on energy argument}
As remarked earlier, the initial step of the proof of
Theorem~\ref{thm:ed} is essentially small energy global regularity for
the Yang--Mills equation in the caloric gauge, which is a quick
consequence of Theorem~\ref{thm:structure}.
\begin{proposition} \label{prop:small-en} There exists a small
  universal constant $E_{\ast} > 0$ (in particular, independent of
  $I$) such that if a classical caloric Yang--Mills connection
  satisfies
  \begin{equation} \label{eq:small-en-hyp} \inf_{t \in I} \nrm{\nb
      A(t)}_{L^{2}}^{2} \leq E_{\ast},
  \end{equation}
  then we have
  \begin{equation} \label{eq:small-en-S1} \nrm{A}_{S^{1}[I]} \aleq
    \sqrt{E_{\ast}}.
  \end{equation}
\end{proposition}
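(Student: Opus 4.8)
\textbf{Proof proposal for Proposition~\ref{prop:small-en}.}
The plan is to reduce this small-energy statement to an application of Theorem~\ref{thm:structure}, bootstrapped along the full interval $I$. First I would fix a time $t_{0} \in I$ realizing (essentially) the infimum in \eqref{eq:small-en-hyp}, so that $\nrm{\nb A(t_{0})}_{L^{2}}^{2} \leq 2 E_{\ast}$, say. Since $A$ is caloric with $A(t_{0}) \in \calC$, by Theorem~\ref{thm:bfH}.(1) the caloric size $\hM(A(t_{0}))$ is controlled by the energy $\nE = \spE(A(t_{0}))$, and smallness of $\nrm{\nb A(t_{0})}_{L^{2}}$ forces $\nE$ to be small and hence $\hM(A(t_{0})) \aleq E_{\ast}$ small as well (recall the discussion after Theorem~\ref{thm:ymhf-structure-loc}: small spatial energy implies global Yang--Mills heat flow with small $L^{3}$ curvature, and in fact $\hM(a) \to 0$ as $\spE(a) \to 0$). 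So we are in the regime where all the implicit constants $\aleq_{\nE, \hM}$ in Sections~\ref{sec:caloric}--\ref{sec:structure} are universal.

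Next I would run a standard continuity/bootstrap argument on the set of closed subintervals $J \subseteq I$ containing $t_{0}$ on which $\nrm{A}_{S^{1}[J]} \leq C_{0} \sqrt{E_{\ast}}$ holds for a suitable universal $C_{0}$. This set is nonempty (by local well-posedness of the regular Cauchy problem, or simply by continuity of $J \mapsto \nrm{A}_{S^{1}[J]}$ as $J$ shrinks to $\set{t_{0}}$), closed, and I claim it is open with a strict improvement of the constant. On such a $J$, the a priori $S^{1}$-bound lets me invoke Theorem~\ref{thm:structure}: with $M = C_{0}\sqrt{E_{\ast}}$ and $\hM \aleq E_{\ast}$, property (2) gives the frequency-envelope bound $\nrm{A}_{S^{1}_{c}[J]} + \nrm{\Box_{A} A}_{(N \cap L^{2} \dot{H}^{-\frac{1}{2}})_{c^{2}}[J]} \aleq_{M,\hM} 1$ for the data frequency envelope $c$. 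Quantitatively, chasing the smallness through the bilinear estimates of Section~\ref{subsec:ests-2} (Propositions~\ref{prop:ell-bi}--\ref{prop:Ax-bi}) and the paradifferential well-posedness Theorem~\ref{thm:paradiff}, every nonlinear term in the equation $\Box_{A} A = \P\calM(A,A) + \P^{\perp}\calM(A,A) + R(A)$ is at least quadratic in $A$ (the cubic-and-higher remainders $R_{j}$, $\DA^{3}$, etc.\ are envelope-preserving of order $\geq 3$), so the linear estimate \eqref{eq:lin-est-S1} yields
\begin{equation*}
  \nrm{A}_{S^{1}[J]} \aleq \nrm{\nb A(t_{0})}_{L^{2}} + C \nrm{A}_{S^{1}[J]}^{2} + C \nrm{A}_{S^{1}[J]}^{3} \aleq \sqrt{E_{\ast}} + C C_{0}^{2} E_{\ast} + C C_{0}^{3} E_{\ast}^{3/2}.
\end{equation*}
Choosing $C_{0}$ a fixed large universal constant and then $E_{\ast}$ small enough that $C C_{0}^{2} E_{\ast}^{1/2} + C C_{0}^{3} E_{\ast} \leq \tfrac{1}{2}$, this gives $\nrm{A}_{S^{1}[J]} \leq \tfrac{3}{4} C_{0}\sqrt{E_{\ast}} < C_{0}\sqrt{E_{\ast}}$, the desired strict improvement. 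Hence the bootstrap set is all of $I$, and letting $J \uparrow I$ (using the interval square-summability \eqref{eq:int-sum-S} of $S^{1}$, or just monotone convergence) gives \eqref{eq:small-en-S1}.

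One subtlety to address carefully — and the step I expect to be the main obstacle — is the interplay between the a priori $S^{1}$-bound on $J$ and the paradifferential machinery: Theorem~\ref{thm:paradiff} requires $\kpp \geq \kpp_{1}(M)$, so a genuinely small $S^{1}$-norm of $A$ does \emph{not} by itself let one take $\kpp$ universal; instead one follows the scheme in the proof of Theorem~\ref{thm:structure}.(1)--(2), choosing $\kpp$ depending on $M$ (hence on $E_{\ast}$), subdividing $J$ into $\aleq_{M} 1$ pieces on which the divisible error norms $\Rem^{\kpp}_{A}$ and $DS^{1}$ are small, and iterating Theorem~\ref{thm:paradiff} across the pieces. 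Since the number of pieces depends only on $M = C_{0}\sqrt{E_{\ast}}$ and not on $F$ or $\eps$, and since each application contributes only the quadratic-and-higher nonlinear feedback above, the final constant in \eqref{eq:small-en-S1} remains $\aleq \sqrt{E_{\ast}}$ as claimed. (Alternatively, since Theorem~\ref{thm:structure} has already been proved in the preceding subsection, one may simply quote its conclusions (1)--(2) as a black box on each bootstrap interval and only track the \emph{quantitative} smallness of the nonlinear terms, which is the content of the $\veps$-free bilinear bounds; this is the cleanest route.)
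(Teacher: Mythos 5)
Your proposal is essentially the paper's argument: a continuity/bootstrap on subintervals of $I$ closed by the frequency-envelope bound of Theorem~\ref{thm:structure}.(2). Two points of comparison are worth noting. First, the paper bootstraps the \emph{unit} bound $\nrm{A}_{S^{1}[J]} \leq 1$ rather than $\nrm{A}_{S^{1}[J]} \leq C_{0}\sqrt{E_{\ast}}$; this is slightly cleaner because it immediately puts $M = 1$ and $\hM \aleq 1$ into Theorem~\ref{thm:structure}.(2), whose conclusion $\nrm{A}_{S^{1}_{c}[J]} \aleq 1$ with $c$ the data envelope then gives $\nrm{A}_{S^{1}[J]} \aleq \nrm{c}_{\ell^{2}} \aleq \sqrt{E_{\ast}}$ directly and improves the bootstrap, with no extra work. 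Second, the intermediate step in which you assert the explicit fixed-point inequality $\nrm{A}_{S^{1}[J]} \aleq \nrm{\nb A(t_{0})}_{L^{2}} + C\nrm{A}_{S^{1}[J]}^{2} + C\nrm{A}_{S^{1}[J]}^{3}$ is not really available ``off the shelf'': the term $\Diff^{\kpp}_{\P A}A$ is not a perturbative quadratic in $A$ and must be handled via Theorem~\ref{thm:paradiff} (with $\kpp$ depending on $M$ and an interval subdivision), so a genuine quadratic-in-$\nrm{A}_{S^{1}}$ estimate would require redoing the KST/\cite{KT}-style small-data analysis rather than quoting the paper's lemmas. Fortunately you already sidestep this in the final parenthetical remark — quoting Theorem~\ref{thm:structure}.(2) as a black box — and that is exactly the paper's route, so the argument is sound once that alternative is taken. (One minor mis-citation: smallness of the caloric size follows from Theorem~\ref{thm:ymhf-thr-0}, not Theorem~\ref{thm:bfH}.(1), though this does not affect the argument.)
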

\begin{proof}
  We will follow a standard continuity argument, similar to the one
  used in the Coulomb gauge in \cite{KT}.  Start from a near minimum
  $t_0$ for $\nrm{\nb A(t)}_{L^{2}}^{2}$. Denote by $c$ a frequency
  envelope for $A[t_0]$ in $\dot H^1 \times L^2$. For a short time,
  there exists a classical solution, which satisfies
  \[
  \| A \|_{S^1[I]} \lesssim E_{\ast}
  \]
  We now consider the maximal interval $I$ containing $t_0$ and where
  the solution $A$ exists as a classical solution and satisfies
  \begin{equation}\label{S-boot}
    \| A\|_{S^1[I]} \leq 1
  \end{equation}
  This in particular implies
  \[
  Q(A) \lesssim 1
  \]
  Hence by Theorem~\ref{thm:structure}.(2) it follows that
  \[
  \| A\|_{S^1_c[I]} \lesssim 1
  \]
  and in particular
  \begin{equation}\label{S-get}
    \| A\|_{S^1[I]} \lesssim E_{\ast}
  \end{equation}

  Assume now by contradiction that $I$ has a finite end $T$. The $S^1$
  \eqref{S-boot} bound implies that $A$ is uniformly bounded near $t =
  T$ and has a limit as a classical solution. Hence it can be extended
  further as a classical solution (for a precise statement see in particular Theorem~\ref{thm:caloric-reg}).
  However, in view of \eqref{S-get}, if $E_{\ast}$ is sufficiently
  small then by continuity we can find a larger interval $I
  \subsetneq J$ where \eqref{S-boot} holds. This is a
  contradiction. It follows that the solution $A$ is global and
  satisfies \eqref{S-get}.
\end{proof}

For the induction step, consider a regular caloric Yang--Mills wave
$A$ on $I$ such that
\begin{equation}
  E < \inf_{t \in I} \nrm{\nb A(t)}_{L^{2}}^{2} \leq E + c(E), \qquad \nrm{F}_{ED_{\geq 0}(I)} \leq \eps, \qquad \abs{I} \leq T.
\end{equation}
Our goal is to establish a uniform bound
\begin{equation} \label{eq:M-E-goal} \nrm{A}_{S^{1}[I]} \leq M
\end{equation}
for appropriately chosen $c(E) > 0$ (depending \emph{only} on $E$),
$\eps$, $T$ and $M$ (which may depend on $E$, $\eps(E)$, $T(E)$,
$M(E)$ and $c(E)$).

Once this goal is achieved, we may extend $M(E)$, $\eps(E)$ and $T(E)$
to $[0, E+c(E)]$ so that $M(E+c(E)) = M$, $\eps(E + c(E)) = \eps$ and
$T(E + c(E)) = T$, while keeping validity of Theorem~\ref{thm:ed} in
this range of energy. Since $c(E)$ is a positive number depending only
on $E$, this procedure can be continued until Theorem~\ref{thm:ed}
holds for all regular subthreshold caloric Yang--Mills waves.

We now turn to the proof of \eqref{eq:M-E-goal}. By translating and
reversing $t$, we may assume without any loss of generality that $I =
[0, T_{+})$ for some $T_{+} > 0$ and
\begin{equation*}
  E < \nrm{\nb A(0)}_{L^{2}}^{2}  \leq E + 2 c(E).
\end{equation*}
Since $A$ is regular, it can be easily seen that $\nrm{A}_{S^{1}[0,
  T)}$ is a continuous function of $T$ satisfying
\begin{equation*}
  \limsup_{T \to 0+} \nrm{A}_{S^{1}[0, T)} \aleq \nrm{\nb A(t)}_{L^{2}} \aleq E^{\frac{1}{2}}.
\end{equation*}
Therefore, on a subinterval $J = [0, T) \subseteq I$, we may make the
bootstrap assumption
\begin{equation} \label{eq:M-E-boot} \nrm{A}_{S^{1}[J]} \leq 2 M.
\end{equation}
In order to improve \eqref{eq:M-E-boot} to \eqref{eq:M-E-goal}, we
compare $A$ with a caloric Yang--Mills wave $\tA$ with $S^{1}[I]$-norm
$\leq M(E)$ (eventually), which we construct as follows.

To begin with, we view the space-time connection $A_{t,x}$ on $I \times
\bbR^{4}$ as a caloric initial data and solve the dynamic Yang--Mills heat
flow in the local caloric gauge, i.e.,
\begin{align*}
  \rd_{s} A_{\mu}(t, x, s) =& \covD^{k} F_{k \mu} (t, x, s), \\
  A_{\mu}(t, x, 0) =& A_{\mu}(t, x).
\end{align*}
From the results in Section~\ref{sec:caloric}, we obtain a
global-in-heat-time solution $A_{t,x}(t, x, s)$ on $I \times \bbR^{4}
\times [0, \infty)$. Note that $\rd_{t} A$ solves the linearized
Yang--Mills heat flow in local caloric gauge, and we have $(A, \rd_{t}
A)(t, s) \in T^{L^{2}} \calC$ for every $(t, s) \in I \times [0,
\infty)$.

By the caloric gauge condition, the linear energy $\nrm{(A, \rd_{t}
  A)(t, s)}_{\dot{H}^{1} \times L^{2}}^{2} = \nrm{\nb A(t,
  s)}_{L^{2}}^{2}$ eventually tends to zero as $s \to \infty$. Thus
there exists a heat-time $\scut' > 0$ such that
\begin{equation*}
  \nrm{(A, \rd_{t} A)(0, s)}_{\dot{H}^{1} \times L^{2}}^{2} = E.
\end{equation*}
To eliminate ambiguity, we take $\scut'$ to be the minimum such
heat-time. In order to choose the cut-off heat-time $\scut$, we
distinguish two scenarios:
\begin{enumerate}
\item If $\scut' \geq 1$, then we define $\scut = 1$.
\item If $\scut' < 1$, then we define $\scut = \scut'$.
\end{enumerate}
With $\scut$ chosen as above, we define $\tA$ to be the caloric
Yang--Mills wave with initial data
\begin{equation*}
  (\tA, \rd_{t} \tA)(0) = (A, \rd_{t} A)(0, \scut).
\end{equation*}

In both scenarios, we aim to prove that $\tA$ exists on $J$ and is
well-approximated by $A(\scut)$. Moreover, by the induction
hypothesis, $\tA$ should obey a nice $S^{1}$-norm bound.
\begin{proposition} \label{prop:ind-low} Let $\tA$ be defined as
  above. For sufficiently small $\eps, T > 0$ depending on $M$,
  $M(E)$, $T(E)$, $\eps(E)$ and $c(E)$, the regular caloric
  Yang--Mills wave $\tA$ exists on the interval $J$ and obeys
  \begin{align}
    \nrm{\tA}_{S^{1}[J]} & \leq M(E) + C_{0} \sqrt{E},  \label{eq:tA-S1}\\
    \nrm{A(\scut) - \tA}_{\uS^{1}_{c^{\ast}}[J]} & \aleq_{M} \eps^{\dlth}, \label{eq:Al-S1} \\
    \nrm{A_{0}(\scut) - \tA_{0}}_{Y^{1}_{c^{\ast}}[J]} & \aleq_{M} \eps^{\dlth}, \label{eq:Al-0} \\
    \nrm{\P^{\perp} A(\scut) - \P^{\perp} \tA}_{Y^{1}_{c^{\ast}}[J]} &
    \aleq_{M} \eps^{\dlth}, \label{eq:Al-cf}
  \end{align}
  where $C_{0}$ is a universal constant and $c^{\ast}$ is a frequency
  envelope defined as
  \begin{equation}
    c^{\ast}_{k} = 2^{- \dlt_{\ast} \abs{k - k(\scut)}}.
  \end{equation}
\end{proposition}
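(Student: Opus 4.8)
The plan is to prove Proposition~\ref{prop:ind-low} by setting up a bootstrap/continuity argument on the interval $J$ for the caloric Yang--Mills wave $\tA$, together with the difference $A(\scut) - \tA$, treating the difference equation perturbatively. First I would make a bootstrap assumption on a subinterval $J' = [0, T') \subseteq J$, say $\nrm{\tA}_{S^1[J']} \leq 2(M(E) + C_0\sqrt{E})$, which is consistent at $t = 0$ since $\nrm{\nb \tA(0)}_{L^2}^2 = E$ (in Scenario~(1)) or $\leq E$ (in Scenario~(2)). Crucially, the data for $\tA$ is $(A, \rd_t A)(0, \scut)$, which is a genuine caloric data set of energy $\leq E$ with $\tA(0) \in \calC_{\hM}$, so once we know $\tA$ exists on $J'$ and obeys an a-priori small-energy-dispersion estimate on $J'$, the induction hypothesis (Theorem~\ref{thm:ed}) applies and gives $\nrm{\tA}_{S^1[J']} \leq M(E)$ — an \emph{improvement} of the bootstrap provided we can verify the hypotheses \eqref{eq:ed-hyp} for $\tA$ on $J'$ with the parameters $\eps(E)$, $T(E)$.

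The next step is to control the difference $\dA := A(\scut) - \tA$. Both $A(\scut)$ and $\tA$ are caloric Yang--Mills waves on $J'$ (the former by Theorem~\ref{thm:main-eq-s}, the latter by construction), agreeing at $t=0$ by definition of $\tA$. I would write the difference equation for $\dA$ in paradifferential form $(\Box + \Diff^\kpp_{\P\tA})\dA = F^\kpp$ exactly as in the proof of Theorem~\ref{thm:structure}.(8), where $F^\kpp$ now also contains the extra heat-flow error terms $\P_j \bfw_x^2$, $R_{j;s}$, $\bfA_{0;s}$, $\DA_{0;s}$ coming from Theorem~\ref{thm:main-eq-s}; these are handled by Propositions~\ref{prop:w0-bi}, \ref{prop:wx-bi}, \ref{prop:diff-tri-nf-w} and the mapping properties \eqref{eq:Rj-s}, \eqref{eq:A0-tri-s}, \eqref{eq:DA0-tri-s}. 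The key inputs for smallness of $F^\kpp$ are: (i) the small energy dispersion of $A$, hence of $A(\scut)$, on the short interval $J$ via Theorem~\ref{thm:structure-heat-ed}, which gives all $\Str^1$-type and $L^2\dot H^{-1/2}$-type norms of $A(\scut)$ small like $\eps^{\dlth}$ (after choosing $\kpp$ appropriately and absorbing $2^{C\kpp}$); (ii) the fact that $\dA[0] = 0$, so the difference evolution starts from zero data; and (iii) divisibility of the non-$S^1$ controlling norms $C(\tA, J')$, $DS^1$ etc., which lets us partition $J'$ into $O_{M}(1)$ pieces on which $F^\kpp$ is genuinely perturbative. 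Then Theorem~\ref{thm:paradiff} applied to $\Box + \Diff^\kpp_{\P\tA}$ on each piece, iterated, yields \eqref{eq:Al-S1}, and \eqref{eq:Al-0}, \eqref{eq:Al-cf} follow from the second part of Proposition~\ref{prop:uS-bnd} (difference bounds for the elliptic components) together with Theorem~\ref{thm:structure-heat-ed}.(4). The frequency envelope $c^\ast_k = 2^{-\dlt_\ast|k - k(\scut)|}$ enters because $A(\scut) - A$ is concentrated, up to rapidly decaying tails, near frequency $2^{k(\scut)}$ (by the $\brk{2^{2k}s}^{-10}$ and $\brk{2^{-2k}s^{-1}}^{-\dltd}$ factors in Theorem~\ref{thm:structure-heat}), so $\dA = (A(\scut) - A) + (A - \tA)$, and since $A$ and $\tA$ nearly agree this is a legitimate frequency envelope for $\dA$.

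To close the loop I would then use \eqref{eq:Al-S1} together with the (already improved, by induction) bound $\nrm{\tA}_{S^1[J']} \leq M(E)$ and the bound $\nrm{A(\scut)}_{\uS^1[J']} \aleq_{M} 1$ from Theorem~\ref{thm:structure-heat} to conclude $\nrm{\tA}_{S^1[J']} \leq M(E) + C_0\sqrt{E}$ — note the $\sqrt E$ rather than $\eps^{\dlth}$ slack is there to absorb the gap between $E$ and the actual energy of $\tA$'s data, which is exactly $E$ in Scenario~(1) and $\leq E$ in Scenario~(2), so in fact $C_0 \sqrt E$ is a safe cushion. Since all estimates are continuous in $T'$ and the improvement is strict, the bootstrap runs up to $T' = T$, i.e. on all of $J$. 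The main obstacle I anticipate is verifying the hypothesis \eqref{eq:ed-hyp} of the induction hypothesis \emph{for $\tA$} on $J$ with the prescribed $\eps(E)$: one needs that $\tA$, not just $A$, is energy dispersed at frequencies $\gtrsim 2^m$ on the short interval. This is not immediate because $\tA$ solves the hyperbolic equation with its own (heat-regularized) data; the resolution is to first establish energy dispersion of $A(\scut)$ on $J$ from Theorem~\ref{thm:structure-heat-ed}.(2), then transfer it to $\tA$ via the difference bound \eqref{eq:Al-S1} — but this is circular unless carefully sequenced, so the argument must run the difference estimate and the $\tA$-regularity estimate \emph{simultaneously} inside the bootstrap, which is the delicate bookkeeping step. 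The secondary difficulty, as flagged in Scenario~(1) of the paper's overview, is the low-frequency regime where $ED^1_{\geq m}$ is useless and one must instead exploit $|I| \leq 2^{-m}T$; here the short-time divisibility built into Definition~\ref{def:eps-disp} and Proposition~\ref{prop:ext} does the work, but one has to check that the heat-flow smoothing at scale $\scut \leq 1$ does not spoil the compatibility between the time-interval length and the relevant frequency scale $2^{k(\scut)}$.
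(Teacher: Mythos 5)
Your proposal does not distinguish between the two scenarios that the paper treats separately, and in fact your argument fails in Scenario~(1), where $\scut = 1 \leq \scut'$. The core issue is that your plan hinges on applying the induction hypothesis (Theorem~\ref{thm:ed}) to $\tA$, which requires $\inf_J E_{lin}(\tA) \leq E$. But in Scenario~(1), the linear energy of $(\tA,\rd_t\tA)(0) = (A,\rd_t A)(0,\scut)$ is $\nrm{\nb A(0,1)}_{L^2}^2 \geq E$ (with strict inequality unless $\scut' = 1$ exactly), since the heat flow energy is decreasing in $s$ and $\scut'$ is precisely the first time it drops to $E$. You even assert that the energy is ``exactly $E$ in Scenario~(1) and $\leq E$ in Scenario~(2)'' — this is backwards: it is exactly $E$ in Scenario~(2) (where $\scut = \scut'$) and $\geq E$ in Scenario~(1). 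So the induction hypothesis is simply unavailable in Scenario~(1), and your bootstrap cannot close.

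The paper sidesteps this in Scenario~(1) by a completely different and more elementary mechanism: since $\scut = 1$, the Yang--Mills heat flow has smoothed the data dramatically, giving $\nrm{(\tA,\rd_t\tA)(0)}_{(\dot H^N\times\dot H^{N-1})\cap(\dot H^1\times L^2)} \aleq \sqrt{E}$ for any fixed $N$; then a \emph{subcritical} local-in-time argument (valid on a time interval of length $O_E(1)$, which dominates $|J| \leq T$ once $T$ is small) gives $\nrm{\tA}_{S^N\cap S^1[J]} \aleq \sqrt{E}$ directly, with no appeal to the induction hypothesis at all. The difference bound \eqref{eq:Al-S1} in this scenario is then obtained frequency by frequency: for $k \geq \kpp_0$ both $\tA$ and $A(\scut)$ are rapidly decaying via the smoothing weights $\brk{2^{2k}\scut}^{-10}$, while for $k \leq \kpp_0$ one simply integrates $\Box\Al$ in time using $L^\infty\dot H^{-1}$ bounds and the zero initial data $\Al[0]=0$, picking up smallness from $|J| 2^{\kpp_0}$.

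Your Scenario~(2) analysis (bootstrap assumption, paradifferential difference equation $\Box_{\tA}\Al = F$, perturbative estimates using small energy dispersion, divisibility, and the heat-flow error terms $\bfw^2$, $R_{j;s}$, $\bfA_{0;s}$, $\DA_{0;s}$) is broadly aligned with the paper's Step~(2). You also correctly flag the circularity worry about transferring energy dispersion from $A(\scut)$ to $\tA$; the paper resolves this by ordering the argument so that the difference bound \eqref{eq:Al-S1} is established first (under the bootstrap assumption \eqref{eq:ind-low-boot}, which gives a weaker power of $\eps$ as an a-priori smallness), and only afterward deduces the energy dispersion of $\tF$ and invokes the induction hypothesis. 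But without the Scenario~(1)/(2) split and the subcritical fallback, your proposal as written does not cover the full statement.
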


On the other hand, viewing $A$ as a ``high frequency perturbation'' of
$\tA$, we show below that $A$ stays close to $\tA$ in the space
$S^{1}$.
\begin{proposition} \label{prop:ind-high} Let $\tA$ be defined as
  above on the interval $J$. Provided that $c = c(E) > 0$ is chosen
  small enough compared to $E$ (but independent of $M(E)$, $T(E)$ or
  $\eps(E)$) and $T, \eps > 0$ are also sufficiently small depending
  on $M$, $M(E)$, $T(E)$, $\eps(E)$ and $c(E)$, we have
  \begin{equation} \label{eq:ind-high} \nrm{A - \tA}_{S^{1}[J]}
    \aleq_{M(E), E} 1.
  \end{equation}
\end{proposition}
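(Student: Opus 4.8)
\textbf{Proof plan for Proposition~\ref{prop:ind-high}.}

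The plan is to treat $\dA = A - \tA$ as a solution of the paradifferential wave equation associated to the \emph{background} $\tA$, exactly as in the proof of Theorem~\ref{thm:structure}.(8), but now accounting for the fact that $A$ and $\tA$ genuinely differ at low energy (so $\dA$ is not small in $S^1$, only in linear energy via $c(E)$), while the high-frequency part of $A$ is small thanks to the energy dispersion hypothesis. First I would write the equation for $\dA$ by subtracting the two versions of \eqref{eq:main-wave}, one for $A$ and one for $\tA$; following the derivation in Lemma~\ref{l:box(a-ta)} and \eqref{Fkappa}, put it in the form
\[
(\Box + \Diff^{\kpp}_{\P \tA}) \dA = \Diff^{\kpp}_{\P A - \P \tA} A + (R^{\kpp}_A A - R^{\kpp}_{\tA} \tA) + (\Box_A A - \Box_A \tA).
\]
As in Section~\ref{sec:structure}, the key point is that by \eqref{eq:M-E-boot} we have $\nrm{A}_{S^1[J]} \leq 2M$ and, by Proposition~\ref{prop:ind-low}, $\nrm{\tA}_{S^1[J]} \leq M(E) + C_0\sqrt{E}$; thus the coefficient norms $C(A,J)$, $C(\tA,J)$ appearing in Lemmas~\ref{l:Rkappa}, \ref{l:box(a-ta)} are divisible (all their constituent norms except $S^1$ are), so for $\kpp$ chosen large enough depending on $M$ and then $J$ partitioned into $O_{M}(1)$ subintervals, the right-hand side becomes perturbative, i.e. $\ll \nrm{\dA}_{S^1}$ on each subinterval, \emph{except} for the genuinely dangerous piece $\Diff^{\kpp}_{\P A - \P \tA} A$.

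The main obstacle is precisely this $\Diff^{\kpp}_{\P A - \P \tA} A$ term: unlike in Theorem~\ref{thm:structure}.(8), here we cannot afford a loss of $2^{-c_\sgm\kpp}$ coming from working in $S^\sgm$ with $\sgm<1$, since we want the full $S^1$ estimate. Instead, smallness must come from the hypotheses of this proposition. The plan is to split $A$ (equivalently $\dA$, since $\tA$ is comparatively benign) into low- and high-frequency pieces relative to the cutoff scale $k(\scut)$ built into $c^\ast$. For the high-frequency part of $A$ (frequencies $\gtrsim 2^{k(\scut)}$), one uses \eqref{eq:ed-hyp} together with Lemma~\ref{lem:hM-ed} and Theorem~\ref{thm:structure-ed} / Theorem~\ref{thm:structure-heat-ed} to extract a power $\eps^{\dlth}$; estimates \eqref{eq:Al-S1}--\eqref{eq:Al-cf} already encode exactly this gain for $A(\scut) - \tA$, and the remaining $A - A(\scut)$ is controlled by the caloric heat-flow bounds of Theorem~\ref{thm:structure-heat}. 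For the low-frequency part, one exploits instead that $\dA$ has \emph{small linear energy}: indeed $\nrm{\nb(A-\tA)(0)}_{L^2}^2 \leq \nrm{\nb A(0)}_{L^2}^2 - \nrm{\nb\tA(0)}_{L^2}^2 + (\text{heat flow defect}) \aleq c(E) + (\ldots)$, and the heat-flow energy defect between $A(0)$ and $A(0,\scut)$ is controlled by the energy dispersion via Theorem~\ref{thm:structure-heat-ed} — so choosing $c(E)$ small buys the needed smallness at low frequencies, which is why $c(E)$ is allowed to depend only on $E$.

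Assembling these, on each of the $O_M(1)$ subintervals $J_i$ the source $F^\kpp$ is bounded by $(\text{small})\cdot\nrm{\dA}_{S^1[J_i]} + C_M(\eps^{\dlth} + c(E)^{1/2})$, where "small" can be made $\ll 1$ by the subinterval count; invoking Theorem~\ref{thm:paradiff} (well-posedness for $\Box + \Diff^{\kpp}_{\P\tA}$, valid since $\kpp \geq \kpp_1(M(E) + C_0\sqrt E)$) on $J_1$ yields $\nrm{\dA}_{S^1[J_1]} \aleq_{M(E),E} 1$, and reiterating across $J_2, \dots$ using interval square-summability \eqref{eq:int-sum-S} of $S^1$ gives \eqref{eq:ind-high}. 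A technical point to watch, as in \cite{ST1, OT2}, is that the bootstrap constant $M$ must be chosen \emph{after} $M(E)$ but the smallness thresholds $\eps, T$ depend on $M$ — this is consistent because $M$ only needs to dominate the right-hand side $C(M(E),E)$ of \eqref{eq:ind-high} plus $\nrm{\tA}_{S^1}$, so one first fixes $M = M(E) + C_0\sqrt E + C(M(E),E) + 1$, say, then fixes $\kpp$, then the subinterval count, then $\eps, T, c(E)$, closing the bootstrap \eqref{eq:M-E-boot} $\Rightarrow$ \eqref{eq:M-E-goal} by the usual continuity-in-$T$ argument.
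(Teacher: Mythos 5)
Your outline identifies the right ingredients (linear energy conservation giving the $c(E)$ gap, Proposition~\ref{prop:ind-low} for the heat-flow comparison, a paradifferential reformulation, and the frequency split around $k(\scut)$), but there are three structural issues that diverge from what actually makes the argument close, and two of them are genuine gaps.

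\textbf{Which paradifferential operator, and which error term.} You propose to place $\Box + \Diff^{\kpp}_{\P \tA}$ on the left and $\Diff^{\kpp}_{\P A - \P\tA} A$ (i.e.\ $\Diff^{\kpp}_{\P\Ah}A$) on the right. The paper instead keeps $\Diff^{\kpp}_{\P A}$ on the left and isolates $\Diff^{\kpp}_{\P\Ah}\tA$, so that the sum $(\Box + \Diff^{\kpp}_{\P A})\Ah + \Diff^{\kpp}_{\P\Ah}\tA$ is \emph{exactly} the difference of the two paradifferential residuals, each of which is bounded by $2^{C\kpp}\eps^{\dltg\dlth}$ directly from Theorem~\ref{thm:structure-ed}.(4) — you do not need to unpack $R^{\kpp}_A A - R^{\kpp}_{\tA}\tA$ and $\Box_A A - \Box_A\tA$ separately. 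More importantly, the direction of the split is not cosmetic: the $\kpp$-gain for $\Diff^{\kpp}_{\P\Ah}\tA$ comes from feeding into Lemma~\ref{l:box(a-ta)} the frequency envelopes $\|P_k\Ah\|_{S^1}\lesssim_E 2^{-\dlt_\ast(k-k^\ast)_-}c_k$ (small below $k^\ast$) and $\|P_k\tA\|_{S^1}\lesssim_E 2^{-(k-k^\ast)_+}c_k$ (small above $k^\ast$, by heat-flow smoothing). In your version the high-frequency slot carries $A$, whose envelope has no decay above $k^\ast$, and the gain disappears for output frequencies $k \geq k^\ast + \kpp$. So the specific algebraic split you chose does not produce the $2^{-c\dlt_\ast\kpp}$ factor; you would have to redo it with the operator built from $A$ and the factor $\tA$ in the error.

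\textbf{How the subinterval count is obtained.} You want to partition $J$ into $O_M(1)$ pieces using divisibility of $C(A,J)$ and $C(\tA,J)$. That makes the final constant depend on the bootstrap parameter $M$, which must not happen: the conclusion is supposed to be $\aleq_{M(E),E}1$, and $M$ is chosen afterwards. The paper partitions using weak divisibility (Theorem~\ref{thm:structure}.(6)) applied to $\tA$ alone, giving $K\aleq_{M(E)}1$ intervals on each of which $\nrm{\tA}_{S^1[J_k]}\aleq_E 1$; all perturbative constants on a single $J_k$ then depend only on $E$. The re-initialization of the bootstrap at the start of each $J_k$ is exactly what Step~1's bound $\sup_{t\in J}\nrm{(\Ah,\rd_t\Ah)(t)}_{\dot H^1\times L^2}^2\aleq c(E)+C_M\eps^{\dlth}$ is for — the small energy gap is used as fresh initial data on each piece, not as a source term.

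\textbf{The frequency split.} Your low/high split around $k(\scut)$ is the right intuition, but the paper never performs an explicit split; the decay of the envelopes for $\Ah$ and $\tA$ on opposite sides of $k^\ast$ (established in Step~3 from Proposition~\ref{prop:ymhf-fe}, Proposition~\ref{prop:ind-low} and Theorem~\ref{thm:structure}) feeds into the generic frequency-envelope trilinear bound and produces the $\kpp$ gain automatically. You also correctly observe that the $S^\sigma$, $\sigma<1$ trick from Theorem~\ref{thm:structure}.(8) is unavailable here; the paper's replacement is precisely this $k^\ast$-centered envelope structure, not a manual frequency decomposition.

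In short: the skeleton (energy conservation $\to$ reinitialization $\to$ paradifferential perturbative argument $\to$ $\Diff$-term via trilinear envelope estimate) is right, but your choice of paradifferential operator and error term would not close, and the subinterval bookkeeping as written introduces a circular dependence on $M$.
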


Assuming the preceding two propositions, we may choose $M$
sufficiently large compared to $M(E)$ and $E$, then choose $\eps$ and
$T$ accordingly, so that the desired estimate \eqref{eq:M-E-goal}
follows from \eqref{eq:tA-S1} and \eqref{eq:ind-high}.

It remains to prove Propositions~\ref{prop:ind-low} and
\ref{prop:ind-high}, which are the subjects of
Sections~\ref{subsec:ind-low} and \ref{subsec:ind-high}, respectively.

\subsection{Control of \texorpdfstring{$\tA - A(\scut)$}{tA-As}: Proof of Proposition~\ref{prop:ind-low}} \label{subsec:ind-low}
We introduce the notation
\begin{equation}
	\Al = \tA - A(\scut).
\end{equation}
We proceed differently depending on how $\scut$ was chosen.

\pfstep{Scenario~(1): $\scut = 1 (\leq \scut')$}
This scenario is simpler to handle, and we do not need to invoke the induction hypothesis. 

\pfsubstep{Step~(1).1: $S^{1}$-norm bound for $\tA$} We first prove
the $S^{1}$-norm bound \eqref{eq:tA-S1}. The idea is to exploit
smoothing property of the Yang--Mills heat flow, which implies control
of higher Sobolev norms of $(\tA, \rd_{t} \tA)(0) = (A, \rd_{t} A)(0,
1)$ in terms of $\sqrt{E}$, and use subcritical local regularity of
Yang--Mills in the caloric gauge, which works in a time interval of
length $O_{E}(1)$.

Fix a large integer $N$ (say $N = 10$). We claim that $\tA$ exists on $J$ and
\begin{equation} \label{eq:tA-S1-smth}
	\nrm{\tA}_{S^{N} \cap S^{1}[J]} \aleq \sqrt{E},
\end{equation}
provided that $T$ is sufficiently small depending only on $E$ (so that $\abs{J} \ll_{E} 1$).

By the smoothing property for the Yang--Mills heat flow and its linearization in the caloric gauge (see Section~\ref{sec:caloric}), we have
\begin{equation*}
	\nrm{(\tA, \rd_{t} \tA)(0)}_{(\dot{H}^{N} \times \dot{H}^{N-1}) \cap (\dot{H}^{1} \times L^{2})}
	\aleq \sqrt{E}.
\end{equation*}

For $T$ sufficiently small (depending only on $E$), the following local-in-time a-priori estimates at subcritical regularity hold:
\begin{align*}
	\sup_{t \in J} \nrm{(\tA, \rd_{t} \tA)(t)}_{(\dot{H}^{N} \times \dot{H}^{N-1}) \cap (\dot{H}^{1} \times L^{2})} 
	+ \abs{J} \nrm{\Box \tA}_{L^{\infty} (\dot{H}^{N-1} \cap L^{2})[J]} \aleq & \sqrt{E}, \\
	\sup_{t \in J} \nrm{(\tA_{0}, \rd_{t} \tA_{0})(t)}_{(\dot{H}^{N} \times \dot{H}^{N-1}) \cap (\dot{H}^{1} \times L^{2})} 	
	\aleq & \sqrt{E}.
\end{align*}
The proof is via Theorem~\ref{thm:main-eq} and, as usual, the Sobolev embedding into $L^{\infty}$; we omit the details.

As a consequence of the preceding a-priori bounds, we obtain \eqref{eq:tA-S1-smth} as desired.
Moreover, by Theorem~\ref{thm:main-eq} and the fixed-time bounds in Section~\ref{sec:ests}, we have
\begin{equation} \label{eq:Al-Box}
\nrm{\Box \tA}_{L^{\infty} \dot{H}^{-1}[J]} \aleq_{E} 1.
\end{equation}

\pfsubstep{Step~(1).2: $S^{1}$-norm bound for $A(\scut) - \tA$}
As a preparation for the proof of \eqref{eq:Al-S1}, we claim that
\begin{equation} \label{eq:Al-S1-smth}
	\nrm{A(\scut) - \tA}_{S^{1}_{c^{\ast}}[J]} \aleq_{M} \eps^{c}.
\end{equation}

In the present case, $2^{k(\scut)} = 1$. For frequencies higher than
$1$, we simply use \eqref{eq:tA-S1-smth} with smoothing estimates for
$A(\scut)$ in $S^{1}$. For frequencies lower than $1$, we control
$\Box (\tA - A(\scut))$ in $L^{\infty} \dot{H}^{-1}$ and integrate in
time.

By Theorem~\ref{thm:structure-heat}, we have
\begin{align}
	\nrm{P_{k} A(\scut)}_{S^{1}[J]} \aleq_{M} & 2^{-20 k_{+}}, \label{eq:Ascut-S1} \\
\nrm{P_{k} \Box A(t, s)}_{\dot{H}^{-1}} \aleq_{M} & 2^{-20 k_{+}}. \label{eq:Ascut-Box}
\end{align}

Let $\kpp_{0} \geq k(\scut)$ be a parameter to be fixed below. By \eqref{eq:Al-S1-smth} and \eqref{eq:Ascut-S1}, we have
\begin{equation}
	\nrm{P_{k} \Al}_{S^{1}[J]}
	\leq \nrm{P_{k} \tA}_{S^{1}[J]} + \nrm{P_{k} A(\scut)}_{S^{1}[J]}
	\aleq_{M} 2^{- c \kpp_{0}} c^{\ast}_{k} \qquad \hbox{ for } k \geq \kpp_{0},
\end{equation}
where $0 < c  \ll 1$ is a universal constant.  Since 
\[
P_{k} (L^{\infty} \dot{H}^{-1} [J]) \hookrightarrow \abs{J} 2^{k} N \cap (\abs{J} 2^{k})^{\frac{1}{2}} L^{2} \dot{H}^{-\frac{1}{2}},
\]
for $k \leq \kpp_{0}$ it follows from \eqref{eq:Al-Box} and \eqref{eq:Ascut-Box} that 
\begin{align*}
	\nrm{P_{k} \Box \Al}_{(N \cap L^{2} \dot{H}^{-\frac{1}{2}})[J]}
	& \leq \nrm{P_{k} \Box \tA}_{(N \cap L^{2} \dot{H}^{-\frac{1}{2}})[J]} + \nrm{P_{k} \Box A(\scut)}_{(N \cap L^{2} \dot{H}^{-\frac{1}{2}})[J]} \\
	& \aleq_{M} ((\abs{J} 2^{\kpp_{0}})^{\frac{1}{2}} + (\abs{J} 2^{\kpp_{0}}) + \eps^{c}) c^{\ast}_{k}.
\end{align*}
Since $\dlt A^{low}[0] = 0$, we arrive at
\begin{equation}
	\nrm{P_{k} \Al}_{S^{1}[J]}
	\aleq_{M} ((\abs{J} 2^{\kpp_{0}})^{\frac{1}{2}} + (\abs{J} 2^{\kpp_{0}}) + \eps^{c}) c^{\ast}_{k} \qquad
	\hbox{ for } k \leq \kpp_{0}.
\end{equation}

\pfsubstep{Step~(1).3: Completion of proof}
Finally, the bounds \eqref{eq:Al-S1}--\eqref{eq:Al-cf} follow from \eqref{eq:Al-S1-smth} and Theorem~\ref{thm:structure-heat}.(3) with $d_{k} = c^{\ast}_{k}$ provided that $\abs{J} \leq T$ is sufficiently small. Here, note that 
\[
e_{k} = c^{\ast}_{k} + c_{k} (c \cdot c^{\ast})_{\leq k} \aleq_{M} c^{\ast}_{k}.
\]

\pfstep{Scenario~(2): $\scut = \scut' >  1$}
In the second scenario, we analyze the equation satisfied by the difference $\Al = A(\scut) - \tA$ 
to prove \eqref{eq:Al-S1}, then make use of the induction hypothesis to derive \eqref{eq:tA-S1}.
 By another continuous induction in time, we may make the following extra bootstrap assumptions:
\begin{equation}  
\nrm{\tA}_{S^{1}[J]}  \leq 2(M(E) + C_{0} \sqrt{E}),  \label{eq:tA-S1-boot}
\end{equation}
as well as
\begin{equation} \label{eq:ind-low-boot}
\nrm{\Al}_{S^{1}_{c^{\ast}}[J]} \leq  \eps^{c\dlth}.
\end{equation}
Here we use a smaller power of $\epsilon$, so this last bound will only serve to insure some a-priori
smallness of $\Al$ in $S^{1}_{c^{\ast}}$. 

By Theorem~\ref{thm:structure-heat-ed}, we have
\begin{align} 
	\nrm{P_{k} A(\scut)}_{S^{1}[J]} & \aleq_{M} c_{k} \brk{2^{2k} \scut}^{-10}, \\
	\nrm{A(\scut)}_{ED^{1}_{\geq 0}[J]} & \aleq_E \eps^{\dltg}, \\
	\nrm{\Box A(\scut)}_{L^{2} \dot{H}^{-\frac{1}{2}}[J]} & \aleq_{M} \eps^{\dltg}.
\end{align}
Therefore, $(A(\scut), J)$ is $(\veps, M_{\ast})$-energy dispersed for $M_{\ast} \aleq_{M} 1$ and $\veps \leq \eps^{\dltg}$.

\pfsubstep{Step~(2).1: Bounds for $\Al$}
Here we establish \eqref{eq:Al-S1}. 
We write an equation for $\Al$ of the form
\[
\Box_{\tA} \Al = F, \qquad \Al[0]=0
\]
We claim that in each subinterval  $J_1$ of  $J$ and for each $\kappa > 10$ we have the bound
\begin{equation}\label{F-in-N}
\| F \|_{(N \cap L^{2} \dot{H}^{-\frac{1}{2}})_{c^{\ast}}[J_1]} \lesssim_M (2^{-c\dlt_{\ast} \kappa} \| \tA\|_{S^1[J_1]}  + 2^{C\kappa} C(\tA,J_1))
\| \Al \|_{S^1_{c^{\ast}}[J_1]}  + \epsilon^{\dlth}, 
\end{equation}
where $ C(\tA,J_1)$ contains only divisible norms of $\tA$, see \eqref{C-def}. 

We first verify that the bound \eqref{F-in-N} implies \eqref{eq:Al-S1}. Using the well-posedness 
for the $\Box_\tA$ equation, given by Theorem~\ref{thm:structure},  in the time interval $J_1 = [t_1,t_2]$, 
we obtain the bound 
\[
\| \Al \|_{S^1_{c^{\ast}}[J_1]} \leq C(M)(\|\Al[t_1]\|_{\H_{c^\ast}} +   (2^{-c\dlt_{\ast} \kappa} \| \tA\|_{S^1[J_1]}  + 2^{C\kappa} C(\tA,J_1))
\| \Al \|_{S^1_{c^{\ast}}[J_1]}  + \epsilon^{\dlth} ).
\]
For this to be useful we need to insure that the coefficient of $\| \Al \|_{S^1_{c^{\ast}}[J_1]}  $ on the right is small.
To achieve that we first choose $\kappa$ large  enough, $k \gg_M 1$, depending only on $M$, 
so that 
\[
C(M)  2^{-c\dlt_{\ast} \kappa} \| \tA\|_{S^1[J]}  \ll 1
\]
Then we divide the interval $J$ into subintervals $J_j$ so that 
\[
C(M) 2^{C\kappa} C(\tA,J_j) \ll 1 
\]
The number of such intervals depends only on $M$. On each subinterval $J_j = [t_{j-1},t_{j}]$ we have the bound
\[
\| \Al \|_{S^1_{c^{\ast}}[J_1]} + \|\Al[t_{j}]\|_{\H_{c^\ast}}   \leq C(M)( \|\Al[t_{j-1}]\|_{\H_{c^\ast}}   + \epsilon^{\dlth}) .
\]
Reiterating this we obtain \eqref{eq:Al-S1}. 

If remains to prove the bound \eqref{F-in-N}. We relabel $J_1$ by $J$ for simplicity.
 As a preliminary step, we observe that, 
by Theorem~\ref{thm:structure-heat-ed} and the bootstrap assumption \eqref{eq:ind-low-boot}, we have
\begin{align}\label{dA-low-first}
 	\nrm{\Al}_{\uS^{1}_{c^{\ast}}[J]} 
	+ \nrm{\Al_{0}}_{Y^{1}_{c^{\ast}}[J]} 
	+ \nrm{\P^{\perp} \Al}_{Y^{1}_{c^{\ast}}[J]}
	\aleq_{M} & \nrm{\Al}_{S^{1}_{c^{\ast}}[J]}. 
\end{align}
In particular, this proves the bounds \eqref{eq:Al-0} and \eqref{eq:Al-cf} once \eqref{eq:Al-S1} is known.

The expression for $F$ is obtained from Theorems~\ref{thm:main-eq} and \ref{thm:main-eq-s},
\begin{equation*}
F:= 	\Box_{\tA} \dlt A^{low}
	= \Box_{\tA} \tA - \Box_{A(\scut)} A(\scut) + (\Box_{A(\scut)} - \Box_{\tA}) A(\scut),
\end{equation*}
where we further expand the two terms as
\begin{equation*}
\begin{aligned}
	\Box_{\tA} \tA - \Box_{A(\scut)} A(\scut)
	= & \calM^{2}(\tA, \tA) - \calM^{2}(A(\scut), A(\scut)) + R(\tA) - R(A(\scut)) \\
	& + \P w^{2}_{x}(\rd_{t} A, \rd_{t} A, s) + R_{j; s}(A),
\end{aligned}\end{equation*}
respectively
\begin{align*}
(\Box_{A(\scut)} - \Box_{\tA}) A(\scut)
= & - \Diff^{\kpp}_{\P \Al} A(\scut) - \Diff^{\kpp}_{\P^{\perp} \Al} A(\scut) - \Rem^{\kpp, 2}_{\Al} A(\scut) \\
& + (\Rem^{3}(A(\scut)) - \Rem^{3}(\tA)) A(\scut) + \Rem^{3}_{\scut}(A) A(\scut).
\end{align*}
We successively estimate the terms above as in \eqref{F-in-N}. 

\begin{enumerate}[label=(\alph*)]
\item For $ \calM^{2}(\tA, \tA) - \calM^{2}(A(\scut), A(\scut))$ we use the estimate \eqref{eq:Ax-bi-df-ed}.
We inherit the envelope $c_{\ast}$ from $\dlt A^{low}$ but we also gain an additional power of $\epsilon$ from
the energy dispersion of $A(\scut)$.

\item For $R(\tA) - R(A(\scut))$ we use the difference version of the bound \eqref{eq:Rj}, with a similar gain.

\item For $ \P w^{2}_{x}(\rd_{t} A, \rd_{t} A, s)$ we use \eqref{eq:wx-bi-ed}, taking advantage of the energy dispersion for $A$.

\item  For  $R_{j; s}(A)$ we use \eqref{eq:Rj-s}, gaining a power of $\epsilon$ from the $\Str^1$ norm. 

\item For $\Diff^{\kpp}_{\P^{\perp} \Al} A(\scut)$ we use \eqref{eq:diff-cf-X} combined with \eqref{dA-low-first} for the 
high modulations, respectively  \eqref{eq:diff-cf-N} combined with \eqref{eq:Q} and \eqref{eq:DA-tri} for low modulations.

\item For $\Rem^{\kpp, 2}_{\Al} A(\scut) $ we use \eqref{eq:rem2-ed}. 

\item  For $(\Rem^{3}(A(\scut)) - \Rem^{3}(\tA)) A(\scut)$ we use \eqref{eq:rem3-fe}.

\item For $\Rem^{3}_{\scut}(A) A(\scut)$ we use \eqref{eq:rem3-map-s-freq}.
\end{enumerate}
This leaves us with the most difficult term $\Diff^{\kpp}_{\P \Al} A(\scut)$, for which we claim that
\begin{equation}
\nrm{\Diff^{\kpp}_{\P \Al} A(\scut)}_{(N \cap L^2 \dot H^{-\frac12})_{c^{\ast}}[J]} \aleq_{M} 2^{-c\dlt_{\ast} \kpp} \|\Al\|_{S^1[J]}.
\end{equation}
For $\P \Al$ we consider the same type of decomposition as in the proof of Lemma~\ref{l:box(a-ta)}, 
\begin{equation*}
	\P \Al = \P \dlt A^{low, main} +    \P \dlt A^{low, main,2}    + \P \dlt A^{low, rem,2} + \P \dlt A^{low,rem, 3}
\end{equation*}
where
\begin{align*}
\dlt A_{0}^{low, main}
= & \ \lap^{-1} \left( [\tA, \rd_{t} \tA] - [A(\scut), \rd_{t} A(\scut)] \right).
\end{align*}
\begin{align*}
\dlt A_{0}^{low, main,2}
= &\  \lap^{-1} \bfw_{0}(A, A, s), 
\end{align*}
\begin{align*}
\dlt A_{0}^{low,rem, 2}
= & \  2 \lap^{-1} \left( \bfQ(\tA, \rd_{t} \tA) - \bfQ(A(\scut), \rd_{t} A(\scut)) \right),
\end{align*}
\begin{align*}
\dlt A_{0}^{low,rem,3} = & \ A_{0}^{3}(\tA, \rd_{t} \tA) - A_{0}^{3}(A(\scut), \rd_{t} A(\scut)) + A^{3}_{0; s}(A, \rd_{t} A)
\end{align*}
respectively 
\begin{align*}
\dlt A_x^{low, main} 
=& \  \Box^{-1} \left( \P \calM^{2}(\tA, \tA) - \P \calM^{2}(A(\scut), A(\scut)) \right) 
\end{align*}
\begin{align*}
\dlt A_x^{low, main,2} 
=& \  \Box^{-1} \P \bfw_{x} (A, A, s),
\end{align*}

\begin{align*}
\dlt A_x^{low, rem,2}  = & \  \Box^{-1} \P \left([\tA_{\alp}, \rd^{\alp} \tA] - [A_{\alp}(\scut), \rd^{\alp} A(\scut)] \right),
\end{align*}
\begin{align*}
	\dlt A_x^{low, rem, 3} =& \  \Box^{-1} \P \left( R(\tA) - R(A(s_{\ast})) - \Rem^{3}(\tA) \tA + \Rem^{3}(A(\scut)) A(\scut) \right) \\
	& + \Box^{-1} \P \left( R_{j; s}(A) - \Rem_{s}^{3}(A) A(\scut)\right).
\end{align*}
where $\Box^{-1}$ is the wave parametrix with zero Cauchy data at $t = 0$.

As a preliminary observation we note that 
\begin{equation}\label{all-in-S1}
\| \dlt A_x^{low, main} \|_{S^1_{c^\ast}} +\| \dlt A_x^{low, main,2} \|_{S^1_{c^\ast}} 
+ \| \dlt A_x^{low, rem,2} \|_{S^1_{c^\ast}}+ \| \dlt A_x^{low,rem,3} \|_{S^1_{c^\ast}} \lesssim_M 
\|\Al\|_{S^1_{c^\ast}} + \epsilon^{\dltc} 
\end{equation}
This is a consequence of \eqref{eq:Ax-bi-df} for the first term,
\eqref{eq:wx-bi-ed} and \eqref{eq:Y-small} for the second, respectively \eqref{eq:Rj},
\eqref{eq:Rj-s}, \eqref{eq:rem3-fe} and \eqref{eq:rem3-map-s-freq} for
the last term. The bound for the third term follows indirectly since they all
add up to $\Al$.

Now we consider the contributions of each of these terms to $\Diff^\kappa_{\P\Al} A(\scut)$. 

\pfsubstep{a) The contributions of $ \dlt A_x^{low,main}$ and $ \dlt
A_0^{low,main}$} These are considered together, and
estimated using Proposition~\ref{prop:diff-tri-nf}. This yields the
frequency envelope
\[
f_k = \left( \sum_{k' < k-\kappa} c^\ast_{k'} c_{k'} \la 2^{2k'} \scut \ra^{-N} \right) c_k\la 2^{2k'} \scut \ra^{-N}
\|\Al\|_{S^1_{c^\ast}[J]}  \lesssim_M 2^{-c\dlt_{\ast} \kappa} c^\ast_k \|\Al\|_{S^1_{c^\ast}[J]}, 
\]
as needed.  

\pfsubstep{b) The contributions of $\dlt A_x^{low,main,2}$ and $\dlt A_0^{low,main,2}$} 
These are also considered together, but  now we want to use Proposition~\ref{prop:diff-tri-nf-w}. 
As they involve no $\Al$ differences, we need to estimate these contributions by $\eps^{\dlth}$.
Unfortunately  Proposition~\ref{prop:diff-tri-nf-w} provides no source for an energy dispersion gain,
so we use a subterfuge, decomposing
\[
\Diff^{\kpp}_{ \dlt A^{low,main,2}   } A(\scut) = \Diff^{\kpp'}_{\dlt A^{low,main,2}   } A(\scut) + \Diff^{[\kpp',\kpp]}_{  \dlt A^{low,main,2} } A(\scut)
\]
where $\kpp' > \kpp$ is a secondary parameter to be chosen shortly.
For the first term we apply Proposition~\ref{prop:diff-tri-nf-w}, which yields 
\[
\| \Diff^{\kpp'}_{ \dlt A^{low,main,2} } A(\scut) \|_{(N \cap L^2 \dot H^{-\frac12})_{c^{\ast}}[J]} \lesssim_M 2^{-c\dlt_{\ast} \kappa'} 
\]
For the second term, on the other hand, we use instead the bounds \eqref{eq:w0-bi-ed} and \eqref{eq:wx-bi-ed}, which capture both the 
$c^{\ast}$ decay and the energy dispersion. The price to pay is that this way we only have access to the $S^1$
norm of $\dlt A^{low,main,2}$, so we are only allowed to use  \eqref{eq:rem2}. This yields
\[
\| \Diff^{[\kpp',\kpp]}_{ \dlt A^{low,main,2} } A(\scut) \|_{(N \cap L^2 \dot H^{-\frac12})_{c^{\ast}}[J]} \lesssim_M  \eps^{c \dlt_{c} \dlt_{g}} 2^{C \kappa'} 
\] 
We now add the last two bounds and then optimize in $\kpp'$ to obtain the desired estimate
\[
\| \Diff^{\kpp}_{ \dlt A^{low,main,2} } A(\scut) \|_{(N \cap L^2 \dot H^{-\frac12})_{c^{\ast}}[J]} \lesssim_M  \eps^{\dlt_{h}} .
\]

\pfsubstep{c) The contribution of $\dlt A^{low,rem,2}$} The $  \dlt A_x^{low,rem,2}$ part
is estimated using Proposition~\ref{prop:diff-tri}, with \eqref{all-in-S1}
serving to verify the hypothesis. For the output this yields
the frequency envelope 
\[
f_k = \left( \sum_{k' < k-\kappa} c_{k'}^\ast   \right) c_k \la 2^{2k'} \scut \ra^{-N}  \lesssim_{M}2^{-c \dlt_{\ast} \kappa} c^\ast_k.
\]
A simpler analysis applies for the contribution of $\dlt A_0^{low,rem,2}$ where we can use Proposition~\ref{prop:Q-bi}.

\pfsubstep{d) The contribution of $\dlt A^{low,rem,3}$} 
For the contribution of $\dlt A_0^{low,rem,3}$ we use  \eqref{eq:A0-tri} respectively \eqref{eq:A0-tri-s},
while for  the contribution of $\dlt A_x^{low,rem,3}$  where we use \eqref{eq:Rj}, \eqref{eq:Rj-s}, \eqref{eq:rem3-fe} 
and \eqref{eq:rem3-map-s-freq}, all combined with Proposition~\ref{prop:diff-bi}.

\pfsubstep{Step~(2).2: $S^{1}$-norm bound for $\tA$ via induction hypothesis}

Taking $\eps$ sufficiently small and using the bootstrap assumption \eqref{eq:ind-low-boot}, we may ensure that
\begin{equation} 
	\nrm{\tF}_{ED_{\geq 0}[J]} \leq \eps(E).
\end{equation}
By the induction hypothesis, we may thus assume that
\begin{equation} 
	\nrm{\tA}_{S^{1}[J]} \leq M(E).
\end{equation}

\subsection{Control of \texorpdfstring{$A - \tA$}{A-tA}:
 Proof of Proposition~\ref{prop:ind-high}} 
\label{subsec:ind-high}
Here, we seek to bound
\begin{equation*}
	\Ah = A - \tA.
\end{equation*}
We begin by observing that
\begin{equation*}
	\nrm{\tA}_{ED^{-1}_{\geq 0}[J]} + 
	\nrm{\Box \tA}_{L^{2} \dot{H}^{-\frac{1}{2}}[J]} \aleq_{M} \eps^{\dlth} .
\end{equation*}
Therefore, both $(A, J)$ and $(\tA, J)$ are $(\veps, M)$-dispersed, where $\veps \aleq_{M} \eps^{\dlth}$.

\pfstep{Step~1: Consequence of approximate linear energy conservation}
We claim that
\begin{equation} 
	\sup_{t \in J} \nrm{(\Ah, \rd_{t} \Ah)(t)}_{\dot{H}^{1} \times L^{2}}^{2} \aleq c(E) + C_{M} \veps^{\dlth}. 
\end{equation}
Note that
\begin{equation*}
	\Ah = (1 - e^{\scut \lap}) A + e^{\scut \lap} A - A(\scut) + A(\scut) - \tA.
\end{equation*}
We begin with the inequality
\begin{equation*}
	\nrm{\nb A(t)}_{L^{2}}^{2} \geq \nrm{\nb (1-e^{\scut \lap}) A(t)}_{L^{2}}^{2} + \nrm{e^{\scut \lap} A(t)}_{L^{2}}^{2},
\end{equation*}
which follows from Plancherel and non-negativity of the symbol of $(1 - e^{\scut \lap}) e^{\scut \lap}$.
On the one hand, by Theorem~\ref{thm:structure-heat-ed}.(1) and \eqref{eq:Al-S1}, we have 
\begin{align} \label{eq:}
\nrm{\nb e^{\scut \lap} A(t)}_{L^{2}}^{2} =& \nrm{\nb \tA(t)}_{L^{2}}^{2} + C_{M} \eps^{\dlth}, \\
\nrm{\nb (1-e^{\scut \lap}) A(t)}_{L^{2}}^{2} =& \nrm{\nb (A - \tA)(t)}_{L^{2}}^{2} + C_{M} \eps^{\dlth}.
\end{align}
 Hence, by Theorem~\ref{thm:structure-ed}.(5), we have
\begin{align*}
	\nrm{\nb (A - \tA)(t)}_{L^{2}}^{2}
	\leq & \nrm{\nb A(t)}_{L^{2}}^{2} - \nrm{\nb \tA(t)}_{L^{2}}^{2} + C_{M} \eps^{\dlth} \\
	\leq &\nrm{\nb A(0)}_{L^{2}}^{2} - \nrm{\nb \tA(0)}_{L^{2}}^{2} + C_{M} \eps^{\dlth} \\
	\leq &  c(E) + C_{M} \eps^{\dlth}.
\end{align*}

\pfstep{Step~2: Weak divisibility and reinitialization} By
Theorem~\ref{thm:structure}.(7) there exists a partition $J =
\cup_{k=1}^{K} J_{k}$ such that $K \aleq_{M(E)} 1$
and
\begin{equation}\label{S=S(E)} 
	\nrm{\tA}_{S^{1}[J_{k}]} \aleq_{E} 1,
\end{equation}
so that the number of such intervals is also controlled $K \lesssim_{M(E)} 1$.
Using the uniform control of the energy of $\Ah$ in Step 1, it suffices to estimate 
$\Ah$ in $S^1$ separately in each of these intervals.

We will  make a bootstrap assumption
\begin{equation} \label{eq:ind-high-boot}
	\nrm{\Ah}_{S^{1}[J_{k}]} \leq 2.
\end{equation}
Then our goal is to improve \eqref{eq:ind-high-boot} to 
\begin{equation} \label{eq:ind-high-goal}
	\nrm{\Ah}_{S^{1}[J_{k}]} \leq 1.
\end{equation}
by taking $c \ll_{E} 1$, $\eps \ll_{M} 1$ and $T \ll_{M, \eps} 1$.

In view of \eqref{S=S(E)} and \eqref{eq:ind-high-boot}, in all the estimates below within a
single interval $J_k$,  all implicit constants will depend on $E$ rather than $M(E)$. To simplify 
the notations we drop the subscript and replace $J_k$ by $J$ in what follows.

\pfstep{Step~3: Frequency envelope bounds}
Let $c_k$ be a frequency envelope for $A$ in $S^1[J]$. Then by Proposition~\ref{prop:ymhf-fe},
the initial data in $J_k$ for $A(s)$ has the frequency envelope $2^{- (k-k^\ast)_+} c_k$.
By Theorem~\ref{thm:structure}, we have a similar envelope in $S^1$,
\begin{equation}\label{tA-in-S1}
\| P_k \tA(s) \|_{S^1[J[} \lesssim_E 2^{- (k-k^\ast)_+} c_k.
\end{equation} 
On the other hand, by the estimate \eqref{eq:Al-S1} we have, under the assumption $\epsilon \ll_E 1$,
the bound
\begin{equation}
\| P_k (\tA - A(s)) \|_{S^1[J]} \lesssim_E 2^{- \dlt_{\ast} |k-k^\ast|} c_k.
\end{equation} 
Hence for the high frequency difference $A^h$ we have the bound
\begin{equation}\label{Ah-in-S1}
\| P_k \Ah \|_{S^1[J]} \lesssim_E 2^{- \dlt_{\ast} (k-k^\ast)_-} c_k.
\end{equation} 

\pfstep{Step~4: Control of nonlinearity}
By Theorem~\ref{thm:structure-ed}.(4) applied separately to $A$ and $\tA$ we have
\begin{equation}\label{first-diff}
	\nrm{(\Box + \Diff^{\kpp}_{\P A}) \Ah + \Diff^{\kpp}_{\P \Ah} \tA }_{N \cap L^{2} \dot{H}^{-\frac{1}{2}} [J]} 
\aleq_{E} 2^{C \kpp} \eps^{\dltg \dlth}. \end{equation}
where the parameter $\kappa \geq 10$ is arbitrary for now, to be chosen later.
We claim that the second term can be estimated separately as
\begin{align} 
	\nrm{\Diff^{\kpp}_{\P \Ah} \tA}_{N\cap L^{2} \dot{H}^{-\frac{1}{2}}     [J]} 
	& \aleq_{E} 2^{- c \dlt_{\ast} \kpp}. \label{eq:ind-high-diff-N}
\end{align}
This is a consequence of Lemma~\ref{l:box(a-ta)}. To see that we use the bounds
\eqref{tA-in-S1} and \eqref{Ah-in-S1} to compute the frequency envelope $f_k$ in 
 Lemma~\ref{l:box(a-ta)}. We have 
\[
f_k \lesssim_E \left(\sum_{k' < k -\kappa}  2^{- c \dlt_{\ast} (k'-k^\ast)_-} c_{k'} +
2^{- (k'-k^\ast)_+} c_{k'}  ( c^2 c^\ast)_{<k'} \right)  2^{- (k-k^\ast)_+} c_k \lesssim_E  2^{- c \dlt_{\ast} 
|k-k^\ast|} c_{k},
\]
and thus \eqref{eq:ind-high-diff-N} follows.
Combining \eqref{first-diff} with \eqref{eq:ind-high-diff-N} yields
\begin{equation}\label{second-diff}
	\nrm{(\Box + \Diff^{\kpp}_{\P A}) \Ah }_{N \cap L^{2} \dot{H}^{-\frac{1}{2}} [J]} 
\aleq_{E} 2^{- c \dlt_{\ast} \kpp} + 2^{C \kpp} \eps^{\dltg \dlth}. 
\end{equation}

Hence by Theorem~\ref{thm:structure}.(1) we conclude that
\begin{equation*}
	\nrm{\Ah}_{S^{1} [J_{k}]} \aleq_E   c +  2^{- c \dlt_{\ast} \kpp} + 2^{C \kpp} \eps^{\dltg \dlth}.
\end{equation*}
Hence by taking $\kappa \gg_E 1$, $c \ll_{E} 1$, $\eps \ll_{E,\kappa} 1$ and
$T \ll_{E, \eps,\kappa} 1$ the desired conclusion \eqref{eq:ind-high-goal} follows.

\section{Proof of the main results} \label{sec:pfs}

The purpose of this short section is to deduce Theorems~\ref{thm:wp}, \ref{thm:ed-inhom} 
and \ref{thm:tmp}
from Theorem~\ref{thm:ed}.

\subsection{Higher regularity local well-posedness}
In this subsection, we sketch the proof of higher regularity local
well-posedness of the hyperbolic Yang--Mills equation. We first use
the temporal gauge, which works for general connections, and then turn
to the caloric gauge, which works for data satisfying
\eqref{eq:ymhf-L3}.
\subsubsection{Temporal gauge}
Here we write the Yang--Mills equations in the temporal gauge, 
\begin{equation}\label{tg}
A_0 = 0
\end{equation}
They take the form
\begin{equation}\label{ym-tg}
\Box_A  A_j = \covD^k \partial_j A_k 
\end{equation}
with the additional constraint equation
\begin{equation}\label{ym-tg-compat}
\covD^j \partial_0 A_j = 0
\end{equation}
This can be viewed as a semilinear system of wave equations for the curl of $A$,
coupled with a second order transport equation for the divergence of $A$.

We consider the Cauchy problem with initial data
\[
A[0] = (A_j(0), \partial_t A_j(0)).
\]
The initial data is uniquely determined by the Yang--Mills initial
data and the gauge condition \eqref{tg}.

The system \eqref{ym-tg} together with the constraint equation \eqref{ym-tg-compat}
is well-posed in regular Sobolev spaces. Precisely, we have

\begin{theorem}\label{thm:lwp-tg}
The system \eqref{ym-tg} is locally well-posed in $H^N \times H^{N-1}$ for $N \geq 2$,
with Lipschitz dependence on the initial data.  
\end{theorem}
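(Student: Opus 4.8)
\textbf{Proof strategy for Theorem~\ref{thm:lwp-tg}.}

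The plan is to treat the temporal gauge system \eqref{ym-tg}--\eqref{ym-tg-compat} as a quasilinear (really semilinear, after expanding $\Box_A$) system of wave equations by a standard energy-method argument in $H^N\times H^{N-1}$, $N\ge 2$, exploiting that $H^{N-1}\hookrightarrow L^\infty$ in dimension $4$ once $N\ge 3$, and handling the borderline case $N=2$ by the usual trick of first solving at higher regularity and then using the $N=2$ energy estimate together with weak continuity. First I would expand
\[
\Box_A A_j = \Box A_j + 2[A^\alpha,\partial_\alpha A_j] + [\partial^\alpha A_\alpha, A_j] + [A^\alpha,[A_\alpha,A_j]],
\]
and substitute the gauge condition $A_0=0$, so that all time derivatives of $A_0$ disappear and only spatial connection components and their derivatives remain; similarly $\covD^k\partial_j A_k = \partial^k\partial_j A_k + [A^k,\partial_j A_k] + [\partial_j A^k, A_k]$. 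The resulting system is of the schematic form $\Box A_j = \partial_j(\text{div}\,A) + \mathcal{B}(A,\partial A) + \mathcal{C}(A,A,A)$, where the only second-order term on the right is $\partial_j\partial^k A_k$; the key structural observation is that this term is \emph{not} part of the principal symbol of a wave operator acting on $A_j$ but is instead a lower-order coupling once one recognizes that the equation for $\partial^j A_j$ itself is (after using the constraint) a \emph{second-order transport} equation in $t$ rather than a wave equation. Concretely, I would split $A = \P A + \P^\perp A$, note that $\P^\perp A_j = \Delta^{-1}\partial_j\partial^\ell A_\ell$ is determined elliptically by $\text{div}\,A$, derive a closed evolution for $\text{div}\,A$ from the constraint equation \eqref{ym-tg-compat} (which gives $\partial_0(\partial^j A_j) = -\partial^j[A_j,\partial_0 A_j]\cdot(\text{stuff})$, a transport-type identity), and treat $\P A$ via the genuine wave equation obtained by applying $\P$ to \eqref{ym-tg}, in which the bad term $\partial_j(\text{div}\,A)$ is annihilated.

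The core of the argument is then a standard Picard iteration / energy estimate: construct approximate solutions $A^{(n)}$ by linearizing (freeze the coefficients and the cubic term at step $n-1$), obtain uniform $H^N\times H^{N-1}$ bounds on a time interval $[0,T]$ with $T$ depending only on $\|A[0]\|_{H^N\times H^{N-1}}$ via Gronwall applied to the energy $\mathcal{E}_N(t) = \sum_{|\beta|\le N}\|\partial^\beta \P A(t)\|_{L^2}^2 + \|\partial_t\partial^\beta \P A(t)\|_{H^{-1}}^2 + \|\text{div}\,A\|_{H^{N-1}}^2 + \|\partial_t\,\text{div}\,A\|_{H^{N-2}}^2$, prove the difference sequence $A^{(n+1)}-A^{(n)}$ is contractive in the lower-regularity norm $H^{N-1}\times H^{N-2}$ (or $H^1\times L^2$ when $N=2$, reading off Lipschitz dependence), and conclude by the usual Bona--Smith-type persistence-of-regularity and continuity-in-time argument. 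The nonlinear estimates needed are the Moser/product estimates $\|fg\|_{H^{s}}\lesssim \|f\|_{H^s}\|g\|_{H^s}$ for $s>d/2 = 2$ and their borderline variants; for the cubic term $[A^\alpha,[A_\alpha,A_j]]$ one uses the algebra property of $H^{N-1}$ directly, and for the quadratic derivative terms $[A^k,\partial_j A_k]$ one places one factor in $H^{N-1}\hookrightarrow L^\infty$ (for $N\ge 3$) or uses $H^{1}\cdot \dot H^{1}\hookrightarrow L^2$-type Sobolev products (for $N=2$). The Lipschitz dependence follows by running the same energy estimate on the difference of two solutions.

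The main obstacle — and the only place real care is needed — is the term $\partial_j(\text{div}\,A)$ and the fact that the temporal gauge does \emph{not} yield a strictly hyperbolic system: one must verify that the split into $(\P A,\,\P^\perp A)$ genuinely decouples the ill-posed directions, i.e.\ that the equation governing $\text{div}\,A$ really is a well-posed second-order ODE-in-$t$ with $H^{s}$-bounded forcing rather than hiding a loss of derivatives, and that the elliptic reconstruction of $\P^\perp A$ from $\text{div}\,A$ does not cost regularity. This is the content of reading \eqref{ym-tg-compat} as propagating $\text{div}\,A$; concretely $\partial_0(\partial^j A_j) = \partial^j\partial_0 A_j$ and the constraint says $\partial^j\partial_0 A_j = -[A^j,\partial_0 A_j]$, so $\text{div}\,A$ satisfies a first-order (in $t$) transport equation with quadratic right-hand side, whose solvability in $H^{N-1}$ given $A,\partial_0 A\in H^{N-1}$ is immediate. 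Once this structural point is in place the remainder is routine: uniform bounds, contraction, persistence of regularity, and continuity all proceed by the classical scheme, and the time of existence is bounded below in terms of the $H^N\times H^{N-1}$ data norm exactly as claimed.
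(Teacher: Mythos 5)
Your overall decomposition is exactly the one the paper hints at (wave equation for the curl/$\P A$ part, transport for $\operatorname{div} A$), and the iteration/energy framework is the right scaffolding. But there is a genuine gap at the low end of the regularity range that the paper claims.

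The Sobolev numerology in your sketch is off and the damage is fatal at $N=2$. In $\bbR^4$, $H^s\hookrightarrow L^\infty$ requires $s>2$; what you actually need in the worst term of the energy estimate, namely $\|A\,\nabla^{N-1}\partial A\|_{L^2}$, is $A\in L^\infty$, i.e.\ $H^N\hookrightarrow L^\infty$, which gives $N\ge 3$ (not $H^{N-1}\hookrightarrow L^\infty$ for $N\ge 3$ as you wrote — at $N=3$ that space is $H^2$, which only lands in $BMO$). So the straightforward energy plus Moser/Sobolev argument you outline closes for $N\ge 3$, but it simply does not close for $N=2$: the borderline product estimate $H^2\cdot H^1\hookrightarrow H^1$ fails in dimension $4$ precisely because $H^2\not\hookrightarrow L^\infty$, and the ``solve at higher regularity, then pass to a limit'' trick you invoke still requires an a priori bound in $H^2\times H^1$, which is exactly what you cannot prove this way. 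To reach $N=2$ one needs additional dispersive input (Strichartz or a bilinear estimate adapted to the semilinear structure of the right-hand side) to control something like $\|A\|_{L^1_tL^\infty_x}$ or $\|\partial A\|_{L^2_tL^q_x}$; this is not a cosmetic point but the actual content required by the claimed range $N\ge 2$.

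A secondary issue: your reading of the constraint as a ``closed first-order transport with quadratic right-hand side'' glosses over the fact that the right-hand side $-[A^j,\partial_0 A_j]$ contains $\partial_0\P^\perp A_j=\Delta^{-1}\partial_j\,\partial_0\operatorname{div}A$, so $\partial_0\operatorname{div}A$ appears implicitly on both sides and you still have to invert a perturbation of the identity (smoothing of order $-1$, so doable, but it must be said). The cleaner route, consistent with the paper's phrasing (``second order transport equation''), is to take the divergence of the evolution equation itself: the $\Delta\operatorname{div}A$ terms cancel and one is left with $\partial_t^2\operatorname{div}A$ equal to lower-order commutator terms, an honest second-order ODE-in-$t$ with no loss of spatial derivatives, with the Gauss constraint appearing only as a compatibility condition on the data that is propagated.
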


We further remark that the temporal gauge fully describes all classical solutions to the 
Yang--Mills system:

\begin{theorem} \label{thm:gt-tg}
Let $A$ be a solution to the Yang--Mills system which has 
local in time regularity  $(A, \partial_t A) \in C([0,T];H^{N} \times H^{N-1})$ for $N \geq 3$.
Then $A$ has a temporal gauge equivalent $\tilde A$ with the same regularity
 $(\tilde A, \partial_t \tilde A) \in C([0,T];H^{N} \times H^{N-1})$. 
\end{theorem}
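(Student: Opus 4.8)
The plan is to produce the gauge transformation to temporal gauge by solving an ordinary differential equation in time, and then to recover the top-order spatial regularity from the temporal gauge local well-posedness theory. Given the solution $A = A_{\mu}$, a $\G$-valued $O$ brings it to temporal gauge precisely when $\tilde A_{0} = Ad(O) A_{0} - O_{;0} = 0$, i.e. when $O$ solves the linear transport equation
\[
\rd_{t} O = O A_{0}, \qquad O(0, \cdot) = I,
\]
which for each fixed $x$ is a linear ODE in $t$. Since $N \geq 3 > 2 = \tfrac{d}{2}$, Sobolev embedding gives $A_{0} \in C([0,T]; H^{N}) \hookrightarrow C([0,T]; L^{\infty} \cap C^{0})$, so this equation has a unique solution on all of $[0,T]$; it takes values in $\G$ because the flow is right-invariant (equivalently, $O$ is the time-ordered exponential of the $\g$-valued path $t \mapsto A_{0}(t, x)$), and $O^{-1}$ solves $\rd_{t} O^{-1} = - A_{0} O^{-1}$.

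First I would record the regularity of $O$. Differentiating the ODE in $x$ and using that $H^{N}(\bbR^{4})$ is a Banach algebra with $H^{N-1}$ a module over it (valid for $N \geq 3$), one obtains $O - I,\ O^{-1} - I \in C([0,T]; H^{N})$, and hence $O_{;j} = \rd_{j} O\, O^{-1} \in C([0,T]; H^{N-1})$; the $x$-differentiated equation $\rd_{t} O_{;j} = Ad(O) \rd_{j} A_{0}$ makes this bound transparent. With these in hand, set $\tilde A_{\mu} = Ad(O) A_{\mu} - O_{;\mu}$. Then $\tilde A_{0} = 0$ by construction; $Ad(O) A_{j} \in C([0,T]; H^{N})$ while $O_{;j} \in C([0,T]; H^{N-1})$, so $(\tilde A_{x}, \rd_{t} \tilde A_{x}) \in C([0,T]; H^{N-1} \times H^{N-1})$ at this stage; gauge invariance of \eqref{eq:ym} (and of the constraint \eqref{eq:YMconstraint}) shows $\tilde A$ is again a solution of the Yang--Mills system in the temporal gauge; and since $O(0) = I$, the initial data $(\tilde A_{j}, \tilde F_{0j})|_{t=0} = (a_{j}, e_{j})$ coincides with the Yang--Mills data of $A$, which by hypothesis lies in $H^{N} \times H^{N-1}$.

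To close the gap between the $H^{N-1}$ regularity just obtained and the claimed $H^{N}$ regularity, I would invoke Theorem~\ref{thm:lwp-tg}. The connection $\tilde A$ is a temporal gauge Yang--Mills solution on $[0,T]$ lying in $C([0,T]; H^{N-1} \times H^{N-1}) \subset C([0,T]; H^{2} \times H^{1})$, hence in the uniqueness class of Theorem~\ref{thm:lwp-tg}, so it agrees with the local well-posed solution launched from $(a, e)$; persistence of higher regularity for that solution --- combined with the uniform $H^{N-1}$, a fortiori $H^{2}\times H^{1}$, control of $\tilde A$ that we already have, which rules out breakdown on $[0,T]$ --- upgrades it to $(\tilde A, \rd_{t} \tilde A) \in C([0,T]; H^{N} \times H^{N-1})$.

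The main obstacle is precisely this last point: the ODE construction by itself loses one derivative on the ``pure gauge'' piece $O_{;j}$, since $\rd_{j} A_{0}$ is only controlled in $H^{N-1}$, so the top-order spatial regularity of $\tilde A_{x}$ cannot be read off directly and must instead be bootstrapped from the temporal gauge well-posedness theory. A secondary technical point is verifying that $O$ genuinely takes values in $\G$ and that the Sobolev product estimates are applied correctly given that only $O - I$ (not $O$) enjoys decay.
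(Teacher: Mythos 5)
Your proposal is correct and follows essentially the same argument as the paper: solve the ODE $\rd_t O = O A_0$, observe that the resulting gauge transformation loses one derivative on the pure gauge piece $O_{;j}$, and then recover the top-order regularity by matching with the $H^N$ well-posed solution from Theorem~\ref{thm:lwp-tg} via uniqueness in a weaker class and persistence of regularity. The only (benign) difference is that your derivative counting for $\rd_t \tilde A_x$ gives $H^{N-1}$ rather than the $H^{N-2}$ stated in the paper — your sharper count is in fact correct since $\partial_t O_{;j}=\partial_j(OA_0)O^{-1}+\partial_j O\,\partial_t O^{-1}\in C(H^{N-1})$, but either suffices for the uniqueness step.
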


 To see this, it suffices to solve an equation for the gauge transformation $O$,
namely 
\[
O^{-1} \rd_{0} O =  A_0, \qquad O(0,x) = I,
\]
which is an ODE on the Lie group $\G$. If $A \in C(H^{N})$ then this yields a unique solution 
$ O\in C(H^{N})$. This in turn yields a temporal gauge equivalent solution
\[
(\tilde A,  \partial_t \tilde A) \in C([0,T];H^{N-1} \times H^{N-2}).
\]
This argument loses one derivative. However, the initial data is in $H^{N} \times H^{N-1}$,
which by the well-posedness result yields a $ C([0,T];H^{N} \times H^{N-1})$ solution.
But by the $H^{N-1} \times H^{N-2}$ well-posedness the two must agree, so we obtain a unique 
representation in the temporal gauge with the same data and without loss of derivatives.

\begin{remark} 
Analogues of Theorems~\ref{thm:lwp-tg} and \ref{thm:gt-tg} hold for the space $H^{N}_{loc} \times H^{N-1}_{loc}$ instead of $H^{N} \times H^{N-1}$, where $H^{N}_{loc}$ is equipped with the norm $\sup_{x \in \bbR^{4}} \nrm{\cdot}_{H^{N}(B_{1}(x))}$.
\end{remark}
\subsubsection{Caloric gauge}

In view of Theorem~\ref{thm:data} we can fully describe caloric Yang--Mills waves as continuous functions
\[
I \ni t \to (A_x(t), \partial_0 A_x(t)) \in T^{L^2} C
\]
For higher regularity Yang--Mills waves we have the following:

\begin{theorem}\label{thm:wp-reg}
  Let $A$ be a solution to the Yang--Mills system which has local in
  time regularity $(A, \partial_t A) \in C([0,T];H^{N} \times
  H^{N-1})$ for $N \geq 2$.  Assume in addition that the bound
  \eqref{eq:ymhf-L3} is uniformly satisfied by its caloric extension,
  globally in parabolic time.  Then $A$ has a caloric gauge equivalent
  $\tilde A$ with the same regularity $(\tilde A, \partial_t \tilde A)
  \in C([0,T];H^{N} \times H^{N-1})$.
\end{theorem}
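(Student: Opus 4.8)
The plan is to pass to the caloric gauge one time slice at a time, using the gauge transformation $O(\cdot)$ generated by the Yang--Mills heat flow, and then to upgrade this fixed-time construction to the asserted joint space-time regularity by differentiating in $t$ and exploiting the quantitative ($C^{1}$) dependence of $O$ on its argument.

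First I would fix $t \in [0,T]$ and apply the heat-flow theory to the spatial connection $A_x(t)$. By hypothesis its Yang--Mills heat flow development in the local caloric gauge exists globally in parabolic time and satisfies \eqref{eq:ymhf-L3}; hence, after the time-independent gauge normalization placing it in $\bfH$, Theorems~\ref{thm:ymhf-structure-loc} and \ref{thm:bfH}(2) produce a unique gauge transformation $O(t) := O(A_x(t))$ with $\lim_{\abs{x}\to\infty} O(t) = I$ and $O(t)^{-1}\rd_j O(t) = a_\infty(t)$ (the flat limit of the heat flow), such that $A_x(t) \mapsto O(t)$ is locally $C^{1}$ from $H^{N}$ to $\dot{H}^{2}\cap\dot{H}^{N+1}$. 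I would then let $\tilde A$ be the gauge transform of the space-time connection $A_{t,x}$ by $O$, so that $\tilde A_x(t)$ equals the caloric representative $Cal(A_x(t))$ at every $t$ and $\tilde A$ again solves \eqref{eq:ym} by gauge invariance. That $\tilde A$ is a caloric Yang--Mills wave then follows once the time regularity below is in hand: $\tilde A_x$ is a $C^{1}$ curve in the $C^{1}$ submanifold $\calC$ (Theorem~\ref{thm:C-mfd}), so $\rd_t \tilde A_x(t)$ is tangent to $\calC$, and the $H^{N-1}\subseteq L^{2}$ regularity places it in $T^{L^{2}}_{\tilde A_x(t)}\calC$; since $\tilde A$ solves \eqref{eq:ym}, Theorem~\ref{thm:main-eq} then yields \eqref{eq:main-wave}--\eqref{eq:main-DA0}.

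The spatial regularity $t\mapsto \tilde A_{t,x}(t)\in H^{N}$ is routine: continuity of $t\mapsto A_x(t)$ into $H^{N}$, continuity of $A_x\mapsto O$ into $\dot{H}^{2}\cap\dot{H}^{N+1}$, and the pointwise bound $\abs{O}\equiv1$ (valid since $\G$ is compact), combined with Moser/Kato--Ponce product estimates, give it, the low-frequency part of $O_{;x}$ being controlled by the $\bfH$-mapping statement of Theorem~\ref{thm:bfH}(2). The hard part will be the time regularity $\rd_t\tilde A\in C([0,T];H^{N-1})$. Differentiating $t\mapsto O(t)$ and using $C^{1}$-dependence, $\rd_t O(t) = DO(A_x(t))[\rd_t A_x(t)]$. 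The differential $DO(a)$ is, as quoted here, only known to be bounded $H^{N}\to\dot{H}^{2}\cap\dot{H}^{N+1}$, whereas $\rd_t A_x(t)$ lies only in $H^{N-1}$; the key claim I would need is that $DO(a)$ in fact acts boundedly on the shifted scale, $DO(a): H^{N-1}\to\dot{H}^{1}\cap\dot{H}^{N}$. This should hold because $DO(a)$ is, up to one inverse derivative, the long-time solution map of the linearized Yang--Mills heat flow \eqref{eq:lin-ymhf} — and, as is used repeatedly in this paper, $\rd_t A_x(t)$ itself solves \eqref{eq:lin-ymhf} at fixed $t$ — so the shifted bound follows from the linearized heat-flow estimates of \cite{OTYM1} (Proposition~\ref{prop:ymhf-fe} and its analogue run one derivative lower). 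Granting this, $\rd_t O\in C([0,T];\dot{H}^{1}\cap\dot{H}^{N})$, hence $\rd_x\rd_t O\in C([0,T];L^{2}\cap\dot{H}^{N-1})=C([0,T];H^{N-1})$; expanding $\rd_t\tilde A_j = Ad(O)\rd_t A_j + [\rd_t O\,O^{-1},\,Ad(O)A_j] - \rd_t(\rd_j O\,O^{-1})$ and likewise $\rd_t\tilde A_0$ (with $\rd_t A_0\in C([0,T];H^{N-1})$ being part of the given data for $A_{t,x}$), the product estimates conclude the argument, the worst terms being $Ad(O)\rd_t A_{t,x}$ and $(\rd_x\rd_t O)O^{-1}$. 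Uniqueness of $\tilde A$ up to constant conjugation is inherited from the uniqueness statements in Theorems~\ref{thm:ymhf-structure-loc} and \ref{thm:bfH}. Thus the main obstacle is establishing this one-derivative-lower mapping property of $DO$.
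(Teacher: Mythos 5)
Your approach and the paper's are genuinely different. The paper's proof is a two-line citation: the $C^{1}$-in-$(H^{N}\times H^{N-1})$ regularity of $(\tilde{a},b)$ (and the $H^{N}$ regularity of $a_{0}$) is taken as a black box from Theorem~\ref{thm:data}.(1), applied slice by slice in $t$; the only component \emph{not} directly supplied by that theorem is $\partial_{t}\tilde{A}_{0}$, and the paper recovers it from the explicit formula \eqref{eq:main-DA0} together with the fixed-time bounds \eqref{eq:DA0-tri-t} and \eqref{eq:d-ell-bi-L2} for its cubic and quadratic parts. You instead try to rebuild the gauge transformation $O(t)$ from scratch via Theorem~\ref{thm:bfH}(2), differentiate it in time, and estimate $\tilde{A}$ directly by Moser/Kato--Ponce.

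The gap in your route is real and you name it yourself: you need $DO(a):H^{N-1}\to\dot{H}^{1}\cap\dot{H}^{N}$, i.e.\ the differential of the caloric gauge map acting one derivative lower than the scale on which $a$ lives, and you supply only a heuristic ("it is essentially the long-time linearized heat-flow map, so it should gain one derivative"). This is precisely the non-obvious content of the $C^{1}$ statement in Theorem~\ref{thm:data}.(1) — that the data-to-caloric-data map is $C^{1}$ on $H^{N}\times H^{N-1}\to H^{N}\times H^{N-1}$, where the second factor already sits one derivative below the first. In other words, you have reproduced the proof obligation of a result already quoted in this paper from \cite{OTYM1}, rather than invoking it; had you cited Theorem~\ref{thm:data}.(1) at the point where you claim the shifted bound, your argument would close.

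Two smaller points. First, your proposal never notices the one genuine subtlety the paper flags: Theorem~\ref{thm:data} does \emph{not} give $\partial_{t}\tilde{A}_{0}\in C_{t}L^{2}$ directly, and the paper's resolution is to bypass the gauge transformation entirely for that component and read it off from \eqref{eq:main-DA0}. Your substitute — expand $\partial_{t}\tilde{A}_{0}$ via the gauge transformation formula using the hypothesized $\partial_{t}A_{0}\in C_{t}H^{N-1}$ — is plausible but again feeds through the unproven shifted bound for $\partial_{t}O$. Second, when you check that $\tilde{A}$ is actually a caloric Yang--Mills wave, you appeal to $\calC$ being a $C^{1}$ manifold and $\partial_{t}\tilde{A}_{x}$ being tangent to it; this is morally right but is exactly the structure Theorem~\ref{thm:data}.(1) has already packaged for you in the form $(\tilde{a},b)\in T^{L^{2}}\calC$.
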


This result  is a direct consequence of Theorem~\ref{thm:data}, with one minor exception. Precisely,
Theorem~\ref{thm:data} does not directly yield the $C_t L^2_x$ regularity for $\partial_0 A_0$.
For that we instead need to refer to the expression  \eqref{eq:main-DA0} and the bounds \eqref{eq:DA-tri-t}
respectively \eqref{eq:d-ell-bi-L2} for the two terms in  \eqref{eq:main-DA0}.

\begin{remark} The same result will easily hold for $(A, \partial_t A)
  \in C([0,T];\bfH \times L^2)$. However, if we only assume that
  $A, \partial_t A) \in C([0,T];\dot H^1 \times L^2)$ then one would
  also need to resolve the remaining gauge freedom.  For that it
  suffices to observe tat if two $A$'s have a small difference in
  $L^2$, then the two $O$'s can be chosen in tandem so that they agree
  at infinity.
\end{remark}

In particular this says that a caloric gauge solution exists for as long as 
a regular solution exists and the $L^3$ bound in  \eqref{eq:ymhf-L3} remains finite. 
This will allow us to bootstrap the existence time for as long as we have good bounds in the caloric gauge. 
Precisely, for\footnote{The requirement $N \geq 3$
is so that there is no loss of regularity in the transition to the temporal gauge. Precisely, we want to insure that
$A_0 \in C(\dot H^1 \cap \dot H^{N+1})$.}
 $N \geq 3$ suppose 
that an $H^N$  solution exists in the caloric gauge up to time $T$. If this solution 
has uniform $H^N$ bounds up to time $T$, then its temporal gauge representation 
has uniform $H^N$ bounds up to time $T$. Thus it can be extended further
in the temporal gauge, hence also in the caloric gauge. This shows that a maximal 
caloric gauge solution must either explode in $H^N$ at the (finite) end if its lifespan,
or the $L^3$ norm in  \eqref{eq:ymhf-L3} must explode. The latter cannot happen for subthreshold solutions.
Thus we have

\begin{theorem}\label{thm:caloric-reg}
The Yang--Mills system in the caloric gauge is locally well-posed in  $H^{N} \times H^{N-1}$
for $N \geq 2$. Further, the solution extends for as long as the $H^{N} \times H^{N-1}$ norm
remains bounded and  the $L^3$ norm in  \eqref{eq:ymhf-L3} remains bounded.
\end{theorem}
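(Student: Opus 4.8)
The plan is to assemble Theorem~\ref{thm:caloric-reg} from the pieces that precede it in this subsection, essentially running a bootstrap between the caloric and temporal gauge formulations. First I would establish short-time local well-posedness in the caloric gauge at regularity $H^{N}\times H^{N-1}$ for $N\geq 2$: given caloric data $(a,b)\in T^{L^2}\calC$ of class $H^{N}\times H^{N-1}$, use Theorem~\ref{thm:data} to produce the auxiliary component $a_0\in\dot H^1\cap\dot H^{N+1}$, write the system \eqref{eq:main-wave}--\eqref{eq:main-DA0} as a semilinear wave system for $A_x$ coupled with the elliptic/transport equations for $A_0$, and solve by a standard Picard iteration in $C([0,T];H^{N}\times H^{N-1})$ for $T$ depending only on the $H^{N}$ norm of the data. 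Here the envelope-preserving mapping properties of $R_j$, $\DA^3$, $\bfA_0^3$, $\DA_0^3$ on the fixed-time spaces (from Theorem~\ref{thm:main-eq}) together with the fixed-time bilinear bounds in Section~\ref{sec:ests} provide the needed Lipschitz estimates; the only subtlety, already flagged, is that the $C_tL^2_x$ regularity of $\rd_0 A_0$ is not immediate from Theorem~\ref{thm:data}, and is instead read off from \eqref{eq:main-DA0} via \eqref{eq:DA-tri-t} and \eqref{eq:d-ell-bi-L2}. One also needs to check that the caloric manifold structure is preserved, i.e. $(A,\rd_t A)(t)\in T^{L^2}\calC$ for $t$ in the existence interval, which follows from the local Lipschitz dependence of the caloric gauge data map and the fact that the heat-flow hypothesis \eqref{eq:ymhf-L3} is an open condition.

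Next I would prove the continuation criterion. Suppose a caloric-gauge $H^{N}$ solution exists on $[0,T)$, $N\geq 3$, and that its $H^{N}\times H^{N-1}$ norm stays bounded and the $L^3_sL^3$ curvature norm of its caloric extension (i.e. \eqref{eq:ymhf-L3}) stays bounded up to $T$. Transfer the solution to the temporal gauge via Theorem~\ref{thm:wp-reg} (equivalently, Theorem~\ref{thm:gt-tg}), which — precisely because $N\geq 3$ so $A_0\in C(\dot H^1\cap\dot H^{N+1})$ — loses no derivatives. Then by Theorem~\ref{thm:lwp-tg} the temporal-gauge solution, having uniformly bounded $H^{N}\times H^{N-1}$ data-norm up to $T$, extends to a slightly larger interval $[0,T+\eps)$ in $C(H^{N}\times H^{N-1})$. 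Since the heat-flow $L^3$ bound is assumed to persist, Theorem~\ref{thm:wp-reg} applies again to convert the extended temporal solution back to a caloric-gauge solution on $[0,T+\eps)$, contradicting maximality. Hence a maximal caloric-gauge $H^{N}$ solution must either blow up in $H^{N}\times H^{N-1}$ or have \eqref{eq:ymhf-L3} blow up at the finite endpoint of its lifespan. This proves the stated continuation dichotomy; for $N=2$ one recovers the result by persistence of regularity (propagating down from $N\geq 3$ data, or by the standard approximation argument) since the continuation statement is really about the controlling norms.

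The main obstacle I expect is the careful bookkeeping in the gauge transition, specifically ensuring no loss of derivatives in either direction. Going from a regular solution to its temporal representative is an ODE $O^{-1}\rd_0 O = A_0$ on $\G$, which a priori costs one derivative (as noted in the discussion after Theorem~\ref{thm:gt-tg}); the fix is to use the $H^{N}\times H^{N-1}$ well-posedness in the temporal gauge to upgrade the $H^{N-1}\times H^{N-2}$ transferred solution back to $H^{N}$, invoking uniqueness at the lower regularity to identify them. The analogous loss in going caloric-to-caloric-via-temporal is avoided only when $N\geq 3$, which is why the hypothesis on the regularity threshold for the continuation step is sharp. A secondary point requiring care is checking that the hypotheses of Theorem~\ref{thm:wp-reg} — namely that \eqref{eq:ymhf-L3} holds \emph{uniformly and globally in parabolic time} along the whole time interval — are genuinely implied by the running assumption that the $L^3_sL^3$ norm stays bounded; this is exactly the content of Theorem~\ref{thm:ymhf-structure-loc} (extension of heat flows with finite $L^3$ curvature), and for subthreshold energy it is automatic by Theorem~\ref{thm:ymhf-thr-0}, which is why the final sentence of the theorem about subthreshold solutions follows immediately. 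Everything else is routine semilinear theory, so I would keep those parts brief and cite Theorems~\ref{thm:main-eq}, \ref{thm:data}, \ref{thm:lwp-tg}, \ref{thm:gt-tg} and \ref{thm:wp-reg} rather than re-deriving estimates.
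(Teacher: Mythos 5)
Your continuation argument is essentially the paper's bootstrap: transfer a uniformly-bounded caloric $H^N$ solution to the temporal gauge (no derivative loss once $N\ge 3$), extend there via Theorem~\ref{thm:lwp-tg}, and pass back to caloric gauge using Theorem~\ref{thm:wp-reg}, with the heat-flow $L^3$ bound ensuring the caloric representative exists. This matches what the paper does and is correct.

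The local well-posedness step, however, takes a genuinely different route from the paper and contains a gap. You propose to do a direct Picard iteration on the caloric system \eqref{eq:main-wave}--\eqref{eq:main-DA0} in $C([0,T];H^N\times H^{N-1})$. The obstruction is that the nonlinear maps $R_j$, $\DA^3$, $\bfA_0^3$, $\DA_0^3$ appearing there are, by construction, defined only on the nonlinear manifold $T^{L^2}\calC$ — Theorem~\ref{thm:main-eq} and Definition~\ref{def:env-pres} say precisely that the ``Lipschitz envelope-preserving'' bounds are for spatial connections with $(a,\rd_t a)(t)\in T^{L^2}\calC$ at each time. A standard Picard iterate, obtained by solving a wave equation with the previous iterate in the source, has no reason to satisfy the caloric constraint (or the compatibility condition $\rd^\ell A_\ell=\DA(A)$) at each time, so the nonlinearity is simply undefined on the iterates. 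You flag the secondary subtlety about the $C_tL^2_x$ regularity of $\rd_0A_0$ but not this more basic one. Making your approach rigorous would require either a suitable extension of the caloric nonlinearities to a neighborhood of the manifold, or working in local charts for $T^{L^2}\calC$ — in effect re-parametrizing by covariant data, which is exactly what the paper avoids by constructing caloric LWP as a corollary: solve in the temporal gauge where the system \eqref{ym-tg} is closed and standard (Theorem~\ref{thm:lwp-tg}), then pass to the caloric gauge representative via Theorem~\ref{thm:wp-reg}. So the paper never performs an iteration on the caloric manifold at all; that is the point of Theorems~\ref{thm:lwp-tg}--\ref{thm:wp-reg} preceding this one.

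Two minor remarks. First, you cite ``Theorem~\ref{thm:wp-reg} (equivalently, Theorem~\ref{thm:gt-tg})'' for the caloric-to-temporal transfer, but these are not equivalent — Theorem~\ref{thm:gt-tg} is the one giving the temporal representative; Theorem~\ref{thm:wp-reg} goes the other way. Second, the closing sentence about a ``final sentence of the theorem about subthreshold solutions'' refers to something that is not in this theorem's statement; the subthreshold observation belongs to the surrounding discussion.
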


For regular data, this result reduces the problem of global well-posedness to that of obtaining uniform bounds
for caloric solutions.

\subsection{Local well-posedness in the caloric manifold 
\texorpdfstring{$\calC$}{C}:  Proof of Theorem~\ref{thm:wp}} \label{subsec:wp-thm}

For $\eps_{\ast} > 0$, recall that the energy concentration scale
$\rc^{\eps_{\ast}}$ was defined as
\begin{align*}
  \rc^{\eps_{\ast}}[a, e] =& \sup \set{r > 0: \nE_{B_{r}(x)}(a, e) \leq \eps^{2}_{\ast} \hbox{ for all $x \in \bbR^{4}$}} \\
  =& \sup \set{r > 0 : \sup_{x \in \bbR^{4}} \frac{1}{2} \sum_{\alp <
      \bt} \nrm{f_{\alp \bt}}^{2}_{L^{2}(B_{r}(x))} \leq
    \eps^{2}_{\ast}},
\end{align*}
where $f_{jk}$ is the curvature form corresponding to $a_{j}$, $f_{0
  j} = -f_{j0} = e_{j}$ and $f_{00} = 0$. Since the definition only
involves $f_{\alp \bt}$, we will slightly abuse the notation and
simply write $\rc^{\eps_{\ast}}[f]$ for $\rc^{\eps_{\ast}}[a, e]$.
\begin{lemma} \label{lem:ec-ed} Let $A$ be a regular caloric
  Yang--Mills wave on $I = (-T_{0}, T_{0})$. For any $\eps > 0$, if
  $\eps_{\ast}$ is sufficiently small compared to $\eps$ and
  \begin{equation*}
    T_{0} \leq \rc^{\eps_{\ast}}[a, e],
  \end{equation*}
  then we have
  \begin{equation*}
    \nrm{F}_{ED_{\geq m}[I]} \leq \eps \quad \hbox{ with } 2^{m} = \eps (\rc^{\eps_{\ast}}[a, e])^{-1}
  \end{equation*}
\end{lemma}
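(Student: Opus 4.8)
\textbf{Proof proposal for Lemma~\ref{lem:ec-ed}.}

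The plan is to reduce the energy dispersion bound at frequencies $\geq 2^m$ to two separate estimates: a high-frequency estimate coming from finiteness of the $S^1$-norm (or, more precisely, from the frequency envelope for $F$ on a short time interval), and a genuinely local estimate coming from the smallness of the local energy at scale $\rc^{\eps_{\ast}}$. First I would recall that by the finite speed of propagation (available for the curvature $F$, which satisfies a covariant wave equation even though the caloric connection itself does not propagate causally — this is exactly the content that makes $F$, rather than $A$, the natural object here), the curvature $F(t,x)$ on the interval $I = (-T_0, T_0)$ with $T_0 \leq \rc^{\eps_{\ast}}$ at a point $(t,x)$ depends only on the initial data in the ball $B_{T_0 + \dist(x, \cdot)}$; combined with the local energy smallness $\nE_{B_r}(a,e) \leq \eps_{\ast}^2$ for $r = \rc^{\eps_{\ast}}$, this localizes the problem to a ball on which the energy is $\lesssim \eps_{\ast}^2$.

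Next I would exploit the scaling invariance of the equation and of the energy: rescaling so that $\rc^{\eps_{\ast}} = 1$, the claim becomes that $\sup_k 2^{-\dltb(m-k)_+} 2^{-2k}\|P_k F\|_{L^\infty L^\infty[I]} \leq \eps$ with $2^m = \eps$, i.e. $m < 0$. For frequencies $2^k \gtrsim 1$ (i.e. $k \geq 0$), the factor $2^{-\dltb(m-k)_+}$ is trivial or helpful, and one uses the Bernstein inequality $2^{-2k}\|P_k F\|_{L^\infty} \lesssim 2^{-2k} 2^{2k}\|P_k F\|_{L^2} = \|P_k F\|_{L^2}$ together with the local-in-time energy bound; here the smallness of $\eps_{\ast}$ (hence the energy) directly gives $\|F\|_{L^2}$ small uniformly in time, since the linear energy is almost conserved on a short interval and the nonlinear energy is exactly conserved. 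For frequencies $2^k \ll 1$ (i.e. $k < m < 0$), one does \emph{not} get smallness for free from the energy; instead the factor $2^{-\dltb(m-k)_+} = 2^{-\dltb(m-k)}$ is a genuine gain, and one balances it against the crude bound $2^{-2k}\|P_k F\|_{L^\infty} \lesssim 2^{-2k} 2^{2k}\|P_k F\|_{L^2} \lesssim_{\nE, \hM} 2^{-2k} 2^{2k} = O_{\nE,\hM}(1)$ coming from Bernstein and the energy; since $2^{-\dltb(m-k)} O_{\nE}(1) = 2^{-\dltb(m-k)} O_{\nE}(1)$ and $m$ is a fixed negative number once $\eps$ is fixed, one chooses $\eps_{\ast}$ small enough (hence $\eps_{\ast}$ small relative to $\eps$) that the product is $\leq \eps$ for all $k < m$. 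The intermediate range $m \leq k < 0$ is handled by the same Bernstein+energy estimate with the now-trivial weight, again using smallness of the energy.

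The main obstacle I anticipate is making the finite-speed-of-propagation step fully rigorous in the caloric gauge, since the connection $A$ itself does not satisfy a causal equation; the resolution is to work at the level of the gauge-invariant curvature $F$, which does obey a covariant wave equation with finite speed of propagation, and to note that the quantity $\|F\|_{ED_{\geq m}[I]}$ is gauge-invariant. One must be careful that the relevant domain of dependence at time $t \in (-T_0, T_0)$ reaching back to $t=0$ has radius $\leq T_0 \leq \rc^{\eps_{\ast}}$, so the only initial data entering is within a single ball of radius $\rc^{\eps_{\ast}}$ around any given spatial point — hence the local energy smallness applies. A secondary (routine) point is to justify that the energy stays $\lesssim \eps_{\ast}^2$ throughout $I$ rather than just at $t=0$; this follows from exact conservation of the nonlinear energy together with the equivalence of the linear and nonlinear energies for caloric waves (Theorem~\ref{thm:bfH}), or alternatively from the approximate linear energy conservation once one is in the energy-dispersed regime. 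Once these localization and conservation facts are in place, the remaining estimates are a straightforward Bernstein-and-counting argument as sketched above.
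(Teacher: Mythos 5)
Your high-level outline (rescale to $\rc^{\eps_*}=1$, use the localized energy estimate/finite speed of propagation to propagate the small local energy on unit balls to all $t\in I$, then analyze dyadic frequencies) does follow the paper's route, and your emphasis on working with $F$ rather than $A$ for the causality argument is exactly right. But there are two concrete gaps in the frequency analysis that break the argument as written.

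First, you assert that ``smallness of $\eps_*$ (hence the energy) directly gives $\|F\|_{L^2}$ small uniformly in time,'' and more generally you pass from Bernstein $\|P_k F\|_{L^\infty}\lesssim 2^{2k}\|P_k F\|_{L^2}$ to smallness via ``the local-in-time energy bound.'' This conflates the \emph{global} $L^2$ norm of $F$ (which is $\aeq \nE^{1/2}$, not small) with the \emph{local} $L^2$ norm $\sup_x\|F(t)\|_{L^2(B_1(x))}\lesssim\eps_*$. The global quantity $\|P_k F\|_{L^2}\leq\|F\|_{L^2}$ does not become small with $\eps_*$. The missing ingredient is precisely the spatial concentration of the kernel of $P_k$: for $k\geq 0$ the kernel is concentrated on scale $2^{-k}\leq 1$, so $|P_k F(t,x_0)|\lesssim 2^{2k}\|F(t)\|_{L^2(B_1(x_0))}$ plus a rapidly decaying tail; for $k<0$ one covers the ball of radius $2^{-k}$ by unit cubes and uses an $\ell^1\!\times\!\ell^\infty$ H\"older to get $|P_k F(t,x_0)|\lesssim\sup_x\|F(t)\|_{L^2(B_1(x))}$. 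Without this, the local energy hypothesis never enters the high-frequency estimate. This is exactly the one-line kernel observation the paper uses as the core of its proof.

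Second, your treatment of $k<m$ (and of $m\leq k<0$) does not close. You propose to beat the crude bound $2^{-2k}\|P_k F\|_{L^\infty}\lesssim_{\nE,\hM}1$ (global Bernstein $\times$ global energy) by the weight $2^{-\dltb(m-k)}$ and then ``choose $\eps_*$ small.'' But that crude bound depends only on $\nE$, not on $\eps_*$, so shrinking $\eps_*$ has no effect on it; and for $k$ just below $m$ the weight is $\aeq 1$, so the product stays $\aeq O_{\nE}(1)$, not $\leq\eps$. What actually closes this range is to combine \emph{both} estimates: the local-energy bound $\|P_k F\|_{L^\infty}\lesssim\eps_*$ (which does shrink with $\eps_*$) for $k$ not too far below $m$, and the global Bernstein bound $\|P_k F\|_{L^\infty}\lesssim 2^{2k}\nE^{1/2}$ (which gains like $4^k$) for $k$ very negative, with the crossover at $2^{2k}\aeq\eps_*/\nE^{1/2}$. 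The paper compresses this into a single inequality of the form $\|P_k F(t)\|_{L^\infty}\aleq 2^{c k_-}\,2^{2k}\sup_x\|F(t)\|_{L^2(B_1(x))}$, where the extra low-frequency gain $2^{ck_-}$ encodes the interpolation between the two regimes. Once you add the kernel-concentration observation and balance the two estimates in the low-frequency range, the rest of your proposal (in particular the localized energy estimate $\nE_{\{t\}\times B_{R-|t|}}(F)\leq\nE_{\{0\}\times B_R}(F)$) is on track.
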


\begin{proof}
  By our notation, $f_{\alp \bt} = F_{\alp \bt}(0)$. After rescaling,
  we may set $\rc^{\eps_{\ast}}(F(0)) = 1$. We begin with the
  observation that
  \begin{equation}
    \nrm{P_{k} F(t)}_{L^{\infty}} \aleq 2^{c k_{-}} 2^{-2k} \sup_{x \in \bbR^{4}} \nrm{F(t)}_{L^{2}(B_{1}(x))},
  \end{equation}
  which follows from the properties of the convolution kernel of
  $P_{k}$; in particular, it is rapidly decaying on the scale $2^{-k}$
  and its $L^{2}$-norm is bounded by $2^{-2k}$. Then, by the localized
  energy estimate for the hyperbolic Yang--Mills equation, i.e.,
  \begin{equation} \label{eq:ec-ed-loc-en} \nE_{\set{t} \times B_{R
        - \abs{t}}}(F) \leq \nE_{\set{0} \times B_{R}}(F) \qquad (0
    < \abs{t} < R),
  \end{equation}
  the lemma follows. \qedhere
\end{proof}

\begin{proof}[Proof of Theorem~\ref{thm:wp}]
  We  prove the theorem in several steps:

  \pfstep{1. Regular solutions}  Let $A$ be a regular caloric
  Yang--Mills wave with energy $\nE$ and initial caloric size
  $\hM$. For $\eps_{\ast}$ small enough, to be chosen later, let
  $r_c:=r_c^{\eps_{\ast}}$ be the corresponding energy concentration
  scale for the initial data.

 Our goal is to prove that if $\eps_{\ast}$ is small enough, depending only on $\nE$ and $\hM$,
then the solution $A$ persists as a regular caloric solution up to time $r_c$.
Precisely, we will to apply Theorem~\ref{thm:ed} to the solution $A$ in order to show that the solution 
$A$  exists in $[-r_c,r_c]$ and satisfies the bound
\begin{equation}\label{A-in-S1}
\| A\|_{S^1[-r_c,r_c]} \leq M(\nE,3\hM).
\end{equation}

We use a continuity argument. Let $T_0 \leq r_c$ be a maximal time with the property that the solution
$A$ given by Theorem~\ref{thm:wp-reg} exists as a classical caloric solution in $(-T_0,T_0)$, and further satisfies the bound 
\begin{equation}\label{boot-Q}
\sup_{t \in [-T_0,T_0]} \hM(A(t)) \leq 3\hM.
\end{equation}
For $0 < T < T_0$ we seek to apply  Theorem~\ref{thm:ed} to $A$ in $I = [-T,T]$. 
To verify the hypothesis of Theorem~\ref{thm:ed} we need to insure that for a suitable choice of $m$ 
we have 
\[
\|F\|_{ED_{\geq m}} \leq \eps(\nE,3\hM), \qquad |I| \leq 2^{-m} T(\nE,3\hM).
\]
For this it suffices to  apply Lemma~\ref{lem:ec-ed} with 
\[
\eps = \min\{ \eps(\nE,3\hM), T(\nE,3\hM) \}.
\]
which yields the appropriate choice of $\eps_{\ast}$.

Now by Theorem~\ref{thm:ed} we obtain the uniform bound 
\[
\| A\|_{S^1[-T,T]} \leq M(\nE,3\hM), \qquad 0 < T < T_0.
\]
By the Structure Theorem~\ref{thm:structure} it follows that higher regularity bounds are 
also uniformly propagated,
\[
\sup_{t \in (-T_0,T_0)} \| (A,\partial_t A)(t)\|_{H^N} < \infty.
\]
Thus by the local result for regular solutions in Theorem~\ref{thm:caloric-reg} we can continue the 
regular caloric Yang--Mills connection $A$ beyond the time interval $[-T_0,T_0]$.

Finally, we consider the bounds for $\hM(A)$. These we can propagate using Theorem~\ref{thm:structure-ed},
which implies that 
\[
\sup_{t \in [-T_0,T_0]} \hM(A(t)) - \hM  \lesssim_{\hM,\nE} \eps^{\dltg}.
\]
Readjusting $\epsilon$ if needed, it follows that
\begin{equation}\label{get-Q}
\sup_{t \in [-T_0,T_0]} \hM(A(t)) \leq 2 \hM 
\end{equation}
This implies that the bound \eqref{boot-Q} also can be propagated beyond $\pm T_0$. This contradicts the maximality 
of $T_0$ unless $T_0 = r_c$. Hence the classical caloric Yang--Mills wave exists in $[-r_c,r_c]$
and  \eqref{A-in-S1} holds. 

\pfstep{2. Rough solutions} Given any caloric initial data $(a,b)$
with finite energy $\nE$ and caloric size $\hM$, we consider the
corresponding regularized data $(a(s),b(s))$ obtained using the
Yang--Mills heat flow. We have the uniform bounds
\[
\nE(a(s),b(s)) \leq \nE(a,b), \qquad \hM(a(s),b(s)) \leq \hM(a,b).
\]
In particular, we have $(f(s),e(s)) \to (f,e)$ in $\dot H^1 \times L^2$. 
This implies that the energy concentration scales for $(a(s),e(s))$ converge 
to those for $(a,e)$. Thus, by the analysis in the smooth case above,
for small enough $s$  the corresponding solutions $A(s)$ exist as smooth caloric Yang--Mills waves
in $I = [-r_c,r_c]$ and satisfy the uniform $S^1$ bound \eqref{A-in-S1}.

Now we use the Structure Theorem~\ref{thm:structure} to consider the limit as $s \to 0$.
If $c_k$ is a frequency envelope for $(a,e)$, then by  Proposition~\ref{prop:ymhf-fe} it follows that 
\begin{enumerate}[label=(\roman*)]
\item For $(a(s),b(s))$ we have the frequency envelope in $\dot H^1 \times L^2$
\[
c_k(s) = c_k \la 2^{2k} s \ra^{-c \dltf}.
\] 
\item For the difference $(a,b) - (a(s),b(s))$   we have the envelope in $\dot H^1 \times L^2$
\[
\delta c_k(s) = c_{k} \la 2^{-2k} s^{-1} \ra^{-c\dltf}. 
\]
\item For the difference $(a(s),b(s)) - (a(2s),b(2s))$   we have the envelope  in $\dot H^1 \times L^2$
\[
c^\ast_k(s) = c_{k(s)} 2^{-c\dltf |k-k(s)|}.
\]
\end{enumerate}
By   Theorem~\ref{thm:structure}.(2), it follows that $c_k(s)$ is a frequency envelope for $A(s)$ in $S_1$.
Combining this with Theorem~\ref{thm:structure}.(8), it follows that  $ c^\ast_k(s)$ is a frequency envelope for $A(s) - A(2s)$.
Summing up such differences, we obtain the general difference bound
\begin{equation}\label{As-diff}
\| A(s_1) - A(s_2)\|_{S^1} \lesssim_{\nE,\hM} c_{[k(s_1),k(s_2)]}.
\end{equation}
This implies that the limit 
\[
A = \lim_{s \to 0} A(s)
\]
exists in $s$. We define $A$ to be the caloric Yang--Mills wave associated to the $(a,b)$ data.
We remark that by \eqref{As-diff} we have the difference bound
\begin{equation}\label{A-As-diff}
\| A - A(s)\|_{S^1} \lesssim_{\nE,\hM}  c_{\geq k(s)}.
\end{equation}

\pfstep{3. Difference bound} The difference bound in part (4) of the theorem is a direct consequence 
of the  difference bound  in Theorem~\ref{thm:structure}.(8). 

\pfstep{4. Continuous dependence}
We consider a convergent sequence of caloric initial data
\begin{equation}\label{an-to-a}
(a^{(n)},b^{(n)}) \to (a,b) \qquad \text{ in } \dot H^1 \times L^2.
\end{equation}
Let $A^{(n)}(s)$, respectively $A(s)$ be the corresponding solutions with regularized 
data. 

Denote by $c_k^n$ a corresponding sequence of  frequency envelopes 
for the initial data $(a^{(n)},b^{(n)}) $ in $ \dot H^1 \times L^2$.
By Theorem~\ref{thm:structure}.(2), these are also frequency envelopes for the solutions
$A^{(n)}(s)$.

By Theorem~\ref{thm:wp-reg} we know that for each $s$ we have
\[
A^{(n)}(s) \to A(s) \qquad \text{ in } S^1
\]
and in effect in stronger topologies. Then we estimate
\[
\begin{split}
\limsup_{n \to \infty} \| A^{(n)} - A\|_{S^1} \lesssim & \
\lim_{s \to \infty}  \limsup_{n \to \infty} \| A^{(n)}(s) - A(s)\|_{S^1} + c^n_{\geq k(s)} + c_{\geq k(s)}
\\
 \lesssim & \
\lim_{s \to \infty}  \limsup_{n \to \infty}  c^n_{\geq k(s)}
\end{split}
\]
But the last limit is zero in view of the convergence in \eqref{an-to-a}. The continuous dependence follows.
\end{proof}

We end this subsection with a lemma that bounds the energy
concentration scale from below by an $L^{2}$-frequency envelope for
$F$, which proves Remark~\ref{rem:cont}.
\begin{lemma} \label{lem:ec-fe} Let $c$ be a frequency envelope for
  $F_{\alp \bt}$ in $L^{2}$ for all $\alp, \bt \in \set{0, 1, \ldots,
    4}$. Suppose that $\nrm{c}_{\ell^{2}_{\geq m}} < C^{-1}
  \eps_{\ast}$ for some $m \in \bbZ$ and a sufficiently large
  universal constant $C > 0$. Then $\rc^{\eps_{\ast}}(F) \geq
  2^{-m}$.
\end{lemma}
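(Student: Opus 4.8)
The plan is to show directly that $\nE_{B_{2^{-m}}}(F) \leq \eps_{\ast}^{2}$. Since $r \mapsto \nE_{B_{r}}(F)$ is non-decreasing, this places $2^{-m}$ inside the set $\set{r : \nE_{B_{r}}(F) \leq \eps_{\ast}^{2}}$ and hence yields $\rc^{\eps_{\ast}}(F) \geq 2^{-m}$. Recalling that $\nE_{B_{r}}(F) = \sup_{x} \frac{1}{2} \sum_{\alp < \bt} \nrm{F_{\alp \bt}}_{L^{2}(B_{r}(x))}^{2}$, and that there are only $10$ pairs $\alp < \bt$ while $C$ is at our disposal, it suffices to fix an arbitrary $x \in \bbR^{4}$ and a single pair $\alp, \bt$ and to bound $\nrm{F_{\alp \bt}}_{L^{2}(B_{2^{-m}}(x))}$ by $C' \nrm{c}_{\ell^{2}_{\geq m}}$ for a universal constant $C'$.

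First I would split $F_{\alp \bt} = P_{\geq m} F_{\alp \bt} + \sum_{k < m} P_{k} F_{\alp \bt}$. For the high-frequency part, discarding the ball and using almost-orthogonality on $\bbR^{4}$ together with the envelope bound $\nrm{P_{k} F_{\alp \bt}}_{L^{2}} \leq c_{k}$ gives
\[
\nrm{P_{\geq m} F_{\alp \bt}}_{L^{2}(B_{2^{-m}}(x))} \leq \nrm{P_{\geq m} F_{\alp \bt}}_{L^{2}(\bbR^{4})} \aleq \Big( \sum_{k \geq m} \nrm{P_{k} F_{\alp \bt}}_{L^{2}}^{2} \Big)^{\frac{1}{2}} \leq \nrm{c}_{\ell^{2}_{\geq m}}.
\]
For the low-frequency part the small radius of the ball is exploited via Bernstein: for $k < m$ one has $\nrm{P_{k} F_{\alp \bt}}_{L^{\infty}} \aleq 2^{2k} \nrm{P_{k} F_{\alp \bt}}_{L^{2}} \leq 2^{2k} c_{k}$ on $\bbR^{4}$, and since $\abs{B_{2^{-m}}(x)} \aeq 2^{-4m}$,
\[
\nrm{P_{k} F_{\alp \bt}}_{L^{2}(B_{2^{-m}}(x))} \leq \abs{B_{2^{-m}}(x)}^{\frac{1}{2}} \nrm{P_{k} F_{\alp \bt}}_{L^{\infty}} \aleq 2^{-2(m-k)} c_{k}.
\]
Summing in $k < m$ and invoking the slowly varying property $c_{k} \aleq 2^{\dlt (m-k)} c_{m}$ (applied with $j = m > k$) gives $\sum_{k < m} 2^{-2(m-k)} c_{k} \aleq c_{m} \sum_{j \geq 1} 2^{-(2 - \dlt) j} \aleq c_{m} \leq \nrm{c}_{\ell^{2}_{\geq m}}$, where the geometric series converges because $\dlt < 2$. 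Combining the two parts yields $\nrm{F_{\alp \bt}}_{L^{2}(B_{2^{-m}}(x))} \aleq \nrm{c}_{\ell^{2}_{\geq m}}$.

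Finally I would square, sum over the $10$ pairs $\alp < \bt$, and take the supremum over $x \in \bbR^{4}$, obtaining $\nE_{B_{2^{-m}}}(F) \aleq \nrm{c}_{\ell^{2}_{\geq m}}^{2} < C^{-2} \eps_{\ast}^{2}$; choosing $C$ larger than the implied universal constant makes the right side $\leq \eps_{\ast}^{2}$, which completes the argument. There is no genuine obstacle here: the only point requiring a moment's attention is the low-frequency sum, where one must check that the geometric volume gain $2^{-2(m-k)}$ off the small ball dominates the slow growth $2^{\dlt(m-k)}$ of the envelope toward lower frequencies — which it does since $\dlt$ is a small universal constant. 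Everything else is routine Littlewood--Paley and Bernstein.
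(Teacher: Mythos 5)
Your proof is correct and follows essentially the same route as the paper's: split $F$ into high frequencies $\geq 2^m$ (handled by almost-orthogonality in $L^2$) and low frequencies $< 2^m$ (handled by Bernstein plus the small volume of $B_{2^{-m}}$, then summed using the slowly-varying property with $\delta < 2$). Your write-up is in fact slightly more careful than the paper's about the $\ell^2$ versus $\ell^1$ bookkeeping of the envelope, but the underlying argument is identical.
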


\begin{proof}
It suffices to establish the bound
\[
\| F \|_{L^2(B(x,2^{-k})} \lesssim c_{\geq k}.  
\]
To see this we use Bernstein's inequality to estimate 
\[
\begin{split}
\| F \|_{L^2(B(x,2^{-k})} \lesssim & \ \| F_{\geq k}\|_{L^2} + \sum_{j < k} 2^{-2k} \| F_j\|_{L^\infty}
\\
\lesssim  & \  c_{\geq k}  +  \sum_{j < k} 2^{2j-2k} c_j \approx c_{\geq k}. \qedhere
\end{split}
\]
\end{proof}

\subsection{Regularity of energy-dispersed solutions: 
Proof of Theorem~\ref{thm:ed-inhom}} \label{subsec:ed-thm}

Consider   a time $t_0$ where $\hM(A(t))$ is nearly minimal.
From Lemma~\ref{lem:hM-ed} we have the estimate
\[
 \hM(A(t_0)) \lesssim_{\nE}  \eps^{c}.
\]
If  $\eps$ is small enough this allows us to conclude first that $\hM \leq 1$, and then that
\[
\hM  \lesssim_{E}  \eps^{c}.
\]
Now  a straightforward continuity argument shows that 
\[
Q(A(t)) \leq 1, \qquad t \in I,
\]
which again by Lemma~\ref{lem:hM-ed} yields
\[
\hM(A(t)) \lesssim_{\nE}  \eps^{c}, \qquad t \in I.
\]
Then we can apply directly the result in Theorem~\ref{thm:ed} for any
$m \in \bbZ$. This eliminates any restriction on the size of the interval $I$.

\subsection{Gauge transformation into temporal gauge: 
Proof of Theorem~\ref{thm:tmp}}

To produce a temporal gauge solution to \eqref{eq:ym} from the caloric gauge solution
we use a gauge transformation $O$ defined as the solution
to the following  ODE:
\begin{equation}\label{ode-to-temp}
	O^{-1} \rd_{t} O = A_{0}, \qquad O(0) = I.
\end{equation}
Here for $A_0$ we have the regularity given by Theorem~\ref{thm:structure}.(9), namely
\begin{equation}\label{a0-sq}
A_{0} \in \ell^{1} \abs{D}^{-2} L^{2}_{x} L^{1}_{t}.
\end{equation}
We use this to compute the regularity of $O$:

\begin{lemma} \label{lem:ODE}
a) Assume that $A_0$ is as in \eqref{a0-sq}. Then the solution $O$ to the ODE has the following properties:
\begin{enumerate}[label=(\roman*)]
\item $O_{;x} \in C_t(\dot H^1)$.

\item $O$ is continuous in both $x$ and $t$.
\end{enumerate}

b) Consider two solutions $O$ and $\tO$ arising from $A_0$ and $\tilde A_0$.
Then we have
\begin{enumerate}[label=(\roman*)]
\item $\dot H^1$ bound: 
\[
\| O^{-1} \rd_{x} O - \tO^{-1} \rd_{x} \tO \|_{ \dot  H^1} \lesssim \| A_0 - \tA_0\|_{  \ell^{1} \abs{D}^{-2} L^{2}_{x} L^{1}_{t}}.
\]
\item Uniform bound: 
\[
\|d(O,\tO)\|_{L^\infty} \lesssim \| A_0 - \tA_0\|_{  \ell^{1} \abs{D}^{-2} L^{2}_{x} L^{1}_{t}}.
\]
\end{enumerate}

\end{lemma}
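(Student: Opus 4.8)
The plan is to convert the defining ODE $O^{-1}\rd_{t}O = A_{0}$, $O(0)=I$ into linear transport-type equations for the $x$-derivatives of $O$, and to estimate these by integrating in $t$ first, pointwise in $x$, exploiting that conjugation by $O$ is a pointwise isometry of $\g$. From $\rd_{t}O = OA_{0}$ and the fact that $A_{0}(x,\cdot)\in L^{1}_{t}$ for a.e.\ $x$ (a consequence of \eqref{a0-sq}) one gets, for a.e.\ fixed $x$, a global-in-$t$ solution valued in the compact group $\G$, so $O\in L^{\infty}$; moreover $O(t,x) = I + \int_{0}^{t}OA_{0}\,dt'$, which gives continuity in $t$ at once. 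Differentiating $\rd_{t}O = OA_{0}$ in $x$ and writing $O_{;x}=\rd_{x}O\,O^{-1}$, $O_{;t}=Ad(O)A_{0}$, a short computation yields $\rd_{t}O_{;x} = Ad(O)\,\rd_{x}A_{0}$ with $O_{;x}(0)=0$, hence $O_{;x}(t) = \int_{0}^{t}Ad(O)\,\rd_{x}A_{0}\,dt'$; equivalently $w := O^{-1}\rd_{x}O$ solves the linear equation $\rd_{t}w = [w,A_{0}] + \rd_{x}A_{0}$, $w(0)=0$.

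For part (a)(i) I would differentiate once more in $x$: $\nb O_{;x}(t) = \int_{0}^{t}Ad(O)\,\nb\rd_{x}A_{0}\,dt' + \int_{0}^{t}[\nb O\,O^{-1},\,Ad(O)\,\rd_{x}A_{0}]\,dt'$, where $\nb O\,O^{-1}=\int_{0}^{t}Ad(O)\,\nb A_{0}$ satisfies the analogous equation. Since $Ad(O(t',x))$ is a pointwise isometry on $\g$, the first integral is bounded pointwise in $(t,x)$ by $\nrm{\nb\rd_{x}A_{0}(x,\cdot)}_{L^{1}_{t}}$, so its $L^{2}_{x}$ norm is $\aleq\nrm{\abs{D}^{2}A_{0}}_{L^{2}_{x}L^{1}_{t}}\aleq\nrm{A_{0}}_{\ell^{1}\abs{D}^{-2}L^{2}_{x}L^{1}_{t}}$; likewise $\abs{\nb O\,O^{-1}(t',x)}\le\nrm{\nb A_{0}(x,\cdot)}_{L^{1}_{t}}$ uniformly in $t'$, so the second integral is bounded pointwise by a product $g(x)h(x)$ with $g,h\in L^{4}_{x}$, and Hölder in $x$ together with Bernstein ($\nrm{P_{k}(\cdot)}_{L^{4}_{x}L^{1}_{t}}\aleq 2^{k}\nrm{P_{k}(\cdot)}_{L^{2}_{x}L^{1}_{t}}$, which commutes with the innermost $L^{1}_{t}$) bounds it again by $\nrm{A_{0}}_{\ell^{1}\abs{D}^{-2}L^{2}_{x}L^{1}_{t}}$. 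The quadratic term forces a short bootstrap, which is harmless once $\nb O\,O^{-1}\in L^{\infty}_{t}L^{4}_{x}$ is in hand. Continuity of $O_{;x}$ in $t$ with values in $\dot{H}^{1}$ follows from absolute continuity of these $t$-integrals (apply the same pointwise bounds on $[t',t]$ and dominated convergence). For (a)(ii), continuity of $O$ in $x$ is \emph{not} a direct consequence of $O_{;x}\in\dot{H}^{1}(\bbR^{4})$ (no $C^{0}$ embedding at this regularity), so I would obtain it by approximating $A_{0}$ with mollified data $A_{0}^{(n)}$, applying the difference bound (b)(ii) to conclude $O^{(n)}\to O$ uniformly, and using that each $O^{(n)}$ is continuous.

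For part (b), the differences $\delta O = O - \tO$ and $\delta w = O^{-1}\rd_{x}O - \tO^{-1}\rd_{x}\tO$, with $\delta A_{0} = A_{0} - \tA_{0}$, solve the same linear equations with an extra forcing proportional to $\delta A_{0}$ (and to $\rd_{x}\delta A_{0}$ for $\delta w$) and vanishing data at $t=0$: $\rd_{t}\delta O = \delta O\,A_{0} + \tO\,\delta A_{0}$ and $\rd_{t}\delta w = [\delta w,A_{0}] + [\tilde{w},\delta A_{0}] + \rd_{x}\delta A_{0}$ with $\tilde{w}=\tO^{-1}\rd_{x}\tO$. Grönwall pointwise in $x$ gives $\abs{\delta O(t,x)}\le e^{\nrm{A_{0}(x,\cdot)}_{L^{1}_{t}}}\nrm{\delta A_{0}(x,\cdot)}_{L^{1}_{t}}$; taking $L^{\infty}_{x}$ and using $\nrm{P_{k}(\cdot)}_{L^{\infty}_{x}L^{1}_{t}}\aleq 2^{2k}\nrm{P_{k}(\cdot)}_{L^{2}_{x}L^{1}_{t}}$ (again commuting with $L^{1}_{t}$) yields $\nrm{d(O,\tO)}_{L^{\infty}}\aleq\nrm{\delta A_{0}}_{\ell^{1}\abs{D}^{-2}L^{2}_{x}L^{1}_{t}}$, which is (ii). For (i) one differentiates $\delta w$ once more in $x$ and repeats the part (a)(i) estimates on the difference: the top-order term contributes $\nrm{\abs{D}^{2}\delta A_{0}}_{L^{2}_{x}L^{1}_{t}}$, and the product terms are controlled by $\nrm{\nb A_{0}}_{L^{4}_{x}L^{1}_{t}}$, $\nrm{\nb\tA_{0}}_{L^{4}_{x}L^{1}_{t}}$ and $\nrm{\nb\delta A_{0}}_{L^{4}_{x}L^{1}_{t}}$, all $\aleq$ the respective $\ell^{1}\abs{D}^{-2}L^{2}_{x}L^{1}_{t}$ norms.

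The main obstacle is the ordering of the mixed norms. A naive estimate such as $\nrm{\int_{0}^{t}Ad(O)\,\nb\rd_{x}A_{0}\,dt'}_{L^{2}_{x}}\le\int_{0}^{t}\nrm{\nb\rd_{x}A_{0}}_{L^{2}_{x}}\,dt'$ would require $A_{0}\in L^{1}_{t}\dot{H}^{2}$, which is strictly stronger than \eqref{a0-sq}, since Minkowski's inequality only gives the embedding $L^{1}_{t}L^{2}_{x}\hookrightarrow L^{2}_{x}L^{1}_{t}$ (and $\ell^{1}\abs{D}^{-2}L^{2}_{x}L^{1}_{t}$ carries no low-frequency information beyond \eqref{a0-sq}). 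The fix, used throughout, is to integrate in $t$ pointwise in $x$ before performing any spatial operation, so that $L^{1}_{t}$ stays innermost; the pointwise isometry property of $Ad(O)$ is precisely what makes this possible, and it is why the square-function norm $S^{sq}$ — which, via Proposition~\ref{prop:A02} and Theorem~\ref{thm:structure}.(9), supplies exactly the $L^{2}_{x}L^{1}_{t}$ control of $\abs{D}^{2}A_{0}$ needed here — is the right object to feed into this lemma.
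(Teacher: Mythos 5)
Your proposal is correct and follows essentially the same approach as the paper: differentiate the ODE in $x$, keep the time integral innermost (pointwise in $x$), exploit that $Ad(O(t,x))$ acts isometrically on $\g$, and convert the resulting $L^p_x L^1_t$ bounds to the $\ell^1 \abs{D}^{-2} L^2_x L^1_t$ norm via Bernstein. The paper's proof is nominally organized a little differently: for the uniform bound (b)(ii) it considers a one-parameter family $O(h)$ interpolating between $O$ and $\tO$ and observes that $\frac{d}{dt}(O^{-1}\rd_h O) = \rd_h F - [F, O^{-1}\rd_h O]$ implies $\abs{O^{-1}\rd_h O(t)}\le\int_0^t\abs{\rd_h F}\,ds$ \emph{without} a Gr\"onwall exponential, because $[F,\cdot]$ is skew-adjoint for the $\mathrm{ad}$-invariant inner product; this gives the clean Lipschitz bound $d(O(t),\tO(t))\le\int_0^t\abs{F-\tF}\,ds$. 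Your Gr\"onwall argument on $\delta O = O-\tO$ produces an extra factor $e^{\nrm{A_0(x,\cdot)}_{L^1_t}}$, which is harmless here since that quantity is uniformly bounded (Bernstein again), but the paper's metric argument is sharper and avoids the need for an a priori bound on $\nrm{A_0(x,\cdot)}_{L^1_t}$. For continuity of $O$ in $x$, the paper notes directly that $A_0\in C_x L^1_t$ (by Bernstein) together with the Lipschitz map $L^1_t\ni F\mapsto O\in C_t$ gives $O\in C_{x}C_t$; your mollification-plus-uniform-convergence argument is equivalent in substance, just phrased as a density argument using (b)(ii) rather than exhibiting the map $F\mapsto O$ as Lipschitz. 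These are cosmetic differences; the core mechanism (time integration first, isometry of $Ad(O)$, mixed-norm Bernstein) is the same in both.
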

\begin{proof}
a) We first consider the ODE
\begin{equation}
O^{-1} \rd_{t} O = F, \qquad O(0) = I,
\end{equation}
and observe that for smooth $F$ this is easily solvable.

Next we consider a smooth one parameter family of solutions $O(h)$.
For this we compute
\[
\frac{d}{dt} (O^{-1} \rd_{h} O) = \partial_h F - [F, O^{-1} \rd_{h} O],
\]
which immediately leads to
\[
|O^{-1} \rd_{h} O(t)| \leq \int_{0}^t | \partial_h F(s)| ds.
\]
Comparing two solutions $O$ and $\tO$ generated by $F$ and $\tF$ 
using the straight line between them, it follows that
\begin{equation}\label{dO-uniform}
d(O(t), \tO(t)) \leq \int_{0}^t |F(s)-\tF(s)| ds.
\end{equation}
This yields a Lipschitz property for the map 
\[
L^1_t \ni F \to O \in C_t
\]
which is thus by density extended to all $F \in L^1_{t}$.

Next we turn our attention to $A_0$, which by Bernstein's inequality 
satisfies 
\[
A_0 \in C_x L^1_t.
\]
This implies the desired continuity of $O$.

Finally we consider the evolution of $O^{-1} \rd_{x} O$,
\[
\frac{d}{dt} (O^{-1} \rd_{x} O) = \partial_x A_0  - [ A_0, O^{-1} \rd_{x} O].
\]
Since $ \partial_x A_0 \in L^4_x L^1_{t}$, this immediately 
gives 
\[
O^{-1} \rd_{x} O \in L^4_x C_t \subset C L^4.
\]
A second differentiation yields as well 
\[
\partial_x (O^{-1} \rd_{x} O) \in L^2_x C_t \subset C L^2.
\]

b) The uniform bound for the difference follows directly from \eqref{dO-uniform}.
For the difference of the derivatives we compute
\begin{equation*}
	\rd_{t} (O^{-1} \rd_{j} O - \tO^{-1} \rd_{j} \tO) + [A_{0}, O^{-1} \rd_{j} O - \tO^{-1} \rd_{j} \tO] = \rd_{j} A_{0} - \rd_{j} \tA_{0} - [A_{0} - \tA_{0}, \tO \rd_{j} \tO] .
\end{equation*}
As above, we can estimate this first in $L^4$ and then in $\dot H^1$.
\end{proof}

To conclude the proof of Theorem~\ref{thm:tmp} it remains to verify (i) that gauge transformations 
$O$ having the properties in the above lemma yield temporal  connections $A^{[t]} \in C(\dot H^1)$, and (ii)
these connections depend continuously on the initial data.

For the continuity in time we write 
\[
A^{[t]} = O (A - O^{-1}\rd_{x} O) O^{-1}.
\]
The second term above is in $C_t \dot H^1$ due to the previous lemma. For the first term we differentiate, then
use again the lemma combined with the  continuity of $O$ and dominated convergence.

For the continuous dependence of the temporal solutions with caloric data the same argument as above applies.
However, we also need to consider general finite energy initial data sets. Here the construction of the temporal gauge 
solutions  starting from a general initial data $(a,e)$  goes as follows:

\begin{enumerate}
\item Given the initial position $a \in \dot H^1$, we consider the gauge transformation $O = O(a)$ which turns $a$ 
into $(\ta,\te)$, its caloric gauge counterpart.

\item Given the caloric data $(\ta,\te)$ we have as above an unique temporal solution $\tA$.

\item To return to the data $(a,e)$ we apply to $\A$ the inverse gauge transformation $O^{-1}$  to obtain the temporal solution
$A$. 
\end{enumerate}

The regularity of the gauge transformation $O$ is $O^{-1} \rd_{x} O \in \dot H^1$, which suffices in order 
for it to map $C(\dot H^1)$ connections into $C(\dot H^1)$  connections. It remains to prove the continuous dependence.
Consider a convergent sequence of data $(a^{(n)},e^{(n)}) \to (a,e)$ in $\dot H^1 \times L^2$. Without any restriction in generality
we can assume that $(a,e)$ is caloric. Denote by $O^{(n)}$ the corresponding gauge transformations, which, we recall,
are only unique up to constant  gauge transformations. Then we need to show that for a well chosen (sub)sequence of 
representatives $O^{(n)}$ we have the following properties:

\begin{enumerate}
\item $(O^{(n)})^{-1} \rd_{x}O^{(n)} \to 0$ in $\dot H^1$.

\item $O^{(n)}(x) \to I $ a.e. in $x$.

\end{enumerate}
But this is a consequence of Theorem~\ref{thm:ymhf-structure-loc}, see also Remark~\ref{re:O-topology} (recall also that $O_{;x} = Ad(O)(O^{-1} \rd_{x} O)$).

\section{Multilinear estimates} \label{sec:multi} 
The purpose of this
section is to prove most of the results stated without proof in
Section~\ref{sec:ests}. The exceptions are Theorem~\ref{thm:paradiff}
and Proposition~\ref{prop:paradiff-weakdiv}, which involve
construction of a parametrix for $\Box + \Diff^{\kpp}_{\P A}$; their
proofs are given in the next section.

\subsection{Disposable operators and null forms} \label{subsec:disp}
In this subsection we collect preliminary materials that are needed for
analysis of the multilinear operators in the nonlinearity of the
Yang--Mills equation in the caloric gauge.

\subsubsection{Disposable operators}
Boundedness properties of the multilinear operators arising in caloric
gauge (see Section~\ref{sec:caloric}) can be conveniently phrased in
terms of \emph{disposability} (after multiplication with appropriate
weights) of these operators.

We begin by considering the multilinear operator $\bfQ$ with the
symbol
\begin{equation*}
  \bfQ(\xi, \eta) = \frac{\abs{\xi}^{2} - \abs{\eta}^{2}}{2 (\abs{\xi}^{2} + \abs{\eta}^{2})} = \frac{(\xi + \eta) \cdot (\xi - \eta)}{2 (\abs{\xi}^{2} + \abs{\eta}^{2})},
\end{equation*}
which arose in the wave equation for $A_{x}$ (most notably through the
expression for $\rd^{\ell} A_{\ell}$) in the caloric gauge.
\begin{lemma} \label{lem:q-disp} For any $k, k_{1}, k_{2} \in \bbZ$,
  the bilinear operator
  \begin{equation*}
    2^{k_{\max} - k} P_{k} \bfQ(P_{k_{1}}(\cdot), P_{k_{2}}(\cdot))
  \end{equation*}
  is disposable.
\end{lemma}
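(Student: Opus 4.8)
The claim is that after multiplying by the weight $2^{k_{\max} - k}$ (where $k_{\max} = \max\set{k, k_1, k_2}$), the Littlewood--Paley-localized piece of $\bfQ$ has a translation-invariant kernel with mass $O(1)$. The strategy is the standard one for symbol estimates: show that the (matrix of the) symbol
\[
2^{k_{\max} - k} \, m_k(\xi) \, \bfQ(\xi_1, \xi_2) \, m_{k_1}(\xi_1) \, m_{k_2}(\xi_2),
\]
restricted to the relevant frequency annuli, together with all its derivatives, obeys the bounds of a normalized bump function adapted to these annuli; then a standard lemma (integration by parts / Coifman--Meyer-type) turns this into the $O(1)$ kernel mass bound, hence disposability. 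So the only real content is to verify the symbol bound on the frequency support.

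First I would split into the two frequency regimes. Recall $\bfQ(\xi_1,\xi_2) = \frac{\abs{\xi_1}^2 - \abs{\xi_2}^2}{2(\abs{\xi_1}^2 + \abs{\xi_2}^2)} = \frac{(\xi_1 + \xi_2)\cdot(\xi_1 - \xi_2)}{2(\abs{\xi_1}^2 + \abs{\xi_2}^2)}$. \emph{Case 1: $\abs{k_1 - k_2} \leq C$ (balanced high-high or comparable frequencies).} Here $k_{\max} \aeq k_1 \aeq k_2$, and on the support of $m_{k_1} m_{k_2}$ the denominator is $\aeq 2^{2k_1}$. The numerator is bounded by $\abs{\xi_1 + \xi_2}\abs{\xi_1 - \xi_2}$; on the output support $\abs{\xi_1 + \xi_2} \aeq 2^k$, and $\abs{\xi_1 - \xi_2} \aleq 2^{k_1}$, so $\abs{\bfQ} \aleq 2^{k + k_1} 2^{-2k_1} = 2^{k - k_1} = 2^{k - k_{\max}}$, which exactly cancels the weight. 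Derivatives in $\xi_1, \xi_2$ each cost at most $2^{-k_1}$ (the denominator is elliptic of size $2^{2k_1}$, and the numerator factor $\xi_1 + \xi_2$ derivative is $O(1)$, multiplied by the cutoffs which are adapted to scale $2^{k_1}$), while derivatives in the output cutoff $m_k$ cost $2^{-k}$; since the factor $2^{k-k_{\max}} \leq 1$ and we also pick up the $m_k(\xi_1+\xi_2)$ localization, one checks the full bump-function estimate. \emph{Case 2: $\abs{k_1 - k_2} > C$ (unbalanced).} WLOG $k_1 > k_2 + C$; then $k_{\max} = k_1$, and also $k \aeq k_1$ on the output support (since $\abs{\xi_1 + \xi_2} \aeq \abs{\xi_1} \aeq 2^{k_1}$), so the weight $2^{k_{\max}-k} \aeq 1$ and it suffices to show $\bfQ m_{k_1} m_{k_2}$ itself satisfies the bump estimate. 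On this support the denominator is $\aeq 2^{2k_1}$ and the numerator $\abs{\xi_1}^2 - \abs{\xi_2}^2$ is $\aeq 2^{2k_1}$, so $\bfQ \aeq 1$; moreover $\bfQ = \frac{1 - \abs{\xi_2}^2/\abs{\xi_1}^2}{2(1 + \abs{\xi_2}^2/\abs{\xi_1}^2)}$ is a smooth function of $\xi_2/\abs{\xi_1}$ and of $\xi_1/\abs{\xi_1}$ on the relevant region (the ratio $\abs{\xi_2}/\abs{\xi_1} \aeq 2^{k_2 - k_1} \leq 1/2$ stays bounded away from the singular locus $\abs{\xi_1} = \abs{\xi_2}$), so all rescaled derivatives are $O(1)$. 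This again yields the bump estimate.

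I do not expect any step to be a genuine obstacle — this is a routine symbol computation. The one point requiring mild care is the \emph{unbalanced low-high case} $k_1 < k_2 - C$ combined with $k$ small: one must note that the Leray/output localization $P_k$ together with the constraint $\xi_1 + \xi_2$ supported at frequency $2^k$ forces $\abs{k - k_2} \leq O(1)$, so again $k \aeq k_{\max}$ and the weight is harmless; the potential singularity of $\bfQ$ at $\abs{\xi_1} = \abs{\xi_2}$ simply does not occur in this regime. The only genuinely delicate subcase of $\bfQ$ is the \emph{near-diagonal} one $\abs{\xi_1} \aeq \abs{\xi_2}$ with $\abs{\xi_1 - \xi_2}$ small, which is Case 1 above; there the apparent loss from the denominator $\aeq 2^{2k_1}$ is compensated by the numerator factor $(\xi_1+\xi_2)\cdot(\xi_1-\xi_2)$ together with the weight $2^{k - k_{\max}}$, and since we never divide by $\abs{\xi_1 - \xi_2}$ there is no actual singularity. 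I would wrap up by invoking the standard fact that a symbol obeying such bump-function estimates on dyadic annuli gives a translation-invariant kernel of mass $O(1)$, which is precisely the definition of disposability.
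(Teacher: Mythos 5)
Your approach is the same as the paper's (direct symbol calculus: verify boundedness and smoothness of the weighted symbol on the relevant frequency annuli, conclude the $L^1$ kernel bound), and the pointwise bound together with the unbalanced case are essentially fine. But in the balanced high--high case ($\abs{k_1 - k_2} \leq C$, $k$ possibly much smaller than $k_1$), the step ``one checks the full bump-function estimate'' glosses over the actual difficulty, and the ``genuinely delicate point'' you flag at the end (a putative singularity at $\abs{\xi_1}=\abs{\xi_2}$) is not the real issue---$\bfQ$ has no singularity there.

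The real issue is the following. In the variables $(\xi_1,\xi_2)$ the weighted symbol $\tilde\sigma = 2^{k_1-k} P_k(\xi_1+\xi_2)\,\bfQ(\xi_1,\xi_2)\, m_{k_1}(\xi_1)\, m_{k_2}(\xi_2)$ does satisfy $\abs{\rd^{(n)}\tilde\sigma} \aleq 2^{-nk}$, but this \emph{isotropic} bound at scale $2^k$ is not enough. The support has measure $\aeq 2^{4k_1}\cdot 2^{4k}$, which is $\gg 2^{8k}$ when $k_1 \gg k$; covering it by $\aeq 2^{4(k_1-k)}$ bumps at scale $2^k$ and summing the kernel masses gives only $O(2^{4(k_1-k)})$, not $O(1)$. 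What is actually needed is the \emph{anisotropic} estimate matching the anisotropic support: smoothness at scale $2^{k_1}$ along $\xi_1$ and at scale $2^k$ along $\xi_1+\xi_2$. This is exactly what the paper obtains by changing variables to $(\xi,\zeta) = (\xi_1, \xi_1+\xi_2)$; in the new variables the $P_k$ factor is no longer hit by $\rd_\xi$, so $\rd_\xi$-derivatives of the weighted symbol cost $2^{-k_1}$ while $\rd_\zeta$-derivatives cost $2^{-k}$, the support is $\abs{\xi}\aeq 2^{k_1}$, $\abs{\zeta}\aeq 2^k$, and the unit Jacobian preserves the kernel mass, giving $O(1)$. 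You should make this change of variables explicit; without it the balanced case is incomplete.
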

\begin{proof}
  To begin with, note the symbol bound
  \begin{equation*}
    \abs{\bfQ(\xi, \eta)} \aleq \frac{\abs{\xi+\eta}}{(\abs{\xi}^{2} + \abs{\eta}^{2})^{\frac{1}{2}}} ,
  \end{equation*}
  which implies that the symbol of $2^{k_{\max} - k} P_{k}
  \bfQ(P_{k_{1}}(\cdot), P_{k_{2}}(\cdot))$ is uniformly bounded. In
  the case $k_{2} < k_{1} - 5$ so that $\abs{k_{\max} - k } \leq 3$,
  it can also be checked that
  \begin{equation*}
    2^{n_{1} k_{1}} 2^{n_{2} k_{2}} \abs{\rd_{\xi}^{(n_{1})} \rd_{\eta}^{(n_{2})}(P_{k}(\xi+\eta) \bfQ(\xi, \eta) P_{k_{1}}(\xi) P_{k_{2}}(\eta))} \aleq_{n_{1}, n_{2}} 1,
  \end{equation*}
  which proves the desired disposability property. By symmetry, the
  case $k_{1} < k_{2} - 5$ follows as well. In the case $\abs{k_{1} -
    k_{2}} < 5$ (so that $\abs{k_{\max} - k_{1}} < 10$), making the
  change of variables $(\xi, \zt) = (\xi, \xi+\eta)$, it can be seen
  that
  \begin{equation*}
    2^{k_{1} - k} 2^{n_{1} k_{1}} 2^{n_{2} k} \abs{\rd_{\xi}^{(n_{1})} \rd_{\zt}^{(n_{2})}(P_{k}(\zt) \bfQ(\xi, \zt - \xi) P_{k_{1}}(\xi) P_{k_{2}}(\zt - \xi))} \aleq_{n_{1}, n_{2}} 1,
  \end{equation*}
  which implies disposability of $2^{k_{\max} - k} P_{k}
  \bfQ(P_{k_{1}}(\cdot), P_{k_{2}}(\cdot))$.
\end{proof}

Next, we consider the multilinear operator $\bfW(s)$ with the symbol
\begin{equation*}
  \bfW (\xi, \eta, s) = - \frac{1}{2 \xi \cdot \eta} e^{ - s \abs{\xi + \eta}^{2}} (1 - e^{2 s \xi \cdot \eta}),
\end{equation*}
which arose in the wave equation for the Yang--Mills heat flow
development $A_{x}(s)$ of a caloric Yang--Mills wave.
\begin{lemma} \label{lem:w-disp} For any $k, k_{1}, k_{2} \in \bbZ$
  and $s > 0$, the bilinear operator
  \begin{equation} \label{eq:w-disp} \brk{s 2^{2k}}^{10} \brk{s^{-1}
      2^{- 2 k_{\max}}} 2^{2 k_{\max}} P_{k} \bfW( P_{k_{1}} (\cdot) ,
    P_{k_{2}} (\cdot), s)
  \end{equation}
  is disposable.
\end{lemma}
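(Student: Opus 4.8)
The plan is to prove disposability of the operator in \eqref{eq:w-disp} by a symbol estimate, treating several frequency regimes separately depending on the relative sizes of $k_1, k_2$ and on whether the heat factor $e^{-s\abs{\xi+\eta}^2}$ is effective. Write $\bfW(\xi, \eta, s) = -\frac{1}{2\xi\cdot\eta} e^{-s\abs{\xi+\eta}^2}(1 - e^{2s\xi\cdot\eta})$. The first observation is the pointwise bound
\[
\abs{\bfW(\xi, \eta, s)} \aleq \min\set{s, \frac{1}{\abs{\xi\cdot\eta}}} e^{-s\abs{\xi+\eta}^2},
\]
which follows from $\abs{1 - e^{-x}} \leq \min\set{1, \abs{x}}$ applied with $x = -2s\xi\cdot\eta$ (note $\xi\cdot\eta$ may have either sign, but $e^{-s\abs{\xi+\eta}^2} e^{2s\xi\cdot\eta} = e^{-s\abs{\xi-\eta}^2} \leq 1$, so the difference is still controlled). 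Combined with $\brk{s 2^{2k}}^{10}\brk{s^{-1} 2^{-2k_{\max}}} 2^{2k_{\max}} = \brk{s 2^{2k}}^{10} \brk{s 2^{2k_{\max}}}^{-1} 2^{2k_{\max}} \cdot \brk{s 2^{2k_{\max}}} \cdot s^{-1} \cdot \ldots$ — more precisely, since on the support $\abs{\xi}\aeq 2^{k_1}$, $\abs{\eta}\aeq 2^{k_2}$ we have $\abs{\xi\cdot\eta} \aleq 2^{k_1 + k_2} \leq 2^{2k_{\max}}$ — the weight $\brk{s^{-1} 2^{-2k_{\max}}} 2^{2k_{\max}}$ dominates $\min\set{s^{-1}, \abs{\xi\cdot\eta}}^{-1}\cdot\abs{\xi\cdot\eta}$... this needs care and is exactly why the two regimes are separated.

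The main steps I would carry out are as follows. \emph{Regime 1: the two input frequencies are comparable}, $\abs{k_1 - k_2} < 10$ (so $\abs{k_{\max} - k_1} < 10$ as well). Here $\abs{k - k_{\max}}$ can be arbitrarily small; the dangerous denominator $\xi\cdot\eta$ can vanish (when $\xi \perp \eta$), but this is compensated by the factor $1 - e^{2s\xi\cdot\eta}$, which vanishes to matching order. Change variables to $(\xi, \zt) = (\xi, \xi + \eta)$ and check that, after multiplying by the weight and the localizations $P_k(\zt) P_{k_1}(\xi) P_{k_2}(\zt - \xi)$, all derivatives $\rd_\xi^{(n_1)} \rd_\zt^{(n_2)}$ are bounded by $2^{-n_1 k_1} 2^{-n_2 k}$ times a constant depending on $n_1, n_2$; then conclude disposability by the standard argument (the kernel is then the Fourier transform of an $L^1$-normalized bump, with mass bounded independently of $k, k_1, k_2, s$). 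The key analytic input is that the function $g(u) := \frac{1 - e^{-u}}{u}$ (with $u = -2s\xi\cdot\eta = s(\abs{\xi+\eta}^2 - \abs{\xi}^2 - \abs{\eta}^2)$) is smooth and all its derivatives are bounded on $\mathbb{R}$, so that $\bfW(\xi,\eta,s) = -s\, e^{-s\abs{\xi+\eta}^2} g(u)$ is handled by the chain rule: each $\xi$- or $\zt$-derivative of $u$ gains a factor $s 2^{k_1}$ or $s 2^{k}$ respectively, combined with the Gaussian $e^{-s\abs{\zt}^2}$ whose derivatives cost $\brk{s}^{1/2}\abs{\zt}$-type factors but which is itself rapidly decaying in $s\abs{\zt}^2$; the prefactor $\brk{s 2^{2k}}^{10}$ is absorbed by this Gaussian decay in $s2^{2k}$, and the remaining $s^{-1}\brk{s^{-1}2^{-2k_{\max}}}^{-1}\cdot\ldots$ weight cancels the overall $s$ and the $\brk{s 2^{2k_1}}$.

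\emph{Regime 2: well-separated frequencies}, say $k_2 < k_1 - 5$ (the case $k_1 < k_2 - 5$ is symmetric via antisymmetry of $\bfW$ in $\xi, \eta$), so $\abs{k_{\max} - k_1}, \abs{k - k_1} \leq 3$ and moreover $\abs{\xi\cdot\eta} \aeq 2^{k_1 + k_2}$ is bounded below, so $\frac{1}{\xi\cdot\eta}$ is now a harmless smooth symbol of size $2^{-k_1 - k_2}$ with derivatives $\rd_\xi^{(n_1)}\rd_\eta^{(n_2)}$ of size $2^{-k_1-k_2}2^{-n_1 k_1} 2^{-n_2 k_2}$. The weight becomes $\brk{s2^{2k_1}}^{10}\brk{s^{-1}2^{-2k_1}} 2^{2k_1}$, and combined with $\frac{1}{\xi\cdot\eta} \aeq 2^{-k_1-k_2}$ and the factor $(1 - e^{2s\xi\cdot\eta}) = (1 - e^{-2s\abs{\xi\cdot\eta}})$ which is bounded and, when $s 2^{2k_1}\ll 1$, is of size $s 2^{k_1 + k_2}$ — exactly cancelling the $2^{-k_1-k_2}$ and leaving $s$, which together with $2^{2k_1}\brk{s^{-1}2^{-2k_1}} \aeq s^{-1}$ in this range gives $O(1)$. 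When $s 2^{2k_1} \ageq 1$, the $\brk{s2^{2k_1}}^{10}$ in the numerator is beaten by the Gaussian $e^{-s\abs{\xi+\eta}^2}$ since $\abs{\xi+\eta}\aeq 2^{k_1}$, yielding rapid decay in $s 2^{2k_1}$. In all subcases one checks the mixed derivative bounds $2^{n_1 k_1} 2^{n_2 k_1}\abs{\rd_\xi^{(n_1)}\rd_\eta^{(n_2)}(\ldots)} \aleq_{n_1, n_2} 1$ of the full weighted localized symbol, which gives disposability.

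The expected main obstacle is Regime 1, specifically bookkeeping the cancellation between the zero of $\xi\cdot\eta$ and the zero of $1 - e^{2s\xi\cdot\eta}$ uniformly in $s$ while simultaneously tracking the interplay of the polynomial weight $\brk{s 2^{2k}}^{10}$, the Gaussian $e^{-s\abs{\xi+\eta}^2}$, and the low-frequency weight $\brk{s^{-1} 2^{-2k_{\max}}}$ — in particular making sure the argument is robust in the degenerate cases $s2^{2k}\to 0$ and $s2^{2k}\to\infty$ and that no spurious growth in $k - k_{\max}$ (which can be very negative) creeps in. Once the clean identity $\bfW = -s e^{-s\abs{\xi+\eta}^2} g(s(\abs{\xi+\eta}^2 - \abs{\xi}^2 - \abs{\eta}^2))$ with $g(u) = (1-e^{-u})/u$ smooth with bounded derivatives is in hand, the rest is a routine (if lengthy) application of the Leibniz rule and the standard disposability criterion, so I would state that identity as the first lemma and then dispatch the regimes; this mirrors exactly the structure of the proof of Lemma~\ref{lem:q-disp} for $\bfQ$.
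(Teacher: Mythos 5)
Your overall strategy — verify derivative bounds on the weighted, Littlewood--Paley-localized symbol and dispose the corresponding multiplier, split into two frequency regimes — matches the paper's, although you split by whether the \emph{input} frequencies are comparable ($\abs{k_1-k_2}<10$) while the paper splits by whether the \emph{output} frequency $k$ equals $k_{\max}+O(1)$ or sits well below. Either split can be made to work, but there are two genuine gaps in the proposal as written.

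First, in Regime~2 you assert $\abs{\xi\cdot\eta}\simeq 2^{k_1+k_2}$ so that $1/(\xi\cdot\eta)$ is a harmless smooth symbol of size $2^{-k_1-k_2}$. This is false: $\xi\cdot\eta=\abs{\xi}\,\abs{\eta}\cos\angle(\xi,\eta)$ can vanish on the support of $P_{k_1}(\xi)P_{k_2}(\eta)$ even when $\abs{\xi}\sim 2^{k_1}\gg\abs{\eta}\sim 2^{k_2}$, namely when $\xi\perp\eta$. The cancellation between the vanishing numerator $1-e^{2s\xi\cdot\eta}$ and the vanishing denominator $\xi\cdot\eta$ is therefore just as necessary in Regime~2 as in Regime~1; the paper treats it by Taylor expansion at $\xi\cdot\eta=0$ in its Case~1.

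Second, the ``clean identity'' you propose to state as a first lemma — that $g(u)=(1-e^{-u})/u$ is smooth with bounded derivatives on all of $\bbR$ — is false: as $u\to-\infty$, $g(u)\sim e^{\abs{u}}/\abs{u}\to\infty$, and likewise for all its derivatives. What is bounded is the \emph{product} $e^{-s\abs{\xi+\eta}^2}g(u)$: with $u=-2s\xi\cdot\eta$ and $\abs{\xi+\eta}^2-2\xi\cdot\eta=\abs{\xi}^2+\abs{\eta}^2$ (not $\abs{\xi-\eta}^2$ as you wrote),
\begin{equation*}
e^{-s\abs{\xi+\eta}^2}\bigl(1-e^{-u}\bigr)=e^{-s\abs{\xi+\eta}^2}-e^{-s(\abs{\xi}^2+\abs{\eta}^2)},
\end{equation*}
a difference of two quantities in $(0,1]$ whose zero at $u=0$ matches that of the denominator. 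In the balanced sub-case $k\sim k_1\sim k_2$ of your Regime~1, $\xi\cdot\eta$ can be large and positive (so $u$ large and negative), and $g$ alone explodes there; the Gaussian must be carried through every Leibniz step. A ``routine application of the Leibniz rule'' that treats $g$ as a bounded symbol and the Gaussian as a separate factor would therefore give a false estimate. Note that the paper never makes this separation in its Case~1 precisely because $\xi\cdot\eta$ can be positive there; it peels off the Gaussian only in its Case~2 (high--high input, low output), where $\abs{\xi+\eta}\ll\abs{\xi}\sim\abs{\eta}$ forces $\xi\cdot\eta\approx-\tfrac12(\abs{\xi}^2+\abs{\eta}^2)<0$, so $u>0$ on the support and the ratio $g$ is genuinely bounded on the relevant range. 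A smaller bookkeeping slip: with $u=-2s\xi\cdot(\zeta-\xi)$ one has $\partial_\zeta u=-2s\xi$, of size $\sim s2^{k_1}$, not $s2^{k}$ as you state; this matters in the $k\ll k_{\max}$ sub-case when checking that $\zeta$-derivatives cost the right factor $2^{-k}$.
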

\begin{proof}
  Without loss of generality, we may assume that $s = 1$ by
  scaling. We distinguish two scenarios:

  \pfstep{Case~1: (High--Low or Low--High: $k = \max \set{k_{1},
      k_{2}} + O(1)$)} To prove disposability of \eqref{eq:w-disp}, it
  suffices to show that
  \begin{equation*}
    \brk{2^{2 k_{\max}}}^{11} 2^{n_{1} k_{1}} 2^{n_{2} k_{2}} \Abs{\rd_{\xi}^{(n_{1})} \rd_{\eta}^{(n_{2})} \left( P_{k}(\xi+\eta) e^{ - \abs{\xi + \eta}^{2}} \frac{1 - e^{2 \xi \cdot \eta}}{\xi \cdot \eta} P_{k_{1}}(\xi) P_{k_{2}}(\eta)\right)} \aleq_{n_{1}, n_{2}} 1
  \end{equation*}
  for any $n_{1}, n_{2} \in \bbN$. Since the derivatives of $P_{k}(\xi
  + \eta) P_{k_{1}}(\xi) P_{k_{2}}(\eta)$ already obey desirable
  bounds, it only remains to prove
  \begin{equation} \label{eq:w-disp-lh-key} \brk{2^{2 k_{\max}}}^{11}
    2^{n_{1} k_{1}} 2^{n_{2} k_{2}} \Abs{\rd_{\xi}^{(n_{1})}
      \rd_{\eta}^{(n_{2})} \left( e^{ - \abs{\xi + \eta}^{2}} \frac{1
          - e^{2 \xi \cdot \eta}}{\xi \cdot \eta} \right)}
    \aleq_{n_{1}, n_{2}} 1
  \end{equation}
  for $\xi, \eta$ in the support of the symbol \eqref{eq:w-disp}.

  Since $k = \max \set{k_{1}, k_{2}} + O(1)$, we have $2^{2 k_{\max}}
  \aeq \abs{\xi}^{2} + \abs{\eta}^{2} \aeq \abs{\xi + \eta}^{2}$. On
  the one hand, it is straightforward to verify
  \begin{align}
    2^{n_{1} k_{1}} 2^{n_{2} k_{2}} \abs{\rd_{\xi}^{(n_{1})}
      \rd_{\eta}^{(n_{2})} e^{- \abs{\xi + \eta}^{2}}}
    \aleq_{n_{1}, n_{2}} & 2^{n_{1} k_{1}} 2^{n_{2} k_{2}} (1+\abs{\xi+\eta}^{2})^{\frac{n_{1} + n_{2}}{2}} e^{- \abs{\xi + \eta}^{2}} \notag \\
    \aleq_{n_{1}, n_{2}} & 2^{(n_{1} + n_{2}) k_{\max}} \brk{2^{2
        k_{\max}}}^{\frac{n_{1} + n_{2}}{2}} e^{- \abs{\xi +
        \eta}^{2}}. \label{eq:w-disp-lh-pf-1}
  \end{align}
  On the other hand, we also have
  \begin{align}
    2^{n_{1} k_{1}} 2^{n_{2} k_{2}} \Abs{\rd_{\xi}^{(n_{1})}
      \rd_{\eta}^{(n_{2})} \left( \frac{1 - e^{2 \xi \cdot \eta}}{\xi
          \cdot \eta} \right)}
    \aleq_{n_{1}, n_{2}} & 2^{n_{1} k_{1}} 2^{n_{2} k_{2}} (1+\abs{\xi}^{2}+\abs{\eta}^{2})^{\frac{n_{1} + n_{2}}{2}} (1 + e^{2 \xi \cdot \eta}) \notag \\
    \aleq_{n_{1}, n_{2}} & 2^{(n_{1} + n_{2}) k_{\max}} \brk{2^{2
        k_{\max}}}^{\frac{n_{1} + n_{2}}{2}} (1 + e^{2 \xi \cdot
      \eta}).  \label{eq:w-disp-lh-pf-2}
  \end{align}
  The key point here is that when $\abs{\xi \cdot \eta} \ll 1$, the
  denominator $\xi \cdot \eta$ cancels with the first term in the
  Taylor expansion of the numerator $1 - \xi \cdot \eta$; we omit the
  details. Combining \eqref{eq:w-disp-lh-pf-1} and
  \eqref{eq:w-disp-lh-pf-2}, it follows that
  \begin{align*}
    2^{n_{1} k_{1}} 2^{n_{2} k_{2}} \Abs{\rd_{\xi}^{(n_{1})}
      \rd_{\eta}^{(n_{2})} \left( e^{ - \abs{\xi + \eta}^{2}} \frac{1
          - e^{2 \xi \cdot \eta}}{\xi \cdot \eta} \right)}
    \aleq_{n_{1}, n_{2}} \brk{2^{2 k_{\max}}}^{n_{1} + n_{2}} e^{-
      \abs{\xi + \eta}^{2}} (1 + e^{2 \xi \cdot \eta}).
  \end{align*}
  Since $e^{- \abs{\xi + \eta}^{2}} (1 + e^{2 \xi \cdot \eta}) =
  e^{-\abs{\xi + \eta}^{2}} + e^{-(\abs{\xi}^{2} + \abs{\eta}^{2})}
  \aleq e^{- C^{-1} 2^{2 k_{\max}}}$, \eqref{eq:w-disp-lh-key}
  follows.

  \pfstep{Case~2: (High--High: $k < \max \set{k_{1}, k_{2}} - C$)} As
  usual, we make the change of variables $(\xi, \zt) = (\xi, \xi +
  \eta)$. It suffices to prove
  \begin{equation*}
    \brk{2^{2 k}}^{10} \brk{2^{2 k_{\max}}} 2^{n_{1} k_{1}} 2^{n_{2} k} \Abs{\rd_{\xi}^{(n_{1})} \rd_{\zt}^{(n_{2})} \left( P_{k}(\zt) e^{- \abs{\zt}^{2}} \frac{1 - e^{2 \xi \cdot (\zt - \xi)}}{\xi \cdot (\zt - \xi)} P_{k_{1}}(\xi) P_{k_{2}}(\xi - \zt)\right) } \aleq_{n_{1}, n_{2}} 1.
  \end{equation*}
  Note that the derivatives of $\brk{2^{2k}}^{10} P_{k}(\zt)
  e^{-\abs{\zt}^{2}} P_{k_{1}}(\xi) P_{k_{2}}(\xi - \zt)$ already obey
  desirable bounds. Hence we are only left to show
  \begin{equation} \label{eq:w-disp-hh-key} \brk{2^{2 k_{\max}}}
    2^{n_{1} k_{1}} 2^{n_{2} k} \Abs{\rd_{\xi}^{(n_{1})}
      \rd_{\zt}^{(n_{2})} \left( \frac{1 - e^{2 \xi \cdot (\zt -
            \xi)})}{\xi \cdot (\zt - \xi)}\right)} \aleq_{n_{1},
      n_{2}} 1,
  \end{equation}
  for $\xi, \zt$ in the support of \eqref{eq:w-disp}.

  Note that $k_{1} = k_{\max} + O(1)$. In the case $2^{2 k_{\max}}
  \aleq 1$, \eqref{eq:w-disp-hh-key} follows from
  \begin{equation*}
    \abs{\rd_{\xi}^{(n_{1})} \rd_{\zt}^{(n_{2})} \left( (2 \xi \cdot (\zt - \xi))^{-1} (1 - e^{2 \xi \cdot (\zt - \xi)}) \right)}
    \aleq_{n_{1}, n_{2}} 1,
  \end{equation*}
  which follows by Taylor expansion at $\xi \cdot (\zt - \xi) = 0$. In
  the case $2^{2 k_{\max}} \ageq 1$, we use
  \begin{align*}
    2^{n_{1} k_{1}} 2^{n_{2} k} \abs{\rd_{\xi}^{(n_{1})}
      \rd_{\zt}^{(n_{2})} (\xi \cdot (\zt - \xi))^{-1}}
    \aleq & 2^{- 2 k_{\max}}, \\
    2^{n_{1} k_{1}} 2^{n_{2} k} \abs{\rd_{\xi}^{(n_{1})}
      \rd_{\zt}^{(n_{2})} (1 - e^{2 \xi \cdot (\zt - \xi)})} \aleq &
    1,
  \end{align*}
  both of which follow from simple computation, whose details we
  omit. \qedhere
\end{proof}

\subsubsection{Null forms} 
We now discuss the null forms that arise in caloric gauge, which occur
in conjunction with various (disposable) translation-invariant
operators. To treat these in a systematic fashion, it is useful to
define null forms in terms of an appropriate decomposition property of
the symbol.

\begin{definition} [Null forms]\label{def:nf}
  Let $\calT$ be a translation-invariant bilinear operator on
  $\bbR^{1+4}$ and let $\pm \in \set{+, -}$ be a sign. Given $k_{1},
  k_{2} \in \bbZ$, $\ell, \ell' \in -\bbN$, $\omg, \omg' \in
  \bbS^{3}$, define
  \begin{equation*}
    \tht_{\pm} = \max \set{\abs{\angle(\omg, \pm \omg')}, 2^{\ell}, 2^{\ell'}}.
  \end{equation*}

  \begin{enumerate}
  \item We say that $\calT$ is a \emph{null form of type
      $\calN_{\pm}$} and write
    \begin{equation*}
      \calT(\cdot, \cdot) = \calN_{\pm}(\cdot, \cdot),
    \end{equation*}
    if for every $k_{1}, k_{2} \in \bbZ$, $\ell, \ell' \in - \bbN$ and
    $\omg, \omg' \in \bbS^{3}$, $\calT$ admits a decomposition of the
    form
    \begin{equation*}
      \calT((\tau, \xi), (\sgm, \eta)) (P_{k_{1}} P^{\omg}_{\ell}) (\xi) (P_{k_{2}} P^{\omg'}_{\ell'})(\eta) = \tht_{\pm} 2^{k_{1} + k_{2}} \calO((\tau, \xi), (\sgm, \eta))\sum_{i_{1}, i_{2} \in \bbN} a_{i_{1}}(\xi) b_{i_{2}}(\eta),
    \end{equation*}
    where the Fourier multipliers
    \begin{equation} \label{eq:nf-disp-1} (1+\abs{i_{1}})^{100}
      a_{i_{1}}, \quad (1+\abs{i_{2}})^{100} b_{i_{2}}
    \end{equation}
    are disposable, and the translation invariant bilinear operator
   with  symbol 
\[
\calO((\tau, \xi), (\sgm, \eta))
\]
 is disposable as
    well.
  \item We say that $\calT$ is a \emph{null form of type $\calN$} if
    $\calT(\cdot, \cdot) = \calN_{+} (\cdot, \cdot)$ and $\calT(\cdot,
    \cdot) = \calN_{-}(\cdot, \cdot)$.
  \item We say that $\calT$ is a \emph{null form of type $\calN_{0,
        \pm}$} and write
    \begin{equation*}
      \calT(\cdot, \cdot) = \calN_{0, \pm} (\cdot, \cdot),
    \end{equation*}
    if for every $k_{1}, k_{2} \in \bbZ$, $\ell, \ell' \in - \bbN$ and
    $\omg, \omg' \in \bbS^{3}$, $\calT$ admits a decomposition of the
    form
    \begin{equation*}
      \calT(\xi, \eta) (P_{k_{1}} P^{\omg}_{\ell}) (\xi) (P_{k_{2}} P^{\omg'}_{\ell'})(\eta) = \tht_{\pm}^{2} 2^{k_{1} + k_{2}} \calO((\tau, \xi), (\eta, \sgm)) \sum_{i_{1}, i_{2} \in \bbN} a_{i_{1}}(\xi) b_{i_{2}}(\eta),
    \end{equation*}
    where the Fourier multipliers
    \begin{equation} \label{eq:nf-disp-2} (1+\abs{i_{1}})^{100}
      a_{i_{1}}, \quad (1+\abs{i_{2}})^{100} b_{i_{2}}
    \end{equation}
    are disposable, and also the translation-invariant bilinear operator
    which has  symbol $\calO((\tau, \xi), (\sgm, \eta))$ is disposable as
    well.
  \end{enumerate}
\end{definition}
In particular, $\calO$, $a_{i_{1}}$ and $b_{i_{2}}$ may depend on
$k_{1}, k_{2}, \ell, \ell', \omg, \omg'$, but the disposability bounds
stated above do not.

\begin{remark} [Null form gain] \label{rem:nf-gain} To exploit the
  null form, it is convenient to make the following observation: As a
  immediate consequence of the definition, we may write
  \begin{equation*}
    \calN_{\pm}(P_{k_{1}} P^{\omg}_{\ell} u, P_{k_{2}} P^{\omg'}_{\ell'} v)
    = C \tht_{\pm} 2^{k_{1} + k_{2}} \tilde{\calO}(P_{k_{1}} P^{\omg}_{\ell} u, P_{k_{2}} P^{\omg'}_{\ell'} v)
  \end{equation*}
  for a universal constant $C > 0$ and some disposable
  $\tilde{\calO}$. Analogous statements hold for $\calN$ and
  $\calN_{0, \pm}$.
\end{remark}

\begin{remark} [Behavior under symbol
  multiplication] \label{rem:nf-compose} The properties in Definition~\ref{def:nf} seem
  complicated at first, but its usefulness comes from the fact that it
  is well-behaved under symbol-multiplication with a disposable
  multilinear operator. More precisely, if $\calO(\cdot, \cdot)$ is a
  disposable translation-invariant bilinear operator and $\calT(\cdot,
  \cdot)$ is a null form in the sense of Definition~\ref{def:nf}, then
  the translation-invariant bilinear operator with symbol $\calO(\xi,
  \eta) \calT(\xi, \eta)$ is clearly also a null form of the same
  type.
\end{remark}

We now verify that the standard null forms are indeed null forms
according to Definition~\ref{def:nf}. We have the following
separation-of-variables result for the symbols of the standard null
forms.
\begin{lemma} [Standard null forms] \label{lem:N-sep-var} Consider the
  symbols
  \begin{equation*}
    \bfN_{ij}(\xi, \eta) = \xi_{i} \eta_{j} - \xi_{j} \eta_{i} , \quad
    \bfN_{0, \pm} (\xi, \eta) = \pm \abs{\xi} \abs{\eta} - \xi \cdot \eta.
  \end{equation*}
  These symbols admit the decompositions
  \begin{align}
    \abs{\xi}^{-1} \abs{\eta}^{-1} \bfN_{ij}(\xi, \eta) (P_{k_{1}}
    P^{\omg}_{\ell}) (\xi) (P_{k_{2}} P^{\omg'}_{\ell'})(\eta)
    = & \min \set{\tht_{+}, \tht_{-}} \sum_{i_{1}, i_{2} \in \bbN} a_{i_{1}}(\xi) b_{i_{2}}(\eta) , \\
    \abs{\xi}^{-1} \abs{\eta}^{-1} \bfN_{0, \pm} (\xi, \eta)
    (P_{k_{1}} P^{\omg}_{\ell}) (\xi) (P_{k_{2}}
    P^{\omg'}_{\ell'})(\eta) = & \tht_{\pm}^{2} \sum_{i_{1}, i_{2} \in
      \bbN} a'_{i_{1}} (\xi) b'_{i_{2}}(\eta),
  \end{align}
  where
  \begin{equation} \label{eq:N-sep-var-disp} (1+\abs{i_{1}})^{100}
    a_{i_{1}}, \quad (1+\abs{i_{1}})^{100} a'_{i_{1}}, \quad
    (1+\abs{i_{2}})^{100} b_{i_{2}}, \quad (1+\abs{i_{2}})^{100}
    b'_{i_{2}}
  \end{equation}
  are disposable.
\end{lemma}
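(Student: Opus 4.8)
The plan is to reduce everything to elementary symbol calculus on the (disposable) building blocks, exploiting that the standard null forms carry an explicit angular factor. First I would fix $k_1, k_2, \ell, \ell', \omg, \omg'$ and work on the frequency supports of $P_{k_1} P^{\omg}_{\ell}$ and $P_{k_2} P^{\omg'}_{\ell'}$; there $\abs{\xi} \aeq 2^{k_1}$, $\abs{\eta} \aeq 2^{k_2}$, and $\xi/\abs{\xi}$ lies within $\aeq 2^{\ell}$ of $\omg$ (similarly for $\eta$). On these supports the key geometric fact is
\begin{equation*}
  \abs{\xi}^{-1}\abs{\eta}^{-1} \bfN_{ij}(\xi,\eta) = \frac{\xi_i}{\abs{\xi}}\frac{\eta_j}{\abs{\eta}} - \frac{\xi_j}{\abs{\xi}}\frac{\eta_i}{\abs{\eta}},
\end{equation*}
which is comparable in size to $\abs{\angle(\xi,\eta)}$, and since $\abs{\angle(\xi, \eta)} \aleq \min\set{\tht_+, \tht_-}$ (the $\min$ because $\angle(\xi,\eta)$ and $\angle(\xi,-\eta)$ sum to $\pi$, so whichever of $\omg,\pm\omg'$ is close forces smallness of the true angle up to $2^{\ell}+2^{\ell'}$), one gains exactly the factor $\min\set{\tht_+,\tht_-}$. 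Likewise $\bfN_{0,\pm}(\xi,\eta) = \abs{\xi}\abs{\eta}(\pm 1 - \cos\angle(\xi,\eta))$, which is comparable to $\abs{\xi}\abs{\eta}\abs{\angle(\xi,\pm\eta)}^2 \aleq \abs{\xi}\abs{\eta}\,\tht_\pm^2$, giving the stated quadratic gain.

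The second step is to turn these size bounds into the claimed \emph{separation-of-variables} statements with rapidly decaying disposable pieces. For $\bfN_{ij}$, after extracting the angular factor $\min\set{\tht_+,\tht_-}$ I would write the remaining normalized symbol as a smooth function on the product of the two caps; a standard tensor-product Fourier-series expansion on each cap (rescaled to unit size so that $\ell, \ell'$ do not enter the constants), together with the symbol bounds $2^{n_1 k_1} 2^{n_2 k_2}\abs{\rd_\xi^{(n_1)}\rd_\eta^{(n_2)}(\cdots)} \aleq_{n_1,n_2} 1$ valid on the supports, yields coefficients $a_{i_1}, b_{i_2}$ whose symbols decay faster than any polynomial in $i_1, i_2$; absorbing a fixed power into the definition makes $(1+\abs{i_1})^{100} a_{i_1}$ and $(1+\abs{i_2})^{100} b_{i_2}$ disposable (each is a smooth, compactly supported, uniformly-bounded-derivative multiplier, hence has integrable kernel). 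The same device handles $\bfN_{0,\pm}$ after extracting $\tht_\pm^2$: one must only check that the normalized symbol $\tht_\pm^{-2}(\pm 1 - \cos\angle(\xi,\eta))$ extends smoothly across the region where $\angle(\xi,\pm\eta)$ is small — which it does, since $1-\cos\theta = \theta^2/2 + O(\theta^4)$ and the $2^{\ell}, 2^{\ell'}$ contributions to $\tht_\pm$ keep the denominator bounded below relative to the numerator on the caps.

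The main obstacle I expect is the bookkeeping in the third, degenerate regime: when the true angle $\abs{\angle(\xi,\eta)}$ (resp. $\abs{\angle(\xi,\pm\eta)}$) is much smaller than $2^{\ell} + 2^{\ell'}$, so that $\tht_\pm$ is dominated by the cap sizes rather than by the genuine angular separation. Here one cannot literally factor out $\abs{\angle(\xi,\eta)}$ — the ``gain'' $\min\set{\tht_+,\tht_-}$ is then not tight — but the point is precisely that $\abs{\angle(\xi,\eta)} \aleq \max\set{2^{\ell}, 2^{\ell'}} \aleq \min\set{\tht_+,\tht_-}$ automatically, so one may divide the whole symbol by $\min\set{\tht_+,\tht_-}$ and the quotient is still a smooth symbol with the required derivative bounds (rescaled to the caps), admitting the same tensor expansion. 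So the argument is uniform across regimes; the only care needed is to verify the derivative bounds for $\rd_\xi^{(n_1)}\rd_\eta^{(n_2)}$ of $\min\set{\tht_+,\tht_-}^{-1}\abs{\xi}^{-1}\abs{\eta}^{-1}\bfN_{ij}$ and of $\tht_\pm^{-2}\abs{\xi}^{-1}\abs{\eta}^{-1}\bfN_{0,\pm}$ on the frequency supports, which is a routine but slightly tedious computation I would organize by first recording $\abs{\rd^{(n)}(\xi/\abs{\xi})} \aleq_n 2^{-n k_1}$ and $\abs{\rd^{(n)} \angle(\xi,\pm\eta)} \aleq_n (2^{-k_1}+2^{-k_2})^n / \abs{\angle(\xi,\pm\eta)}^{n-1}$, then noting that on the caps $\abs{\angle(\xi,\pm\eta)} \aleq \tht_\pm$ absorbs the negative powers. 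Once these bounds are in hand the Fourier-series/disposability step is mechanical, and \eqref{eq:N-sep-var-disp} follows.
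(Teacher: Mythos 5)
Your route is the same as the paper's: extract the angular factor, then separate variables via a Fourier series on the caps (rescaled to unit size), with the key technical input being derivative bounds for the normalized symbol. The paper implements exactly this and records the precise derivative bounds for $\tilde{\bfN}_{0,\pm}$, referring to Germain--Ionescu--Staffilani style computations for $\bfN_{ij}$. Two small imprecisions are worth fixing in your write-up, though neither is fatal.

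First, your claimed bound $2^{n_{1}k_{1}}2^{n_{2}k_{2}}\abs{\rd_{\xi}^{(n_{1})}\rd_{\eta}^{(n_{2})}(\cdots)}\aleq 1$ for the normalized symbol (after dividing out $\min\{\tht_+,\tht_-\}$ or $\tht_{\pm}^{2}$) is too strong and in fact false. The division costs one or two powers of $\tht$, so isotropic $n$-th derivatives scale like $\tht^{-1}2^{-n_{1}k_{1}-n_{2}k_{2}}$ or $\tht^{-2}2^{-n_{1}k_{1}-n_{2}k_{2}}$ for $n_{1}+n_{2}\geq 2$. This is exactly what the paper's recorded bounds for $\tilde{\bfN}_{0,\pm}$ encode: the $\tht_{\pm}$ gain disappears past first order. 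What the Fourier series on the rescaled caps actually needs is the weaker anisotropic bound $2^{n_{1}(k_{1}+\ell)}2^{n_{2}(k_{2}+\ell')}\abs{\rd_{\xi}^{(n_{1})}\rd_{\eta}^{(n_{2})}(\cdots)}\aleq 1$, which \emph{does} hold since $\tht_{\pm}\geq 2^{\ell},2^{\ell'}$ gives $2^{n\ell}\tht_{\pm}^{-2}\leq\tht_{\pm}^{n-2}\leq 1$ for $n\geq 2$ (and $2^{n\ell}\tht_{\min}^{-1}\leq\tht_{\min}^{n-1}\leq 1$ in the $\bfN_{ij}$ case). You should state the cap-scale bound rather than the isotropic one.

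Second, your formula $\abs{\rd^{(n)}\angle(\xi,\pm\eta)}\aleq (2^{-k_{1}}+2^{-k_{2}})^{n}/\abs{\angle(\xi,\pm\eta)}^{n-1}$ followed by ``on the caps $\abs{\angle(\xi,\pm\eta)}\aleq\tht_{\pm}$ absorbs the negative powers'' goes the wrong way: smallness of $\abs{\angle}$ \emph{worsens} the negative powers, and the quantity $\angle$ itself is not $C^{\infty}$ at $0$. The clean route (which you in fact allude to for $\bfN_{0,\pm}$) is never to differentiate $\angle$: work directly with the smooth symbols $\hat\xi_{i}\hat\eta_{j}-\hat\xi_{j}\hat\eta_{i}$ and $\pm 1-\hat\xi\cdot\hat\eta$, whose $n$-th derivatives are bounded by $2^{-n_{1}k_{1}-n_{2}k_{2}}$ with no angular degeneracy, and observe as above that the $\tht$-gain only persists through order $\leq 1$ for $\bfN_{0,\pm}$ (and order $\leq 0$ for $\bfN_{ij}$). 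With these two corrections your outline reproduces the paper's argument.
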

As a corollary, it follows that $\bfN_{ij}$ is a null form of type $\calN$, whereas 
$\bfN_{0,\pm}$ are null forms of type $\calN_{\pm}$.

As before, $a_{i_{1}}$, $a'_{i_{1}}$, $b_{i_{2}}$ and $b'_{i_{2}}$
depend on $k_{1}, k_{2}, \ell, \ell', \omg, \omg'$, but the
disposability bounds stated in \eqref{eq:N-sep-var-disp} do not.

This lemma can be proved by performing separation of variables using
Fourier series on an appropriate rectangular box containing the
support of $P_{k_{1}} P^{\omg}_{\ell}(\xi) P_{k_{2}}
P^{\ell'}_{\omg}(\xi')$. For the details in the case of
$\abs{\xi}^{-1} \abs{\eta}^{-1} \bfN_{ij}(\xi, \eta)$, we refer to
\cite[Proof of Proposition~7.8]{GO}. For $\bfN_{0, \pm}$, observe that
$\tilde{\bfN}_{0, \pm} (\xi, \eta) := \abs{\xi}^{-1} \abs{\eta}^{-1}
\bfN_{0, \pm}(\xi, \eta)$ obeys
\begin{align*}
  & \abs{\tilde{\bfN}_{0, \pm}(\xi, \eta)} \aleq \tht_{\pm}^{2}, \quad
  \abs{\rd_{\xi} \tilde{\bfN}_{0, \pm}(\xi, \eta)} \aleq 2^{-k_{1}}
  \tht_{\pm}, \quad
  \abs{\rd_{\eta} \tilde{\bfN}_{0, \pm}(\xi, \eta)} \aleq 2^{-k_{2}} \tht_{\pm}, \\
  & \abs{\rd_{\xi}^{(n_{1})} \rd_{\eta}^{(n_{2})} \tilde{\bfN}_{0,
      \pm}(\xi, \eta)} \aleq 2^{-n_{1} k_{1}} 2^{-n_{2} k_{2}} \quad
  (n_{1} + n_{2} \geq 2).
\end{align*}
for $\xi, \eta$ in the support of $P_{k_{1}} P^{\omg}_{\ell}(\xi)
P_{k_{2}} P^{\ell'}_{\omg}(\eta)$. Using these symbol bounds, the case
of $\bfN_{0, \pm}$ can be handled by essentially the same proof as in
\cite[Proof of Proposition~7.8]{GO}. See also \cite[Section~8]{Gav}.

We now present algebraic lemmas, which are used to identify null forms
in the Yang--Mills equation in the caloric gauge.  The following lemma
identifies all bilinear null forms.
\begin{lemma} \label{lem:nf-bi} Let $\calO$ be a disposable bilinear
  operator on $\bbR^{1+4}$. Let $A$ be a spatial 1-form and let $u, v$
  be functions in the Schwartz class on $\bbR^{1+4}$. Then we have
  \begin{align}
    \calO(\bfP^{\ell} A, \rd_{\ell} u) = & \sum_{j} \calN(\abs{D}^{-1} A_{j}, u) ,  \label{eq:nf-N*} \\
    \bfP_{x} \calO(u, \rd_{x} v) = & \abs{D}^{-1} \calN(u, v)
    . \label{eq:nf-N}
  \end{align}
  Moreover, we also have
  \begin{equation} \label{eq:nf-N0}
    \begin{aligned}
      \calO(\rd^{\alp} u, \rd_{\alp} v)
      = & \calN_{0, +}(Q^{+} u, Q^{+} v) + \calN_{0, +}(Q^{-} u, Q^{-} v)  \\
      & + \calN_{0, -}(Q^{+} u, Q^{-} v) + \calN_{0, -}(Q^{-} u, Q^{+}
      v) + \calR_{0}(u, v)
    \end{aligned}
  \end{equation}
  where
  \begin{equation} \label{eq:nf-N0-rem}
    \begin{aligned}
      \calR_{0}(u', v')
      = & \calO( (D_{t} - \abs{D}) Q^{+} u' + (D_{t} + \abs{D})Q^{-} u , D_{t} v' ) \\
      & + \calO ( \abs{D}(Q^{+} u' - Q^{-} u'), (D_{t} - \abs{D})Q^{+}
      v' + (D_{t} + \abs{D})Q^{-} v' ).
    \end{aligned}
  \end{equation}
\end{lemma}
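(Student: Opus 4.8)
\textbf{Proof plan for Lemma~\ref{lem:nf-bi}.}

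The strategy is to reduce each of the three identities to the standard null form decompositions of Lemma~\ref{lem:N-sep-var} by elementary symbol manipulations, using the good behavior of the null-form property under symbol multiplication (Remark~\ref{rem:nf-compose}). For \eqref{eq:nf-N*}, I would write $\bfP^{\ell} A = A^{\ell} - \abs{D}^{-2} \rd^{\ell} \rd^{m} A_{m}$ so that $\calO(\bfP^{\ell} A, \rd_{\ell} u)$ becomes a sum of terms whose symbols (in the $(A_j, u)$ Fourier variables $(\xi, \eta)$) involve the combination $\xi_\ell \eta^\ell / \abs{\xi}^2$ contracted against $A$; the point is that after extracting $\abs{D}^{-1}$ from $A$ and $\abs{D}^{-1}$ from the derivative on $u$ (note $\rd_\ell u$ contributes a symbol factor of order $2^{k_2}$ and $A$ of order $2^{k_1}$), the remaining symbol is a linear combination of $\abs{\xi}^{-1}\abs{\eta}^{-1}\bfN_{ij}(\xi,\eta)$-type quantities times disposable factors. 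Concretely, $\delta^{\ell m}\xi_\ell \eta_m = \xi\cdot\eta$ and one uses the algebraic identity expressing $\xi\cdot\eta$ (relative to the $\bfP$ projection, i.e.\ working modulo the radial part of $\xi$) through the $\bfN_{ij}$: on the support of $\bfP^\ell$ applied to the first factor, the radial component is annihilated, leaving precisely the antisymmetric combinations $\xi_i\eta_j - \xi_j\eta_i$. Then Lemma~\ref{lem:N-sep-var} and Remark~\ref{rem:nf-compose} give the $\calN$-structure.

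For \eqref{eq:nf-N}, the mechanism is dual: $\bfP_x \calO(u, \rd_x v)$ has symbol $\bfP_j(\xi+\eta) (\xi+\eta)^{\text{(from }\bfP\text{)}}$-adjusted times $\eta_x$ (from $\rd_x v$). Here one uses that $\bfP_j$ applied to the output frequency $\zeta = \xi + \eta$ kills the radial-in-$\zeta$ part, so that $\zeta_j \eta_k$ may be replaced by $\zeta_j \eta_k - \zeta_k \eta_j = \xi_j \eta_k - \xi_k \eta_j$ (since $\zeta = \xi + \eta$ and the $\eta_j\eta_k - \eta_k\eta_j$ part vanishes), which is again an $\bfN_{jk}$ up to the disposable factor $\abs{D}^{-1}$ that is pulled out front. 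This yields $\abs{D}^{-1}\calN(u,v)$.

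For \eqref{eq:nf-N0}, I would start from the algebraic identity $\rd^\alp u \, \rd_\alp v = -\rd_t u \rd_t v + \nb_x u \cdot \nb_x v$ and, on each modulation sector $Q^{\pm}$, substitute $D_t = \pm\abs{D} + (D_t \mp \abs{D})$; the leading term produces $-(\pm\abs{D})u \cdot (\pm'\abs{D})v + \xi\cdot\eta$-type contractions, i.e.\ exactly $\mp\abs{\xi}\abs{\eta} - \xi\cdot\eta$ (up to sign bookkeeping for the four sign combinations $(+,+),(-,-),(+,-),(-,+)$), which is $-\bfN_{0,\pm}$ after matching signs. The error terms, in which at least one factor carries $D_t \mp \abs{D}$, are collected into $\calR_0(u,v)$ as displayed in \eqref{eq:nf-N0-rem}; these are manifestly disposable (no null structure claimed or needed), so nothing further is required for them. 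The only subtlety is that the $+\abs{D}\cdot\abs{D}$ vs.\ $\xi\cdot\eta$ matching must be tracked through all four $(\pm,\pm)$ combinations and the $Q^{\pm}$ projections must be inserted carefully; I expect this sign/projection bookkeeping in \eqref{eq:nf-N0} to be the main (though purely routine) obstacle, while \eqref{eq:nf-N*} and \eqref{eq:nf-N} are immediate once the radial-annihilation observation for $\bfP$ is in place.
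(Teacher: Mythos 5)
Your proposal is correct and follows essentially the same route as the paper: reduce to $\calO = $ product via Remark~\ref{rem:nf-compose}, invoke the Klainerman--Machedon-type algebraic identities $\P^{\ell} A\,\rd_{\ell} u = \bfN_{\ell j}((-\lap)^{-1}\rd^{\ell} A^{j}, u)$ and $\P_{j}(u\,\rd_{x} v) = (-\lap)^{-1}\rd^{\ell}\bfN_{\ell j}(u,v)$ for \eqref{eq:nf-N*}--\eqref{eq:nf-N}, and for \eqref{eq:nf-N0} split into $Q^{\pm}$ and substitute $D_{t}Q^{\pm} = \pm\abs{D}Q^{\pm} + (D_{t}\mp\abs{D})Q^{\pm}$, collecting the $(D_t\mp\abs{D})$ terms into $\calR_0$. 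One clarifying remark: your ``radial component annihilated'' phrasing for \eqref{eq:nf-N*} and \eqref{eq:nf-N} is the right intuition but should be made precise by the one-line identity $\eta^{m} - \abs{\xi}^{-2}(\xi\cdot\eta)\xi^{m} = \abs{\xi}^{-2}\xi_{\ell}(\xi^{\ell}\eta^{m} - \xi^{m}\eta^{\ell})$ (and its analogue with $\zeta = \xi+\eta$ in place of $\xi$), which is exactly how the Leray projection produces the $\bfN_{\ell m}$ symbol.
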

\begin{remark} \label{rem:nf-multi} As it is evident from the proof
  below, Lemma~\ref{lem:nf-bi} readily generalizes to a disposable
  multilinear operator $\calO$ that has one of the above structures
  with respect to two inputs. We omit the precise statement, as the
  notation gets unnecessarily involved. However, we point out that
  this is all we need in order to handle the trilinear secondary null
  structure.
\end{remark}

\begin{remark} \label{rem:nf-N0-alt} An alternative way to make use of
  the null form $\calO(\rd^{\alp} u, \rd_{\alp} v)$ is to rely on the
  simple algebraic identity
  \begin{gather} \label{eq:nf-N0-alt}\tag{\ref{eq:nf-N0}$'$} 2
    \calO(\rd^{\alp} u, \rd_{\alp} v) = \Box \calO(u, v) - \calO(\Box
    u, v) - \calO(u, \Box v).
  \end{gather}
  We have elected to use the decomposition \eqref{eq:nf-N0} to unify
  the treatment of null forms.
\end{remark}

\begin{proof}
  We begin with \eqref{eq:nf-N*} and \eqref{eq:nf-N}. By
  Remark~\ref{rem:nf-compose}, it suffices to consider the case when
  $\calO(u, v)$ is the product $uv$. Then it is a well-known fact
  (going back to \cite{KlMa1, KlMa2}) that $ \P^{\ell} A \rd_{\ell} u$
  and $ \P_{j} (u \rd_{x} v)$ are standard null forms, i.e.,
  \begin{align}
    \P^{\ell} A \rd_{\ell} u
    = & \  \bfN_{\ell j}((-\lap)^{-1} \rd^{\ell} A^{j}, u), \label{eq:nf-N*-bare} \\
    \P_{j} (u \rd_{x} v)
    = & \ (-\lap)^{-1} \rd^{\ell} \bfN_{\ell j}(u,v). \label{eq:nf-N-bare}
  \end{align}
  We omit the simple symbol computation. Hence \eqref{eq:nf-N*} and
  \eqref{eq:nf-N} follow.

  Next, we prove \eqref{eq:nf-N0}, which is essentially the well-known
  fact that $\rd^{\alp} u \rd_{\alp} v = - D^{\alp} u D_{\alp} v$ is a
  null form. To verify \eqref{eq:nf-N0}, we first decompose $u = Q^{+}
  u + Q^{-} u$ and $v = Q^{+} v + Q^{-}$, then we substitute
  \begin{equation*}
    D_{t} Q^{\pm} u = \pm \abs{D} Q^{\pm} u + (D_{t} \mp \abs{D}) Q^{\pm} u , \quad
    D_{t} Q^{\pm'} v = \pm' \abs{D} Q^{\pm'} v + (D_{t} \mp' \abs{D}) Q^{\pm'} v .
  \end{equation*}
  When $\calO(u, v) = uv$, the contribution of the first terms give
  \begin{equation*}
    \sum_{\pm, \pm'} \left( \pm \pm' \abs{D} Q^{\pm} u \abs{D} Q^{\pm'} v - D^{\ell} Q^{\pm} u D_{\ell} Q^{\pm'} v \right) = \sum_{\pm, \pm'} \bfN_{0, \pm \pm'} (Q^{\pm} u, Q^{\pm'} v).
  \end{equation*}
  By Remark~\ref{rem:nf-compose}, the same contribution constitutes
  the first four terms in \eqref{eq:nf-N0} in general.  Note moreover
  that the remainder makes up $\calR_{0}(u, v)$, which proves
  \eqref{eq:nf-N0}. \qedhere
\end{proof}

Next, we present an algebraic computation, which will be used to
reveal the trilinear secondary null form of the caloric Yang--Mills
wave equation.
\begin{lemma} \label{lem:nf-tri} Let $\calO, \calO'$ be disposable
  bilinear operators on $\bbR^{1+4}$. Then we have
  \[
\begin{split}
    & \calO'( \lap^{-1} \calO(u^{(1)}, \rd_{0} u^{(2)}), \rd^{0}
    u^{(3)})
    + \calO'(\Box^{-1} \P_{i}  \calO(u^{(1)}, \rd_{x} u^{(2)}), \rd^{i} u^{(3)}) \\
    & = \calO'(\Box^{-1} \calO(u^{(1)}, \rd_{\alp} u^{(2)}),
    \rd^{\alp} u^{(3)})
    - \calO'(\Box^{-1} \lap^{-1} \rd_{t} \rd_{\alp} \calO(u^{(1)}, \rd^{\alp} u^{(2)}), \rd_{t} u^{(3)}) \\
    & \phantom{=} - \calO'(\Box^{-1} \lap^{-1} \rd_{\ell} \rd_{\alp}
    \calO(u^{(1)}, \rd^{\ell} u^{(2)}), \rd^{\alp} u^{(3)}),
  \end{split}
\]
  provided that $\lap^{-1} \calO$, $\Box^{-1} \calO$ and $\Box^{-1}
  \lap^{-1} \calO$ are well-defined in the sense that their kernels
  have finite masses.
\end{lemma}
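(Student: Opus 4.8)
The identity is a purely algebraic manipulation of Fourier multipliers, so the plan is to reduce to the scalar case (where $\calO, \calO'$ are products) via Remark~\ref{rem:nf-compose}/the observation in the proof of Lemma~\ref{lem:nf-bi}, and then verify the identity at the level of symbols. The key structural idea is that the left-hand side is the sum of the ``$\rd_0$-part'' and the ``spatial $\P_x$-part'' of a null form $\calO(u^{(1)}, \rd_\alp u^{(2)})$ contracted against $\rd^\alp u^{(3)}$, but with the wrong propagators ($\lap^{-1}$ in the temporal slot, $\Box^{-1}\P_x$ in the spatial slot rather than a uniform $\Box^{-1}$). The right-hand side rewrites this as the ``clean'' version $\calO'(\Box^{-1}\calO(u^{(1)},\rd_\alp u^{(2)}), \rd^\alp u^{(3)})$ plus two correction terms that account for (i) replacing $\lap^{-1}$ by $\Box^{-1}$ in the temporal slot and (ii) replacing $\Box^{-1}$ by $\Box^{-1}\P_x$ (equivalently subtracting $\Box^{-1}\P_x^\perp = \Box^{-1}\lap^{-1}\rd_x\rd^\ell$) in the spatial slot.

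\textbf{Key steps.} First I would record the operator identity $\Box^{-1} - \lap^{-1} = \Box^{-1}\lap^{-1}\rd_t^2$ (since $\Box = \rd_t^2 - \lap$, so $\lap^{-1} - \Box^{-1} = -\Box^{-1}(\Box-\lap)\lap^{-1} = -\Box^{-1}\rd_t^2\lap^{-1}$, i.e.\ $\lap^{-1}\calO(\cdot,\rd_0 u^{(2)}) = \Box^{-1}\calO(\cdot,\rd_0 u^{(2)}) - \Box^{-1}\lap^{-1}\rd_t^2\calO(\cdot,\rd_0 u^{(2)})$). Since $\rd^0 = -\rd_0$, contracting the first piece against $\rd^0 u^{(3)}$ gives the $\alp=0$ part of the first term on the right-hand side, and the second piece gives exactly the $-\calO'(\Box^{-1}\lap^{-1}\rd_t\rd_\alp\calO(u^{(1)},\rd^\alp u^{(2)}),\rd_t u^{(3)})$ correction when restricted to $\alp = 0$. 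Second, for the spatial slot I would use $\P_i = \dlt_i{}^\ell + \lap^{-1}\rd_i\rd^\ell \cdot(-1)$, more precisely $\P_i A = A_i - (-\lap)^{-1}\rd_i\rd^\ell A_\ell$ wait — using the paper's convention $\P_j A = A_j + (-\lap)^{-1}\rd_j\rd^\ell A_\ell$, so $\P_i\calO(u^{(1)},\rd_x u^{(2)}) = \calO(u^{(1)},\rd_i u^{(2)}) + (-\lap)^{-1}\rd_i\rd^\ell\calO(u^{(1)},\rd_\ell u^{(2)})$. Contracting $\Box^{-1}$ of this against $\rd^i u^{(3)}$, the first term supplies the spatial ($\alp = i$) part of $\calO'(\Box^{-1}\calO(u^{(1)},\rd_\alp u^{(2)}),\rd^\alp u^{(3)})$ and the second term supplies exactly $-\calO'(\Box^{-1}\lap^{-1}\rd_\ell\rd_\alp\calO(u^{(1)},\rd^\ell u^{(2)}),\rd^\alp u^{(3)})$ (with the sign coming from $(-\lap)^{-1}$ and the index contraction, noting $\rd^\ell\calO(\cdot,\rd_\ell u^{(2)}) = \rd_\ell\calO(\cdot,\rd^\ell u^{(2)})$ and that only the spatial $\alp = i$ survives against $\rd^i u^{(3)}$). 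Third, adding the temporal and spatial contributions and noting that the ``clean'' term $\calO'(\Box^{-1}\calO(u^{(1)},\rd_\alp u^{(2)}),\rd^\alp u^{(3)})$ reassembles from its $\alp = 0$ and $\alp = i$ parts completes the identity. Finally I would note that all the inverse operators $\lap^{-1}\calO$, $\Box^{-1}\calO$, $\Box^{-1}\lap^{-1}\calO$ appearing are assumed to have kernels of finite mass, so every manipulation is justified (one may regularize, e.g.\ work with $(\lap - \veps)^{-1}$ and $(\Box - \veps\,\mathrm{sgn})^{-1}$ type truncations or simply on the Fourier side on the relevant frequency supports, and pass to the limit).

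\textbf{Main obstacle.} The only real subtlety — and it is bookkeeping rather than analysis — is sign tracking: the paper's metric signature $\mathrm{diag}(-1,+1,\dots,+1)$ means $\rd^0 = -\rd_0$ and $\rd_\alp\calO\,\rd^\alp = -\rd_0\calO\,\rd_0 + \rd_i\calO\,\rd^i$, while the Leray projection convention carries a $+(-\lap)^{-1}$ rather than $-\lap^{-1}$. I expect to have to be careful that the two correction terms come out with the stated signs (both with $-$ in front) after these are combined; a clean way to avoid errors is to do the entire computation on the Fourier side, writing $\calO(u^{(1)},\rd_\alp u^{(2)})$ with symbol $\eta_\alp$ (where $(\sgm,\eta)$ is the frequency of $u^{(2)}$), $\rd^\alp u^{(3)}$ with symbol $\zeta^\alp$ (frequency $(\rho,\zeta)$ of $u^{(3)}$), and $\Box^{-1}$, $\lap^{-1}$ as the obvious multipliers in $\xi+\eta$, then just verify the scalar identity
\[
-\frac{|\xi+\eta|^{-2}\,\sgm\,\rho + (|\xi+\eta|^{-2})(\text{spatial proj in }\xi+\eta)\cdot(\text{spatial part})}{\phantom{x}} \;=\; \cdots,
\]
i.e.\ reduce everything to the elementary identity among the rational functions $-(\tau'^2-|\xi'|^2)^{-1}$, $|\xi'|^{-2}$ and their difference, which is immediate. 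Once the scalar identity is checked, Remark~\ref{rem:nf-compose} upgrades it to general disposable $\calO,\calO'$ verbatim.
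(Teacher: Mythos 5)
Your approach is the same as the paper's --- expand the left-hand side using the resolvent identity $\lap^{-1}-\Box^{-1}=-\Box^{-1}\lap^{-1}\rd_t^2$ in the temporal slot and $\P_i B=B_i-\lap^{-1}\rd_i\rd^\ell B_\ell$ in the spatial slot --- but as written it has a genuine gap. In your Step 2 you assert that the second piece from the Leray decomposition ``supplies exactly'' the third RHS term, justifying this by the claim that ``only the spatial $\alp=i$ survives against $\rd^i u^{(3)}$.'' That claim is false: the third RHS term is $-\calO'(\Box^{-1}\lap^{-1}\rd_\ell\rd_\alp\calO(u^{(1)},\rd^\ell u^{(2)}),\rd^\alp u^{(3)})$ with $\alp$ ranging over all of $0,\dots,4$, and its $\alp=0$ part is nonzero. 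What Step 2 actually produces is only the $\alp=i$ part of that term. Symmetrically, your Step 1 (correctly) produces only the $\alp=0$ part of the second RHS term and you never supply its $\alp=i$ part. So after Steps 1--3 you are short by
\[
-\calO'(\Box^{-1}\lap^{-1}\rd_t\rd_\ell\calO(u^{(1)},\rd^\ell u^{(2)}),\rd_t u^{(3)})
-\calO'(\Box^{-1}\lap^{-1}\rd_\ell\rd_0\calO(u^{(1)},\rd^\ell u^{(2)}),\rd^0 u^{(3)}),
\]
which happens to equal zero since $\rd^0 u^{(3)}=-\rd_t u^{(3)}$ and $\rd_0=\rd_t$. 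This cancellation is precisely what the paper's ``add and subtract $\calO'(\Box^{-1}\lap^{-1}\rd_t\rd^\ell\calO(u^{(1)},\rd_\ell u^{(2)}),\rd_t u^{(3)})$'' device is for, and it is the one nontrivial observation in the lemma; you need to state and check it rather than assert that the third term has no temporal part.

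A secondary point: you write $\Box=\rd_t^2-\lap$, which is the opposite of this paper's convention ($\Box=\rd^\alp\rd_\alp=-\rd_t^2+\lap$ with $\mathrm{diag}(-1,+1,\dots,+1)$). Your subsequent parenthetical derivation contains a compensating sign slip, so the resolvent identity you end up with, $\lap^{-1}-\Box^{-1}=-\Box^{-1}\lap^{-1}\rd_t^2$, is in fact correct; but the chain of reasoning as written is not. Your proposal at the end to verify everything at the symbol level on the Fourier side would, if actually carried out, be a clean way to dodge both issues, but it is offered as a fallback rather than done, so it does not currently close the gap.
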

Of course, the requirement that the kernels of $\lap^{-1} \calO$,
$\Box^{-1} \calO$ and $\Box^{-1} \lap^{-1} \calO$ have finite masses
is excessively strong for the validity of the lemma, but it will be
verified in the applications below.
\begin{proof}
  The proof of this lemma is the same as in
  \cite[Appendix]{KST}. Using the identities
  \begin{equation*}
    \lap^{-1} - \Box^{-1} = \Box^{-1}\lap^{-1} (-\rd_{t}^{2}), \quad
    \P_{i} B = B_{i} - \lap^{-1} \rd_{i} \rd^{\ell} B_{\ell}, \quad \rd^{0} = - \rd_{0} = - \rd_{t} 
  \end{equation*}
  and adding and subtracting $\calO'(\Box^{-1} \lap^{-1} \rd_{t}
  \rd^{\ell} \calO(u^{(1)}, \rd_{\ell} u^{(2)}), \rd_{t} u^{(3)})$, we
  may write
  \begin{align*}
    & \calO'( \lap^{-1} \calO(u^{(1)}, \rd_{0} u^{(2)}), \rd^{0}
    u^{(3)})
    + \calO'(\Box^{-1} \P_{i}  \calO(u^{(1)}, \rd_{x} u^{(2)}), \rd^{i} u^{(3)}) \\
    & = \calO'(\Box^{-1} \calO(u^{(1)}, \rd_{0} u^{(2)}), \rd^{0}
    u^{(3)})
    + \calO'(\Box^{-1} \calO(u^{(1)}, \rd_{i} u^{(2)}), \rd^{i} u^{(3)}) \\
    &\phantom{=} - \calO'(\Box^{-1} \lap^{-1} \rd_{t} \rd^{0}
    \calO(u^{(1)}, \rd_{0} u^{(2)}), \rd_{t} u^{(3)})
    - \calO'(\Box^{-1} \lap^{-1} \rd_{i} \rd^{\ell} \calO(u^{(1)}, \rd_{\ell} u^{(2)}), \rd^{i} u^{(3)}) \\
    &\phantom{=} - \calO'(\Box^{-1} \lap^{-1} \rd_{t} \rd^{\ell}
    \calO(u^{(1)}, \rd_{\ell} u^{(2)}), \rd_{t} u^{(3)})
    - \calO'(\Box^{-1} \lap^{-1} \rd_{0} \rd^{\ell} \calO(u^{(1)}, \rd_{\ell} u^{(2)}), \rd^{0} u^{(3)}) \\
    & = \calO'(\Box^{-1} \calO(u^{(1)}, \rd_{\alp} u^{(2)}),
    \rd^{\alp} u^{(3)})
    - \calO'(\Box^{-1} \lap^{-1} \rd_{t} \rd_{\alp} \calO(u^{(1)}, \rd^{\alp} u^{(2)}), \rd_{t} u^{(3)}) \\
    & \phantom{=} - \calO'(\Box^{-1} \lap^{-1} \rd_{\ell} \rd_{\alp}
    \calO(u^{(1)}, \rd^{\ell} u^{(2)}), \rd^{\alp} u^{(3)}).
  \end{align*}
  In the last equality, we paired the first and the second, the third
  and the fifth, and the fourth and the sixth terms, respectively,
  from the preceding lines. \qedhere
\end{proof}

\subsection{Summary of global-in-time dyadic
  estimates} \label{subsec:dyadic-ests} 
In what follows, we denote by
$\calO$ a disposable translation-invariant bilinear operator on
$\bbR^{1+4}$, and by $\calN$ a bilinear null form as in
Definition~\ref{def:nf}(2). Let $u$ and $v$ be test functions on
$\bbR^{1+4}$. For convenience, we also introduce test functions $u'$
and $v'$, which stands for inputs of the form $\nb u$ and $\nb v$,
respectively, in the applications.

Given $k, k_{1}, k_{2} \in \bbZ$, we define $k_{\max} = \max \set{k,
  k_{1}, k_{2}}$ and $k_{\min} = \min \set{k, k_{1}, k_{2}}$.  We use
the shorthands $u_{k_{1}} = P_{k_{1}} u$, $v_{k_{2}} = P_{k_{2}} v$
and $v'_{k_{2}} = P_{k_{2}} v'$.

\subsubsection{Bilinear estimates for elliptic components}
We start with simple bilinear bounds which do not involve any null
forms.
\begin{proposition} \label{prop:bi-ell} We have
  \begin{align}
    \nrm{P_{k}\calO(u_{k_{1}}, v'_{k_{2}})}_{L^{2}
      \dot{H}^{-\frac{1}{2}}}
    \aleq & 2^{-\dlta (k_{\max} - k_{\min})} \nrm{D u_{k_{1}}}_{\Str^{0}} \nrm{v'_{k_{2}}}_{\Str^{0}}, \label{eq:qf-L2L2} \\
    \nrm{P_{k}\calO(u_{k_{1}}, v'_{k_{2}})}_{L^{\frac{9}{5}}
      \dot{H}^{-\frac{4}{9}}}
    \aleq & 2^{-\dlta (k_{\max} - k_{\min})} \nrm{D u_{k_{1}}}_{\Str^{0}} \nrm{v'_{k_{2}}}_{\Str^{0}}, \label{eq:qf-L9/5L2} \\
    \nrm{P_{k}\calO(u_{k_{1}}, v'_{k_{2}})}_{L^{1} \dot{W}^{-2,
        \infty}} \aleq & 2^{-\dlta \abs{k_{1} - k_{2}}} \nrm{D
      u_{k_{1}}}_{S} \nrm{v'_{k_{2}}}_{S}. \label{eq:qf-L1Linfty}
  \end{align}
  Furthermore, we have the following simpler variants of
  \eqref{eq:qf-L2L2}, \eqref{eq:qf-L9/5L2} and \eqref{eq:qf-L1Linfty}:
  \begin{align}
    \nrm{P_{k}\calO(u_{k_{1}}, v'_{k_{2}})}_{L^{2}
      \dot{H}^{-\frac{1}{2}}}
    \aleq & 2^{-\dlta (k_{\max} - k_{\min})} \nrm{u_{k_{1}}}_{L^{2} \dot{H}^{\frac{3}{2}}} \nrm{v'_{k_{2}}}_{S}, \label{eq:qf-Y-L2L2} \\
    \nrm{P_{k}\calO(u_{k_{1}}, v'_{k_{2}})}_{L^{\frac{9}{5}}
      \dot{H}^{-\frac{4}{9}}}
    \aleq & 2^{-\dlta (k_{\max} - k_{\min})} \nrm{u_{k_{1}}}_{L^{2} \dot{H}^{\frac{3}{2}}} \nrm{v'_{k_{2}}}_{S}, \label{eq:qf-Y-L9/5L2} \\
    \nrm{P_{k}\calO(u_{k_{1}}, v'_{k_{2}})}_{L^{1} \dot{W}^{-2,
        \infty}} \aleq & 2^{\frac{2}{3} k_{\min}} 2^{-\frac{4}{3} k}
    2^{-\frac{1}{6} k_{1}} 2^{\frac{5}{6} k_{2}} (2^{\frac{1}{6}
      k_{1}} \nrm{u_{k_{1}}}_{L^{2} L^{6}} )(2^{-\frac{5}{6} k_{2}}
    \nrm{v'_{k_{2}}}_{L^{2} L^{6}}). \label{eq:qf-L2L6-L1Linfty}
  \end{align}
\end{proposition}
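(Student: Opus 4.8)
\textbf{Proof plan for Proposition~\ref{prop:bi-ell}.}
The plan is to prove all six estimates by a standard Littlewood--Paley trichotomy: for fixed output frequency $2^k$ and input frequencies $2^{k_1}, 2^{k_2}$, split into the high--high regime $k \leq \min\{k_1,k_2\} + O(1)$ (where $k_1 = k_2 + O(1)$), and the two high--low regimes $k = k_1 + O(1) \geq k_2$ and $k = k_2 + O(1) \geq k_1$, the latter two being symmetric. In every regime the disposability of $P_k \calO(P_{k_1}\cdot, P_{k_2}\cdot)$ lets us discard $\calO$ entirely and replace it by the ordinary product (this is the content of the ``disposable operator'' formalism already set up), so we only ever need product estimates. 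The off-diagonal gain $2^{-\dlta(k_{\max}-k_{\min})}$ will come, in the high--high case, from Bernstein in the low output frequency, and in the high--low case it is automatic since $k_{\max}-k_{\min} = |k_1 - k_2|$ and we Bernstein in the low input frequency.

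First I would dispose of \eqref{eq:qf-L2L2} and \eqref{eq:qf-L9/5L2}. Here the inputs $Du_{k_1}$ and $v'_{k_2}$ are measured in the non-sharp Strichartz space $\Str^0$, which by definition contains, in particular, $L^4 L^4$ at the scaling of $L^\infty L^2$ after suitable frequency normalization. Concretely, at frequency $2^{k_i}$ one has $\| P_{k_i} w\|_{L^4 L^4} \aleq 2^{k_i} \| w\|_{\Str^0_{k_i}}$ (up to the $\dlts$-losses built into $\Str$), so a H\"older product $L^4 L^4 \cdot L^4 L^4 \hookrightarrow L^2 L^2$ gives the product bound; summing over the low frequency with a Bernstein factor yields the $2^{-\dlta(k_{\max}-k_{\min})}$ gain, where $\dlta$ is absorbed from the slack in the Strichartz exponents. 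For \eqref{eq:qf-L9/5L2} one repeats this using the exponent $L^{18/5} L^{18/5}$ (or interpolating between two admissible Strichartz pairs) so that the product lands in $L^{9/5} L^{9/5} \hookrightarrow L^{9/5}\dot H^{-4/9}$ at the right scaling; again the Bernstein factor in the low frequency supplies the off-diagonal decay. The simpler variants \eqref{eq:qf-Y-L2L2}, \eqref{eq:qf-Y-L9/5L2} are proved the same way, except that now one input is measured in $L^2 \dot H^{3/2}$ rather than $\Str^0$; using $P_{k_1} L^2 \dot H^{3/2} \hookrightarrow 2^{2k_1} L^2 L^2 \cap \dots$ and $S \hookrightarrow \Str^0$ for the other factor, one runs the same H\"older argument (placing the $L^2 L^2$ factor together with an $L^\infty L^\infty$-type Strichartz factor when needed, after Bernstein).

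Next I would handle the $L^1 \dot W^{-2,\infty}$ bounds \eqref{eq:qf-L1Linfty} and \eqref{eq:qf-L2L6-L1Linfty}. Since the target is $L^1_t L^\infty_x$ with two derivatives removed, I would estimate $\| P_k (\text{product}) \|_{L^1 L^\infty} \aleq 2^{?k} \| P_{k_1} w_1 \|_{L^2 L^q} \| P_{k_2} w_2 \|_{L^2 L^{q'}}$ by H\"older in time and Bernstein in space, choosing $q$ so that the factors are controlled by the $S$-norm (which contains $S^{str}_k$, i.e.\ all subcritical Strichartz pairs, plus the square-function norm $S^{sq}_k = 2^{-3k/10} L^{10/3}_x L^2_t$). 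For \eqref{eq:qf-L1Linfty} the balanced choice $L^2 L^6 \cdot L^2 L^3 \hookrightarrow L^1 L^2 \hookrightarrow_{\text{Bernstein}} L^1 L^\infty$ at the low output frequency works, and the $2^{-\dlta|k_1-k_2|}$ gain is again a low-frequency Bernstein factor. For \eqref{eq:qf-L2L6-L1Linfty} one simply records the $L^2 L^6 \times L^2 L^6 \hookrightarrow L^1 L^3$ version, tracking the explicit powers of $2^{k_{\min}}, 2^{k}, 2^{k_1}, 2^{k_2}$ that come from Bernstein (to pass from $L^3$ to $L^\infty$ in the output and to normalize the two inputs to $2^{k_1/6}\|u_{k_1}\|_{L^2 L^6}$ and $2^{-5k_2/6}\|v'_{k_2}\|_{L^2 L^6}$); this is pure bookkeeping and no trichotomy subtlety arises.

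The main obstacle, such as it is, is not any single estimate but rather the frequency summation in the high--high regime for the $L^1 \dot W^{-2,\infty}$ bounds: there the output frequency $2^k$ can be much smaller than the common input frequency, and one must be careful that the two derivatives ($\dot W^{-2,\infty}$, i.e.\ $2^{-2k}$) together with the Bernstein cost of $L^\infty$ from $L^2$ or $L^3$ (a further $2^{2k}$ or $2^{4k/3}$) leave a genuine positive power of $2^{k-k_1}$ to sum. This is exactly why the exponents in \eqref{eq:qf-L2L6-L1Linfty} are written with the particular combination $2^{\frac23 k_{\min}} 2^{-\frac43 k}$: one checks that $\frac23 k_{\min} - \frac43 k = -\frac23(k - k_{\min}) + (\text{zero when } k = k_{\min})$ combined with the off-diagonal input gain closes. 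No new ideas beyond Bernstein, H\"older, disposability, and the definitions of $\Str$ and $S$ are needed; the whole proposition is routine once the regimes are separated. $\qed$
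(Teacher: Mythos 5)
Your general framework (Littlewood--Paley trichotomy, disposability of $\calO$, H\"older with Strichartz/Bernstein) is the right one and is what the paper uses, and your treatments of \eqref{eq:qf-Y-L2L2}, \eqref{eq:qf-Y-L9/5L2}, \eqref{eq:qf-L2L6-L1Linfty} are fine. But two of your concrete implementations do not close, and one of them conceals the only genuinely non-routine step in the proposition.

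For \eqref{eq:qf-L2L2} the balanced choice $L^4L^4\cdot L^4L^4\to L^2L^2$ lands the product \emph{exactly} in the target space, leaving no room for the Bernstein step you correctly identify (in your plan) as the source of the off-diagonal gain. Computing scalings: $\|u_{k_1}\|_{L^4L^4}\aleq 2^{-k_1/4}\|D u_{k_1}\|_{\Str^0}$ and $\|v'_{k_2}\|_{L^4L^4}\aleq 2^{3k_2/4}\|v'_{k_2}\|_{\Str^0}$, so in the high--high regime ($k_1=k_2+O(1)\gg k$) one obtains $2^{k_1/2}$, which should be compared against the required $2^{k/2}2^{-\dlta(k_1-k)}$ --- off by essentially a half-derivative in $k_1-k$. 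The same accounting shows the high--low case also fails. The paper instead uses an \emph{asymmetric} pair of exponents whose spatial Lebesgue indices are pushed toward $L^2$ (namely $u_{k_1}\in L^{9/4}L^{54/11+}$, $v'_{k_2}\in L^{18}L^{27/13+}$), so the product lands in $L^2_tL^{r_0}_x$ with $1/r_0>1/2$; the spatial Bernstein from $L^{r_0}$ to $L^2$ at the low output frequency then produces exactly the factor needed to convert the input scaling into $2^{k/2}2^{-c(k_{\max}-k_{\min})}$. Your $L^{18/5}L^{18/5}$ choice for \eqref{eq:qf-L9/5L2} has the identical defect.

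For \eqref{eq:qf-L1Linfty}, your $L^2L^6\cdot L^2L^3\hookrightarrow L^1L^2$ step is not available: the pair $(2,3)$ violates wave admissibility in $\bbR^{1+4}$ ($\tfrac12+\tfrac{3}{2\cdot3}=1>\tfrac34$), lies on no interpolation segment of admissible pairs, and cannot be reached from $L^2L^6$ by Bernstein (Bernstein only raises the spatial exponent). The obstruction is structural: if $(p_1,q_1),(p_2,q_2)$ are both admissible and give $L^1_tL^2_x$ by H\"older, then summing the two constraints $\tfrac1{p_i}+\tfrac{3}{2q_i}\le\tfrac34$ forces $1+\tfrac34\le\tfrac32$, a contradiction. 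This is precisely why the paper handles the high--high regime $k<k_1-10$ of \eqref{eq:qf-L1Linfty} by decomposing all three factors into cubes $\calC_k(0)$ of side $2^k$, using the almost-orthogonality (the pairing vanishes unless $-\calC+\calC^1+\calC^2\ni0$, with $O(1)$ multiplicity once two cubes are fixed), and then invoking the square-summed, cube-localized $L^2L^\infty$ bound encoded in the $S^{ang}_k$ component of $S$; the $\ell^2$-summation in cubes is what defeats the endpoint counting. This cube/orthogonality argument is the key idea missing from your proposal, and without it the high--high case of \eqref{eq:qf-L1Linfty} cannot be closed by plain H\"older and Bernstein in any combination of admissible Strichartz norms.
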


\subsubsection{Bilinear estimates concerning the $N$-norm}
Next, we state the $N$-norm estimates which will be used  for the bilinear expressions
arising from $\P \calM$, $\P^{\perp} \calM$ and $\Rem^{\kpp, 2}$.
\begin{proposition} \label{prop:nf-core} We have
  \begin{align}
    \nrm{P_{k}\calN(u_{k_{1}}, v_{k_{2}})}_{N} \aleq & 2^{- \dlta (k_{\max} - k_{\min})} 2^{k} \nrm{D u_{k_{1}}}_{S} \nrm{D v_{k_{2}}}_{S}, \label{eq:nf-core} \\
    \nrm{P_{k} \calO(\rd^{\alp} u_{k_{1}}, \rd_{\alp}  v_{k_{2}})}_{N} \aleq & 2^{- \dlta (k_{\max} - k_{\min})} 2^{k_{\max}} \nrm{D u_{k_{1}}}_{S} \nrm{D v_{k_{2}}}_{S} , \label{eq:N0-core} \\
    \nrm{P_{k} \calO(u'_{k_{1}}, v_{k_{2}})}_{L^{1} L^{2}} \aleq &
    2^{-\dlta (k_{\max} - k_{\min})} \nrm{u'_{k_{1}}}_{L^{2}
      \dot{H}^{\frac{1}{2}}} (2^{\frac{1}{6} k_{2}}
    \nrm{v_{k_{2}}}_{L^{2} L^{6}}). \label{eq:qf-rem-ell}
  \end{align}
  Furthermore, for any $\kpp \in \bbN$, we have the low modulation gain
  \begin{align}
    \nrm{P_{k} Q_{<k_{\min} - \kpp}\calN(Q_{<k_{\min} - \kpp} u_{k_{1}}, Q_{<k_{\min} - \kpp} v_{k_{2}})}_{N} \aleq & 2^{- \dlta \kpp} 2^{k} \nrm{D u_{k_{1}}}_{S} \nrm{D v_{k_{2}}}_{S}, \label{eq:nf-core-lomod} \\
    \nrm{P_{k} Q_{<k_{\min} - \kpp} \calO(\rd^{\alp} Q_{<k_{\min} -
        \kpp} u_{k_{1}}, \rd_{\alp} Q_{<k_{\min} - \kpp}
      v_{k_{2}})}_{N} \aleq & 2^{- \dlta \kpp} 2^{k_{\max}} \nrm{D
      u_{k_{1}}}_{S} \nrm{D v_{k_{2}}}_{S} . \label{eq:N0-core-lomod}
  \end{align}
\end{proposition}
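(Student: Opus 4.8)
\textbf{Plan of proof for Proposition~\ref{prop:nf-core}.}
The strategy is the standard one for bilinear null form estimates in the Tataru--Tao--Sterbenz function space framework: reduce each estimate to a frequency-localized statement, exploit the null form structure (Definition~\ref{def:nf} and Remark~\ref{rem:nf-gain}) to convert the angular/modulation gain into a concrete power, and then invoke the core bilinear bounds already available from \cite{KST, OT2, KT}. Concretely, first I would dispose of the multipliers: since $\calN$ is a null form of type $\calN$, by Remark~\ref{rem:nf-gain} we may write, after the dyadic localizations $P_{k}\calN(P_{k_1}(\cdot), P_{k_2}(\cdot))$ and further angular decompositions $P^{\omg}_{\ell}$, $P^{\omg'}_{\ell'}$,
\[
  P_k \calN(P_{k_1} P^{\omg}_{\ell} u, P_{k_2} P^{\omg'}_{\ell'} v) = C\, \tht_{\pm} 2^{k_1 + k_2}\, \tilde{\calO}(P_{k_1} P^{\omg}_{\ell} u, P_{k_2} P^{\omg'}_{\ell'} v)
\]
with $\tilde{\calO}$ disposable; likewise for $\calO(\rd^{\alp} u, \rd_{\alp} v)$ one uses the decomposition \eqref{eq:nf-N0} to replace it by a sum of $\calN_{0, \pm}$'s (which carry the $\tht_{\pm}^2$ gain) plus the remainder $\calR_0$ from \eqref{eq:nf-N0-rem}, whose outputs contain a factor $(D_t \mp \abs{D})$ that supplies extra modulation decay. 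The disposability of $\tilde{\calO}$ (and of the separated pieces $a_{i_1}, b_{i_2}$, which are summable in $i_1, i_2$ with the $(1+\abs{i})^{100}$ weights) means every remaining step is a genuine product estimate on $L^p L^q$-type norms, not a symbol calculation.

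\textbf{Order of the steps.} (i) For \eqref{eq:nf-core}: split into the regimes high$\times$low / low$\times$high ($k_{\max}=k_1$ or $k_2$, with $k$ small) and high$\times$high ($k \ll k_1 \aeq k_2$). In each regime one performs the dyadic angular decomposition so that the two inputs lie in caps of size $\aeq 2^{\ell}$ with $\ell$ dictated by the output modulation/frequency geometry, uses the null form gain $\tht_\pm \aleq 2^{\ell}$ to beat the worst-case concentration, and then applies the corresponding null-frame / PW$\times$NE / Strichartz bilinear estimates encoded in $S^{\omg}_k(\ell)$ (as in \cite[Eqs.~(136)--(138)]{KST} and the core trilinear estimate Proposition~\ref{prop:tri-nf-core}). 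The transition to the $X^{0,-1/2}_1$ part of $N$ versus the $L^1 L^2$ part is handled exactly as in \cite{KST}: high output modulation goes into $X^{0,-1/2}_1$, low modulation (where the geometry forces transversality) goes into $L^1 L^2$ via null-frame bounds. (ii) For \eqref{eq:N0-core}: same case analysis but now with the $\tht_\pm^2$ gain, which is what allows one to sum freely over the angular sectors and over modulations; the remainder $\calR_0$ is placed in $X^{0,-1/2}_1$ directly using the $(D_t \mp \abs{D})$ factor, which gains a full modulation power. (iii) For \eqref{eq:qf-rem-ell}: this is the easy non-null case — it is a straightforward Hölder estimate, putting $u'_{k_1}$ in $L^2 \dot H^{1/2} \hookrightarrow L^2 L^3$-type spaces (via $S \hookrightarrow L^2 L^6$) and $v_{k_2}$ in $L^2 L^6$, with the off-diagonal gain $2^{-\dlta(k_{\max}-k_{\min})}$ coming from Bernstein on the lowest frequency. (iv) For the low-modulation refinements \eqref{eq:nf-core-lomod} and \eqref{eq:N0-core-lomod}: restrict all three factors (output and both inputs) to modulation $< k_{\min} - \kpp$; then the wave-interaction geometry forces the two input frequencies to be \emph{transversal} with angle $\ageq 2^{(k_{\min}-\kpp)/2 - k_{\min}/2} = 2^{-\kpp/2}$ from below, but more importantly the null form gain $\tht_\pm \aleq 2^{\ell}$ with $\ell \aleq \frac12(k_{\min}-\kpp - k_{\max})$ produces the advertised $2^{-\dlta \kpp}$ after summing. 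This last point is a direct adaptation of the low-modulation gain arguments in \cite{KST, ST1, OT2}.

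\textbf{Main obstacle.} The delicate point is the endpoint-type bookkeeping in \eqref{eq:nf-core} in the \emph{high$\times$high} regime with \emph{low} output modulation, where one cannot afford any loss: here the null structure must be fully exploited through the null-frame spaces $PW^{\mp}_{\omg}(\ell)$ and $NE$, and one must carefully match the angular parameter $\ell$ of the decomposition with the modulation threshold so that the outputs square-sum in the rectangular boxes $\calC_{k'}(\ell')$ — this is precisely the role of the restriction $k+2\ell \leq k'+\ell'$ built into $S^{\omg}_k(\ell)$, and it is the step that was flagged in the remark following the definition of $S^{\omg}_k(\ell)$ as being slightly stronger than in \cite{KST}. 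The second (milder) technical nuisance is that $N$ is only defined globally in time, so these dyadic estimates must be proved \emph{globally} first (which is what this subsection does); the interval localization, which requires absorbing sharp time cutoffs past the non-local modulation projections, is deferred to Section~\ref{subsec:ests-pf}. Once the global dyadic bounds of Propositions~\ref{prop:bi-ell} and \ref{prop:nf-core} are in hand, the remaining structure is essentially the same as in the Maxwell--Klein--Gordon analysis of \cite{OT2}, and the proof proceeds by the scheme already used there.
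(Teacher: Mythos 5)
Your outline follows the right general philosophy, but it diverges from the paper's actual proof in two structural ways and has one genuine imprecision that would bite if you tried to write out the details.

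First, the paper does \emph{not} re-prove \eqref{eq:nf-core} or \eqref{eq:nf-core-lomod}: these two are simply cited from \cite[Proposition~7.1]{OT2}, and all the detailed work in this proof is devoted to the $\calO(\rd^{\alp}\cdot,\rd_{\alp}\cdot)$ estimates \eqref{eq:N0-core}, \eqref{eq:N0-core-lomod} plus the elementary \eqref{eq:qf-rem-ell}. Your case analysis for \eqref{eq:nf-core} is essentially the content of the KST/OT2 argument, so that part is fine as background, but it is not where the proof lives here.

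Second, your description of the $\calO(\rd^{\alp}\cdot,\rd_{\alp}\cdot)$ argument is organized differently from the paper's. The paper's Step~1 handles the case where one of the three factors (output, $u$, or $v$) has modulation $\geq 2^{k_{\min}}$, and in this regime it does \emph{not} invoke any null structure: it discards the null form, decomposes into cubes $\calC_{k_{\min}}(0)$, and closes purely with Hölder/Bernstein plus $L^2$-almost-orthogonality in the cubes, placing the high-modulation factor in $X^{0,\pm\frac12}$ and the others in $L^2L^\infty$ and $L^\infty L^2$. The null structure — including the remainder $\calR_0$ from \eqref{eq:nf-N0-rem} — enters only in Step~2 (low modulation) and is packaged into a single lemma (Lemma~\ref{lem:N0-gain}) that converts $\calO(\rd^{\alp}Q_{<j}\cdot,\rd_{\alp}Q_{<j}\cdot)$ into $2^{2\ell}\tilde{\calO}(\nb\cdot,\nb\cdot)$ with $\ell = \frac12(j-k_{\min})$. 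Inside the proof of that lemma, $\calN_{0,\pm}$ contributes $\tht_\pm^2 \aeq 2^{2\ell}$ and $\calR_0$ contributes $2^j 2^{-k_{\min}} = 2^{2\ell}$, so the two pieces are deliberately \emph{not} routed differently as your plan suggests (you propose placing $\calR_0$ directly in $X^{0,-1/2}_1$ via the $(D_t\mp\abs{D})$ factor; the paper absorbs it into the same $2^{2\ell}$ gain as the null form). Your route would also work, but it is not what the paper does.

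Finally, the geometric statement in your step (iv) is garbled. You write that small modulation "forces the two input frequencies to be transversal with angle $\geq 2^{-\kpp/2}$ from below," which is backwards: restricting all three factors to modulation $< 2^{k_{\min}-\kpp}$ forces, via Lemma~\ref{lem:geom-cone}, the relevant (signed) angular separation to be bounded \emph{above} by $\aeq 2^{\ell}$ with $\ell \aleq \frac12(j-k_{\min}) \leq -\kpp/2$, and the null form gain $\tht_\pm \aleq 2^{\ell}$ then supplies the factor $2^{-c\kpp}$. Your formula $\ell \aleq \frac12(k_{\min}-\kpp-k_{\max})$ also has $k_{\max}$ in place of $k_{\min}$. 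The conclusion you state is correct, but the mechanism as written would not survive close inspection, and this is the substantive part of the proof.
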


For the term $\Diff^{\kpp}_{\P A} B$, we need to distinguish the case
when the low frequency input $A$ has a dominant modulation. For this
purpose, we borrow the bilinear operator $\calH_{k}^{\ast}$ (and its
``dual'' $\calH_{k}$) from \cite{KST}.

Given a bilinear translation-invariant operator $\calO$, we introduce
the expression $\calH_{k} \calO$ [resp. $\calH^{\ast}_{k} \calO$],
which essentially separates out the case when the modulation of the
output [resp. the first input] is dominant. More precisely, we define
\begin{align*}
  \calH_{k} \calO (u, v) =& \sum_{j : j < k + C} Q_{j} \calO(Q_{< j - C} u, Q_{< j - C} v), \\
  \calH^{\ast}_{k} \calO (u, v) =& \sum_{j : j < k + C} Q_{< j - C}
  \calO(Q_{j} u, Q_{< j - C} v),
\end{align*}
for some  universal constant $C$ such that $C < C_{0}$, where
$C_{0}$ is the constant in Lemma~\ref{lem:geom-cone}. We also define
\begin{align*}
  \calH \calO (u, v) =& \sum_{k, k_{1}, k_{2} :  k < k_{2} - C} P_{k} \calH_{k} \calO(P_{k_{1}} u, P_{k_{2}} v), \\
  \calH^{\ast} \calO (u, v) =& \sum_{k, k_{1}, k_{2} : k_{1} < k_{2} -
    C} \calH^{\ast}_{k_{1}} P_{k} \calO(P_{k_{1}} u, P_{k_{2}} v).
\end{align*}

We are now ready to state our estimates for the $N$-norm of the term
$\Diff_{\P A} B$.
\begin{proposition} \label{prop:H*} For $k_{1} < k - 10$, we have
  \begin{align}
    \nrm{P_{k} (1 - \calH^{\ast}_{k_{1}}) \calN (\abs{D}^{-1}
      u_{k_{1}}, v_{k_{2}})}_{N}
    \aleq & \nrm{D u_{k_{1}}}_{S} \nrm{D v_{k_{2}}}_{S} , \label{eq:nf*-1-H*} \\
    \nrm{P_{k} (1 - \calH^{\ast}_{k_{1}}) \calO (u_{k_{1}},
      v'_{k_{2}})}_{N}
    \aleq & \nrm{u_{k_{1}}}_{L^{2} \dot{H}^{\frac{3}{2}}} \nrm{v'_{k_{2}}}_{S}, \label{eq:qf-1-H*} \\
    \nrm{P_{k} \calH^{\ast}_{k_{1}} \calN (\abs{D}^{-1} u_{k_{1}},
      v_{k_{2}})}_{N}
    \aleq & \nrm{u_{k_{1}}}_{Z^{1}} \nrm{D v_{k_{2}}}_{S}, \label{eq:nf*-H*} \\
    \nrm{P_{k} \calH^{\ast}_{k_{1}} \calO (u_{k_{1}}, v'_{k_{2}})}_{N}
    \aleq & \nrm{u_{k_{1}}}_{\lap^{-\frac{1}{2}} \Box^{\frac{1}{2}}
      Z^{1}} \nrm{v'_{k_{2}}}_{S}. \label{eq:qf-H*}
  \end{align}
  Furthermore, for $k_{1} < k - 10$ and any $\kpp \in \bbN$, we have
  \begin{align}
    \nrm{P_{k} \calH^{\ast}_{k_{1}} \calN (\abs{D}^{-1} Q_{<k_{1} -
        \kpp} u_{k_{1}}, v_{k_{2}})}_{N}
    \aleq & 2^{-\dlta \kpp} \nrm{u_{k_{1}}}_{Z^{1}} \nrm{D v_{k_{2}}}_{S}, \label{eq:nf*-H*-lomod} \\
    \nrm{P_{k} \calH^{\ast}_{k_{1}} \calO (Q_{<k_{1} - \kpp}
      u_{k_{1}}, v'_{k_{2}})}_{N} \aleq & 2^{-\dlta \kpp}
    \nrm{u_{k_{1}}}_{\lap^{-\frac{1}{2}} \Box^{\frac{1}{2}} Z^{1}}
    \nrm{v'_{k_{2}}}_{S}. \label{eq:qf-H*-lomod}
  \end{align}
\end{proposition}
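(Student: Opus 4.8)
\textbf{Proof strategy for Proposition~\ref{prop:H*}.}

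The plan is to treat the four unconditional bounds \eqref{eq:nf*-1-H*}--\eqref{eq:qf-H*} first, and then read off the frequency-gap refinements \eqref{eq:nf*-H*-lomod}--\eqref{eq:qf-H*-lomod} by inserting the modulation cutoff $Q_{<k_1-\kpp}$ and tracking where it helps. Throughout we fix the high-low-high configuration $k_1 < k - 10$, so that $k = k_2 + O(1)$ and $k_{\max}=k_2$, $k_{\min}=k_1$. The dichotomy is whether the output modulation $j$ (equivalently, the modulation of the second input, since the first input is low-frequency) is $\geq k_1 + O(1)$ or $< k_1 + O(1)$: the operator $\calH^\ast_{k_1}$ is exactly the projection onto the latter region (where the first input $u_{k_1}$ carries the dominant modulation $j$), and $1-\calH^\ast_{k_1}$ onto the former. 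First I would dispose of the ``off-diagonal modulation'' piece $1-\calH^\ast_{k_1}$: in this regime the output or the second input has modulation $\gtrsim 2^{k_1}$, which is small compared to the frequency $2^{k_2}$. After using the null-form gain (Remark~\ref{rem:nf-gain}: $\calN(\abs D^{-1}u_{k_1},v_{k_2})$ becomes $2^{k_1}\theta \cdot \tilde\calO(\ldots)$, and the angle $\theta$ is controlled by $2^{(j-k_2)/2}$ on the null-frame decomposition) and absorbing the extra modulation/angle factors into the $X^{0,1/2}_\infty$ and null-frame components of $S$, this reduces to standard Strichartz and null-frame bilinear estimates already used in \cite{KST}. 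Estimate \eqref{eq:qf-1-H*} is simpler since $\calO$ is a bare disposable operator: one just uses that $Du_{k_1}$ at low frequency is cheap in $L^2\dot H^{1/2}$-type norms and the $S$-norm of $v'_{k_2}$ via Strichartz, again following \cite{KST}.

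The more delicate pieces are \eqref{eq:nf*-H*} and \eqref{eq:qf-H*}, where the first input carries the dominant modulation and must be placed in the $Z^1$-type norm. Here I would follow the $\calH^\ast$ analysis of \cite[Section~12 or the relevant null-form section]{KST} essentially verbatim: the output $P_k\calH^\ast_{k_1}\calN(\abs D^{-1}u_{k_1},v_{k_2})$ is localized, via Lemma~\ref{lem:geom-cone}, to a small angular sector and to modulation $\sim 2^j$ with $j\le k_1+C$; one decomposes both the output and the low-frequency input into caps $P^\omega_\ell$ with $2^\ell\sim 2^{(j-k_2)/2}$ and into modulation shells $Q_j$, uses the null structure to extract the gain $\theta\cdot 2^{k_1+k_2}$ (Lemma~\ref{lem:N-sep-var}, Lemma~\ref{lem:nf-bi}), and pairs the $X^{-1/4-b_0,1/4+b_0,1}_1$-type $Z^1$-norm of $u_{k_1}$ against the null-frame $PW$/$NE$ components of $S$ for $v_{k_2}$. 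The precise bookkeeping of the exponents $b_0$ and the $\ell^1$-summation in modulation is exactly what the modified $Z^1$-norm (Remark~\ref{rem:Z-old}) is designed to accommodate; the factor $\lap^{-1/2}\Box^{1/2}$ in \eqref{eq:qf-H*} simply converts the $Z^1$-scaling for a $\Box$-source into the scaling appropriate for the bare operator $\calO$ (no null form, so one power of frequency is lost and must be supplied by $\Box^{1/2}$ hitting the input, compensated by $\lap^{-1/2}$ on its low frequency).

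For the low-modulation gains \eqref{eq:nf*-H*-lomod} and \eqref{eq:qf-H*-lomod}, the point is that inside $\calH^\ast_{k_1}$ the output modulation $j$ already satisfies $j\le k_1+C$; inserting $Q_{<k_1-\kpp}u_{k_1}$ forces $j < k_1 - \kpp + O(1)$ as well (the dominant modulation is that of the first input). Then on every dyadic modulation shell $2^j$ with $j < k_1-\kpp$, the null-form angle gain is $\theta\lesssim 2^{(j-k_2)/2}\lesssim 2^{-\kpp/2}2^{(k_1-k_2)/2}$, i.e.\ one picks up an extra $2^{-\kpp/2}$ beyond the bound already proved for the full $\calH^\ast_{k_1}$; since there is room to spare in the $\delta$-exponents, this upgrades to $2^{-\dlta\kpp}$ after the usual $\ell^1$-summation over the remaining shells. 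I expect the main obstacle to be purely organizational: carefully matching the angular-sector/modulation decomposition used here to the $S^\omega_k(\ell)$ and $Z^1$ norms as redefined in Section~\ref{subsec:ftn-sp} (which differ from \cite{KST} by the extra $2^{-\ell'}$ weight and the $b_0$-shift), and verifying that all the disposability claims of Definition~\ref{def:nf} survive the symbol multiplications — but no genuinely new bilinear input beyond \cite{KST} is needed, so this is a matter of transcription rather than new ideas.
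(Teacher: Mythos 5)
Your proposal matches the paper's proof, which defers \eqref{eq:nf*-1-H*}--\eqref{eq:qf-1-H*} to the corresponding estimates in \cite{KST} and carries out the $\calH^\ast$ analysis by the angular/modulation decomposition you describe, with the lomod refinements coming from truncating the $j$-summation at $j < k_1 - \kpp$. One correction: since $k_{\min} = k_1$ in Lemma~\ref{lem:geom-cone}, the cap scale is $2^{\ell} = 2^{(j-k_1)/2}$, not $2^{(j-k_2)/2}$, and the $\kpp$-gain comes from the excess factor $2^{(\frac{1}{2}-2b_0)\ell}$ that remains after the $Z^1$ modulation weight $2^{(\frac{1}{4}+b_0)j}$ has been peeled off, rather than directly from the null-form angle factor itself, which is already consumed in matching the $Z^1$-numerology.
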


\subsubsection{Bilinear estimates concerning $X^{s, b, p}_{r}$-type
  norms}
We now state the $Z^{1}$-, $Z^{1}_{p_{0}}$- and $\tZ^{1}_{p_{0}}$-norm
bounds. We begin with the ones for the bilinear expressions arising
from $\P \calM^{2}$, $\Rem^{\kpp, 2}_{A}$ and $\calM^{2}_{0}$.

\begin{proposition} \label{prop:nf-Z} We have
  \begin{align}
    \nrm{P_{k} \calN (u_{k_{1}}, v_{k_{2}}) }_{\Box \Xdf^{1}}
    \aleq & 2^{- \dlta (k_{\max} - k_{\min})} 2^{k} \nrm{D u_{k_{1}}}_{S} \nrm{Dv_{k_{2}}}_{S}, \label{eq:nf-uX} \\
    \nrm{P_{k} \calN (u_{k_{1}}, v_{k_{2}}) }_{\Box Z^{1}} \aleq &
    2^{- \dlta \abs{k_{1} - k_{2}}} 2^{k} \nrm{D u_{k_{1}}}_{S}
    \nrm{Dv_{k_{2}}}_{S}, \label{eq:nf-Z}
  \end{align}
  Furthermore, for $k \leq k_{1} - C$, we have
  \begin{align}
    \nrm{P_{k} (1 - \calH_{k}) \calN (u_{k_{1}}, v_{k_{2}}) }_{\Box
      Z^{1}}
    \aleq & 2^{- \dlta (k_{1} - k)} 2^{k} \nrm{D u_{k_{1}}}_{S} \nrm{D v_{k_{2}}}_{S},  \label{eq:nf-1-H-Z} \\
    \nrm{P_{k} (1 - \calH_{k}) \calO (u_{k_{1}}, v'_{k_{2}})
    }_{\lap^{\frac{1}{2}} \Box^{\frac{1}{2}} Z^{1}} \aleq & 2^{-
      \dlta (k_{1} - k)} \nrm{D u_{k_{1}}}_{S}
    \nrm{v'_{k_{2}}}_{S}. \label{eq:qf-1-H-Z-ell}
  \end{align}
\end{proposition}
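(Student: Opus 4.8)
The final statement is Proposition~\ref{prop:nf-Z}, which provides $\Box Z^1$-, $\Box \Xdf^1$-, and $\lap^{1/2}\Box^{1/2} Z^1$-type bounds for dyadic pieces of bilinear null forms $\calN(u_{k_1},v_{k_2})$ and of disposable bilinear expressions $\calO(u_{k_1},v_{k_2}')$. The overarching strategy is the standard one in this circle of ideas (cf.\ \cite{KST, KT, OT2}): reduce everything to counting modulation and frequency gains against the size of the output modulation, exploit the null form gain via Remark~\ref{rem:nf-gain}, and then use Bernstein together with the energy--flux bounds encoded in $S$. The first step is to recall that the $Z^1$- and $\Xdf^1$-type norms are built on an $X^{\sigma,b,p}_r$ structure localized to modulations $\lesssim 2^{k+C}$ (the $Q_{<k+C}$ cutoff in the definitions of $\Box Z^1_k$, $\Box \Xdf^1_k$), so that the only region one must estimate is where the output modulation $j$ satisfies $j < k + C$. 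Within this region one splits into the usual interaction cases by frequency: High--High ($k \le k_1 - C$, so $k_1 = k_2 + O(1)$), High--Low and Low--High ($k = \max\{k_1,k_2\}+O(1)$), and the balanced case. In each case the modulation $j$ of the output is constrained by the geometry of the cone (Lemma~\ref{lem:geom-cone}, used via the cutoffs $Q_{<j-C}$ appearing implicitly) together with the modulations $j_1, j_2$ of the inputs.

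\textbf{Key steps.} First I would prove \eqref{eq:nf-uX} and \eqref{eq:nf-Z}. Apply Remark~\ref{rem:nf-gain} to write $\calN(u_{k_1},v_{k_2}) = C\,\theta_\pm 2^{k_1+k_2}\,\tilde\calO(\cdot,\cdot)$ after an angular decomposition into caps of size $2^\ell$, with $\theta_\pm \simeq \max\{|\angle(\omega,\pm\omega')|, 2^\ell,2^{\ell'}\}$. The angular parameter $\ell$ is tied to the output modulation via the standard cone geometry: for the output at modulation $2^j$ and frequency $2^k$ with two inputs near the cone, $2^{j} \gtrsim 2^{k_{\min}} \theta_\pm^2$ roughly, giving $\theta_\pm \lesssim 2^{(j - k_{\min})/2}$. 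This is exactly the gain needed to pay for the $2^{-3/4+b_0}$ (resp.\ $2^{1/4+b_0}$) weight in the $X^{\cdots}_1$ norm, while the remaining budget is absorbed using the Strichartz component $S^{str}_k \subseteq S_k$ after Hölder in $L^2 L^2$ (this is where the $L^2 L^2$-based structure of $X^{\sigma,b,2}_1$ is convenient). For \eqref{eq:nf-uX} one gets the off-diagonal gain $2^{-\dlta(k_{\max}-k_{\min})}$ from the High--High case directly via Bernstein in the small output frequency, and from the High--Low/Low--High cases via the off-diagonal decay already built into $S_k^{ang}$ (the PW and NE norms give transversality gains); for \eqref{eq:nf-Z} the weaker off-diagonal factor $2^{-\dlta|k_1-k_2|}$ suffices and is cheaper. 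For \eqref{eq:nf-1-H-Z} one restricts to the regime $k \le k_1 - C$ and removes the piece $\calH_k\calN$ where the \emph{output} modulation dominates (that piece is handled separately, e.g.\ via the $\calH^{\ast}$-type bounds); on the complement $1-\calH_k$ one of the inputs carries modulation $\gtrsim 2^{j}$, so one can trade a derivative and invoke the $X^{0,1/2}_\infty \subseteq S$ embedding, extracting the extra $2^{-\dlta(k_1-k)}$ decay from Bernstein on the small output frequency. Finally, \eqref{eq:qf-1-H-Z-ell}: here the symbol is disposable (no null structure), $v'_{k_2}$ stands for $\nb v_{k_2}$, and one measures $u_{k_1}$ in $S$ and $v'_{k_2}$ in $S$; on $1-\calH_k$ the output or an input modulation is $\gtrsim 2^{j}$, and after balancing the $\lap^{1/2}\Box^{1/2}$ weights one reduces to an $L^2 L^2 \times L^\infty L^\infty$ type Hölder estimate plus Bernstein, again with the $2^{-\dlta(k_1-k)}$ coming from the high-high (low output) geometry.

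\textbf{Main obstacle.} The delicate point is the bookkeeping in the balanced/resonant regime for the $\Box Z^1$ estimates — precisely when the output frequency $2^k$ is comparable to both inputs and the output modulation $2^j$ is near its maximal allowed value $2^{k+C}$, so that the null form gain $\theta_\pm$ is only $O(1)$ and provides no help. There one must instead see that at least one of the three modulations $j, j_1, j_2$ is $\gtrsim 2^{k}$ (so one is genuinely "off-diagonal" in modulation), use the corresponding $X^{0,1/2}$-type control from $S_k$ to pay for the negative power $2^{-3/4+b_0}$ in the $X^{-5/4-b_0,-3/4+b_0,1}_1$ norm, and carefully track that the $\ell^1$-summation in modulation (the subscript $1$ on $X^{\cdots}_1$) is bounded — this is where the precise constraint $b_0 < 1/4$ and the slack $1/4 - b_0 < 1/48$ in \eqref{eq:p0-b} get used. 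I expect this, together with making the $\ell^1$-in-$j$ sum converge uniformly in $k$, to be the part requiring the most care; the remaining cases are routine modifications of the corresponding arguments in \cite[Section~11]{KST} and their refinements in \cite{KT, OT2}.
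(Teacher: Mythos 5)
Your proposal stays in the right circle of ideas but misses the structure that makes the paper's proof short, and it is vague precisely where the paper has to do genuine new work.

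First, the paper does not attack \eqref{eq:nf-uX} by re-running an angular decomposition: it observes that the embedding $N \cap \Box Z^{1} \subseteq \Box \Xdf^{1}$ (from \eqref{eq:N-Z-Xdf}, a simple interpolation) together with the already-established $N$-bound \eqref{eq:nf-core} from Proposition~\ref{prop:nf-core} reduces \eqref{eq:nf-uX} to \eqref{eq:nf-Z}. You propose proving \eqref{eq:nf-uX} directly, which is not wrong in principle but is both harder and unnecessary once \eqref{eq:nf-Z} and \eqref{eq:nf-core} are available.

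Second, the paper handles \eqref{eq:nf-1-H-Z}, \eqref{eq:qf-1-H-Z-ell}, and the balanced case $k \geq k_{1} - C$ of \eqref{eq:nf-Z} entirely by citing estimates (134), (135), (141) of \cite{KST}, with the observations that the $Z$-norm here is \emph{weaker} than that of \cite{KST} and that only $S$-norms (not $S^{1}$) of the inputs are used. You do not identify this, and instead flag the balanced regime as the ``main obstacle'' — but this regime is precisely where nothing new is needed.

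Third, and most importantly, the only piece the paper proves from scratch is the $\calH_{k}$ portion of \eqref{eq:nf-Z} in the High--High case $k < k_{1} - C$. You do acknowledge that the $\calH_{k}$ piece ``is handled separately,'' but you point to ``$\calH^{\ast}$-type bounds,'' which address the case where an \emph{input} has the dominant modulation; $\calH_{k}$ instead isolates the case where the \emph{output} modulation dominates, and \eqref{eq:nf*-H*} is not applicable. The concrete argument needed is a decomposition into rectangular boxes $\calC, \calC^{1}, \calC^{2} \in \{\calC_{k}(\ell)\}$ with $\ell = \min\{(j-k)/2, 0\}$, the frequency-geometry orthogonality from Lemma~\ref{lem:geom-cone} (for fixed $\calC^{1}$ only $O(1)$-many $(\calC, \calC^{2})$ contribute, and symmetrically), the null form gain $2^{k_{1}+k_{2}} 2^{\ell}$ from Remark~\ref{rem:nf-gain}, and then H\"older plus Cauchy--Schwarz in $\calC^{1}, \calC^{2}$ placing the output in $L^{1} L^{\infty}$ and each boxed input in $L^{2} L^{\infty}$ (square-summed via $S^{ang}_{k}$). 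Your proposal never reaches this step, so as written it has a genuine gap exactly at the proposition's novel content.
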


The following bounds are for the null form arising from
$\Diff^{\kpp}_{\P_{x} A} B$; we remark that this is the only place
where we need to use the intermediate $\tZ^{1}_{p_{0}}$-norm.
\begin{proposition} \label{prop:nf*-X} We have
  \begin{align}
    \nrm{P_{k} \calN (\abs{D}^{-1} u_{k_{1}}, v_{k_{2}})}_{\Box
      \mXdf^{1}}
    \aleq & 2^{- \dlta (k_{\max} - k_{\min})} \nrm{u_{k_{1}}}_{S^{1}} \nrm{D v_{k_{2}}}_{S}, \label{eq:nf*-mX} \\
    \nrm{P_{k} \calN (\abs{D}^{-1} u_{k_{1}}, v_{k_{2}})}_{\Box
      \Xdf^{1}}
    \aleq & 2^{- \dlta (k_{\max} - k_{\min})} \nrm{u_{k_{1}}}_{S^{1} \cap \mXdf^{1}} \nrm{D v_{k_{2}}}_{S}, \label{eq:nf*-uX} \\
    \nrm{P_{k} \calN (\abs{D}^{-1} u_{k_{1}}, v_{k_{2}})}_{\Box Z^{1}}
    \aleq & 2^{- \dlta (k_{\max} - k_{\min})} \nrm{u_{k_{1}}}_{S^{1} \cap \Xdf^{1}} \nrm{D v_{k_{2}}}_{S}, \label{eq:nf*-Z} \\
    \nrm{P_{k} \calN (\abs{D}^{-1} u_{k_{1}},
      v_{k_{2}})}_{X^{-\frac{1}{2} + b_{1}, - b_{1}}} \aleq & 2^{-
      \dlta (k_{\max} - k_{\min})} \nrm{u_{k_{1}}}_{S^{1} \cap
      \Xdf^{1}} \nrm{D v_{k_{2}}}_{S}. \label{eq:nf*-Xsb}
  \end{align}
\end{proposition}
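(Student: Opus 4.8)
\textbf{Proof proposal for Proposition~\ref{prop:nf*-X}.}

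The plan is to reduce each of the four estimates to the core dyadic bounds already established for the $N$-, $Z^{1}$- and $X^{-\frac12+b_1,-b_1}$-norms (Propositions~\ref{prop:nf-core}, \ref{prop:H*} and \ref{prop:nf-Z}), exploiting the null form $\calN$ via the separation-of-variables structure of Definition~\ref{def:nf} and the angular gain recorded in Remark~\ref{rem:nf-gain}. Since the left-hand inputs carry an extra $\abs{D}^{-1}$, the natural normalization is to measure $u_{k_1}$ in $S^{1}$ (equivalently $\abs{D}^{-1} u_{k_1}$ in $2^{-k_1}$ times the core norm scaled appropriately), so that after writing $\calN(\abs{D}^{-1}u_{k_1}, v_{k_2}) = C\,\tht_{\pm} 2^{k_1+k_2}\tilde{\calO}(\abs{D}^{-1}P^{\omg}_{\ell} P_{k_1} u, P^{\omg'}_{\ell'} P_{k_2} v)$ the powers of $2^{k_1}$ balance out. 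The first step is therefore a case division by the standard trichotomy $k \sim k_1 \leq k_2$ (High-High $\to$ output low), $k \sim k_2 \leq k_1$ or $k_1 \sim k_2 \leq k$ (balanced or output high), with the angular decomposition $P^{\omg}_{\ell}$, $P^{\omg'}_{\ell'}$, $Q_j$ inserted as dictated by the geometry of the cone (Lemma~\ref{lem:geom-cone}); in each case one tracks the off-diagonal exponential gain $2^{-\dlta(k_{\max}-k_{\min})}$ from the null structure together with summability over caps and modulations.

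For \eqref{eq:nf*-mX} the target space $\Box\mXdf^{1}$ scales like $L^{1}L^{2}$ and is the weakest of the four, so it should follow from the embedding $X^{0,-\frac12}_{\infty} \cap \Box Z^{1}_k \subseteq (\Box\mXdf^{1})_k$ of \eqref{eq:N-Z-Xdf} combined with the $N$-norm bound for $\calN$ (which controls the $X^{0,-\frac12}_\infty$ piece after the low-modulation part is separated) and a direct $\Box Z^{1}$-estimate. The $\Box Z^{1}$ contribution is where the auxiliary device genuinely enters: one splits $\calN(\abs{D}^{-1}u_{k_1},v_{k_2}) = \calH^{\ast}_{k_1}\calN(\cdots) + (1-\calH^{\ast}_{k_1})\calN(\cdots)$, handling the non-resonant part $(1-\calH^{\ast})$ by \eqref{eq:nf*-1-H*} (output or second-input modulation dominant, no $Z^{1}$ needed) and the resonant part $\calH^{\ast}$ by \eqref{eq:nf*-H*}, which already outputs a $Z^{1}$-norm of $u_{k_1}$; translating $\nrm{u_{k_1}}_{Z^1}$ back into $\nrm{u_{k_1}}_{S^1}$ via the embedding $P_k S^1_k \subseteq Z^1_k$ (a consequence of the definitions and $S_k \subseteq X^{0,1/2}_\infty$) gives \eqref{eq:nf*-mX}. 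Then \eqref{eq:nf*-uX}, \eqref{eq:nf*-Z} and \eqref{eq:nf*-Xsb} are obtained by a bootstrapping cascade along the chain of embeddings $X^{0,-\frac12}_\infty \cap \Box Z^1 \subseteq \Box\Xdf^1 \subseteq \Box\mXdf^1$: having placed $\abs{D}^{-1}u_{k_1}$ in $\mXdf^1$, one re-runs the same dyadic analysis but now allowed to spend the stronger $\mXdf^1$-control of $u_{k_1}$ in the resonant ($\calH^\ast$) regime, which by \eqref{eq:nf*-uX}'s analogue of \eqref{eq:qf-H*} upgrades the output to $\Xdf^1$; one more iteration with $u_{k_1} \in S^1\cap\Xdf^1$ yields both the $\Box Z^1$ bound \eqref{eq:nf*-Z} and, via the interpolation-type estimate built into \eqref{eq:nf*-Xsb} using that $b_1$ lies in the admissible window \eqref{eq:b2}, the high-modulation bound $X^{-\frac12+b_1,-b_1}$.

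The main obstacle I anticipate is the resonant High-High interaction feeding a low output frequency in the $\Box Z^1$ and $X^{-\frac12+b_1,-b_1}$ estimates: here neither input has a modulation gain available a priori, the null form gain $\tht_\pm^2 \lesssim 2^{k-k_1}$ (in the worst parallel geometry) must be combined carefully with the $Z^1$-structure of the first input, and one must be sure the $\ell^1$-summation over the frequency gap $k_1-k$ closes — this is exactly the mechanism for which the $Z^1$-norm was weakened to $b_0 > 0$ (Remark~\ref{rem:Z-old}), so the bookkeeping of the $2^{(\frac14-b_0)}$-type factors against the off-diagonal gain $2^{-\dlta(k_1-k)}$ requires $\dlta \gg \frac14 - b_0$, consistent with the constant hierarchy \eqref{eq:deltas}. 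A secondary technical nuisance is that all of this must be done with $\ell^1$-summability in the output dyadic frequency (so as to land in $\Box\Xdf^1$ rather than merely $\Box\mXdf^1$), which forces the angular-cap summation to be performed with genuine $\ell^1$ rather than $\ell^2$ gains in the worst cases; this is precisely where the extra $(1+\abs{i_1})^{-100}(1+\abs{i_2})^{-100}$ decay in the separation-of-variables representation \eqref{eq:nf-disp-1} of the null form is used.
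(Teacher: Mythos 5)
Your overall scaffolding — Littlewood–Paley trichotomy, the $\calH^{\ast}_{k_1}/(1-\calH^{\ast}_{k_1})$ split, the cascade through the chain $\mXdf^{1}\to\Xdf^{1}\to Z^{1}$ — matches the paper, but there are two concrete errors in the mechanism.

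First, you have misidentified the hard regime. By trichotomy, the configuration $k_1\sim k_2\gg k$ (High--High into a low output frequency) falls into the case $k_1\geq k-10$, which in the paper is \emph{Step~1.1}, the easy case; there, all three estimates \eqref{eq:nf*-mX}--\eqref{eq:nf*-Z} are dispatched in one stroke from \eqref{eq:nf-Z}, \eqref{eq:nf-core} and the embeddings $N\cap\Box Z^{1}\subseteq\Box\Xdf^{1}\subseteq\Box\mXdf^{1}$, because the prefactor $\abs{D}^{-1}$ on $u_{k_1}$ gives a favorable $2^{k-k_1}$ gain when $k_1=k_{\max}+O(1)$. The genuinely delicate regime is the opposite one: $k_1<k-10$, hence $k_1=k_{\min}$ and $k\sim k_2=k_{\max}$, a Low--High interaction, and specifically the $\calH^{\ast}_{k_1}$ piece where the low-frequency input $u_{k_1}$ carries the dominant modulation $2^{j}$ with $j<k_1+C$. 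There one loses an $\ell^{1}$-sum over $j<k_1$ and must recover it by a careful interplay of the null-form angular gain $\tht\sim 2^{(j-k_1)/2}$, the exponents $(b_0,p_0)$ in \eqref{eq:p0-b}, and orthogonality across angular caps — this is where the weakening of $Z^{1}$ to $b_0>0$ matters, not in the High--High case as you suggest.

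Second, the claimed embedding ``$P_k S^1_k\subseteq Z^1_k$'' is false, and it cannot be used to close \eqref{eq:nf*-mX}. The space $Z^1_k$ is an $X^{\sigma,b,1}_1$-type norm with an $\ell^{1}$-sum over modulations in an $L^{1}L^{\infty}$ Lebesgue framework; $S_k\subseteq X^{0,1/2}_{\infty}$ controls only $\sup_j 2^{j/2}\nrm{Q_j u_k}_{L^2L^2}$, which neither supplies $\ell^{1}$-summability in $j$ nor the $L^{1}L^{\infty}$ integrability. In fact the proper way to see the logical structure of the proposition is that \eqref{eq:nf*-mX} needs only $u\in S^{1}$ because $\Box\mXdf^{1}$ is the \emph{weakest} target; \eqref{eq:nf*-uX} needs the stronger input norm $S^{1}\cap\mXdf^{1}$ because $\Box\Xdf^{1}$ is harder; and \eqref{eq:nf*-Z} needs $S^{1}\cap\Xdf^{1}$. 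The bootstrap is not a re-run of the same argument with stronger input control — in the paper's Steps~1.3.a--c the actual Lebesgue exponent pairs $(p,q_1,q_2)$ in \eqref{eq:LpL2-ortho}/\eqref{eq:LpLp'-LpL2} change between steps, and which component of the $X^{s,b,p}_r$-norm of $Q_j u_{k_1}$ is invoked differs (a $X^{1,1/2}_\infty$ piece for $\mXdf^1$, the $p_0$-based $\mXdf^1$-norm for $\Xdf^1$, and the $\ell^\infty_j$ $\Xdf^1$-norm for $Z^1$). You would need to supply these distinct choices to make the argument actually run.
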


Finally, the following bounds are used to handle $\Diff^{\kpp}_{A_{0}}
B$ and $\Diff^{\kpp}_{\P^{\perp} A} B$.
\begin{proposition} \label{prop:qf-X} We have
  \begin{align}
    \nrm{P_{k} \calO (u_{k_{1}}, v'_{k_{2}})}_{\Box \Xdf^{1}}
    \aleq & 2^{- \dlta (k_{\max} - k_{\min})} \nrm{D u_{k_{1}}}_{Y} \nrm{v'_{k_{2}}}_{S} , \label{eq:qf-uX} \\
    \nrm{P_{k} \calO (u_{k_{1}}, v'_{k_{2}})}_{\Box Z^{1}}
    \aleq & 2^{- \dlta (k_{\max} - k_{\min})} \nrm{D u_{k_{1}}}_{Y} \nrm{v'_{k_{2}}}_{S} , \label{eq:qf-Z} \\
    \nrm{P_{k} \calO (u_{k_{1}}, v'_{k_{2}})}_{X^{-\frac{1}{2} +
        b_{1}, - b_{1}}} \aleq & 2^{- \dlta (k_{\max} - k_{\min})}
    \nrm{D u_{k_{1}}}_{Y} \nrm{v'_{k_{2}}}_{S} . \label{eq:qf-Xsb}
  \end{align}
\end{proposition}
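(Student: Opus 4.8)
The three bounds in Proposition~\ref{prop:qf-X} govern the interaction of an \emph{elliptic} factor $u_{k_{1}}$, controlled in $Y$, against a \emph{wave} factor $v'_{k_{2}}$, controlled in $S$, with the output measured in the high-modulation norms $\Box Z^{1}_{k}$, $(\Box \Xdf^{1})_{k}$ and $X^{-\frac12+b_{1},-b_{1}}$, all of which scale like $L^{1} L^{2}$. The overall plan follows the template of \cite{KST, KT}. Since $\calO$ is disposable, a standard reduction (as used throughout Section~\ref{sec:multi}, cf. Section~\ref{subsec:disp}) allows us to replace $\calO(u_{k_{1}}, v'_{k_{2}})$ by the plain product $u_{k_{1}} v'_{k_{2}}$ with $k, k_{1}, k_{2}$ fixed. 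We then split into the standard interaction regimes: the diagonal case $k \aeq k_{1} \aeq k_{2}$, in which $2^{-\dlta(k_{\max}-k_{\min})} = O(1)$ and no gain is needed; the unbalanced cases $k \aeq k_{1} \gg k_{2}$ and $k \aeq k_{2} \gg k_{1}$; and the high-high-to-low case $k_{1} \aeq k_{2} \gg k$. In each of the three off-diagonal regimes the factor $2^{-\dlta(k_{\max}-k_{\min})}$ — indeed a much larger power — will be produced by a Bernstein inequality on the factor sitting at the lowest frequency (the output, $v'_{k_{2}}$, or $u_{k_{1}}$, respectively).

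The next step reduces all three target norms to a single $L^{1} L^{2}$ estimate for the output. Both $\Box Z^{1}_{k}$ and $(\Box \Xdf^{1})_{k}$ contain the cutoff $Q_{<k+C}$, so only the near-cone part of $u_{k_{1}} v'_{k_{2}}$ is seen, and by the embeddings \eqref{eq:L1L2-Z-j} and \eqref{eq:N-Z-Xdf} together with $L^{1} L^{2} \subseteq N \subseteq X^{0,-\frac12}_{\infty}$ it is enough to prove
\[
	\nrm{P_{k}(u_{k_{1}} v'_{k_{2}})}_{L^{1} L^{2}} \aleq 2^{-\dlta(k_{\max}-k_{\min})}\, 2^{\sgm_{k}}\, \nrm{D u_{k_{1}}}_{Y} \nrm{v'_{k_{2}}}_{S},
\]
where $2^{\sgm_{k}}$ is the power of $2^{k}$ dictated by scaling. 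For the norm $X^{-\frac12+b_{1},-b_{1}}$, which lacks the $Q_{<k+C}$ projection, one additionally needs the full output; this follows from the same estimate viewed in $L^{2} L^{2}$ together with the elementary fact that a product containing a genuinely wave-type factor obeys $\nrm{Q_{j}(\cdot)}_{L^{2} L^{2}}$ decay of rate $\tfrac12$ below the smallest input frequency. Since $b_{1} = \tfrac12 - 10 \dlts < \tfrac12$, this makes the $\ell^{2}_{j}$ sum over modulations converge, and the high-modulation tail is harmless because $-b_{1} < 0$. (The choices $b_{0} = \tfrac14 - \dlts$ and $p_{0}$ close to $\infty$ play exactly the analogous role for the other two norms.)

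The last step is the $L^{1} L^{2}$ bound itself. The key resource is that $D u_{k_{1}}$ is controlled in $Y_{k} = L^{2} \dot{H}^{\frac12} + L^{p_{0}} \dot{W}^{2-\frac{3}{p_{0}}, p_{0}'}$, so (up to the disposable factor $2^{-k_{1}} \abs{D}^{-1}$) we may write $u_{k_{1}}$ as a sum of a piece in $L^{2} \dot{W}^{\frac32,2}$ and a piece in $L^{p_{0}} \dot{W}^{\,\cdots, p_{0}'}$. Each piece lives in an $L^{2}_{t}$ or an $L^{p_{0}}_{t}$ norm, so pairing it by H\"older in time against the $L^{\infty}_{t} L^{2}_{x}$, Strichartz, and square-function ($S^{sq}_{k_{2}}$) content of $\nrm{v'_{k_{2}}}_{S}$ lands us in $L^{1}_{t}$, after which Bernstein in space balances the frequencies and extracts the off-diagonal gain. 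In the diagonal and unbalanced regimes there is a spatial frequency gap and the gain is immediate with room to spare.

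I expect the high-high-to-low regime to be the main obstacle: there the two inputs cannot be separated in frequency, so $2^{-\dlta(k_{\max}-k_{\min})}$ must be squeezed out of Bernstein at the single low output frequency $2^{k}$, which forces one to use the two extra derivatives of smoothness carried by the $L^{p_{0}} \dot{W}^{2-\frac{3}{p_{0}}, p_{0}'}$ component of $Y$ against the matching Strichartz bound for $v'_{k_{2}}$ from $S^{str}_{k_{2}}$; the exponents $b_{0}, b_{1}, p_{0}$ are calibrated precisely so that these gains are just barely sufficient. A secondary nuisance is that $Y$ carries no modulation localization or null structure, so the $X^{-\frac12+b_{1},-b_{1}}$ estimate must be closed using only the modulation information supplied by the wave factor $v'$.
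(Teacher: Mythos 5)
Your identification of the hard case is backwards. In the paper's proof, the high–high–to–low regime $k_{1}\aeq k_{2}\gg k$ falls into the easy case $k_{1}\geq k-10$: there one simply invokes \eqref{eq:qf-Y-L2L2}, \eqref{eq:qf-rem-ell} and the embeddings $L^{1}L^{2}\subseteq \Box Z^{1}\cap\Box \Xdf^{1}$ and $L^{1}L^{2}\cap L^{2}\dot H^{-\frac12}\subseteq X^{-\frac12+b_{1},-b_{1}}$, with the gain coming from Bernstein at the low output frequency and the $L^{2}\dot H^{\frac12}$ component of $Y$ (not the $L^{p_{0}}$ component). The genuinely delicate case is the paradifferential one, $k_{1}<k-10$ so $k\aeq k_{2}\gg k_{1}$: there the output frequency is tied to the high input $v'_{k_{2}}$, so Bernstein on the output produces no gain, and it is here — not in high–high–to–low — that the calibrated $L^{p_{0}}$ structure of $Y$ and of $\Xdf^{1}$ is used.

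Your plan to reduce all three bounds to a single $L^{1}L^{2}$ estimate for $P_{k}(u_{k_{1}}v'_{k_{2}})$ cannot work in that paradifferential case, because the hoped-for off-diagonal gain $2^{-\dlta(k-k_{1})}$ is simply not present in the $L^{1}L^{2}$ norm. To land in $L^{1}_{t}$ with $k_{1}\ll k$, the only global pairing available from $Y\times S$ is H\"older $L^{p_{0}}_{t}L^{p'_{0}}_{x}\times L^{p'_{0}}_{t}L^{q_{0}}_{x}\to L^{1}L^{2}$ (any attempt to Bernstein $u_{k_{1}}$ to $L^{\infty}_{x}$ would force $v'\in L^{p'_{0}}L^{2}$, which is not a Strichartz norm and hence not controlled by $S$ globally). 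That pairing yields a factor $2^{3(1-\frac{1}{p_{0}})(k-k_{1})}$, which is a \emph{loss}. The paper closes the estimates differently: for $\Box\Xdf^{1}$ and $X^{-\frac12+b_{1},-b_{1}}$ it measures the output in $L^{p_{0}}L^{2}$, pairing $u\in L^{p_{0}}L^{\infty}$ (Bernstein from the $L^{p_{0}}\dot W^{2-\frac{3}{p_{0}},p'_{0}}$ component of $Y$) against $v'\in L^{\infty}L^{2}$, obtaining the genuine gain $2^{-(1-\frac{1}{p_{0}})(k-k_{1})}$ and then invoking \eqref{eq:LpL2-Xsbp-<j} with $p=p_{0}$ (plus \eqref{eq:qf-L2L2} for high modulations in $X^{-\frac12+b_{1},-b_{1}}$); for $\Box Z^{1}$ it splits modulations at $2^{k_{1}}$, pays the $L^{1}L^{2}$ loss on $Q_{<k_{1}}$ against the structural gain $2^{-b_{0}(k-k_{1})}$ of $\Box Z^{1}$ at low modulations, and invokes \eqref{eq:nf*-Z-hi-mod-output} for $Q_{\geq k_{1}}$. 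So the argument must keep the three target norms apart, at least in the paradifferential case; the $S^{sq}$ component of $S$ plays no role in this proposition.
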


\subsubsection{Trilinear null form estimate}
Let $u^{(1)}, u^{(2)}, u^{(3)}$ be test function on
$\bbR^{1+4}$. Given $k_{i} \in \bbZ$, we introduce the shorthand
$u^{(i)}_{k_{i}} = P_{k_{i}} u^{(i)}$ $(i=1,2,3)$.
\begin{proposition} \label{prop:tri-nf} Let $\calO$ and $\calO'$ be
  disposable bilinear operators on $\bbR^{1+4}$. Let $j < k - C$ and
  $k < \min \set{k_{0}, k_{1}, \ldots, k_{3}} - C$. Consider the
  expression
  \begin{align*}
    \calN^{cubic}_{k, j}(u_{k_{1}}^{(1)}, u_{k_{2}}^{(2)},
    u_{k_{3}}^{(3)})
    = & Q_{<j-C} \calO'(\lap^{-1} P_{k} Q_{j} \calO(Q_{<j-C} u^{(1)}_{k_{1}}, \rd_{0} Q_{<j-C} u^{(2)}_{k_{2}}), \rd^{0} Q_{<j-C} u^{(3)}_{k_{3}}) \\
    & + Q_{<j-C} \calO'(\Box^{-1} P_{k} Q_{j} \P_{\ell} \calO(Q_{<j-C}
    u^{(1)}_{k_{1}}, \rd_{x} Q_{<j-C} u^{(2)}_{k_{2}}), \rd^{\ell}
    Q_{<j-C} u^{(3)}_{k_{3}}).
  \end{align*}
  Then we have
  \begin{equation} \label{eq:tri-nf} \nrm{\calN^{cubic}_{k,
        j}(u_{k_{1}}^{(1)}, u_{k_{2}}^{(2)}, u_{k_{3}}^{(3)})}_{L^{1}
      L^{2}} \aleq 2^{-\dlta (k_{1} - k)} 2^{-\dlta(k-j)} \nrm{D
      u^{(1)}_{k_{1}}}_{S} \nrm{D u^{(2)}_{k_{2}}}_{S} \nrm{D
      u^{(3)}_{k_{3}}}_{S}.
  \end{equation}
\end{proposition}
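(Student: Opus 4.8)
\textbf{Proof plan for Proposition~\ref{prop:tri-nf}.}

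The plan is to exploit the secondary null structure via the algebraic identity of Lemma~\ref{lem:nf-tri}, then reduce matters to the core trilinear null form estimate that underlies \cite[Eqs.~(136)--(138)]{KST}. First I would apply Lemma~\ref{lem:nf-tri} with the roles of $\calO$ and $\calO'$ as given, which rewrites the sum of the two terms defining $\calN^{cubic}_{k,j}$ as
\begin{equation*}
  \calO'(\Box^{-1} \calO(u^{(1)}, \rd_{\alp} u^{(2)}), \rd^{\alp} u^{(3)})
  - \calO'(\Box^{-1}\lap^{-1} \rd_{t}\rd_{\alp}\calO(u^{(1)}, \rd^{\alp}u^{(2)}), \rd_{t}u^{(3)})
  - \calO'(\Box^{-1}\lap^{-1}\rd_{\ell}\rd_{\alp}\calO(u^{(1)}, \rd^{\ell}u^{(2)}), \rd^{\alp}u^{(3)}),
\end{equation*}
modulo the frequency and modulation localizations (which are harmless since $Q_{<j-C}$ commutes with the multipliers up to disposable errors, given $j < k - C$ and $k$ much smaller than the $k_i$). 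The kernels of $\Box^{-1}\calO$, etc., are given finite mass by the restriction to the region $Q_j$ at the inner output frequency $2^k$, which is exactly where the hypothesis $j < k - C$ is used. In each of the three resulting terms, the inner expression $\calO(u^{(1)}, \rd_{\alp}u^{(2)})$ or $\calO(u^{(1)},\rd^{\ell}u^{(2)})$ pairs a $\rd_{\alp}$ (summed against the outer $\rd^{\alp}$) so that, after recognizing $\rd^{\alp}(\cdot)\rd_{\alp}(\cdot)$ as the null form $\calN_{0,\pm}$ via Lemma~\ref{lem:nf-bi} (identity \eqref{eq:nf-N0}), one obtains a genuine trilinear null form with two angular gains — one from the inner interaction and one from the outer interaction.

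Next I would invoke the core trilinear null form estimate (Proposition~\ref{prop:tri-nf-core}, i.e. the Yang--Mills analogue of \cite[Eqs.~(136)--(138)]{KST} built using the $PW$ and $NE$ components of $S^{\omg}_{k}(\ell)$) to bound this in $L^{1}L^{2}$ by a product of three $S$-norms. The angular decomposition is organized around the transversality of the two high-frequency inputs $u^{(2)}_{k_2}, u^{(3)}_{k_3}$ against the output; since the output of $\calN^{cubic}_{k,j}$ sits at frequency $\min\{k_1,\dots,k_3\}$ (much below all $k_i$), there is a high-high-to-low cascade at which the two $\rd_\alp$-contracted inputs are nearly antipodal, and the null structure converts this near-degeneracy into the factor $2^{-\dlta(k-j)}$ (from squaring the angle $\theta \aeq 2^{(j-k)/2}$ against the low-modulation restriction $Q_{<j-C}$) together with $2^{-\dlta(k_1-k)}$ (the off-diagonal gain from the separation between the low input frequency $k_1$ and the output frequency $k$). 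One runs the standard Cauchy--Schwarz in the angular caps, using disposability of the separated pieces $a_{i_1}, b_{i_2}$ from Definition~\ref{def:nf} together with Remark~\ref{rem:nf-compose} to absorb the extra multipliers $\Box^{-1}$, $\lap^{-1}\rd_t\rd_\alp$, etc., which are all disposable after multiplication by the appropriate powers of $2^{k}$ given the frequency localizations.

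The main obstacle I anticipate is the careful bookkeeping of the modulation localizations: all inputs and the inner output carry $Q_{<j-C}$, but the outer pairing $\calO'(\cdot, \rd^{\alp}u^{(3)})$ with $u^{(3)}$ also at low modulation forces the output of $\calN^{cubic}_{k,j}$ itself to low modulation, and one must verify that the $\Box^{-1}$ acting at modulation $\aeq 2^j$ and frequency $\aeq 2^k$ really does produce a clean gain of $2^{-k-j}$ without spilling into other modulation regimes — this is precisely the geometry-of-the-cone input (Lemma~\ref{lem:geom-cone}) that fixes the value of $C$. A secondary technical point is that the three terms produced by Lemma~\ref{lem:nf-tri} are not individually symmetric, so one has to check that in each the correct pair of derivatives contracts to give the null form; the term $\calO'(\Box^{-1}\lap^{-1}\rd_t\rd_\alp\calO(u^{(1)},\rd^\alp u^{(2)}),\rd_t u^{(3)})$ is the subtlest since the null cancellation is entirely internal to $\calO$ and the outer factor $\rd_t u^{(3)}$ contributes only the $Z^1$-type loss that is compensated by the $\lap^{-1}$. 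Once these localization and contraction checks are in place, summing over the dyadic pieces $k_0,k_1,k_2,k_3$ (which converges because of the two $2^{-\dlta(\cdot)}$ gains) completes the proof.
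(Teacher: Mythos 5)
Your plan matches the paper's own proof almost step for step: reduce via Lemma~\ref{lem:nf-tri} to the three terms with $\calO(\cdot, \rd_\alp \cdot)\rd^\alp$, decompose into rectangular boxes $\calC_k(\ell)$ with $\ell = (j-k)/2$, and invoke Proposition~\ref{prop:tri-nf-core} (the Yang--Mills analogue of \cite[(136)--(138)]{KST}) together with Lemma~\ref{lem:geom-cone} and the square-summability estimate \eqref{eq:S-box-sum}. One small imprecision is your statement that the output of $\calN^{cubic}_{k,j}$ sits at frequency $\min\{k_1,\dots,k_3\}$; by Littlewood--Paley trichotomy, since the inner bilinear piece is localized at $\abs{\xi}\aeq 2^{k}\ll 2^{k_3}$, the output frequency is $k_3 + O(1)$ rather than the minimum of the $k_i$. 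This does not affect the structure of the argument, which otherwise correctly identifies both off-diagonal gains $2^{-\dlta(k_1-k)}$ and $2^{-\dlta(k-j)}$ and the role of the core null-frame-space estimates.
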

In fact, for later use (in Section~\ref{sec:paradiff-err}), it is
convenient to also state a more atomic form of
\eqref{eq:tri-nf}. Given $k_{i} \in \bbZ$ and a rectangular box
$\calC^{(i)}$, we use the shorthand $u^{(i)}_{k_{i}, \calC^{(i)}} =
P_{k_{i}} P_{\calC^{(i)}} u^{(i)}$ $(i=1,2)$.
\begin{proposition} \label{prop:tri-nf-core} Let $\calO$ and $\calO'$
  be translation-invariant bilinear operators on $\bbR^{1+4}$ such
  that $\calO(P^{\omg}_{\ell} \cdot, P^{\omg'}_{\ell'}\cdot)$ and
  $\calO'(P^{\omg}_{\ell} \cdot, P^{\omg'}_{\ell'}\cdot)$ are
  disposable for every $\ell, \ell' \in - \bbN$ and $\omg, \omg' \in
  \bbS^{3}$. Let $j < k - C$, $k < \min\set{k_{0}, k_{1}, \ldots,
    k_{3}} - C$ and $\calC^{(1)}, \calC^{(2)} \in
  \set{\calC_{k}(\ell)}$, where $\ell = \frac{j - k}{2}$. We have
  \begin{align}
    &	\nrm{P_{k_{0}} Q_{<j-C} \calO'(\Box^{-1} P_{k} Q_{j} \calO(Q_{<j-C} u^{(1)}_{k_{1}, \calC^{(1)}}, \rd_{\alp} Q_{<j-C} u^{(2)}_{k_{2}, \calC^{(2)}} ), \rd^{\alp} Q_{<j-C} u^{(3)}_{k_{3}})}_{L^{1} L^{2}} \notag \\
    & \aleq 2^{- \dlta(k_{1} - k)} 2^{- \dlta(k - j)} \nrm{D u^{(1)}_{k_{1}, \calC^{(1)}}}_{S_{k_{1}}[\calC_{k}(\ell)]} \nrm{D u^{(2)}_{k_{2}, \calC^{(2)}}}_{S_{k_{2}}[\calC_{k}(\ell)]} \nrm{D u^{(3)}_{k_{3}}}_{S} , \label{eq:tri-nf-core-1} \\
    &	\nrm{P_{k_{0}} Q_{<j-C} \calO'(\Box^{-1} \lap^{-1} P_{k} Q_{j} \rd_{t} \rd_{\alp} \calO(Q_{<j-C} u^{(1)}_{k_{1}, \calC^{(1)}}, \rd^{\alp} Q_{<j-C} u^{(2)}_{k_{2}, \calC^{(2)}} ), \rd_{t} Q_{<j-C} u^{(3)}_{k_{3}})}_{L^{1} L^{2}} \notag \\
    & \aleq 2^{- \dlta(k_{1} - k)} 2^{- \dlta(k - j)} \nrm{D u^{(1)}_{k_{1}, \calC^{(1)}}}_{S_{k_{1}}[\calC_{k}(\ell)]} \nrm{D u^{(2)}_{k_{2}, \calC^{(2)}}}_{S_{k_{2}}[\calC_{k}(\ell)]} \nrm{D u^{(3)}_{k_{3}}}_{S} , \label{eq:tri-nf-core-2} \\
    &	\nrm{P_{k_{0}} Q_{<j-C} \calO'(\Box^{-1} \lap^{-1} P_{k} Q_{j} \rd_{\ell} \rd_{\alp} \calO(Q_{<j-C} u^{(1)}_{k_{1}, \calC^{(1)}}, \rd^{\ell} Q_{<j-C} u^{(2)}_{k_{2}, \calC^{(2)}} ), \rd^{\alp} Q_{<j-C} u^{(3)}_{k_{3}})}_{L^{1} L^{2}} \notag \\
    & \aleq 2^{- \dlta(k_{1} - k)} 2^{- \dlta(k - j)} \nrm{D
      u^{(1)}_{k_{1}, \calC^{(1)}}}_{S_{k_{1}}[\calC_{k}(\ell)]}
    \nrm{D u^{(2)}_{k_{2}, \calC^{(2)}}}_{S_{k_{2}}[\calC_{k}(\ell)]}
    \nrm{D u^{(3)}_{k_{3}}}_{S}. \label{eq:tri-nf-core-3}
  \end{align}
\end{proposition}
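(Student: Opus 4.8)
The goal is to prove the atomic trilinear null form estimate of Proposition~\ref{prop:tri-nf-core}, which is the box-localized refinement of Proposition~\ref{prop:tri-nf}. The overall strategy is the one of \cite[Appendix]{KST} and \cite{ST1}, adapted here to the caloric-gauge function spaces. First I would reduce all three estimates \eqref{eq:tri-nf-core-1}--\eqref{eq:tri-nf-core-3} to a single model expression. Using Lemma~\ref{lem:nf-tri} (the algebraic identity that reorganizes the sum of the $A_0$-type and the $\P_x A$-type contributions into a genuine $\calN_0$ null form plus two further ``$\lap^{-1} \rd \rd$'' terms), one sees that all three left-hand sides are of the same shape, up to disposable operators: an outer bilinear operator $\calO'$ applied to a wave-parametrix-regularized inner null form $\calO(u^{(1)}_{k_1, \calC^{(1)}}, \rd u^{(2)}_{k_2, \calC^{(2)}})$ and a derivative of $u^{(3)}_{k_3}$. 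I would then invoke Lemma~\ref{lem:nf-bi} and Remark~\ref{rem:nf-N0-alt} (or \eqref{eq:nf-N0}) to expose the $\calN_{0,\pm}$ structure on the outer pairing, and Lemma~\ref{lem:N-sep-var} to write the null form symbol as $\tht_{\pm}^2 2^{k_1+k_2} \sum a_{i_1} b_{i_2}$ with rapidly decaying disposable pieces, so that the angular gain $\tht_\pm^2 \aeq 2^{2\ell} = 2^{j-k}$ can be factored out explicitly.

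\textbf{Key steps.} (1) \emph{Reduction and null-form extraction.} Apply Lemma~\ref{lem:nf-tri} to rewrite the defining sum, then apply Lemma~\ref{lem:nf-bi} and Lemma~\ref{lem:N-sep-var} to both null forms (inner and outer), pulling out the geometric factors. Here one records that the output frequency $2^k$ of the parametrix-regularized middle factor is the smallest, $k < \min\{k_0, k_1, k_3\} - C$, and that modulations are restricted to $Q_{<j-C}$ on all three inputs while the middle propagator carries $Q_j$ at frequency $2^k$. (2) \emph{Disposability of the outer structure.} After extracting $2^{j-k}$ from the $\calN_0$ pairing of the middle factor against $\rd u^{(3)}_{k_3}$, the outer operator $\calO'$ composed with $\Box^{-1} P_k Q_j$ becomes disposable once the $L^1_t L^2_x$-estimate is set up via a $PW$--$NE$ null frame pairing, exactly as in \cite[Eqs.~(136)--(138)]{KST}; the $Q_j$, $P_k$ and box $\calC_k(\ell)$ localizations fit the geometry because $\ell = (j-k)/2$. (3) \emph{Null frame estimates.} Decompose the middle factor $u^{(1)}_{k_1,\calC^{(1)}}$ and $u^{(2)}_{k_2,\calC^{(2)}}$ into caps of size $\aeq 2^\ell$; transversality of $\calC^{(1)}$ and $\calC^{(2)}$ (which is forced by the $Q_j$-output constraint, via Lemma~\ref{lem:geom-cone}) gives a Jacobian gain, and the $PW^{\mp}_\omega(\ell)$ norms in $S_{k_i}[\calC_k(\ell)]$ absorb the two middle inputs while $NE$ (or $S^{str}$) absorbs $u^{(3)}_{k_3}$. (4) \emph{Counting and summation.} The $2^{-\dlta(k_1-k)} 2^{-\dlta(k-j)}$ gains come from: the off-diagonal high-low imbalance $k \ll k_1$ producing a power of $2^{-(k_1-k)}$ from the parametrix estimate and cap-counting; and the high-modulation output $Q_j$ at low frequency $2^k$ producing a $2^{-(k-j)}$ excess over what is needed, which can be divided to leave a $\dlta$-power. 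Then \eqref{eq:tri-nf} in Proposition~\ref{prop:tri-nf} follows by summing the atomic pieces \eqref{eq:tri-nf-core-1}--\eqref{eq:tri-nf-core-3} over $\calC^{(1)}, \calC^{(2)}$ using Lemma~\ref{lem:S-box-sum} (square-summability of $S_k[\calC_{k'}(\ell')]$), together with the Cauchy--Schwarz in the cap sum.

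\textbf{Main obstacle.} The delicate point, as always in this circle of estimates, is the interplay between the three modulation cutoffs $Q_{<j-C}$, the output modulation cutoff $Q_j$ on the middle factor, and the box decomposition at scale $\ell = (j-k)/2$: one must verify that for inputs of modulation $< 2^{j-C}$ and frequencies $\aeq 2^{k_1}, 2^{k_2}$, producing an output at frequency $\aeq 2^k \ll 2^{k_1}$ and modulation $\aeq 2^j$ genuinely forces the two middle inputs into \emph{transversal} caps (this is where Lemma~\ref{lem:geom-cone} and the constant $C < C_0$ in the definition of $\calH_k, \calH^\ast_k$ enter), so that the $PW$--$PW$--$NE$ null-frame estimate of \cite{KST} is applicable with the stated box norms. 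A secondary technical nuisance is that after applying Lemma~\ref{lem:nf-tri} one obtains the two extra terms with $\lap^{-1} \rd_t \rd_\alp$ and $\lap^{-1} \rd_\ell \rd_\alp$ in front; these have a potentially dangerous $\lap^{-1}$ at the low output frequency $2^k$, but since that factor acts on the \emph{output} of $Q_j$ at frequency $2^k$ it only costs $2^{-2k}$, which is precisely compensated by the two extra derivatives $\rd_t \rd_\alp$ (each $\aeq 2^{k_1}$ or $2^{k_2}$) acting on the high-frequency inputs, up to the harmless $2^{2k_1}$ that is already accounted for by the $2^{k_1+k_2}$ normalization of the null form symbol; tracking these bookkeeping factors carefully, without losing the off-diagonal gain, is the main place where care is required. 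I expect the rest — the reduction via the algebraic lemmas and the invocation of the $PW$/$NE$ machinery — to be essentially routine given the results already available in Section~\ref{subsec:disp} and in \cite{KST}.
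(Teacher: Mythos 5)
Your overall route — reduce to the $PW/NE$ null-frame estimates of \cite[(136)--(138)]{KST}, using disposability of $\calO$, $\calO'$ in the presence of the $\calC_k(\ell)$-box localization, cap decomposition at scale $2^\ell$, transversality from Lemma~\ref{lem:geom-cone}, and factor-counting to produce the $2^{-\dlta(k_1-k)}2^{-\dlta(k-j)}$ gains — is precisely the one the paper takes; it defers the estimates wholesale to \cite[(136)--(138)]{KST} with exactly the two modifications you name.

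However, there are two conceptual slips you should fix. First, you propose to ``apply Lemma~\ref{lem:nf-tri} to rewrite the defining sum'' and claim that afterwards ``all three left-hand sides are of the same shape.'' This inverts the logic: Lemma~\ref{lem:nf-tri} is the algebraic identity used to reduce Proposition~\ref{prop:tri-nf} to the three atomic estimates of Proposition~\ref{prop:tri-nf-core}; it is not available as a tool \emph{inside} the proof of the latter, whose three left-hand sides are already the three distinct terms produced by that lemma. And they are genuinely distinct, not reducible to one model: in \eqref{eq:tri-nf-core-1} the $\alp$-contraction couples $u^{(2)}$ with $u^{(3)}$ (a genuine $\calN_{0,\pm}$ pairing on the outer bilinear form), whereas in \eqref{eq:tri-nf-core-2} and \eqref{eq:tri-nf-core-3} the $\alp$-contraction (resp.\ $\ell$-contraction) is internal to the middle factor and the outer pairing with $u^{(3)}$ is only $\rd_t$ (resp.\ $\rd^\alp$). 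This is why the proposition lists three estimates and the paper correspondingly follows the three separate proofs of \cite[(136), (137), (138)]{KST}; they do not collapse to a single null-form expression.

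Second, your bookkeeping of the $\lap^{-1}$ compensation in the last paragraph is off: the derivatives $\rd_t \rd_\alp$ (resp.\ $\rd_\ell \rd_\alp$) sit \emph{outside} the inner $\calO$ and under $P_k Q_j$, so they act at the output frequency $\aeq 2^k$, not at $2^{k_1}$ or $2^{k_2}$. It is the resulting $2^{2k}$ that cancels $\lap^{-1}P_k \aeq 2^{-2k}$; the inputs' high frequency does not enter the cancellation. The conclusion is unchanged, but the stated reasoning would produce the wrong power count if followed literally.
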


\subsection{Proof of the interval-localized
  estimates} \label{subsec:ests-pf} In this subsection, we prove all
estimates claimed in Section~\ref{sec:ests} except
Theorem~\ref{thm:paradiff} and
Proposition~\ref{prop:paradiff-weakdiv}, which are proved in the next
section.

The key technical issue we address here is passage to
interval-localized frequency envelope bounds (as stated in
Section~\ref{sec:ests}) from the global-in-time dyadic estimates
stated in Section~\ref{subsec:dyadic-ests}.

In what follows, we denote by $\calO$ and $\bfO$ disposable
multilinear operators on $\bbR^{1+4}$ and $\bbR^{4}$, respectively,
which may vary from line to line. Similarly, $\chi_{I}^{k}$ indicates
a generalized time cutoff adapted to the scale $2^{-k}$, which may
vary from line to line.

\subsubsection{Estimates that do not involve any null forms}
Here we establish Propositions~\ref{prop:ell-bi}, \ref{prop:Q-bi}, \ref{prop:A02}  and
\ref{prop:Box-A-L2}, whose proofs do not involve any null forms.

\begin{proof}[Proofs of Propositions~\ref{prop:ell-bi} and \ref{prop:Q-bi}]
  We introduce the shorthand $A' = \rd_{t} A$ and $B' = \rd_{t}
  B$. Using \eqref{eq:A0-bi-def} and Lemma~\ref{lem:q-disp} to write
  \begin{align}
    \abs{D}^{-1} P_{k} \calM_{0}^{2}(P_{k_{1}} A, P_{k_{2}} B)
    =& 2^{-k} P_{k} \bfO (P_{k_{1}} A, P_{k_{2}} B'), \label{eq:ell-op-1} \\
    P_{k} \bfQ(P_{k_{1}} A, P_{k_{2}} B)
    =& 2^{k} 2^{-k_{\max}} P_{k} \bfO (P_{k_{1}} A, P_{k_{2}} B), \label{eq:ell-op-2} \\
    \abs{D}^{-1} P_{k} \bfQ(P_{k_{1}} A, P_{k_{2}} \rd_{t} B)
    =& 2^{-k_{\max}} P_{k} \bfO	 (P_{k_{1}} A, P_{k_{2}} B'), \label{eq:ell-op-3} \\
    \abs{D}^{-2} P_{k} \calD \calM_{0}^{2}(P_{k_{1}} A, P_{k_{2}} B)
    = & 2^{-k} 2^{-k_{\max}} P_{k} \bfO (P_{k_{1}} A', P_{k_{2}}
    B'). \label{eq:ell-op-4}
  \end{align}

  \pfstep{Step~1: Fixed-time estimates} Applying H\"older and
  Bernstein (to one of the inputs or the output, whichever has the
  lowest frequency), we obtain
  \begin{equation} \label{eq:ell-bi-L2-atom} \nrm{P_{k} \bfO(P_{k_{1}}
      u', P_{k_{2}} v')}_{L^{2}} \aleq 2^{2 k_{\min}} \nrm{u'}_{L^{2}}
    \nrm{v'}_{L^{2}}.
  \end{equation}
  Recalling \eqref{eq:ell-op-1}--\eqref{eq:ell-op-4}, the fixed-time
  estimates \eqref{eq:ell-bi-L2}, \eqref{eq:d-ell-bi-L2} and
  \eqref{eq:Q-L2} follow.

  \pfstep{Step~2: Space-time estimates} Here, we prove the remaining
  estimates in Propositions~\ref{prop:ell-bi} and \ref{prop:Q-bi}.  In
  this step, we simply extend $A, B, A', B'$ by zero outside
  $I$. Furthermore, we define
  \begin{align}
    \calM_{0, small}^{\kpp, 2}(A, B) =& \sum_{\abs{k_{\max} - k_{\min}} \geq \kpp} P_{k} \calM_{0}^{2}(P_{k_{1}} A, P_{k_{2}} B), \label{eq:ell-bi-small-def} \\
    \calM_{0, large}^{\kpp, 2}(A, B) =& \sum_{\abs{k_{\max} -
        k_{\min}} < \kpp} P_{k} \calM_{0}^{2}(P_{k_{1}} A, P_{k_{2}}
    B). \label{eq:ell-bi-large-def}
  \end{align}
  so that $\calM_{0}^{\kpp, 2}(A, B) = \calM_{0, small}^{\kpp, 2}(A,
  B) + \calM_{0, large}^{\kpp, 2}(A, B)$.

  \pfsubstep{Step~2.1: $L^{2} \dot{H}^{\frac{1}{2}}$-norm estimates}
  We first verify \eqref{eq:ell-bi}--\eqref{eq:d-ell-bi-ed},
  \eqref{eq:Q-L2L2} and \eqref{eq:Q-ed} with the $L^{2}
  \dot{H}^{\frac{1}{2}}$-norm (instead of the $Y$-norm) on the
  LHS. All of these estimates follow from \eqref{eq:qf-L2L2} and
  \eqref{eq:ell-op-1}--\eqref{eq:ell-op-4}. The small factor in
  \eqref{eq:ell-bi-small} arises from the exponential gain in
  \eqref{eq:qf-L2L2} and the frequency gap $\kpp$ in
  \eqref{eq:ell-bi-small-def}, whereas the factor $\veps^{\dltc}
  M$ in \eqref{eq:ell-bi-ed}, \eqref{eq:d-ell-bi-ed} and
  \eqref{eq:Q-ed} arises from \eqref{eq:ed-str}.

  \pfsubstep{Step~2.2: $L^{1} L^{\infty}$-norm estimates} By
  H\"older's inequality, we have
  \begin{equation} \label{eq:Y-int} \nrm{P_{k} u}_{L^{p_{0}}
      \dot{W}^{2 - \frac{3}{p_{0}}, p_{0}'}} \aleq \nrm{P_{k}
      u}_{L^{2} \dot{H}^{\frac{1}{2}}}^{1-\tht_{0}} \nrm{P_{k}
      u}_{L^{1} \dot{W}^{-1, \infty}}^{\tht_{0}}
  \end{equation}
  where $\tht_{0} = 2(\frac{1}{p_{0}} - \frac{1}{2}) \in (0,
  1)$. Therefore, \eqref{eq:ell-bi}, \eqref{eq:ell-bi-small} and
  \eqref{eq:ell-bi-ed} follow by combining \eqref{eq:qf-L1Linfty} with
  the $L^{2} \dot{H}^{\frac{1}{2}}$-norm estimates from Step~2.1. On
  the other hand, for \eqref{eq:ell-bi-large} we use
  \eqref{eq:qf-L2L6-L1Linfty} instead of \eqref{eq:qf-L1Linfty}, which
  allows us to use the $D S^{1}$-norm on the RHS at the expense of
  losing the exponential off-diagonal gain. Finally, for \eqref{eq:Q}
  and \eqref{eq:Q-ed}, observe that by \eqref{eq:qf-L2L6-L1Linfty},
  \eqref{eq:ell-op-2} and \eqref{eq:ell-op-3} we have
  \begin{equation*}
    \nrm{\abs{D}^{-\sgm-1} \bfQ(P_{k_{1}} A, P_{k_{2}} B')}_{L^{1} L^{\infty}}
    \aleq 2^{-\dlta (k_{\max} - k_{\min})} \nrm{P_{k_{1}} A}_{DS^{1}} \nrm{\abs{D}^{-\sgm} P_{k_{2}} B'}_{DS^{1}},
  \end{equation*}
  for $\sgm = 0, 1$. Therefore, the $L^{1} L^{\infty}$-norm bound in
  \eqref{eq:Q} follows directly, whereas the $Y$-norm bound
  in \eqref{eq:Q} and \eqref{eq:Q-ed} follow after interpolating with
  the $L^{2} \dot{H}^{\frac{1}{2}}$-norm estimates from
  Step~2.1. \qedhere
\end{proof}

\begin{proof}[Proofs of Proposition~\ref{prop:A02}]
For this proof we use the square function $L^{\frac{10}{3}}_x L^2_t$ component of the $S_k$ norm,
for which we have
\[
  \| u\|_{S^{sq}_k}  =   2^{-\frac{3}{10} k}    \| u \|_{L^{\frac{10}{3}}_x L^2_t}.
\]
We recall that the symbol of $\Delta \A_0^{2}$ is 
\[
\Delta \A_0^{2}(\xi,\eta) = \frac{\abs{\xi}^2}{\abs{\xi}^2+\abs{\eta}^2}
\]
Then we use Bernstein at the lowest frequency to estimate 
\[
\| P_k \Delta  \A_0^{2}(A_{k_1},\partial_t A_{k_2})\|_{L^2 L^1} \lesssim 2^{-2(k_2-k_1)_+} 2^{-\frac{7}{10} k_1} 2^{\frac{3}{10} k_2}
2^{\frac{4}{10} k_{min}}   c_{k_1} c_{k_2}  \lesssim  2^{-\frac{3}{10}(k_{max}- k_{min})}  c_{k_1} c_{k_2}.
\]
Now the bound \eqref{eq:sf-a0} immediately follows due to the off-diagonal decay.

\end{proof}

\begin{proof}[Proof of Proposition~\ref{prop:Box-A-L2}]
  The bounds in this proposition are trivial consequences of
  Proposition~\ref{prop:bi-ell}, along with the observation that
  $\nrm{\abs{D} u}_{\Str^{0}} \aleq \nrm{\nb u}_{L^{2}
    \dot{H}^{\frac{1}{2}}}$. We omit the details.
\end{proof}

\subsubsection{Estimates for $\P \calM^{2}$, $\P^{\perp} \calM^{2}$ and $\Rem^{2, \kpp}$}
We now present the proofs of Propositions~\ref{prop:Ax-bi} and
\ref{prop:rem2}, which require the bilinear null form estimates in
Propositions~\ref{prop:nf-core}, as well as the
$X^{s, b, p}_{r}$-type norm estimates in Propositions~\ref{prop:nf-Z},
\ref{prop:nf*-X} and \ref{prop:qf-X}.

\begin{proof}[Proof of Proposition~\ref{prop:Ax-bi}]
  Unless otherwise stated, we extend the inputs $A, B$ by homogeneous
  waves outside $I$. For $k, k_{1}, k_{2} \in \bbZ$, by
  Lemma~\ref{lem:q-disp}, note that
  \begin{align}
    P_{k} \P \calM^{2}(P_{k_{1}} A, P_{k_{2}} B) =& \ P_{k} \P \bfO(P_{k_{1}} A, \rd_{x} P_{k_{2}} B), \label{eq:Ax-bi-df-nf} \\
    P_{k} \P^{\perp} \calM^{2}(P_{k_{1}} A, P_{k_{2}} B) =& \
    2^{-k_{\max}} P_{k} \bfO(\rd_{\alp} P_{k_{1}} A, \rd^{\alp}
    P_{k_{2}} B), \label{eq:Ax-bi-cf-nf}
  \end{align}
  for some disposable operator $\bfO$ on $\bbR^{4}$. Note also that,
  by Lemma~\ref{lem:nf-bi}, the RHSs are null forms.

\pfstep{Step~0: Proofs of \eqref{eq:Ax-bi-df-L2},  \eqref{eq:Ax-bi-df-L2}}
In view of \eqref{eq:Ax-bi-df-nf} and \eqref{eq:Ax-bi-df-nf}, both follow 
easily using the standard Littlewood-Paley trichotomy and \eqref{eq:ell-bi-L2-atom}.

  \pfstep{Step~1: Proofs of \eqref{eq:Ax-bi-df}, \eqref{eq:Ax-bi-cf},
    \eqref{eq:Ax-bi-df-himod} and \eqref{eq:Ax-bi-cf-himod}} The
  $N$-norm bounds in \eqref{eq:Ax-bi-df} and \eqref{eq:Ax-bi-cf}
  follow from the null form estimates
  \eqref{eq:nf-core}--\eqref{eq:N0-core}. On the other hand, the $\Box
  \uX^{1}$-norm bounds in \eqref{eq:Ax-bi-df} and \eqref{eq:Ax-bi-cf}
  follow from \eqref{eq:qf-L2L2}, \eqref{eq:qf-L9/5L2} and
  \eqref{eq:nf-uX}; we remark that the $\Box \Xdf^{1}$-norm bound for
  $\P^{\perp} \calM$ is unnecessary, since $\P \P^{\perp} \calM =
  0$. Estimates \eqref{eq:Ax-bi-df-himod} and
  \eqref{eq:Ax-bi-cf-himod} immediately follow from
  \eqref{eq:qf-L2L2}, where we may simply extend $A, \rd_{t} A, B,
  \rd_{t} B$ by zero outside $I$ as in the proofs of
  Propositions~\ref{prop:ell-bi} and \ref{prop:Q-bi} above.

  \pfstep{Step~2: Proofs of \eqref{eq:Ax-bi-df-small},
    \eqref{eq:Ax-bi-cf-small}, \eqref{eq:Ax-bi-df-large} and
    \eqref{eq:Ax-bi-cf-large}} Since the case of $\P \calM^{2}$ (i.e.,
  estimates \eqref{eq:Ax-bi-df-small} and \eqref{eq:Ax-bi-df-large})
  can be read off from \cite[Proof of Proposition~4.1]{OT2}, we will
  only provide a detailed proof in the case of $\P^{\perp} \calM^{2}$
  (i.e., estimates \eqref{eq:Ax-bi-cf-small},
  \eqref{eq:Ax-bi-cf-large}).

  \pfsubstep{Step~2.1: Off-diagonal dyadic frequencies} If $\max
  \set{\abs{k - k_{1}}, \abs{k - k_{2}}} \geq \kpp$, then by
  \eqref{eq:N0-core} we have
  \begin{align*}
    \nrm{P_{k} \P^{\perp} \calM^{2}(P_{k_{1}} A, P_{k_{2}} B)}_{N}
    \aleq & \ 2^{-\dlta(k_{\max} - k_{\min})}\nrm{P_{k_{1}} A}_{S^{1}} \nrm{P_{k_{2}} B}_{S^{1}} \\
    \aleq & \  2^{-\frac{1}{2} \dlta \kpp} 2^{-\frac{1}{2} \dlta
      (k_{\max} - k_{\min})}\nrm{P_{k_{1}} A}_{S^{1}} \nrm{P_{k_{2}}
      B}_{S^{1}}.
  \end{align*}
  Hence the contribution in the case $\max\set{\abs{k - k_{1}}, \abs{k
      - k_{2}}} \geq \kpp$ can always be put in $\P^{\perp}
  \calM^{\kpp, 2}_{small}$.

  \pfsubstep{Step~2.2: Balanced dyadic frequencies, short time
    interval} Next, we consider the case when $\abs{k - k_{1}} <
  \kpp$, $\abs{k - k_{2}} < \kpp$ and $\abs{I} \leq 2^{-k + C
    \kpp}$. Then by H\"older and \eqref{eq:Ax-bi-cf-nf}, we simply
  estimate
  \begin{align*}
     \nrm{P_{k} \P^{\perp} \calM^{2} (P_{k_{1}} A, P_{k_{2}} B)}_{L^{1} L^{2}[I]}  &  \aleq  \abs{I}^{\frac{1}{2}}  \nrm{P_{k} \P^{\perp} \calM^{2} (P_{k_{1}} A, P_{k_{2}} B)}_{L^{2} L^{2}[I]} \\
    & \aleq \abs{I}^{\frac{1}{2}} 2^{-k_{\max}} \nrm{\calO(\rd^{\alp} P_{k_{1}} A, \rd_{\alp} P_{k_{2}} B)}_{L^{2} L^{2}} \\
    & \aleq 2^{C \kpp} \nrm{\abs{D}^{-\frac{3}{4}} \nb
      A_{k_{1}}}_{L^{4} L^{4}[I]} \nrm{\abs{D}^{-\frac{3}{4}} \nb
      B_{k_{2}}}_{L^{4} L^{4}[I]} .
  \end{align*}
  Therefore, when $\abs{I} \leq 2^{-k + C \kpp}$, the contribution in
  the case $\max\set{\abs{k - k_{1}}, \abs{k - k_{2}}} < \kpp$ can be
  put in $\P^{\perp} \calM^{\kpp, 2}_{large}$.

  \pfsubstep{Step~2.3: Balanced dyadic frequencies, long time
    interval} Finally, we consider the case when $\abs{k - k_{1}} <
  \kpp$, $\abs{k - k_{2}} < \kpp$ and $\abs{I} \geq 2^{-k + C
    \kpp}$. We define $\P^{\perp} \calM^{\kpp, 2}_{large}$ by the
  relation
  \begin{align*}
    & \sum_{\max \set{\abs{k - k_{1}}, \abs{k - k_{2}}} < \kpp} P_{k} \P^{\perp} \calM^{2}(P_{k_{1}} A, P_{k_{2}} B) \\
    & = \sum_{\max \set{\abs{k - k_{1}}, \abs{k - k_{2}}} < \kpp}
    P_{k} Q_{<k_{\min} - \kpp} \P^{\perp} \calM^{2}(P_{k_{1}}
    Q_{<k_{\min} - \kpp} A, P_{k_{2}} Q_{<k_{\min} - \kpp} B) +
    \P^{\perp} \calM^{\kpp, 2}_{large}(A, B).
  \end{align*}
  By \eqref{eq:N0-core-lomod}, the first term on the RHS gains a
  factor of $2^{-c\dlta \kpp}$, and therefore can be put in
  $\P^{\perp} \calM^{\kpp, 2}_{small}$. Now it only remains to
  establish \eqref{eq:Ax-bi-cf-large} for $ \P^{\perp} \calM^{\kpp,
    2}_{large}$ defined as above.

  By definition, $\P^{\perp} \calM^{\kpp, 2}_{large}(A, B)$ is the sum
  over $\set{(k, k_{1}, k_{2}) : \max\set{\abs{k - k_{1}}, \abs{k -
        k_{2}}} < \kpp}$ of
  \begin{equation*}
    P_{k} \P^{\perp} \calM^{2}(P_{k_{1}} A, P_{k_{2}} B) - P_{k} Q_{<k_{\min} - \kpp} \P^{\perp} \calM^{2}(P_{k_{1}} Q_{<k_{\min} - \kpp} A, P_{k_{2}} Q_{<k_{\min} - \kpp} B) .
  \end{equation*}
  Since we are allowed to lose an exponential factor in $\kpp$ in
  \eqref{eq:Ax-bi-cf-large}, it suffices to freeze $k, k_{1}, k_{2}$
  and estimate the preceding expression. At this point, we divide into
  three subcases: \pfsubstep{Step~2.3.a: Output has high modulation}
  When the output has modulation $\geq 2^{k_{\min} - \kpp}$, we use
  the $X^{0, -\frac{1}{2}}_{1}$-component of the $N$-norm. Since the
  kernel of $P_{k} Q_{\geq k_{\min} - \kpp}$ decays rapidly in $t$ on
  the scale $\aeq 2^{-k} 2^{C \kpp}$, we have
  \begin{align*}
    \nrm{P_{k} Q_{\geq k_{\min} - \kpp} \P^{\perp} \calM^{2}
      (P_{k_{1}} A, P_{k_{2}} B)}_{X^{0, -\frac{1}{2}}_{1} [I]} \aleq
    2^{C \kpp} 2^{-\frac{1}{2} k} \nrm{\chi_{I}^{k} \, \P^{\perp}
      \calM^{2} (P_{k_{1}} A, P_{k_{2}} A)}_{L^{2} L^{2}},
  \end{align*}
  for some generalized cutoff function $\chi_{I}^{k}$ adapted to the
  scale $2^{-k}$. Then, by Proposition~\ref{prop:ext},
  \begin{align*}
     2^{C \kpp} 2^{-\frac{1}{2} k} \nrm{\chi_{I}^{k} \, \P^{\perp} \calM^{2} (P_{k_{1}} A, P_{k_{2}} A)}_{L^{2} L^{2}} 
    & \aleq 2^{C \kpp} \nrm{\chi_{I}^{k} \abs{D}^{-\frac{3}{4}} \nb P_{k_{1}} A}_{L^{4} L^{4}} \nrm{\chi_{I}^{k} \abs{D}^{-\frac{3}{4}} \nb P_{k_{2}} B}_{L^{4} L^{4}}  \\
    & \aleq 2^{C \kpp} \nrm{\abs{D}^{-\frac{3}{4}} \nb P_{k_{1}}
      A}_{L^{4} L^{4}[I]} \nrm{\abs{D}^{-\frac{3}{4}} \nb P_{k_{2}}
      B}_{L^{4} L^{4}[I]},
  \end{align*}
  which is acceptable.

  \pfsubstep{Step~2.3.b: $A$ has high modulation} Next, we consider
  the case when the output has modulation $< 2^{k_{\min} - \kpp}$, yet
  $A$ has modulation $\geq 2^{k_{\min} - \kpp}$. The kernel of $P_{k}
  Q_{<k_{\min} - \kpp}$ again decays rapidly in $t$ on the scale $\aeq
  2^{-k} 2^{C \kpp}$. For any $2 \leq q \leq \infty$, we have
  \begin{align*}
    & \nrm{P_{k} Q_{< k_{\min} - \kpp} \P^{\perp} \calM^{2} (Q_{\geq k_{\min} - \kpp} P_{k_{1}} A, P_{k_{2}} B)}_{L^{1} L^{2} [I]}  \\
    & \ \ \ \ \ \ \ \ \aleq 2^{C \kpp} \nrm{\chi_{I}^{k} \, \P^{\perp} \calM^{2} (Q_{\geq k_{1} - \kpp} P_{k_{1}} A, P_{k_{2}} B)}_{L^{1} L^{2}} \\
    & \ \ \ \ \ \ \ \  \aleq 2^{C \kpp} \nrm{\abs{D}^{-\frac{1}{q}} \Box P_{k_{1}} A}_{L^{q'} L^{2}} \nrm{\chi_{I}^{k} \, \abs{D}^{2-\frac{1}{q}} \nb P_{k_{2}} B}_{L^{q} L^{\infty}}  \\
    & \ \ \ \ \ \ \ \ \aleq 2^{C \kpp} \nrm{\abs{D}^{-\frac{1}{q}} \Box P_{k_{1}}
      A}_{L^{q'} L^{2}[I]} \nrm{\abs{D}^{2-\frac{1}{q}} \nb P_{k_{2}}
      B}_{L^{q} L^{\infty}[I]},
  \end{align*}
  where we used Proposition~\ref{prop:ext} on the last line. Taking $q
  = 2$, we see that the last line is bounded by $\aleq 2^{C \kpp}
  \nrm{\Box P_{k_{1}} A}_{L^{2} \dot{H}^{-\frac{1}{2}}[I]}
  \nrm{P_{k_{2}} B}_{DS^{1}[I]}$, which is acceptable.

  \pfsubstep{Step~2.3.c: $B$ has high modulation} Finally, the only
  remaining case is when the output and $A$ have modulation $<
  2^{k_{\min} -\kpp}$, but $B$ has modulation $\geq 2^{k_{\min} -
    \kpp}$. Proceeding as in Step~2.3.b, and using the fact that the
  kernel of $P_{k_{1}} Q_{<k_{\min} -\kpp}$ decays rapidly in $t$ on
  the scale $\aeq 2^{-k} 2^{C\kpp}$, we have
  \begin{align*}
    & \nrm{P_{k} Q_{< k_{\min} - \kpp} \P^{\perp} \calM^{2} (Q_{< k_{\min} - \kpp} P_{k_{1}} A, Q_{\geq k_{\min} - \kpp} P_{k_{2}} B)}_{L^{1} L^{2} [I]}  \\
    &\ \ \ \ \ \ \ \ \aleq 2^{C \kpp} \nrm{\chi_{I}^{k} \, \P^{\perp} \calM^{2} (Q_{< k_{1} - \kpp} P_{k_{1}} A, Q_{\geq k_{2} - \kpp} P_{k_{2}} B)}_{L^{1} L^{2}} \\
    &\ \ \ \ \ \ \ \ \aleq 2^{C \kpp} \nrm{\chi_{I}^{k} \, \abs{D}^{-\frac{3}{2}} \nb Q_{<k_{\min} - \kpp} P_{k_{1}} A}_{L^{2} L^{\infty}} \nrm{\abs{D}^{-\frac{1}{2}} \Box P_{k_{2}} B}_{L^{2} L^{2}}  \\
    & \ \ \ \ \ \ \ \ \aleq 2^{C \kpp} \nrm{\abs{D}^{-\frac{3}{2}} \nb P_{k_{1}}
      A}_{L^{2} L^{\infty}[I]} \nrm{\Box P_{k_{2}} B}_{L^{2}
      \dot{H}^{-\frac{1}{2}}[I]} ,
  \end{align*}
  which is acceptable.

  \pfstep{Step~3: Proofs of \eqref{eq:Ax-bi-df-ed} and
    \eqref{eq:Ax-bi-cf-ed}} Since the $L^{2}
  \dot{H}^{-\frac{1}{2}}$-norm bounds follow from \eqref{eq:ed-str},
  \eqref{eq:Ax-bi-df-himod} and \eqref{eq:Ax-bi-cf-himod}, it remains
  to only consider the $N$-norm. The case of $\P \calM^{2}$ can be
  read off from \cite[Proof of Proposition~4.1]{OT2}. Finally, for
  $\P^{\perp} \calM^{2}$, we split into the small and large parts as
  in Step~2. For the small part, we already have
  \begin{equation*}
    \nrm{\P^{\perp} \calM^{\kpp, 2}_{small}(A, B)}_{N_{c}[I]} \aleq 2^{-c\dlta \kpp} \nrm{A}_{S^{1}_{c}[I]} M.
  \end{equation*}
  For the large part, we proceed as in Step~2, except we choose $q =
  \frac{9}{4}$ in Step~2.3.b. Then by \eqref{eq:eps-disp-himod},
  \eqref{eq:ed-str} and the embedding $\Str^{1}[I] \subseteq L^{4}
  L^{4}[I] \cap L^{\frac{9}{4}} L^{\infty}[I]$, it follows that
  \begin{equation*}
    \nrm{\P^{\perp} \calM^{\kpp, 2}_{large}(A, B)}_{N_{c}[I]} \aleq 2^{C \kpp} \veps^{\dlta} \nrm{A}_{\uS^{1}_{c}[I]} M.
  \end{equation*}
  Therefore, choosing $2^{-\kpp} = \veps^{c}$ with $c > 0$
  sufficiently small, \eqref{eq:Ax-bi-cf-ed} follows. \qedhere
\end{proof}

\begin{remark} \label{rem:nf-bi} As a corollary of the preceding proof
  in the case of $\P \calM^{2}$, we obtain the following statement:
  Let $\bfO$ be a disposable operator on $\bbR^{4}$, and let $A, B$ be
  $\g$-valued functions (or 1-forms) on $I$. Then we have
  \begin{equation} \label{eq:nf-bi-div}
    \begin{aligned}
      & \nrm{P_{k} (\bfO(\rd_{i} P_{k_{1}} A, \rd_{j} P_{k_{2}} B) - \bfO(\rd_{j} P_{k_{1}} A, \rd_{i} P_{k_{2}} B))}_{N[I]} \\
      & \ \ \ \ \ \ \  \aleq 2^{C (k_{\max} - k_{\min})} 2^{k} \nrm{P_{k_{1}} A}_{D
        S^{1}[I]} \nrm{P_{k_{2}} B}_{D S^{1}[I]}.
    \end{aligned}
  \end{equation}
  Moreover, if $(B, I)$ is $(\veps, M)$-energy dispersed, then
  \begin{equation} \label{eq:nf-bi-ed}
    \begin{aligned}
      & \nrm{P_{k} (\bfO(\rd_{i} P_{k_{1}} A, \rd_{j} P_{k_{2}} B) - \bfO(\rd_{j} P_{k_{1}} A, \rd_{i} P_{k_{2}} B))}_{N[I]} \\
      &\ \ \ \ \ \ \ \  \aleq 2^{C (k_{\max} - k_{\min})} 2^{k} \veps^{c \dlta}
      \nrm{P_{k_{1}} A}_{\uS^{1}[I]} M.
    \end{aligned}
  \end{equation}
\end{remark}

\begin{proof}[Proof of Proposition~\ref{prop:rem2}]
  We decompose $\Rem^{\kpp, 2}_{A} B$ into
  \begin{equation*}
    \Rem^{\kpp, 2}_{A} B = \Rem^{\kpp, 2}_{\P_{x} A} B + \Rem^{\kpp, 2}_{\P^{\perp} A} B + \Rem^{\kpp, 2}_{A_{0}} B,
  \end{equation*}
  where
  \begin{align}
    \Rem^{\kpp, 2}_{\P_{x} A} B = & \sum_{k, k_{1}, k_{2} : k_{1} \geq k_{2} - \kpp} 2 P_{k} [\P_{\ell} P_{k_{1}} A, \rd^{\ell} P_{k_{2}} B] \label{eq:rem2-df-def} \\
    \Rem^{\kpp, 2}_{\P^{\perp} A} B = & \sum_{k, k_{1}, k_{2} : k_{1} \geq k_{2} - \kpp} 2 P_{k} [P_{k_{1}} \P^{\perp}_{\ell} A, \rd^{\ell} P_{k_{2}} B] \label{eq:rem2-cf-def} \\
    \Rem^{\kpp, 2}_{A_{0}} B = & - \sum_{k, k_{1}, k_{2} : k_{1} \geq
      k_{2} - \kpp} 2 P_{k} [P_{k_{1}} A_{0}, P_{k_{2}} \rd_{t}
    B]\label{eq:rem2-ell-def}
  \end{align}
  By Littlewood--Paley trichotomy, note that the summands on the RHSs
  of \eqref{eq:rem2-df-def}--\eqref{eq:rem2-ell-def} vanish unless $k
  - k_{1} \leq \kpp + C$.

  Unless otherwise stated, we extend the in may not coincide with $\P^{\perp}$ of the
  extended $A$ outside $I$ in general.

  \pfstep{Step~1: Proofs of \eqref{eq:rem2} and \eqref{eq:rem2-himod}}
  The $N$-norm bound in \eqref{eq:rem2} follows from
  Lemma~\ref{lem:nf-bi} and \eqref{eq:nf-core} for $\Rem^{\kpp,
    2}_{\P_{x} A} B$, and \eqref{eq:qf-rem-ell} for $\Rem^{\kpp,
    2}_{\P^{\perp} A} B$, $\Rem^{\kpp, 2}_{A_{0}} B$. On the other
  hand, for the $\Box \uX^{1}$-norm bound in \eqref{eq:rem2}, we apply
  \eqref{eq:qf-L2L2}, \eqref{eq:qf-L9/5L2}, \eqref{eq:nf-uX} to
  $\Rem^{\kpp, 2}_{\P_{x} A} B$, and \eqref{eq:qf-Y-L2L2},
  \eqref{eq:qf-Y-L9/5L2} and \eqref{eq:qf-uX} to $\Rem^{\kpp,
    2}_{\P^{\perp} A} B$, $\Rem^{\kpp, 2}_{A_{0}} B$. Finally,
  \eqref{eq:rem2-himod} follows from \eqref{eq:qf-L2L2} and
  \eqref{eq:qf-Y-L2L2}.

  \pfstep{Step~2: Proofs of \eqref{eq:rem2-small},
    \eqref{eq:rem2-large} and \eqref{eq:rem2-ed}} The term
  $\Rem^{\kpp, 2}_{A_{0}} B$ can be put in $\Rem^{\kpp, 2}_{A, large}
  B$, since for each triple $(k, k_{1}, k_{2})$ within the range
  $k_{1} \geq k_{2} - \kpp$, by \eqref{eq:qf-rem-ell} we have
  \begin{align*}
     \nrm{P_{k} [P_{k_{1}} A_{0}, P_{k_{2}} \rd_{t} B]}_{L^{1} L^{2}[I]} 
    & =  \nrm{P_{k} \calO(\chi_{I} P_{k_{1}} A_{0}, \chi_{I} P_{k_{2}} \rd_{t} B)}_{L^{1} L^{2}} \\
    & \aleq 2^{k_{2} - k_{1}} \nrm{P_{k} \calO(\chi_{I} \abs{D} P_{k_{1}} A_{0}, \chi_{I} \abs{D}^{-1} P_{k_{2}} \rd_{t} B)}_{L^{1} L^{2}} \\
    & \aleq 2^{\kpp} 2^{-\dlta (k_{\max} - k_{\min})}
    \nrm{P_{k_{1}} A_{0}}_{L^{2} \dot{H}^{\frac{3}{2}}[I]}
    \nrm{P_{k_{2}} B}_{DS^{1}[I]}.
  \end{align*}
  Similarly, the term $\Rem^{\kpp, 2}_{\P^{\perp} A} B$ can be put in
  $\Rem^{\kpp, 2}_{A, large} B$. Moreover, the contribution of these
  two terms to \eqref{eq:rem2-ed} are clearly acceptable, since they
  need not gain any small factor.

  It remains to handle the term $\Rem^{\kpp, 2}_{\P_{x} A} B$. We
  proceed differently according to the length of $I$. If $\abs{I} \leq
  2^{-k + C\kpp}$, we define
  \begin{align*}
    \Rem^{\kpp, 2}_{A, small} B = \sum_{k, k_{1}, k_{2} : k_{1} \geq
      k_{2} - \kpp, \, \max \set{\abs{k_{1} - k_{2}}, \abs{k_{1} - k}}
      \geq C_{0} \kpp} 2 P_{k} [\P_{\ell} P_{k_{1}} A, \rd^{\ell}
    P_{k_{2}} B],
  \end{align*}
  and if $\abs{I} \geq 2^{-k + C \kpp}$, we define
  \begin{align*}
    &
    \Rem^{\kpp, 2}_{A, small} B  \\
    &	=  \sum_{k, k_{1}, k_{2} : k_{1} \geq k_{2} - \kpp, \, \max \set{\abs{k_{1} - k_{2}}, \abs{k_{1} - k}} \geq C_{0} \kpp} 2 P_{k} [\P_{\ell} P_{k_{1}} A, \rd^{\ell} P_{k_{2}} B]\\
    & \phantom{=} + \sum_{k, k_{1}, k_{2} : \max \set{\abs{k_{1} -
          k_{2}}, \abs{k_{1} - k}} < C_{0} \kpp} 2 P_{k} Q_{<k_{\min}
      - C_{0} \kpp} [\P_{\ell} P_{k_{1}} Q_{<k_{\min} - C_{0} \kpp} A,
    \rd^{\ell} P_{k_{2}} Q_{<k_{\min} - C_{0} \kpp} B].
  \end{align*}
  In both cases, we put the remainder $\Rem^{\kpp, 2}_{\P_{x} A} B -
  \Rem^{\kpp, 2}_{A, small} B$ in $\Rem^{\kpp, 2}_{A, large} B$.

  Choosing $C_{0} > 0$ large enough (depending on $\dlta$), it
  follows from Lemma~\ref{lem:nf-bi}, \eqref{eq:nf-core} and
  \eqref{eq:nf-core-lomod} that $\Rem^{\kpp, 2}_{A, small} B$ obeys
  the desired bound \eqref{eq:rem2-small}; this bound is also
  acceptable for \eqref{eq:rem2-ed}. On the other hand, the
  contribution of $\Rem^{\kpp, 2}_{\P_{x} A} B - \Rem^{\kpp, 2}_{A,
    small} B$ in \eqref{eq:rem2-large} and \eqref{eq:rem2-ed} can be
  handled by proceeding as in Steps~2.2--2.3 and 3 in Proof of
  Proposition~\ref{prop:Ax-bi}; for the details, we refer to
  \cite[Proof of Proposition~4.6]{OT2}. \qedhere
\end{proof}

\subsubsection{Estimates for $\Diff^{\kpp}_{\P^{\perp} A} B$ and high modulation estimates for $\Diff^{\kpp}_{\P A} B$}
Next, we prove Propositions~\ref{prop:paradiff-cf} and
\ref{prop:paradiff-df-himod}, which mainly concern the
$X^{-\frac{1}{2} + b_{1}, - b_{1}} \cap \Box \uX^{1}$-norm of
$\Diff^{\kpp}_{\P^{\perp} A} B$ and $\Diff^{\kpp}_{\P A} B$.

\begin{proof}[Proof of Proposition~\ref{prop:paradiff-cf}]
  We extend $B$ by homogeneous waves outside $I$, and $\P^{\perp} A$
  by zero outside $I$. Note that
  \begin{equation} \label{eq:cf-ext} \nrm{D \P^{\perp} A}_{Y} \aleq
    \nrm{\P^{\perp} A}_{Y^{1}[I]}, \quad \nrm{B}_{S^{1}} \aleq
    \nrm{B}_{S^{1}[I]}.
  \end{equation}

  To prove \eqref{eq:diff-cf-X}, we need to estimate the
  $X^{-\frac{1}{2} + b_{1}, - b_{1}} \cap \Box \uX^{1}$-norm of
  $\chi_{I} \Diff^{\kpp}_{\P^{\perp} A} B$. We may write
  \begin{align*}
    \chi_{I} \Diff^{\kpp}_{\P^{\perp} A} B = \sum_{k} 2[P_{< k-\kpp}
    \P^{\perp}_{\ell} A, \chi_{I} \rd^{\ell} P_{k} A] = \sum_{k} 2^{k}
    \calO(P_{< k-\kpp} \P^{\perp} A, \chi_{I} P_{k} A).
  \end{align*}
  Then by \eqref{eq:qf-Y-L2L2}, \eqref{eq:qf-Y-L9/5L2},
  \eqref{eq:qf-uX} and \eqref{eq:qf-Xsb}, as well as
  \eqref{eq:cf-ext}, we obtain \eqref{eq:diff-cf-X}. On the other
  hand, \eqref{eq:diff-cf-N} simply follows from H\"older's inequality
  $L^{1} L^{\infty} \times L^{\infty} L^{2} \to L^{1} L^{2}$.
\end{proof}

\begin{proof}[Proof of Proposition~\ref{prop:paradiff-df-himod}]
  We extend $A, B$ by homogeneous waves outside $I$, and $A_{0}$ by
  zero outside $I$. In addition to $\nrm{A}_{S^{1}} \aleq
  \nrm{A}_{S^{1}[I]}$, observe that we have
  \begin{equation} \label{eq:A-alp-ext} \nrm{D A_{0}}_{Y} \aleq
    \nrm{A_{0}}_{Y^{1}[I]}, \quad \nrm{\P A}_{\Xdf^{1}} \aleq \nrm{\P
      A}_{\Xdf^{1}[I]}, \quad \nrm{\P A}_{\mXdf^{1}} \aleq \nrm{\P
      A}_{\mXdf^{1}[I]}.
  \end{equation}
  Moreover, by \eqref{eq:int-loc-S-N}, we have
  \begin{equation} \label{eq:chi-B-ext} \nrm{\chi_{I} \nb A}_{S} \aleq
    \nrm{\nb A}_{S} \aleq \nrm{A}_{S^{1}[I]}, \quad \nrm{\chi_{I} \nb
      B}_{S} \aleq \nrm{\nb B}_{S} \aleq \nrm{B}_{S^{1}[I]}.
  \end{equation}

  We first prove \eqref{eq:diff-a0-X}, for which we need to estimate
  the $X^{-\frac{1}{2} + b_{1}, - b_{1}} \cap \Box \uX^{1}$-norm of
  $\chi_{I} \Diff^{\kpp}_{A_{0}} B$. We may write
  \begin{equation*}
    \chi_{I} \Diff^{\kpp}_{A_{0}} B 
    = - \sum_{k} 2 [P_{<k-\kpp} A_{0}, \chi_{I} \rd_{t} P_{k} B]
    = \sum_{k} \calO(P_{<k - \kpp} A_{0}, \chi_{I} P_{k} \rd_{t} B).
  \end{equation*}
  Then by \eqref{eq:qf-Y-L2L2}, \eqref{eq:qf-Y-L9/5L2},
  \eqref{eq:qf-uX} and \eqref{eq:qf-Xsb}, as well as
  \eqref{eq:A-alp-ext}--\eqref{eq:chi-B-ext}, we obtain
  \eqref{eq:diff-a0-X}.

  For \eqref{eq:diff-ax-mX}, \eqref{eq:diff-ax-uX} and
  \eqref{eq:diff-ax-Xsb}, by Lemma~\ref{lem:nf-bi}, we may write
  \begin{equation*}
    \chi_{I} \Diff^{\kpp}_{\P_{x} A} B 
    = - \sum_{k} 2 [P_{<k-\kpp} \P_{\ell} A, \chi_{I} \rd^{\ell} P_{k} B]
    = \sum_{k} \calN(\abs{D}^{-1} P_{<k - \kpp} \P A, \chi_{I} P_{k} B).
  \end{equation*}
  By \eqref{eq:nf*-mX}, \eqref{eq:nf*-uX} and \eqref{eq:nf*-Xsb},
  combined with \eqref{eq:qf-L2L2}, \eqref{eq:qf-L9/5L2} and the
  extension relations \eqref{eq:A-alp-ext}--\eqref{eq:chi-B-ext}, we
  obtain the desired estimates. \qedhere
\end{proof}

\subsubsection{Estimates for $\Diff^{\kpp}_{\P A} B$}
Here we prove Propositions~\ref{prop:diff-single}, \ref{prop:diff-bi},
\ref{prop:diff-tri-nf}, \ref{prop:diff-tri}   and \ref{prop:diff-aux}]. Note that, by the
estimates proved so far in this subsection, we may now use
Proposition~\ref{prop:uS-bnd} (see also Remark~\ref{rem:uS-bnd}).

Before we embark on the proofs, we first establish some bilinear
$Z^{1}$-norm bounds that will be used multiple times below.

\begin{lemma} \label{lem:Z-prelim} We have
  \begin{align}
    \nrm{P_{k} \P \calM^{2}(\chi_{I} P_{k_{1}} A, P_{k_{2}} B)}_{\Box
      Z^{1}}
    & \aleq 2^{-\dlta \abs{k_{1} - k_{2}}} \nrm{P_{k_{1}} A}_{S^{1}[I]} \nrm{P_{k_{2}} B}_{S^{1}[I]}, \\
    \nrm{P_{k} \calM^{2}_{0}(\chi_{I} P_{k_{1}} A, P_{k_{2}}
      B)}_{L^{1} L^{\infty}}
    & \aleq 2^{-\dlta \abs{k_{1} - k_{2}}} \nrm{P_{k_{1}} A}_{S^{1}[I]} \nrm{P_{k_{2}} B}_{S^{1}[I]}, \\
    \nrm{P_{k} [P_{k_{1}} \P_{\ell} A, \chi_{I} \rd^{\ell} P_{k_{2}}
      B]}_{\Box Z^{1}}
    & \aleq 2^{-\dlta (k_{\max} - k_{\min})} \nrm{P_{k_{1}} A}_{\uS^{1}[I]} \nrm{P_{k_{2}} B}_{S^{1}[I]}, \\
    \nrm{P_{k} [P_{k_{1}} G, \chi_{I} \nb P_{k_{2}} B]}_{\Box Z^{1}} &
    \aleq 2^{-\dlta (k_{\max} - k_{\min})} \nrm{P_{k_{1}}
      G}_{Y^{1}[I]} \nrm{P_{k_{2}} B}_{S^{1}[I]}.
  \end{align}
  Moreover, for $k < k_{1} - 10$, we have
  \begin{align}
    \nrm{(1-\calH_{k}) P_{k} \P \calM^{2}(\chi_{I} P_{k_{1}} A,
      P_{k_{2}} B)}_{\Box Z^{1}}
    & \aleq 2^{-\dlta (k_{\max} - k_{\min})} \nrm{P_{k_{1}} A}_{S^{1}[I]} \nrm{P_{k_{2}} B}_{S^{1}[I]}, \\
    \nrm{(1-\calH_{k}) P_{k} \calM^{2}_{0}(\chi_{I} P_{k_{1}} A,
      P_{k_{2}} B)}_{\lap^{\frac{1}{2}} \Box^{\frac{1}{2}} Z^{1}} &
    \aleq 2^{-\dlta (k_{\max} - k_{\min})} \nrm{P_{k_{1}}
      A}_{S^{1}[I]} \nrm{P_{k_{2}} B}_{S^{1}[I]}.
  \end{align}
\end{lemma}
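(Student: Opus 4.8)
The plan is to prove Lemma~\ref{lem:Z-prelim} by reducing each bilinear $Z^{1}$-type bound to the global-in-time dyadic estimates already collected in Section~\ref{subsec:dyadic-ests}, handling the sharp time cutoff $\chi_{I}$ carefully. First I would extend $B$ by homogeneous waves outside $I$ (so that $\nrm{\nb B}_{S} \aleq \nrm{B}_{S^{1}[I]}$, using \eqref{eq:int-loc-def} and Remark~\ref{rem:ext-S1}) while keeping $A$ (resp. $G$) restricted to $I$; the point is that the $\chi_{I}$ weight sits only on the high-frequency input $P_{k_{2}} B$ (or, in the last two displays, on $\nb P_{k_{2}} B$). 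Since $\chi_{I}$ is a sharp time cutoff, $\chi_{I} P_{k_{2}} B$ need not be frequency-localized in a clean way, but the operator $P_{k} Q_{<k+C} \chi_{I} P_{k_{2}}$ is disposable with $O(2^{C(k-k_{2})_{+}})$ kernel mass (as in the comments after the definition of the modulation projections), so the extra cutoff can be absorbed at the cost of a harmless exponential in the frequency gap; this is exactly the interval-localization mechanism used throughout Section~\ref{subsec:ests-pf}.

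Next, for each of the six displayed bounds I would match the left-hand side to one of the dyadic estimates: the first bound (for $P_{k} \P \calM^{2}$) follows from \eqref{eq:nf-Z} in Proposition~\ref{prop:nf-Z} after writing $\P \calM^{2}(P_{k_{1}} A, P_{k_{2}} B) = P_{k} \P \bfO(P_{k_{1}} A, \rd_{x} P_{k_{2}} B) = \calN(\cdots)$ via Lemma~\ref{lem:nf-bi}; the second bound (for $\calM^{2}_{0}$ in $L^{1} L^{\infty}$) uses \eqref{eq:ell-op-1} together with \eqref{eq:qf-L1Linfty} from Proposition~\ref{prop:bi-ell}; the third bound (for $[P_{k_{1}} \P_{\ell} A, \chi_{I} \rd^{\ell} P_{k_{2}} B]$) again uses Lemma~\ref{lem:nf-bi} to rewrite it as $2^{k} \calN(\abs{D}^{-1} P_{k_{1}} \P A, \chi_{I} P_{k_{2}} B)$, then \eqref{eq:nf*-Z} from Proposition~\ref{prop:nf*-X} — note this is why the $\uS^{1}$-norm appears on $P_{k_{1}} A$, since $\nrm{u_{k_{1}}}_{S^{1} \cap \Xdf^{1}}$ in \eqref{eq:nf*-Z} is controlled by $\nrm{A}_{\uS^{1}[I]}$ via Proposition~\ref{prop:uS-bnd} (cf. Remark~\ref{rem:uS-bnd}); the fourth bound ($[P_{k_{1}} G, \chi_{I} \nb P_{k_{2}} B]$ in $\Box Z^{1}$) follows from \eqref{eq:qf-Z} in Proposition~\ref{prop:qf-X}, after noting $\nrm{D P_{k_{1}} G}_{Y} \aleq \nrm{P_{k_{1}} G}_{Y^{1}[I]}$. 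The last two bounds, which involve the projection $1 - \calH_{k}$ onto the regime where the output does \emph{not} have dominant modulation, follow respectively from \eqref{eq:nf-1-H-Z} (combined with Lemma~\ref{lem:nf-bi}) and from \eqref{eq:qf-1-H-Z-ell} (combined with \eqref{eq:ell-op-1}); in both cases the off-diagonal gain $2^{-\dlta(k_{1}-k)}$ in the dyadic estimate upgrades to $2^{-\dlta(k_{\max}-k_{\min})}$ by the standard Littlewood–Paley trichotomy since here $k_{1}$ is the high frequency.

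The main obstacle I expect is the bookkeeping around the sharp cutoff $\chi_{I}$ in the modulation-sensitive bounds — specifically the $\Box Z^{1}$ and $\lap^{\frac{1}{2}} \Box^{\frac{1}{2}} Z^{1}$ norms in the last two displays — since the $Z^{1}$-norms involve the modulation projection $Q_{<k+C}$, which does not commute with multiplication by $\chi_{I}$. The resolution, as in the interval-localization arguments of Proposition~\ref{prop:Ax-bi} (Steps~2.3.a–c) and as systematized in \cite[Proofs of Propositions~4.1 and 4.6]{OT2}, is to split according to whether the output, the $A$-input, or the $B$-input carries high modulation relative to $k_{\min}$; in the output-high-modulation case one uses that $\chi_{I}$ only mildly spreads the modulation and the resulting expression is controlled in $L^{2} L^{2}$ via Proposition~\ref{prop:ext}, while in the input-high-modulation cases the high-modulation factor is directly available from the $X$-norms in $S^{1}$ (or the $S^{1} \cap \Xdf^{1}$ norm) of that input. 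Once this trichotomy is in place, each piece reduces to a disposable-operator estimate plus one of the dyadic bounds cited above, with all constants depending at most polynomially on $2^{\kpp}$, which is harmless since these bounds are used with a fixed $\kpp$ (eventually chosen large but finite).
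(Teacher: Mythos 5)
Your choice of dyadic estimates and null-form reductions matches the paper's proof exactly: Lemma~\ref{lem:nf-bi} plus \eqref{eq:nf-Z}, \eqref{eq:qf-L1Linfty}, \eqref{eq:nf*-Z}, \eqref{eq:qf-Z}, \eqref{eq:nf-1-H-Z}, \eqref{eq:qf-1-H-Z-ell} are the right ingredients, and the appeal to Proposition~\ref{prop:uS-bnd} to control the $S^{1}\cap\Xdf^{1}$-norm of $\P A$ by $\nrm{A}_{\uS^{1}[I]}$ for \eqref{eq:nf*-Z} is also what the paper does. So at the level of \emph{which} estimates to apply, the proof is in agreement.

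Where you diverge from the paper --- and make the argument harder than it needs to be --- is the treatment of $\chi_{I}$. You invoke the modulation trichotomy of Steps~2.3.a--c from Proposition~\ref{prop:Ax-bi} and Proposition~\ref{prop:ext}, and you claim that $P_{k}Q_{<k+C}\chi_{I}P_{k_{2}}$ is disposable with $O(2^{C(k-k_{2})_{+}})$ kernel mass. The second claim is not quite right: multiplication by the sharp cutoff $\chi_{I}$ destroys translation invariance, so ``kernel mass'' in the convolution sense is not the right notion, and the comment after the modulation-projection definitions that you are alluding to concerns $P_{k/<k}Q_{j/<j}^{\Box}$ \emph{without} $\chi_{I}$. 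Meanwhile, the trichotomy is overkill: the paper's mechanism is simply that $\chi_{I}$ commutes with all spatial Littlewood--Paley projections (it is a function of $t$ only, so $\chi_{I}P_{k_{1}}A=P_{k_{1}}(\chi_{I}A)$ remains spatially frequency-localized), and $\chi_{I}$ is bounded on $S$ via \eqref{eq:int-loc-S-N}, recorded as \eqref{eq:chi-B-ext}. Since the dyadic estimates of Section~\ref{subsec:dyadic-ests} require only spatial frequency localization of the inputs --- they make no modulation assumption on $u_{k_{1}},v_{k_{2}}$ --- you may feed $\chi_{I}P_{k_{1}}A$ in directly and get $\nrm{D(\chi_{I}P_{k_{1}}A)}_{S}\aleq\nrm{DP_{k_{1}}A}_{S}\aleq\nrm{P_{k_{1}}A}_{S^{1}[I]}$ after extending $A$ by homogeneous waves; the $Q_{<k+C}$ projection in $\Box Z^{1}$ and $\lap^{\frac{1}{2}}\Box^{\frac{1}{2}}Z^{1}$ sits on the \emph{output} and is already built into the statements \eqref{eq:nf-Z}, \eqref{eq:nf-1-H-Z}, etc., so no further splitting is needed. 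One smaller note: your setup ``extend $B$ by homogeneous waves, keep $A$ restricted to $I$, $\chi_{I}$ sits on $B$'' does not match the lemma, where $\chi_{I}$ sits on $A$ in the first, second, fifth and sixth displays; the paper extends both $A$ and $B$ by homogeneous waves (and $A_{0},G$ by zero) and leaves the cutoff wherever it falls.
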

These bounds follow from Lemma~\ref{lem:nf-bi},
\eqref{eq:qf-L1Linfty}, \eqref{eq:nf-1-H-Z}, \eqref{eq:qf-1-H-Z-ell},
\eqref{eq:nf*-Z} and \eqref{eq:qf-Z}, where we use
\eqref{eq:A-alp-ext} and \eqref{eq:chi-B-ext} to absorb $\chi_{I}$ and
return to interval-localized norms. We omit the straightforward
details.

\begin{proof}[Proof of Proposition~\ref{prop:diff-single}]
  As in the proof of Proposition~\ref{prop:paradiff-df-himod}, we
  extend $A, B$ by homogeneous waves outside $I$, and $A_{0}$ by zero
  outside $I$.  Furthermore, we extend $\P^{\perp} A$ by zero outside
  $I$, and denote the extension by $G$ (we emphasize that, in general,
  $G$ does not coincide with $\P^{\perp} A$ outside $I$).  In addition
  to \eqref{eq:A-alp-ext} and \eqref{eq:chi-B-ext}, by
  Proposition~\ref{prop:uS-bnd} (see also Remark~\ref{rem:uS-bnd}) we
  have
  \begin{equation} \label{eq:uS-Y-ext} \nrm{A}_{\uS^{1}} \aleq_{M} 1,
    \quad \nrm{D A_{0}}_{\ell^{1} Y} \aleq_{M} 1, \quad \nrm{D
      G}_{\ell^{1} Y} \aleq_{M} 1.
  \end{equation}
  In the case of the $L^{2} \dot{H}^{-\frac{1}{2}}$-norm on the LHS,
  \eqref{eq:diff-single} now follows easily from \eqref{eq:qf-L2L2}
  and \eqref{eq:qf-Y-L2L2}. It remains to estimate the $N$-norm of
  $\Diff^{\kpp}_{P_{k_{0}} \P A} B$.

  By our extension procedure, note that $P_{k_{0}} A_{0}$ and
  $P_{k_{0}} \P_{x} A$ obey the equations
  \begin{align*}
    \lap P_{k_{0}} A_{0}
    = & P_{k_{0}} \left( [\chi_{I} A^{\ell}, \rd_{t} A_{\ell}] + 2 \bfQ(A, \chi_{I} \rd_{t} A) + \chi_{I} \lap \bfA^{3}_{0}(A) \right) \\
    \Box P_{k_{0}} \P_{x} A = & P_{k_{0}} \P \left( \P
      \calM^{2}(\chi_{I} A, A)
      + 2 [A_{0}, \chi_{I} \rd_{t} A] - 2 [G_{\ell}, \chi_{I} \rd^{\ell} A] - 2 [\P_{\ell} A, \chi_{I} \rd^{\ell} A] \right) \\
    & + P_{k_{0}} \P \left( \chi_{I} R(A) - \chi_{I} \Rem^{3} (A) A
    \right).
  \end{align*}
  For the cubic and higher order nonlinearities, by
  Theorem~\ref{thm:main-eq} and Proposition~\ref{prop:rem-3}, we have
  \begin{align}
    \nrm{\chi_{I} P_{k_{0}} \lap \bfA_{0}^{3}(A )}_{L^{1} L^{2}}
    & \aleq_{M} 1, \\
    \nrm{\chi_{I} P_{k_{0}} R(A)}_{L^{1} L^{2}}
    & \aleq_{M} 1,	\\
    \nrm{\chi_{I} P_{k_{0}} \Rem^{3} (A) A}_{L^{1} L^{2}} & \aleq_{M}
    1.
  \end{align}
  For the quadratic nonlinearities, we use \eqref{eq:qf-L1Linfty} for
  $[\chi_{I} A^{\ell}, \rd_{t} A_{\ell}]$ and $\bfQ(A, \chi_{I}
  \rd_{t} A)$; Lemma~\ref{lem:nf-bi} and \eqref{eq:nf-Z} for $\P
  \calM^{2}[\chi_{I}A, A)$; Lemma~\ref{lem:nf-bi} and \eqref{eq:nf*-Z}
  for $-[\P_{\ell} A, \chi_{I} \rd^{\ell} A]$; and \eqref{eq:qf-Z} for
  $[A_{0}, \chi_{I} \rd_{t} A]$ and $G_{\ell} \chi_{I} \rd^{\ell}
  A]$. Combining these with the cubic and higher order estimates and
  the embedding $L^{1} L^{2} \subseteq \Box Z^{1} \cap
  \lap^{-\frac{1}{2}} \Box^{\frac{1}{2}} Z^{1}$, we arrive at
  \begin{align}
    \nrm{P_{k_{0}} A_{0}}_{L^{1} L^{\infty} + L^{2} \dot{H}^{\frac{3}{2}} \cap \lap^{-\frac{1}{2}} \Box^{\frac{1}{2}} Z^{1}} \aleq_{M} & 1, \\
    \nrm{P_{k_{0}} \P_{x} A}_{Z^{1}} \aleq_{M} & 1.
  \end{align}
  By Lemma~\ref{lem:nf-bi}, \eqref{eq:nf*-1-H*}, \eqref{eq:qf-1-H*},
  \eqref{eq:nf*-H*}, \eqref{eq:qf-H*} and H\"older's inequality $L^{1}
  L^{\infty} \times L^{\infty} L^{2} \to L^{1} L^{2}$, it follows that
  \begin{align*}
    \nrm{P_{k} \Diff^{\kpp}_{P_{k_{0}} A_{0}} P_{k_{2}} B}_{N}
    \aleq & \nrm{P_{k_{0}} A_{0}}_{L^{1} L^{\infty} + L^{2} \dot{H}^{\frac{3}{2}} \cap \lap^{-\frac{1}{2}} \Box^{\frac{1}{2}} Z^{1}} \nrm{D B}_{S}, \\
    \nrm{P_{k} \Diff^{\kpp}_{P_{k_{0}} \P_{x} A} P_{k_{2}} B}_{N}
    \aleq & \nrm{P_{k_{0}} \P_{x} A}_{S^{1} \cap Z^{1}} \nrm{D B}_{S}.
  \end{align*}
  Thanks to the frequency gap $\kpp \geq 5$, note furthermore that the
  LHSs vanish unless $k = k_{2} + O(1)$. This completes the proof of
  Proposition~\ref{prop:diff-single}.
\end{proof}

\begin{proof}[Proof of Proposition~\ref{prop:diff-bi}]
  Estimate \eqref{eq:diff-bi-ell} follows easily using H\"older and
  Bernstein. To prove \eqref{eq:diff-bi-holder}, we extend $\P A, B$
  by homogeneous waves outside $I$, so that $\nrm{P_{k_{1}} \Box \P
    A}_{L^{1} L^{2}} \leq \nrm{P_{k_{1}} \Box \P A}_{L^{1} L^{2}[I]}$
  and $\nrm{P_{k_{2}} B}_{S^{1}} \aleq \nrm{P_{k_{2}}
    B}_{S^{1}[I]}$. Moreover, by the embedding $L^{1} L^{2} \subseteq
  N \cap \Box Z^{1}$, we have $\nrm{P_{k_{1}} \P A}_{S^{1} \cap Z^{1}}
  \aleq \nrm{P_{k_{1}} \nb \P A(t_{0})}_{L^{2}} + \nrm{P_{k_{1}} \Box
    \P A}_{L^{1} L^{2}[I]}$. Then \eqref{eq:diff-bi-holder} follows by
  Lemma~\ref{lem:nf-bi}, \eqref{eq:nf*-1-H*} and \eqref{eq:nf*-H*}.
\end{proof}

\begin{proof}[Proof of Proposition~\ref{prop:diff-tri-nf}]
  Here, in addition to the bilinear null forms
  (Lemma~\ref{lem:nf-bi}), we need to use the secondary null structure
  (Lemma~\ref{lem:nf-tri}).

  Without loss of generality, we set $t_{0} = 0$. We extend $B$,
  $B^{(1)}$ and $B^{(2)}$ by homogeneous waves outside $I$, then
  define $A_{0}$ and $\P A$ by solving the equations
  \eqref{eq:diff-tri-nf-A0} and \eqref{eq:diff-tri-nf-Ax},
  respectively\footnote{We may put in $\chi_{I}$ on the RHSs of
    \eqref{eq:diff-tri-nf-A0} and \eqref{eq:diff-tri-nf-Ax}, but it is
    not necessary.}. In $A_{0}$ and $\P A$, we separate out the ($high
  \times high \to low$) interaction terms by defining
  \begin{align*}
    A_{0}^{hh} = & \sum_{k, k_{1}, k_{2}: k < k_{1} - 10} \lap^{-1} P_{k} [P_{k_{1}} B^{(1) \ell}, P_{k_{2}} \rd_{t} B^{(2)}_{\ell}], \\
    \P A^{hh} = & \sum_{k, k_{1}, k_{2}: k < k_{1} - 10} \Box^{-1}
    P_{k} \P [P_{k_{1}} B^{(1) \ell}, \rd_{x} P_{k_{2}}
    B^{(2)}_{\ell}],
  \end{align*}
  where $\Box^{-1} f$ refers to the solution to the inhomogeneous wave
  equation $\Box u = f$ with $(u, \rd_{t} u)(0) = 0$. We also
  introduce
  \begin{align*}
    \calH A_{0}^{hh} = & \sum_{k, k_{1}, k_{2}: k < k_{1} - 10} \lap^{-1} \calH_{k} P_{k} [P_{k_{1}} B^{(1) \ell}, P_{k_{2}} \rd_{t} B^{(2)}_{\ell}], \\
    \calH \P A^{hh} = & \sum_{k, k_{1}, k_{2}: k < k_{1} - 10}
    \Box^{-1} \calH_{k} P_{k} \P [P_{k_{1}} B^{(1) \ell}, \rd_{x}
    P_{k_{2}} B^{(2)}_{\ell}].
  \end{align*}
  Accordingly, we split
  \begin{align}
    \Diff^{\kpp}_{\P A} B
    = & \sum_{k} \left( 2 [P_{<k-\kpp} (A_{0} - \calH A_{0}^{hh}), \rd^{0} P_{k} B] + 2 [ P_{<k-\kpp} (\P_{\ell} A - \calH \P_{\ell} A^{hh}), \rd^{\ell} P_{k} B] \right) \label{eq:diff-tri-nf-nonmain} \\
    & + \sum_{k} \left( 2 [P_{<k-\kpp} \calH A_{0}^{hh}, \rd^{0} P_{k}
      B] + 2 [ P_{<k-\kpp} \calH \P_{\ell} A^{hh}, \rd^{\ell} P_{k} B]
    \right). \label{eq:diff-tri-nf-main}
  \end{align}

  By Propositions~\ref{prop:ell-bi}, \ref{prop:Ax-bi} and
  Lemma~\ref{lem:Z-prelim}, we have
  \begin{align*}
    \nrm{A_{0}}_{Y^{1}_{cd}} + \nrm{A_{0} - A_{0}^{hh}}_{L^{1}
      L^{\infty}_{cd}} + \nrm{A_{0}^{hh}}_{Y^{1}_{cd}}
    + \nrm{A_{0}^{hh} - \calH A_{0}^{hh}}_{\lap^{-\frac{1}{2}} \Box^{\frac{1}{2}} Z^{1}_{cd}} \aleq & \nrm{B^{(1)}}_{S^{1}_{c}} \nrm{B^{(2)}}_{S^{1}_{d}}, \\
    \nrm{\P A}_{S^{1}_{cd}} + \nrm{\P A^{hh} - \calH \P
      A^{hh}}_{Z^{1}_{cd}} \aleq & \nrm{B^{(1)}}_{S^{1}_{c}}
    \nrm{B^{(2)}}_{S^{1}_{d}}.
  \end{align*}
  Combining these bounds with Lemma~\ref{lem:nf-bi},
  \eqref{eq:nf*-1-H*}, \eqref{eq:qf-1-H*}, \eqref{eq:nf*-H*},
  \eqref{eq:qf-H*} and H\"older's inequality $L^{1} L^{\infty} \times
  L^{\infty} L^{2} \to L^{1} L^{2}$, it follows that
  \begin{align*}
    \nrm{\sum_{k} [P_{<k-\kpp} (A_{0} - \calH A_{0}^{hh}), \rd^{0}
      P_{k} B]}_{N_{f}}
    \aleq & \nrm{B^{(1)}}_{S^{1}_{c}} \nrm{B^{(2)}}_{S^{1}_{d}} \nrm{B}_{S^{1}_{e}}, \\
    \nrm{\sum_{k} [P_{<k-\kpp} (\P_{\ell} A - \calH \P_{\ell} A^{hh}),
      \rd^{\ell} P_{k} B]}_{N_{f}} \aleq & \nrm{B^{(1)}}_{S^{1}_{c}}
    \nrm{B^{(2)}}_{S^{1}_{d}} \nrm{B}_{S^{1}_{e}},
  \end{align*}
  which handles the contribution of \eqref{eq:diff-tri-nf-nonmain}. On
  the other hand, unraveling the definitions, we may rewrite
  \eqref{eq:diff-tri-nf-main} as
  \begin{align*}
    \eqref{eq:diff-tri-nf-main} = & \sum \Big(
    Q_{<j-C} \calO'(\lap^{-1} P_{k} Q_{j} \calO(P_{k_{1}} Q_{<j-C} B^{(1)}, \rd_{0} P_{k_{2}} Q_{<j-C} B^{(2)}), \rd^{0} Q_{<j-C} P_{k_{3}} B) \\
    & \phantom{\sum \Big(} + Q_{<j-C} \calO'(\Box^{-1} P_{k} Q_{j}
    \P_{\ell} \calO(P_{k_{1}} Q_{<j-C} B^{(1)}, \rd_{x} P_{k_{2}}
    Q_{<j-C} B^{(2)}), \rd^{\ell} Q_{<j-C} P_{k_{3}} B) \Big)
  \end{align*}
  for some disposable operators $\calO$ and $\calO'$, where the
  summation is taken over the range $\set{(k, k_{1}, k_{2}, k_{3}) : k
    < k_{1} - 10, \, k < k_{3} - \kpp + 5}$. By \eqref{eq:tri-nf}, it
  follows that
  \begin{equation*}
    \nrm{\eqref{eq:diff-tri-nf-main}}_{L^{1} L^{2}_{f}} \aleq \nrm{B^{(1)}}_{S^{1}_{c}} \nrm{B^{(2)}}_{S^{1}_{d}} \nrm{B}_{S^{1}_{e}},
  \end{equation*}
  which is acceptable. Finally, for the $L^{2}
  \dot{H}^{-\frac{1}{2}}$-norm of $\Diff^{\kpp}_{\P A} B$, note that
  \eqref{eq:qf-L2L2} and the preceding bounds imply
  \begin{equation*}
    \nrm{P_{k} (\Diff^{\kpp}_{\P A} B)}_{L^{2} \dot{H}^{-\frac{1}{2}}}
    \aleq c_{k-\kpp} d_{k-\kpp} e_{k},
  \end{equation*}
  which is better than what we need. \qedhere
\end{proof}

\begin{proof}[Proof of Proposition~\ref{prop:diff-tri}]
  As in the preceding proof, we extend $B$, $B^{(1)}$ and $B^{(2)}$ by
  homogeneous waves outside $I$. This time, however, we also extend
  $\P A$ by homogeneous waves outside $I$. We moreover extend $B_{0}$
  and $\P^{\perp} B^{(1)}$ by zero outside $I$, where the latter is
  denoted by $G^{(1)}$. Note that $\P A$ solves the equation
  \begin{equation*}
    \Box \P A = \P \left( [\P_{\ell} B^{(1)}, \chi_{I} \rd^{\ell} B^{(2)}] + [B_{0}^{(1)}, \chi_{I} \rd^{0} B^{(2)}] + [G_{\ell}^{(1)}, \chi_{I} \rd^{\alp} B^{(2)}] \right).
  \end{equation*}
  By Lemma~\ref{lem:Z-prelim} and the frequency envelope bounds
  \eqref{eq:diff-tri-fe-hyp-B12}--\eqref{eq:diff-tri-fe-hyp-AB}, it
  follows that
  \begin{equation} \label{eq:diff-tri-pf:Z} \nrm{\P A}_{Z^{1}_{cd}}
    \aleq (\nrm{B^{(1)}}_{\uS^{1}_{c}[I]} + \nrm{(B_{0}^{(1)},
      G^{(1)})}_{Y^{1}_{c}[I]}) \nrm{B^{(2)}}_{S^{1}_{d}[I]} \leq 1.
  \end{equation}
  On the other hand, recall that $\nrm{\P A}_{S^{1}_{a}} \leq 1$ by
  \eqref{eq:diff-tri-fe-hyp-AB}. Therefore, by Lemma~\ref{lem:nf-bi},
  \eqref{eq:nf*-1-H*} and \eqref{eq:nf*-H*}, we have
  \begin{equation*}
    \nrm{\Diff^{\kpp}_{\P_{x} A} B}_{N_{f}}
    \aleq 1.
  \end{equation*}
  On the other hand, by \eqref{eq:qf-L2L2}, we also have
  \begin{equation*}
    \nrm{P_{k} (\Diff^{\kpp}_{\P_{x} A} B)}_{L^{2} \dot{H}^{-\frac{1}{2}}}
    \aleq a_{k-\kpp} e_{k},
  \end{equation*}
  which is better than what we need. The desired estimate
  \eqref{eq:diff-tri} follows. \qedhere
\end{proof}

\begin{proof}[Proof of  Proposition~\ref{prop:diff-aux}]
  We move the problem to the entire real line using the free wave
  extension for $\P A_x$ and $B$, and the zero extension for $A_0$.

  The expression $|D|^{-1}[\nabla,\Diff^\kpp_{\P A}] B$ is a
  translation invariant bilinear expression in $\P A$ and $B$, whose
  Littlewood-Paley pieces can be expressed in the form
\begin{equation}\label{eq:com-dec}
|D|^{-1}[\nabla,\Diff^\kpp_{P_{k'}\P A}] P_{k} B = 2^{k'-k} \calO(P_{k'}\P A_\alpha , \partial^\alpha P_{k} B), \qquad k' < k-\kpp
\end{equation}
with $\calO$ disposable. 
By \eqref{eq:nf-N*} the spatial part is a null form, so we can rewrite the above expression as
\[
2^{-k} \calN(P_{k'}\P A_x ,P_{k} B) + 2^{k'-k} \calO(P_{k'}A_0 , P_{k} \partial_t B)
\] 
We consider separately the spatial part and the temporal part. For the spatial part we use the bound \eqref{eq:nf-core}
to estimate 
\[
\| 2^{-k} \calN(P_{k'}\P A_x ,P_{k} B) \|_{N} \lesssim 2^{-\dlta |k-k'|} \| P_k' \P A\|_{S^1} \|B\|_{S^1}
\]
which suffices after summation in $k' < k-\kappa$. 

For the temporal part we use instead the bound \eqref{eq:qf-rem-ell}, which yields
\[
\|2^{k'} \calO(P_{k'} A_0 , P_{k} B) \|_{L^1 L^2} \lesssim 2^{-\dlta |k-k'|} \| P_k' D A_0\|_{L^2 \dot H^\frac12} \|B\|_{S^1}
\]
which again suffices.

The expression $\Diff^\kpp_{P_{k'}\P A} B - (\Diff^\kpp_{P_{k'}\P
  A})^\ast B$ is easily seen to have the same form as in
\eqref{eq:com-dec}, so the same estimate follows.
\end{proof}

\subsubsection{Estimates involving $\bfW$}
Here we prove Propositions~\ref{prop:w0-bi}, \ref{prop:wx-bi} and
\ref{prop:diff-tri-nf-w}, which involve $\bfw^{2}_{0}$ and
$\bfw^{2}_{x}$.

\begin{proof}[Proof of Proposition~\ref{prop:w0-bi}]
  By definition \eqref{eq:w2-0-def}, we have
  \begin{equation*}
    P_{k} \bfw^{2}_{0}(P_{k_{1}} A, P_{k_{2}} B, s) 
    = - 2  P_{k} \bfW(P_{k_{1}} \rd_{t} A, P_{k_{2}} \lap B, s).
  \end{equation*}
  Applying Lemma~\ref{lem:w-disp} to the expression on the RHS, we
  have
  \begin{align}
    P_{k} \bfW(P_{k_{1}} \rd_{t} A, P_{k_{2}} \lap B, s) = & - \brk{s
      2^{2k}}^{-10} \brk{s^{-1} 2^{-2 k_{\max}}}^{-1} 2^{-2k_{\max}}
    2^{2 k_{2}} P_{k} \bfO(P_{k_{1}} \rd_{t} A, P_{k_{2}} 
    B), \label{eq:w0-disp}
  \end{align}
  for some disposable operator $\bfO$ on $\bbR^{4}$. The rest of the
  proof follows that of Proposition~\ref{prop:ell-bi}. First, by
  \eqref{eq:ell-bi-L2-atom}, it follows that
  \begin{align*}
    & \nrm{\abs{D}^{-1} P_{k} \bfw^{2}_{0}(P_{k_{1}}  A, P_{k_{2}} B, s)}_{L^{2}} \\
    & \aleq \brk{s 2^{2k}}^{-10} \brk{s^{-1} 2^{-2 k_{\max}}}^{-1}
    2^{2 (k_{\min} - k_{\max})} 2^{k_{2} - k}\nrm{P_{k_{1}}\rd_{t}
      A}_{L^{2}} \nrm{P_{k_{2}} B}_{\dot{H}^{1}}.
  \end{align*}
  From this dyadic bound, the frequency envelope bound
  \eqref{eq:w0-bi-L2} follows. Indeed, for any $0 < \dlt' < 4 \dlt$
  and any $\dlt'$-admissible frequency envelopes $c, d$, we compute
  \begin{align}
    \brk{s 2^{2k}}^{-10} \brk{s^{-1} 2^{-2 k_{\max}}}^{-1} 2^{-\dlt
      (k_{\max} - k_{\min})}c_{k_{1}} d_{k_{2}}
    & \aleq \brk{s 2^{2k}}^{-10} \brk{s^{-1} 2^{-2 k_{\max}}}^{-1} 2^{-\frac{1}{2} \dlt (k_{\max} - k_{\min})}c_{k} d_{k} \notag \\
    & \aleq \brk{s 2^{2k}}^{-10} \brk{s^{-1} 2^{-2 k}}^{-\frac{1}{4}
      \dlt} c_{k} d_{k}. \label{eq:w-fe}
  \end{align}
  which proves \eqref{eq:w0-bi-L2}. The estimate \eqref{eq:w0-bi-L2L2} follows in a similar manner from \eqref{eq:ell-bi-L2-atom}.

Next, extending $\rd_{t} A$ and
  $B$ by zero outside $I$, then applying \eqref{eq:qf-L2L2} and
  \eqref{eq:qf-L1Linfty}, it follows that
  \begin{align*}
    & \nrm{\abs{D}^{-1} P_{k} \bfw^{2}_{0}(P_{k_{1}} A, P_{k_{2}} B, s)}_{L^{2} \dot{H}^{-\frac{1}{2}}[I]} \\
    & \aleq \brk{s 2^{2k}}^{-10} \brk{s^{-1} 2^{-2 k_{\max}}}^{-1} 2^{-\dlta(k_{\max} - k_{\min})} 2^{2 (k_{1} - k_{\max})} \nrm{P_{k_{1}} A}_{\Str^{1}[I]} \nrm{P_{k_{2}} B}_{\Str^{1}[I]}, \\
    & \nrm{\abs{D}^{-2} P_{k} \bfw^{2}_{0}(P_{k_{1}} A, P_{k_{2}} B, s)}_{L^{1} L^{\infty}[I]} \\
    & \aleq \brk{s 2^{2k}}^{-10} \brk{s^{-1} 2^{-2 k_{\max}}}^{-1}
    2^{2 (k_{1} - k_{\max})} \nrm{P_{k_{1}} A}_{S^{1}[I]}
    \nrm{P_{k_{2}} B}_{S^{1}[I]}.
  \end{align*}
  Using \eqref{eq:ed-str} and \eqref{eq:Y-int}, these two bounds imply
  \eqref{eq:w0-bi} and \eqref{eq:w0-bi-ed}, as in Proof of
  Proposition~\ref{prop:ell-bi}, Step~2. \qedhere
\end{proof}

\begin{proof}[Proof of Proposition~\ref{prop:wx-bi}]
  We begin with algebraic observations. By \eqref{eq:w2-x-def}, we
  have
  \begin{align}
    P_{k} \P_{j} \bfw^{2}(P_{k_{1}} A, P_{k_{2}} B, s)
    = & - 2 P_{k} \P_{j} \bfW(P_{k_{1}} \rd_{t} A^\ell, \rd_{x} P_{k_{2}} \rd_{t} B_\ell, s) \notag \\
    & + 4  P_{k} \P_{j} \bfW(P_{k_{1}} \P \rd_{t} A^\ell, \rd_{\ell} P_{k_{2}} \rd_{t} B, s) \label{eq:w2-x-decompose} \\
    & + 4 P_{k} \P_{j} \bfW(P_{k_{1}} \P^{\perp} \rd_{t} A^\ell, \rd_{\ell}
    P_{k_{2}} \rd_{t} B, s), \notag
  \end{align}
  where, by Lemma~\ref{lem:w-disp}, we may write
  \begin{align}
    & P_{k} \P_{j} \bfW(P_{k_{1}} \rd_{t} A^\ell, \rd_{x} P_{k_{2}} \rd_{t} B_\ell, s) \notag \\
    & = \brk{s 2^{2k}}^{-10} \brk{s^{-1} 2^{-2k_{\max}}}^{-1} 2^{-2 k_{\max}} P_{k} \P_{j} \bfO(P_{k_{1}} \rd_{t} A^\ell, \rd_{x} P_{k_{2}} \rd_{t} B_\ell) , \label{eq:wx-bi-main-disp} \\
    & P_{k} \P_{j} \bfW(P_{k_{1}} \rd_{t} \P A^\ell, \rd_{\ell} P_{k_{2}} \rd_{t} B, s) \notag \\
    & = - 2 \brk{s 2^{2k}}^{-10} \brk{s^{-1} 2^{-2k_{\max}}}^{-1} 2^{-2 k_{\max}} P_{k} \bfO(\P_{\ell} P_{k_{1}} \rd_{t} A, \rd^{\ell} P_{k_{2}} \rd_{t} B) ,\label{eq:wx-bi-df-disp} \\
    & P_{k} \P_{j} \bfW(P_{k_{1}} \rd_{t} \P^{\perp} A^\ell, \rd_{\ell} P_{k_{2}} \rd_{t} B, s) \notag \\
    & = - 2 \brk{s 2^{2k}}^{-10} \brk{s^{-1} 2^{-2k_{\max}}}^{-1}
    2^{-2 k_{\max}} P_{k} \bfO(P_{k_{1}} \rd_{t} \P^{\perp}_{\ell} A,
    \rd^{\ell} P_{k_{2}} \rd_{t} B) ,\label{eq:wx-bi-cf-disp}
  \end{align}
  for some disposable operator $\bfO$ on $\bbR^{4}$. Note that
  \eqref{eq:wx-bi-main-disp} and \eqref{eq:wx-bi-df-disp} are null
  forms according to Lemma~\ref{lem:nf-bi}, and
  \eqref{eq:wx-bi-cf-disp} is favorable since $\rd_{t} \P^{\perp} A$
  is controlled in the $L^{2} \dot{H}^{\frac{1}{2}}$-norm.

 Given the above formulas for $\bfw_x$, the proof of  the estimates 
\eqref{eq:wx-bi-L2} and   \eqref{eq:wx-bi-L2L2} is  almost identical to the proof of  
\eqref{eq:w0-bi-L2} \eqref{eq:w0-bi-L2L2},  using  the dyadic bounds \eqref{eq:ell-bi-L2-atom},\eqref{eq:ell-bi-L2-atom}
and \eqref{eq:w-fe}.

  We now prove \eqref{eq:wx-bi}. We extend $A, B$ by homogeneous waves
  outside $I$. By \eqref{eq:qf-L2L2}, \eqref{eq:qf-L9/5L2},
  Lemma~\ref{lem:nf-bi}, \eqref{eq:nf-core} and \eqref{eq:nf-uX}, it
  follows that
  \begin{align*}
    & \nrm{P_{k} \P_{j} \bfW(P_{k_{1}} \rd_{t} A, \rd_{x} P_{k_{2}} \rd_{t} B, s)}_{N \cap \Box \uX^{1}} \\
    & \aleq \brk{s 2^{2k}}^{-10} \brk{s^{-1} 2^{-2k_{\max}}}^{-1} 2^{-\dlta(k_{\max} - k_{\min})} 2^{k_{1} + k_{2} -2 k_{\max}} \nrm{P_{k_{1}} A}_{S^{1}} \nrm{P_{k_{2}} B}_{S^{1}} \\
    & \nrm{P_{k} \P_{j} \bfW(P_{k_{1}} \rd_{t} \P A, \rd_{x} P_{k_{2}} \rd_{t} B, s)}_{N \cap \Box \uX^{1}} \\
    & \aleq \brk{s 2^{2k}}^{-10} \brk{s^{-1} 2^{-2k_{\max}}}^{-1} 2^{-\dlta(k_{\max} - k_{\min})} 2^{k + k_{2} -2 k_{\max}} \nrm{P_{k_{1}} A}_{S^{1}} \nrm{P_{k_{2}} B}_{S^{1}} \\
    & \nrm{P_{k} \P_{j} \bfW(P_{k_{1}} \rd_{t} \P^{\perp} A, \rd_{x} P_{k_{2}} \rd_{t} B, s)}_{N \cap \Box \uX^{1}} \\
    & \aleq \brk{s 2^{2k}}^{-10} \brk{s^{-1} 2^{-2k_{\max}}}^{-1}
    2^{-\dlta(k_{\max} - k_{\min})} 2^{2 k_{2} -2 k_{\max}}
    \nrm{P_{k_{1}} \rd_{t} \P^{\perp} A}_{L^{2} \dot{H}^{\frac{1}{2}}}
    \nrm{P_{k_{2}} B}_{S^{1}}.
  \end{align*}
  Clearly, $2^{k_{1} + k_{2} - 2 k_{\max}}$, $2^{k + k_{2} - 2
    k_{\max}}$ and $2^{2 k_{2} - 2 k_{\max}}$ are bounded, so they may
  be safely discarded. By the same frequency envelope computation
  \eqref{eq:w-fe} as before, we obtain \eqref{eq:wx-bi}.

  In the energy dispersed case \eqref{eq:wx-bi-ed}, we proceed as in
  the proofs of Propositions~\ref{prop:Ax-bi} and \ref{prop:rem2}. The
  contribution of \eqref{eq:wx-bi-cf-disp} is already acceptable,
  since we need not gain any smallness factor. Moreover, for the
  contribution of \eqref{eq:wx-bi-main-disp} and
  \eqref{eq:wx-bi-df-disp}, the case of $L^{2} \dot{H}^{-\frac{1}{2}}$
  on the LHS can be easily handled using \eqref{eq:qf-L2L2} and
  \eqref{eq:ed-str}; we omit the details.

  It remains to consider only the $N$-norm of
  \eqref{eq:wx-bi-main-disp} and \eqref{eq:wx-bi-df-disp}. For a
  parameter $\kpp > 0$ to be chosen below, the preceding proof of
  \eqref{eq:wx-bi} imply that in the case $k_{\max} - k_{\min} \geq
  \kpp$, we have
  \begin{equation*}
    \nrm{\eqref{eq:wx-bi-main-disp}}_{N}
    + \nrm{\eqref{eq:wx-bi-df-disp}}_{N} \aleq \brk{s 2^{2k}}^{-10} \brk{s^{-1} 2^{-2k_{\max}}}^{-1} 2^{-\frac{1}{2} \dlta \kpp}  2^{-\frac{1}{2} \dlta(k_{\max} - k_{\min})}  \nrm{P_{k_{1}} A}_{S^{1}} \nrm{P_{k_{2}} B}_{S^{1}}.
  \end{equation*}
  On the other hand, when $k_{\max} - k_{\min} \leq \kpp$, we may
  apply Lemma~\ref{lem:nf-bi} (in particular, \eqref{eq:nf-N*-bare}
  and \eqref{eq:nf-N-bare}) and Remark~\ref{rem:nf-bi}, which implies
  \begin{equation*}
    \nrm{\eqref{eq:wx-bi-main-disp}}_{N}
    + \nrm{\eqref{eq:wx-bi-df-disp}}_{N} \aleq \brk{s 2^{2k}}^{-10} \brk{s^{-1} 2^{-2k_{\max}}}^{-1} 2^{C \kpp}  \veps^{c\dlta}  \nrm{P_{k_{1}} A}_{\uS^{1}} M.
  \end{equation*}
  Choosing $2^{\kpp} = \veps^{c}$ for a sufficiently small $c > 0$,
  and performing a similar frequency envelope computation as in
  \eqref{eq:w-fe}, we arrive at \eqref{eq:wx-bi-ed}. \qedhere
\end{proof}

\begin{proof}[Proof of Proposition~\ref{prop:diff-tri-nf-w}]
We first note that both $\bfw_0$ and $\bfw_x$ depend on $\partial_t B_1$,
for which we control $\| \partial_t B_1\|_{S_c}$ and $\| \P^\perp \partial_t B_1\|_{Y_c}$. 
We may assume that
  \begin{equation*}
    \nrm{\partial_t B^{(1)}}_{S_{c}[I]}, \, \nrm{\P^{\perp} \partial_t B^{(1)}}_{Y_{c}[I]}, \, \nrm{B^{(2)}}_{S^{1}_{d}[I]}, \, \nrm{B}_{S^{1}_{e}[I]} \leq 1.
  \end{equation*}
We can now extend $\partial_t B_1$ by zero outside $I$, and $B^{(2)}$ and $B$ by free waves. Then the problem is reduced
to the similar problem on the real line. We begin with the simpler $L^{2} \dot{H}^{-\frac{1}{2}}$ bound. For that we use 
\eqref{eq:w0-bi-L2L2} and \eqref{eq:wx-bi} to obtain
\begin{equation}
\| P_k \bfw_0\|_{L^2 \dot H^{-\frac12}} + \|P_k \bfw_x\|_{N \cap \Box \uX^1} \lesssim   
  \brk{s 2^{2k'}}^{-10} \brk{s^{-1} 2^{-2 k_{\max}}}^{-\dltb}    c_k d_k
\end{equation}
and then conclude with \eqref{eq:qf-L2L2} respectively \eqref{eq:qf-Y-L2L2}.

It remains to prove the $N$ bound.  We define
  \begin{align*}
    \calI(k', k_{1}, k_{2}, k, s)
    = & \big( - [\lap^{-1} P_{k'} \bfw_{0}^{2}(P_{k_{1}} B^{(1)}, P_{k_{2}} B^{(2)}, s), \rd_{t} P_{k} B] \\
    & \phantom{\big(} + [\Box^{-1} P_{k'} \P_{\ell}
    \bfw_{x}^{2}(P_{k_{1}} B^{(1)}, P_{k_{2}} B^{(2)}, s), \rd^{\ell}
    P_{k} B] \big),
  \end{align*}
  so that $\Diff_{\P A}^{\kpp} B = \sum_{k', k_{1}, k_{2}, k : k' < k
    - \kpp} \calI(k', k_{1}, k_{2}, k)$ on $I$.  Introducing the
  shorthands 
\[
k_{\max} = \max \set{k', k_{1}, k_{2}}, \qquad k_{\min} =
  \min \set{k', k_{1}, k_{2}}
\]
 and
  \begin{equation*}
    \alp(k', k_{1}, k_{2}, s) = \brk{s 2^{2k'}}^{-10} \brk{s^{-1} 2^{-2 k_{\max}}}^{-1} 2^{-c\dlta (k_{\max} - k_{\min})}
  \end{equation*}
  we claim that
  \begin{align}
    \nrm{\calI(k', k_{1}, k_{2}, k, s)}_{N } \aleq & \ \alp(k', k_{1}, k_{2}, s)
    c_{k_{1}} d_{k_{2}} e_{k}.
  \end{align}
This would conclude the proof of the proposition after summation with respect to $k_1$ and $k_2$.

We start with a simple observation, namely that we can easily dispense with the high modulations of $\partial_t B_1$
and $B_2$ using Lemma~\ref{lem:w-disp}, combined with H\"older and Bernstein's inequalities 
and also \eqref{eq:nf*-1-H*} and \eqref{eq:nf*-H*-lomod}. Thus from here on we assume that 
\[
 P_{k_{1}} \rd_{t} B^{(1)} = P_{k_{1}} Q_{<k_1} \rd_{t} B^{(1)}, \qquad  P_{k_{2}} \rd_{t} B^{(2)} = P_{k_{2}} Q_{<k_2} \rd_{t} B^{(2)}
\]

In view of \eqref{eq:w2-x-decompose} and the identity
  \begin{equation*}
    \bfw_{0}^{2}(A, B, s)
    = - 2 \bfW(\rd_{t} A, \rd_{t}^{2} B, s)
    - 2 \bfW(\rd_{t} A, \Box B, s),
  \end{equation*}
  we may expand
  \begin{align}
    \calI(k', k_{1}, k_{2}, k, s)
    = & \ 2 [P_{k'} \lap^{-1} \bfW(P_{k_{1}} \rd_{t} B^{(1)}, \Box P_{k_{2}} B^{(2)}, s), \rd_{t}P_{k} B] \\
    & + 4 [\Box^{-1} P_{k'} \bfP_{\ell} \bfW(P_{k_{1}} \P \rd_{t} B^{(1),m}, \rd_{m} P_{k_{2}} \rd_{t} B^{(2)}, s), \rd^{\ell} P_{k} B] \\
    & + 4 [\Box^{-1} P_{k'} \bfP_{\ell} \bfW(P_{k_{1}} \P^{\perp} \rd_{t} B^{(1),m}, \rd_{m} P_{k_{2}} \rd_{t} B^{(2)}, s), \rd^{\ell} P_{k} B] \\
    & + 2 [\lap^{-1} P_{k'} \bfW(P_{k_{1}} \rd_{t} B^{(1)}, \rd_{t} P_{k_{2}} \rd_{t} B^{(2)}, s), \rd_{t} P_{k} B] \\
    & - 2 [\Box^{-1} P_{k'} \P_{\ell} \bfW(P_{k_{1}} \rd_{t} B^{(1),m},
    \rd_{x} P_{k_{2}} \rd_{t} B^{(2)}_m, s), \rd^{\ell} P_{k} B] .
\\
=  & \  \calI_{(1)} + \calI_{(2)}+   \calI_{(3)} + \calI_{(4)}+\calI_{(5)}
\end{align}
The first term is easily estimated in $L^1 L^2$ using Lemma~\ref{lem:w-disp} and Holder and Bernstein's inequality  by
\[
\begin{split}
\|  \calI_{(1)}\|_{L^1 L^2} \lesssim & \ \| P_{k'} \lap^{-1} \bfW(P_{k_{1}} \rd_{t} B^{(1)}, \Box P_{k_{2}} B^{(2)}, s)\|_{L^1 L^\infty}
\|  \rd_{t}P_{k} B]\|_{L^\infty L^2} \\
\lesssim & \   \brk{s 2^{2k'}}^{-10} \brk{s^{-1} 2^{-2 k_{\max}}}^{-1} 2^{\frac12(k_{min}- k_{max})}
 \|\rd_{t} P_{k_{1}} B^{(1)}\|_{L^2 \dot W^{1,8}} \|\Box P_{k_{2}} B^{(2)}\|_{L^2 \dot H^{-\frac12}}
e_k
\end{split}
\]
which suffices.

 To continue, we use \eqref{eq:qf-rem-ell}, \eqref{eq:nf-Z} and the
embedding $L^{1} L^{2} \subseteq \Box Z^{1}$, we have
  \begin{align*}
    \nrm{P_{k'} \bfP_{\ell} \bfW(P_{k_{1}} \P \rd_{t} B^{(1)}, \rd_{x}
      P_{k_{2}} \rd_{t} B^{(2)}, s)}_{N \cap \Box Z^{1}}
    \aleq & \ \alp(k', k_{1}, k_{2}, s) c_{k_{1}} d_{k_{2}} \\
    \nrm{P_{k'} \bfP_{\ell} \bfW(P_{k_{1}} \P^{\perp} \rd_{t} B^{(1)},
      \rd_{x} P_{k_{2}} \rd_{t} B^{(2)}, s)}_{N \cap \Box Z^{1}} 
    \aleq & \ \alp(k', k_{1}, k_{2}, s) c_{k_{1}} d_{k_{2}}
  \end{align*}
This yields
 \begin{align*}
    \nrm{\Box^{-1} P_{k'} \bfP_{\ell} \bfW(P_{k_{1}} \P \rd_{t} B^{(1)}, \rd_{x}
      P_{k_{2}} \rd_{t} B^{(2)}, s)}_{S \cap Z^{1}}
    \aleq & \ \alp(k', k_{1}, k_{2}, s) c_{k_{1}} d_{k_{2}} \\
    \nrm{\Box^{-1} P_{k'} \bfP_{\ell} \bfW(P_{k_{1}} \P^{\perp} \rd_{t} B^{(1)},
      \rd_{x} P_{k_{2}} \rd_{t} B^{(2)}, s)}_{S \cap Z^{1}} 
    \aleq & \ \alp(k', k_{1}, k_{2}, s) c_{k_{1}} d_{k_{2}}
  \end{align*}

  We use this directly for the next two terms $\calI_{(2)}$ and
  $\calI_{(3)}$, arguing in a bilinear fashion.  The desired $N$ bound
  for both is obtained using both \eqref{eq:nf*-1-H*} and
  \eqref{eq:nf*-H*-lomod} with $\kappa = 0$.

The final two terms are combined together in a trilinear null form,
\[
\calI_{(4)} + \calI_{(5)} = \Diff^\kappa_{\P \tA} B
\]
where
\[
\tA_0 = \Delta^{-1}  P_{k'} \bfW(P_{k_{1}} \rd_{t} B^{(1)}, \rd_{t} P_{k_{2}} \rd_{t} B^{(2)}, s), 
\]
and 
\[
\A_x = \Box^{-1}  P_{k'} \P_{\ell} \bfW(P_{k_{1}} \rd_{t} B^{(1),m},
    \rd_{x} P_{k_{2}} \rd_{t} B^{(2)}_m, s)
\]
At this point we have placed ourselves in the same setting as in the proof of Proposition~\ref{prop:diff-tri-nf}.
Then the same argument applies, with the only difference that, due to Lemma~\ref{lem:w-disp}, we obtain an additional
factor of 
\[
\brk{s 2^{2k'}}^{-10} \brk{s^{-1} 2^{-2 k_{\max}}}^{-1} 2^{- 2k_{max}} 2^{k_1+k_2}
\]
as needed. Here the factors $2^{k_1}$ and $2^{k_2}$ come from one time derivative on $B^{(1)}$, respectively $B^{(2)}$
at low modulation. Thus the $N$ bound for $\calI_{(4)} + \calI_{(5)}$ follows.
\end{proof}

\subsubsection{Estimates for $\Rem^{3}(A) B$ and $\Rem^{3}_{s}(A) B$}
Finally, we sketch the proof of Proposition~\ref{prop:rem-3}.
\begin{proof}[Proof of Proposition~\ref{prop:rem-3}]
 By Holder's inequality and Bernstein, it suffices to show that the following nonlinear maps
are Lipschitz and envelope preserving:
\[
\Str^1 \ni A \to (\DA_0,\DA) \in L^{2-}  \dot H^{\frac12+} \cap L^{2+} \dot H^{\frac12-} 
\]
\[
\Str^1 \ni A \to  \A_0 \in L^2 \dot H^{\frac32}  
\]

The same applies for the maps
\[
\Str^1 \ni A \to \DA_{0,s}  \in L^{2-} \dot H^{\frac12+} \cap L^{2+} \dot H^{\frac12-} 
\]
\[
\Str^1 \ni A \to  \A_{0;s} \in L^2 \dot H^{\frac32} 
\]
with the addition that now the output has to be also concentrated at frequency $k(s)$.

The $\A_0$ property is a consequence of  \eqref{eq:ell-bi-L2L2} for the quadratic term, 
and  \eqref{eq:A0-tri} for the cubic part $\A_0^3$. Similarly, the $A_{0;s}$ property  
is a consequence of  \eqref{eq:w0-bi-L2L2} for the quadratic term, 
and  \eqref{eq:A0-tri-s} for the cubic part $\A_{0;s}^3$.

The $\DA$ property follows from (a minor variation of) \eqref{eq:Q-L2L2} for the quadratic part, 
and \eqref{eq:DA-tri-t}  for the cubic part $\DA^3$.  

Finally, the $\DA_0$ property is a consequence of (a small variation of) \eqref{eq:ell-bi-L2L2} 
for the quadratic part and of \eqref{eq:DA0-tri} for the cubic part. 
Similarly, for $\DA_0^s$ we need (a small variation of) \eqref{eq:w0-bi-L2L2} and 
of \eqref{eq:DA0-tri-s}. 
\end{proof}


\subsection{Proof of the global-in-time dyadic
  estimates} \label{subsec:dyadic-ests-pf} In this subsection, we
prove the global-in-time dyadic estimates stated in
Section~\ref{subsec:dyadic-ests}.
\subsubsection{Preliminaries on orthogonality}
Let $\calO$ be a translation-invariant bilinear operator on
$\bbR^{1+4}$. Consider the expression
\begin{equation} \label{eq:geom-cone-expr} \iint u^{(0)}
  \calO(u^{(1)}, u^{(2)}) \, \ud t \ud x.
\end{equation}

Our general strategy for proving the dyadic estimates stated in
Section~\ref{subsec:dyadic-ests} will be as follows: (1) Decompose
$u^{(i)}$ by frequency projection into various sets, (2) Estimate each
such piece, and (3) Exploit vanishing (or orthogonality) properties of
\eqref{eq:geom-cone-expr}, which depend on the relative configuration
of the frequency supports of $u^{(i)}$'s, to sum up. Some simple
examples of orthogonality properties of \eqref{eq:geom-cone-expr} that
we will use are as follows:
\begin{itemize}
\item ({\bf Littlewood--Paley trichotomy}) If $u^{(i)} = P_{k_{1}}
  u^{(i)}$, then \eqref{eq:geom-cone-expr} vanishes unless the largest
  two numbers of $k_{0}, k_{1}, k_{2}$ are part by at most (say)
  $5$. This property has already been used freely.

\item ({\bf Cube decomposition}) If $u^{(i)} = P_{k_{i}} P_{\calC^{i}}
  u^{(i)}$ with $\calC^{i} = \calC_{k_{\min}}(0)$ (i.e., is a cube of
  dimension $2^{k_{\min}} \times \cdots 2^{k_{\min}}$) situated in
  $\set{\abs{\xi} \aeq 2^{k_{i}}}$, then \eqref{eq:geom-cone-expr}
  vanishes unless $\calC^{0} + \calC^{1} + \calC^{2} \ni 0$.

  To obtain more useful statements, let $\calC^{\max}$, $\calC^{\med}$
  and $\calC^{\min}$ denote the re-indexing of the cubes $\calC^{0}$,
  $\calC^{1}$ and $\calC^{2}$, which are situated at the annuli
  $\set{\abs{\xi} \aeq 2^{k_{\max}}}$, $\set{\abs{\xi} \aeq
    2^{k_{\med}}}$ and $\set{\abs{\xi} \aeq 2^{k_{\min}}}$,
  respectively. Then for every fixed $\calC^{\min}$ and $\calC^{\max}$
  [resp. $\calC^{\med}$], there are only $O(1)$-many cubes
  $\calC^{\med}$ [resp. $\calC^{\max}$] satisfying $\calC^{\min} +
  \calC^{\med} + \calC^{\max} \ni 0$. Moreover, we have
  \begin{equation*}
    \abs{\angle(\calC^{\max}, - \calC^{\med})} \aleq 2^{k_{\max} - k_{\min}}.
  \end{equation*}
  Geometrically, such cubes $\calC^{\max}$ and $\calC^{\med}$ are
  ``nearly antipodal.''
\end{itemize}

We will also exploit the relationship between modulation localization
and angular restriction for \eqref{eq:geom-cone-expr}. In the proofs
below, we will only need the following simple statement. For a more
complete discussion, see, e.g., \cite{Tao2}.
\begin{lemma} [Geometry of the cone] \label{lem:geom-cone} Consider integers  $k_{0},
  k_{1}, k_{2}, j_{0}, j_{1}, j_{2} \in \bbZ$ be such that
  $\abs{k_{\med} - k_{\max}} \leq 5$. For $i = 0, 1, 2$, let $\omg_{i}
  \subseteq \bbS^{3}$ be an angular cap of radius $r_{i} < 2^{-5}$,
  $\pm_{i} \in \set{+, -}$, and $u^{(i)} \in \calS(\bbR^{1+4})$ have
  frequency support in the region $\set{\abs{\xi} \aeq 2^{k_{i}}, \
    \frac{\xi}{\abs{\xi}} \in \omg_{i}, \, \abs{\tau - \pm_{i}
      \abs{\xi}} \aeq 2^{j_{i}}}$. Suppose that $j_{\max} \leq
  k_{\min}$, and define $\ell = \frac{1}{2} \min \set{j_{\max} -
    k_{\min}, 0}$.

  Then the expression \eqref{eq:geom-cone-expr} vanishes unless
  \begin{equation*}
    \abs{\angle(\pm_{i} \omg_{i}, \pm_{i'} \omg_{i'})} \aleq 2^{k_{\min} - \min \set{k_{i}, k_{i'}}} 2^{\ell} + \max \set{r_{i}, r_{i'}}
  \end{equation*}
  for every pair $i, i' \in \set{0, 1, 2}$ $(i \neq i')$.

\end{lemma}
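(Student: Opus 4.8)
The plan is to reduce the lemma to an elementary statement about Fourier supports and then verify the angular constraints directly. Writing \eqref{eq:geom-cone-expr} in Fourier variables (legitimate by translation invariance of $\calO$), the integrand is supported on $\set{\Xi^{(0)} + \Xi^{(1)} + \Xi^{(2)} = 0}$; hence the expression vanishes unless there exist frequencies $\Xi^{(i)} = (\tau_i, \xi_i)$ with $\xi_i/\abs{\xi_i} \in \omg_i$, $\abs{\xi_i} \aeq 2^{k_i}$, $\abs{\tau_i - \pm_i \abs{\xi_i}} \aeq 2^{j_i}$, and $\sum_i \Xi^{(i)} = 0$. It therefore suffices to show that any such triple satisfies $\theta_{ii'} \aleq 2^{k_{\min} - \min\set{k_i, k_{i'}}} 2^{\ell} + \max\set{r_i, r_{i'}}$ for each pair, where $\theta_{ii'} := \abs{\angle(\pm_i \omg_i, \pm_{i'} \omg_{i'})}$. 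Relabelling, assume $k_0 = k_{\min}$, so $k_1, k_2 \geq k_{\max} - 5$. Since $2^{j_i} \leq 2^{j_{\max}} \leq 2^{k_{\min}} \leq 2^{k_i}$, modulations are dominated by frequencies, so $\sgn \tau_i = \pm_i$, $\abs{\tau_i} \aeq 2^{k_i}$, and the elliptic symbol $m(\Xi^{(i)}) := \tau_i^2 - \abs{\xi_i}^2 = (\tau_i - \pm_i \abs{\xi_i})(\tau_i + \pm_i \abs{\xi_i})$ obeys $\abs{m(\Xi^{(i)})} \aeq 2^{j_i + k_i}$. I also write $b(\Xi, \Xi') = \tau \tau' - \xi \cdot \xi'$, so that $m(\Xi + \Xi') = m(\Xi) + m(\Xi') + 2 b(\Xi, \Xi')$.

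For a pair containing the low index $0$, say $\set{0, 1}$ (the pair $\set{0, 2}$ being symmetric), I use the resonance identity: from $\Xi^{(0)} + \Xi^{(1)} = - \Xi^{(2)}$ one gets $2 b(\Xi^{(0)}, \Xi^{(1)}) = m(\Xi^{(2)}) - m(\Xi^{(0)}) - m(\Xi^{(1)})$, hence $\abs{b(\Xi^{(0)}, \Xi^{(1)})} \aleq 2^{j_{\max} + k_{\max}}$. On the other hand, inserting $\tau_i = \pm_i \abs{\xi_i} + O(2^{j_i})$ and expanding gives $b(\Xi^{(0)}, \Xi^{(1)}) = \abs{\xi_0} \abs{\xi_1} \bigl(\pm_0 \pm_1 - \cos \angle(\xi_0, \xi_1)\bigr) + O(2^{j_{\max} + k_{\max}})$, and the bracket equals $\pm_0 \pm_1 \bigl(1 - \cos \angle(\pm_0 \xi_0, \pm_1 \xi_1)\bigr)$, whose absolute value is $\ageq \theta_{01}^2$ once $\theta_{01} \geq C \max\set{r_0, r_1}$ (using $1 - \cos \vartht \ageq \vartht^2$ on $[0, \pi]$ and that $\angle(\pm_0 \xi_0, \pm_1 \xi_1)$ lies within $O(r_0 + r_1)$ of $\theta_{01}$). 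Comparing, $2^{k_{\min} + k_{\max}} \theta_{01}^2 \aleq 2^{j_{\max} + k_{\max}}$, i.e. $\theta_{01} \aleq 2^{(j_{\max} - k_{\min})/2} = 2^{\ell}$; this is the claimed bound since $\min\set{k_0, k_1} = k_{\min}$.

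The main case is the high--high pair $\set{1, 2}$, for which $\min\set{k_1, k_2} \aeq k_{\max}$, so the target $2^{k_{\min} - k_{\max}} 2^{\ell}$ is much smaller than the previous argument would yield and one must instead read the angle off the elliptic part of the \emph{low} factor. From $\tau_1 + \tau_2 = - \tau_0$ with $\abs{\tau_0} \aeq 2^{k_{\min}} \ll 2^{k_{\max}} \aeq \abs{\xi_1}, \abs{\xi_2}$ the signs must be opposite, $\pm_1 = - \pm_2$; set $\vartht = \angle(\pm_1 \xi_1 / \abs{\xi_1}, \pm_2 \xi_2 / \abs{\xi_2})$, so $\cos \angle(\xi_1, \xi_2) = - \cos \vartht$ and $\theta_{12} \leq \vartht + C \max\set{r_1, r_2}$. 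Then $\abs{\xi_0}^2 = \abs{\xi_1 + \xi_2}^2 = (\abs{\xi_1} - \abs{\xi_2})^2 + 2 \abs{\xi_1} \abs{\xi_2} (1 - \cos \vartht)$; while $\tau_0 = - (\tau_1 + \tau_2) = \mp_1(\abs{\xi_1} - \abs{\xi_2}) + O(2^{j_{\max}})$ gives, using $\Abs{\abs{\xi_1} - \abs{\xi_2}} \aleq 2^{k_{\min}}$, the relation $\tau_0^2 = (\abs{\xi_1} - \abs{\xi_2})^2 + O(2^{j_{\max} + k_{\min}})$, and combining with $\abs{m(\Xi^{(0)})} \aeq 2^{j_0 + k_{\min}} \leq 2^{j_{\max} + k_{\min}}$ yields $\abs{\xi_0}^2 = (\abs{\xi_1} - \abs{\xi_2})^2 + O(2^{j_{\max} + k_{\min}})$. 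Subtracting, $2 \abs{\xi_1} \abs{\xi_2} (1 - \cos \vartht) \aleq 2^{j_{\max} + k_{\min}}$, hence $\vartht^2 \aeq 1 - \cos \vartht \aleq 2^{j_{\max} + k_{\min} - 2 k_{\max}}$, i.e. $\vartht \aleq 2^{k_{\min} - k_{\max}} 2^{\ell}$, and the claim follows.

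I expect the main obstacle to be exactly this last step: the cheap triangle-inequality bound only gives $\theta_{12} \aleq 2^{k_{\min} - k_{\max}}$, and the genuine $2^{\ell}$ gain requires the sign analysis together with the delicate cancellation $\abs{\xi_0}^2 - (\abs{\xi_1} - \abs{\xi_2})^2 = O(2^{j_{\max} + k_{\min}})$ extracted from the modulation of the low factor. The rest is bookkeeping --- tracking the $O(r_i)$ cap errors and the $O(2^{j_{\max} + k_{\max}})$ modulation errors so that the main terms dominate, and handling near-ties among $k_0, k_1, k_2$, which is harmless since then every $2^{k_{\min} - \min\set{k_i, k_{i'}}}$ factor is $\aeq 1$ and every pair is covered by the low--high argument.
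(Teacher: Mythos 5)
The paper states Lemma~\ref{lem:geom-cone} without proof, pointing to \cite{Tao2} for ``a more complete discussion,'' so there is no in-paper argument to compare against; the result is treated as standard. Your proof is correct and is the standard one. The reduction to the Fourier-support statement $\Xi^{(0)}+\Xi^{(1)}+\Xi^{(2)}=0$, the resonance identity $m(\Xi+\Xi')=m(\Xi)+m(\Xi')+2b(\Xi,\Xi')$ for the low--high pair, and the sign analysis combined with the cancellation $|\xi_0|^2-(|\xi_1|-|\xi_2|)^2=2|\xi_1||\xi_2|(1-\cos\vartheta)$ for the high--high pair are all sound, and you correctly identify that the crux is the extra $2^\ell$ factor in the high--high bound, which the naive triangle inequality misses. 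Two small remarks on rigor. First, you repeatedly write $|m(\Xi^{(i)})|\aeq 2^{j_i+k_i}$, but for $i$ with $j_i$ within $O(1)$ of $k_i$ (possible only for $i$ attaining $k_{\min}$) one has only the one-sided bound $|m(\Xi^{(i)})|\aleq 2^{j_i+k_i}$ since $\tau_i+\pm_i|\xi_i|$ need not stay away from zero; fortunately your argument only uses this one-sided bound in both cases. Second, since $\theta_{ii'}=|\angle(\pm_i\omg_i,\pm_{i'}\omg_{i'})|$ is defined as an infimum over the caps, one has $\theta_{ii'}\leq\vartheta$ automatically for any representatives $\xi_i/|\xi_i|\in\omg_i$, so the $O(r_i+r_{i'})$ correction you insert in both cases is unnecessary (and in the low--high case your phrasing momentarily inverts the inequality), though it is harmless since the stated conclusion also carries a $\max\set{r_i,r_{i'}}$ term. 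The borderline case $j_{\max}\geq k_{\min}-O(1)$, where $\sgn\tau_0=\pm_0$ can fail, is correctly dismissed since then $2^\ell\aeq 1$ and the claimed bound reduces to the triangle inequality $\theta_{12}\aleq 2^{k_{\min}-k_{\max}}$; it would have been worth stating this explicitly rather than folding it into the closing remark about near-ties, but the observation you need --- that $j_1,j_2\leq k_{\min}\ll k_1,k_2$ still forces opposite temporal signs even when $j_0\aeq k_0$ --- is present.
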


Finally, we collect some often used estimates. For $k' \leq k$ and
$\ell' < -5$, note that
\begin{equation*}
  2^{-\frac{5}{6} k} \nrm{P_{\calC_{k'}(\ell')} u_{k}}_{L^{2} L^{6}} 
  + 2^{-k' - \frac{1}{2} k} 2^{-\frac{1}{2} \ell'} \nrm{P_{\calC_{k'}(\ell')} u_{k}}_{L^{2} L^{\infty}}  
  \aleq \nrm{P_{\calC_{k'}(\ell')} u_{k}}_{S_{k}[\calC_{k'}(\ell')]},
\end{equation*}
where, by \eqref{eq:S-box-sum}, we have
\begin{equation*}
  \sum_{\calC \in \set{\calC_{k'}(\ell')}} \nrm{P_{\calC} u_{k}}_{S_{k}[\calC]}^{2} \aleq \nrm{u_{k}}_{S_{k}}^{2} \aeq \nrm{u_{k}}_{S}^{2}.
\end{equation*}
Also note that, for any $j \leq k + 2 \ell$, we have
\begin{equation*}
  \sum_{\omg} \nrm{P_{\ell}^{\omg} Q_{<j} u_{k}}_{L^{\infty} L^{2}}^{2} \aleq \nrm{u_{k}}_{S_{k}}^{2} \aeq \nrm{u_{k}}_{S}^{2},
\end{equation*}
by disposing $Q_{<j}$ (using boundedness on $L^{\infty} L^{2}$) and
using $S_{k}^{ang} \supseteq S_{k}$.

\subsubsection{Bilinear estimates that do not involve any null forms}
We first prove Proposition~\ref{prop:bi-ell}, which does not involve
any null forms.
\begin{proof}[Proof of Proposition~\ref{prop:bi-ell}]
  In this proof, we adopt the convention of writing $L^{p} L^{q+}$ for
  $L^{p} L^{\tilde{q}}$ with $\tilde{q}^{-1} = q^{-1} - \dlts$. In
  particular, if $(p, q)$ is a sharp Strichartz exponent with $\dlts
  \leq p^{-1} \leq \frac{1}{2} - \dlts$, then $2^{(\frac{1}{p} +
    \frac{4}{q} - 2 - 4 \dlts) k} \Str^{0}_{k} \subseteq L^{p}
  L^{q+}$.

  To prove \eqref{eq:qf-L2L2}, we apply H\"older and Bernstein (on the
  lowest frequency factor), where we put $u_{k_{1}}$ in
  $L^{\frac{9}{4}} L^{\frac{54}{11}+}$ and $v_{k_{2}}$ in $L^{18}
  L^{\frac{27}{13}+}$. The proof of \eqref{eq:qf-L9/5L2} is similar,
  except we put $v_{k_{2}}$ in $L^{9} L^{\frac{54}{23}+}$. The proofs
  of \eqref{eq:qf-Y-L2L2} and \eqref{eq:qf-Y-L9/5L2} are similar; for
  \eqref{eq:qf-Y-L2L2}, we apply H\"older and Bernstein with
  $u_{k_{1}}$ in $L^{2} L^{\infty}$ and $v_{k_{2}}$ in $L^{\infty}
  L^{2}$, and for \eqref{eq:qf-Y-L9/5L2} we put $v_{k_{2}}$ in $L^{18}
  L^{\frac{27}{13}}$ instead.

  It only remains to establish \eqref{eq:qf-L1Linfty} and
  \eqref{eq:qf-L2L6-L1Linfty}. First, \eqref{eq:qf-L2L6-L1Linfty}
  follows simply by applying H\"older and Bernstein (on the lowest
  frequency factor), where we put $u_{k_{1}}$, $v_{k_{2}}$ in $L^{2}
  L^{6}$. To prove \eqref{eq:qf-L1Linfty}, we divide into two
  cases. When $k \geq k_{1} - 10$, the desired bound follows by
  H\"older, where we put both $u_{k_{1}}$ and $v_{k_{2}}$ in $L^{2}
  L^{\infty}$. On the other hand, when $k < k_{1} - 10$, we have $k =
  k_{\min}$ and $k_{1} = k_{2} +O(1)$ by Littlewood--Paley
  trichotomy. We decompose the inputs and the output by frequency
  projections to cubes of the form $\calC_{k}(0)$, i.e.,
  \begin{equation*}
    P_{k} \calO(u_{k_{1}}, v'_{k_{2}})
    = \sum_{\calC, \calC^{1}, \calC^{2}} P_{k} P_{\calC} \calO(P_{\calC^{1}} u_{k_{1}}, P_{\calC^{2}} v'_{k_{2}}),
  \end{equation*}
  where $\calC, \calC^{1}, \calC^{2} \in \set{\calC_{k}(0)}$. The
  summand on the RHS vanishes except when $-\calC + \calC^{1} +
  \calC^{2} \ni 0$. For a pair $\calC$ and $\calC^{1}$
  [resp. $\calC^{2}$], there are only $O(1)$-many $\calC^{2}$
  [resp. $\calC^{1}$] such that the preceding condition
  holds. Moreover, there are only $O(1)$-many $\calC$ in the annulus
  $\set{\abs{\xi} \aeq 2^{k}}$. Therefore, by H\"older and
  Cauchy--Schwarz (in $\calC^{1}$ and $\calC^{2}$), we have
  \begin{align*}
    & 2^{-2k} \nrm{P_{k} \calO(u_{k_{1}}, v'_{k_{2}})}_{L^{1} L^{\infty}} \\
    & \aleq 2^{-2k} \left( \sum_{\calC^{1}} \nrm{P_{\calC^{1}} u_{k_{1}}}_{L^{2} L^{\infty}}^{2} \right)^{\frac{1}{2}} \left( \sum_{\calC^{2}} \nrm{P_{\calC^{2}} v'_{k_{2}}}_{L^{2} L^{\infty}}^{2} \right)^{\frac{1}{2}} \\
    & \aleq \nrm{D u_{k_{1}}}_{S} \nrm{v'_{k_{2}}}_{S},
  \end{align*}
  which completes the proof.
\end{proof}

\subsubsection{Bilinear null form estimates for the $N$-norm}
We now prove Proposition~\ref{prop:nf-core}. We start with a lemma
quantifying the gain from the null form $\calO(\rd^{\alp} (\cdot),
\rd_{\alp} (\cdot))$, which is a quick consequence of
Lemmas~\ref{lem:nf-bi} and \ref{lem:geom-cone}.
\begin{lemma} \label{lem:N0-gain} Let $k, k_{1}, k_{2}, j, j_{1},
  j_{2}$ satisfy $k_{\max} - k_{\med} \leq 5$, $j, j_{1}, j_{2} \leq
  k_{\min} + C_{0}$, $j_{1} = j + O(1)$ and $j_{2} = j + O(1)$. Define
  $\ell = \min \set{\frac{j-k_{\min}}{2}, 0}$, and let $\calC,
  \calC^{1}, \calC^{2}$ be rectangular boxes of the form
  $\calC_{k_{\min}}(\ell)$. Then we have
  \begin{equation} \label{eq:N0-gain} P_{k} Q_{<j} P_{\calC}
    \calO(\rd^{\alp} Q_{<j_{1}} P_{\calC^{1}} u_{k_{1}}, \rd_{\alp}
    Q_{<j_{2}} P_{\calC^{2}} v_{k_{2}} ) = C 2^{2 \ell} P_{\calC}
    \tilde{\calO}(\nb P_{\calC^{1}} u_{k_{1}}, \nb P_{\calC^{2}}
    v_{k_{2}})
  \end{equation}
  for some universal constant $C$ and a disposable operator
  $\tilde{O}$.
\end{lemma}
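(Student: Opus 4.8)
The plan is to reduce the claimed identity to the null-form separation of variables of Lemma~\ref{lem:N-sep-var} together with the cone geometry of Lemma~\ref{lem:geom-cone}, following the standard treatment of the $\calN_{0,\pm}$ null forms. First I would apply the decomposition \eqref{eq:nf-N0} of Lemma~\ref{lem:nf-bi} to the bilinear form $\calO(\rd^{\alp}(\cdot),\rd_{\alp}(\cdot))$, writing it as the sum of the four main null-form pieces $\calN_{0,\pm}(Q^{\pm}(\cdot),Q^{\pm'}(\cdot))$ and the remainder $\calR_0$ from \eqref{eq:nf-N0-rem}. Since every input is restricted to modulation $<2^{j_1}$, $<2^{j_2}$ with $j_1,j_2=j+O(1)$, and since $j\le k_{\min}+O(1)\le k_i+O(1)$ for $i=1,2$, each term of $\calR_0$ carries exactly one ``defect'' factor $(D_t\mp\abs{D})Q^{\pm}$ acting on one input, whose symbol has size $\aleq 2^{j_i}$, against a genuine first derivative ($D_t$ or $\abs{D}$) on the other. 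Using $2^{j_i}\aleq 2^{j-k_{\min}}2^{k_i}=2^{2\ell}2^{k_i}$ up to a factor $2^{O(1)}$ (valid in the nontrivial range $j\le k_{\min}$, where $\ell=\tfrac12(j-k_{\min})$; when $j>k_{\min}$ one has $2^{2\ell}=1$ and the statement is trivial), and rewriting both the defect multiplier and the $\abs{D}$-factor as disposable multipliers composed with $\nb$, each such term is already of the desired form $C\,2^{2\ell}P_{\calC}\tilde{\calO}(\nb P_{\calC^{1}}u_{k_1},\nb P_{\calC^{2}}v_{k_2})$.

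For the four main pieces I would assume the boxes are compatible, $\calC^{1}+\calC^{2}+\calC\ni 0$ (otherwise $P_kQ_{<j}P_{\calC}$ of the piece vanishes and we take $\tilde{\calO}=0$), and invoke Lemma~\ref{lem:geom-cone}: since all three modulations are $<2^{j+O(1)}$ and $j\le k_{\min}$, the input frequency directions satisfy $\abs{\angle(\pm_1\omg_1,\pm_2\omg_2)}\aleq 2^{k_{\min}-\min\set{k_1,k_2}}2^{\ell}+(\text{angular radii})\aleq 2^{\ell}$, where one uses that a box of the form $\calC_{k_{\min}}(\ell)$ intersected with an annulus $\set{\abs{\xi}\aeq 2^{k_i}}$, $k_i\ge k_{\min}$, has angular radius $\aeq 2^{k_{\min}+\ell-k_i}\le 2^{\ell}$; this is exactly where the hypothesis that $\calC,\calC^1,\calC^2$ are of the form $\calC_{k_{\min}}(\ell)$ enters. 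The sign combination in each of the four pieces of \eqref{eq:nf-N0} is precisely the one for which $\calN_{0,\pm}(\xi,\eta)$ degenerates as $\pm_1\omg_1\to\pm_2\omg_2$, so Lemma~\ref{lem:N-sep-var} applies on the relevant sectors and yields $\abs{\xi}^{-1}\abs{\eta}^{-1}\calN_{0,\pm}(\xi,\eta)=\tht_{\pm}^2\sum_{i_1,i_2}a'_{i_1}(\xi)b'_{i_2}(\eta)$ with $\tht_{\pm}\aleq 2^{\ell}$ and rapidly decaying disposable $a'_{i_1},b'_{i_2}$. Extracting the constant $2^{-2\ell}\tht_{\pm}^2\aleq 1$ and the disposable angular cutoffs, writing $\abs{\xi}=2^{k_1}(\abs{\xi}2^{-k_1})$, $\abs{\eta}=2^{k_2}(\abs{\eta}2^{-k_2})$, converting the scalar factors $2^{k_1},2^{k_2}$ into $\nb P_{\calC^{1}}u_{k_1}$, $\nb P_{\calC^{2}}v_{k_2}$ via disposable multipliers (legitimate on the cone support where $\abs{\tau_i}\aeq 2^{k_i}$), and finally composing with the disposable operator $\calO$ (Remark~\ref{rem:nf-compose}) and the disposable projection $P_kQ_{<j}P_{\calC}$, each main piece also takes the form $C\,2^{2\ell}P_{\calC}\tilde{\calO}(\nb P_{\calC^{1}}u_{k_1},\nb P_{\calC^{2}}v_{k_2})$. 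Summing the $O(1)$ main pieces and the $O(1)$ remainder terms and collecting the disposable operators into one completes the proof.

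The step I expect to be the main obstacle is the symbol bookkeeping in the last reassembly, namely checking that the combined operator $\tilde{\calO}$ is genuinely disposable after all the projections (the output cutoff $P_kQ_{<j}P_{\calC}$, the input cutoffs $Q^{\pm_i}Q_{<j_i}P_{\calC^{i}}$) and the separation-of-variables sums are multiplied together. The feature that keeps this routine rather than delicate is that one must never estimate $\bx\,\calO(u,v)$, $\calO(\bx u,v)$, $\calO(u,\bx v)$ separately — none of these is individually of the form $2^{2\ell}(\text{disposable})$ — but keep the null forms $\calN_{0,\pm}$ intact throughout, so that the full $2^{2\ell}$ gain is read off from the genuinely small symbol of the null form restricted to nearly (anti)parallel frequencies.
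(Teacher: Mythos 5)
Your proof is correct and follows essentially the same route as the paper's: decompose $\calO(\rd^{\alp}(\cdot),\rd_{\alp}(\cdot))$ via \eqref{eq:nf-N0}, use Lemma~\ref{lem:geom-cone} to force the angular proximity $\abs{\angle(\pm_1\calC^1,\pm_2\calC^2)}\aleq 2^{\ell}$, read off the $\tht_{\pm}^2\aleq 2^{2\ell}$ gain for the $\calN_{0,\pm}$ pieces from the separation of variables in Definition~\ref{def:nf}/Lemma~\ref{lem:N-sep-var}, and bound the remainder $\calR_0$ by the size $\aleq 2^{j_i}$ of the defect multipliers. Your handling of the $j>k_{\min}$ regime (trivial since $2^{2\ell}=1$) is the same observation the paper packages as "harmlessly assume $j,j_1,j_2<k_{\min}-5$ by disposability."
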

\begin{proof}
  By disposability of $P_{k} Q_{<j} P_{\calC}$, $P_{k_{1}} Q_{<j_{1}}
  P_{\calC^{1}}$ and $P_{k_{2}} Q_{<j_{2}} P_{\calC^{2}}$, we may
  harmlessly assume that (say) $j, j_{1}, j_{2} < k_{\min} - 5$. Then
  we can decompose
  \begin{equation*}
    P_{k} Q_{<j} P_{\calC} \calO(\rd^{\alp} Q_{<j_{1}} P_{\calC^{1}} u_{k_{1}}, \rd_{\alp} Q_{<j_{2}} P_{\calC^{2}} v_{k_{2}} )
    = \sum_{\pm, \pm_{1}, \pm_{2}} P_{k} Q^{\mp}_{<j} P_{\calC} \calO(\rd^{\alp} Q^{\pm_{1}}_{<j_{1}} P_{\calC^{1}} u_{k_{1}}, \rd_{\alp} Q^{\pm_{2}}_{<j_{2}} P_{\calC^{2}} v_{k_{2}} ).
  \end{equation*}
  By Lemma~\ref{lem:geom-cone}, the summand on the RHS vanishes (and
  thus \eqref{eq:N0-gain} holds trivially) unless $\abs{\angle(\pm_{1}
    \calC^{1}, \pm_{2} \calC^{2})} \aleq 2^{\ell}$. In such a case,
  \eqref{eq:N0-gain} follows from the decompositions \eqref{eq:nf-N0}
  in Lemma~\ref{lem:nf-bi} and the schematic identities
  \begin{align*}
    \calN_{0, \pm_{1} \pm_{2}} (Q_{<j_{1}}^{\pm_{1}} P_{\calC^{1}}
    u_{k_{1}}, Q_{<j_{2}}^{\pm_{2}} P_{\calC^{2}} v_{k_{2}})
    = & C 2^{k_{1} + k_{2}} 2^{2 \ell} \tilde{\calO}(P_{\calC^{1}} u_{k_{1}}, P_{\calC^{2}} v_{k_{2}}), \\
    \calR_{0} (Q_{<j_{1}}^{\pm_{1}} P_{\calC^{1}} u_{k_{1}},
    Q_{<j_{2}}^{\pm_{2}} P_{\calC^{2}} v_{k_{2}}) = & C 2^{j} 2^{-\min
      \set{k_{1}, k_{2}}} \tilde{\calO}(\nb P_{\calC^{1}} u_{k_{1}},
    \nb P_{\calC^{2}} v_{k_{2}}),
  \end{align*}
  which in turn follow from Definition~\ref{def:nf} (see also
  Remark~\ref{rem:nf-gain}) and \eqref{eq:nf-N0-rem},
  respectively. \qedhere
\end{proof}
\begin{proof}[Proof of Proposition~\ref{prop:nf-core}]
  Estimates \eqref{eq:nf-core} and \eqref{eq:nf-core-lomod} were
  proved in \cite[Proposition~7.1]{OT2}. Estimate
  \eqref{eq:qf-rem-ell} is a simple consequence of H\"older and
  Bernstein for $u'_{k_{1}}$, $v_{k_{2}}$ or the output, depending on
  which has the lowest frequency. In the remainder of the proof, we
  prove \eqref{eq:N0-core} and \eqref{eq:N0-core-lomod}
  simultaneously.

  \pfstep{Step~1: High modulation inputs/output} The goal of this step
  is to prove
  \begin{equation} \label{eq:N0-core-himod} \nrm{P_{k}
      \calO(\rd^{\alp} u_{k_{1}}, \rd_{\alp} v_{k_{2}}) - P_{k}
      Q_{<k_{\min}} \calO(\rd^{\alp} Q_{<k_{\min}} u_{k_{1}},
      \rd_{\alp} Q_{<k_{\min}} v_{k_{2}})}_{N} \aleq 2^{\frac{
      k_{\min}+ k_{\max}}2} \nrm{\nb u_{k_{1}}}_{S}
    \nrm{\nb v_{k_{2}}}_{S}
  \end{equation}
  Note that this step is vacuous for \eqref{eq:N0-core-lomod}. Here we
  do not need the null form, and simply view $\calO(\rd^{\alp}
  u_{k_{1}}, \rd^{\alp} v_{k_{2}})$ as $\tilde{\calO}(\nb u_{k_{1}},
  \nb v_{k_{2}})$ for some disposable $\tilde{\calO}$.

  We begin by reducing \eqref{eq:N0-core-himod} into an atomic
  form. For $j, j_{1}, j_{2} \geq k_{\min}$, we claim that
  \begin{equation} \label{eq:N0-core-himod-atom} \Abs{\int Q_{j} w_{k}
      \tilde{\calO}(Q_{<j_{1}} u'_{k_{1}}, Q_{<j_{2}} v'_{k_{2}}) \,
      \ud t \ud x} \aleq 2^{- \frac{1}{2} j} 2^{k_{\min}}
    2^{\frac{1}{2} k_{1}} \nrm{w_{k}}_{X^{0, \frac{1}{2}}_{\infty}}
    \nrm{u'_{k_{1}}}_{S} \nrm{v'_{k_{2}}}_{L^{\infty} L^{2}}.
  \end{equation}
  Once we prove \eqref{eq:N0-core-himod-atom}, then by duality (recall
  that $N^{\ast} = L^{\infty} L^{2} \cap X^{0, \frac{1}{2}}_{\infty}$)
  we would have
  \begin{align*}
    \sum_{j \geq k_{\min}} \nrm{P_{k} Q_{j} \calO(\rd^{\alp}
      u_{k_{1}}, \rd_{\alp} v_{k_{2}})}_{N}
    \aleq & 2^{\frac{1}{2} k_{\min}} 2^{\frac{1}{2} k_{1}} \nrm{\nb u_{k_{1}}}_{S} \nrm{\nb v_{k_{2}}}_{L^{\infty} L^{2}}, \\
    \sum_{j \geq k_{\min}} \nrm{P_{k} Q_{<k_{\min}} \calO(\rd^{\alp}
      Q_{j} u_{k_{1}}, \rd_{\alp} v_{k_{2}})}_{N}
    \aleq & 2^{\frac{1}{2} k_{\min}} 2^{\frac{1}{2} k_{2}} \nrm{\nb u_{k_{1}}}_{X^{0, \frac{1}{2}}_{\infty}} \nrm{\nb v_{k_{2}}}_{S}, \\
    \sum_{j \geq k_{\min}} \nrm{P_{k} Q_{<k_{\min}} \calO(\rd^{\alp}
      Q_{<k_{\min}} u_{k_{1}}, \rd_{\alp} Q_{j} v_{k_{2}})}_{N} \aleq
    & 2^{\frac{1}{2} k_{\min}} 2^{\frac{1}{2} k_{1}} \nrm{\nb
      u_{k_{1}}}_{S} \nrm{\nb v_{k_{2}}}_{X^{0,
        \frac{1}{2}}_{\infty}},
  \end{align*}
  from which \eqref{eq:N0-core-himod} would follow.

  To prove \eqref{eq:N0-core-himod-atom}, we decompose $u', v', w$ by
  frequency projection to cubes of the form $\calC_{k_{\min}}(0)$,
  i.e.,
  \begin{equation*}
    \int Q_{j} w_{k} \tilde{\calO}(Q_{<j_{1}} u'_{k_{1}}, Q_{<j_{2}} v'_{k_{2}}) \, \ud t \ud x
    = \sum_{\calC^{0}, \calC^{1}, \calC^{2} }\int Q_{j} P_{\calC^{0}} w_{k} \tilde{\calO}(Q_{<j_{1}} P_{\calC^{1}} u'_{k_{1}}, Q_{<j_{2}} P_{\calC^{1}}v'_{k_{2}}) \, \ud t \ud x,
  \end{equation*}
  where $\calC, \calC^{1}, \calC^{2} \in \set{\calC_{k_{\min}}(0)}$.

  Let $\calC^{\max}$, $\calC^{\med}$ and $\calC^{\min}$ denote the
  re-indexing of the boxes $\calC^{0}, \calC^{1}, \calC^{2}$, which
  are situated at the frequency annuli $\set{\abs{\xi} \aeq
    2^{k_{\max}}}$, $\set{\abs{\xi} \aeq 2^{k_{\med}}}$ and
  $\set{\abs{\xi} \aeq 2^{k_{\min}}}$, respectively. The summand on
  the RHS vanishes unless $\calC^{\max} + \calC^{\med} + \calC^{\min}
  \ni 0$. For a fixed pair $\calC^{\min}$ and $\calC^{\max}$
  [resp. $\calC^{\med}$], this happens only for $O(1)$-many
  $\calC^{\med}$ [resp. $\calC^{\max}$].  Moreover, note that each
  $\calC^{i}$ lies within an angular sector of size $O(2^{k_{\min} -
    k_{i}})$; hence, $Q_{<j_{i}} P_{\calC^{i}}$ is disposable
  $(i=1,2)$.  Thus, by H\"older, Cauchy--Schwarz (in $\calC^{\max}$
  and $\calC^{\med}$) and the fact that there are only $O(1)$-many
  cubes $\calC^{\min}$ situated in $\set{\abs{\xi} \aeq 2^{k_{\min}}}$
  (so any $\ell^{r}$-sums over $\calC^{\min}$ are equivalent), we have
  \begin{align*}
    & \abs{\sum_{\calC^{0}, \calC^{1}, \calC^{2} }\int Q_{j} P_{\calC^{0}} w_{k} \tilde{\calO}(Q_{<j_{1}} P_{\calC^{1}} u'_{k_{1}}, Q_{<j_{2}} P_{\calC^{2}}v'_{k_{2}}) \, \ud t \ud x} \\
    & \aleq \nrm{(\sum_{\calC^{0}} \nrm{Q_{j} P_{\calC^{0}} w_{k}(t,
        \cdot)}_{L^{2}}^{2} )^{\frac{1}{2}}}_{L^{2}_{t}}
    \nrm{(\sum_{\calC^{1}} \nrm{P_{\calC^{1}} u'_{k_{1}}(t,
        \cdot)}_{L^{\infty}}^{2} )^{\frac{1}{2}}}_{L^{2}_{t}}
    \nrm{	(\sum_{\calC^{2}} \nrm{P_{\calC^{2}} v'_{k_{2}}(t, \cdot)}_{L^{2}}^{2} )^{\frac{1}{2}}}_{L^{\infty}_{t}} \\
    & \aleq \nrm{Q_{j} w_{k}}_{L^{2} L^{2}} (\sum_{\calC^{1}} \nrm{P_{\calC^{1}} u'_{k_{1}}}_{L^{2} L^{\infty}}^{2})^{\frac{1}{2}} \nrm{v'_{k_{2}}}_{L^{\infty} L^{2}} \\
    & \aleq 2^{-\frac{1}{2} j} 2^{k_{\min}} 2^{\frac{1}{2} k_{1}}
    \nrm{w_{k}}_{X^{0, \frac{1}{2}}_{\infty}} \nrm{u'_{k_{1}}}_{S}
    \nrm{v'_{k_{2}}}_{L^{\infty} L^{2}},
  \end{align*}
  as desired.

  \pfstep{Step~2: Proofs of \eqref{eq:N0-core} and
    \eqref{eq:N0-core-lomod}} For $j < k_{\min}$ and $\ell = \frac{j -
    k_{\min}}{2}$, we claim that
  \begin{align}
    \nrm{P_{k} Q_{j} \calO( \rd^{\alp} Q_{< j} u_{k_{1}}, \rd_{\alp}
      Q_{< j} v_{k_{2}})}_{N}
    \aleq & 2^{-\frac{1}{2} (j-k_{\min})} 2^{\frac{5}{2} \ell} 2^{\frac{1}{2} k_{\min}} 2^{\frac{1}{2} k_{1}} \nrm{\nb u_{k_{1}}}_{S} \nrm{\nb v_{k_{2}}}_{S}, \label{eq:N0-core-output} \\
    \nrm{P_{k} Q_{\leq j} \calO( \rd^{\alp} Q_{j} u_{k_{1}},
      \rd_{\alp} Q_{< j} v_{k_{2}})}_{N}
    \aleq & 2^{-\frac{1}{2} (j-k_{\min})} 2^{\frac{5}{2} \ell} 2^{\frac{1}{2} k_{\min}} 2^{\frac{1}{2} k_{2}} \nrm{\nb u_{k_{1}}}_{S} \nrm{\nb v_{k_{2}}}_{S}, \label{eq:N0-core-u} \\
    \nrm{P_{k} Q_{\leq j} \calO( \rd^{\alp} Q_{\leq j} u_{k_{1}},
      \rd_{\alp} Q_{j} v_{k_{2}})}_{N} \aleq & 2^{-\frac{1}{2}
      (j-k_{\min})} 2^{\frac{5}{2} \ell} 2^{\frac{1}{2} k_{\min}}
    2^{\frac{1}{2} k_{1}} \nrm{\nb u_{k_{1}}}_{S} \nrm{\nb
      v_{k_{2}}}_{S}. \label{eq:N0-core-v}
  \end{align}
  Assuming that these estimates hold, we first conclude the proofs of
  \eqref{eq:N0-core} and \eqref{eq:N0-core-lomod}. We start with
  \eqref{eq:N0-core}. By Step~1, it suffices to estimate $P_{k}
  Q_{<k_{\min}} \calO(\rd^{\alp} Q_{<k_{\min}} u_{k_{1}},
  Q_{<k_{\min}} v_{k_{2}})$. Decomposing the inputs and the output
  using $Q_{<k_{\min}} = \sum_{j < k_{\min}} Q_{j}$, and dividing
  cases according to which has dominant modulation (corresponding to
  $j$ in the above estimates), \eqref{eq:N0-core} follows by summing
  \eqref{eq:N0-core-output}--\eqref{eq:N0-core-v} over $j$. To prove
  \eqref{eq:N0-core-lomod}, observe simply that the modulation
  restrictions of the inputs and the output restricts the
  $j$-summation to $j < k_{\min} - \kpp$ in the preceding argument.

  It remains to establish
  \eqref{eq:N0-core-output}--\eqref{eq:N0-core-v}.

  \pfsubstep{Step~2.1: Proof of \eqref{eq:N0-core-output}} Here we
  provide a detailed proof of \eqref{eq:N0-core-output}; similar
  arguments involving orthogonality and the null form gain will be
  used repeatedly in the remainder of this subsection.

  We expand
  \begin{equation*}
    P_{k} Q_{j} \calO( \rd^{\alp} Q_{< j} u_{k_{1}}, \rd_{\alp} Q_{< j} v_{k_{2}})
    = \sum_{\pm_{0}, \pm_{1}, \pm_{2}} \sum_{\calC^{0}, \calC^{1}, \calC^{2}} P_{k} Q^{\mp_{0}}_{j} P_{-\calC^{0}} \calO( \rd^{\alp} Q^{\pm_{1}}_{< j} P_{\calC^{1}} u_{k_{1}}, \rd_{\alp} Q^{\pm_{2}}_{< j} P_{\calC^{2}}v_{k_{2}}),
  \end{equation*}
  where $\calC^{0}, \calC^{1}, \calC^{2} \in
  \set{\calC_{k_{\min}}(\ell)}$. By duality, in order to estimate the
  summand on the RHS, it suffices to bound
  \begin{equation} \label{eq:N0-core-output-decomp} \int P_{k}
    Q^{\pm_{0}}_{j} P_{\calC^{0}} w \, \calO(\rd^{\alp} Q^{\pm_{1}}_{<
      j} P_{\calC^{1}} u_{k_{1}}, \rd_{\alp} Q^{\pm_{2}}_{< j}
    P_{\calC^{2}}v_{k_{2}}) \, \ud t \ud x.
  \end{equation}
  Let $\calC^{\max}$, $\calC^{\med}$ and $\calC^{\min}$ denote the
  re-indexing of the boxes $-\calC, \calC^{1}, \calC^{2}$, which are
  situated at the frequency annuli $\set{\abs{\xi} \aeq
    2^{k_{\max}}}$, $\set{\abs{\xi} \aeq 2^{k_{\med}}}$ and
  $\set{\abs{\xi} \aeq 2^{k_{\min}}}$, respectively.

  Note that \eqref{eq:N0-core-output-decomp} vanishes unless
  $\calC^{0} + \calC^{1} + \calC^{2} \ni 0$. Combined with the
  geometry of the cone (Lemma~\ref{lem:geom-cone}) we see that: For a
  fixed $\calC^{\max}$ [resp. $\calC^{\med}$],
  \eqref{eq:N0-core-output-decomp} vanishes except for $O(1)$-many
  $\calC^{\min}$ and $\calC^{\med}$ [resp. $\calC^{\max}$]. By
  H\"older, Cauchy--Schwarz (in $\calC^{\max}$ and $\calC^{\med}$) and
  Lemma~\ref{lem:N0-gain}, we obtain
  \begin{align*}
    & \abs{\sum_{\pm_{0}, \pm_{1}, \pm_{2}} \sum_{\calC^{0}, \calC^{1}, \calC^{2}} \eqref{eq:N0-core-output-decomp}} \\
    & \aleq \sum_{\pm_{0}} 2^{2 \ell} \nrm{(\sum_{\calC^{0}} \nrm{P_{k} Q^{\pm_{0}}_{j} P_{\calC^{0}} w(t, \cdot)}_{L^{2}}^{2} )^{\frac{1}{2}}}_{L^{2}_{t}} \\
    & \phantom{\aleq} \times \nrm{(\sum_{\calC^{1}} \nrm{\nb
        P_{\calC^{1}} u_{k_{1}}(t, \cdot)}_{L^{\infty}}^{2}
      )^{\frac{1}{2}}}_{L^{2}_{t}}
    \nrm{	(\sum_{\calC^{2}} \nrm{\nb P_{\calC^{2}} v_{k_{2}}(t, \cdot)}_{L^{2}}^{2} )^{\frac{1}{2}}}_{L^{\infty}_{t}} \\
    & \aleq \sum_{\pm_{0}} 2^{2 \ell} \nrm{P_{k} Q^{\pm_{0}}_{j} w}_{L^{2} L^{2}} (\sum_{\calC^{1}} \nrm{\nb P_{\calC^{1}} u_{k_{1}}}_{L^{2} L^{\infty}}^{2})^{\frac{1}{2}} \nrm{\nb v_{k_{2}}}_{L^{\infty} L^{2}} \\
    & \aleq 2^{-\frac{1}{2} j} 2^{\frac{5}{2} \ell} 2^{k_{\min}}
    2^{\frac{1}{2} k_{1}} \nrm{w}_{X^{0, \frac{1}{2}}_{\infty}}
    \nrm{\nb u_{k_{1}}}_{S} \nrm{\nb v_{k_{2}}}_{L^{\infty} L^{2}}.
  \end{align*}
  By duality, \eqref{eq:N0-core-output} follows.

  \pfsubstep{Steps~1.2 \& 1.3: Proofs of \eqref{eq:N0-core-u} \&
    \eqref{eq:N0-core-v}} We now sketch the proofs of
  \eqref{eq:N0-core-u} and \eqref{eq:N0-core-v}, which are very
  similar to Step~2.1.  As before, we expand each modulation
  projection to the $\pm$-parts, and decompose the output, $u$, $v$ by
  frequency projection to $-\calC^{0}, \calC^{1}, \calC^{2} \in
  \set{\calC_{k_{\min}}(\ell)}$, respectively.

  We proceed as in Step~1.1 but put the test function $w$ in
  $L^{\infty} L^{2}$ and the input with the dominant modulation in
  $L^{2} L^{2}$. Then we obtain
  \begin{align*}
    & \abs{\sum_{\pm_{0}, \pm_{1}, \pm_{2}} \sum_{\calC^{0}, \calC^{1}, \calC^{2}} \iint P_{k} Q^{\pm_{0}}_{\leq j} P_{\calC^{0}} \calO( \rd^{\alp} Q^{\pm_{1}}_{j} P_{\calC^{1}} u_{k_{1}}, \rd_{\alp} Q^{\pm_{2}}_{< j} P_{\calC^{2}}v_{k_{2}})} \\
    & \aleq 2^{-\frac{1}{2} j} 2^{\frac{5}{2} \ell} 2^{k_{\min}} 2^{\frac{1}{2} k_{2}} \nrm{w}_{L^{\infty} L^{2}} \nrm{\nb u_{k_{1}}}_{X^{0, \frac{1}{2}}_{\infty}} \nrm{\nb v_{k_{2}}}_{S}, \\
    & \abs{\sum_{\pm_{0}, \pm_{1}, \pm_{2}} \sum_{\calC^{0}, \calC^{1}, \calC^{2}} \iint P_{k} Q^{\pm_{0}}_{\leq j} P_{\calC^{0}} \calO( \rd^{\alp} Q^{\pm_{1}}_{\leq j} P_{\calC^{1}} u_{k_{1}}, \rd_{\alp} Q^{\pm_{2}}_{j} P_{\calC^{2}}v_{k_{2}})} \\
    & \aleq 2^{-\frac{1}{2} j} 2^{\frac{5}{2} \ell} 2^{k_{\min}}
    2^{\frac{1}{2} k_{1}} \nrm{w}_{L^{\infty} L^{2}} \nrm{\nb
      u_{k_{1}}}_{S} \nrm{\nb v_{k_{2}}}_{X^{0,
        \frac{1}{2}}_{\infty}}.
  \end{align*}
  By duality, \eqref{eq:N0-core-u} and \eqref{eq:N0-core-v} follow.
\end{proof}

\subsubsection{Bilinear estimates for the $X^{s, b, p}_{r}$-type norms}
Next, we prove Propositions~\ref{prop:H*}, \ref{prop:nf-Z},
\ref{prop:nf*-X} and \ref{prop:qf-X}.
\begin{proof}[Proof of Proposition~\ref{prop:H*}]
  Estimates \eqref{eq:nf*-1-H*} and \eqref{eq:qf-1-H*} were proved in
  \cite[Eqns.~(132) and (133)]{KST}; note that the slightly stronger
  $S^{1}$-norm is used on the RHS in \cite[Eqns.~(132) and
  (133)]{KST}, but the proofs in fact lead to \eqref{eq:nf*-1-H*} and
  \eqref{eq:qf-1-H*}. Estimates \eqref{eq:nf*-H*} and \eqref{eq:qf-H*}
  follow from slight modifications of the proofs of \cite[Eqns.~(134)
  and (140)]{KST} (the $Z$-norm in \cite{KST} is stronger than ours),
  as we outline below.

  For \eqref{eq:nf*-H*}, we first recall the definition of
  $\calH^{\ast}$. For each $j < k_{1} - C$, we introduce $\ell =
  \frac{1}{2} (j - k_{1})$ and decompose
  \begin{equation*}
    P_{k} Q_{< j - C} \calN (\abs{D}^{-1} Q_{j} u_{k_{1}}, Q_{< j - C} v_{k_{2}})
    = \sum_{\omg, \omg'} P_{k} Q_{< j - C} \calN (\abs{D}^{-1} P^{\omg}_{\ell} Q_{j} u_{k_{1}}, P_{\omg'}^{\ell} Q_{< j - C} v_{k_{2}}).
  \end{equation*}
  By the geometry of the cone (Lemma~\ref{lem:geom-cone}), the summand
  vanishes unless $\abs{\angle(\omg, \pm \omg')} \aleq 2^{\ell}$ for
  some sign $\pm$. In this case, the null form $\calN$ gains $2^{k_{1}
    + k_{2}} 2^{\ell}$ (cf. Definition~\ref{def:nf}), and hence we
  have
  \begin{align*}
    & \hskip-2em
    \nrm{P_{k} Q_{< j - C} \calN (\abs{D}^{-1} Q_{j} u_{k_{1}}, Q_{< j - C} v_{k_{2}})}_{L^{1} L^{2}} \\
    \aleq & \sum_{\omg, \omg' : \min_{\pm} \abs{\angle(\omg, \pm \omg')} \aleq 2^{\ell}} 2^{k_{2}} 2^{\ell} \nrm{P^{\omg}_{\ell} Q_{j} u_{k_{1}}}_{L^{1} L^{\infty}} \nrm{P^{\omg'}_{\ell} Q_{< j - C} v_{k_{2}}}_{L^{\infty} L^{2}} \\
    \aleq & 2^{k_{2}} 2^{(\frac{1}{2} - 2 b_{0}) \ell} \left( \sum_{\omg} (2^{(\frac{1}{2} + 2 b_{0}) \ell} \nrm{P^{\omg}_{\ell} Q_{k + 2 \ell} u_{k_{1}}}_{L^{1} L^{\infty}})^{2} \right)^{\frac{1}{2}} \left( \sum_{\omg'} \nrm{P^{\omg'}_{\ell} Q_{< j - C} v_{k_{2}}}_{L^{\infty} L^{2}}^{2} \right)^{\frac{1}{2}} \\
    \aleq & 2^{(\frac{1}{2} - 2 b_{0}) \ell} \left( \sum_{\omg}
      (2^{(\frac{1}{2} + 2 b_{0}) \ell} \nrm{P^{\omg}_{\ell} Q_{k + 2
          \ell} u_{k_{1}}}_{L^{1} L^{\infty}})^{2}
    \right)^{\frac{1}{2}} \nrm{D v_{k_{2}}}_{S}.
  \end{align*}
  In the second inequality, we used Cauchy--Schwarz (or Schur's test)
  with the fact that the $\omg, \omg'$ is essentially diagonal (i.e.,
  for a fixed $\omg$, there are only $O(1)$ many $\omg'$'s such that
  the sum is nonvanishing, and vice versa). Summing up in $j < k_{1} -
  C$, then using the definition of the $Z^{1}$-norm, \eqref{eq:nf*-H*}
  follows.

  Next, \eqref{eq:qf-H*} is proved by essentially the same argument
  (with the same numerology) as above. Here we do not gain $2^{\ell}$
  from the null form $\calN$, but rather from the extra factor
  $\lap^{-\frac{1}{2}} \Box^{\frac{1}{2}}$ in the norm
  $\lap^{-\frac{1}{2}} \Box^{\frac{1}{2}} Z^{1}$. Finally,
  \eqref{eq:nf*-H*-lomod} and \eqref{eq:qf-H*-lomod} follow from the
  preceding proofs, once we observe that the modulation localization
  of $u_{k_{1}}$ restricts the $j$-summation to $j < k_{1} - \kpp$,
  which then leads to the small factor $2^{- (\frac{1}{2} - 2 b_{0})
    \kpp}$.
\end{proof}

\begin{proof}[Proof of Proposition~\ref{prop:nf-Z}]
  In view of the embedding $N \cap \Box Z^{1} \subseteq \Box
  \Xdf^{1}$, \eqref{eq:nf-uX} would follow once \eqref{eq:nf-Z} is
  proved. Estimates \eqref{eq:nf-1-H-Z} and \eqref{eq:qf-1-H-Z-ell}
  follow from (134) and (141) in \cite{KST}, respectively. Moreover,
  when $k \geq k_{1} - C$, \eqref{eq:nf-Z} follows from (134) and
  (135) in \cite{KST}. In using the estimates from \cite{KST}, we
  remind the reader that the $Z$-norm in \cite{KST} (which is equal to
  $\sum_{k} \nrm{P_{k} Q_{<k} u}_{X^{-\frac{1}{4}, \frac{1}{4},
      1}_{\infty}}$) is stronger the $Z$-norm in this work. Moreover,
  although (134), (135) and (141) in \cite{KST} are stated with the
  $S^{1}$-norm on the RHS, an inspection of the proof reveals that
  only the $S$-norm is used.

  It remains to establish \eqref{eq:nf-Z} in the case $k < k_{1} -
  C$. By Littlewood--Paley trichotomy, note that the LHS vanishes
  unless $k = k_{\min}$ and $k_{1} = k_{2} + O(1)$. By
  \eqref{eq:nf-1-H-Z}, we are only left to show that the $\Box
  Z^{1}$-norm of
  \begin{equation} \label{eq:nf-H-expr} P_{k} \calH_{k}
    \calN(u_{k_{1}}, v_{k_{2}}) = \sum_{j < k + C} P_{k} Q_{j}
    \calN(Q_{<j-C} u_{k_{1}}, Q_{<j-C} v_{k_{2}})
  \end{equation}
  is bounded by $\aleq 2^{k} \nrm{D u_{k_{1}}}_{S} \nrm{D
    v_{k_{2}}}_{S}$.

  Consider the summand of \eqref{eq:nf-H-expr}. We decompose the
  inputs and the output by frequency projections to rectangular boxes
  of the form $\calC_{k}(\ell)$, where $\ell = \min
  \set{\frac{j-k}{2}, 0}$. Then we need to consider the expression
  \begin{equation*}
    P_{k} Q_{j} P_{\calC} \calN(Q_{<j-C} P_{\calC^{1}} u_{k_{1}}, Q_{<j-C} P_{\calC^{2}} v_{k_{2}})
  \end{equation*}
  where $\calC, \calC^{1}, \calC^{2} \in \set{\calC_{k}(\ell)}$. This
  expression is nonvanishing only when $-\calC + \calC^{1} + \calC^{2}
  \ni 0$. In fact, combined with the geometry of the cone
  (Lemma~\ref{lem:geom-cone}), we see that for each fixed $\calC^{1}$
  [resp. $\calC^{2}$], it is nonvanishing only for $O(1)$-many $\calC$
  and $\calC^{2}$ [resp. $\calC^{1}$]. The null form gains the factor
  $2^{k_{1} + k_{2}} 2^{\ell}$. By H\"older and Cauchy--Schwarz (in
  $\calC^{1}$ and $\calC^{2}$), we have
  \begin{align*}
    & \nrm{P_{k} Q_{j} \calN(Q_{<j-C} u_{k_{1}}, Q_{<j-C} v_{k_{2}})}_{\Box Z^{1}} \\
    & = 2^{-\frac{3}{2} k} 2^{-\frac{1}{2} j} \nrm{\sum_{\calC, \calC^{1}, \calC^{2}} P_{k} Q_{j} P_{\calC} \calN(Q_{<j-C} P_{\calC^{1}} u_{k_{1}}, Q_{<j-C} P_{\calC^{2}} v_{k_{2}})}_{L^{1} L^{\infty}} \\
    & \aleq 2^{-\frac{3}{2} k} 2^{-\frac{1}{2} j} 2^{k_{1} + k_{2}} 2^{\ell} \left(\sum_{\calC^{1}} \nrm{Q_{<j-C}P_{\calC^{1}} u_{k_{1}}}_{L^{2} L^{\infty}}^{2}\right)^{\frac{1}{2}} \left( \sum_{\calC^{2}} \nrm{Q_{<j-C} P_{\calC^{2}} v_{k_{2}}}_{L^{2} L^{\infty}}^{2} \right)^{\frac{1}{2}} \\
    & \aleq 2^{-\frac{1}{2} (k-j)} 2^{k} \nrm{D u_{k_{1}}}_{S} \nrm{D
      v_{k_{2}}}_{S}.
  \end{align*}
  Summing up in $j < k + C$, the desired estimate follows. \qedhere
\end{proof}

\begin{proof}[Proof of Proposition~\ref{prop:nf*-X}]
  For all the estimates, the most difficult case is when $k_{1} < k -
  10$ (low-high interaction) and when $u_{k_{1}}$ has the dominant
  modulation, i.e., the expression $P_{k} \calH^{\ast}_{k_{1}}
  \calN(\abs{D}^{-1} u_{k_{1}}, v_{k_{2}})$.  \pfstep{Step~1: Proof of
    \eqref{eq:nf*-mX}, \eqref{eq:nf*-uX} and \eqref{eq:nf*-Z}} We
  divide into three cases: (1) $k_{1} \geq k - 10$; (2) $k_{1} < k -
  10$ but either the output or $v_{k_{2}}$ has the dominant
  modulation; or (3) $k_{1} < k - 10$ and $u_{k_{1}}$ has the dominant
  modulation.  \pfsubstep{Step~1.1: $k_{1} \geq k - 10$} In this case,
  all three bounds can be proved simultaneously. The idea is to apply
  Propositions~\ref{prop:nf-core} and \ref{prop:nf-Z}. Indeed, by
  \eqref{eq:nf-Z} and the fact that the LHS vanishes unless $k_{1} =
  k_{\max} + O(1)$ (Littlewood--Paley trichotomy), we see that
  \begin{align*}
    \nrm{P_{k} \calN(\abs{D}^{-1} u_{k_{1}}, v_{k_{2}})}_{\Box Z^{1}}
    \aleq & \ 2^{k - k_{1}} \nrm{P_{k} \abs{D}^{-1} \calN( u_{k_{1}},
      v_{k_{2}})}_{\Box Z^{1}} \\ \aleq & \  2^{-C\dlta (k_{\max} -
      k_{\min})} \nrm{D u_{k_{1}}}_{S} \nrm{D v_{k_{2}}}_{S}.
  \end{align*}
  Combined with \eqref{eq:nf-core}, it follows that
  \begin{align*}
    \nrm{P_{k} \calN(\abs{D}^{-1} u_{k_{1}}, v_{k_{2}})}_{N \cap \Box
      Z^{1}} \aleq 2^{-C\dlta (k_{\max} - k_{\min})} \nrm{D
      u_{k_{1}}}_{S} \nrm{D v_{k_{2}}}_{S}.
  \end{align*}
  By the chain of embeddings $N \cap \Box Z^{1} \subseteq \Box
  \Xdf^{1} \subseteq \Box \mXdf^{1}$, the desired bounds follow.

  \pfsubstep{Step~1.2: $k_{1} < k - 10$, contribution of
    $1-\calH^{\ast}_{k_{1}}$} Note that, by Littlewood--Paley
  trichotomy, $P_{k} \calN(\abs{D}^{-1} u_{k_{1}}, v_{k_{2}})$
  vanishes unless $k_{1} = k_{\min}$ and $k = k_{\max} + O(1)$. In
  Steps~1.2.a--1.2.c below, we estimate the $\Box Z^{1}$-norm of
  $P_{k} (1-\calH^{\ast}_{k_{1}}) \calN( \abs{D}^{-1} u_{k_{1}},
  v_{k_{2}})$. Then in Step~1.2.d, we conclude the proof by
  interpolating with \eqref{eq:nf*-1-H*}.

  \pfsubstep{Step~1.2.a: High modulation inputs/output} The goal of
  this step is to prove
  \begin{equation} \label{eq:nf*-Z-hi-mod} \nrm{P_{k}
      \calN(\abs{D}^{-1} u_{k_{1}}, v_{k_{2}}) - P_{k} Q_{<k_{1}}
      \calN(\abs{D}^{-1} Q_{<k_{1}+C} u_{k_{1}}, Q_{<k_{1}}
      v_{k_{2}})}_{\Box Z^{1}}\! \aleq 2^{-\frac{1}{4}(k - k_{1})}
    \nrm{D u_{k_{1}}}_{S} \nrm{D v_{k_{2}}}_{S}.
  \end{equation}
  Here there is no need for null structure, so we simply write
  $\calN(\abs{D}^{-1} u_{k_{1}}, v_{k_{2}}) = \calO(u_{k_{1}}, D
  v_{k_{2}})$.  We begin by proving
  \begin{equation} \label{eq:nf*-Z-hi-mod-output} \nrm{P_{k} Q_{\geq
        k_{1}} \calO(u_{k_{1}}, D v_{k_{2}})}_{\Box Z^{1}} \aleq 2^{-
      b_{0} (k - k_{1})} \nrm{\abs{D}^{-\frac{1}{2}} u_{k_{1}}}_{L^{2}
      L^{\infty}} \nrm{D v_{k_{2}}}_{S}.
  \end{equation}
  For $j \geq k_{1}$, we decompose
  \begin{equation*}
    P_{k} Q_{j} P^{\omg}_{\frac{j-k}{2}} \calO(u_{k_{1}}, D v_{k_{2}})
    = \sum_{\omg'} P_{k} Q_{j} P^{\omg}_{\frac{j-k}{2}} \calO(u_{k_{1}}, D P_{\frac{j-k}{2}}^{\omg'} v_{k_{2}}).
  \end{equation*}
  Since $\frac{j-k}{2} \geq k_{1}-k$, for each fixed $\omg$ there are
  only $O(1)$-many $\omg'$ such that the summand on the RHS is
  (possibly) non-vanishing, and vice versa. Therefore, by H\"older,
  Bernstein and Cauchy--Schwarz, we have
  \begin{align*}
    & \hskip-2em
    2^{(-\frac{3}{4} + b_{0})(j-k)}  2^{-2k} \left(\sum_{\omg} \nrm{P_{k} Q_{j} P^{\omg}_{\frac{j-k}{2}} \calO(u_{k_{1}}, D P_{\frac{j-k}{2}}^{\omg'} v_{k_{2}})}_{L^{1} L^{\infty}}^{2} \right) \\
    \aleq & 2^{(-\frac{1}{2} + b_{0})(j-k)} 2^{- \frac{1}{2}(k-k_{1})} (2^{-\frac{1}{2} k_{1}} \nrm{u_{k_{1}}}_{L^{2} L^{\infty}}) \left(    \sum_{\omg'} (2^{\frac{1}{6} k_{2}} \nrm{P_{\frac{j-k}{2}}^{\omg'} v_{k_{2}}}_{L^{2} L^{6}})^{2}\right)^{\frac{1}{2}} \\
    \aleq & 2^{(-\frac{1}{2} + b_{0})(j-k_{1})} 2^{-b_{0}(k - k_{1})}
    \nrm{\abs{D}^{-\frac{1}{2}} u_{k_{1}}}_{L^{2} L^{\infty}} \nrm{D
      v_{k_{2}}}_{S}.
  \end{align*}
  Summing up in $j \geq k_{1}$, we obtain
  \eqref{eq:nf*-Z-hi-mod-output}.

  Next, we prove
  \begin{equation} \label{eq:nf*-Z-hi-mod-v} \nrm{P_{k} Q_{< k_{1}}
      \calO(u_{k_{1}}, D Q_{\geq k_{1}} v_{k_{2}})}_{\Box Z^{1}} \aleq
    2^{-b_{0} (k - k_{1})} \nrm{\abs{D}^{-\frac{1}{2}}
      u_{k_{1}}}_{L^{2} L^{\infty}} \nrm{D v_{k_{2}}}_{S}.
  \end{equation}
  By \eqref{eq:L1L2-Z-j} and (uniform-in-$j$) boundedness of $Q_{j}$
  on $L^{1} L^{2}$, we have
  \begin{equation} \label{eq:L1L2-BoxZ1-low-mod} \nrm{P_{k} Q_{<k_{1}}
      f}_{\Box Z^{1}} \aleq 2^{-b_{0}(k - k_{1})} \nrm{f}_{L^{1}
      L^{2}}.
  \end{equation}
  Therefore,
  \begin{align*}
    \nrm{P_{k} Q_{<k_{1}} \calO(u_{k_{1}}, D Q_{j} v_{k_{2}})}_{\Box
      Z^{1}}
    \aleq & 2^{-b_{0} (k - k_{1})}\nrm{P_{k} Q_{<k_{1}} \calO(u_{k_{1}}, D Q_{j} v_{k_{2}})}_{L^{1} L^{2}} \\
    \aleq & 2^{-\frac{1}{2} (j-k_{1})} 2^{-b_{0} (k - k_{1})} (2^{-\frac{1}{2} k_{1}}\nrm{u_{k_{1}}}_{L^{2} L^{\infty}}) \nrm{D Q_{j} v_{k_{2}}}_{X^{0, \frac{1}{2}}_{\infty}} \\
    \aleq & 2^{-\frac{1}{2}(j-k_{1})} 2^{-b_{0} (k - k_{1})}
    \nrm{\abs{D}^{-\frac{1}{2}} u_{k_{1}}}_{L^{2} L^{\infty}} \nrm{D
      v_{k_{2}}}_{S}.
  \end{align*}
  Then summing up in $j \geq k_{1}$, \eqref{eq:nf*-Z-hi-mod-v}
  follows.

  To conclude the proof of \eqref{eq:nf*-Z-hi-mod}, note that
  $\nrm{\abs{D}^{-\frac{1}{2}} u_{k_{1}}}_{L^{2} L^{\infty}} \aleq
  \nrm{D u_{k_{1}}}_{S}$. Moreover, observe that
  \begin{equation*}
    P_{k} Q_{<k_{1} } \calO(Q_{j} u_{k_{1}}, D Q_{<k_{1}}v_{k_{2}})
  \end{equation*}
  vanishes unless $j < k_{1} + 10$.

  \pfsubstep{Step~1.2.b: Output has dominant modulation} Here we prove
  \begin{align}
    \sum_{j < k_{1}} \nrm{P_{k} Q_{j} \calN(\abs{D}^{-1} Q_{< j_{1}}
      u_{k_{1}}, Q_{< j_{2}} v_{k_{2}})}_{\Box Z^{1}} \aleq &
    2^{-b_{0}(k - k_{1})} \nrm{D u_{k_{1}}}_{S} \nrm{D v_{k_{2}}}_{S}
    , \label{eq:nf*-Z-output} \end{align} where $j_{1}, j_{2} = j +
  O(1)$.

  Let $\ell = \frac{1}{2}(j - k_{1})$. After decomposing $u_{k_{1}} =
  \sum_{\omg'} P_{\ell}^{\omg'} u_{k_{1}}$ and $v_{k_{2}} =
  \sum_{\omg''} P_{\frac{j-k}{2}}^{\omg''} v_{k_{2}}$, consider the
  expression
  \begin{align*}
    P_{k} Q_{j} P^{\omg}_{\frac{j-k}{2}} \calN(\abs{D}^{-1} Q_{<
      j_{1}} P^{\omg'}_{\ell} u_{k_{1}}, Q_{< j_{2}}
    P^{\omg''}_{\frac{j-k}{2}} v_{k_{2}}).
  \end{align*}
  Using the geometry of the cone (Lemma~\ref{lem:geom-cone}), observe
  that for every fixed $\omg$ [resp. $\omg''$], the preceding
  expression vanishes except for $O(1)$-many $\omg'$ and $\omg''$
  [resp. $\omg$]. Moreover, for such a triple $\omg, \omg', \omg''$,
  the null form $\calN$ gains a factor of $2^{\ell}$. By H\"older,
  Bernstein (for $P^{\omg}_{\frac{j - k}{2}} v_{k_{2}}$) and
  Cauchy--Schwarz (in $\omg, \omg''$), we have
  \begin{align*}
    & \nrm{P_{k} Q_{j} \calN(\abs{D}^{-1} Q_{< j_{1}} u_{k_{1}}, Q_{< j_{2}} v_{k_{2}})}_{\Box Z^{1}} \\
    & \aleq  2^{(-\frac{3}{4} + b_{0})(j - k)} 2^{-2k} \left( \sum_{\omg} \nrm{P_{k} Q_{j} P^{\omg}_{\frac{j-k}{2}} \calN(\abs{D}^{-1} Q_{< j_{1}} u_{k_{1}}, Q_{< j_{2}} v_{k_{2}})}_{L^{1} L^{\infty}}^{2} \right)^{\frac{1}{2}} \\
    & \aleq  2^{(-\frac{1}{2} + b_{0})(j - k)} 2^{\ell} 2^{-\frac{1}{2}(k - k_{1})} \left( \sup_{\omg'} 2^{-\frac{1}{2} k_{1}} \nrm{Q_{< j_{1}} P^{\omg'}_{\ell} u_{k_{1}}}_{L^{2} L^{\infty}} \right) \left( \sum_{\omg} (2^{\frac{1}{6} k_{2}} \nrm{Q_{< j_{2}} P^{\omg}_{\frac{j-k}{2}} v_{k_{2}}}_{L^{2} L^{6}})^{2} \right)^{\frac{1}{2}} \\
    & \aleq 2^{- b_{0}(k_{1} - j)} 2^{-b_{0}(k - k_{1})} \nrm{D
      u_{k_{1}}}_{S} \nrm{D v_{k_{2}}}_{S}.
  \end{align*}
  Summing up in $j < k_{1}$, \eqref{eq:nf*-Z-output} follows.

  \pfsubstep{Step~1.2.c: $v$ has dominant modulation} Next, we prove
  \begin{align}
    \sum_{j < k_{1}} \nrm{P_{k} Q_{< j_{0}} \calN(\abs{D}^{-1} Q_{<
        j_{1}} u_{k_{1}}, Q_{j} v_{k_{2}})}_{\Box Z^{1}} \aleq &
    2^{-b_{0} (k - k_{1})} \nrm{D u_{k_{1}}}_{S} \nrm{D v_{k_{2}}}_{S}
    , \label{eq:nf*-Z-v}
  \end{align}
  where $j_{0}, j_{1} = j + O(1)$.  As before, let $\ell =
  \frac{j-k_{1}}{2}$. By \eqref{eq:L1L2-Z-j} and (uniform-in-$j$)
  boundedness of $Q_{j}$ on $L^{1} L^{2}$, we have
  \begin{equation*}
    \nrm{P_{k} Q_{<j} f}_{\Box Z^{1}} \aleq 2^{-b_{0} (k-j)} \nrm{f}_{L^{1} L^{2}}.
  \end{equation*}
  Hence it suffices to estimate the $L^{1} L^{2}$ norm of the
  output. This time, we decompose $u_{k_{1}} = \sum_{\omg}
  P^{\omg}_{\ell} u_{k_{1}}$ and $v_{k_{2}} = \sum_{\omg'}
  P^{\omg'}_{\ell} v_{k_{2}}$. By the geometry of the cone, for a
  fixed $\omg$, the expression
  \begin{equation*}
    P_{k} Q_{< j_{0}} \calN(\abs{D}^{-1} Q_{< j_{1}} P^{\omg}_{\ell} u_{k_{1}}, Q_{j} P^{\omg'}_{\ell} v_{k_{1}})
  \end{equation*}
  vanishes except for $O(1)$-many $\omg'$ and vice versa. Moreover,
  the null form $\calN$ gains a factor of $2^{\ell}$. By H\"older and
  Cauchy--Schwarz (in $\omg, \omg'$), we have
  \begin{align*}
    & 2^{-b_{0}(k - j)} \nrm{P_{k} Q_{< j_{0}} \calN(\abs{D}^{-1} Q_{< j_{1}} P^{\omg}_{\ell} u_{k_{1}}, Q_{j} P^{\omg'}_{\ell} v_{k_{2}})}_{L^{1} L^{2}} \\
    & \aleq 2^{-b_{0}(k - j)} 2^{\frac{3}{2} \ell} 2^{\frac{1}{2}
      k_{1}} 2^{-\frac{1}{2} j} \left( \sum_{\omg} (2^{-\frac{1}{2}
        k_{1}} 2^{-\frac{1}{2} \ell}\nrm{Q_{< j_{1}}
        P^{\omg}_{\ell}u_{k_{1}}}_{L^{2} L^{\infty}})^{2}
    \right)^{\frac{1}{2}}
    \left( \sum_{\omg'} (2^{k_{2}} \nrm{Q_{j} P^{\omg'}_{\ell} v_{k_{2}}}_{X^{0, \frac{1}{2}}_{\infty}})^{2} \right)^{\frac{1}{2}} \\
    & \aleq 2^{(-\frac{1}{4} - b_{0})(k_{1} - j)} 2^{-b_{0} (k -
      k_{1})} \nrm{D u_{k_{1}}}_{S} \nrm{D v_{k_{2}}}_{S}.
  \end{align*}
  Summing up in $j < k_{1}$, \eqref{eq:nf*-Z-v} is proved.

  \pfsubstep{Step~1.2.d: Interpolation with \eqref{eq:nf*-1-H*}}
  Combining \eqref{eq:nf*-Z-hi-mod}, \eqref{eq:nf*-Z-output} and
  \eqref{eq:nf*-Z-v}, we obtain
  \begin{equation*}
    \nrm{P_{k} (1 - \calH^{\ast}_{k_{1}}) \calN(\abs{D}^{-1} u_{k_{1}}, v_{k_{2}})}_{\Box Z^{1}}
    \aleq 2^{-b_{0} (k - k_{1})} \nrm{D u_{k_{1}}}_{S} \nrm{D v_{k_{2}}}_{S}.
  \end{equation*}

  On the other hand, \eqref{eq:nf*-1-H*} and the embedding $N
  \subseteq X^{0, -\frac{1}{2}}_{\infty}$ yields a similar bound for
  the $X^{0, -\frac{1}{2}}_{\infty}$-norm without the exponential
  gain. Nevertheless, since we have $\nrm{f}_{\Box \Xdf^{1}} \aleq
  \nrm{f}_{\Box Z^{1}}^{\tht_{0}} \nrm{f}_{X^{0,
      -\frac{1}{2}}_{\infty}}^{1-\tht_{0}}$ where $\tht_{0}= 2
  (\frac{1}{p_{0}} - \frac{1}{2}) > 0$,
  \begin{equation*}
    \nrm{P_{k} (1 - \calH^{\ast}_{k_{1}}) \calN(\abs{D}^{-1} u_{k_{1}}, v_{k_{2}})}_{\Box \Xdf^{1}}
    \aleq 2^{-\tht_{0} b_{0} (k - k_{1})} \nrm{D u_{k_{1}}}_{S} \nrm{D v_{k_{2}}}_{S}.
  \end{equation*}
  Then the desired estimate for $\Box \mXdf^{1}$ follows as well,
  thanks to the embedding $\Box \Xdf^{1} \subseteq \Box \mXdf^{1}$.

  \pfsubstep{Step~1.3: $k_{1} < k - 10$, contribution of
    $\calH^{\ast}_{k_{1}}$} This is the most difficult case. We
  consider
  \begin{equation*}
    P_{k} \calH^{\ast}_{k_{1}} \calN(\abs{D}^{-1} u_{k_{1}}, v_{k_{2}})
    = \sum_{j < k_{1} + C} P_{k} Q_{<j-C} \calN(\abs{D}^{-1} Q_{j} u_{k_{1}}, Q_{<j-C} v_{k_{2}})
  \end{equation*}
  As before, by Littlewood--Paley trichotomy, this expression vanishes
  unless $k_{1} = k_{\min}$ and $k = k_{\max} + O(1)$.

  Recall that all three norms $\Box \mXdf^{1}$, $\Box \Xdf^{1}$ and
  $\Box Z^{1}$ are of the type $X^{s, b, p}_{1}$. To ensure the
  $\ell^{2}$-summability in $\omg$ in the definition
  \eqref{eq:Xsbp-def}, we go through the $L^{p} L^{2}$ norm. More
  precisely, by Bernstein and $L^{2}$-orthogonality of
  $P^{\omg}_{\frac{j-k}{2}}$, note that
  \begin{equation*}
    \nrm{P_{k} Q_{j} f}_{X^{s, b, p}_{1}}
    \aleq 2^{s k} 2^{\frac{5}{2} (\frac{1}{p} - \frac{1}{2}) k} 2^{b j} 2^{\frac{3}{2} (\frac{1}{p} - \frac{1}{2}) j} \nrm{f}_{L^{p} L^{2}}.
  \end{equation*}
  Since $b + \frac{3}{2}(\frac{1}{p} - \frac{1}{2}) > 0$ in all of
  these cases by \eqref{eq:p0-b}, we have
  \begin{equation} \label{eq:LpL2-Xsbp-<j} \nrm{P_{k} Q_{<j} f}_{X^{s,
        b, p}_{1}} \aleq 2^{s k} 2^{\frac{5}{2} (\frac{1}{p} -
      \frac{1}{2}) k} 2^{b j} 2^{\frac{3}{2} (\frac{1}{p} -
      \frac{1}{2}) j} \nrm{f}_{L^{p} L^{2}}.
  \end{equation}
  Hereafter, the proofs of the three bounds differ.

  \pfsubstep{Step~1.3.a: Proof of \eqref{eq:nf*-mX}} We decompose the
  inputs and the output by frequency projections to rectangular boxes
  of the form $\calC_{k_{1}}(\ell)$. Then we need to consider the
  expression
  \begin{equation*}
    P_{k} Q_{< j - C} P_{\calC} \calN(\abs{D}^{-1} Q_{j} P_{\calC^{1}} u_{k_{1}}, Q_{< j - C} P_{\calC^{2}} v_{k_{2}})
  \end{equation*}
  where $\calC, \calC^{1}, \calC^{2} \in
  \set{\calC_{k_{1}}(\ell)}$. Note that the above expression is
  nonvanishing only when $-\calC + \calC^{1} + \calC^{2} \ni
  0$. Moreover, by the geometry of the cone
  (Lemma~\ref{lem:geom-cone}), for each fixed $\calC$
  [resp. $\calC^{2}$], this expression is nonvanishing only for
  $O(1)$-many $\calC^{1}$ and $\calC^{2}$ [resp. $\calC$], and the
  null form gains the factor $2^{k_{1} + k_{2}} 2^{\ell}$.

  For exponents $p_{1}, p_{2}, q_{1}, q_{2} \geq 2$ such that
  $p_{1}^{-1} + p_{2}^{-1} = p^{-1}$ and $q_{1}^{-1} + q_{2}^{-1} =
  2^{-1}$, proceeding carefully to exploit spatial orthogonality in
  $L^{2}$, we have
  \begin{align}
    & \nrm{P_{k} Q_{<j-C} \calN(\abs{D}^{-1} Q_{j} u_{k_{1}}, Q_{<j-C} v_{k_{2}})}_{L^{p} L^{2}} \notag \\
    & = \nrm{\sum_{\calC, \calC^{1}, \calC^{2}}P_{k} Q_{<j-C} P_{\calC} \calN(\abs{D}^{-1} Q_{j} P_{\calC^{1}} u_{k_{1}}, Q_{<j-C} P_{\calC^{2}} v_{k_{2}})}_{L^{p} L^{2}} \notag \\
    & \aleq \nrm{( \sum_{\calC} \nrm{\sum_{\calC^{1}, \calC^{2}}P_{k} Q_{<j-C} P_{\calC} \calN(\abs{D}^{-1} Q_{j} P_{\calC^{1}} u_{k_{1}}, Q_{<j-C} P_{\calC^{2}} v_{k_{2}})(t, \cdot)}_{L^{2}}^{2} )^{\frac{1}{2}}}_{L^{p}_{t}} \notag \\
    & \aleq 2^{\ell} 2^{k_{2}} \nrm{\sup_{\calC_{1}} \nrm{Q_{j} P_{\calC^{1}} u_{k_{1}}(t, \cdot)}_{L^{q_{1}}} }_{L^{p_{1}}_{t}} \nrm{(\sum_{\calC^{2}} \nrm{Q_{<j-C} P_{\calC^{2}} v_{k_{2}}(t, \cdot)}_{L^{q_{2}}}^{2})^{\frac{1}{2}}}_{L^{p_{2}}_{t}} \notag \\
    & \aleq 2^{\ell} 2^{k_{2}} \nrm{Q_{j}u_{k_{1}}}_{L^{p_{1}}
      L^{q_{1}}} \left( \sum_{\calC^{2}} \nrm{Q_{<j-C} P_{\calC^{2}}
        v_{k_{2}}}_{L^{p_{2}} L^{q_{2}}}^{2}
    \right)^{\frac{1}{2}}. \label{eq:LpL2-ortho}
  \end{align}
  We now apply \eqref{eq:LpL2-Xsbp-<j} and \eqref{eq:LpL2-ortho} with
  \[
(s, b, p, p_{1}, q_{1}, p_{2}, q_{2}) = (\frac{5}{4} -
  \frac{3}{p_{0}} + (\frac{1}{4} - b_{0}) \tht_{0}, - \frac{1}{4} -
  (\frac{1}{4} - b_{0}) \tht_{0}, p_{0}, 2, 2, \frac{2 p_{0}}{2 -
    p_{0}}, \infty),
\]
where $\tht_{0} = 2(\frac{1}{p_{0}} -
  \frac{1}{2})$. We then obtain
  \begin{align*}
    & \nrm{P_{k} Q_{<j-C} \calN(\abs{D}^{-1} Q_{j} u_{k_{1}}, Q_{<j-C} v_{k_{2}})}_{\Box \mXdf^{1}} \\
    & \aleq 2^{-(1-\frac{1}{p_{0}}) k} 2^{k} 2^{(-\frac{1}{4} -
      (\frac{1}{4} - b_{0}) \tht_{0})(j - k)} 2^{-\frac{3}{2} (1 -
      \frac{1}{p_{0}}) (j-k)} 2^{\frac{3}{4}(j-k)} 2^{\ell}
    \nrm{Q_{j}u_{k_{1}}}_{L^{2} L^{2}}
    \left( \sum_{\calC^{2}} \nrm{P_{\calC^{2}} Q_{<j-C} v_{k_{2}}}_{L^{p_{2}} L^{\infty}}^{2} \right)^{\frac{1}{2}} \\
    & \aleq 2^{(-\frac{3}{4} +\frac{1}{2} (1-\frac{1}{p_{0}}) +
      (\frac{1}{4} - b_{0}) \tht_{0}) (k_{1} - j)} 2^{(- \frac{1}{2}
      (1 - \frac{1}{p_{0}}) + (\frac{1}{4} - b_{0}) \tht_{0})(k -
      k_{1})} \nrm{Q_{j} u_{k_{1}}}_{X^{1, \frac{1}{2}}_{\infty}}
    \left( \sum_{\calC^{2}} \nrm{D P_{\calC^{2}}
        v_{k_{2}}}_{S_{k_{2}}[\calC_{k_{1}}(\ell)]}^{2}
    \right)^{\frac{1}{2}}.
  \end{align*}
  On the last line, we used
  \begin{equation*}
    \nrm{Q_{<j-C} P_{\calC^{2}} v_{k_{2}}}_{L^{p_{2}} L^{\infty}} 
    \aleq 2^{(\frac{3}{2} - \tht_{0}) \ell} 2^{(2 - \tht_{0}) (k_{1} - k_{2})} 2^{(2 - \frac{1}{2} \tht_{0}) k_{2}} \nrm{P_{\calC^{2}} v_{k_{2}}}_{S_{k_{2}}[\calC_{k_{1}}(\ell)]},
  \end{equation*}
  which follows from interpolation. By \eqref{eq:p0-b}, the factors in
  front of $(k_{1} -j)$ and $(k - k_{1})$ are both negative. Summing
  up in $j < k_{1} + C$, we obtain \eqref{eq:nf*-mX}.

  \pfsubstep{Step~1.3.b: Proof of \eqref{eq:nf*-uX}} As in the proof
  of \eqref{eq:nf*-Z-v} (Step~1.2.c), we decompose $u_{k_{1}} =
  \sum_{\omg} P^{\omg}_{\ell} u_{k_{1}}$ and $v_{k_{2}} = \sum_{\omg'}
  P^{\omg'}_{\ell} v_{k_{2}}$, where $\ell = \frac{j-k_{1}}{2}$. By
  the geometry of the cone (Lemma~\ref{lem:geom-cone}), the null form
  gain, H\"older, Cauchy--Schwarz (in $\omg, \omg'$) and Bernstein
  (for $u_{k_{1}}$), we have
  \begin{equation} \label{eq:LpLp'-LpL2}
    \begin{aligned}
      & \nrm{P_{k} Q_{< j} \calN(\abs{D}^{-1} Q_{j} P^{\omg}_{\ell} u_{k_{1}}, Q_{<j-C} P^{\omg'}_{\ell} v_{k_{2}})}_{L^{p} L^{2}} \\
      & \aleq 2^{(1 + 3(1 - \frac{1}{p})) \ell} 2^{4(1-\frac{1}{p})
        k_{1}} 2^{k_{2}} \left( \sum_{\omg} \nrm{P^{\omg}_{\ell} Q_{j}
          u_{k_{1}}}_{L^{p} L^{p'}}^{2} \right)^{\frac{1}{2}} \left(
        \sum_{\omg'} \nrm{P^{\omg'}_{\ell} Q_{<j-C}
          v_{k_{2}}}_{L^{\infty} L^{2}}^{2} \right)^{\frac{1}{2}} .
    \end{aligned}
  \end{equation}
  Applying \eqref{eq:LpL2-Xsbp-<j} and \eqref{eq:LpLp'-LpL2} with $(s,
  b, p) = (\frac{3}{2} - \frac{3}{p_{0}} + (\frac{1}{4} - b_{0})
  \tht_{0}, -\frac{1}{2} - (\frac{1}{4} - b_{0}) \tht_{0}, p_{0})$,
  where $\tht_{0} = 2(\frac{1}{p_{0}} - \frac{1}{2})$, we obtain
  \begin{align*}
    & \nrm{P_{k} Q_{< j} \calN(\abs{D}^{-1} Q_{j} P^{\omg}_{\ell} u_{k_{1}}, Q_{<j} P^{\omg'}_{\ell} v_{k_{2}})}_{\Box \Xdf^{1}} \\
    & \aleq
    2^{- (1 - \frac{1}{p_{0}}) k} 2^{(\frac{1}{4} - (\frac{1}{4} - b_{0}) \tht_{0}) (j-k)} 2^{-\frac{3}{2}(1-\frac{1}{p_{0}}) (j-k)} \nrm{P_{k} Q_{< j} \calN(\abs{D}^{-1} Q_{j} P^{\omg}_{\ell} u_{k_{1}}, Q_{<j} P^{\omg'}_{\ell} v_{k_{2}})}_{L^{p_{0}} L^{2}} \\
    & \aleq 2^{(-\frac{1}{4} + (\frac{1}{4} - b_{0}) \tht_{0} +
      \frac{1}{2} ( 1 - \frac{1}{p_{0}}) ) (k - k_{1})} \nrm{Q_{j}
      u_{k_{1}}}_{X^{\frac{9}{4} - \frac{3}{p_{0}} + (\frac{1}{4} -
        b_{0}) \tht_{0}, \frac{3}{4} - (\frac{1}{4} - b_{0}) \tht_{0},
        p_{0}}_{1}} \nrm{D v_{k_{2}}}_{S}.
  \end{align*}
  By our choices of $b_{0}$ and $p_{0}$, the overall factor in front
  of $(k - k_{1})$ is negative. Summing up in $j < k_{1}+C$, we obtain
  the desired conclusion.

  \pfsubstep{Step~1.3.c: Proof of \eqref{eq:nf*-Z}} We again decompose
  $u_{k_{1}} = \sum_{\omg} P^{\omg}_{\ell} u_{k_{1}}$ and $v_{k_{2}} =
  \sum_{\omg'} P^{\omg'}_{\ell} v_{k_{2}}$, where $\ell =
  \frac{j-k_{1}}{2}$. We use \eqref{eq:LpL2-Xsbp-<j} with $(s, b, p) =
  (-\frac{5}{4} - b_{0}, -\frac{3}{4} + b_{0}, 1)$. By the geometry of
  the cone (Lemma~\ref{lem:geom-cone}), the null form gain, H\"older
  and Cauchy--Schwarz (in $\omg, \omg'$), we have
  \begin{align*}
    & 2^{b_{0}(j-k)} \nrm{P_{k} Q_{< j} \calN(\abs{D}^{-1} Q_{j} P^{\omg}_{\ell} u_{k_{1}}, Q_{<j-C} P^{\omg'}_{\ell} v_{k_{2}})}_{L^{1} L^{2}} \\
    & \aleq 2^{b_{0}(j-k)} 2^{\ell} 2^{k_{2}} \left( \sum_{\omg} \nrm{Q_{j} P^{\omg}_{\ell} u_{k_{1}}}_{L^{p_{0}} L^{p_{0}'}}^{2} \right)^{\frac{1}{2}} \left( \sum_{\omg'} \nrm{Q_{<j-C} P^{\omg'}_{\ell} v_{k_{2}}}_{L^{p_{0}'} L^{q_{0}}}^{2} \right)^{\frac{1}{2}}  \\
    & \aleq 2^{(b_{0} + (\frac{1}{4} - b_{0}) \tht_{0})(k_{1} - j)}
    2^{-b_{0} (k - k_{1})} 2^{3 (1 - \frac{1}{p_{0}})(k-k_{1})}
    \nrm{u_{k_{1}}}_{X^{3(1- \frac{1}{p_{0}}) - \frac{1}{2} +
        (\frac{1}{4} - b_{0}) \tht_{0}, \frac{1}{2} - (\frac{1}{4} -
        b_{0}) \tht_{0}, p_{0}}_{\infty}} \nrm{D v_{k_{2}}}_{S},
  \end{align*}
  where $q_{0}^{-1} = 2^{-1} - (p'_{0})^{-1}$ and $\tht_{0} =
  2(\frac{1}{p_{0}} - \frac{1}{2})$. By our choices of $p_{0}$ and
  $b_{0}$, the overall factors in front of $(k_{1} - j)$ and $(k -
  k_{1})$ are both negative. Summing up in $j < k_{1}$, the proof is
  complete.

  \pfstep{Step~2: Proof of \eqref{eq:nf*-Xsb}} As in Step~1, we divide
  into three cases.  \pfsubstep{Step 2.1: $k_{1} \geq k - 10$} In view
  of the embedding $N \cap L^{2} \dot{H}^{-\frac{1}{2}} \subseteq
  X^{-\frac{1}{2} + b_{1}, -b_{1}}$ (for any $0 < b_{1} <
  \frac{1}{2}$), the desired bound follows from \eqref{eq:qf-L2L2} and
  \eqref{eq:nf-core}.

  \pfsubstep{Step 2.2: $k_{1} < k - 10$, contribution of
    $1-\calH^{\ast}_{k_{1}}$} Consider the expression
  \begin{equation*}
    P_{k} (1-\calH^{\ast}_{k_{1}}) \calN(\abs{D}^{-1} u_{k_{1}}, v_{k_{2}}).
  \end{equation*}
  Interpolating the $N$-norm bound \eqref{eq:nf*-1-H*} (recall that $N
  \subseteq X^{0, -\frac{1}{2}}_{\infty}$) with an $L^{2}
  \dot{H}^{-\frac{1}{2}}$-norm bound (which is a minor modification of
  \eqref{eq:qf-L2L2}), the desired estimate for this expression
  follows for $0 < b_{1} < \frac{1}{2}$.

  \pfsubstep{Step 2.3: $k_{1} < k - 10$, contribution of
    $\calH^{\ast}_{k_{1}}$} Finally, we estimate
  \begin{equation*}
    P_{k} \calH^{\ast}_{k_{1}} \calN(\abs{D}^{-1} u_{k_{1}}, v_{k_{2}})
    = \sum_{j < k_{1} + C} P_{k} Q_{<j-C} \calN(\abs{D}^{-1} Q_{j} u_{k_{1}}, Q_{<j-C} v_{k_{2}}).
  \end{equation*}
  By \eqref{eq:LpLp'-LpL2}, we have
  \begin{align*}
    & 2^{(\frac{1}{p_{0}}-1) k}\nrm{P_{k} Q_{<j-C} \calN(\abs{D}^{-1} Q_{j} u_{k_{1}}, Q_{<j-C} v_{k_{2}})}_{L^{p_{0}} L^{2}} \\
    & \aleq 2^{(\frac{1}{2} + \frac{3}{2}(1-\frac{1}{p_{0}})) (j -
      k_{1})} 2^{(\frac{1}{p_{0}}-1) k} 2^{4(1-\frac{1}{p_{0}}) k_{1}}
    2^{-3 (1 - \frac{1}{p_{0}}) k_{1}} 2^{(-\frac{1}{2} + (\frac{1}{4} - b_{0}) \tht_{0}) (j - k_{1})}\nrm{u_{k_{1}}}_{\Xdf^{1}} \nrm{D v_{k_{2}}}_{L^{\infty} L^{2}} \\
    & \aleq 2^{(-\frac{3}{2}(1-\frac{1}{p_{0}}) - (\frac{1}{4} -
      b_{0}) \tht_{0}) (k_{1} - j)} 2^{-(1-\frac{1}{p_{0}}) (k-k_{1})}
    \nrm{u_{k_{1}}}_{\Xdf^{1}} \nrm{D v_{k_{2}}}_{S}.
  \end{align*}
  Summing up in $j < k_{1} + C$ and using the embedding
  $2^{(1-\frac{1}{p_{0}}) k} P_{k} Q_{< k} L^{p_{0}} L^{2} \subseteq
  X^{-\frac{1}{2} + b_{1}, - b_{1}}$, which holds by Bernstein since
  $b_{1} < \frac{1}{p_{0}} - \frac{1}{2}$, the proof of
  \eqref{eq:nf*-Xsb} is complete. \qedhere
\end{proof}

\begin{proof}[Proof of Proposition~\ref{prop:qf-X}]
  As in Proposition~\ref{prop:nf*-X}, we divide the proof into two
  cases: $k_{1} \geq k -10$ and $k_{1} < k - 10$.  \pfstep{Step 1:
    $k_{1} \geq k - 10$} In this case, by \eqref{eq:qf-Y-L2L2},
  \eqref{eq:qf-rem-ell} and the embeddings $L^{1} L^{2} \subseteq \Box
  \Xdf^{1} \cap \Box Z^{1}$ and $L^{1} L^{2} \cap L^{2}
  \dot{H}^{-\frac{1}{2}} \subseteq X^{-\frac{1}{2}+b_{1}, -b_{1}}$,
  the three bounds follow simultaneously.

  \pfstep{Step 2: $k_{1} < k - 10$} We begin with \eqref{eq:qf-uX} and
  \eqref{eq:qf-Xsb}. By H\"older and Bernstein, we have
  \begin{equation*}
    2^{(\frac{1}{p_{0}} - 1) k} \nrm{P_{k} \calO(u_{k_{1}}, v'_{k_{2}})}_{L^{p_{0}} L^{2}}
    \aleq 2^{-(1-\frac{1}{p_{0}}) (k-k_{1})} \nrm{u_{k_{1}}}_{L^{p_{0}} \dot{W}^{2 - \frac{3}{p_{0}}, p_{0}'}} \nrm{v'_{k_{2}}}_{L^{\infty} L^{2}}
  \end{equation*}
  By \eqref{eq:LpL2-Xsbp-<j} with $(s, b, p) = (\frac{3}{2} -
  \frac{3}{p_{0}}, -\frac{1}{2}, p_{0})$, \eqref{eq:qf-uX}
  follows. Moreover, by the $L^{2} \dot{H}^{-\frac{1}{2}}$-norm
  estimate \eqref{eq:qf-L2L2} and the embedding $P_{k} Q_{<k}
  L^{p_{0}} L^{2} \subseteq X^{-\frac{1}{2} + b_{1}, -b_{1}}$,
  \eqref{eq:qf-Xsb} follows as well.

  It remains to prove \eqref{eq:qf-Z}. Applying
  \eqref{eq:nf*-Z-hi-mod-output} (from Step~1.2.a of the proof of
  Proposition~\ref{prop:nf*-X}) with $D v_{k_{2}} = v'_{k_{2}}$ and
  the embedding $2^{-\frac{3}{2} k_{1}} P_{k_{1}} Y \subseteq L^{2}
  L^{\infty}$, we have
  \begin{equation*}
    \nrm{P_{k} Q_{\geq k_{1}} \calO(u_{k_{1}}, v'_{k_{2}})}_{\Box Z^{1}} \aleq 2^{-b_{0} (k - k_{1})} \nrm{D u_{k_{1}}}_{Y} \nrm{v'_{k_{2}}}_{S}.
  \end{equation*}
  On the other hand, by \eqref{eq:L1L2-BoxZ1-low-mod} and H\"older, we
  have
  \begin{align*}
    \nrm{P_{k} Q_{<k_{1}} \calO(u_{k_{1}}, v'_{k_{2}})}_{\Box Z^{1}}
    \aleq & 2^{-b_{0} (k - k_{1})}\nrm{P_{k} \calO(u_{k_{1}}, v'_{k_{2}})}_{L^{1} L^{2}} \\
    \aleq & 2^{-b_{0} (k - k_{1})} 2^{3(1 - \frac{1}{p_{0}}) (k - k_{1})}\nrm{D u_{k_{1}}}_{Y} (2^{(\frac{3}{p_{0}} - 3) k_{2}}\nrm{v'_{k_{2}}}_{L^{p_{0}'} L^{q_{0}}}) \\
    \aleq & 2^{-b_{0} (k - k_{1})} 2^{3(1 - \frac{1}{p_{0}}) (k -
      k_{1})}\nrm{D u_{k_{1}}}_{Y} \nrm{v'_{k_{2}}}_{S},
  \end{align*}
  where $q_{0}^{-1} = 2^{-1} - (p'_{0})^{-1}$. By our choice of
  $p_{0}$, the overall factor in front of $(k - k_{1})$ is negative;
  hence, \eqref{eq:qf-Z} follows. \qedhere
\end{proof}

\subsubsection{Trilinear null form estimates}
\begin{proof}[Proofs of Propositions~\ref{prop:tri-nf} and
  \ref{prop:tri-nf-core}]
  Estimate \eqref{eq:tri-nf} would follow from Lemma~\ref{lem:nf-tri}
  and the core estimates \eqref{eq:tri-nf-core-1},
  \eqref{eq:tri-nf-core-2} and \eqref{eq:tri-nf-core-3}, combined with
  Lemma~\ref{lem:geom-cone} and \eqref{eq:S-box-sum}.

  Estimates \eqref{eq:tri-nf-core-1}, \eqref{eq:tri-nf-core-2} and
  \eqref{eq:tri-nf-core-3} can be established by repeating the proofs
  of (136), (137) and (138) in \cite{KST} with the following
  modifications:
  \begin{itemize}
  \item Thanks to the frequency localization of the inputs and the
    output to rectangular boxes of the type $\calC_{k}(\ell)$, the
    bilinear operators $\calO$ and $\calO'$ can be safely disposed.
  \item Moreover, for any disposable multilinear operator $\calM$ and
    rectangular boxes $\calC, \calC'$ of the type $\calC_{k}(\ell)$
    situated in the annuli $\set{\abs{\xi} \aeq 2^{k_{1}}}$ and
    $\set{\abs{\xi} \aeq 2^{k_{2}}}$, respectively, note that (by
    Lemma~\ref{lem:nf-bi})
    \begin{align*}
      & \calM(\rd^{\alp} Q^{\pm}_{<j-C} P_{\calC} u_{k_{1}}, \rd_{\alp} Q^{\pm'}_{<j-C} P_{\calC'} v_{k_{2}}, \cdots) \\
      & = C 2^{k_{1} + k_{2}} \max \set{\abs{\angle(\pm \calC, \pm'
          \calC')}^{2}, 2^{j - \min \set{k_{1}, k_{2}}}}
      \tilde{\calM}(P_{\calC} u_{k_{1}},P_{\calC'} v_{k_{2}}, \cdots)
    \end{align*}
    for some disposable $\tilde{\calM}$, which suffices for the proofs
    in \cite{KST}.
  \end{itemize}
  We also note that although (136)--(138) in \cite{KST} are stated
  with the factor $2^{\dlt(k - \min \set{k_{i}})}$ on the RHS, an
  inspection of the proofs reveals that the actual gain is $2^{\dlt(k
    - k_{1})}$, as claimed in
  \eqref{eq:tri-nf-core-1}--\eqref{eq:tri-nf-core-3}. We omit the
  straightforward details.
\end{proof}

\section{The paradifferential wave equation} \label{sec:paradiff}
Sections~\ref{sec:paradiff}, \ref{sec:paradiff-renrm} and
\ref{sec:paradiff-err} are devoted to the proofs of
Theorem~\ref{thm:paradiff} and
Proposition~\ref{prop:paradiff-weakdiv}. In this section, we first
reduce the task of proving these results to that of constructing an
appropriate parametrix (Section~\ref{subsec:parametrix}). Parametrix
construction, in turn, is reduced to constructing a renormalization
operator that roughly conjugates $\Box + \Diff_{\P A}^{\kpp}$ to
$\Box$. Sections~\ref{sec:paradiff-renrm} and \ref{sec:paradiff-err}
are devoted proofs of the desired properties of the renormalization
operator.

\subsection{Reduction to parametrix
  construction} \label{subsec:parametrix} We start with a quick
reduction of the problem \eqref{eq:paradiff-wave}.  After peeling off
perturbative terms using commutator estimates (which will be sketched
in more detail below), we are led to consideration of the frequency
localized problem
\begin{equation} \label{eq:paradiff-wave-0} \left\{
    \begin{aligned}
&      \Box u_{k} + 2 [P_{< k - \kpp} \P_{\alp} A, \rd^{\alp} u_{k}] =  f_{k}, \\
  &    (u_{k}, \rd_{t} u_{k}) (0)= (u_{0, k}, u_{1, k}),
    \end{aligned}
  \right.
\end{equation}
for each $k \in \bbZ$. By scaling, we may normalize $k = 0$.

Our goal is to construct a parametrix to \eqref{eq:paradiff-wave-0}.
We summarize the main properties of the parametrix in this case, as
well as the precise hypotheses on $A_{\alp}$ that we need, in the
following theorem.
\begin{theorem} [Parametrix construction] \label{thm:paradiff-core}
  Let $A_{\alp}$ be a $\g$-valued 1-form on $I \times \bbR^{4}$ such
  that
  \begin{equation} \label{eq:S-bnd-hyp} \nrm{A}_{S^{1}[I]} + \nrm{\Box
      A}_{\ell^{1} X^{-\frac{1}{2} + b_{1}, -b_{1}}[I]} \leq M
  \end{equation}
  for some $M > 0$ and $b_{1} > \frac{1}{4}$.  Let $\veps > 0$. Assume
  that $\kpp > \kpp_{1}(\veps, M)$ and
  \begin{align}
    \nrm{A}_{DS^{1}[I]} + \nrm{\Box A}_{\ell^{1} L^{2}
      \dot{H}^{-\frac{1}{2}}} <&\  \dltp(\veps, M,
    \kpp_{1}), \label{eq:DS-bnd-hyp}
  \end{align}
  for some functions $\kpp_{1}(\veps, M) \gg 1$, $0 < \dltp(\veps,
  M, \kpp_{1}) \ll 1$ independent of $A_{\alp}$. Moreover, assume that
  there exists $\tilde{A}_{\alp}$ such that
  \begin{align}
    \nrm{\tA}_{\uS^{1}[I]}
    + \nrm{(D \tA_{0}, D \P^{\perp} \tA)}_{Y[I]} \leq & \ M, \label{eq:S-bnd-hyp-tA} \\
    \nrm{\tA}_{DS^{1}[I]} + \nrm{(\tA_{0}, \P^{\perp} \tA)}_{L^{2}
      \dot{H}^{\frac{3}{2}} [I]} <& \  \dltp(\veps, M,
    \kpp_{1}), \label{eq:DS-bnd-hyp-tA}
  \end{align}
  and
  \begin{align}
    \nrm{\lap A_{0} - \bfO(\tilde{A}^{\ell}, \rd_{0}
      \tilde{A}_{\ell})}_{\ell^{1} (\lap L^{1} L^{\infty} \cap L^{2}
      \dot{H}^{-\frac{1}{2}})[I]}
    < & \ 	\dltp^{2}(\veps, M, \kpp_{1}), \label{eq:A0-eq-hyp} \\
    \nrm{\Box \P A - \P \bfO(\tilde{A}^{\ell}, \rd_{x}
      \tilde{A}_{\ell})) - \P \bfO'(\tilde{A}^{\alp}, \rd_{\alp}
      \tilde{A})}_{\ell^{1} (L^{1} L^{2} \cap L^{2}
      \dot{H}^{-\frac{1}{2}})[I]} < & \ \dltp^{2}(\veps, M,
    \kpp_{1}), \label{eq:Ax-eq-hyp}
  \end{align}
  where $\bfO(\cdot, \cdot)$ and $\bfO'(\cdot, \cdot)$ are disposable
  bilinear operators on $\bbR^{4}$. Then the following statements
  hold.
  \begin{enumerate}
  \item Given any $(u_{0}, u_{1}) \in \dot{H}^{1} \times L^{2}$ and $f
    \in N \cap L^{2} \dot{H}^{-\frac{1}{2}}$ such that $u_{0}, u_{1},
    f$ are all frequency-localized in $\set{C^{-1} \leq \abs{\xi} \leq
      C}$, there exists a $\g$-valued function $u(t)$ on $I$ which
    obeys
    \begin{align}
      \nrm{u}_{S^{1}[I]}
      \aleq_{M} & \nrm{(u_{0}, u_{1})}_{\dot{H}^{1} \times L^{2}} + \nrm{f}_{N \cap L^{2} \dot{H}^{-\frac{1}{2}}[I]}, \label{eq:paradiff-core:bdd} \\
      \nrm{\Box u + 2 [P_{<- \kpp} \P_{\alp} A, \rd^{\alp} u] - f}_{N
        \cap L^{2} \dot{H}^{-\frac{1}{2}}[I]}
      \leq & \veps \left( \nrm{(u_{0}, u_{1})}_{\dot{H}^{1} \times L^{2}} + \nrm{f}_{N \cap L^{2} \dot{H}^{-\frac{1}{2}}[I]}\right),
 \label{eps-data}\\
      \nrm{u[0] - (u_{0}, u_{1})}_{\dot{H}^{1} \times L^{2}} \leq &
      \veps \left( \nrm{(u_{0}, u_{1})}_{\dot{H}^{1} \times L^{2}} +
        \nrm{f}_{N \cap L^{2} \dot{H}^{-\frac{1}{2}}[I]}\right).\label{eps-sln}
    \end{align}
    Moreover, $u$ is frequency-localized in $\set{(2C)^{-1} \leq
      \abs{\xi} \leq 2 C}$.
  \item Assume furthermore that
    \begin{equation} \label{eq:paradiff-core:S-bnd-hyp-imp}
      \nrm{A_x}_{\ell^{\infty} S^{1}[I]} +   \nrm{A_0}_{\ell^{\infty} L^2 \dot H^\frac32 [I]}   < \dlto(M)	
    \end{equation}
    for some $\dlto(M) \ll 1$ independent of $A_{\alp}$. Then the
    approximate solution $u$ constructed above obeys
    \eqref{eq:paradiff-core:bdd} with a universal constant, i.e.,
    \begin{equation}
      \nrm{u}_{S^{1}[I]} 
      \aleq  \nrm{(u_{0}, u_{1})}_{\dot{H}^{1} \times L^{2}} + \nrm{f}_{N \cap L^{2} \dot{H}^{-\frac{1}{2}}[I]}. \label{eq:paradiff-core:bdd-imp}
    \end{equation}
  \end{enumerate}
\end{theorem}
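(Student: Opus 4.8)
The plan is to build the parametrix by conjugating the free half-wave flow with a pseudodifferential renormalization operator, in the spirit of \cite{KST, KT}, but extracting the necessary smallness from the large frequency gap $\kpp$ (together with the divisible quantity $\dltp$) rather than from smallness of $\nrm{A}_{S^1}$, as in \cite{ST1, OT2}. By scaling we normalize $k = 0$, so we must produce an approximate solution operator for $\Box u + 2[P_{<-\kpp}\P_{\alp}A, \rd^{\alp}u] = f$ acting on inputs frequency-localized near $\abs{\xi}\aeq 1$. First I would split $u = u_{+} + u_{-}$ according to the half-wave decomposition and reduce, for each sign $\pm$, to conjugating the operator $D_{t} \mp \abs{D}$ (plus the relevant part of the paradifferential term) into $D_{t} \mp \abs{D}$ modulo acceptable errors. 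The renormalization operator $e_{\pm}^{<0}(t,x,D)$ is the exponential of a pseudodifferential operator whose symbol $\Psi_{\pm}^{<0}(t,x,\xi)$ solves a transport equation along the $\pm$-null direction with right-hand side built out of $P_{<-\kpp}\P_{\alp}A$; this is precisely the choice which, at the symbol level, cancels the paradifferential term. The parametrix is then
\[
u = \sum_{\pm} e_{\pm}^{<0}(t,x,D)\Big[ (e_{\pm}^{<0})^{-1}(0,x,D)\, v_{\pm}[0] + \Box_{\pm}^{-1}\big((e_{\pm}^{<0})^{-1}(t,x,D)\, f\big)\Big],
\]
where $v_{\pm}[0]$ is the $\pm$-projection of the Cauchy data and $\Box_{\pm}^{-1}$ is the free half-wave Duhamel operator, all objects being frequency-localized to $\abs{\xi}\aeq 1$. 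It is convenient to run the argument first at the level of the $S^{\sharp}_{\pm}$ norms and then upgrade to $S^{1}$.

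Next I would invoke the two groups of estimates that form the content of Sections~\ref{sec:paradiff-renrm} and \ref{sec:paradiff-err}. The first is the set of \emph{mapping properties} of $e_{\pm}^{<0}$ and its inverse: boundedness on $S^{1}[I]$, on $N \cap L^{2}\dot{H}^{-\frac12}[I]$, and on the auxiliary high-modulation spaces $\Box\uX^{1}$ and $X^{-\frac12+b_1,-b_1}$, with constants depending only on $M$, together with the $t=0$ identity $e_{\pm}^{<0}(0,x,D)(e_{\pm}^{<0})^{-1}(0,x,D) = I + (\text{small})$. The second is the \emph{conjugation error estimate}, namely
\[
(\Box + \Diff^{\kpp}_{\P A})\, e_{\pm}^{<0}(t,x,D) - e_{\pm}^{<0}(t,x,D)\,\Box \;:\; S^{1}[I] \to \veps\,(N \cap L^{2}\dot{H}^{-\frac12})[I].
\]
Granting these, \eqref{eq:paradiff-core:bdd} follows from the free-wave linear estimate \eqref{eq:lin-est-S1} (and the analogous bounds controlling $\Box$ on the auxiliary norms) combined with the mapping properties; the near-solution property \eqref{eps-data} follows from the conjugation error estimate after reinstating the genuine paradifferential coefficient (the discrepancy between $2[P_{<-\kpp}\P_\alp A, \rd^\alp\cdot]$ acting on a single frequency and $\Diff^{\kpp}_{\P A}$ is perturbative by Proposition~\ref{prop:diff-single}, with the off-diagonal frequencies summed using the $\kpp$-gain); and the near-data property \eqref{eps-sln} follows from the $t=0$ identity. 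For part~(2), under the stronger smallness \eqref{eq:paradiff-core:S-bnd-hyp-imp} the symbol $\Psi_{\pm}^{<0}$ is itself small, so $e_{\pm}^{<0} = I + \mathcal{O}(\dlto)$ as a bounded operator on $S^{1}[I]$, and \eqref{eq:paradiff-core:bdd-imp} holds with a universal constant.

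The main obstacle is the conjugation error estimate. One cannot afford any net loss in $\kpp$, so the error must be decomposed so that each piece either carries an off-diagonal or large-gap gain $2^{-\dltb\kpp}$ or else is bounded by a divisible norm controlled by $\dltp$. The delicate part is the contribution of the \emph{quadratic} portion of $\P A$, which enters through $A_{0}$ and $\Box\P A$ via hypotheses \eqref{eq:A0-eq-hyp}--\eqref{eq:Ax-eq-hyp}: these terms must be re-iterated using the model connection $\tilde{A}$, and their combined effect on the error is tamed only by the \emph{secondary trilinear null structure}, i.e. Proposition~\ref{prop:diff-tri-nf} together with the algebraic identity of Lemma~\ref{lem:nf-tri}; treating $A_{0}$ and $\P A$ separately would fail. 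A second difficulty, not present in \cite{ST1, OT2}, is that the divisible-norm smallness lives on the finite interval $I$ while the renormalization and its error involve modulation projections and parametrix kernels that are non-local in time; absorbing the sharp cutoff $\chi_{I}$ in the $\Box\Xdf^{1}$- and $\Box\mXdf^{1}$-norms forces a careful subdivision $I = \cup_{i\in\calI} I_{i}$ with $\#\calI \aleq_{M,\kpp} 1$ on which the residual error is genuinely $\leq \veps$, after which one reassembles using interval square-summability of $S^{1}$ and $N \cap L^{2}\dot{H}^{-\frac12}$.

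Once Theorem~\ref{thm:paradiff-core} is established, Theorem~\ref{thm:paradiff} follows by a standard iteration: the approximate solution operator of part~(1) is inverted perturbatively on a fixed frequency, the frequency-localized pieces are summed against an admissible envelope using the $\ell^{2}$-structure of $S^{1}$ (with the frequency-diagonal localization of $\Diff^{\kpp}$ ensuring output frequency $\aeq$ input frequency), and the commutator errors coupling distinct frequencies are handled by Proposition~\ref{prop:diff-single}. Proposition~\ref{prop:paradiff-weakdiv} then follows by applying this well-posedness on a subdivision of $I$ into boundedly many pieces on which the relevant divisible norms and the contribution of $G$ are small, again using Proposition~\ref{prop:diff-single} to treat $\Diff^{\kpp}_{\P A}$ as a perturbation on each piece.
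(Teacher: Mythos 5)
Your outline follows the paper's strategy closely: you build the parametrix by conjugating the free half-wave Duhamel operator with a pseudodifferential renormalization operator, and you correctly identify the two pillars (the mapping properties of Theorem~\ref{thm:renrm-bnd} and the conjugation error of Theorem~\ref{thm:renrm-err}), the sources of smallness (the frequency gap $\kpp$ and the divisible quantities bounded by $\dltp$), the necessity of re-iterating $A_0$ and $\Box\P A$ through $\tilde A$ in order to reach the secondary trilinear null structure, the interval-localization difficulty stemming from the non-locality of modulation projections, and the mechanism by which part~(2) follows from smallness of the symbol. The one point you should tighten is the description of the renormalization operator as ``the exponential of a pseudodifferential operator whose symbol solves a transport equation.'' That picture is literally correct only in the abelian Maxwell--Klein--Gordon case of \cite{KT}. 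Here, with a non-commutative gauge group, one cannot exponentiate a single $\g$-valued symbol and hope to cancel the paradifferential term at all scales simultaneously; instead the paper splits the construction into two steps. First, at the linear microlocal level, $\Psi_{\pm,<h}$ is produced by applying the approximate inverse $-L^{\omg}_{\mp}\lap_{\omg^{\perp}}^{-1}$ of $L^{\omg}_{\pm}$ to the frequency-curated main part $A^{main,\pm}_{j,<h}\omg^{j}$ of $\P A$ (this is the precise form of ``solving the transport equation''). Second, at the nonlinear local level, a $\G$-valued symbol $O_{<h,\pm}$ is obtained by integrating the Littlewood--Paley ODE $\tfrac{d}{dh}O_{<h,\pm}\,O_{<h,\pm}^{-1}=\Psi_{\pm,h}$ as in the wave-map construction of \cite{ST1}, and the renormalization operator is then the paradifferential operator $Op(Ad(O_{\pm})_{<0})$ acting by conjugation in the adjoint representation, not a bare exponential. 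With that replacement your sketch agrees with the paper's definition of the parametrix in Section~\ref{subsec:parametrix-pf}, after which, granting Theorems~\ref{thm:renrm-bnd}--\ref{thm:renrm-err}, the argument follows \cite{OT2} exactly as you describe.
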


In the remainder of this subsection, we sketch the proofs of
Theorem~\ref{thm:paradiff} and Proposition~\ref{prop:paradiff-weakdiv}
assuming Theorem~\ref{thm:paradiff-core}. Then in the rest of this
section, as well as in Sections~\ref{sec:paradiff-renrm} and
\ref{sec:paradiff-err}, our goal will be to establish
Theorem~\ref{thm:paradiff-core}.

\begin{lemma} \label{lem:paradiff-comm} a) Let $A_{t,x}$ and
  $\tilde{A}_{t,x}$ be $\g$-valued 1-forms on $I \times \bbR^{4}$,
  which satisfy \eqref{eq:S-bnd-hyp}, \eqref{eq:DS-bnd-hyp},
  \eqref{eq:S-bnd-hyp-tA}, \eqref{eq:DS-bnd-hyp-tA},
  \eqref{eq:A0-eq-hyp} and \eqref{eq:Ax-eq-hyp}. Then for $\veps > 0$
  sufficiently small (depending on $M$) and $\kpp$ sufficiently large
  (depending on $\veps$, $M$), given any $(u_{0}, u_{1}) \in
  \dot{H}^{1} \times L^{2}$ and $f \in N \cap L^{2}
  \dot{H}^{-\frac{1}{2}}[I]$, there exists a unique solution $u \in
  S^{1}[I]$ to the IVP
  \begin{equation} \label{para-eqn}
    \left\{
      \begin{aligned}
&        (\Box + \Diff_{\P A}^{\kpp}) u = f, \\
  &      u[0] = (u_{0}, u_{1}).
      \end{aligned}
    \right.
  \end{equation}
  which obeys
  \begin{equation} \label{eq:paradiff-div:bdd} \nrm{u}_{S^{1}[I]}
    \aleq_{M} \nrm{(u_{0}, u_{1})}_{\dot{H}^{1} \times L^{2}} +
    \nrm{f}_{N \cap L^{2} \dot{H}^{-\frac{1}{2}}[I]}.
  \end{equation}
  b) If, in addition, $\nrm{A}_{\ell^{\infty} S^{1}[I]}$ obeys
  \eqref{eq:paradiff-core:S-bnd-hyp-imp}, then the solution $u$
  constructed above obeys \eqref{eq:paradiff-div:bdd} with a universal
  constant, i.e.,
  \begin{equation} \label{eq:paradiff-div:bdd-imp} \nrm{u}_{S^{1}[I]}
    \aleq \nrm{(u_{0}, u_{1})}_{\dot{H}^{1} \times L^{2}} + \nrm{f}_{N
      \cap L^{2} \dot{H}^{-\frac{1}{2}}[I]}.
  \end{equation}
\end{lemma}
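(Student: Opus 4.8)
The idea is to deduce Lemma~\ref{lem:paradiff-comm} from Theorem~\ref{thm:paradiff-core} by a continuity/perturbation argument, combined with a decomposition of $2\Diff_{\P A}^{\kpp}$ into a sum of frequency-localized pieces each handled by the parametrix, plus a genuinely perturbative commutator remainder. First I would reduce the problem to the frequency-localized equation \eqref{eq:paradiff-wave-0}. Writing $u = \sum_k u_k$ with $u_k = P_k u$, we have $P_k \Diff_{\P A}^{\kpp} u = [P_{<k-\kpp}\P_\alp A, \rd^\alp u_k] + [\nb, \text{(commutator terms)}]$, where the commutator terms arise because $P_k$ does not commute with multiplication by $P_{<k-\kpp}\P_\alp A$. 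The key point is that these commutator terms are perturbative: by Proposition~\ref{prop:diff-aux} (the bound \eqref{eq:diff-aux-d} on $\abs{D}^{-1}[\nb, \Diff^{\kpp}_{\P A}]B$) they gain a factor $2^{-\dltb\kpp}$, hence can be absorbed into the RHS once $\kpp$ is large depending on $M$. So up to a perturbative error it suffices to solve $\Box u_k + 2[P_{<k-\kpp}\P_\alp A, \rd^\alp u_k] = f_k$ for each $k$, which after rescaling to $k=0$ is exactly \eqref{eq:paradiff-wave-0}.

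\textbf{Main steps.} The second step is to verify that the hypotheses \eqref{eq:S-bnd-hyp}--\eqref{eq:Ax-eq-hyp} of Theorem~\ref{thm:paradiff-core} are met in our setting. The bound \eqref{eq:S-bnd-hyp} on $\nrm{A}_{S^1[I]} + \nrm{\Box A}_{\ell^1 X^{-\frac12+b_1,-b_1}[I]}$ follows from the structure of a caloric Yang--Mills wave — precisely Proposition~\ref{prop:uS-bnd} and Theorem~\ref{thm:structure}.(4). The divisibility hypotheses \eqref{eq:DS-bnd-hyp} and \eqref{eq:DS-bnd-hyp-tA} are the crucial ones: since $DS^1$, $L^2\dot H^{-\frac12}$, $L^2\dot H^{\frac32}$ are all divisible norms (divisible in time), we can partition $I = \cup_{i\in\calI} I_i$ with $\#\calI \aleq_{M,\kpp_1} 1$ so that on each $I_i$ these norms drop below $\dltp(\veps,M,\kpp_1)$; this is exactly the mechanism already used in the proofs of Theorem~\ref{thm:structure}.(1),(2),(7). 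The auxiliary connection $\tA$ is taken to be $A$ itself (or its heat-flow regularization), and \eqref{eq:A0-eq-hyp}, \eqref{eq:Ax-eq-hyp} become the statements that $\lap A_0$ and $\Box\P A$ agree with their quadratic parts $\bfO(A^\ell,\rd_0 A_\ell)$ and $\P[A^\ell,\rd_x A_\ell] + 2\Delta^{-1}\rd\bfQ(\rd A,\rd A)$ up to divisible errors — this is Theorem~\ref{thm:main-eq} together with the envelope-preserving-of-order-$\geq 3$ bounds on $R_j$, $\DA^3$, $\bfA_0^3$ on $\Str^1[I]$ (and divisibility of $\Str^1$), so again a partition of $I$ makes the cubic-and-higher remainders small.

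\textbf{Third step: the continuity argument.} On a fixed subinterval $I_i$ where all smallness hypotheses hold, Theorem~\ref{thm:paradiff-core}.(1) produces an approximate solution operator: given data and source, it returns $u$ solving the equation up to a $\veps$-relative error in $N\cap L^2\dot H^{-\frac12}$ both for the equation \eqref{eps-data} and for the initial data \eqref{eps-sln}. Choosing $\veps$ small enough depending on $M$, a Neumann series / iteration in $S^1[I_i]$ converts the approximate solver into an exact solver for \eqref{para-eqn} on $I_i$, with the bound \eqref{eq:paradiff-div:bdd} (the constant $\aleq_M$ coming from \eqref{eq:paradiff-core:bdd}); uniqueness follows since the difference of two solutions solves the homogeneous equation with zero data and the same iteration forces it to vanish. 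One then reassembles the $\#\calI$ subintervals: the $S^1$ norm is interval square summable by Proposition~\ref{prop:int-loc}.(2), and at each interface one uses the exact solution values as data for the next interval, so the global bound \eqref{eq:paradiff-div:bdd} holds with a constant depending on $M$ and on $\#\calI \aleq_M 1$, i.e.\ still $\aleq_M$. For part (b), the extra hypothesis \eqref{eq:paradiff-core:S-bnd-hyp-imp} (smallness of $\nrm{A_x}_{\ell^\infty S^1}$ and $\nrm{A_0}_{\ell^\infty L^2\dot H^{3/2}}$) feeds directly into Theorem~\ref{thm:paradiff-core}.(2), giving \eqref{eq:paradiff-core:bdd-imp} with a universal constant on each $I_i$; since the number of intervals is still only $\aleq_M 1$ this yields \eqref{eq:paradiff-div:bdd-imp} — here one must be slightly careful that the reassembly does not reintroduce an $M$-dependence, which works because the interface data bound is universal too.

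\textbf{Expected main obstacle.} The genuinely delicate point is the interaction between interval localization and the non-local-in-time modulation projections hidden inside $\Diff_{\P A}^{\kpp}$ and inside the $N$, $X^{s,b,p}_r$, and $Z^1$ norms: the $Q_j$ cutoffs do not respect sharp time restriction, so multiplying by $\chi_{I_i}$ and extending outside $I_i$ must be done with care. This is precisely why the interval-localized norms $\uX^1[I]$, $\Xdf^1[I]$ etc.\ are \emph{defined} via homogeneous-wave extension rather than arbitrary extension, and why Proposition~\ref{prop:ext} is needed. I expect the bulk of the real work is bookkeeping: ensuring that when we restrict to $I_i$, extend by free waves, solve, and glue, all the norms used are the interval-localized ones for which Theorem~\ref{thm:paradiff-core} and Propositions~\ref{prop:diff-aux}, \ref{prop:diff-single}, \ref{prop:int-loc} were stated, and that the perturbative commutator estimate \eqref{eq:diff-aux-d} survives interval localization (which it does, being an $N$-norm bound, handled exactly as the $\Rem^{\kpp,2}$ terms in the proof of Proposition~\ref{prop:Ax-bi}). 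Compared to the derivation of Theorem~\ref{thm:paradiff} itself — which is essentially this lemma plus the non-paradifferential remainder $R^\kappa_A$ treated via Lemma~\ref{l:Rkappa} — the only new content here is the self-contained hypothesis verification and the reassembly, so I do not anticipate conceptual difficulties beyond the localization technicalities.
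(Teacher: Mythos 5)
Your proposal has the right overall shape (use Theorem~\ref{thm:paradiff-core} to build a parametrix piece by dyadic piece, absorb the mismatch perturbatively, iterate), but there are two issues worth flagging.

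\textbf{A logical misplacement.} A substantial part of your second step is devoted to verifying the hypotheses \eqref{eq:S-bnd-hyp}--\eqref{eq:Ax-eq-hyp} of Theorem~\ref{thm:paradiff-core} for a caloric Yang--Mills wave, invoking Proposition~\ref{prop:uS-bnd}, Theorem~\ref{thm:main-eq}, the envelope-preserving bounds on $R_j,\DA^3,\bfA_0^3$, divisibility of $\Str^1$, etc. But Lemma~\ref{lem:paradiff-comm} already \emph{assumes} these hypotheses as given; they are not to be verified here. That verification is the content of the subsequent deduction of Theorem~\ref{thm:paradiff} from the lemma, where one chooses $\tA = A$, $\bfO = [\cdot,\cdot]$, etc. Including it in the proof of the lemma is not incorrect, but it conflates two separate steps of the argument and obscures where the actual new work of the lemma lies.

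\textbf{A genuine gap: the non-commutator mismatch.} The paper's proof sets $u_{app} = \sum_{k} u_k$ with $u_k$ the parametrix output for data $(P_k u_0, P_k u_1, P_k f)$, and then computes
\begin{equation*}
(\Box + 2\Diff_{\P A}^{\kpp})u_{app} - f \;=\; \sum_k\Bigl(\Box u_k + 2[P_{<k-\kpp}\P A_\alp,\rd^\alp u_k]-P_k f\Bigr)\;+\;\sum_k g_k,
\end{equation*}
where $g_k$ is the mismatch between the operator $\Diff_{\P A}^{\kpp}$ acting on $u_{app}$ and the frozen paradifferential operator at frequency $2^k$ that the parametrix was built for. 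The crucial point is that $g_k$ splits into \emph{two} distinct pieces $g_k = g_k^1 + g_k^2$: a commutator piece $g_k^1$, which is indeed estimated by Proposition~\ref{prop:diff-aux} with the $2^{-\dltb\kpp}$ gain you cite, and a piece $g_k^2$ of the form $[P_{[k-\kpp,\,k'-\kpp)}\P A_\alp,\rd^\alp P_{k'}u_k]$ with $k' = k+O(1)$, coming from the mismatch of the frequency thresholds $k-\kpp$ versus $k'-\kpp$. This second piece has \emph{no} commutator structure — it is simply $O(1)$ dyadic shells of $\P A$ near frequency $2^{k-\kpp}$ hitting $u_k$, and Proposition~\ref{prop:diff-aux} does not touch it. It is instead absorbed via the $\Rem^{\kpp,2}$-type split of Proposition~\ref{prop:rem2} into a small part (gaining $2^{-\dltb\kpp}$) plus a large part bounded by divisible norms of $A$, which is then made small by interval subdivision. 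Your proposal treats the entire mismatch as a commutator handled by \eqref{eq:diff-aux-d}, which would leave $g_k^2$ unestimated. This is the only ingredient of real substance missing; the rest of your outline (Neumann iteration on each subinterval, square-summable gluing via Proposition~\ref{prop:int-loc}, the universal-constant version in part (b) from Theorem~\ref{thm:paradiff-core}.(2)) matches the paper's argument.
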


\begin{proof}
  Let $u_{k}$ be the function given by (the rescaled) Theorem~\ref{thm:paradiff-core} 
which is determined by the  initial data  $(P_{k}u_{0}, P_{k}u_{1}, P_{k} f)$. We set
  \begin{equation*}
    u_{app} = \sum_{k'} u_{k'}.
  \end{equation*}
We claim that $u$ is a good approximate solution to \eqref{para-eqn} in the sense that in any subinterval $J \subset I$
we have
\begin{equation}\label{approx-size}
\| u_{app}\|_{S^1[J]} \lesssim_M \|(u_0,u_1)\|_{\dot H^1 \times L^2} + \|f\|_{N \cap L^{2} \dot{H}^{-\frac{1}{2}}[J]},
\end{equation}
\begin{equation}\label{approx-data}
\| u_{app} [0] - (u_0,u_1)\|_{\dot H^1 \times L^2} \lesssim \epsilon (\|(u_0,u_1)\|_{\dot H^1 \times L^2} + \|f\|_{N \cap L^{2} \dot{H}^{-\frac{1}{2}}[J]}),
\end{equation}
respectively
\begin{gather} \label{approx-sln}
\begin{aligned}
\| (\Box+ \Diff_{\P A}^{\kpp}) u_{app}  - f\|_{N \cap L^{2} \dot{H}^{-\frac{1}{2}}[J]} \lesssim_M & \left(\epsilon + 2^{-\dltb \kappa} + 2^{C\kappa} 
(\| \P A \|_{\ell^\infty DS^1[I]} + \|A_0\|_{\ell^\infty L^2 \dot H^\frac32[J]}\right) \!\!\!\!\!
\\ & \ 
\left(\|(u_0,u_1)\|_{\dot H^1 \times L^2} + \|f\|_{N \cap L^{2} \dot{H}^{-\frac{1}{2}}[J]}\right)
\end{aligned}
\end{gather}
Assume that we have  these bounds. Then the solution $u$ to \eqref{para-eqn} is obtained as follows:

\begin{enumerate}[label=(\roman*)]
\item We choose $\kappa$ large enough so that $2^{-\dltb} \ll_M 1$.
\item We divide the interval $I$ into subintervals $J_j$ so that 
\[
2^{C \kappa} \| \P A \|_{DS^1[I]} + \|A_0\|_{L^2 \dot H^\frac32[I_j]} \ll_M 1
\]
\item Within the interval $J_1$ we now have small errors for the approximate solution $u_{app}$; hence 
we can obtain an exact solution by reiterating.

\item We successively repeat the previous step on each of the subintervals $I_j$.
\end{enumerate}
It remains to prove the bounds \eqref{approx-size}, \eqref{approx-data} and \eqref{approx-sln}.
The first two  follow directly from \eqref{eq:paradiff-core:bdd} and  \eqref{eps-data} for $u_k$ after summation in $k$.
We now consider \eqref{approx-sln}, where we write
\[
(\Box+ \Diff_{\P A}^{\kpp}) u  - f = \sum_k \left(\Box u_k + 2 [P_{<k-\kappa} \P A_\alpha, \partial^\alpha  u_k]  - P_k f \right)  
+\sum_k  g_k
\]
where
\[
g_k = 2 [P_{< k-\kappa} \P A_\alpha, \partial^\alpha  u_k]  - \sum_{k'}  [P_{-k'-\kappa} \P A_\alpha, \partial^\alpha P_{k'} u_k]  
\]
The first sum is estimated directly via \eqref{eps-data}, so it remains to estimate $g_k$. We split
\[
g_k = g_k^1 + g_k^2
\]
where 
\[
g_k^1 =  \sum_{k'= k+O(1)}  P_{k'} [P_{-k'-\kappa} \P A_\alpha, \partial^\alpha P_{k'} u_k]   -  
[P_{-k'-\kappa} \P A_\alpha, \partial^\alpha P_{k'} u_k]  
\]
and 
\[
g_k^2 = \sum_{k'= k+O(1)}  [P_{[-k'-\kappa,k-\kappa)} \P A_\alpha, \partial^\alpha P_{k'} u_k]  
\]
Here $g_k^1$ has a commutator structure, so we can estimate it as in Proposition~\ref{prop:diff-aux},
yielding a $ 2^{-\dltb \kappa}$ factor. For the expression $g_k^2$, on the other hand, we can apply Proposition~\ref{prop:rem2}
to split it into a small  part and a large part but which uses only divisible norms. Thus \eqref{approx-sln} follows,
and the proof of the Lemma is concluded.

b) The same iterative construction applies, but no we no longer need
to subdivide the interval as \eqref{eq:paradiff-core:S-bnd-hyp-imp}
insures that the divisible norms in \eqref{approx-sln} are actually small.

\end{proof}

\begin{proof}[Proof of Theorem~\ref{thm:paradiff} assuming Theorem~\ref{thm:paradiff-core}]
We prove the theorem by repeatedly applying the lemma in successive intervals.
To achieve this, we begin by choosing $\epsilon$ and $\kappa$ depending only on $M$ 
so that Lemma~\ref{lem:paradiff-comm} holds. It remains to insure that we can divide 
the interval $I$ into subintervals $J_j$ where the conditions 
\eqref{eq:S-bnd-hyp}, \eqref{eq:DS-bnd-hyp}, \eqref{eq:S-bnd-hyp-tA}, \eqref{eq:DS-bnd-hyp-tA},
  \eqref{eq:A0-eq-hyp} and \eqref{eq:Ax-eq-hyp} hold. 

We choose $\tA = A$. We carefully 
observe that we cannot use Theorem~\ref{thm:structure} here, as Theorem~\ref{thm:paradiff} 
is used in the proof of Theorem~\ref{thm:structure}. However, we can use the weaker result in
Proposition~\ref{prop:uS-bnd}, which immediately gives.
  \eqref{eq:S-bnd-hyp} and  \eqref{eq:S-bnd-hyp-tA} from Theorem~\ref{thm:structure}. 

The remaining bounds are for divisible norms,
so it suffices to establish them with a large constant depending on $M$; then we gain smallness by subdividing.
Indeed, for \eqref{eq:DS-bnd-hyp} and  \eqref{eq:DS-bnd-hyp-tA} this still follows from Proposition ~\ref{prop:uS-bnd}.

For  \eqref{eq:A0-eq-hyp} we choose $\bfO(A,\partial_0 A) = [A,\partial_0 A]$. Then we can use \eqref{eq:A0-tri} and \eqref{eq:Q}.
Finally for \eqref{eq:Ax-eq-hyp} we choose in addition
$ \bfO(A_\alpha,\partial^\alpha A)= -2[A_\alpha ,\partial^\alpha A]$. Then by Theorem~\ref{thm:paradiff-core} we have
\[
\Box A - \bfO(A,\partial_x A) - \bfO(A_\alpha,\partial^\alpha A) = R(A) + \Rem^3(A) A
\]
and it suffices to use \eqref{eq:Rj} and \eqref{eq:rem3-fe}.

To conclude, we note that the second part of the lemma is proved as 
\end{proof}

\begin{proof}[Proof of Proposition~\ref{prop:paradiff-weakdiv}
  assuming Theorem~\ref{thm:paradiff-core}]
  We divide
  \begin{equation*}
    A_{t,x} = A_{t,x}^{pert} + A_{t,x}^{nonpert}
  \end{equation*}
  where
  \begin{equation*}
    A_{t,x}^{pert} = \sum_{k \in K} P_{k} A_{t,x}
  \end{equation*}
  with $\abs{K} = O_{\dlto(M)^{-1} M}(1)$ and
  \begin{align*}
    \nrm{A^{nonpert}}_{\ell^{\infty} S^{1}[I]} < \dlto(M).
  \end{align*}

  By Proposition~\ref{prop:diff-single}, it follows that the
  contribution of any finite number of dyadic pieces of $A_{t,x}$ in
  $\Diff_{\P A}^{\kpp}$ is perturbative. More precisely, for
  $A^{pert}$, we have
  \begin{equation} \label{eq:diff-pert-B} 
\nrm{\Diff^{\kpp}_{\P A^{pert}} B}_{N
      \cap L^{2} \dot{H}^{-\frac{1}{2}}[I]} \aleq_{\abs{K}, M}
    \nrm{B}_{S^{1}[I]}.
  \end{equation}
Thus $B$ solves also
\[
(\Box + \Diff^\kappa_{\P A^{nonpert}})B = \tilde G,
\]
 where 
\[
\| \tilde G\|_{N  \cap L^{2} \dot{H}^{-\frac{1}{2}}[I]} \lesssim_M \| G\|_{N  \cap L^{2} \dot{H}^{-\frac{1}{2}}[I]} + \| B\|_{S^1[I]}
\]
We now claim that Theorem~\ref{thm:paradiff-core} and thus Lemma~\ref{lem:paradiff-comm} apply 
for $A^{nonpert}$. If that were true, then the conclusion of the proposition is achieved by subdividing 
the interval $I$ into finitely many subintervals $J_j$, depending only on $M$,
so that
\begin{enumerate}[label=(\roman*)]
\item Lemma~\ref{lem:paradiff-comm} applies in $J_j$
\item The size of the inhomogeneous term $\| \tilde G\|_{N  \cap L^{2} \dot{H}^{-\frac{1}{2}}[I]}$ is small in $J_j$.
\end{enumerate}

Indeed, to verify the hypothesis of Theorem~\ref{thm:paradiff-core} with $A$ replaced by $A^{nonpert}$
it suffices to leave $\tA = A$, unchanged, but instead replace the operators $\bfO$ and $\bfO'$ by 
$(1- \sum_{k \in K} P_k)  \bfO$, respectively  $(1- \sum_{k \in K} P_k)  \bfO'$, which are still disposable.
\end{proof}

\subsection{Extension and spacetime Fourier projections} \label{subsec:ext}
As in \cite{KT}, our parametrix will be constructed by conjugating the usual Fourier representation formula for the $\pm$-half-wave equations by a \emph{renormalization operator} $Op(Ad(O_{\pm})_{<0})$; see \eqref{eq:parametrix}. The renormalization operator is designed so that it cancels the most dangerous part of the paradifferential term $2 [\P A_{\alp, <-\kpp}, \rd^{\alp}P_{0} u]$ (Theorem~\ref{thm:renrm-err}), and furthermore enjoys nice mapping properties in functions spaces we use (Theorem~\ref{thm:renrm-bnd}).

\subsubsection{Extension to a global-in-time wave}
As in \cite{KT}, our parametrix construction for \eqref{eq:paradiff-wave-0} involves fine spacetime Fourier localization of $\P A$, which necessitates extension of $\P A$ outside $I$. Here we specify the extension procedure, and collect some of its properties that will be used later.

We extend $\P A$ by homogeneous waves outside $I$. By \eqref{eq:S-bnd-hyp}, this extension (still denoted by $\P A$) obeys the global-in-time bound
\begin{align} 
	\nrm{\P A}_{S^{1}} + \nrm{\Box \P A}_{\ell^{1} X^{-\frac{1}{2} + b_{1}, - b_{1}}} \aleq & M. \label{eq:S-bnd-global}
\end{align}
By Proposition~\ref{prop:ext}, for any $p \geq 2$ note that
\begin{align} 
	\nrm{\chi_{I}^{k} P_{k} \P A}_{L^{p} L^{\infty}} 
	\aleq & \nrm{P_{k} \P A}_{L^{p} L^{\infty}[I]}. \label{eq:DS-k-bnd-global} 
\end{align}
Moreover, by \eqref{eq:DS-bnd-hyp}, we have
\begin{align} 
	\sum_{k} \nrm{P_{k} \Box \P A}_{L^{2} \dot{H}^{-\frac{1}{2}}} 
	= & \nrm{\Box \P A}_{\ell^{1} L^{2} \dot{H}^{-\frac{1}{2}}[I]} < \dltp. \label{eq:DS-bnd-global} 
\end{align}

Next, we specify the extension of $A_{0}$, and also of the relations \eqref{eq:A0-eq-hyp} and \eqref{eq:Ax-eq-hyp} outside $I$.
We first extend $\tA$ by homogeneous wave outside $I$ and $\tA_{0}$ by zero outside $I$. These extensions (still denoted by $\tA$ and $\tA_{0}$, respectively) satisfy the global-in-time bound
\begin{align} \label{eq:S-bnd-global-tA}
	\nrm{\tA}_{\uS^{1}} + \nrm{D \tA_{0}}_{Y} \aleq M.
\end{align}
In addition, we introduce the extension $\tG$ of $\P^{\perp} \tA$ by zero outside $I$. It obeys
\begin{align} \label{eq:Y-bnd-global-tB}
	\nrm{D \tG}_{Y} \aleq M.
\end{align}
We emphasize that, in general, $\P^{\perp} \tA$ does \emph{not} coincide with $\tG$ outside $I$.

Define $\tR_{0}$ and $\P \tR$ as
\begin{align*}
	\tR_{0}(t) = & \lap A_{0}(t) - \bfO(\tA^{\ell}(t), \rd_{t} \tA_{\ell}(t)) \quad \hbox{ for } t \in I, \\
	\P \tR(t) = & \Box \P A(t) - \P \bfO(\tA^{\ell}(t), \rd_{x} \tA_{\ell}(t)) + \P \bfO'(\tA_{\alp}, \rd^{\alp} \tA) \quad \hbox{ for } t \in I,
\end{align*}
and $0$ for $t \not \in I$. By the hypotheses \eqref{eq:A0-eq-hyp} and \eqref{eq:Ax-eq-hyp}, we have
\begin{align} 
	\nrm{\tR_{0}}_{\ell^{1} (\lap L^{1} L^{\infty} \cap L^{2} \dot{H}^{-\frac{1}{2}})} < & \dltp^{2}, \label{eq:tR0} \\
	\nrm{\P \tR}_{\ell^{1} (L^{1} L^{2} \cap L^{2} \dot{H}^{-\frac{1}{2}})} < & \dltp^{2}.	\label{eq:PtR}
\end{align}
We extend $A_{0}$ outside $I$ by solving the equation
\begin{equation} \label{eq:A0-eq-global}
	\lap A_{0} = \bfO(\chi_{I} \tA^{\ell}, \rd_{t} \tA_{\ell}) + \chi_{I} \tR_{0}.
\end{equation}
By \eqref{eq:qf-L2L2}, \eqref{eq:qf-L1Linfty}, \eqref{eq:DS-bnd-hyp-tA}, \eqref{eq:S-bnd-global-tA} and \eqref{eq:tR0}, it follows that
\begin{align} 
	\nrm{D A_{0}}_{\ell^{1} Y} \aleq & M^{2},	\label{eq:Y-bnd-global} \\
	\nrm{\lap A_{0}}_{\ell^{1} L^{2} \dot{H}^{-\frac{1}{2}}} \aleq & \dltp^{2}.		\label{eq:DY-bnd}
\end{align}
Moreover, observe that the extension $\P A$ obeys the equation
\begin{equation} \label{eq:Ax-eq-global}
\begin{aligned}
	\Box \P A =& \P \bfO(\chi_{I} \tA^{\ell}, \rd_{x} \tA_{\ell})  + \P \bfO'(\P_{\ell} \tA, \chi_{I} \rd^{\ell} \tA) \\
	& - \P \bfO'(\tA_{0}, \chi_{I} \rd_{t} \tA) + \P \bfO'(\tG_{\ell}, \chi_{I} \rd^{\ell} \tilde{A}) + \chi_{I} \P \tR. 
\end{aligned}
\end{equation}

\subsubsection{Spacetime Fourier projections}
Here we introduce the spacetime Fourier projections needed for definition of the renormalization operator. We denote by $(\tau, \xi) \in \bbR \times \bbR^{4}$ the Fourier variables for the input, and by $(\sgm, \eta) \in \bbR \times \bbR^{4}$ the Fourier variables for the symbol, which will be constructed from $\P A$. We remind the reader that our sign convention is such that the characteristic cone for a $\pm$-wave is $\set{\tau \pm \abs{\xi} = 0}$. 

Consider the following (overlapping) decomposition of $\bbR^{1+4}$, which is symmetric and homogeneous with respect to the origin:
\begin{align*}
D^{\omg, \pm}_{cone} 
= 	& \set{\sgn(\sgm)(\sgm \pm \eta \cdot \omg) > \tfrac{1}{16} \abs{\eta}^{-1} (\abs{\eta_{\perp}}^{2} + \abs{\sgm \pm \eta \cdot \omg}^{2})} \\
	& \cap \set{ \sgn(\sgm) (\sgm \pm \eta \cdot \omg) < \tfrac{4}{5} \abs{\sgm}^{-1} (\abs{\eta_{\perp}}^{2} + \abs{\sgm \pm \eta \cdot \omg}^{2})}, \\
D^{\omg, \pm}_{null} 
= 	& \set{\abs{\sgm \pm \eta \cdot \omg} < \tfrac{1}{8} \abs{\eta}^{-1} (\abs{\eta_{\perp}}^{2} + \abs{\sgm \pm \eta \cdot \omg}^{2})}, \\
D^{\omg, \pm}_{out} 
= 	& \set{\sgn(\sgm)(\sgm \pm \eta \cdot \omg) < - \tfrac{1}{16} \abs{\eta}^{-1} (\abs{\eta_{\perp}}^{2} + \abs{\sgm \pm \eta \cdot \omg}^{2})} \\
& \cup \set{\sgn(\sgm) (\sgm \pm \eta \cdot \omg) > \tfrac{2}{3}  \abs{\sgm}^{-1} (\abs{\eta_{\perp}}^{2} + \abs{\sgm \pm \eta \cdot \omg}^{2})}.
\end{align*}
where $\eta_{\perp} = \eta - (\eta \cdot \omg) \omg$. See Figure~\ref{fig:domains} below for a plot of these domains.

\begin{figure}[h] 
\begin{center}
\includegraphics[width=250px]{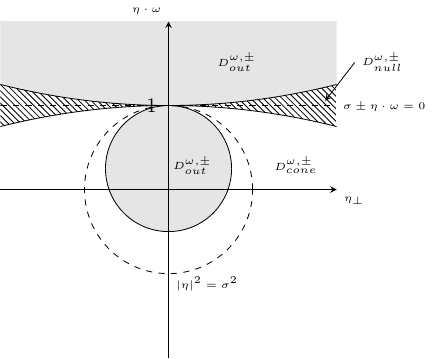}
\caption{Caricature of $D^{\omg, \pm}_{cone}$, $D^{\omg, \pm}_{med}$ and $D^{\omg, \pm}_{out}$ in the hyperplane $\set{\sgm = 1}$ with $\pm = -$. Note that the actual domains are defined to be slightly overlapping.} \label{fig:domains}
\end{center}
\end{figure}

We construct a smooth partition of unity adapted to the decomposition $D^{\omg, \pm}_{cone} \cup D^{\omg, \pm}_{null} \cup D^{\omg, \pm}_{out} = \bbR^{1+4}$ as follows. We begin with the preliminary definitions
\begin{align*}
	\widetilde{\Pi}^{\omg, \pm}_{in}(\sgm, \eta) = & m_{> 1} \left( \frac{4}{5} \frac{\sgm (\sgm \pm \eta \cdot \omg)}{(\abs{\eta}^{2} - (\eta \cdot \omg)^{2}) + \abs{\sgm \pm \eta \cdot \omg}^{2}} \right), \\
	\widetilde{\Pi}^{\omg, \pm}_{med}(\sgm, \eta) = & m_{> 1} \left( 8 \frac{\sgn (\sgm) \abs{\eta} (\sgm \pm \eta \cdot \omg) }{(\abs{\eta}^{2} - (\eta \cdot \omg)^{2}) + \abs{\sgm \pm \eta \cdot \omg}^{2}} \right), \\
	\widetilde{\Pi}^{\omg, \pm}_{out}(\sgm, \eta) = & m_{> 1} \left( - 8 \frac{\sgn (\sgm) \abs{\eta} (\sgm \pm \eta \cdot \omg) }{(\abs{\eta}^{2} - (\eta \cdot \omg)^{2}) + \abs{\sgm \pm \eta \cdot \omg}^{2}} \right),
\end{align*}
where $m_{> 1}(z) : \bbR \to [0, 1]$ is a smooth cutoff to the region $\set{z > 1}$ (i.e., equals $1$ there), which vanishes outside $\set{z > \frac{5}{6}}$. Then we define the symbols $\Pi^{\omg, \pm}_{cone}(\sgm, \eta)$, $\Pi^{\omg, \pm}_{null}(\sgm, \eta)$, $\Pi^{\omg, \pm}_{out}(\sgm, \eta)$ as follows:
\begin{align}
	\Pi^{\omg, \pm}_{cone}(\sgm, \eta) =& \widetilde{\Pi}^{\omg, \pm}_{med}(\sgm, \eta) - \widetilde{\Pi}^{\omg, \pm}_{in}(\sgm, \eta) , \\
	\Pi^{\omg, \pm}_{null}(\sgm, \eta) =& 1 - \widetilde{\Pi}^{\omg, \pm}_{med}(\sgm, \eta) - \widetilde{\Pi}^{\omg, \pm}_{out}(\sgm, \eta), \\
	\Pi^{\omg, \pm}_{out}(\sgm, \eta) =& \widetilde{\Pi}^{\omg, \pm}_{out}(\sgm, \eta) + \widetilde{\Pi}^{\omg, \pm}_{in}(\sgm, \eta).
\end{align}
Observe that $1 = \Pi^{\omg, \pm}_{cone} + \Pi^{\omg, \pm}_{null} + \Pi^{\omg, \pm}_{out}$, and $\supp \, \Pi^{\omg, \pm}_{\ast} \subseteq D^{\omg, \pm}_{\ast}$ for $\ast \in \set{cone, \, null, \, out}$. Moreover, by symmetry, $\Pi^{\omg, \pm}_{\ast}$ preserves the real-valued property.

We also make use of a dyadic angular decomposition with respect to $\omg$. Given $\tht > 0$, we define the symbol
\begin{equation*}
	\Pi^{\omg, \pm}_{> \tht} (\sgm, \eta) = m_{> 1}\left( \frac{\abs{\angle (\omg, - \sgn (\sgm)} \eta)}{\tht} \right)
\end{equation*}
Furthermore, we define
\begin{equation*}
	\Pi^{\omg, \pm}_{\leq \tht}(\sgm, \eta) = 1 - \Pi^{\omg, \pm}_{> \tht} (\sgm, \eta), \qquad
	\Pi^{\omg, \pm}_{\tht}(\sgm, \eta) = (\Pi^{\omg, \pm}_{> \tht} - \Pi^{\omg, \pm}_{> \tht/2}) (\sgm, \eta).
\end{equation*}
Since these symbols are real-valued and odd, the corresponding multipliers (which we simply denote by $\Pi^{\omg, \pm}_{> \tht}$, $\Pi^{\omg, \pm}_{\geq \tht}$ and $\Pi^{\omg, \pm}_{\tht}$, respectively) preserve the real-valued property.

The regularity of the symbols $\Pi_{cone}^{\omg, \pm}$, $\Pi_{null}^{\omg, \pm}$ and $\Pi_{out}^{\omg, \pm}$ degenerate as $\abs{\eta_{\perp}} \to 0$; however, they are well-behaved when composed with $\Pi^{\omg, \pm}_{\tht} P_{h}$. The following lemma will play a basic role for our construction.

\begin{lemma} \label{lem:proj-bdd}
For any fixed $\pm$, $\omg \in \bbS^{3}$, $n \in \bbN$, $h \in 2^{\bbR}$ and $\ast \in \set{cone, null, out}$, the multiplier\footnote{We quantize $(\sgm, \eta) \mapsto (D_{t}, D_{x})$.}
$\tht^{n} \rd_{\xi}^{(n)} (\Pi^{\omg, \pm}_{\ast} \Pi_{\tht}^{\omg, \pm} P_{h})$ is disposable.
\end{lemma}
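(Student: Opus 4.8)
Lemma~\ref{lem:proj-bdd} is a statement about the smoothness of the symbols $\Pi^{\omg, \pm}_{\ast}$ after they are restricted, both in angle (via $\Pi^{\omg,\pm}_{\tht}$) and in the size of the spatial frequency (via $P_h$). The plan is to reduce to a scaling-normalized situation and then verify that the symbol is a bump function of the appropriate type on the relevant region. By the scaling and rotation invariance of the whole family of symbols — all of $\Pi^{\omg,\pm}_{\ast}$, $\Pi^{\omg,\pm}_{\tht}$ and $P_h$ are homogeneous of degree $0$ in $(\sgm,\eta)$ except for $P_h$, which localizes $\abs{\eta} \aeq h$ — I would first normalize $h = 1$ and, by rotating, take $\omg = e_1$. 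The extra factor $\tht^n$ on the $n$-th derivative is exactly what compensates the loss from differentiating a symbol that lives on an angular sector of width $\tht$, so the natural strategy is to rescale the transverse variable $\eta_\perp$ by $\tht$ and check that in the rescaled variable the symbol is a standard bump.

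The key steps, in order, would be: (1) reduce to $h=1$, $\omg = e_1$ by scaling/rotation, and record that on $\supp \Pi^{\omg,\pm}_\tht P_1$ we have $\abs{\eta} \aeq 1$, $\abs{\angle(\omg,-\sgn(\sgm)\eta)} \aeq \tht$, hence $\abs{\eta_\perp} \aeq \tht$ and $\abs{\sgm \pm \eta\cdot\omg} \aleq \tht$ (the latter because $\abs{\sgm} \aeq \abs{\eta} \aeq 1$ and the angle controls $\sgm + \eta\cdot\omg$ when $\sgn(\sgm) = \mp 1$; one must here use that the $D^{\omg,\pm}_\ast$ regions and the angular cutoff are mutually consistent, i.e. on $D^{\omg,\pm}_{null}$ and the transition zones $\abs{\sgm \pm \eta\cdot\omg}$ is at most comparable to $\abs{\eta_\perp}^2/\abs{\eta}$, which is $\aleq \tht^2 \aleq \tht$); (2) examine the defining argument of each $\widetilde\Pi^{\omg,\pm}_{in/med/out}$, namely the ratio
\[
\frac{\sgm(\sgm\pm\eta\cdot\omg)}{(\abs{\eta}^2 - (\eta\cdot\omg)^2) + \abs{\sgm\pm\eta\cdot\omg}^2} = \frac{\sgm(\sgm\pm\eta\cdot\omg)}{\abs{\eta_\perp}^2 + \abs{\sgm\pm\eta\cdot\omg}^2},
\]
and observe that, with $\abs{\eta_\perp} \aeq \tht$ fixed, this ratio is a smooth function of $(\sgm\pm\eta\cdot\omg)/\abs{\eta_\perp}$ (times smooth nonvanishing factors), so that after substituting $u = (\sgm\pm\eta\cdot\omg)/\tht$ and $v = \eta_\perp/\tht$ the symbol becomes a fixed smooth bump in $(u,v,\abs{\eta})$ supported where $\abs{v} \aeq 1$; (3) translate the chain rule back: each $\rd_\xi$ in the original variables hitting $\Pi^{\omg,\pm}_\ast \Pi^{\omg,\pm}_\tht P_1$ produces either a $\rd_{\eta\cdot\omg}$, which costs nothing (the symbol is smooth of size $O(1)$ in that direction on the relevant zone, since $\rd_{\sgm\pm\eta\cdot\omg}$ of the ratio is $O(\tht^{-1})$ but this is multiplied by the derivative of $m_{>1}$ which is itself supported where the ratio is $\aeq 1$, i.e. where $\abs{\sgm\pm\eta\cdot\omg}\aeq\tht$ — wait, this already costs $\tht^{-1}$), or a $\rd_{\eta_\perp}$, which costs $\tht^{-1}$; in all cases each derivative costs at most $\tht^{-1}$, so $\tht^n \rd_\xi^{(n)}$ is $O_n(1)$; (4) since the symbol is supported in $\set{\abs{\eta}\aeq 1}$, which has bounded volume, a symbol with $\tht^n\rd^{(n)}$-bounds of $O_n(1)$ has an $L^1$ kernel (e.g. by writing the kernel as $\langle x\rangle^{-N}$ times $L^\infty$ bounds obtained from integration by parts $N$ times), hence is disposable; undoing the scaling $h=1 \to h$ only rescales the kernel, preserving the mass bound.

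The step I expect to be the main obstacle is step (3) combined with the angular-consistency bookkeeping in step (1): one must be careful that on the overlap regions where $\widetilde\Pi^{\omg,\pm}_{med}$ and $\widetilde\Pi^{\omg,\pm}_{in}$ transition (i.e. where the ratio is $\aeq 1$), the constraints $\abs{\sgm\pm\eta\cdot\omg}$ versus $\abs{\eta_\perp}$ are indeed both of size $\aeq\tht$ — so that the single parameter $\tht$ really does govern all the transverse scales — rather than, say, the transition happening at $\abs{\sgm\pm\eta\cdot\omg}\aeq\abs{\eta_\perp}^2 \aeq\tht^2$, which would require the sharper weight $\tht^{2n}$. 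Here I would use that $P_h$ forces $\abs{\eta}\aeq h$ so that the ``cone'' scaling $\abs{\sgm\pm\eta\cdot\omg}\aeq\abs{\eta_\perp}^2/\abs{\eta}$ coincides, up to the fixed factor $\abs{\eta}\aeq 1$, with $\abs{\eta_\perp}^2$; and on $D^{\omg,\pm}_{null}$, where $\abs{\sgm\pm\eta\cdot\omg}$ can be much smaller than $\abs{\eta_\perp}$, the symbol $\Pi^{\omg,\pm}_{null}$ is actually \emph{constant} (equal to $1$ minus things that vanish there), so no derivative loss occurs at all. This case analysis — (a) deep null zone where the symbol is locally constant, (b) transition zones where $\abs{\sgm\pm\eta\cdot\omg}\aeq\tht$ and the $\tht^{-1}$-per-derivative bound holds, (c) the $\rd_{\eta_\perp}$ direction — is what makes the lemma work, and assembling it cleanly is the only real content; everything else is routine rescaling and the standard fact that a bump symbol on a bounded frequency region has an integrable, translation-invariant kernel.
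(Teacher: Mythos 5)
Your proposal contains the right overall strategy — normalize by scaling, exploit the parabolic geometry of the support, and conclude disposability from support volume times derivative bounds via kernel integration by parts — but it breaks down at exactly the point you flagged as ``the main obstacle,'' and the resolution you offer is not correct.

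The transition of the cutoffs $\widetilde\Pi^{\omg,\pm}_{in/med/out}$ genuinely happens at $\abs{\sgm\pm\eta\cdot\omg}\aeq\abs{\eta_\perp}^2/\abs{\eta}\aeq\tht^2$, not at $\tht$. Writing $\upsilon = \sgm\pm\eta\cdot\omg$ and $\underline\upsilon = \sgm\mp\eta\cdot\omg$, on the support of $\Pi^{\omg,\pm}_\ast\Pi^{\omg,\pm}_\tht P_0$ (after normalizing $h=0$) one has $\abs{\eta_\perp}\aeq\tht$, $\abs{\underline\upsilon}\aeq 1$, and $\abs{\upsilon}\aleq\tht^2$ (with $\aeq\tht^2$ on the cone region). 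Thus the correct rescaling is $\upsilon/\tht^2$, $\eta_\perp/\tht$ — your choice of $\upsilon/\tht$ is off by a power, and with it the claim that ``the symbol becomes a fixed smooth bump'' in your rescaled variables fails: the argument of $m_{>1}$ retains an explicit $\tht^{-1}$ factor. Your fallback, that either the symbol is locally constant (true only deep inside $D^{\omg,\pm}_{null}$) or all transverse scales are $\aeq\tht$, does not cover the cone region or the transition zone, where $\rd_\upsilon$ genuinely costs $\tht^{-2}$ per derivative.

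The ingredient you are missing is that $\rd_\xi$ is a derivative in the \emph{direction} $\omg = \xi/\abs{\xi}$, not in $(\sgm,\eta)$, and
\[
\rd_\xi(\eta\cdot\omg) \;=\; \eta\cdot\rd_\xi\omg \;=\; \eta_\perp\cdot\rd_\xi\omg,
\]
which has size $\aeq\abs{\eta_\perp}\aeq\tht$ on the support (not $O(1)$). So when the chain rule sends a $\rd_\xi$ through the parabolic variable $\upsilon$, the bookkeeping is: $\tht^{-2}$ (from $\rd_\upsilon$ of the cutoff argument) $\times\tht$ (from $\rd_\xi(\eta\cdot\omg)$) $=\tht^{-1}$ per $\rd_\xi$. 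This is precisely what makes the weight $\tht^n$, and not $\tht^{2n}$, correct. Without this observation your step (3) does not close; with it, the conclusion you draw (``each derivative costs at most $\tht^{-1}$'') is right but your justification is not, as you yourself hint by inserting ``wait, this already costs $\tht^{-1}$'' in the wrong place. Everything else in your outline — reduction to $h=0$ and $\omg$ fixed, observing the support has volume $\aeq\tht^5$ in the null coordinates $(\underline\upsilon,\upsilon,\tilde\eta_\perp)$, and deducing an $L^1$ kernel bound — is sound and matches the paper's approach.
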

\begin{proof}
In this proof, we take $h = 0$ by scaling, and fix $\pm = +$. Let $\ast \in \set{cone, null}$. 

We begin with some elementary reductions. 
First, since $1 = \Pi^{\omg, \pm}_{cone} + \Pi^{\omg, \pm}_{null} + \Pi^{\omg, \pm}_{out}$, and $\tht^{n} \rd_{\xi}^{(n)} \Pi^{\omg, \pm}_{\tht} P_{0}$ is disposable, it suffices to prove the lemma for just $\Pi^{\omg, \pm}_{cone}$ and $\Pi^{\omg, \pm}_{null}$. In this case, note that the symbol $\Pi^{\omg, \pm}_{\ast} \Pi^{\omg, \pm}_{\tht} m_{h}(\eta)$ (where $m_{h}$ is the symbol of $P_{h}$) is compactly supported. Furthermore, the lemma is obvious if $\tht \ageq 1$, since then the symbol is smooth in $\xi, \sgm, \eta$ on the unit scale. Therefore, we may assume that $\tht \ll 1$. 

We now consider the case $n = 0$, when there is no $\xi$-differentiation. We fix $\omg \in \bbS^{3}$. To ease our computation, we introduce the null coordinate system $(\underline{\upsilon}, \upsilon, \tilde{\eta}_{\perp})$, where
\begin{equation*}
	\underline{\upsilon} = \sgm - \eta \cdot \omg, \quad
	\upsilon = \sgm + \eta \cdot \omg,
\end{equation*}
and $\tilde{\eta}_{\perp} \in \bbR^{3}$ are the coordinates for the constant $\underline{\upsilon}, \upsilon$-spaces.
Observe that
\begin{equation} \label{eq:proj-var-size}
\frac{\sgm + \eta \cdot \omg}{\abs{\eta_{\perp}}^{2} + \abs{\sgm + \eta \cdot \omg}^{2}}
= \frac{\upsilon}{\abs{\tilde{\eta}_{\perp}}^{2} + \upsilon^{2}} \aeq 1, \quad
\abs{\eta_{\perp}} = \abs{\tilde{\eta}_{\perp}} \aeq \tht, \quad
\abs{\upsilon} \aeq \tht^{2}, \quad
\abs{\underline{\upsilon}} \aeq 1
\end{equation}
on the support of $\Pi^{\omg, \pm}_{\ast} \Pi^{\omg, \pm}_{\tht} m_{0}$. Moreover, $\sgm = \sgm(\underline{\upsilon}, \upsilon, \tilde{\eta}_{\perp})$ and $\abs{\eta} = \abs{\eta}(\underline{\upsilon}, \upsilon, \tilde{\eta}_{\perp})$ are comparable to $1$, and are also smooth on the unit scale on the support of $\Pi^{\omg, \pm}_{\ast} \Pi^{\omg, \pm}_{\tht} m_{0}$. Recalling the definition of $\Pi^{\omg, \pm}_{\ast}$, it can be computed from \eqref{eq:proj-var-size} that
\begin{equation*}
	\abs{\rd_{\underline{\upsilon}}^{\alp} \rd_{\upsilon}^{\bt} \rd_{\tilde{\eta}_{\perp}}^{\gmm} \Pi^{\omg, \pm}_{\ast}}
	\aleq \tht^{-2 \abs{\bt} - \abs{\gmm}} \quad \hbox{ on } \supp \, \Pi^{\omg, \pm}_{\ast} \Pi^{\omg, \pm}_{\tht} m_{0}.
\end{equation*}
On the other hand, 
\begin{equation*}
	\abs{\rd_{\underline{\upsilon}}^{\alp} \rd_{\upsilon}^{\bt} \rd_{\tilde{\eta}_{\perp}}^{\gmm} (\Pi^{\omg, \pm}_{\tht} m_{0})}
	\aleq \tht^{- \abs{\gmm}} \quad \hbox{ on } \supp \, \Pi^{\omg, \pm}_{\ast} \Pi^{\omg, \pm}_{\tht} m_{0},
\end{equation*}
so it follows that
\begin{equation} \label{eq:proj-sym-smth}
	\abs{\rd_{\underline{\upsilon}}^{\alp} \rd_{\upsilon}^{\bt} \rd_{\tilde{\eta}_{\perp}}^{\gmm} (\Pi^{\omg, \pm}_{\ast} \Pi^{\omg, \pm}_{\tht})}
	\aleq \tht^{-2 \abs{\bt} - \abs{\gmm}}.
\end{equation}
Furthermore, from \eqref{eq:proj-var-size} we have
\begin{equation} \label{eq:proj-sym-supp}
	\abs{\supp \, \Pi^{\omg, \pm}_{\ast} \Pi^{\omg, \pm}_{\tht} m_{0}} \aleq \tht^{5}.
\end{equation}
From these bounds, we see that the multiplier $\Pi^{\omg, \pm}_{\ast} \Pi^{\omg, \pm}_{\tht} P_{0}$ has a kernel with a universal bound on the mass, and thus is disposable.

Finally, we sketch the proof in the case $n \geq 1$. We first claim that
\begin{equation} \label{eq:proj-sym-smth-xi-0} 
	\abs{\rd_{\xi}^{(n)}(\Pi^{\omg, \pm}_{\ast} \Pi^{\omg, \pm}_{\tht} m_{0})}
	\aleq \tht^{-n}.
\end{equation}
Clearly $\abs{\rd_{\xi}^{(n)} \Pi^{\omg, \pm}_{\tht}} \aleq_{n} \tht^{-n}$, so it suffices to verify that $\abs{\rd_{\xi}^{(n)} \Pi^{\omg, \pm}_{\ast}} \aleq_{n} \tht^{-n}$ on the support of $\Pi^{\omg, \pm}_{\ast} \Pi^{\omg, \pm}_{\tht} m_{0}$. Note that
\begin{equation} \label{eq:proj-dxi}
	\abs{\rd_{\xi}^{\alp} (\eta \cdot \omg) } 
\aleq_{\abs{\alp}} \left\{ \begin{array}{ll} \tht & \abs{\alp} = 1 \\  1 & \abs{\alp} \geq 2 \end{array} \right. \quad \hbox{ on } \supp \, \Pi^{\omg, \pm}_{\ast} \Pi^{\omg, \pm}_{\tht} m_{0}.
\end{equation}
%
%
Then recalling the definition of $\Pi^{\omg, \pm}_{\ast}$ and using the chain rule, the claim \eqref{eq:proj-sym-smth-xi-0} follows. We remark that a differentiation in $\sgm + \eta \cdot \omg$ loses $\tht^{-2}$, but we gain back a factor of $\tht$ through the chain rule and \eqref{eq:proj-dxi}.

Next, we fix $\omg \in \bbS^{3}$ and start differentiating in $(\underline{\upsilon}, \upsilon, \tilde{\eta}_{\perp})$. Using the chain rule, \eqref{eq:proj-dxi} and \eqref{eq:proj-var-size}, it can be proved that
\begin{equation} \label{eq:proj-sym-smth-xi}
	\abs{\rd_{\underline{\upsilon}}^{\alp} \rd_{\upsilon}^{\bt} \rd_{\tilde{\eta}_{\perp}}^{\gmm} \rd_{\xi}^{(n)} (\Pi^{\omg, \pm}_{\ast} \Pi^{\omg, \pm}_{\tht})}
	\aleq \tht^{-2 \abs{\bt} - \abs{\gmm}} \tht^{-n}.
\end{equation}
We omit the details. Combined with \eqref{eq:proj-sym-supp}, we see that $\tht^{n} \rd_{\xi}^{(n)} \Pi^{\omg, \pm}_{\ast} \Pi^{\omg, \pm}_{\tht} P_{0}$ is disposable.
\end{proof}

As a corollary of the proof of Lemma~\ref{lem:proj-bdd}, we obtain the following disposability statement.
\begin{corollary} \label{cor:proj-disp}
For any fixed $\pm$, $\omg \in \bbS^{3}$, $h, k \in 2^{\bbR}$ and $\ast \in \set{cone, null, out}$, the translation-invariant bilinear operator on $\bbR^{1+4}$ with symbol
\begin{equation*}
	\Pi^{\abs{\xi}^{-1} \xi, \pm}_{\ast} \Pi^{\abs{\xi}^{-1} \xi, \pm}_{2^{\ell}} P_{h} (\sgm, \eta) P_{k} P^{\omg}_{\ell} (\xi)
\end{equation*}
is disposable.
\end{corollary}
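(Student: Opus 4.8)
\textbf{Proof plan for Corollary~\ref{cor:proj-disp}.}
The plan is to reduce the claim to Lemma~\ref{lem:proj-bdd} by exploiting the special relationship between the angular variable $\omg$ appearing in the input cutoff $P_{k} P^{\omg}_{\ell}(\xi)$ and the angular variable $\abs{\xi}^{-1} \xi$ appearing in the symbol cutoffs. First I would note that, by scaling, we may normalize $h = 0$ (which rescales $k$ and $\ell$ accordingly, but the argument is scale-invariant), and by symmetry we may fix $\pm = +$. Because $P_{k} P^{\omg}_{\ell}$ restricts $\xi$ to a cap of radius $\aeq 2^{\ell}$ about $2^{k} \omg$, on the support of the multiplier we have $\abs{\angle(\abs{\xi}^{-1} \xi, \omg)} \aleq 2^{\ell}$. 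Consequently, the factor $\Pi^{\abs{\xi}^{-1} \xi, +}_{\ast} \Pi^{\abs{\xi}^{-1} \xi, +}_{2^{\ell}} P_{0}(\sgm, \eta)$ differs from $\Pi^{\omg, +}_{\ast} \Pi^{\omg, +}_{2^{\ell'}} P_{0}(\sgm, \eta)$ (for suitable $\ell' = \ell + O(1)$ chosen so that the supports nest) only by a smooth reparametrization of the angular variable, $\abs{\xi}^{-1} \xi \mapsto \omg$, which is a perturbation of size $\aeq 2^{\ell}$.

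The key step is then to view the bilinear operator as a composition. Concretely, one writes
\begin{equation*}
	\Pi^{\abs{\xi}^{-1} \xi, +}_{\ast} \Pi^{\abs{\xi}^{-1} \xi, +}_{2^{\ell}} P_{0}(\sgm, \eta) \, P_{k} P^{\omg}_{\ell}(\xi)
	= \left( \Pi^{\omg, +}_{\ast} \Pi^{\omg, +}_{2^{\ell'}} P_{0}(\sgm, \eta) \right) \times m(\xi, \sgm, \eta),
\end{equation*}
where $m$ collects the input cutoff $P_{k} P^{\omg}_{\ell}(\xi)$ together with the ``error'' multiplier that converts the $\abs{\xi}^{-1} \xi$-centered cutoffs into the $\omg$-centered ones. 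By Lemma~\ref{lem:proj-bdd}, the multiplier $\Pi^{\omg, +}_{\ast} \Pi^{\omg, +}_{2^{\ell'}} P_{0}$ (acting in the $(\sgm, \eta)$ variables, which are quantized as the symbol variables) is disposable; more precisely, the lemma gives the needed control $2^{n \ell'} \abs{\rd_{\eta}^{(n)}(\cdots)} \aleq 1$, which is exactly the Schwartz-tail bound that makes the kernel have bounded mass. One then checks, using the bounds $\abs{\rd_{\xi}^{(j)}(\abs{\xi}^{-1} \xi)} \aleq 2^{-jk}$ and the fact that $P_{k} P^{\omg}_{\ell}(\xi)$ is already disposable (being a standard frequency cap projection), that $m(\xi, \sgm, \eta)$ satisfies the product-type symbol bounds required for the composed kernel in the $x$-variable to be disposable as well; this is the same kind of chain-rule estimate carried out in the proof of Lemma~\ref{lem:proj-bdd} (see \eqref{eq:proj-dxi} and \eqref{eq:proj-sym-smth-xi}). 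Finally, assembling the two pieces — a kernel of bounded mass in the symbol variables tensored with a kernel of bounded mass in the input variables — yields a translation-invariant bilinear operator on $\bbR^{1+4}$ whose kernel has mass bounded by a universal constant, which is precisely the disposability statement.

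The main obstacle I anticipate is not conceptual but bookkeeping: carefully verifying that replacing $\abs{\xi}^{-1} \xi$ by the fixed direction $\omg$ in the degenerate-regularity symbols $\Pi^{\omg, +}_{\ast}$ does not cost more than the $2^{\ell}$-angular localization can absorb. The symbols $\Pi^{\omg, +}_{cone}$, $\Pi^{\omg, +}_{null}$, $\Pi^{\omg, +}_{out}$ lose a factor $\tht^{-2}$ per derivative in the null direction $\sgm + \eta \cdot \omg$ and $\tht^{-1}$ per transverse derivative, so when $\omg$ itself is allowed to vary over a cap of size $2^{\ell} \aeq \tht$ one must confirm, via the chain rule exactly as in \eqref{eq:proj-dxi}, that each $\xi$-derivative of the composite symbol also loses at most $\tht^{-1}$. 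Once this is pinned down, square-summability (or rather, the uniform mass bound) follows from the compact support estimate \eqref{eq:proj-sym-supp} just as in Lemma~\ref{lem:proj-bdd}, and there is nothing further to do.
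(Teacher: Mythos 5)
Your plan is pointing at the right ingredient: the uniformity in $\xi$ (for $n=0$) and the $\xi$-derivative bounds (for $n\ge 1$) established in the proof of Lemma~\ref{lem:proj-bdd} are exactly what make the corollary follow, and you correctly observe that the $\xi$-derivatives of the first factor cost a factor comparable to what differentiating $P_{k}P^{\omg}_{\ell}$ costs in the angular direction. The paper itself supplies no argument beyond ``corollary of the proof of Lemma~\ref{lem:proj-bdd},'' and the content of such an argument is precisely this: integrate by parts in $\xi$ over the box $\supp P_{k}P^{\omg}_{\ell}$ (of size $2^{k}\times(2^{k+\ell})^{3}$); at each step a $\xi$-derivative hits either the bump $P_{k}P^{\omg}_{\ell}$ (cost $2^{-k}$ radial, $2^{-(k+\ell)}$ angular) or the $(\sgm,\eta)$-multiplier through $\omg=\xi/|\xi|$ (cost $2^{-k}\cdot\tht^{-1}=2^{-(k+\ell)}$ by Lemma~\ref{lem:proj-bdd} after rescaling); and at every stage the resulting $(\sgm,\eta)$-kernel retains a universal mass bound, uniformly in $\xi$, again by the lemma. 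Multiplying the resulting decay in the variable conjugate to $\xi$ by the box measure and the uniform $(\sgm,\eta)$-mass gives the corollary.

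Where your write-up is on shakier ground is the intermediate ``factorization'' step, where you attempt to write the bilinear symbol as $\Pi^{\omg,+}_{\ast}\Pi^{\omg,+}_{2^{\ell'}}P_{0}(\sgm,\eta)\cdot m(\xi,\sgm,\eta)$ for some error multiplier $m$. This division is problematic even after enlarging to $\ell'=\ell+O(1)$ so that supports nest: the cutoffs $\Pi^{\omg,+}_{\ast}$ are smooth compactly supported functions which \emph{vanish} on the boundary of their support, and the ratio of the $\xi$-dependent symbol to the fixed-$\omg$ symbol need not satisfy good symbol bounds near that boundary. More importantly, the factorization buys you nothing: verifying the claimed bounds on $m$ \emph{is} the whole content of the chain-rule estimates \eqref{eq:proj-dxi}--\eqref{eq:proj-sym-smth-xi}, and your closing sentence about ``assembling the two pieces --- a kernel ... tensored with a kernel ...'' papers over the fact that the two pieces are coupled through $\xi/|\xi|$, so the bilinear kernel is genuinely not a tensor product. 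The direct integration-by-parts argument sketched above avoids both issues and is what the paper's laconic statement is pointing at.
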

Clearly, the same corollary holds with any of the continuous Littlewood-Paley projections $P_{h}, P_{k}$ replaced by the discrete analogue.

We also record a lemma which describes how the operator $\Box$ acts in the presence of $\Pi^{\omg, \pm}_{cone} \Pi_{\tht}^{\omg, \pm} P_{h}$.
\begin{lemma} \label{lem:proj-Box-bdd}
For any fixed $\pm$, $\omg \in \bbS^{3}$, $n \in \bbN$ and $h \in 2^{\bbR}$, the multiplier
\begin{gather}
(2^{-2h} \tht^{-2} \Box) \tht^{n} \rd_{\xi}^{(n)} (\Pi^{\omg, \pm}_{cone} \Pi_{\tht}^{\omg, \pm} P_{h}) \label{eq:proj-Box-bdd}
\end{gather}
is disposable.
\end{lemma}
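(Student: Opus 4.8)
The plan is to reduce \eqref{eq:proj-Box-bdd} to Lemma~\ref{lem:proj-bdd} by exploiting the fact that on the support of $\Pi^{\omg, \pm}_{cone} \Pi_{\tht}^{\omg, \pm} P_{h}$ the symbol $-\tau^{2} + \abs{\xi}^{2}$ of $\Box$ is comparable to $2^{2h} \tht^{2}$ and, more importantly, is \emph{smooth on the corresponding anisotropic scale}. By scaling we may set $h = 1$ (i.e.\ $\abs{\xi} \aeq 1$), and by the orthogonality/symmetry reductions used in Lemma~\ref{lem:proj-bdd} we may fix $\pm = +$ and $\omg \in \bbS^{3}$, and assume $\tht \ll 1$ (the case $\tht \ageq 1$ being trivial since everything is smooth on the unit scale). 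Write the symbol of \eqref{eq:proj-Box-bdd} as
\begin{equation*}
	\tht^{-2} (\abs{\xi}^{2} - \tau^{2}) \cdot \tht^{n} \rd_{\xi}^{(n)} \left( \Pi^{\omg, +}_{cone} \Pi_{\tht}^{\omg, +} m_{1}(\xi) \right),
\end{equation*}
and note that, since the first factor does not depend on the variable being differentiated in $\rd_\xi$ through the second factor only (here $\rd_\xi$ differentiates the multiplier acting on the input in $\xi$, not the symbol built from $\P A$ in $\eta$ — in this lemma there is only one Fourier variable $(\tau,\xi) = (\sgm,\eta)$, so this is a genuine product), we only need to show that multiplication by $\tht^{-2}(\abs{\xi}^{2} - \tau^{2})$ preserves disposability after composing with $\Pi^{\omg, +}_{cone} \Pi_{\tht}^{\omg, +} P_{1}$.

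The key computation is the symbol bound for $g(\tau, \xi) := \tht^{-2}(\abs{\xi}^{2} - \tau^{2})$ in the null coordinates $(\underline{\upsilon}, \upsilon, \tilde{\eta}_{\perp})$ of the proof of Lemma~\ref{lem:proj-bdd}: factoring $\abs{\xi}^{2} - \tau^{2} = (\abs{\xi} - \tau)(\abs{\xi} + \tau)$ and using $\abs{\xi} + \tau = -\upsilon$ together with $\abs{\xi} - \tau = -(\sgm - \eta\cdot\omg) + (\abs{\eta} - \eta\cdot\omg) = -\underline{\upsilon} + \abs{\eta_\perp}^2/(\abs{\eta}+\eta\cdot\omg)$, one sees that on $\supp\, \Pi^{\omg, +}_{cone} \Pi^{\omg, +}_{\tht} m_{1}$, where $\abs{\upsilon} \aeq \tht^{2}$, $\abs{\underline{\upsilon}} \aeq 1$, $\abs{\tilde\eta_\perp}\aeq \tht$, we have $\abs{\abs{\xi} - \tau} \aeq 1$ and hence $\abs{g} \aeq \tht^{-2}\cdot \tht^{2} \aeq 1$; moreover $g$ is smooth on the anisotropic scale, i.e.
\begin{equation*}
	\abs{\rd_{\underline{\upsilon}}^{a} \rd_{\upsilon}^{b} \rd_{\tilde{\eta}_{\perp}}^{c} g} \aleq \tht^{-2b - \abs{c}}, \qquad
	\abs{\rd_{\xi}^{(m)} g} \aleq \tht^{-m},
\end{equation*}
by the same chain-rule bookkeeping and the estimate \eqref{eq:proj-dxi} used in Lemma~\ref{lem:proj-bdd} (a $\upsilon$-derivative of the $(\abs{\xi}+\tau) = -\upsilon$ factor is harmless, and a $\upsilon$-derivative of $\tht^{-2}$ times a constant loses $\tht^{-2}$, which is exactly the allowed budget). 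These are precisely the bounds \eqref{eq:proj-sym-smth} and \eqref{eq:proj-sym-smth-xi} satisfied by $\Pi^{\omg, +}_{cone} \Pi^{\omg, +}_{\tht}$ itself; since the class of symbols obeying such estimates on a set of measure $\aleq \tht^{5}$ (see \eqref{eq:proj-sym-supp}) is closed under products, the product $g \cdot \tht^{n}\rd_\xi^{(n)}(\Pi^{\omg, +}_{cone}\Pi^{\omg,+}_\tht m_1)$ again satisfies $\abs{\rd_{\underline{\upsilon}}^{a} \rd_{\upsilon}^{b} \rd_{\tilde{\eta}_{\perp}}^{c} (\cdot)} \aleq \tht^{-2b-\abs{c}}\tht^{-n}$ with support of measure $\aleq \tht^5$, which by the argument at the end of the proof of Lemma~\ref{lem:proj-bdd} (rescaling to the unit cube in the anisotropic coordinates and applying the standard kernel-mass bound) gives a kernel with universally bounded mass. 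Thus \eqref{eq:proj-Box-bdd} is disposable.

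\textbf{Main obstacle.} The only delicate point is bookkeeping the $\upsilon$-derivatives: a single $\rd_\upsilon$ applied to $\tht^{-2}(\abs\xi^2 - \tau^2)$ can in principle produce a term of size $\tht^{-2}$ (from differentiating the $-\underline\upsilon + \abs{\eta_\perp}^2/(\abs\eta + \eta\cdot\omg)$ piece whose $\upsilon$-variation is $O(1)$, multiplied by $\tht^{-2}$), which is exactly the borderline allowed size $\tht^{-2b}$ with $b=1$; one must check this never compounds across higher-order mixed derivatives beyond the budget $\tht^{-2b-\abs c -n}$, which follows because each of $\abs\xi - \tau$, $\abs\xi + \tau$ and the cutoffs is itself smooth on the anisotropic scale and products of such symbols stay in the class. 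Once this is verified, the conclusion is immediate from the machinery already developed for Lemma~\ref{lem:proj-bdd}.
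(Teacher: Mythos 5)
Your high-level plan---reduce to Lemma~\ref{lem:proj-bdd} by showing that $g := 2^{-2h}\tht^{-2}(\abs{\eta}^2 - \sgm^2)$ obeys the same anisotropic derivative bounds as $\Pi^{\omg,\pm}_{cone}\Pi^{\omg,\pm}_{\tht}P_h$ and that the class is closed under products---is the same as the paper's, but the computation you give for $g$ is not correct. The paper does not factor $\Box$; it uses the exact polynomial identity $-\sgm^2 + \abs{\eta}^2 = -\underline{\upsilon}\upsilon + \abs{\tilde{\eta}_{\perp}}^2$ (because $\underline{\upsilon}\upsilon = \sgm^2 - (\eta\cdot\omg)^2$ while $\abs{\eta}^2 - (\eta\cdot\omg)^2 = \abs{\eta_\perp}^2$). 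Since this is a quadratic polynomial in the null coordinates with no $\omg$-dependence, all derivatives of order $\geq 3$ vanish, $\rd_\xi$ annihilates it, and the remaining ones are immediate: $\rd_{\underline\upsilon}$ gives $-\upsilon \aeq \tht^2$, $\rd_\upsilon$ gives $-\underline\upsilon \aeq 1 = \tht^2\cdot\tht^{-2}$, $\rd_{\tilde\eta_\perp}$ gives $2\tilde\eta_\perp \aeq \tht = \tht^2\cdot\tht^{-1}$, and $\rd_{\underline\upsilon}\rd_\upsilon = -1$, $\rd^2_{\tilde\eta_\perp} = 2$ are $O(\tht^2\cdot\tht^{-2})$. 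This is the entire estimate, after which Leibniz plus the kernel argument of Lemma~\ref{lem:proj-bdd} conclude.

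Your factoring $\abs{\eta}^2 - \sgm^2 = (\abs{\eta}-\sgm)(\abs{\eta}+\sgm)$ replaces this polynomial by a product of non-polynomial symbols in which $\abs{\eta} = \sqrt{(\eta\cdot\omg)^2 + \abs{\eta_\perp}^2}$ appears, and the claimed identity $\abs{\xi} + \tau = -\upsilon$ is false. The correct relation is $\abs{\eta} + \sgm = \upsilon + (\abs{\eta} - \eta\cdot\omg) = \upsilon + \abs{\eta_\perp}^2/(\abs{\eta}+\eta\cdot\omg)$, and these two contributions are of the \emph{same} order, so $\abs{\eta}+\sgm$ is not $\pm\upsilon$ in any useful sense. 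Moreover which factor is small is sign-dependent: on the support $\eta\cdot\omg \approx -\sgn(\sgm)\abs{\eta}$, so for $\sgm > 0$ the small factor ($\aleq\tht^2$, via $(\abs{\eta}-\sgm)(\abs{\eta}+\sgm) = -\underline\upsilon\upsilon + \abs{\eta_\perp}^2$ and $\abs{\eta}+\sgm\aeq 1$) is $\abs{\eta}-\sgm$, the opposite of what you claim, while for $\sgm<0$ it is $\abs{\eta}+\sgm$. Consequently the magnitude bound $\abs{\abs{\xi}-\tau}\aeq 1$ and the derivative bookkeeping in your ``main obstacle'' paragraph do not hold as written. The fix is to drop the factoring entirely and differentiate the polynomial $-\underline\upsilon\upsilon + \abs{\tilde\eta_\perp}^2$ directly.
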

\begin{proof}
We set $h = 0$ by scaling. The symbol of $\Box$ is $- \sgm^{2} + \abs{\eta}^{2}$.
For a fixed $\omg$, we introduce the null coordinate system $(\underline{\upsilon}, \upsilon, \eta_{\perp})$ as before. Then observe that
\begin{equation*}
	\abs{\rd_{\underline{\upsilon}}^{\alp} \rd_{\upsilon}^{\bt} \rd_{\tilde{\eta_{\perp}}}^{\gmm} (- \sgm^{2} + \abs{\eta}^{2})}
	= \abs{\rd_{\underline{\upsilon}}^{\alp} \rd_{\upsilon}^{\bt} \rd_{\tilde{\eta_{\perp}}}^{\gmm} (- \underline{\upsilon} \upsilon + \abs{\tilde{\eta}_{\perp}}^{2})}
	\aleq \tht^{2} \tht^{-2\abs{\bt} - \abs{\gmm}}
\end{equation*}
on the support of $\Pi^{\omg, \pm}_{cone} \Pi^{\omg, \pm}_{\tht} P_{0}$. The lemma follows by combining this bound with the proof of Lemma~\ref{lem:proj-bdd}. \qedhere
\end{proof}

\subsection{Pseudodifferential renormalization operator} \label{subsec:renrm}
In this subsection we define the pseudodifferential renormalization operator, and describe its main properties. 

\subsubsection{Definition of the pseudodifferential renormalization operator}
As mentioned before, the aim for our renormalization operator is not to remove all of $\P A$, but only the most 
harmful (nonperturbative) part of it. This part is defined as
\begin{align}
	A^{main, \pm}_{j, < h}
=	& \Pi_{\geq \abs{\eta}^{\dlt}}^{\omg, \pm} \Pi_{cone}^{\omg, \pm} P_{< h} (\P A)_{j}. \label{eq:A-main-def}
\end{align}
Precisely, given a direction $\omega$, it selects the region which is both near the cone in a parabolic fashion near
the direction $\omega$, but also away from $\omega$, on an angular scale that is slowly decreasing as the frequency $\eta$
of $A$ approaches $0$. We emphasize that this decomposition depends on $\omega$, which is what will make our 
renormalization operator a pseudodifferential operator.

To account for the fact that our gauge group is noncommutative, and also to better take advantage 
of previous work in this area, we divide the construction of the renormalization operator in two steps.
The first step is microlocal but linear, and mirrors the renormalization construction in the (MKG) case,
see \cite{KST} and also \cite{OT2}. Precisely, we define the intermediate symbol
\begin{align}
	\Psi_{\pm, < h}
=	& - L^{\omg}_{\mp} \lap_{\omg^{\perp}}^{-1} A^{main, \pm}_{j, <h} \omg^{j}. \label{eq:Psi-def}
\end{align}
Here the operator $L^{\omg}_{\mp} \lap_{\omg^{\perp}}^{-1}$ is chosen
as a good approximate inverse for $L^\omega_{\pm}$, within the
frequency localization region for $A^{main, \pm}_{j, < h}$.  In effect
this frequency localization region is chosen exactly so that this
property holds within. This is based on the decomposition
\begin{equation*}
	- L^{\omg}_{\pm} L^{\omg}_{\mp} + \lap_{\omg^{\perp}} = \Box.
\end{equation*}
which gives
\begin{equation*}
	L^{\omg}_{\pm} L^{\omg}_{\mp} \lap_{\omg^{\perp}}^{-1} = 1 - \Box \lap^{-1}_{\omg^{\perp}}
\end{equation*}
Given $A^{main, \pm}_{j, <h}$ and $\Psi_{\pm, < h}$ as above, we define their Littlewood-Paley pieces as
\begin{equation*}
	A^{main, \pm}_{j, h} = \frac{\ud}{\ud h} A^{main, \pm}_{j, < h}, \quad
	\Psi_{\pm, h} = \frac{\ud}{\ud h} \Psi_{\pm, < h}.
\end{equation*}

Now we come to the second step in the construction of the renormalization operator.
This step is nonlinear but local, and is based on the construction of the 
renormalization operator in \cite{ST1} for the corresponding wave map problem.
Precisely, we solve the ODE
\begin{equation} \label{eq:O-def}
	\frac{\ud}{\ud h} O_{<h, \pm} O^{-1}_{<h, \pm} = \Psi_{\pm, h} 
\end{equation}
\begin{equation*}
	\lim_{h \to - \infty} \nrm{\rd_{x} O_{<h, \pm}(t, x, \xi)}_{L^{\infty}} = 0.
\end{equation*}

Thus our renormalization is achieved via the paradifferential operator
\[
Ad(O_{\pm})_{<0} 
\]
where the localization to small frequencies is so that 
this operator preserves the unit dyadic frequency shell.

The parameter $\dlt > 0$ is a universal constant, which is chosen
below so that the parametrix construction go through. In particular,
we take $0 < \dlt < \frac{1}{100}$. Logically, it is fixed at the end
of Section~\ref{sec:paradiff-renrm}.

\subsubsection{Properties of the pseudodifferential renormalization
  operator}
Now we state the key properties satisfied by the renormalization
operator $Ad(O_{\pm})_{<0}$ that we just defined; see
Theorems~\ref{thm:renrm-bnd} and \ref{thm:renrm-err}. Proofs of these
results are the subjects of Sections~\ref{sec:paradiff-renrm} and
\ref{sec:paradiff-err}, respectively.

\begin{theorem} [Mapping properties of the pseudodifferential
  renormalization operator] \label{thm:renrm-bnd}\  \\ 
Let  $A$  be  a Lie
  algebra-valued spatial 1-form on $I \times \bbR^{4}$ such that $A =
  P_{< - \kpp} A$ and
  \begin{equation*}
    \nrm{\bfP A}_{S^{1}[I]} \leq M_{0}.
  \end{equation*}
  for some $\kpp, M_{0} > 0$.  Let $\Psi_{\pm, <h}$, $\Psi_{\pm, h}$
  and $O_{<h, \pm}$ be defined on $\bbR^{1+4}$ as above from the
  homogeneous-wave extension of $\bfP A$. Let $Z$ be any of the spaces
  $L^{2}_{x}$, $N$ or $N^{\ast}$.
  \begin{enumerate}
  \item For $\kpp > 20$, the following bounds hold:
    \begin{itemize}
    \item (Boundedness)
      \begin{equation} \label{eq:renrm:bdd}
        \nrm{Op(Ad(O_{\pm})_{<0})(t, x, D) P_{0}}_{Z \to Z}
        \aleq_{M_{0}} 1,
      \end{equation}
    \item (Dispersive estimates)
      \begin{equation} \label{eq:renrm:disp}
        \nrm{Op(Ad(O_{\pm})_{<0})(t, x, D) P_{0}}_{S_{0}^{\sharp} \to
          S_{0}} \aleq_{M_{0}} 1.
      \end{equation}
    \end{itemize}
  \item For any $\veps > 0$, there exist $\kpp_{0}(\veps, M_{0}) \gg
    1$ (independent of $A_{x}$) such that if $\kpp > \kpp_{0}(\veps,
    M_{0})$, then
    \begin{itemize}
    \item (Derivative bounds)
      \begin{equation} \label{eq:renrm:bdd-d} \nrm{[\rd_{t},
          Op(Ad(O_{\pm})_{<0})(t, x, D)] P_{0}}_{Z \to Z} \aleq \veps,
      \end{equation}
    \item (Approximate unitarity)
      \begin{equation} \label{eq:renrm:approx-uni}
        \nrm{(Op(Ad(O_{\pm})_{<0})(t, x, D)
          Op(Ad(O_{\pm}^{-1})_{<0})(D, s, y) - I ) P_{0}}_{Z \to Z}
        \aleq \veps,
      \end{equation}
    \end{itemize}
    where the implicit constants are universal.
  \item There exists $0 < \dlto(M_{0}) \ll 1$ (independent of
    $A_{x}$) such that if, in addition to the above hypothesis,
    \begin{equation} \label{eq:renrm:imp-hyp} \nrm{\bfP
        A_{x}}_{\ell^{\infty} S^{1}[I]} < \dlto(M_{0}),
    \end{equation}
    then \eqref{eq:renrm:bdd} and \eqref{eq:renrm:disp} hold with
    universal constants. That is, for $\kpp > 20$ we have
    \begin{itemize}
    \item (Boundedness with a universal constant)
      \begin{gather} \label{eq:renrm:bdd:imp}
        \tag{\ref{eq:renrm:bdd}$'$} \nrm{Op(Ad(O_{\pm})_{<0})(t, x, D)
          P_{0}}_{Z \to Z} \aleq 1,
      \end{gather}
    \item (Dispersive estimates with a universal constant)
      \begin{gather} \label{eq:renrm:disp:imp}
        \tag{\ref{eq:renrm:disp}$'$} \nrm{Op(Ad(O_{\pm})_{<0})(t, x,
          D) P_{0}}_{S_{0}^{\sharp} \to S_{0}} \aleq 1.
      \end{gather}
    \end{itemize}
  \end{enumerate}
\end{theorem}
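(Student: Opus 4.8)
\textbf{Proof plan for Theorem~\ref{thm:renrm-bnd}.}
The plan is to construct the pseudodifferential renormalization operator $Op(Ad(O_{\pm})_{<0})$ through the two-step procedure already described (first the linear microlocal symbol $\Psi_{\pm,<h}$ built from $A^{main,\pm}_{j,<h}$, then the nonlinear local ODE \eqref{eq:O-def} defining $O_{<h,\pm}$), and then to extract from that construction the family of quantitative bounds on the symbol $Ad(O_{\pm})_{<0}(t,x,\eta)$ that feed into a Coifman--Meyer/pseudodifferential calculus argument for the $Z\to Z$ and $S^\sharp_0\to S_0$ mapping statements. The backbone is the same as in \cite{ST1} (for the nonlinear Lie-group step) and \cite{KST, OT2, KT} (for the microlocal step and the function space mapping properties), so the bulk of the work is checking that the present frequency-gap-based source of smallness substitutes cleanly for the $S^1$-smallness used in \cite{KT}. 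First I would establish the key pointwise and Sobolev-type bounds on $\Psi_{\pm,h}$: using Lemma~\ref{lem:proj-bdd}, Corollary~\ref{cor:proj-disp} and the definition \eqref{eq:A-main-def}--\eqref{eq:Psi-def}, together with the $S^1[I]$ control of $\bfP A$ (and its homogeneous-wave extension, which obeys \eqref{eq:S-bnd-global}), one shows that the dyadic pieces $\Psi_{\pm,h}(t,x,\eta)$ satisfy symbol bounds with a gain $2^{-\dlta\kpp}$ coming from the localization $A = P_{<-\kpp}A$ and the angular cutoff $\Pi^{\omg,\pm}_{\geq\abs{\eta}^{\dlt}}$; this is where the frequency gap $\kpp$ enters. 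Summing in $h<0$ gives control of $\Psi_{\pm,<0}$, hence of $\log O_{<h,\pm}$ via the transport equation \eqref{eq:O-def}, and then of $Ad(O_{\pm})_{<0}$ itself by expanding the adjoint action.

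Next I would feed these symbol bounds into the operator-theoretic machinery. For \eqref{eq:renrm:bdd} on $Z=L^2_x$ this is essentially a Calder\'on--Vaillancourt / Coifman--Meyer estimate once we know $Ad(O_{\pm})_{<0}(t,x,\eta)P_0$ has bounded symbol with enough $\eta$-regularity on the appropriate (parabolic, $\eta$-dependent) scales; for $Z=N$ and $Z=N^\ast$ one runs the same argument on the atomic/null-frame pieces of these spaces, using that the symbol is, up to the $\eta$-localization to the unit shell, a superposition over $\omg$ of operators adapted to the cone near $\omg$ — this is exactly the structure exploited in \cite[Section~10]{KT}. The dispersive estimate \eqref{eq:renrm:disp} is proved by decomposing $P_0$ into caps $P^\omg_\ell$, conjugating the free half-wave evolution by the piece of $Ad(O_\pm)_{<0}$ adapted to that cap, and checking that the conjugated propagator still satisfies the Strichartz, null-frame and square-function bounds that define $S_0$; here the parabolic localization of $A^{main,\pm}$ near the cone is what makes the conjugation harmless at the level of the dispersive norms. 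Part~(2) — the derivative bound \eqref{eq:renrm:bdd-d} and the approximate unitarity \eqref{eq:renrm:approx-uni} — follows by the same scheme: $[\rd_t, Op(Ad(O_\pm)_{<0})]$ has symbol $\rd_t(Ad(O_\pm)_{<0})$, which gains an extra $2^{-\dlt\kpp}$-type factor (a $t$-derivative costs $2^h$ but is compensated by the smallness from the gap and the divisibility already available), and the composition $Op(Ad(O_\pm)_{<0})Op(Ad(O_\pm^{-1})_{<0}) - I$ is an error term in the symbol calculus whose size is again governed by the $\kpp$-gain, so choosing $\kpp_0(\veps,M_0)$ large makes it $\leq\veps$. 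Finally, part~(3) is a bookkeeping variant: under the extra hypothesis \eqref{eq:renrm:imp-hyp} that $\bfP A_x$ is small in $\ell^\infty S^1[I]$, the constants in the symbol bounds for $\Psi_{\pm,h}$ (which are linear in $\nrm{\bfP A}_{S^1}$ at each dyadic level and summable thanks to the frequency-envelope structure) become universal rather than depending on $M_0$, and the same propagates through the calculus — so \eqref{eq:renrm:bdd:imp} and \eqref{eq:renrm:disp:imp} hold with universal constants.

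The main obstacle, as in \cite{KT}, is the interplay between the pseudodifferential (i.e.\ $\omg$- and $\eta$-dependent) nature of the renormalization and the anisotropic, cone-adapted structure of the solution space $S_0$: one must organize the $\omg$-summation so that the conjugated propagator's null-frame and $PW$ components remain square-summable, which forces the angular cutoff scale $2^{-\abs{\eta}^\dlt}$ in \eqref{eq:A-main-def} and a careful matching of that scale against the $S^\omg_k(\ell)$ definition. A secondary technical point is that, unlike \cite{KT}, smallness here is purchased entirely from $\kpp$, so every estimate must be tracked with explicit $2^{C\kpp}$ losses and $2^{-c\dlta\kpp}$ gains, and the choice of $\kpp_0$ in part~(2) must be made only after all such losses are collected; I would defer fixing $\dlt$ until that accounting is complete (as the excerpt indicates, at the end of Section~\ref{sec:paradiff-renrm}). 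Everything else — the Lie-group ODE analysis, the passage from $\Psi$ to $O$, the commutator and composition estimates — is a direct adaptation of \cite{ST1, KST, OT2}, and I would cite those proofs for the routine parts rather than reproduce them.
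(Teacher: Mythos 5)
Your overall plan mirrors the paper's outline — symbol bounds on $\Psi_{\pm,h}$ and $O_{<h,\pm}$ feeding into decomposability calculus, kernel estimates and $TT^\ast$ for the fixed-time $L^2$ bounds, a spacetime modulation analysis for $Z=N,N^\ast$, and finally a transfer of the \cite{KST} dispersive argument — and you correctly identify that the frequency gap $\kpp$ replaces the small-$S^1$ hypothesis of \cite{KT} and that $\ell^\infty S^1$-smallness in part~(3) makes the constants universal. The paper implements this by introducing a single ``symbol-size'' constant $M_\sgm$ collecting all the pointwise and decomposability bounds of Lemmas~\ref{lem:symb-Psi}--\ref{lem:decomp-O}, and observing $M_\sgm\aleq_M\nrm{A}_{\ell^\infty S^1}+\nrm{A_0}_{\ell^\infty Y^1}$, so that all downstream estimates are stated in terms of $M_\sgm$ and part~(3) is then automatic.

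That said, there are some genuine gaps. First, you locate the $\kpp$ gain in the wrong place: the dyadic symbol bounds on $\Psi_{\pm,h}$ do not carry a $2^{-\dlta\kpp}$ factor. Lemma~\ref{lem:symb-Psi} gives bounds of the form $2^{(1-n\dlt)h}\nrm{A_h}_{S^1}$; the smallness comes only from the fact that $A=P_{<-\kpp}A$ forces $h<-\kpp$ in the $h$-integration, and it enters specifically in part~(2) through Proposition~\ref{prop:renrm-bdd-L2} (whose right side is $2^{\dlt_{(0)}h}+2^{-10(k-h)}$ with $h\leq-\kpp$) and through the decomposability bound \eqref{eq:decomp-O} for $O_{;t}$. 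Part~(1) needs no $\kpp$ gain at all; the constants there depend on $M_0$. Second, the spacetime bound on $N\to N$ (and $N^\ast\to N^\ast$) — Proposition~\ref{prop:renrm-bdd-lomod} — is the technical heart of the theorem and is not a routine transfer of the $L^2_x$ argument to ``atomic pieces'': it requires a careful Taylor-type expansion of $Ad(\bfO_{<h})$ into linear, quadratic and cubic contributions in $\Psi$ (the $\calL,\calQ,\calC$ decomposition), a modulation–angle dichotomy exploiting that $\Pi^{\omg}_{\tht}\Pi^{\omg}_{cone}$ forces modulations $\aeq 2^h\tht^2$, and repeated use of Lemmas~\ref{lem:renrm-bdd-hifreq} and \ref{lem:renrm-double-hifreq} to insert and remove double spacetime frequency cutoffs. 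Your proposal leaves this entirely to citation, but it is the step where the $\veps$-smallness for approximate unitarity in $N,N^\ast$ is actually earned. Third, the dispersive estimate \eqref{eq:renrm:disp} cannot simply be cited from \cite{KST,KT}: the $S_0$ norm used here contains the square-function component $S^{sq}_k$, which is new to this paper, and the corresponding bound $Op(Ad(O_\pm)_{<0})P_0:S^\sharp_0\to L^{10/3}_x L^2_t$ requires a separate Stein analytic interpolation argument (the last portion of Section~\ref{subsec:para-disp}). Your plan as written would silently omit this piece.
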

Here the frequency localization operator  $P_{0}$ can easily be replaced by a more general
  localization to $\set{\abs{\xi} \aeq 1}$.

\begin{remark} 
  As we will see in the proof below, $\kpp_{0}(\veps, M_{0})
  \aeq_{\veps} \log M_{0}$ and $\dlto(M_{0}) \ll_{M_{0}} 1$.
\end{remark}
\begin{remark} \label{rem:renrm-bnd} Note that the symbol of each of
  the above PDOs is independent of $\tau = \xi_{0}$, and thus it
  defines a PDO on $\bbR^{4}$ for each fixed $t$. By the mapping
  property $Z \to Z$ with $Z = L^{2}_{x}$, we mean that the PDO maps
  $L^{2}_{x} \to L^{2}_{x}$ for each fixed $t$, with a constant
  uniform in $t$.
\end{remark}
\begin{theorem} [Renormalization
  error] \label{thm:renrm-err} Let $A_{\alp}$ be a $\g$-valued 1-form
  on $I \times \bbR^{4}$ such that $A_{\alp} = P_{<-\kpp} A_{\alp}$
  and $\nrm{\P A_{x}}_{S^{1}[I]} \leq M$ for some $\kpp, M > 0$.  Let
  $\veps > 0$. Assume that $\kpp > \kpp_{1}(\veps, M)$ and
  \eqref{eq:DS-bnd-hyp}--\eqref{eq:Ax-eq-hyp} hold for some functions
  $\kpp_{1}(\veps, M) \gg 1$ and $0 < \dltp(\veps, M, \kpp_{1}) \ll
  1$ independent of $A_{\alp}$ (to be specified below). Let
  $\Psi_{\pm, <h}$, $\Psi_{\pm, h}$ and $O_{<h, \pm}$ be defined as
  above from the homogeneous-wave extension of $\bfP A_{x}$. Then we
  have
  \begin{equation} \label{eq:renrm-err} \nrm{(\Box_{\P A}^{p}
      Op(Ad(O_{\pm})_{< 0}) - Op(Ad(O_{\pm})_{< 0}) \Box )
      P_{0}}_{S^{\sharp}_{0, \pm}[I] \to N_{0, \pm}[I]} < \veps.
  \end{equation}

\end{theorem}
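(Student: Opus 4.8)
\emph{Plan of proof.} By scaling it suffices to treat the frequency‑$1$ piece as stated, so throughout I work with inputs frequency‑localized to $\set{\abs{\xi}\aeq 1}$, write $R_{\pm}=Op(Ad(O_{\pm})_{<0})(t,x,D)$, and recall $\Box_{\P A}^{p}=\Box+\Diff^{\kpp}_{\P A}$. The error to be estimated is then
\begin{equation*}
  \Box_{\P A}^{p} R_{\pm}P_{0}-R_{\pm}\Box P_{0}=\big(\Box R_{\pm}-R_{\pm}\Box\big)P_{0}+\Diff^{\kpp}_{\P A}R_{\pm}P_{0}.
\end{equation*}
For the first bracket I would apply the pseudodifferential Leibniz rule for $\Box$: up to symbol‑calculus remainders coming from the $\xi$‑dependence of the symbol, $\Box R_{\pm}-R_{\pm}\Box$ is $2\,Op(ad(O_{;}^{\alpha}))\rd_{\alpha}$ (suitably composed with $R_{\pm}$) plus $Op(\Box\, Ad(O_{\pm})_{<0})$, where $O_{;}^{\alpha}=\rd^{\alpha}O_{<0,\pm}\,O_{<0,\pm}^{-1}$. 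In $\Diff^{\kpp}_{\P A}R_{\pm}=2[P_{<-\kpp}\P A_{\alpha},\rd^{\alpha}(R_{\pm}\,\cdot\,)]$ I would pull one copy of $R_{\pm}$ to the outside, generating the commutator $[P_{<-\kpp}\P A_{\alpha},R_{\pm}]$ (an error, treated as in Proposition~\ref{prop:diff-aux}) and the genuinely resonant term $2\,R_{\pm}\,Op(Ad(O_{\pm}^{-1})_{<0})[P_{<-\kpp}\P A_{\alpha},\rd^{\alpha}\,\cdot\,]$; matching the latter against $2\,Op(ad(O_{;}^{\alpha}))\rd_{\alpha}$ and using the defining ODE \eqref{eq:O-def} together with \eqref{eq:Psi-def} and the splitting $\Box=-L^{\omg}_{\pm}L^{\omg}_{\mp}+\opLap$ shows that the resonant contributions cancel \emph{exactly} the main part $2[A^{main,\pm}_{\alpha,<-\kpp},\rd^{\alpha}\,\cdot\,]$ of the coefficient — which is the entire purpose of the choice of $A^{main,\pm}$ in \eqref{eq:A-main-def}.

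What survives is a finite list of error terms, each of which I would estimate by reducing, via Theorem~\ref{thm:renrm-bnd} (boundedness, dispersive bounds, derivative bounds and approximate unitarity) to strip off the outer $R_{\pm}$, $R_{\pm}^{-1}$ factors, to a bound on a ``bare'' bilinear or trilinear expression applied to $u$, and then invoking the estimates of Sections~\ref{sec:ests} and \ref{sec:multi}. The terms are: (i) the contribution of the \emph{non‑main} part $(\P A-A^{main,\pm})_{<-\kpp}$ of the paradifferential coefficient — its piece near $\omg$ (the one complementary to $\Pi^{\omg,\pm}_{\geq\abs{\eta}^{\dlt}}$) behaves like a transported term, while the pieces in $\Pi^{\omg,\pm}_{null}$ and $\Pi^{\omg,\pm}_{out}$ lie off the $\pm$‑characteristic cone and are handled via the disposability statements of Lemma~\ref{lem:proj-bdd} and Corollary~\ref{cor:proj-disp}; pairing the low‑frequency coefficient against the frequency‑$1$ input yields a gain $2^{-\dlt\kpp}$; (ii) the calculus remainders from the Leibniz expansion and from $\opLap^{-1}$ being only an approximate inverse of $L^{\omg}_{\pm}L^{\omg}_{\mp}$ on the support of $A^{main,\pm}$, again disposable by Lemmas~\ref{lem:proj-bdd} and \ref{lem:proj-Box-bdd}; (iii) the quadratic‑in‑symbol terms of the form $Op((\rd O_{\pm})(\rd O_{\pm}^{-1}))$, controlled by the $S^{1}$‑bound $M$ on $\P A$ together with a frequency‑gap gain; (iv) the sharp‑cutoff/renormalization mismatch, estimated exactly as in Propositions~\ref{prop:diff-aux} and \ref{prop:diff-single}; and (v) the term $Op(\Box\, Ad(O_{\pm})_{<0})$, equivalently $Op(\Box\Psi_{\pm,<0})$ modulo lower order, which forces a differentiation of the wave equation for $\P A$.

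The main obstacle is precisely item (v): $\Box\P A$ carries no absolute smallness, so brute force fails. Here I would use the structural hypotheses \eqref{eq:A0-eq-hyp}--\eqref{eq:Ax-eq-hyp} (and their global extensions \eqref{eq:A0-eq-global}, \eqref{eq:Ax-eq-global}) to replace $\Box\P A$ by the quadratic expressions $\P\bfO(\tA,\rd\tA)+\P\bfO'(\tA_{\alpha},\rd^{\alpha}\tA)$ plus the divisible remainder $\P\tR$ (small by \eqref{eq:PtR}), and similarly $\lap A_{0}$; combining the resulting quadratic coefficient with the $\rd^{\alpha}u$ factor produces a \emph{trilinear} expression in which the secondary, Klainerman--Machedon--type null structure of Lemma~\ref{lem:nf-tri} becomes available, so that the term is controlled by Proposition~\ref{prop:tri-nf} (atomically, Proposition~\ref{prop:tri-nf-core}), the smallness again stemming from the frequency gap $2^{-\dlt\kpp}$ encoded in the support of $A^{main,\pm}$. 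A secondary but pervasive difficulty is the bookkeeping of the pseudodifferential calculus: since $O_{\pm}$ is a genuine pseudodifferential operator whose symbol depends on the direction $\xi/\abs{\xi}$ of the input, every integration‑by‑parts step generates symbol remainders that must be shown disposable, which is exactly the role of Lemma~\ref{lem:proj-bdd}, Corollary~\ref{cor:proj-disp} and Lemma~\ref{lem:proj-Box-bdd}, and of the precise choice $0<\dlt<\tfrac{1}{100}$. Finally, as all of this is carried out for the global‑in‑time extension of $\P A$ fixed in Section~\ref{subsec:ext}, the passage to the interval‑localized operator norm $S^{\sharp}_{0,\pm}[I]\to N_{0,\pm}[I]$ is a routine restriction argument of the type used in Section~\ref{subsec:ests-pf}.
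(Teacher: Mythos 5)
Your high‑level architecture — split off the exact cancellation built into the ODE \eqref{eq:O-def} and the choice \eqref{eq:Psi-def}, then enumerate the residual error terms (non‑main coefficient, symbol‑calculus remainders and approximate inverse, quadratic‑in‑symbol terms, cutoff mismatch, and a $\Box O$ term) — does match the paper's decomposition $E=E_{1}+\cdots+E_{6}$ in Section~\ref{sec:paradiff-err}, and you have correctly identified all the ingredients that appear somewhere in the argument. However, you have placed the two hardest ingredients in the wrong slots, and this is not a bookkeeping issue: as written, the plan would fail.

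First, the null‑region piece of the non‑main coefficient (your item (i)) is \emph{not} handled by disposability and a frequency‑gap gain. When the coefficient lives in $\Pi^{\omg,\pm}_{null}$ at angle $\tht\aeq 2^{\ell}$ from $\omg$ and frequency $\aeq 2^{h}$, its spacetime frequency is nearly parallel to the $L^{\omg}_{\pm}$ characteristic direction, and it is perfectly consistent for \emph{both} the input and the output of $Op(ad(A^{(\tht)}_{h,null}))\rd^{\alp}P_{0}$ to carry modulation $\ll 2^{h}\tht^{2}$. In that configuration there is no off‑cone gain at all, and the naive estimate using the $X^{0,\frac12}_{\infty}$ component of $S$ diverges as $\ell\to -\infty$. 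This is precisely the term (the paper's $G_{h,null}$) for which the structural hypotheses \eqref{eq:A0-eq-hyp}--\eqref{eq:Ax-eq-hyp} are invoked: after reducing, via $Q_{<j-C}\,ad(Q_{j}A_{\alp,h})\rd^{\alp}P_{0}Q_{<j-C}$, to low‑modulation input and output, the paper replaces $\Box\P A$ and $\lap A_{0}$ by the quadratic expressions in $\tA$, splits off the divisible remainders $\tR_{0},\P\tR$, and applies the trilinear null‑form estimates (Lemma~\ref{lem:nf-tri}, Propositions~\ref{prop:tri-nf} and \ref{prop:diff-tri-nf}); the smallness there comes from the frequency gap encoded in the envelope $f_{k}=(\sum_{k'<k-\kpp}c_{k'}d_{k'})e_{k}$. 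The claim in your item (i) that $\Pi^{\omg,\pm}_{null}$ ``lies off the $\pm$‑characteristic cone and is handled via disposability'' is therefore the genuine gap in the proposal.

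Second, and conversely, your item (v) — the term $Op(\Box Ad(O_{\pm})_{<0})$, i.e.\ essentially $Op(ad(\Box\Psi))$ — does \emph{not} require the wave equation for $\P A$ or any trilinear structure. Since $\Psi$ is built from the $\Pi^{\omg,\pm}_{cone}$ projection, the symbol of $\Box$ is elliptic of size $\tht^{2}2^{2h}$ on that Fourier support (Lemma~\ref{lem:proj-Box-bdd}), which cancels the $\tht^{-2}2^{-2h}$ loss from $\lap_{\omg^{\perp}}^{-1}$ and gives $\nrm{\Box\Psi^{(\tht)}_{h}}_{DL^{2}L^{\infty}}\aleq\tht^{\frac32}2^{\frac32 h}\nrm{A_{h}}_{S^{1}}$ (estimate \eqref{eq:decomp-Psi-Box}). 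Combined with the fact that, due to the modulation of $\Psi^{(\tht)}_{h}$, one of the input/output must have modulation $\ageq 2^{h}\tht^{2}$, the whole of $E_{4}$ is small simply from $\int_{-\infty}^{-\kpp}\tht^{1/2}2^{h}\,\ud h\aleq 2^{-\kpp}$, using only the $S^{1}$ bound $M$. No reference to \eqref{eq:A0-eq-hyp}--\eqref{eq:Ax-eq-hyp} is needed here. So the roles of (i) and (v) in your plan must be swapped.

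A secondary omission: for your item (iii) (the quadratic‑in‑symbol terms, the paper's $E_{2}$), the $S^{1}$ bound together with a frequency‑gap gain is not sufficient. The $DL^{2}L^{\infty}$ bounds on $\Psi_{h_{1}}$ and $\Psi_{h_{2}}$ produce $\nrm{A_{h_{1}}}_{S^{1}}\nrm{A_{h_{2}}}_{S^{1}}$, which is of size $M^{2}$ and not small; the paper gets smallness by splitting the angular variables into $\tht<2^{-\kpp}$ (genuine frequency‑gap gain) and $\tht\geq 2^{-\kpp}$, where it replaces the global decomposability bounds by the \emph{interval‑localized} ones of Lemma~\ref{lem:decomp-loc} and exploits the divisibility hypothesis \eqref{eq:DS-bnd-hyp}. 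Without that two‑case argument, $E_{2}$ does not close. The same mechanism is needed at the end of the $G_{h,cone}$ and $G_{h,out}$ estimates, where the $\Box\P A$ contributions are split into a frequency‑gap range and a divisible range.
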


\begin{remark} 
  As we will see later, $\kpp_{1}(\veps, M) \aeq_{\veps} \log M$ and
  $\dltp(\veps, M, \kpp_{1}) \ll_{M, \kpp_{1}} \veps$.
\end{remark}

\subsection{Definition of the parametrix and proof of 
Theorem~\ref{thm:paradiff-core}} \label{subsec:parametrix-pf}
Our parametrix is given by:
\begin{equation} \label{eq:parametrix}
\begin{aligned}
u(t) = \sum_{\pm} \bigg( 
	& \frac{1}{2} Op(Ad(O_{\pm})_{<0})(t,x, D) e^{\pm i t \abs{D}} Op(Ad(O_{\pm}^{1})_{<0})(D, 0, y) (u_{0} \pm i \abs{D}^{-1} u_{1}) \\
	& + Op(Ad(O_{\pm})_{<0})(t, x, D) \frac{1}{\abs{D}} K^{\pm} Op(Ad(O_{\pm}^{-1})_{<0})(D, s, y) f \bigg)
\end{aligned}
\end{equation}
where
\begin{equation*}
	K^{\pm} g(t) = \int_{0}^{t} e^{\pm i (t-s) \abs{D}} g(s) \, \ud s.
\end{equation*}
With this definition, the proof of Theorem~\ref{thm:paradiff-core} starting from Theorems~\ref{thm:renrm-bnd}, \ref{thm:renrm-err}
is essentially identical  to the corresponding proof in \cite{OT2}, and is omitted.

\section{Mapping properties of the renormalization operator} \label{sec:paradiff-renrm}

\subsection{Fixed-time pointwise bounds for the symbols \texorpdfstring{$\Psi$ and $O$}{psi-O}}
Here we state fixed-time pointwise bounds for $\Psi$ and $O$. We borrow these estimates from \cite{KT}, while carefully noting dependence of constants on the frequency envelope of $A = A_{x}$ in $S^{1}$. The bounds below are stated using continuous Littlewood-Paley projections $P_{h}$, but we note that the same bounds hold for discrete Littlewood-Paley projections as well.

We begin with pointwise bounds for the $\g$-valued symbol $\Psi_{h, \pm}(t, x, \xi)$.
\begin{lemma} \label{lem:symb-Psi}
The following bounds hold.
\begin{enumerate}
\item For $m \geq 0$ and $0 \leq n < \dlt^{-1}$, we have
\begin{equation}\label{eq:DPsi}
	\abs{\rd_{\xi}^{(n)} \rd_{x}^{(m-1)} \nb \Psi_{\pm, h}^{(\tht)}(t, x, \xi)} \aleq 2^{m h} \tht^{\frac{1}{2} - n} \nrm{A_{h}}_{S^{1}}. 
\end{equation}
When $m = 0$, we interpret the expression on the LHS as $\rd_{\xi}^{n} \Psi_{\pm, h}^{(\tht)}$.
\item Let $\brk{t-s, x-y}^{2} = 1+ \abs{t-s}^{2} + \abs{x-y}^{2}$. We have
\begin{align} 
	\abs{\Psi_{\pm, h}(t, x, \xi) - \Psi_{\pm, h}(s, y, \xi)} \aleq  \min\set{2^{h} \brk{t-s, x-y}, 1} \nrm{A_{h}}_{S^{1}}.
\end{align}
\item Finally, for $1 \leq n < \dlt^{-1}$ we have
\begin{equation} \label{eq:symb-Psi-dxi} 
\begin{aligned}
	\abs{\rd_{\xi}^{(n)}(\Psi_{\pm, h}(t,x,\xi) - \Psi_{\pm, h}(s, y, \xi))} 
	\aleq & \min \set{2^{h}\brk{t-s, x-y}, 1} 2^{-(n - \frac{1}{2}) \dlt h} \nrm{A_{h}}_{S^{1}}.
\end{aligned}\end{equation}

\end{enumerate}
\end{lemma}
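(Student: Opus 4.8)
\textbf{Plan of proof for Lemma~\ref{lem:symb-Psi}.}

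The plan is to reduce everything to the known pointwise bounds for the ``half-wave potential'' $\Psi$ from \cite{KT}, keeping careful track of the dependence on the frequency envelope of $A = A_{x}$ in $S^{1}$. The starting point is the definition \eqref{eq:Psi-def}, namely $\Psi_{\pm, <h} = - L^{\omg}_{\mp} \lap_{\omg^{\perp}}^{-1} A^{main, \pm}_{j, <h} \omg^{j}$ with $\omg = \xi / \abs{\xi}$, where $A^{main, \pm}_{j, <h} = \Pi^{\omg, \pm}_{\geq \abs{\eta}^{\dlt}} \Pi^{\omg, \pm}_{cone} P_{<h} (\P A)_{j}$ is the spacetime-Fourier-localized piece defined in \eqref{eq:A-main-def}. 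The dyadic piece is $\Psi_{\pm, h} = \frac{\ud}{\ud h} \Psi_{\pm, <h}$, so $\Psi_{\pm, h}$ is built from $A^{main, \pm}_{j, h}$, whose spatial frequency is localized at $\abs{\eta} \aeq 2^{h}$ and whose angular sector (relative to $-\sgn(\sgm) \eta$) is of size $\gtrsim 2^{\dlt h}$; the further localization $\Psi^{(\tht)}_{\pm, h}$ restricts the angular scale to $\tht$ (so $\tht \gtrsim 2^{\dlt h}$). The key structural input is that, within this localization region, the operators $\Pi^{\omg, \pm}_{cone} \Pi^{\omg, \pm}_{\tht} P_{h}$ and their $\xi$-derivatives are disposable after renormalization by powers of $\tht$ (Lemma~\ref{lem:proj-bdd}), and that $L^{\omg}_{\mp} \lap_{\omg^{\perp}}^{-1}$ acts on this region like a bounded multiplier of size $\aeq 2^{-h} \tht^{-1}$ in the directions transverse to $\omg$ --- this is exactly why the localization $\geq \abs{\eta}^{\dlt}$ was chosen, cf. the identity $L^{\omg}_{\pm} L^{\omg}_{\mp} \lap^{-1}_{\omg^{\perp}} = 1 - \Box \lap^{-1}_{\omg^{\perp}}$ together with Lemma~\ref{lem:proj-Box-bdd}.

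First I would establish (1). Apply $\nb = \rd_{t,x}$ and $\rd_x^{(m-1)}$ to $\Psi^{(\tht)}_{\pm, h}$; each spatial or temporal derivative lands on $A^{main,\pm}_{j,h}$ and costs a factor $2^{h}$ (frequency localization at scale $2^h$ in spacetime, since the cone localization also pins $\abs{\sgm} \aeq 2^h$), giving the factor $2^{mh}$. The operator $L^{\omg}_{\mp}\lap^{-1}_{\omg^\perp}$ contributes $2^{-h}\tht^{-1}$, but combined with the square-function / Strichartz structure of $S^1$ it effectively converts the $L^{\infty}$-norm into a factor $\tht^{1/2}$ --- this is the standard gain in \cite[proof of the analogue]{KT}: a null-frame / $PW$ estimate localized to an angular cap of size $\tht$ gives $\tht^{1/2} \nrm{A_h}_{S^1}$ in $L^\infty_{t,x}$ after paying $2^{-h}$. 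Each $\xi$-differentiation acts on $\omg = \xi/\abs{\xi}$ hence costs at most $\tht^{-1}$ (differentiating the angular cutoff $\Pi^{\omg,\pm}_{\tht}$, the symbol $\Pi^{\omg,\pm}_{cone}$ via Lemma~\ref{lem:proj-bdd}, and $L^{\omg}_{\mp}\lap^{-1}_{\omg^\perp}$ via its explicit symbol), yielding $\tht^{-n}$ and the bound $2^{mh}\tht^{1/2-n}\nrm{A_h}_{S^1}$. The constraint $n < \dlt^{-1}$ is needed because $\tht \gtrsim 2^{\dlt h}$, so $\tht^{-n}$ must not overwhelm the $2^{mh}$ and $\tht^{1/2}$ gains when summed; I would not dwell on this routine bookkeeping.

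For (2), I would write $\Psi_{\pm, h}(t,x,\xi) - \Psi_{\pm, h}(s,y,\xi)$ and use the fundamental theorem of calculus along the segment from $(t,x)$ to $(s,y)$, so that the difference is controlled by $\abs{t-s} + \abs{x-y}$ times $\sup \abs{\nb \Psi_{\pm,h}}$; by the $m=1$ case of \eqref{eq:DPsi} (summed over $\tht \gtrsim 2^{\dlt h}$, which converges since $\tht^{1/2}$ is summable) this is $\aleq 2^{h}\brk{t-s,x-y}\nrm{A_h}_{S^1}$. The competing bound $\aleq \nrm{A_h}_{S^1}$ (with no $2^h$) follows directly from the $m=0$ case of \eqref{eq:DPsi}, i.e. the $L^\infty$ bound on $\Psi_{\pm,h}$ itself, again summing $\tht^{1/2}$; taking the minimum gives the claim. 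Statement (3) is the $\xi$-differentiated version: apply $\rd_\xi^{(n)}$ before taking the difference, then repeat the argument of (2) but now using the $\rd_\xi^{(n)}$-versions of the $m=0$ and $m=1$ bounds from (1), which carry the extra factor $\tht^{-n}$; since $\tht \gtrsim 2^{\dlt h}$, summing $\tht^{1/2-n}$ over dyadic $\tht \geq 2^{\dlt h}$ produces $2^{\dlt h (\frac12 - n)} \aeq 2^{-(n-\frac12)\dlt h}$ (the sum is dominated by its largest term, $\tht \aeq 2^{\dlt h}$, when $n \geq 1$), which is exactly the stated gain.

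The main obstacle --- and the only genuinely delicate point --- is the interplay between the angular localization scale $\tht$, the frequency-dependent lower cutoff $\tht \gtrsim 2^{\dlt h}$ in the definition of $A^{main}$, and the operator $L^{\omg}_{\mp}\lap^{-1}_{\omg^\perp}$: one must verify that this operator is genuinely disposable (after the $\tht^{-1}$ renormalization) on the support of $\Pi^{\omg,\pm}_{cone}\Pi^{\omg,\pm}_{\tht}P_h$, and that differentiating in $\xi$ (which rotates $\omg$, hence the whole family of symbols) costs no more than $\tht^{-1}$ per derivative. This is precisely where Lemma~\ref{lem:proj-bdd}, Corollary~\ref{cor:proj-disp} and Lemma~\ref{lem:proj-Box-bdd} are used, and where the choice $0 < \dlt < \frac{1}{100}$ enters. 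Since all of these ingredients are established in the excerpt and the remaining steps are the standard null-frame estimates from \cite{KT}, I would present the proof as an adaptation of \cite{KT}, emphasizing only the frequency-envelope bookkeeping and the $\tht$-summation, and referring to \cite{KT} for the core disposability and Strichartz computations.
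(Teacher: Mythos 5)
Your sketch is the right adaptation of \cite[Section~7.3]{KT}, which is exactly the source the paper points to (it does not prove this lemma in-line), and the final exponents in all three parts come out correctly, including the dyadic $\tht$-summation producing the $2^{-(n-\frac{1}{2})\dlt h}$ gain in (3). One arithmetic slip worth flagging: on $\supp\,(\Pi^{\omg,\pm}_{cone}\Pi^{\omg,\pm}_{\tht}P_{h})$ the symbol of $L^{\omg}_{\mp}\lap_{\omg^{\perp}}^{-1}$ has size $\aeq 2^{-h}\tht^{-2}$, not $2^{-h}\tht^{-1}$ as you write (in null coordinates $\underline{\upsilon}\aeq 2^{h}$ while $\abs{\tilde{\eta}_{\perp}}\aeq 2^{h}\tht$; cf.\ the proof of Lemma~\ref{lem:decomp-Psi}), and this is compensated by the fact that the angularly localized $L^{\infty}$ bound is $\nrm{A^{main,(\tht)}_{h}}_{L^{\infty}}\aleq 2^{h}\tht^{5/2}\nrm{A_{h}}_{S^{1}}$ rather than $2^{h}\tht^{3/2}\nrm{A_{h}}_{S^{1}}$ (the $q=\infty$ case of Lemma~\ref{lem:decomp-A}), so the product is still $\tht^{1/2}\nrm{A_{h}}_{S^{1}}$ and your conclusion stands.
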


For a proof, we refer to \cite[Section~7.3]{KT}. As a corollary of \eqref{eq:DPsi} we have
\begin{equation}\label{eq:Psix}
|\nabla \Psi_{\pm,h}| \lesssim 2^h \| A_h\|_{S^1}
\end{equation}

Next, we consider the $\G$-valued symbol $O_{<h, \pm}$. 
\begin{lemma} \label{lem:symb-O}
Let $c_{h}$ be an admissible frequency envelope for $A$ in $S^{1}$. Then the following bounds hold.

\begin{enumerate}
\item For $0 \leq n < \dlt^{-1}$, we have
\begin{equation} \label{eq:symb-O-h}
	\abs{\rd_{\xi}^{(n)} (O_{<h, \pm})_{; t,x}(t, x, \xi)} \aleq_{\nrm{A}_{S^{1}}} 2^{(1-n \dlt) h} c_{h}
\end{equation}
\item We have
\begin{align} 
	d(O_{<h, \pm}(t, x, \xi) O^{-1}_{<h, \pm}(s, y, \xi), Id) 
	\aleq_{\nrm{A}_{S^{1}}} & \log(1 + 2^{h} \brk{t - s, x-y}) c_{h}. \label{eq:symb-O-diff-h} 
\end{align}
\item Finally, for $1 \leq n < \dlt^{-1}$, we have
\begin{equation} \label{eq:symb-O-diff-dxi} 
\begin{aligned}
& \hskip-1em
	\abs{\rd_{\xi}^{(n-1)}(O_{<h, \pm}(t, x, \xi) O^{-1}_{<h, \pm}(s, y, \xi))_{;\xi}} \\
	& \aleq_{\nrm{A}_{S^{1}}}  \min \set{2^{h}\brk{t-s, x-y}, 1}^{1 - (n-\frac{1}{2})\dlt} (1 + \brk{t-s, x-y})^{(n-\frac{1}{2}) \dlt} c_{h}.
\end{aligned}\end{equation}
\end{enumerate}

\end{lemma}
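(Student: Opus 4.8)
\textbf{Proof proposal for Lemma~\ref{lem:symb-O}.}

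The plan is to deduce all three bounds from the corresponding estimates for $\Psi$ in Lemma~\ref{lem:symb-Psi}, by integrating the defining ODE \eqref{eq:O-def} in the frequency parameter $h$. The key structural input is that $O_{<h,\pm}$ solves $\frac{d}{dh} O_{<h,\pm} O_{<h,\pm}^{-1} = \Psi_{\pm,h}$ with the normalization $\rd_x O_{<h,\pm} \to 0$ as $h \to -\infty$; since $\G$ is compact, $O_{<h,\pm}$ itself stays bounded, and the whole difficulty is transferred to controlling logarithmic derivatives and $\xi$-derivatives of $O$. The first step is to fix a time $t$ (and the variable $x$, $\xi$) and treat \eqref{eq:O-def} as an ODE in $h$ on the Lie group, so that $(O_{<h,\pm})_{;t,x} = \rd_{t,x} O_{<h,\pm} O_{<h,\pm}^{-1}$ satisfies a transport-type equation
\[
\frac{d}{dh}\left( (O_{<h,\pm})_{;t,x} \right) = \rd_{t,x} \Psi_{\pm,h} - \left[ (O_{<h,\pm})_{;t,x},\, \Psi_{\pm,h} \right],
\]
obtained by differentiating the relation $\frac{d}{dh}(\rd_{t,x}O O^{-1}) = \rd_{t,x}\Psi_{\pm,h} + [\Psi_{\pm,h}, \rd_{t,x} O O^{-1}]$ and using antisymmetry of the bracket. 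Integrating from $-\infty$ and using Gronwall together with $\sum_{h'} \nrm{A_{h'}}_{S^1} \aleq \nrm{A}_{S^1}$ (so the bracket term contributes only a factor depending on $\nrm{A}_{S^1}$), the bound \eqref{eq:DPsi} with $m=1$ gives $\abs{(O_{<h,\pm})_{;t,x}} \aleq_{\nrm{A}_{S^1}} \sum_{h' < h} 2^{h'} \nrm{A_{h'}}_{S^1}$, which by slow variation of $c_{h'}$ and the $2^{h'}$ gain telescopes to $\aleq_{\nrm{A}_{S^1}} 2^{h} c_h$. For the $\xi$-derivatives $\rd_\xi^{(n)}$, one differentiates the ODE $n$ times; each $\xi$-derivative landing on $\Psi_{\pm,h}$ costs $2^{-n\dlt h}$ by \eqref{eq:DPsi}, while derivatives landing on $O$ are handled inductively using the lower-order bounds already obtained, and a Leibniz expansion closes the induction. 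Summing the geometric series in $h'$ (the $2^{h'}$ factor always dominates) yields \eqref{eq:symb-O-h}.

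For part (2), the difference $O_{<h,\pm}(t,x,\xi) O_{<h,\pm}^{-1}(s,y,\xi)$ is estimated by writing its logarithmic derivative in $h$ (with $(s,y,\xi)$ fixed) in terms of $\Psi_{\pm,h}(t,x,\xi) - \Psi_{\pm,h}(s,y,\xi)$ plus commutator corrections, then integrating in $h'$ from $-\infty$ to $h$. The point (borrowed from \cite[Section~7.3]{KT}) is to split the $h'$-sum at the scale $2^{-h'} \aeq \brk{t-s,x-y}$: for $2^{h'}\brk{t-s,x-y} \leq 1$ one uses the first branch of the minimum in Lemma~\ref{lem:symb-Psi}(2), giving $\sum 2^{h'}\brk{t-s,x-y} \nrm{A_{h'}}_{S^1}$, which sums to $\aleq c_h$; for $2^{h'}\brk{t-s,x-y} > 1$ one uses the trivial bound $\nrm{A_{h'}}_{S^1}$, and summing over the $\aleq \log(1+2^h\brk{t-s,x-y})$ relevant dyadic scales $h'$ (slow variation keeping $c_{h'} \aeq c_h$ in this range) produces the logarithmic factor in \eqref{eq:symb-O-diff-h}. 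The compactness of $\G$ is used to pass from control of the logarithmic derivative to control of $d(\cdot, Id)$, absorbing exponentials via Gronwall. Part (3) combines the two previous ideas: one differentiates the difference $(n-1)$ times in $\xi$ and once more ($O_{;\xi}$ denotes the logarithmic $\xi$-derivative), splits the $h'$-sum at scale $\brk{t-s,x-y}^{-1}$ as in part (2), and on each piece uses \eqref{eq:symb-Psi-dxi}, which supplies both the $\min\set{2^h\brk{t-s,x-y},1}$ factor and the $2^{-(n-\frac12)\dlt h'}$ gain; interpolating the two branches of the minimum against the number of scales gives the stated mixed power $\min\set{\cdot}^{1-(n-\frac12)\dlt}(1+\brk{\cdot})^{(n-\frac12)\dlt}$.

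The main obstacle I anticipate is the bookkeeping in part (3): one must carefully track how the $\xi$-derivative losses $2^{-n\dlt h'}$ interact with the dyadic summation after splitting at the transition scale, since in the high-$h'$ regime the factor $2^{-(n-\frac12)\dlt h'}$ actually helps convergence but in the low-$h'$ regime it must be balanced against the $2^{h'}$ gain — getting the exact exponent $1-(n-\frac12)\dlt$ rather than a lossy version requires choosing the split point optimally and matching the two sums at their common value, which is where the $(1+\brk{t-s,x-y})^{(n-\frac12)\dlt}$ correction enters. Everything else is a routine Gronwall-plus-geometric-series argument, essentially transcribed from \cite[Section~7.3]{KT}, with the only new bookkeeping being the insertion of the frequency envelope $c_h$ (legitimate because $\nrm{A_h}_{S^1} \leq c_h$ and $c_h$ is slowly varying, so it can be pulled out of the telescoping sums at the cost of the stated $\dlt$-losses absorbed into the admissibility range $n < \dlt^{-1}$).
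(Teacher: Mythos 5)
The paper's proof of this lemma is a citation to \cite[Section~7.7]{KT}, so I will compare your sketch against what that reference actually does. Your overall strategy — integrate the defining ODE \eqref{eq:O-def} in $h$ and transfer the $\Psi$-bounds of Lemma~\ref{lem:symb-Psi} to $O$ — is the right one, and the dyadic splitting at the transition scale $2^{h'}\brk{t-s,x-y}\aeq 1$ for parts (2)–(3) is also correct. However, there is a genuine gap in the way you close the $h$-integration.

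You justify the boundedness of the "bracket term" by the claim $\sum_{h'}\nrm{A_{h'}}_{S^{1}}\aleq\nrm{A}_{S^{1}}$ and then invoke Gronwall. This inequality is false: $\nrm{A_{h'}}_{S^{1}}$ is square-summable (since $\nrm{A}_{S^{1}}^{2}=\sum_{k}\nrm{P_{k}A}_{S^{1}_{k}}^{2}$), and an $\ell^{1}$ sum of an $\ell^{2}$ sequence is not controlled by the $\ell^{2}$ norm — one can make $\sum_{h}\nrm{A_{h}}_{S^{1}}$ arbitrarily large with $\nrm{A}_{S^{1}}$ held fixed. Consequently, the Gronwall factor $\exp\bigl(\sum_{h'}|\Psi_{h'}|\bigr)$ you would produce is \emph{not} a function of $\nrm{A}_{S^{1}}$, so the constant in your bound is not of the form $\aleq_{\nrm{A}_{S^{1}}}$ as claimed. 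In fact, the paper later repackages Lemma~\ref{lem:symb-O} into the constant $M_{\sgm}$, which by \eqref{eq:Msgm} must be controlled by $\nrm{A}_{\ell^{\infty}S^{1}}$ — a Gronwall exponential would destroy this.

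The correct mechanism, and the one implicit in \cite{KT}, is to avoid Gronwall entirely using that $\G$ is compact so $Ad$ is an isometry of $\g$. For part (1): the transport equation $\frac{d}{dh}(O_{<h})_{;x}=\rd_{x}\Psi_{h}+[\Psi_{h},(O_{<h})_{;x}]$ has the exact Duhamel representation
$$(O_{<h})_{;x}=\int_{-\infty}^{h}Ad\bigl(O_{<h}O_{<h'}^{-1}\bigr)\,\rd_{x}\Psi_{h'}\,dh',$$
since $Ad(O_{<h})$ is the fundamental solution of the homogeneous equation and $(O_{<-\infty})_{;x}=0$. Because $Ad$ preserves the bi-invariant norm, one gets $|(O_{<h})_{;x}|\leq\int_{-\infty}^{h}|\rd_{x}\Psi_{h'}|\,dh'\aleq\sum_{h'<h}2^{h'}\nrm{A_{h'}}_{S^{1}}\aleq 2^{h}c_{h}$, with a \emph{universal} constant — the geometric factor $2^{h'}$ handles the sum without any bound on $\sum\nrm{A_{h'}}_{S^{1}}$. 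For part (2), the same trick applies to $\tilde Q_{h}:=O_{<h}^{-1}(s,y)\,O_{<h}(t,x)\,O_{<h}^{-1}(s,y)\,O_{<h}(s,y)$... more precisely, conjugating $Q_{h}=O_{<h}(t,x)O_{<h}^{-1}(s,y)$ by $O_{<h}(s,y)$ gives $\bigl(\tfrac{d}{dh}\tilde Q_{h}\bigr)\tilde Q_{h}^{-1}=Ad(O_{<h}^{-1}(s,y))\bigl(\Psi_{h}(t,x)-\Psi_{h}(s,y)\bigr)$, so by bi-invariance $d(Q_{h},Id)=d(\tilde Q_{h},Id)\leq\int_{-\infty}^{h}|\Psi_{h'}(t,x)-\Psi_{h'}(s,y)|\,dh'$, again with no exponential. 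Your scale-splitting then goes through as you describe. Part (3) is handled identically after $(n-1)$ $\xi$-differentiations, using \eqref{eq:symb-Psi-dxi}. So the plan is sound; you should replace the Gronwall step (and the false $\ell^{1}\aleq\ell^{2}$ assertion) by this $Ad$-conjugation argument throughout.
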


For a proof, we refer to \cite[Section~7.7]{KT}.

\subsection{Decomposability calculus}
To handle symbol multiplications, we use the decomposability calculus
introduced in \cite{RT, KS}, which allows us to roughly regard these
operations as multiplication by a function in $L^{p} L^{q}$.  In the
present work, we need an interval-localized version in order to
exploit small divisible norms.

Given $\tht \in 2^{-\bbN}$, consider a covering of the unit sphere
$\bbS^{3} = \set{\omg \in \bbR^{4} : \abs{\xi} = 1}$ by solid angular
caps of the form $\set{\omg \in \bbS^{3} : \abs{\phi - \omg} < \tht}$
with uniformly finite overlaps. We index these caps by their centers
$\phi \in \bbS^{3}$, and denote by $\set{(m^{\phi}_{\tht})^{2}(\omg)}$
the associated nonnegative smooth partition of unity on $\bbS^{3}$.

Let $I$ be an interval. Consider a $\End(\g)$-valued symbol $c(t, x ,
\xi)$ on $I_{t} \times \bbR_{x}^{4} \times \bbR_{\xi}^{4}$, which is
zero homogeneous in $\xi$, i.e., depends only on the angular variable
$\omg = \frac{\xi}{\abs{\xi}}$. We say that $c(t,x, \xi)$ is
\emph{decomposable in $L^{q} L^{r} [I]$} if $c = \sum_{\tht}
c^{(\tht)}$, $\tht \in 2^{-\bbN}$ and
\begin{equation} \label{eq:decomp-nrm} \sum_{\tht}
  \nrm{c^{(\tht)}}_{D_{\tht} L^{q} L^{r}[I]} < \infty,
\end{equation}
where
\begin{equation} \label{eq:decomp-tht} \nrm{c^{(\tht)}}_{D_{\tht}
    L^{q} L^{r}[I]} = \nrm{\big(\sum_{n=0}^{40} \sum_{\phi}
    \sup_{\omg} \big( m^{\phi}_{\tht}(\omg) \nrm{\tht^{n}
      \rd_{\xi}^{(n)} c^{(\tht)}}_{L^{r}_{x}}
    \big)^{2}\big)^{\frac{1}{2}}}_{L^{q}_{t}[I]}.
\end{equation}
We define $\nrm{c}_{D L^{q} L^{r} [I]}$ to be the infimum of
\eqref{eq:decomp-nrm} over all possible decompositions $c =
\sum_{\tht} c^{(\tht)}$. In what follows, we will use the convention
of omitting $[I]$ when $I = \bbR$.

In the following lemma, we collect some basic properties of the symbol
class $D L^{q} L^{r}[I]$.

\begin{lemma} \label{lem:decomp-basic}

  \begin{enumerate}
  \item For any two intervals such that $I \subset I'$, we have
    \begin{equation*}
      \nrm{c}_{D L^{q} L^{r}[I]} \leq \nrm{c}_{D L^{q} L^{r}[I']}.
    \end{equation*}

  \item For any symbols $c \in D L^{q_{1}} L^{r_{1}}[I]$ and $d \in D
    L^{q_{2}} L^{r_{2}}[I]$, its product obeys the H\"older-type bound
    \begin{equation*}
      \nrm{cd}_{D L^{q}_{t} L^{r}_{x}[I]} \aleq \nrm{c}_{D L^{q_{1}} L^{r_{1}}[I]} \nrm{d}_{D L^{q_{2}} L^{r_{2}}[I]}
    \end{equation*}
    where $1 \leq q_{1}, q_{2}, q, r_{1}, r_{2}, r \leq \infty$,
    $\frac{1}{q_{1}} + \frac{1}{q_{2}} = \frac{1}{q}$ and
    $\frac{1}{r_{1}} + \frac{1}{r_{2}} = \frac{1}{r}$.
  \item Let $a(t, x, \xi)$ be a $\End(\g)$-valued smooth symbol on $I
    \times \bbR^{4}_{x} \times \bbR^{4}_{\xi}$ whose left quantization
    $Op(a)$ satisfies the fixed-time bound
    \begin{equation*}
      \sup_{t \in I} \nrm{Op(a)(t, x, D)}_{L^{2} \to L^{2}} \leq C_{a}.
    \end{equation*}
    Then for any symbol $c \in D L^{q} L^{r}$, we have the spacetime
    bound
    \begin{equation*}
      \nrm{Op(a c)(t, x, D)}_{L^{q_{1}} L^{2}[I] \to L^{q_{2}} L^{r_{2}}[I]} \aleq C_{a} \nrm{c}_{D L^{q} L^{r}[I]}
    \end{equation*}
    where $1 \leq q_{1}, q_{2}, q, r_{2}, r \leq \infty$,
    $\frac{1}{q_{1}} + \frac{1}{q} = \frac{1}{q_{2}}$ and $\frac{1}{2}
    + \frac{1}{r} = \frac{1}{r_{2}}$. An analogous statement holds in
    the case of right quantization.
  \end{enumerate}
\end{lemma}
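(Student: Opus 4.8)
The three parts will be established in order, each following the decomposability calculus of \cite{RT, KS}; the only new point is that every time-norm is now taken over $I$, which costs nothing since the operators in play act fixed-in-time. Part (1) is immediate: any admissible decomposition $c = \sum_{\theta} c^{(\theta)}$ on $I'$ restricts to an admissible decomposition on $I$, and for each piece $\|c^{(\theta)}\|_{D_{\theta} L^{q} L^{r}[I]} \le \|c^{(\theta)}\|_{D_{\theta} L^{q} L^{r}[I']}$ by monotonicity of the $L^{q}_{t}$-norm under restriction of the $t$-variable; taking the infimum over $I'$-decompositions of $c$ yields the inequality.

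For part (2), I would first reduce to single dyadic pieces $c = c^{(\sigma)}$, $d = d^{(\theta)}$, using subadditivity of the norms $\|\cdot\|_{D_{\mu} L^{q} L^{r}[I]}$ and the fact that the scale label $\mu$ in a decomposition is only bookkeeping for the derivative weight, so any splitting of $cd$ is admissible. One then assigns $c^{(\sigma)} d^{(\theta)}$ to the finer scale $\mu = \min(\sigma,\theta)$; say $\mu = \sigma \le \theta$. In $\sigma^{n}\partial_{\xi}^{(n)}(c^{(\sigma)} d^{(\theta)})$ the Leibniz rule produces factors $\sigma^{n_{1}}\partial_{\xi}^{(n_{1})}c^{(\sigma)}$ and $(\sigma/\theta)^{n_{2}}\,\theta^{n_{2}}\partial_{\xi}^{(n_{2})}d^{(\theta)}$ with $n_{1}+n_{2}=n \le 40$; since $\sigma/\theta \le 1$ the coarser piece contributes only its $\theta$-weighted derivatives, and H\"older in $x$ ($\tfrac1{r_{1}}+\tfrac1{r_{2}}=\tfrac1r$) separates the two factors in $L^{r}_{x}$. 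For the angular square-sum, each cap at scale $\sigma$ lies in a unique cap at scale $\theta$, so the $d$-factor evaluated on a $\sigma$-cap is bounded by the $D_{\theta}$-quantity on the containing $\theta$-cap; pulling it out as $\bigl(\sum_{\phi'}(\cdots)^{2}\bigr)^{1/2}$ while keeping the square-sum over $\sigma$-caps for the $c$-factor yields, after H\"older in $t$ ($\tfrac1{q_{1}}+\tfrac1{q_{2}}=\tfrac1q$), the bound $\|c^{(\sigma)} d^{(\theta)}\|_{D_{\sigma} L^{q} L^{r}[I]} \aleq \|c^{(\sigma)}\|_{D_{\sigma} L^{q_{1}} L^{r_{1}}[I]}\|d^{(\theta)}\|_{D_{\theta} L^{q_{2}} L^{r_{2}}[I]}$. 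Summing over the two ranges and taking infima over the decompositions of $c$ and $d$ completes part (2).

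Part (3) is the substantive step. I would reduce again to a single piece $c = c^{(\theta)}$ and use the partition $\sum_{\phi}(m^{\phi}_{\theta})^{2}=1$ on $\bbS^{3}$ to write $Op(a c^{(\theta)}) = \sum_{\phi} Op(a\, m^{\phi}_{\theta})\, Op(m^{\phi}_{\theta} c^{(\theta)})$ modulo a composition error that is lower order, because in the applications the inputs are localized near unit spatial frequency while the $\xi$-derivatives of both symbols cost only powers of $\theta^{-1}$. The operator $Op(m^{\phi}_{\theta} c^{(\theta)})$ maps $L^{2}_{x}\to L^{r}_{x}$ at fixed $t$ with norm $\aleq \sup_{\omega} m^{\phi}_{\theta}(\omega)\sum_{n\le 40}\|\theta^{n}\partial_{\xi}^{(n)}c^{(\theta)}(t)\|_{L^{r}_{x}}$, by a standard kernel estimate exploiting the angular localization to a $\theta$-cap together with the $\theta$-weighted symbol regularity; the operators $Op(a\, m^{\phi}_{\theta})$ are almost orthogonal in $L^{2}_{x}$ (finitely overlapping angular supports) and, $Op(a)$ being $L^{2}_{x}$-bounded uniformly in $t$ with constant $C_{a}$, a Cotlar--Stein/$TT^{*}$ argument gives $\bigl\|\sum_{\phi} Op(a\,m^{\phi}_{\theta}) g^{\phi}\bigr\|_{L^{2}_{x}} \aleq C_{a}\bigl(\sum_{\phi}\|g^{\phi}\|_{L^{2}_{x}}^{2}\bigr)^{1/2}$. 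Combining these with H\"older in $x$ ($\tfrac12+\tfrac1r=\tfrac1{r_{2}}$) gives, for fixed $t$,
\[
\|Op(a c^{(\theta)})u(t)\|_{L^{r_{2}}_{x}} \aleq C_{a}\Bigl(\sum_{\phi}\sup_{\omega}\bigl(m^{\phi}_{\theta}(\omega)\!\!\sum_{n\le 40}\|\theta^{n}\partial_{\xi}^{(n)}c^{(\theta)}(t)\|_{L^{r}_{x}}\bigr)^{2}\Bigr)^{1/2}\|u(t)\|_{L^{2}_{x}},
\]
and H\"older in $t$ over $I$ ($\tfrac1{q_{1}}+\tfrac1q=\tfrac1{q_{2}}$) turns the right side into $C_{a}\|c^{(\theta)}\|_{D_{\theta}L^{q}L^{r}[I]}\|u\|_{L^{q_{1}}L^{2}[I]}$. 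Summing over $\theta$ and taking the infimum over decompositions of $c$ proves part (3); the right-quantization case is identical after passing to adjoints.

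I expect the main obstacle to be entirely within part (3): making the factorization $Op(a(m^{\phi}_{\theta})^{2}c^{(\theta)}) \approx Op(a\,m^{\phi}_{\theta})\,Op(m^{\phi}_{\theta}c^{(\theta)})$ rigorous together with the $L^{2}$ almost-orthogonality of the angular pieces $Op(a\,m^{\phi}_{\theta})$ — that is, commuting the $x$-dependent, $\theta$-scale symbol $c^{(\theta)}$ past the angular localizers at acceptable cost. This is precisely the content of the decomposability lemma of \cite{RT, KS}, which I would import essentially verbatim, inserting $[I]$ into every time-norm; the interval localization itself introduces nothing new, since all the operators act at fixed time and $I$ enters only through the $t$-integrations.
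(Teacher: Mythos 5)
Parts (1) and (2) are fine: restriction of an $I'$-decomposition plus monotonicity of $L^q_t$ under shrinking the time interval handles (1), and for (2) the Leibniz–H\"older scheme you describe, assigning the product to the finer angular scale, does close once you observe that the coarse-scale factor enters only through a $\sup$ over $\theta$-caps, which is dominated by the $\ell^2$ sum — this is the $\ell^2\times\ell^\infty\to\ell^2$ step you invoke.

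Part (3) has a genuine gap, and it is precisely the point you flagged as the ``main obstacle.'' The factorization
\[
Op\bigl(a\,c^{(\theta)}\bigr) \;\approx\; \sum_{\phi} Op\bigl(a\,m^{\phi}_{\theta}\bigr)\,Op\bigl(m^{\phi}_{\theta}\,c^{(\theta)}\bigr)
\]
is not available at the cost you anticipate. The composition error in $Op(b_1)Op(b_2)-Op(b_1b_2)$ involves $\partial_{\xi}$ of one symbol paired with $\partial_{x}$ of the other, and the decomposability norm $D_{\theta}L^{q}L^{r}$ gives \emph{no} information about $x$-derivatives of $c^{(\theta)}$ (nor does the hypothesis give $x$-derivatives of $a$). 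This is why Lemma~\ref{lem:PsDO}, which does control such a composition error, has the explicit extra hypothesis $b = P^{x}_{<h_{\theta}-10}\,b$. There is also a chaining obstruction: in left quantization $Op(a\,m^{\phi}_{\theta})Op(m^{\phi}_{\theta}c^{(\theta)})$ applies $Op(m^{\phi}_{\theta}c^{(\theta)}):L^2_x\to L^{r_2}_x$ first, producing an $L^{r_2}_x$ output with $r_2<2$, which the operator $Op(a\,m^{\phi}_{\theta})$ (only known to be $L^2_x\to L^2_x$) cannot accept; and since the outer factor is a genuine pseudodifferential operator rather than a multiplication, H\"older cannot be invoked to recover the $L^{r_2}$ endpoint at that stage.

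The standard route (in \cite{KS,KST}, and visible in the paper's own proof of Lemma~\ref{lem:PsDO}) avoids any composition error: Fourier transform $c^{(\theta)}$ in $\xi$, writing $c^{(\theta)}(t,x,\xi)=\int\widehat{c^{(\theta)}}(t,x,\Xi)\,e^{i\Xi\cdot\xi}\,\ud\Xi$, and observe that
\[
Op\bigl(a\,c^{(\theta)}\bigr)u(x) = \int \widehat{c^{(\theta)}}(t,x,\Xi)\,\bigl(Op(a_{\Xi})u\bigr)(x+\Xi)\,\ud\Xi, \qquad a_{\Xi}(y,\xi):=a(y-\Xi,\xi),
\]
an \emph{exact} identity. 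Translation invariance gives $\|Op(a_{\Xi})\|_{L^2\to L^2}=\|Op(a)\|_{L^2\to L^2}\le C_a$ uniformly in $\Xi$, so that H\"older in $x$ (applied to the \emph{function} $\widehat{c^{(\theta)}}(t,\cdot,\Xi)$ times the $L^2_x$-function $Op(a_\Xi)u$, now legitimate) and then Minkowski in $\Xi$ give, at fixed $t$, $\|Op(ac^{(\theta)})u(t)\|_{L^{r_2}_x}\le C_a\|u(t)\|_{L^2_x}\int\|\widehat{c^{(\theta)}}(t,\cdot,\Xi)\|_{L^r_x}\,\ud\Xi$. Integration by parts in $\xi$ against the angular partition $\{m^{\phi}_{\theta}\}$ converts the $\Xi$-integral into the $\theta$-weighted $\xi$-derivative bounds comprising $\|c^{(\theta)}\|_{D_{\theta}L^{q}L^{r}}$, with the $\ell^2_{\phi}$ square-sum arising from Cauchy--Schwarz over caps; and H\"older in $t$ over $I$ finishes. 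No pseudodifferential calculus is needed. Interval localization still costs nothing, for the reason you state.
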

The proof is essentially the same as the global-in-time versions in
\cite[Chapter~10]{KS} and \cite[Lemma~7.1]{KST}; we omit the details.

\subsection{Decomposability bounds for 
\texorpdfstring{$A$, $\Psi$ and $O$}{APO}}
Here we collect some decomposability bounds for $A$, $\Psi$ and $O$
that we will use in our proof of Theorems~\ref{thm:renrm-bnd} and
\ref{thm:renrm-err}. As before, we state the bounds using continuous
Littlewood-Paley projections $P_{h}$, but note that the same bounds
hold for discrete Littlewood-Paley projections as well.  For
simplicity of notation, we will usually write $\nrm{G}_{D L^{q} L^{r}} =
\nrm{ad(G)}_{D L^{q} L^{r}}$ for a $\g$-valued symbol $G$, respectively
$\nrm{O}_{D L^{q} L^{r}} = \nrm{Ad(O)}_{D L^{q} L^{r}}$ for a
$\G$-valued symbol $O$.

For any $\tht > 0$, $h \in \bbR$ and $\ast \in \set{cone, null, out}$,
recall the definition
\begin{equation*}
  A_{\alp, h, \ast, \pm}^{(\tht)} = P_{h} \Pi^{\omg, \pm}_{\ast} \Pi^{\omg, \pm}_{\tht} (\bfP A)_{\alp}.
\end{equation*}
As before, we will often omit the subscript $x$ for simplicity, and
write $A^{(\tht)}_{h, \ast, \pm} = A^{(\tht)}_{x, h, \ast, \pm}$ etc.

These symbols obey the following global-in-time decomposability
bounds:
\begin{lemma} \label{lem:decomp-A} For $q \geq 2$ and $\ast \in
  \set{cone, null, out}$, we have
  \begin{align}
    \nrm{A_{h, \ast, \pm}^{(\tht)} \cdot \omg}_{D L^{q} L^{\infty}}
    \aleq & 2^{(1-\frac{1}{q}) h} \tht^{\frac{5}{2} - \frac{2}{q}} \nrm{A_{h}}_{S^{1}} , \label{eq:decomp-Ax} \\
    \nrm{A_{0, h, \ast, \pm}^{(\tht)}}_{D L^{q} L^{\infty}} \aleq & \ 
    2^{(1-\frac{1}{q}) h} \tht^{\frac{5}{2} - \frac{2}{q}} \nrm{A_{0,
        h}}_{Y^{1}} . \label{eq:decomp-A0}
  \end{align}
  Furthermore, for $\ast = cone$ we have
  \begin{align}
    \nrm{\Box A_{h, cone, \pm}^{(\tht)} \cdot \omg}_{D L^{q}
      L^{\infty}}
    \aleq & \ 2^{(3-\frac{1}{p}) h} \tht^{\frac{9}{2} - \frac{2}{q}} \nrm{A_{h}}_{S^{1}},  \label{eq:decomp-Box-Ax} \\
    \nrm{\lap_{\omg^{\perp}}^{-1} \Box A_{h, cone, \pm}^{(\tht)} \cdot
      \omg}_{D L^{q} L^{\infty}} \aleq & \ 2^{(1-\frac{1}{p}) h}
    \tht^{\frac{5}{2} - \frac{2}{q}}
    \nrm{A_{h}}_{S^{1}}.  \label{eq:decomp-Box-perplap-Ax}
  \end{align}
\end{lemma}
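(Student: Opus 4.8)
\textbf{Proof proposal for Lemma~\ref{lem:decomp-A}.}

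The plan is to reduce everything to a single microlocal decomposability estimate for the output of $P_h \Pi^{\omg,\pm}_{\ast}\Pi^{\omg,\pm}_{\tht}$ applied to a spatial $1$-form, and then feed in the $S^1$ (respectively $Y^1$) bound on $A_h$. First I would fix $h$ and rescale to $h=0$, so that all frequencies are $\aeq 1$ and the angular parameter $\tht$ runs over dyadic values in $2^{-\bbN}$. The key point is that, by Lemma~\ref{lem:proj-bdd} (and Corollary~\ref{cor:proj-disp}), the operator $\tht^{n}\rd_\xi^{(n)}(\Pi^{\omg,\pm}_{\ast}\Pi^{\omg,\pm}_{\tht}P_0)$ is disposable for every $n$ up to $40$, and its symbol is supported in a tube of angular width $\aeq\tht$ about the direction $\mp\sgn(\sgm)\eta$ and of size $\aeq \tht^{5}$ in measure. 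Translating this into the language of the decomposability seminorm \eqref{eq:decomp-tht}: for a single dyadic angular sector, the contribution of $A^{(\tht)}_{h,\ast,\pm}\cdot\omg$ is essentially $P_0 P^{\omg}_{\tht}$ of (a disposable image of) $\bfP A$ localized to modulation/angle compatible with the $\Pi$-cutoffs, and the $D_\tht L^q L^\infty$ norm is controlled by $\ell^2_\omg$ of the $L^q L^\infty$ norms of the angular pieces $P^{\omg}_{\tht} \bfP A$, up to the derivative weights which are exactly absorbed by the disposability of $\tht^n\rd_\xi^{(n)}\Pi$.

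The second step is the dyadic Strichartz-type input bound. For each angular sector of size $\tht$ one has, by Bernstein in the angular variable and the square-function/$NE$ components of $S_0$ (precisely the estimates already packaged in the definition of $S^{ang}_k$ and used throughout Section~\ref{sec:ests}), a bound of the form
\begin{equation*}
\Big(\sum_{\omg}\nrm{P^{\omg}_{\tht}\bfP A_h\cdot\omg}_{L^q L^\infty}^2\Big)^{\frac12}\aleq 2^{(1-\frac1q)h}\tht^{\frac52-\frac2q}\nrm{A_h}_{S^1},
\end{equation*}
which is the standard way one gains powers of $\tht$ from localizing a spatial $1$-form to a cap (the $\tht^{5/2}$ comes from the $L^2$-orthogonality of the $\aeq\tht^{-3}$ caps combined with the $L^\infty_x$ Bernstein loss, and $\tht^{-2/q}$ is the interpolation between the $q=2$ null-frame bound and the $q=\infty$ energy bound). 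Summing over dyadic $\tht$ — which converges because of the positive power $\tht^{5/2-2/q}$ for $q<\infty$ and because for $q=\infty$ one still has $\tht^{5/2}$ — gives \eqref{eq:decomp-Ax}. Estimate \eqref{eq:decomp-A0} is identical in structure, with $S^1$ replaced by $Y^1$ and using the mixed Lebesgue components $L^2\dot H^{1/2}$, $L^{p_0}\dot W^{2-3/p_0,p_0'}$ of the $Y$ norm in place of the Strichartz/null-frame components; here $\Pi_{\ast}$ for $\ast\in\{cone,null,out\}$ plays no special role since $A_0$ is an elliptic component and one only needs the angular localization.

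For the last two bounds I would use Lemma~\ref{lem:proj-Box-bdd}: on the region $D^{\omg,\pm}_{cone}$ the symbol of $\Box$ is of size $\aeq \tht^2 2^{2h}$, and more precisely $(2^{-2h}\tht^{-2}\Box)\tht^n\rd_\xi^{(n)}(\Pi^{\omg,\pm}_{cone}\Pi^{\omg,\pm}_{\tht}P_h)$ is disposable. Hence $\Box A^{(\tht)}_{h,cone,\pm}\cdot\omg$ is, up to a disposable operator, equal to $\tht^2 2^{2h}$ times $A^{(\tht)}_{h,cone,\pm}\cdot\omg$, which immediately upgrades \eqref{eq:decomp-Ax} to \eqref{eq:decomp-Box-Ax} (gaining $\tht^2 2^{2h}$). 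For \eqref{eq:decomp-Box-perplap-Ax} I would note that on $D^{\omg,\pm}_{cone}$ the symbol $\lap_{\omg^\perp}=\abs{\eta}^2-(\eta\cdot\omg)^2=\abs{\eta_\perp}^2\aeq \tht^2 2^{2h}$, so $\lap^{-1}_{\omg^\perp}\Box$ has symbol $\aeq 1$ there and, by the same disposability argument, $\lap^{-1}_{\omg^\perp}\Box A^{(\tht)}_{h,cone,\pm}\cdot\omg$ obeys the same bound as $A^{(\tht)}_{h,cone,\pm}\cdot\omg$ itself, i.e.\ \eqref{eq:decomp-Ax} — but one must be slightly careful that $\lap^{-1}_{\omg^\perp}$ is a genuine multiplier only after the $\Pi^{\omg,\pm}_{cone}\Pi^{\omg,\pm}_{\tht}$ localization, which is exactly what the hypothesis $\ast=cone$ guarantees. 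The main obstacle, and the step I would spend the most care on, is the first one: making precise that the seminorm \eqref{eq:decomp-tht}, with its $n\le 40$ derivative weights $\tht^n\rd_\xi^{(n)}$, is controlled by the $\ell^2_\omg$ sum of capped Strichartz norms — this is where Lemma~\ref{lem:proj-bdd}/Corollary~\ref{cor:proj-disp} must be invoked in its full strength (uniformly in $n\le 40$ and in the cap), and where one has to check that composing with the disposable operators $\tht^n\rd_\xi^{(n)}\Pi$ does not destroy the angular orthogonality. Everything else is routine interpolation and dyadic summation, following the pattern of \cite[Section~7]{KT}.
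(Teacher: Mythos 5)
The structure of your argument (Bernstein on angular sectors, then the $S^1$ resp. $Y^1$ input, then symbol size of $\Box$ and $\lap_{\omg^{\perp}}^{-1}$ on the cone region for the last two estimates) is the paper's structure. The last two estimates, using Lemma~\ref{lem:proj-Box-bdd} and the observation that $\lap_{\omg^\perp}\aeq\tht^2 2^{2h}$ on the support, are correct as you wrote them.

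However, there is a gap in your derivation of the exponent $\tht^{5/2-2/q}$, and it is not a cosmetic one. Bernstein $L^r_x\to L^\infty_x$ on a sector of dimensions $2^h\times(\tht 2^h)^3$ gives the factor $(\tht^3 2^{4h})^{1/r}$; using the Strichartz exponent pair $(q,r)$ with $1/q+3/(2r)=3/4$ and the Strichartz component of $S$, one obtains $\nrm{A_h}_{L^qL^r}\aleq 2^{(2-1/q-4/r)h}\nrm{A_h}_S$, so the combination yields $\tht^{3/r}2^{(1-1/q)h}\nrm{A_h}_{S^1}=\tht^{3/2-2/q}2^{(1-1/q)h}\nrm{A_h}_{S^1}$. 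The $\ell^2$-orthogonality of the $\tht^{-3}$ caps is exactly lossless — it does not produce a further power of $\tht$. So ``orthogonality $+$ Bernstein'' only accounts for $\tht^{3/2-2/q}$, which is one full power of $\tht$ short of the claimed $\tht^{5/2-2/q}$. The missing $\tht$ comes from the contraction $\omg\cdot\bfP A_h$ appearing in \eqref{eq:decomp-Ax}: since $\widehat{\bfP A}(\eta)\perp\eta$ and the angular projector $\Pi^{\omg,\pm}_\tht$ restricts to $|\angle(\omg,\mp\eta)|\aeq\tht$, one has $\omg^j\widehat{(\bfP A)}_j(\eta)=(\omg\mp\eta/|\eta|)^j\widehat{(\bfP A)}_j(\eta)$ and hence $|\omg\cdot\widehat{\bfP A}|\aleq\tht|\widehat{\bfP A}|$ on the support. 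You allude to ``localizing a spatial $1$-form to a cap'' as the standard source of $\tht$-gains, but you then attribute the exponent to orthogonality and Bernstein, which do not supply it; without making the divergence-free contraction explicit the count does not close. (A consistency check: for \eqref{eq:decomp-A0} the same exponent $\tht^{5/2-2/q}$ appears without any div-free structure; there the balance instead comes from the fact that $Y^1$ controls $\nrm{A_{0,h}}_{L^2L^2}$ without a modulation restriction, which is exactly one power of $\tht$ stronger than what $S^1$ gives for $Q_{\aeq h+2\log_2\tht}A_h$.)

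A secondary point: you propose to use the ``square-function/$NE$ components of $S_0$,'' i.e.\ $S^{ang}_k$ and $S^{sq}_k$. The paper's proof, as you see once you run the exponent count, uses the Strichartz component $S^{str}_k$ (and the $L^2\dot H^{1/2}$ component of $\nabla Y^1$ for $A_0$), which is both sufficient and the natural choice here; the null-frame components $NE$, $PW$ in $S^{ang}_k$ and the square-function norm $S^{sq}_k$ would not give the $\tht^{3/2-2/q}$ Bernstein factor in the form you need. This is a real divergence of route, not just of notation, and the $S^{ang}_k$ route would have to overcome the mismatch that $NE$ loses a factor of $\tht^{-1}$ when the angular gradient is inverted on a sector of width $\tht$.
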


\begin{proof}
The symbols $(\theta \partial_\omega)^n(\Pi^{\omg, \pm}_{\ast} \Pi^{\omg, \pm}_{\tht})$ are smooth, homogeneous
and uniformly bounded, and the corresponding multipliers are disposable for fixed $\Omega$. Then the bounds
\eqref{eq:decomp-Ax} and \eqref{eq:decomp-A0} follow by Bernstein's inequality using the Strichartz component of the $S^1$ 
norm, respectively the $L^2 \dot H^\frac12$ component of the $\nabla Y^1$ norm.

For the bounds \eqref{eq:decomp-Box-Ax} and \eqref{eq:decomp-Box-perplap-Ax} we  need in addition to consider the size of 
the symbol of $\Box$, respectively $\lap_{\omg^{\perp}}^{-1}$ within the support of $P_{h} \Pi^{\omg, \pm}_{cone} \Pi^{\omg, \pm}_{\tht}$.
This is $\theta^2 2^{2h}$, respectively $\theta^{-2} 2^{-2h}$. Precisely, we have the representations
\[
\Box P_{h} \Pi^{\omg, \pm}_{cone} \Pi^{\omg, \pm}_{\tht} = \theta^2 2^{2h} \bfO \Pi^{\omg, \pm}_{cone} \Pi^{\omg, \pm}_{\tht},
\qquad 
\lap_{\omg^{\perp}}^{-1}  P_{h} \Pi^{\omg, \pm}_{\ast} \Pi^{\omg, \pm}_{\tht} = \theta^{-2} 2^{-2h} \bfO \Pi^{\omg, \pm}_{cone} \Pi^{\omg, \pm}_{\tht}
\]
with $\bfO$ disposable, see e.g. Lemma~\ref{lem:proj-Box-bdd}.
Then \eqref{eq:decomp-Box-Ax} and \eqref{eq:decomp-Box-perplap-Ax}
immediately follow from \eqref{eq:decomp-Ax}.
\end{proof}

Next, we consider the phase $\Psi_{\pm}$, which was defined in
\eqref{eq:Psi-def}. Given $\tht > 0$ and $h \in \bbR$, let
\begin{equation*}
  \Psi_{h, \pm}^{(\tht)} = P_{h} \Pi^{\omg, \pm}_{\tht} \Psi_{\pm}.
\end{equation*}
We have the following global-in-time decomposability bounds.
\begin{lemma} \label{lem:decomp-Psi} For $q, r \geq 2$ and
  $\frac{2}{q} + \frac{3}{r} \leq \frac{3}{2}$, we have
  \begin{equation} \label{eq:decomp-Psi} \nrm{(\Psi_{h, \pm}^{(\tht)},
      2^{-h} \nb \Psi_{h, \pm}^{(\tht)})}_{D L^{q} L^{r}} \aleq
    2^{-(\frac{1}{q} + \frac{4}{r}) h} \tht^{\frac{1}{2} - \frac{2}{q}
      - \frac{3}{r}} \nrm{A_{h}}_{S^{1}}.
  \end{equation}
  In addition, suppose that $\tht \aleq 2^{a}$ for some $a \in -
  \bbN$. Then for $q, r \geq2$, we also have
  \begin{equation} \label{eq:decomp-Psi-himod} \nrm{Q_{h +
        2a}(\Psi_{h, \pm}^{(\tht)}, 2^{-h} \nb \Psi_{h,
        \pm}^{(\tht)})}_{D L^{q} L^{r}} \aleq 2^{-(\frac{1}{q} +
      \frac{4}{r}) h} 2^{-\frac{2}{q} a} \tht^{\frac{1}{2} -
      \frac{3}{r}} \nrm{A_{h}}_{S^{1}}.
  \end{equation}
  Furthermore,
  \begin{equation} \label{eq:decomp-Psi-Box} \nrm{\Box \Psi_{h,
        \pm}^{(\tht)}}_{D L^{2} L^{\infty}} \aleq \tht^{\frac{3}{2}}
    2^{\frac{3}{2} h} \nrm{A_{h}}_{S^{1}}.
  \end{equation}
\end{lemma}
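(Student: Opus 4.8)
\textbf{Plan of proof for Lemma~\ref{lem:decomp-Psi}.}

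The strategy is to transfer the decomposability bounds for $A_{h,cone,\pm}^{(\tht)}$ from Lemma~\ref{lem:decomp-A} through the defining relation \eqref{eq:Psi-def}, namely $\Psi_{\pm,<h} = -L^{\omg}_{\mp}\lap_{\omg^\perp}^{-1} A^{main,\pm}_{j,<h}\omg^j$, taking the Littlewood--Paley piece at frequency $2^h$ and angular cap $\tht$. The key point is that on the frequency support of $A_{h,cone,\pm}^{(\tht)}$ (parabolic cone region at angle $\aeq\tht$ from $\omg$), the operator $\lap_{\omg^\perp}^{-1}$ acts like multiplication by a symbol of size $\tht^{-2}2^{-2h}$, while $L^{\omg}_{\mp} = \rd_t \mp \omg\cdot\rd_x$ acts like multiplication by a symbol of size $\aeq \tht^2 2^h$ there (since $\abs{\tau \mp \omg\cdot\xi}\aleq \tht^2\abs{\xi}$ in the parabolic region, but more importantly the relevant estimate uses the full $L^{\omg}_\mp$ which on the cone-localized piece has symbol bounded by the null-coordinate size). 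So morally $\Psi_{h,\pm}^{(\tht)}\approx \tht^{-2}2^{-h} \times (\text{disposable})\times A^{(\tht)}_{h,cone,\pm}\cdot\omg$ up to disposable factors, and then \eqref{eq:decomp-Psi} follows by combining the Strichartz component of the $S^1$ norm with Bernstein's inequality (for the $L^r$ in space and the $L^q$ in time), exactly as in the proof of Lemma~\ref{lem:decomp-A}, and tracking the powers of $\tht$ and $2^h$.

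More precisely, I would first record the pointwise symbol bounds already available: \eqref{eq:DPsi} gives $\abs{\rd_\xi^{(n)}\rd_x^{(m-1)}\nb\Psi^{(\tht)}_{\pm,h}}\aleq 2^{mh}\tht^{1/2-n}\nrm{A_h}_{S^1}$, which via Bernstein in $x$ (gaining $2^{4h/r}\tht^{3/r}$ per cap of dimensions $2^h\times(\tht 2^h)^3$, since $P_h\Pi^{\omg,\pm}_\tht$ localizes to such a box) and H\"older in time against the Strichartz norm (the admissible exponents being exactly $\frac2q+\frac3r\le\frac32$) yields the claimed $D L^q L^r$ bound. The time-integrability comes from the Strichartz component $\nrm{P_h A}_{L^q L^{r'}}$ in the $S^1$ norm, transferred through the disposable multipliers $(\tht\rd_\omg)^n(\Pi^{\omg,\pm}_{cone}\Pi^{\omg,\pm}_\tht)$ and the approximate inverse $L^{\omg}_\mp\lap_{\omg^\perp}^{-1}$ restricted to the cone piece, for which one has a representation $L^{\omg}_\mp\lap_{\omg^\perp}^{-1} P_h\Pi^{\omg,\pm}_{cone}\Pi^{\omg,\pm}_\tht = \tht^{-2}2^{-h}\,\bfO\,\Pi^{\omg,\pm}_{cone}\Pi^{\omg,\pm}_\tht$ with $\bfO$ disposable (analogous to Lemma~\ref{lem:proj-Box-bdd} and the representations used in the proof of Lemma~\ref{lem:decomp-A}). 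For \eqref{eq:decomp-Psi-himod}, the extra restriction $\tht\aleq 2^a$ and the modulation projection $Q_{h+2a}$ means we are localized to the region where $\abs{\tau\mp\omg\cdot\xi}\aeq \tht^2 2^h$ with $\tht$ possibly much smaller than $2^a$; the refined counting of the box dimensions (now the box is $2^h\times 2^{h+a}\times(2^h\tht)^2$ rather than $(2^h\tht)^3$ in the three transverse directions — one direction is controlled by the modulation at scale $2^a$ rather than the angle $\tht$) gives the modified power $2^{-2a/q}\tht^{1/2-3/r}$ instead of $\tht^{1/2-2/q-3/r}$. For \eqref{eq:decomp-Psi-Box}, we use the decomposition $-L^{\omg}_\pm L^{\omg}_\mp + \lap_{\omg^\perp} = \Box$, so $\Box\Psi^{(\tht)}_{\pm,h} = \Box L^{\omg}_\mp\lap_{\omg^\perp}^{-1}A^{main,\pm}_{j,h}\omg^j$, and since on the cone piece $\Box$ has symbol of size $\tht^2 2^{2h}$ and $L^{\omg}_\mp\lap_{\omg^\perp}^{-1}$ is $O(\tht^{-2}2^{-h})$ times disposable, this reduces to $\Box\Psi^{(\tht)}_{\pm,h}\aeq 2^h\,\bfO\,A^{(\tht)}_{h,cone,\pm}\cdot\omg$, which by \eqref{eq:decomp-Ax} with $q=2$ gives $\tht^{3/2}2^{3h/2}\nrm{A_h}_{S^1}$ in $DL^2L^\infty$ (combining $2^h\cdot 2^{h/2}\tht^{3/2}$, the latter from \eqref{eq:decomp-Ax} at $q=2$).

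The main obstacle I anticipate is the careful bookkeeping of powers of $\tht$ and $2^h$ when passing through $L^{\omg}_\mp\lap_{\omg^\perp}^{-1}$ on the angularly- and frequency-localized piece, and in particular verifying that this composition is genuinely a disposable operator times the right scalar factor (this is where the precise shape of the parabolic cone region $D^{\omg,\pm}_{cone}$ and the angular truncation $\Pi^{\omg,\pm}_{\ge\abs{\eta}^\dlt}$ in the definition of $A^{main,\pm}$ enter, ensuring $\lap_{\omg^\perp}$ is genuinely elliptic of the stated size there). This is routine symbol calculus in the null coordinate system $(\underline\upsilon,\upsilon,\tilde\eta_\perp)$ already set up in the proof of Lemma~\ref{lem:proj-bdd}, but it must be done with care. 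Everything else — Bernstein, H\"older, and the decomposability Lemma~\ref{lem:decomp-basic} — is standard and follows the template of the proof of Lemma~\ref{lem:decomp-A}. I would expect these estimates to be essentially identical to the corresponding ones in \cite[Section~7]{KT}, with the only new feature being the explicit tracking of the frequency envelope $\nrm{A_h}_{S^1}$, so I would cite \cite{KT} for the bulk of the computation and only spell out the envelope dependence.
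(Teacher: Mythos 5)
Your proposal captures the key ingredient of the paper's (very short) proof and follows essentially the same route: on the support of $P_h\Pi^{\omg,\pm}_{cone}\Pi^{\omg,\pm}_\tht$ the operator $L^{\omg}_{\mp}\lap_{\omg^{\perp}}^{-1}$ has the form $\tht^{-2}2^{-h}\bfO$ with $\bfO$ disposable, and then \eqref{eq:decomp-Psi} drops out of the Strichartz component of $S^1$ via Bernstein, exactly as in Lemma~\ref{lem:decomp-A}. Your treatment of \eqref{eq:decomp-Psi-Box} via $\Box = -L^{\omg}_{\pm}L^{\omg}_{\mp}+\lap_{\omg^{\perp}}$ and the size $\tht^2 2^{2h}$ of $\Box$ on the cone piece (Lemma~\ref{lem:proj-Box-bdd}) is likewise equivalent to what the paper does (it invokes the $L^2L^\infty$ case of \eqref{eq:decomp-Psi} plus Lemma~\ref{lem:proj-Box-bdd}).

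Two points deserve care. First, your intermediate claim that $L^{\omg}_{\mp}$ ``acts like multiplication by a symbol of size $\aeq\tht^2 2^h$'' is wrong: in the null coordinates of Lemma~\ref{lem:proj-bdd} it is the \emph{small} null coordinate $L^{\omg}_{\pm}$ whose symbol is $\aeq\tht^2 2^h$ on $D^{\omg,\pm}_{cone}$, while $L^{\omg}_{\mp}$ corresponds to the \emph{large} null coordinate and has symbol $\aeq 2^h$ there. You notice the inconsistency and land on the correct composite size $\tht^{-2}2^{-h}$ (because $\lap_{\omg^{\perp}}^{-1}$ is $\tht^{-2}2^{-2h}$), but the intermediate sentence as written would give $2^{-h}$, not $\tht^{-2}2^{-h}$.

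Second, for \eqref{eq:decomp-Psi-himod} your ``refined box dimensions'' heuristic is not quite the right picture: the projection $Q_{h+2a}$ constrains the \emph{modulation} (equivalently the $\tau$-support, to an interval of length $\aeq 2^{h+2a}$), but the \emph{spatial} frequency box is still $2^h\times(\tht 2^h)^3$; it is not shortened to $2^h\times 2^{h+a}\times(\tht 2^h)^2$. The paper's actual mechanism is cleaner: first prove the $q=r=2$ case using the $X^{1,\frac12}_\infty$ component of the $S^1$ norm at the single modulation $2^{h+2a}$ (i.e.\ $\nrm{Q_{h+2a}P_h\nb A}_{L^2L^2}\aleq 2^{-(h+2a)/2}\nrm{A_h}_{S^1}$), then use spatial Bernstein (with the unchanged box $2^{4h}\tht^3$) and temporal Bernstein (with the localized $\tau$-support of length $\aeq 2^{h+2a}$) to pass to general $q,r\geq 2$. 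Carrying out that bookkeeping gives exactly the factor $2^{-\frac{2}{q}a}\tht^{\frac12-\frac3r}$, so your final numerology is right, but the justification should go through the $X^{0,\frac12}_\infty\supseteq S$ embedding and time-frequency localization rather than through an altered spatial box.
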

\begin{proof}
Observing that within the support of $P_{h} \Pi^{\omg, \pm}_{cone} \Pi^{\omg, \pm}_{\tht}$ the symbol
$L^{\mp} \Delta_{\omega^\perp}^{-1}$ has the form $2^{-h} \theta^{-2} \bfO$ with $\bfO$ disposable
and depending smoothly on $\omega$ on the $\theta$ scale, the first bound \eqref{eq:decomp-Psi}
is again a direct consequence of the Strichartz bounds in the $S^1$ norm for $A$. 

For \eqref{eq:decomp-Psi-himod} it suffices to prove the case $p=q=2$ and then use Bernstein's inequality.
But in this case it suffices to use the $X_\infty^{1,\frac12}$ component of the $S^1$ norm at fixed modulation.

For the last bound \eqref{eq:decomp-Psi-Box} it suffices to combine the $L^2L^\infty$ case of \eqref{eq:decomp-Psi}
with Lemma~\ref{lem:proj-Box-bdd}.
\end{proof}

We now consider the $\G$-valued symbol $O_{<h, \pm}$, which was
defined in \eqref{eq:O-def}. It obeys the following global-in-time
decomposability bounds.
\begin{lemma} \label{lem:decomp-O} Let $c_{h}$ be an admissible
  frequency envelope for $A$ in $S^{1}$.  Then for any $q > 4$, we
  have
  \begin{equation} \label{eq:decomp-O} \nrm{(O_{<h, \pm; x}, O_{<h,
        \pm; t})}_{D L^{q} L^{\infty}} \aleq_{\nrm{A}_{S^{1}}}
    2^{(1-\frac{1}{q}) h} c_{h}.
  \end{equation}
  When $q = 2$, an analogous bound with a slight loss holds:
  \begin{equation} \label{eq:decomp-O-2} \nrm{(O_{<h, \pm; x}, O_{<h,
        \pm; t})}_{D L^{2} L^{\infty}} \aleq_{\nrm{A}_{S^{1}}}
    2^{\frac{1}{2} (1 - \dlt) h} c_{h}.
  \end{equation}
\end{lemma}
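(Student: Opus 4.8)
\emph{Proof idea.} The plan is to propagate the decomposability bounds through the defining ODE \eqref{eq:O-def}, in the same spirit as the fixed-time symbol bounds of Lemma~\ref{lem:symb-O} (cf. \cite[Section~7.7]{KT}), but now tracking $D L^{q} L^{\infty}$ norms. First I would differentiate the relations $O_{<h,\pm;x} = (\partial_{x} O_{<h,\pm}) O_{<h,\pm}^{-1}$ and $O_{<h,\pm;t} = (\partial_{t} O_{<h,\pm}) O_{<h,\pm}^{-1}$ in $h$, using $\partial_{h} O_{<h,\pm} = \Psi_{\pm,h} O_{<h,\pm}$, to obtain the transport equations
\begin{equation*}
\tfrac{\ud}{\ud h} O_{<h,\pm;x} = \partial_{x} \Psi_{\pm,h} + [\Psi_{\pm,h}, O_{<h,\pm;x}], \qquad \tfrac{\ud}{\ud h} O_{<h,\pm;t} = \partial_{t} \Psi_{\pm,h} + [\Psi_{\pm,h}, O_{<h,\pm;t}].
\end{equation*}
Integrating from $h = -\infty$, where the normalization in \eqref{eq:O-def} forces $O_{<h,\pm;x}, O_{<h,\pm;t} \to 0$, yields the Duhamel representation
\begin{equation*}
O_{<h,\pm;x}(t,x,\xi) = \int_{-\infty}^{h} Ad\big(O_{<h,\pm}(t,x,\xi) O_{<h',\pm}^{-1}(t,x,\xi)\big)\, \partial_{x} \Psi_{\pm,h'}(t,x,\xi)\, \ud h',
\end{equation*}
and similarly with $\partial_{t}$ in place of $\partial_{x}$ for $O_{<h,\pm;t}$.

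Next I would strip off the conjugation factor $Ad(O_{<h,\pm} O_{<h',\pm}^{-1})$, which is pointwise bounded by $1$ and whose $\xi$-regularity (up to the finitely many derivatives appearing in \eqref{eq:decomp-tht}) is controlled with constants depending only on $\nrm{A}_{S^{1}}$ via Lemma~\ref{lem:symb-O}. Using the decomposability calculus of Lemma~\ref{lem:decomp-basic}, multiplication by this factor costs only $\lesssim_{\nrm{A}_{S^{1}}} 1$ in the $D L^{q} L^{\infty}$ norm, so the problem reduces to the dyadic bound
\begin{equation*}
\nrm{O_{<h,\pm;x}}_{D L^{q}L^{\infty}} + \nrm{O_{<h,\pm;t}}_{D L^{q}L^{\infty}} \aleq_{\nrm{A}_{S^{1}}} \sum_{h' \leq h} \big( \nrm{\partial_{x} \Psi_{\pm,h'}}_{D L^{q}L^{\infty}} + \nrm{\partial_{t} \Psi_{\pm,h'}}_{D L^{q}L^{\infty}}\big).
\end{equation*}
For each $h'$ I would invoke Lemma~\ref{lem:decomp-Psi} with $r = \infty$: for each angular scale $\theta$ one has $\nrm{2^{-h'}\nb\Psi_{h',\pm}^{(\theta)}}_{DL^{q}L^{\infty}} \lesssim 2^{-h'/q} \theta^{\frac12 - \frac2q} \nrm{A_{h'}}_{S^{1}}$, and, recalling that $A^{main}$ (hence $\Psi$) is angularly restricted to scales $\theta \gtrsim 2^{\dlt h'}$, one sums over $\theta \in (2^{\dlt h'}, 1)$. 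When $q > 4$ the exponent $\frac12 - \frac2q$ is positive, so the angular sum converges with no loss and $\nrm{\nb\Psi_{h',\pm}}_{DL^{q}L^{\infty}} \lesssim 2^{(1-\frac1q)h'} \nrm{A_{h'}}_{S^{1}}$; when $q = 2$ the sum is dominated by its bottom endpoint, giving the loss $2^{-\dlt h'/2}$ and $\nrm{\nb\Psi_{h',\pm}}_{DL^{2}L^{\infty}} \lesssim 2^{\frac12(1-\dlt)h'} \nrm{A_{h'}}_{S^{1}}$. Summing in $h' \leq h$ against the admissible envelope ($\nrm{A_{h'}}_{S^{1}} \leq c_{h'} \lesssim 2^{\dlt(h-h')} c_{h}$), the resulting geometric series is dominated by $h' = h$ and produces $2^{(1-\frac1q)h} c_{h}$ for $q > 4$ and $2^{\frac12(1-\dlt)h} c_{h}$ for $q = 2$, which are the asserted bounds.

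The hard part is the second step, namely controlling the conjugation factor $Ad(O_{<h,\pm} O_{<h',\pm}^{-1})$ uniformly in $h, h'$ in the relevant decomposability class with a constant depending only on $\nrm{A}_{S^{1}}$. A naive estimate of $O_{<h,\pm} O_{<h',\pm}^{-1}$ as a product of exponentials of the increments $\Psi_{\pm,h''}$ would require the $\ell^{1}$-in-$h''$ sum $\sum_{h''} \nrm{A_{h''}}_{S^{1}}$, which need not converge for $A \in S^{1}$. This is circumvented exactly as in \cite[Section~7.7]{KT}: one runs the estimate as a bootstrap in the parameter $h$ and works throughout with the divisible $L^{q}_{t}$ ($q < \infty$) structure, so that the dyadic-in-$h''$ contributions may be gathered in $\ell^{2}$ rather than $\ell^{1}$; the $\ell^{2}$-summability of the Littlewood--Paley pieces of $A$ in $S^{1}$ then closes the argument. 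Once this is in hand, the remaining steps — the decomposability-calculus manipulations and the angular and dyadic summations sketched above — are routine.
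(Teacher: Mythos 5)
Your overall plan — differentiate the relation $\partial_h O_{<h,\pm} = \Psi_{\pm,h} O_{<h,\pm}$ to get a transport ODE for $O_{<h,\pm;\alpha}$, integrate to a Duhamel formula with the conjugation factor $Ad(O_{<h,\pm}O_{<h',\pm}^{-1})$, and then import the decomposability bounds from Lemma~\ref{lem:decomp-Psi} via the $\theta$- and $h'$-summations — is the same structure the paper has in mind when it cites \cite[Lemma~7.9]{KT}, and your accounting for where the $q>4$ constraint comes from (the $\theta$-sum is geometric only when $\tfrac12-\tfrac2q>0$, with a $2^{-\tfrac{\dlt}{2}h'}$ loss at $q=2$) exactly matches the paper's remark.

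However, there is a genuine gap in your middle step, which you then paper over in your final paragraph without actually resolving. You assert that "multiplication by $Ad(O_{<h,\pm}O_{<h',\pm}^{-1})$ costs only $\lesssim_{\nrm{A}_{S^1}} 1$ in the $DL^qL^\infty$ norm", citing Lemma~\ref{lem:decomp-basic}(2). This product rule would require $\nrm{Ad(O_{<h}O_{<h'}^{-1})}_{DL^\infty L^\infty}\lesssim 1$, and that is false: the $D_\theta L^\infty L^\infty$ norm contains an $\ell^2$ sum over $\sim\theta^{-3}$ caps $\phi$ of the $L^\infty_x$ norms (see \eqref{eq:decomp-tht}), so any symbol that is not angularly localized (and $Ad(W)$ is not — it is a full rotation of $\End(\g)$ that varies slowly over the whole sphere) pays a $\theta^{-3/2}$ price. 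Moreover, the $\xi$-derivative bounds for the conjugation factor degenerate at scale $2^{\dlt h'}$ (not at the scale $\theta\gg 2^{\dlt h'}$ of $\Psi^{(\theta)}_{h'}$), so $\theta^k\partial_\xi^{(k)}Ad(W)$ is not uniformly bounded either. Naive rescaling to $\theta_0=2^{\dlt h'}$ then loses an extra factor $(\theta/2^{\dlt h'})^{3/2}$, which after $\theta$-summation produces a $2^{-\tfrac{3}{2}\dlt h'}$ loss — worse than the $2^{-\tfrac{\dlt}{2}h'}$ claimed at $q=2$, and a loss at all for $q>4$ where the lemma claims no loss. In other words, the clean "strip off the conjugation" reduction you write down is not achievable with the blunt tools (pointwise boundedness of $Ad$, Lemmas~\ref{lem:symb-O} and~\ref{lem:decomp-basic}) you invoke; the careful treatment of the conjugation is the whole content of \cite[Lemma~7.9]{KT}, and your final paragraph's appeal to "a bootstrap with $\ell^2$ gathering" is only a gesture at it. You have correctly identified where the difficulty sits, but you have also asserted it away as a routine decomposability-calculus step, and as written those two statements contradict each other.
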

\begin{proof} These bounds are a consequence of the $\Psi_{h,\pm}^{(\theta)}$ bounds 
in the previous lemma. The proof is similar to the proof of the similar result 
in \cite[Lemma 7.9]{KT} and is omitted. We note that the constraint 
$q > 4$ in the first bound is to prevent losses in the $\theta$ summation in \eqref{eq:decomp-Psi}.
\end{proof}

Finally, we consider \emph{interval-localized} decomposability bounds,
which will be needed to exploit divisibility (i.e., the hypothesis
\eqref{eq:DS-bnd-hyp}) to gain smallness.
\begin{lemma} \label{lem:decomp-loc} Let $\abs{I} \geq 2^{-h - \kpp}$,
  where $h \in \bbR$ and $\kpp \geq 0$. For $q \geq 2$, we have
  \begin{align}
    \nrm{\Psi_{h}^{(\tht)}}_{D L^{q} L^{\infty}[I]}
    \aleq 	& 2^{C \kpp} \tht^{-C} 2^{- h} \nrm{A_{h}}_{L^{q} L^{\infty}[I]}, \label{eq:decomp-loc-psi} \\
    \nrm{\lap_{\omg^{\perp}}^{-1} \Box (\omg \cdot A_{h, cone,
        \pm}^{(\tht)})}_{D L^{q} L^{\infty}[I]}
    \aleq 	& 2^{C \kpp} \tht^{-C} \nrm{A_{h}}_{L^{q} L^{\infty}[I]}.	\label{eq:decomp-loc-Box-perplap-Ax} \\
    \nrm{\omg \cdot A_{h}^{(\tht)}}_{D L^{q} L^{\infty}[I]}
    \aleq 	& 2^{C \kpp} \tht^{-C} \nrm{A_{h}}_{L^{q} L^{\infty}[I]}.	\label{eq:decomp-loc-Ax} \\
    \nrm{\omg \cdot A_{0, h}^{(\tht)}}_{D L^{q} L^{\infty}[I]} \aleq &
    2^{C \kpp} \tht^{-C} \nrm{A_{0, h}}_{L^{q}
      L^{\infty}[I]}. \label{eq:decomp-loc-A0}
  \end{align}
\end{lemma}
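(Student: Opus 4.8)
\textbf{Proof plan for Lemma~\ref{lem:decomp-loc}.}

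The plan is to derive all four interval-localized bounds by the same mechanism used in the proof of Lemma~\ref{lem:decomp-basic}(1), namely an extension argument, combined with the fixed-$\omega$ disposability of the frequency-projection symbols $(\theta \partial_\omega)^n(\Pi^{\omg,\pm}_{\ast}\Pi^{\omg,\pm}_{\theta})$, $L^\omega_\mp$, $\lap_{\omega^\perp}^{-1}$ and $\Box$ within the support of $P_h \Pi^{\omg,\pm}_\ast \Pi^{\omg,\pm}_\theta$. Since all the symbols involved here only see spatial frequency $\aeq 2^h$ and angular scale $\aeq \theta$, the point is that all these operations behave, after fixing $\omega$ and summing over the cap partition, like disposable multipliers on $\bbR^{1+4}$ whose symbol seminorms pay at most a power of $2^h$ and $\theta^{-1}$. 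The factor $2^{C\kappa}$ and $\theta^{-C}$ will come purely from the interval-extension step and from the angular summation, respectively, and there will be \emph{no} off-diagonal/Strichartz gain — which is exactly why the conclusions are phrased with a generic constant $C$ rather than a sharp exponent.

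Concretely, first I would reduce to the case $I = \bbR$ by extending $A_h$ (which is localized at spatial frequency $2^h$) outside $I$ as a homogeneous wave, in analogy with the setup in Section~\ref{subsec:ext}; by Proposition~\ref{prop:ext} (see \eqref{eq:ext-str}, and the remark following it), for any Strichartz pair $(q,r)$ and for the scale-$2^{-h}$ generalized cutoff $\chi_I^h$ we have $2^{-h}\|\chi_I^h \nb A_h^{ext}\|_{L^q L^r} \aleq 2^{C\kappa}\big(\|A_h\|_{L^q L^r[I]} + 2^{(\frac12 - \frac1q - \frac4r)h}\|\Box A_h\|_{L^2 L^2[I]}\big)$, where the second term is harmless here because on $I$ the quantity $\Box A$ is not even present in the statement — more precisely, once we only want $L^q L^\infty$ control of $A_h$ itself on $I$, we directly use $\|\chi_I^h A_h^{ext}\|_{L^q L^\infty} \aleq \|A_h\|_{L^q L^\infty[I]}$, which is the $r = \infty$ version of \eqref{eq:ext-str} modulo the harmless $2^{C\kappa}$ factor that arises precisely because $|I|\ge 2^{-h-\kappa}$. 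Then \eqref{eq:decomp-loc-Ax} and \eqref{eq:decomp-loc-A0} follow: apply the cap partition $\{(m^\phi_\theta)^2\}$, note that each $m^\phi_\theta(\omega) \Pi^{\omg,\pm}_\ast \Pi^{\omg,\pm}_\theta P_h$ together with its $\theta\partial_\omega$-derivatives up to order $40$ is a disposable multiplier on $\bbR^{1+4}$ (this is the content of Lemma~\ref{lem:proj-bdd} and Corollary~\ref{cor:proj-disp}), square-sum in $\phi$ using finite overlap, and sum the $\theta$-series — each dyadic $\theta$ piece costs a fixed power $\theta^{-C}$ from the derivatives hitting the sharp angular cutoff, and the whole series is dominated by $\theta^{-C}$ after absorbing a geometric tail into a slightly larger $C$. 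For \eqref{eq:decomp-loc-psi} one further composes with $-L^\omega_\mp \lap_{\omega^\perp}^{-1}$, whose symbol on the support of $P_h \Pi^{\omg,\pm}_{cone}\Pi^{\omg,\pm}_\theta$ is $2^{-h}\theta^{-2}$ times a disposable (smooth-on-the-$\theta$-scale-in-$\omega$) symbol; this contributes the $2^{-h}$ prefactor and another $\theta^{-C}$. For \eqref{eq:decomp-loc-Box-perplap-Ax} one composes $\lap_{\omega^\perp}^{-1}\Box$ restricted to that support, where $\Box$ has symbol $\theta^2 2^{2h}\bfO$ and $\lap_{\omega^\perp}^{-1}$ has symbol $\theta^{-2}2^{-2h}\bfO$ (Lemma~\ref{lem:proj-Box-bdd}), so the net operator is again disposable-times-$\theta^{-C}$ with no extra power of $2^h$, matching the stated bound.

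The main obstacle — and the only point requiring genuine care — is the interplay between the sharp time cutoff $\chi_I$ inherent in interval localization and the \emph{nonlocal in time} character of the symbols $\Psi_h^{(\theta)}$ and $A^{(\theta)}_{h,\ast,\pm}$: these are built from space-\emph{time} Fourier projections $\Pi^{\omg,\pm}_\ast \Pi^{\omg,\pm}_\theta$, so one cannot simply restrict in time and must instead commute the extension past these projections. I would handle this exactly as in the extension discussion of Section~\ref{subsec:ext} and in the proof of Proposition~\ref{prop:int-loc}: use the homogeneous-wave extension of $A_h$ (which is globally defined and whose Strichartz norms are controlled by those of $A_h$ on $I$ with a $2^{C\kappa}$ loss because $|I|\ge 2^{-h-\kappa}$), apply the global decomposability calculus (Lemmas~\ref{lem:decomp-A}, \ref{lem:decomp-Psi}) to the extension but crucially using the \emph{Strichartz norms of the extension} rather than the $S^1$ norm, and then note that all the symbol operators here are disposable for fixed $\omega$ so no modulation-projection issues arise. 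One subtlety is that the disposability constants in Lemma~\ref{lem:proj-bdd} degenerate like $\theta^{-n}$ in the number of $\xi$-derivatives taken, which is why only finitely many derivatives ($n\le 40$, as in \eqref{eq:decomp-tht}) are needed and why the final $\theta$-power is a fixed but unspecified $C$; this is acceptable since the lemma's conclusions are only used in contexts (the error estimates of Section~\ref{sec:paradiff-err}) where one has an \emph{off-diagonal or frequency-gap gain to spare}, large enough to beat any fixed power $\theta^{-C}$. Once these bookkeeping points are settled, the four estimates are immediate consequences of the global decomposability bounds with the $S^1$ norm replaced by the appropriate $L^q L^\infty[I]$ norm.
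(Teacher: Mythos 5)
Your plan is essentially the paper's proof: after scaling to $h=0$, one observes that $\tht^{n}\rd_{\xi}^{(n)}\Pi^{\omg}_{\tht}\Pi^{\omg}_{\ast}P_{0}$ (for fixed $\omg$) is a spacetime Fourier multiplier with a compactly supported symbol whose derivatives cost $\tht^{-C}$, so its rapidly decaying kernel lets one insert a generalized time cutoff $\chi^{0}_{I}$ and then invoke Proposition~\ref{prop:ext} (with the homogeneous-wave extension of $A$) to recover the interval-localized $L^{q}L^{\infty}$ norm at the price of $2^{C\kpp}$; the factors $2^{-h}$ and the extra $\tht^{-C}$ for \eqref{eq:decomp-loc-psi} and \eqref{eq:decomp-loc-Box-perplap-Ax} come from the symbol sizes of $L^{\omg}_{\mp}\lap_{\omg^{\perp}}^{-1}$ and $\lap_{\omg^{\perp}}^{-1}\Box$ on the relevant support, exactly as you say. (One small correction: there is no ``$\tht$-series'' to sum here --- each estimate concerns a single, fixed $\tht$-piece, so the $D L^{q}L^{\infty}$ norm reduces to the $D_{\tht}L^{q}L^{\infty}$ seminorm of that single piece.)
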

\begin{proof}
  We will prove \eqref{eq:decomp-loc-psi}, and leave the similar cases
  of \eqref{eq:decomp-loc-Box-perplap-Ax}, \eqref{eq:decomp-loc-Ax},
  \eqref{eq:decomp-loc-A0} to the reader.

  By scaling, we set $h = 0$. By the definition of the class $D L^{q}
  L^{\infty}[I]$, we have
  \begin{align*}
    \nrm{\Psi_{0}^{(\tht)}}_{D L^{q} L^{\infty}[I]}
    \aleq & \tht^{-2} \Big( \sum_{n=0}^{40} \sum_{\phi} \sup_{\omg} \nrm{m^{\phi}_{\tht}(\omg)\tht^{n} \rd_{\xi}^{(n)} \Pi^{\omg}_{\tht} \Pi^{\omg}_{cone} P_{0} (\omg \cdot \bfP A )}^{2}_{L^{q} L^{\infty}[I]} \Big)^{\frac{1}{2}} \\
    \aleq & \tht^{-C} \sum_{n=0}^{40} \nrm{\tht^{n} \rd_{\xi}^{(n)}
      \Pi^{\omg}_{\tht} \Pi^{\omg}_{cone} P_{0} (\omg \cdot \bfP A
      )}_{L^{q} L^{\infty}[I]}.
  \end{align*}
  Fix $n \in [1, 40]$ and $\omg \in \bbS^{3}$. From the proof of
  Lemma~\ref{lem:proj-bdd}, we see that the projection $\tht^{n'}
  \rd_{\xi}^{(n')} \Pi^{\omg}_{\tht} \Pi^{\omg}_{cone} P_{0}$, when
  viewed as a Fourier multiplier in $(\sgm, \eta)$, has a symbol which
  is supported in a spacetime cube of radius $\aleq 1$, and its
  derivatives (up to $40$, say) are bounded by $\tht^{-C}$ for some
  large universal constant $C$. Moreover, we have $\abs{\tht^{n''}
    \rd_{\xi}^{(n'')} \omg} \aleq_{n''} 1$. Denoting by $\chi^{0}_{I}$
  a generalized cutoff adapted at the unit scale as in
  \eqref{eq:gen-cutoff}, we have
  \begin{align*}
    \nrm{\tht^{n} \rd_{\xi}^{(n)} \Pi^{\omg}_{\tht} \Pi^{\omg}_{cone}
      P_{0} (\omg \cdot \bfP A )}_{L^{q} L^{\infty}[I]} \aleq
    \tht^{-C} \nrm{\chi^{0}_{I} P_{0} A}_{L^{q} L^{\infty}}
  \end{align*}
  Recall that $A$ is extended outside $I$ by homogeneous waves. By
  Proposition~\ref{prop:ext}, the last expression is bounded by
  \begin{align*}
    \aleq 2^{C \kpp} \tht^{-C} \nrm{P_{0} A}_{L^{q} L^{\infty}[I]},
  \end{align*}
  which proves \eqref{eq:decomp-loc-psi}. \qedhere
\end{proof}

\subsection{Collection of symbol bounds}
Before we continue, we introduce the quantity $M_{\sgm}$, which collects various symbol bounds that we have so far.
 
We fix large enough $N$ and a small universal constant $\dlt_{\sgm} > 0$. Then we let $M_{\sgm} > 0$ be the minimal constant such that:
\begin{itemize}
\item The following pointwise bounds hold for all $0 \leq n \leq \dlt^{-1}$ and $0\leq m \leq N$:
\begin{align*}
\abs{\rd_{\xi}^{(n)} \rd_{x}^{(m-1)} \nb \Psi_{\pm, h}^{(\tht)}} \leq & 2^{mh} \tht^{\frac{1}{2} - n} M_{\sgm}, \\
\abs{\Psi_{\pm, h}(t, x, \xi) - \Psi_{\pm, h}(s, y, \xi)} \leq & \min\set{2^{h} \brk{t-s, x-y}, 1} M_{\sgm}, \\
	\abs{\rd_{\xi}^{(n)}(\Psi_{\pm, h}(t,x,\xi) - \Psi_{\pm, h}(s, y, \xi))} 
	\leq & \min \set{2^{h}\brk{t-s, x-y}, 1} 2^{-(n - \frac{1}{2}) \dlt h} M_{\sgm}, \\
	\abs{\rd_{\xi}^{(n)} (O_{<h, \pm})_{; t,x}(t, x, \xi)} 
	\leq & 2^{(1-n \dlt) h} M_{\sgm}, \\
	d(O_{<h, \pm}(t, x, \xi) O^{-1}_{<h, \pm}(s, y, \xi), Id) 
	\leq & \log(1 + 2^{h} \brk{t - s, x-y}) M_{\sgm}, \\
	\abs{\rd_{\xi}^{(n-1)}(O_{<h, \pm}(t, x, \xi) O^{-1}_{<h, \pm}(s, y, \xi))_{;\xi}} 
	\leq& \min \set{2^{h}\brk{t-s, x-y}, 1}^{1 - (n-\frac{1}{2})\dlt} \\
	& \times (1 + \brk{t-s, x-y})^{(n-\frac{1}{2}) \dlt} M_{\sgm}.
\end{align*}

\item The following decomposability bounds hold for all $\ast \in
  \set{cone, null, out}$, $q, r \geq 2$ and $\frac{2}{q} + \frac{3}{r} \leq \frac{3}{2}$:
\begin{align*}
    \nrm{A_{h, \ast, \pm}^{(\tht)} \cdot \omg}_{D L^{q} L^{\infty}}
    \leq & 2^{(1-\frac{1}{q}) h} \tht^{\frac{5}{2} - \frac{2}{q}} M_{\sgm}, \\
    \nrm{A_{0, h, \ast, \pm}^{(\tht)}}_{D L^{q} L^{\infty}} \leq & 
    2^{(1-\frac{1}{q}) h} \tht^{\frac{5}{2} - \frac{2}{q}} M_{\sgm}, \\
    \nrm{\Box A_{h, cone, \pm}^{(\tht)} \cdot \omg}_{D L^{q}
      L^{\infty}}
    \leq & 2^{(3-\frac{1}{p}) h} \tht^{\frac{9}{2} - \frac{2}{q}} M_{\sgm}, \\
    \nrm{\lap_{\omg^{\perp}}^{-1} \Box A_{h, cone, \pm}^{(\tht)} \cdot
      \omg}_{D L^{q} L^{\infty}} \leq & 2^{(1-\frac{1}{p}) h}
    \tht^{\frac{5}{2} - \frac{2}{q}}
    M_{\sgm}, \\
\nrm{(\Psi_{h, \pm}^{(\tht)},
      2^{-h} \nb \Psi_{h, \pm}^{(\tht)})}_{D L^{q} L^{r}} \leq &
    2^{-(\frac{1}{q} + \frac{4}{r}) h} \tht^{\frac{1}{2} - \frac{2}{q}
      - \frac{3}{r}} M_{\sgm}, \\
\nrm{Q_{h +
        2a}(\Psi_{h, \pm}^{(\tht)}, 2^{-h} \nb \Psi_{h,
        \pm}^{(\tht)})}_{D L^{q} L^{r}} \leq& 2^{-(\frac{1}{q} +
      \frac{4}{r}) h} 2^{-\frac{2}{q} a} \tht^{\frac{1}{2} -
      \frac{3}{r}} M_{\sgm},  \qquad (\tht \aleq 2^{a} \aleq 1) \\
       \nrm{\Box \Psi_{h,
        \pm}^{(\tht)}}_{D L^{2} L^{\infty}} \leq & \tht^{\frac{3}{2}}
    2^{\frac{3}{2} h} M_{\sgm}, \\
\nrm{(O_{<h, \pm; x}, O_{<h,
        \pm; t})}_{D L^{q} L^{\infty}} \leq&
    2^{(1-\frac{1}{q}) h} M_{\sgm}, \qquad \qquad \qquad \quad (q \geq 4 + \dlt_{\sgm})\\
\nrm{(O_{<h, \pm; x}, O_{<h,
        \pm; t})}_{D L^{2} L^{\infty}} \leq&
    2^{\frac{1}{2} (1 - \dlt) h} M_{\sgm}.
\end{align*}
\end{itemize}
 
By the preceding results, there exists a $M_{\sgm}$ such that
\begin{equation} \label{eq:Msgm}
M_{\sgm} \aleq_{M} \nrm{A}_{\ell^{\infty} S^{1}} + \nrm{A_{0}}_{\ell^{\infty} Y^{1}}.
\end{equation}
In particular, note that all of the above symbol bounds are small if $\nrm{A}_{\ell^{\infty} S^{1}}$ and $\nrm{A_{0}}_{\ell^{\infty} Y^{1}}$ are.

\subsection{Oscillatory integral bounds}
Given a smooth function $a$, let
\begin{equation*}
	K^{a}_{<0}(t, x; s, y) = \int Ad(O_{<h, \pm})_{<0}(t, x, \xi) a(\xi) e^{\pm i (t-s) \abs{\xi}} e^{i \xi \cdot (x-y)} Ad(O^{-1}_{<h, \pm})_{<0}(\xi, y, s) \, \frac{\ud \xi}{(2 \pi)^{4}}.
\end{equation*}

\begin{lemma} \label{lem:osc-int}
For a sufficiently small universal constant $\dlt > 0$, the following bounds hold for the kernel $K^{a}_{<0}(t, x; s, y)$.
\begin{enumerate}
\item Assume that $a$ is a smooth bump function on the unit scale. Then
\begin{equation} \label{eq:osc-int}
	\abs{K^{a}_{<0}(t, x; s, y)} \aleq_{M_{\sgm}} \brk{t-s}^{-\frac{3}{2}} \brk{\abs{t-s} - \abs{x-y}}^{-100}.
\end{equation}

\item Let $a = a_{\calC}$ be a smooth bump function on a radially oriented rectangular box $\calC$ of size $2^{k} \times (2^{k+\ell})^{3}$, where $k, \ell \leq 0$. Then
\begin{equation} \label{eq:osc-int-C}
	\abs{K^{a}_{<0}(t, x; s, y)} \aleq_{M_{\sgm}} 2^{4k + 3 \ell} \brk{2^{2(k+\ell)}(t-s)}^{-\frac{3}{2}} \brk{2^{k} ( \abs{t-s} - \abs{x-y}) }^{-100}.
\end{equation}

\item Let $a = a_{\calC}$ be a smooth bump function on a radially oriented rectangular box $\calC$ of size $1 \times (2^{\ell})^{3}$, where $\ell \leq 0$. Let $\omg \in \bbS^{3}$ be at angle $\aeq 2^{\ell}$ from $\calC$. Then for $t-s = (x-y) \cdot \omg + O(1)$,
\begin{equation} \label{eq:osc-int-null}
	\abs{K^{a}_{<0}(t, x; s, y)} \aleq_{M_{\sgm}} 2^{3 \ell} \brk{2^{2 \ell} (t - s)}^{-100} \brk{2^{\ell} (x' - y')}^{-100}
\end{equation}
where $x' = x - (x \cdot \omg) \omg$ and $y' = y - (y \cdot \omg) \omg$.
\end{enumerate}
\end{lemma}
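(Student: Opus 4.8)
\textbf{Plan of proof for Lemma~\ref{lem:osc-int}.} The three bounds are all variations on a single stationary-phase estimate for the oscillatory integral kernel $K^{a}_{<0}$, and the strategy is to reduce each to the classical dispersive bound for the free half-wave propagator by treating the renormalization factors $Ad(O_{<h,\pm})_{<0}$ and $Ad(O_{<h,\pm}^{-1})_{<0}$ as essentially harmless symbol multipliers. The first step is to record, using the pointwise symbol bounds collected in $M_{\sgm}$ (in particular \eqref{eq:symb-O-h} and \eqref{eq:symb-O-diff-h}), that the combined symbol
\[
b(t,x;s,y;\xi) = Ad(O_{<h,\pm})_{<0}(t,x,\xi)\, a(\xi)\, Ad(O^{-1}_{<h,\pm})_{<0}(\xi,y,s)
\]
is, for each fixed $(t,x,s,y)$, a symbol adapted to the same frequency region as $a$, with $\xi$-derivatives controlled by $\abs{\rd_\xi^{(n)} b} \aleq_{M_\sgm} (1+\brk{t-s,x-y})^{n\dlt}$ times the natural scale of $a$. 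The key point is that the logarithmic growth in \eqref{eq:symb-O-diff-h} and the $\dlt$-power losses in \eqref{eq:symb-O-diff-dxi} are absorbable: a single $\xi$-integration by parts in the oscillatory integral $\int b\, e^{\pm i(t-s)\abs{\xi}} e^{i\xi\cdot(x-y)}\,\ud\xi$ gains a genuine power of $\brk{\abs{t-s}-\abs{x-y}}^{-1}$ (from the non-stationary phase $\pm(t-s)\frac{\xi}{\abs\xi}+(x-y)$ away from the light cone), against which the $\brk{t-s,x-y}^{n\dlt}$ loss is negligible once $\dlt$ is taken small enough relative to the decay exponent $100$.

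For part (1), with $a$ a unit-scale bump, I would first localize $\xi$ to dyadic caps $P^\omega_{\frac{-j}2}$ where $2^{-j} = \brk{t-s}^{-1}$ is the natural angular scale for stationary phase, decompose $b$ accordingly, and apply the standard $\brk{t-s}^{-3/2}$ stationary phase estimate on each cap while using the $D L^q L^\infty$ decomposability bounds for $O_{<h,\pm}$ (Lemma~\ref{lem:decomp-O}) to sum the cap contributions; the off-cone decay $\brk{\abs{t-s}-\abs{x-y}}^{-100}$ then comes from repeated non-stationary integration by parts in the radial direction, as explained above. This is essentially the argument in \cite[Section~7]{KT}, adapted to the pseudodifferential setting, and I would cite it for the routine details. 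Part (2) follows from part (1) by the exact scaling and modulation (Galilean/Lorentz) symmetries of the half-wave equation: a rectangular box $\calC$ of dimensions $2^k \times (2^{k+\ell})^3$ is mapped to the unit cube by an anisotropic rescaling combined with a boost, under which the symbol bounds $M_\sgm$ are preserved (since they are stated scale-invariantly in $h$ and $\tht$); tracking the Jacobians produces the prefactor $2^{4k+3\ell}$ and the rescaled arguments $2^{2(k+\ell)}(t-s)$ and $2^k(\abs{t-s}-\abs{x-y})$. Part (3) is the null-frame variant: here $\calC$ has dimensions $1 \times (2^\ell)^3$, so the box is "tube-like" along the $\omega$-direction; on the hyperplane $t-s = (x-y)\cdot\omega + O(1)$ the phase $\pm(t-s)\abs\xi + \xi\cdot(x-y)$ restricted to the support of $a_\calC$ has no critical points in the tangential variables $\xi_\perp$ when $(x'-y')$ is large, so tangential integration by parts gives the decay $\brk{2^\ell(x'-y')}^{-100}$, while the transversal decay $\brk{2^{2\ell}(t-s)}^{-100}$ comes from the remaining radial/longitudinal integration; again the renormalization symbols are absorbed via their decomposability bounds restricted to the cap of size $2^\ell$ around $\omega$.

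The main obstacle I anticipate is bookkeeping the interaction between the $\xi$-derivative losses of the renormalization symbols and the number of integrations by parts needed. Specifically, to extract the $\brk{\abs{t-s}-\abs{x-y}}^{-100}$ (or $\brk{2^k(\abs{t-s}-\abs{x-y})}^{-100}$) decay one must integrate by parts roughly $100$ times, each time differentiating $b$ and hence incurring a factor $\brk{t-s,x-y}^\dlt$; to close, one needs $100\,\dlt < $ (the gain per integration by parts minus a margin), which forces the choice $0 < \dlt \ll \frac{1}{100}$ flagged in the statement. I would make this quantitative dependence explicit. A secondary technical point is that, because $O_{<h,\pm}$ depends on $(t,x)$ and $(s,y)$ separately rather than just on $\xi$, the kernel is not literally a convolution kernel; but this only means the symbol bounds must be applied with the full eight arguments $(t,x,s,y,\xi)$ present, which the $M_\sgm$ bounds are designed to handle, so no genuine new difficulty arises. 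Once part (1) is established carefully, parts (2) and (3) are immediate consequences via scaling and Lorentz transformations, and I would present them briefly.
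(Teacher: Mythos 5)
Your top-level strategy — stationary phase applied to the oscillatory integral, with the renormalization factors $Ad(O_{<h,\pm})$ controlled via the pointwise symbol bounds — matches the paper, which simply cites \cite[Section~8.1]{KT} for exactly this argument (you cite Section~7, which is a minor slip). Your accounting of why $\dlt$ must be taken $\ll \frac{1}{100}$ — each $\xi$-integration by parts costs a factor $\brk{t-s,x-y}^{\dlt}$ from \eqref{eq:symb-O-diff-dxi}, so one needs roughly $100$ integrations by parts to stay ahead — is also the right bookkeeping.

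However, there is a genuine gap in how you propose to get parts~(2) and~(3) from part~(1). You claim they follow ``immediately'' from~(1) by an anisotropic rescaling combined with a Lorentz boost, on the grounds that the $M_{\sgm}$ bounds are ``scale-invariant in $h$ and $\tht$.'' This does not hold up. The Lorentz boost needed to map $\calC$ to the unit cube has rapidity $e^{\beta} = 2^{\ell}$, which for $\ell \ll 0$ is a large boost; as a change of variables in $\xi$, it introduces a Jacobian factor of $2^{-\ell}$ per $\xi$-derivative. The pointwise bounds for $O_{<h,\pm}$ in \eqref{eq:symb-O-h}--\eqref{eq:symb-O-diff-dxi} lose only $2^{-n\dlt h}$ per derivative, a much slower rate; the boosted symbol does not satisfy the bounds collected in $M_{\sgm}$, so you cannot simply quote part~(1) for it. In addition, the isotropic Littlewood--Paley cutoffs $P_{<h}$ entering the definition of $O_{<h,\pm}$ do not commute with boosts (boosts mix spatial frequency with modulation), so the symbol $O_{<h,\pm}$ in the transformed coordinate system is not of the form covered by the hypotheses. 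The numbers $2^{4k+3\ell}$, $2^{2(k+\ell)}$, $2^{k}$ are indeed explained by scaling heuristics, but the proof must be a direct stationary-phase estimate adapted to the box $\calC$: the key point making this work is that the renormalization symbol varies in $\xi$ only on the isotropic scale $2^{\dlt h}$, which (for $\dlt$ small) is coarser than the angular extent $2^{\ell}$ of the box, so the symbol can be treated as nearly constant on $\calC$ and the free-wave kernel computation carries over with an $M_{\sgm}$-dependent constant. This is what \cite[Section~8.1]{KT} does, and it is not significantly harder than the unit-scale case. A secondary point: in part~(1) you invoke the decomposability bounds of Lemma~\ref{lem:decomp-O} ``to sum the cap contributions,'' but those are time-integrated $DL^{q}L^{\infty}$ norms and do not give pointwise control; the kernel bound requires only the fixed-time pointwise symbol bounds of Lemmas~\ref{lem:symb-Psi} and~\ref{lem:symb-O}, as the paper's proof indicates.
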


This lemma is proved as in \cite[Section~8.1]{KT} by stationary phase, using the symbol bounds in Lemmas~\ref{lem:symb-Psi} and \ref{lem:symb-O}. 

\subsection{Fixed-time \texorpdfstring{$L^{2}$}{L2} bounds}
The goal of this subsection is to prove \eqref{eq:renrm:bdd}, \eqref{eq:renrm:bdd-d}, \eqref{eq:renrm:approx-uni} and \eqref{eq:renrm:bdd:imp} for $Z = L^{2}$. The common key ingredient is the following fixed-time $L^{2}$ estimate:
\begin{proposition} \label{prop:renrm-bdd-L2}
For $\dlt > 0$ sufficiently small, there exists $\dlt_{(0)} > 0$ such that the following statement holds. the following statement holds. Let $h+10 \leq k  \leq 0$. Then for every fixed $t$, we have
\begin{equation} \label{eq:renrm-bdd-L2}
	\nrm{\left(Op(Ad(O_{<h, \pm})_{<k})(x, D) Op(Ad(O_{<h, \pm}^{-1})_{<k})(D, y) - 1\right) P_{0}}_{L^{2} \to L^{2}} 
	\aleq_{M_{\sgm}} 2^{\dlt_{(0)} h} + 2^{- 10 (k - h)}.
\end{equation}
\end{proposition}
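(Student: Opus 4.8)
�The plan is to estimate the operator $T = Op(Ad(O_{<h, \pm})_{<k})(x, D) Op(Ad(O_{<h, \pm}^{-1})_{<k})(D, y) - 1$ on the unit frequency shell via a $TT^{*}$-type argument, exploiting the pseudodifferential calculus and the decomposability bounds for $O_{<h, \pm}$ collected in Lemma~\ref{lem:decomp-O}. The basic mechanism is that, after composing the two PDOs, the symbol becomes $Ad(O_{<h, \pm})_{<k}(x, \xi) \cdot Ad(O^{-1}_{<h, \pm})_{<k}(\xi, y)$ convolved against the kernel that moves $x$ to $y$; since both factors are built from the same phase $\Psi_{\pm}$ and the cutoff to frequencies $< k$ localizes $x - y$ to spatial scale $\aeq 2^{-k}$, the difference $O_{<h, \pm}(x) - O_{<h, \pm}(y)$ is controlled by the Lipschitz bound \eqref{eq:symb-O-diff-h}, which over a distance $2^{-k}$ gives a gain of $\aeq \log(1 + 2^{h - k}) M_{\sgm}$; combined with a further favorable power in $h$ (from the low-frequency truncation of $\Psi$, i.e., from $\Psi_{\pm, <h}$ being supported at frequencies $\lesssim 2^{h}$) this yields \eqref{eq:renrm-bdd-L2}. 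Concretely, first I would write $T = T_{1} + T_{2}$ where $T_{1}$ captures the composition error of the pseudodifferential calculus (commuting $Op(a)(x,D)$ past $Op(b)(D,y)$) and $T_{2}$ captures the pointwise defect $Ad(O_{<h,\pm})_{<k}(x,\xi) Ad(O^{-1}_{<h,\pm})_{<k}(\xi,x) - 1$, which is small because $O O^{-1} = I$ exactly and the frequency truncations $(\cdot)_{<k}$ only perturb this by a commutator-type error.

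The key steps, in order, are as follows. Step 1: Reduce to a dyadic angular decomposition. Using the decomposability calculus (Lemma~\ref{lem:decomp-basic}), decompose the symbol of $Ad(O_{<h,\pm})_{<k}$ into angular pieces indexed by $\tht \in 2^{-\bbN}$ with $\tht \gtrsim 2^{h}$ (below that the phase is essentially trivial), and correspondingly for $Ad(O^{-1}_{<h,\pm})_{<k}$; here the admissible frequency envelope $c_{h}$ from Lemma~\ref{lem:symb-O}/Lemma~\ref{lem:decomp-O} enters. Step 2: For the "diagonal" contribution where the two angular sectors agree, use the exact cancellation $O O^{-1} = I$ together with the fixed-time bound \eqref{eq:symb-O-h} for $(O_{<h,\pm})_{;\xi}$, which after one derivative costs $2^{-\dlt h}$ — this is the source of the $2^{\dlt_{(0)} h}$ gain with $\dlt_{(0)} \aeq \dlt$. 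Step 3: For the "off-diagonal" contribution where the $x$-symbol and $D$-symbol are localized at separated frequencies relative to the $2^{-k}$ spatial scale, integrate by parts in the kernel; each integration costs $2^{-(k-h)}$ (the ratio of the phase frequency scale $2^{h}$ to the output frequency scale $2^{k} \aeq 1$), and iterating $\gtrsim 10$ times gives the $2^{-10(k-h)}$ term. Step 4: Assemble via Cotlar–Stein / Schur's test over the angular and frequency-annulus parameters, using the overlap structure of the $\set{P^{\omg}_{\ell}}$ and the rapid decay ensured by $N$ large in the symbol bounds. The $M_{\sgm}$-dependence is tracked throughout and enters only multiplicatively.

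I expect the main obstacle to be Step 2–Step 3: controlling the pseudodifferential composition error uniformly in $t$ while simultaneously extracting \emph{two} different small factors ($2^{\dlt_{(0)} h}$ on the one hand and $2^{-10(k-h)}$ on the other) from what is a single calculus error term. The difficulty is that the naive calculus bound gives a symbol difference of size $\aeq M_{\sgm}$ with no gain; the gain must come from carefully exploiting that the $\xi$-support is a single unit annulus (so $k$ is close to $0$) and that the phase $\Psi_{\pm, <h}$ has $\xi$-derivatives bounded by negative powers of $2^{h}$ via \eqref{eq:symb-Psi-dxi}. Organizing the integration by parts so that it respects both the angular decomposition of Step 1 and the $h$-gain of \eqref{eq:symb-O-diff-dxi} is the delicate bookkeeping. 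This is, however, essentially the fixed-time analogue of the argument in \cite[Section~8]{KT}, adapted to keep track of $M_{\sgm}$ rather than assuming smallness of $\nrm{A}_{S^{1}}$, so I would follow that blueprint closely, with the envelope $c_{h}$ absorbing the summation in $h$ that would otherwise be lost.
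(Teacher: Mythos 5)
Your proposal shares the general flavor of the paper's argument (kernel estimates plus high-frequency symbol gains) but the mechanisms you describe contain concrete errors, and the decomposition is different. The paper inserts a bump $a(D)$ adapted to the unit shell and reduces to two freestanding lemmas: Lemma~\ref{lem:renrm-bdd-L2-bare} is a kernel-mass estimate for the \emph{untruncated} composition $Op(Ad(O_{<h,\pm}))\, a(D)\, Op(Ad(O^{-1}_{<h,\pm})) - a(D)$, proved by splitting on $\abs{x-y} \lessgtr 2^{-\dlt_{(0)} h}$ (not $2^{-k}$). Near the diagonal one writes the symbol as $\int_{-\infty}^{h} \frac{\ud}{\ud\ell} Ad(O_{<\ell}(x)O^{-1}_{<\ell}(y)) \,\ud\ell$, uses the algebraic identity that rewrites this derivative as $Ad(O_{<\ell})\, ad\bigl(Ad(O_{<\ell}^{-1})\Psi_\ell(x) - Ad(O_{<\ell}^{-1})\Psi_\ell(y)\bigr)\, Ad(O_{<\ell}^{-1})$ together with the $Ad$-invariance of the norm, applies the mean value theorem and \eqref{eq:Psix}, and integrates in $\ell$. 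Away from the diagonal one integrates by parts in $\xi$ using \eqref{eq:symb-O-h}, paying $2^{\dlt\abs{n-1/2} h}$ per derivative. The second term $2^{-10(k-h)}$ comes from Lemma~\ref{lem:renrm-bdd-hifreq}, a separate $L^2$ bound on the dyadic pieces $Op(Ad(O_{<h,\pm})_{k'})$ for $k' \geq h+10$, controlling the error incurred when the $(\cdot)_{<k}$ truncations are dropped. There is no angular decomposition and no Cotlar--Stein assembly anywhere in the proof; the fixed-time $L^2$ bound follows from Schur's test on the kernel of the composed operator.

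The central heuristic in your proposal is false: the truncation $(\cdot)_{<k}$ is a mollification of the symbol's $(t,x)$-argument (making it smooth on scale $2^{-k}$), and it does \emph{not} compress the Schwartz kernel to $\abs{x-y} \aleq 2^{-k}$. The kernel's decay in $\abs{x-y}$ is governed by the $\xi$-smoothness of $Ad(O_{<h})$, which by \eqref{eq:symb-O-h} lives on the much coarser scale $2^{\dlt h}$; hence the claimed Lipschitz gain "$\log(1+2^{h-k})$ over a distance $2^{-k}$" never materializes, and the Lipschitz bound \eqref{eq:symb-O-diff-h} alone cannot deliver convergence of the kernel mass without the large-$\abs{x-y}$ decay from repeated $\xi$-integration by parts — precisely the purpose of the paper's two-case split. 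Your Step~3 likewise misattributes the factor $2^{-10(k-h)}$ to integrations by parts in the composed kernel; in the paper it is obtained from the high-frequency symbol pieces by a bound imported from \cite{KT}, independently of the composition. Finally, $\dlt_{(0)}$ is not $\aeq\dlt$: it is a universal small constant chosen to balance the near-diagonal gain $2^{(1-\dlt_{(0)})h}$ against the far-field tail $\abs{x-y}^{-(1-\dlt)N+\dlt/2}$ (with $N<\dlt^{-1}$), and the hypothesis "$\dlt$ sufficiently small" is what makes $N$ large enough for that tail to integrate.
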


\begin{lemma} \label{lem:renrm-bdd-L2-bare}
There exists $\dlt_{(0)} > 0$ such that the following statement holds. Let $h \leq 0$ and $a(\xi)$ be a smooth bump function adapted to $\set{\abs{\xi} \aleq 1}$. Then for every fixed $t$, we have
\begin{equation} \label{eq:renrm-bdd-L2-bare}
	\nrm{Op(Ad(O_{<h, \pm}))(x, D) a(D) Op(Ad(O_{<h, \pm}^{-1}))(D, y) - a(D) }_{L^{2} \to L^{2}} 
	\aleq_{M_{\sgm}} 2^{\dlt_{(0)} h}.
\end{equation}
\end{lemma}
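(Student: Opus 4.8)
\textbf{Proof plan for Lemma~\ref{lem:renrm-bdd-L2-bare}.} The plan is to reduce the claim to an $L^2$-boundedness estimate for a pseudodifferential operator whose symbol measures the failure of $Ad(O_{<h,\pm})(x,\xi)$ to be ``constant'' in $x$ at frequency scales relevant to the input. First I would write the composition
\[
Op(Ad(O_{<h,\pm}))(x,D)\, a(D)\, Op(Ad(O_{<h,\pm}^{-1}))(D,y)
\]
explicitly as an oscillatory integral operator with kernel
\[
K(x,y) = \int Ad(O_{<h,\pm})(x,\xi)\, a(\xi)\, Ad(O_{<h,\pm}^{-1})(\xi,y)\, e^{i\xi\cdot(x-y)}\,\frac{\ud\xi}{(2\pi)^4},
\]
and subtract off $a(D)$, which corresponds to replacing $Ad(O_{<h,\pm})(x,\xi) Ad(O_{<h,\pm}^{-1})(\xi,y)$ by the identity. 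Because the $\xi$-support is $\{\abs{\xi}\aleq 1\}$ and $O_{<h,\pm}$ has frequency content $<h$, the difference symbol $Ad(O_{<h,\pm})(x,\xi)Ad(O_{<h,\pm}^{-1})(y,\xi) - I$ is a smooth symbol which is small when $x$ is close to $y$ on the scale $2^{-h}$; the gain should come precisely from the difference bound \eqref{eq:symb-O-diff-h} in Lemma~\ref{lem:symb-O} (together with its $\xi$-derivative companion \eqref{eq:symb-O-diff-dxi}), which controls $d(O_{<h,\pm}(x,\xi)O_{<h,\pm}^{-1}(y,\xi),Id)$ by $\log(1+2^h\brk{x-y})$ times the (unit) frequency envelope.

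Next I would organize the estimate by a dyadic decomposition in the spatial separation $\abs{x-y}\aeq 2^m$. For $m \leq -h$ (i.e. separation below the scale $2^{-h}$ on which $O_{<h,\pm}$ varies), the difference symbol is $O(2^{h+m}\, M_\sgm)$ pointwise by \eqref{eq:symb-O-diff-h}, and after $\xi$-integration on the unit scale the kernel obeys $\abs{K(x,y)}\aleq 2^{h+m}M_\sgm \brk{x-y}^{-N}$ for large $N$ (here the $\brk{x-y}^{-N}$ decay comes from integrating by parts in $\xi$, using \eqref{eq:symb-O-diff-dxi} which shows the $\xi$-derivatives of the difference symbol only cost a small power $2^{-(n-\frac12)\dlt h}$ relative to the base gain). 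For $m > -h$, one does not gain from the smallness of the symbol, but the difference symbol is still bounded by $M_\sgm$ and its $\xi$-derivatives by $M_\sgm$ times controllable factors, so the kernel is again $\aleq M_\sgm\brk{x-y}^{-N}$; here one instead extracts a gain $2^{\dlt_{(0)} h}$ by splitting the $O$'s in frequency and noting that the output and the $O$-factors live at separated frequencies, or more simply by observing that the $L^2$ operator norm on each annulus $\abs{x-y}\aeq 2^m$ is $\aleq M_\sgm 2^{-Nm}$ while only $O(\abs{h})$ annuli with $m\leq -h$ contribute the bulk, giving total $\aleq M_\sgm\,\abs{h}\,2^{h}\aleq_{M_\sgm} 2^{\dlt_{(0)}h}$ after absorbing the logarithm. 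The $L^2\to L^2$ bound on each piece then follows from Schur's test using the $\brk{x-y}^{-N}$ off-diagonal decay, and summing the geometric series in $m$ yields \eqref{eq:renrm-bdd-L2-bare} with $\dlt_{(0)}$ any small constant below the minimum of $1$ and the $\dlt$-losses incurred in the $\xi$-integration by parts.

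I would expect the main obstacle to be the bookkeeping of the $\xi$-derivative losses: each integration by parts in $\xi$ needed to produce spatial decay costs a factor of $2^{-(n-\frac12)\dlt h}$ from \eqref{eq:symb-O-diff-dxi}, which for $h<0$ is a \emph{growing} factor $2^{\abs{(n-\frac12)\dlt h}}$, so one must carefully balance the number of integrations by parts (hence the power $N$ of decay) against $\dlt$ small enough that $N\dlt$ is dominated by the base gain $2^h$ and still leaves a net gain $2^{\dlt_{(0)}h}$. This is exactly the mechanism already used in \cite[Section~8.1]{KT}, so the argument should transcribe with only the modification that we track the dependence on $M_\sgm$ (which enters linearly, since all the symbol bounds in the definition of $M_\sgm$ are linear in that quantity) rather than on a smallness hypothesis; concretely, the implicit constant in \eqref{eq:renrm-bdd-L2-bare} is $\aleq (1+M_\sgm)^{C}$. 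Once $\dlt$ is fixed small enough for this balancing to work — and consistently with the later fixing of $\dlt$ at the end of this section — the lemma follows, and it will feed directly into the proof of Proposition~\ref{prop:renrm-bdd-L2} by a further dyadic decomposition in the frequency scale $k$ of the intermediate truncation.
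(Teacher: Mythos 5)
Your plan is essentially sound and close in spirit to the paper's argument, but you take a dyadic-in-$\abs{x-y}$ decomposition where the paper uses a cleaner two-region split, and a couple of your detailed claims would not survive as stated.

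The paper splits only once, at $\abs{x-y}\aeq 2^{-\dlt_{(0)}h}$. In the inner region it never integrates by parts in $\xi$: it instead writes $Ad(O_{<h}(x,\cdot)O_{<h}^{-1}(y,\cdot))-1$ via the fundamental theorem of calculus in the parabolic parameter $\ell$, uses the ODE $\rd_\ell O_{<\ell} O_{<\ell}^{-1} = \Psi_\ell$ together with $Ad$-invariance of the norm, and the mean value theorem with \eqref{eq:symb-O-h} on $\Psi$ to get the \emph{pointwise} kernel bound $\aleq 2^{(1-\dlt_{(0)})h}$. Integrating over a ball of radius $2^{-\dlt_{(0)}h}$ then costs a factor $2^{-4\dlt_{(0)}h}$, and the result is clean and free of $\xi$-derivative bookkeeping. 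Only in the outer region does it integrate by parts. Your route — invoking the pre-packaged symbol difference \eqref{eq:symb-O-diff-h} and decomposing dyadically in $m=\log_2\abs{x-y}$ — can be made to work, but it \emph{forces} you to integrate by parts even in the near-diagonal region (since $\sum_{m\leq -h} 2^{4m}\cdot 2^{h+m}$ diverges), and then the product bound you write down, $\abs{K}\aleq 2^{h+m}M_\sgm\brk{x-y}^{-N}$, is not actually what $N$ integrations by parts with \eqref{eq:symb-O-diff-dxi} produce. The correct outcome is roughly $2^{h(1-(N-\frac12)\dlt)}\,2^{-(N-1)m}$: the $2^{h+m}$ gain degrades as you differentiate, and you lose one power of decay. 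This is still sufficient, but your stated bound overclaims. Similarly, your treatment of $m>-h$ is too loose: neither of the two sketched routes (frequency separation of the $O$'s, or the "$\abs{h}$ annuli contribute $\abs{h}\,2^h$" heuristic) is quite right. The correct and standard finish is simply $\abs{K}\aleq M_\sgm\, 2^{-(N(1-\dlt)+\frac12\dlt)m}$ from \eqref{eq:symb-O-diff-dxi} with the $\min$-factor saturated at $1$, and the $L^1$ tail sum converges for $N$ large with $N\dlt$ small; the logarithm in \eqref{eq:symb-O-diff-h} plays no role there.

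So: the proposal is correct in overall mechanism (the gain is entirely from the $x,y$-difference structure of the symbol, manifested through Lemma~\ref{lem:symb-O} and Schur's test), but the near-diagonal kernel bound you write is not literally achievable and the tail argument needs to be pinned down; both of these are avoidable by adopting the paper's cleaner fixed-scale split and FTC-in-$\ell$ argument for the inner region.
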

\begin{proof}
For simplicity of notation, we omit $\pm$ in $O_{<h, \pm}$, $O_{<h, \pm}^{-1}$ and $\Psi_{\pm, h}$. Following the hypothesis, we fix $t \in \bbR$. 

The idea is to derive a kernel estimate as in Lemma~\ref{lem:osc-int}, but taking into account the frequency gap. The kernel of the $\End(\g)$-valued operator in \eqref{eq:renrm-bdd-L2-bare} is given by
\begin{equation} \label{eq:renrm-bdd-L2-K}
	K_{<h}(x, y) 
	= \int \left( Ad(O_{<h}(x, \xi) O_{<h}^{-1}(y, \xi)) - 1 \right) a(\xi) e^{i (x-y) \cdot \xi} \, \frac{\ud \xi}{(2 \pi)^{4}} .
\end{equation}
We obtain two different estimates depending on whether $\abs{x-y} \aleq 2^{-\dlt_{(0)} h}$ or $\abs{x-y} \ageq 2^{-\dlt_{(0)} h}$.

\pfstep{Case~1: $\abs{x-y} \aleq 2^{-\dlt_{(0)} h}$} In this case, we use the fundamental theorem of calculus and simply bound
\begin{align*}
	\abs{K_{<h}(x, y)} 
	\aleq & \int \int_{-\infty}^{h} \Abs{\frac{\ud}{\ud \ell} \left( Ad(O_{<\ell}(x, \xi) O_{<\ell}^{-1}(y, \xi))\right)} \abs{a(\xi)} \, \ud \ell \, \ud \xi  \\
	\aleq & \sup_{\abs{\xi} \aleq 1} \int_{-\infty}^{h} \Abs{\frac{\ud}{\ud \ell} \left( Ad(O_{<\ell}(x, \xi) O_{<\ell}^{-1}(y, \xi))\right)} \, \ud \ell
\end{align*}
By the algebraic property
\begin{equation*}
	O [u, v] O^{-1} = [O u O^{-1}, O v O^{-1}], \quad O \in \G, \, u, v \in \g
\end{equation*}
we have
\begin{equation*}
	ad(u) Ad(O) = Ad(O) ad(Ad(O^{-1}) u), \quad
	Ad(O^{-1}) ad(u) = ad(Ad(O^{-1}) u) Ad(O^{-1}).
\end{equation*}
Therefore, 
\begin{align*}
& \frac{\ud}{\ud \ell} \left( Ad(O_{<\ell}(x, \xi) O_{<\ell}^{-1}(y, \xi))\right) \\
& =  ad(\Psi_{\ell}) Ad(O_{<\ell})(x, \xi)  Ad(O_{<\ell}^{-1})(y, \xi) 
  - Ad(O_{<\ell})(x, \xi)  Ad(O_{<\ell}^{-1}) ad(\Psi_{\ell})(y, \xi) \\
& = Ad(O_{<\ell})(x, \xi) ad(Ad(O_{<\ell}^{-1} ) \Psi_{\ell}(x, \xi) - Ad(O_{<\ell}^{-1}) \Psi_{\ell}(y, \xi)) Ad(O_{<\ell}^{-1})(y, \xi) .
\end{align*}
Then using the fact that the norm on $\End(\g)$ is invariant under $Ad(O)$ for any $O \in \G$, we have
\begin{align*}
\Abs{\frac{\ud}{\ud \ell} \left( Ad(O_{<\ell}(x, \xi) O_{<\ell}^{-1}(y, \xi))\right)}
&= \Abs{Ad(O_{<\ell}^{-1} ) \Psi_{\ell}(x, \xi) - Ad(O_{<\ell}^{-1} ) \Psi_{\ell}(y, \xi)}.
\end{align*}
By the symbol bounds \eqref{eq:symb-O-h} and \eqref{eq:Psix}, we have $\abs{\rd_{x} (Ad(O_{<\ell}^{-1}) \Psi_{\ell})} \aleq_{M_{\sgm}} 2^{\ell}$. Thus, by the mean value theorem,
\begin{equation*}
\Abs{\frac{\ud}{\ud \ell} \left( Ad(O_{<\ell}(x, \xi) O_{<\ell}^{-1}(y, \xi))\right)} \aleq_{M_{\sgm}} 2^{\ell} 2^{- \dlt_{(0)} h}.
\end{equation*}
Integrating in $\ell$, we arrive at
\begin{equation}
	\abs{K_{<h}(x,y)} \aleq_{M_{\sgm}} 2^{(1-\dlt_{(0)}) h}.
\end{equation}

\pfstep{Case~2: $\abs{x-y} \ageq 2^{-\dlt_{(0)} h}$}
Here, the idea is to repeatedly integrate by parts in $\xi$. Since
\begin{align*}
\rd_{\xi} Ad(O_{<h}(x, \xi) O_{<h}^{-1}(y, \xi))
= ad((O_{<h}(x, \xi) O_{<h}^{-1}(y, \xi))_{;\xi}) Ad(O_{<h}(x, \xi) O_{<h}^{-1}(y, \xi)),
\end{align*}
the symbol bound \eqref{eq:symb-O-h} implies
\begin{align*}
\abs{\rd_{\xi}^{(n)} Ad(O_{<h}(x, \xi) O_{<h}^{-1}(y, \xi))}
\aleq_{n, M_{\sgm}} 2^{\dlt \abs{n-\frac{1}{2}} h}.
\end{align*}
Therefore, integrating by parts in $\xi$ for $N$-times in \eqref{eq:renrm-bdd-L2-K}, we obtain
\begin{equation*}
	\abs{K_{<h}(x, y)} \aleq_{\dlt, N, M_{\sgm}} \frac{1}{\abs{x-y}^{(1-\dlt) N + \frac{1}{2} \dlt}} \quad \hbox{ for } \abs{x - y} \ageq 2^{-\dlt_{(0)} h}, \, 0 \leq N < \dlt^{-1}.
\end{equation*}

Finally, combining Cases~1 and 2, we obtain 
\begin{equation*}
	\sup_{x} \int \abs{K_{<h}(x, y)} \, \ud y + \sup_{y} \int \abs{K_{<h}(x, y)} \, \ud x \aleq_{M_{\sgm}} 2^{(1-5 \dlt_{(0)}) h} \aleq 2^{\dlt_{(0)} h}
\end{equation*}
provided that $\dlt_{(0)}$ is small enough. Bound \eqref{eq:renrm-bdd-L2-bare} now follows. \qedhere
\end{proof}

\begin{corollary} \label{cor:}
For any $k \in \bbR$ we have
\begin{align} 
	\nrm{Op(Ad(O_{<h, \pm}))(x, D) P_{0}}_{L^{2} \to L^{2}} 
	\aleq_{M_{\sgm}} & 1, \label{eq:renrm-L2-single-bare} \\
	\nrm{Op(Ad(O_{<h, \pm})_{<k})(x, D) P_{0}}_{L^{2} \to L^{2}} 
	\aleq_{M_{\sgm}} & 1. \label{eq:renrm-L2-single}
\end{align}
\end{corollary}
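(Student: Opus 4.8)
}
The plan is to deduce \eqref{eq:renrm-L2-single-bare} and \eqref{eq:renrm-L2-single} from the oscillatory integral bounds in Lemma~\ref{lem:osc-int} together with the almost-orthogonality/almost-unitarity estimates just established (Lemma~\ref{lem:renrm-bdd-L2-bare} and Proposition~\ref{prop:renrm-bdd-L2}). For the first bound, fix a time $t$ and observe that $Op(Ad(O_{<h,\pm}))(x,D) P_{0}$ can be written, using a smooth bump $a(\xi)$ adapted to $\set{\abs{\xi}\aeq 1}$ with $a P_{0} = P_{0}$, as $Op(Ad(O_{<h,\pm}))(x,D) a(D) P_{0}$. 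The key algebraic device is the factorization through the approximately unitary composition: write
\[
Op(Ad(O_{<h,\pm}))(x,D) a(D) = \Big( Op(Ad(O_{<h,\pm}))(x,D) a(D) Op(Ad(O_{<h,\pm}^{-1}))(D,y) \Big) Op(Ad(O_{<h,\pm}^{1}))(D,y).
\]
By Lemma~\ref{lem:renrm-bdd-L2-bare}, the first factor in parentheses is $a(D)$ plus an operator of norm $\aleq_{M_{\sgm}} 2^{\dlt_{(0)} h} \leq 1$ on $L^{2}$, hence $L^{2}\to L^{2}$ bounded with constant $\aleq_{M_{\sgm}} 1$. For the second factor, $Op(Ad(O_{<h,\pm}^{-1}))(D,y)$ is the right quantization of the symbol $Ad(O_{<h,\pm}^{-1})(y,\xi)$; its $L^{2}\to L^{2}$ boundedness (uniformly in $t$) follows by the same argument applied to the adjoint, or more directly from the kernel bound \eqref{eq:osc-int} of Lemma~\ref{lem:osc-int} specialized to $t=s$ (the $\brk{t-s}^{-3/2}\brk{\abs{t-s}-\abs{x-y}}^{-100}$ factor becomes $\brk{x-y}^{-100}$, an integrable kernel), combined with Schur's test. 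This gives \eqref{eq:renrm-L2-single-bare}.

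For \eqref{eq:renrm-L2-single}, I would argue that inserting the frequency cutoff $(\cdot)_{<k}$ in the adjoint variable of $Ad(O_{<h,\pm})$ only costs a disposable multiplier: writing $Ad(O_{<h,\pm})_{<k} = P_{<k}(Ad(O_{<h,\pm}))$ in the appropriate (symbol-side) sense, one has $Op(Ad(O_{<h,\pm})_{<k})(x,D) P_{0} = Op(Ad(O_{<h,\pm}))(x,D) P_{0}$ up to an error controlled by commuting $P_{<k}$ past the frequency-$1$ localization $P_{0}$ of the input. Since the relevant range is $h+10\leq k\leq 0$, the cutoff $P_{<k}$ acting on the symbol (whose $\xi$-dependence is already localized to $\abs{\xi}\aeq 1$ by $P_{0}$ on the input) is harmless — the worst case is when $k$ is close to $0$, in which case $P_{<k}P_{0}$ differs from $P_{0}$ only by tail terms with rapidly decaying kernels, handled by the symbol bounds for $O_{<h,\pm}$ collected in $M_{\sgm}$. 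Thus \eqref{eq:renrm-L2-single} follows from \eqref{eq:renrm-L2-single-bare} and a routine kernel perturbation estimate.

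The main obstacle, and the place where care is genuinely needed, is the uniformity in $t$ together with the passage from the almost-unitarity of the composition (Lemma~\ref{lem:renrm-bdd-L2-bare}, Proposition~\ref{prop:renrm-bdd-L2}) to boundedness of the single operator $Op(Ad(O_{<h,\pm}))(x,D)$. One cannot directly use a $TT^{*}$-type argument because $Op(Ad(O_{<h,\pm}^{-1}))(D,y)$ is not literally the adjoint of $Op(Ad(O_{<h,\pm}))(x,D)$; rather, $Ad(O^{-1}) = (Ad(O))^{-1}$ holds pointwise, so the composition is an approximate identity but neither factor is individually controlled by that fact alone. This is exactly why one needs the independent kernel bound \eqref{eq:osc-int} at $t=s$ to get boundedness of the right quantization, after which the factorization closes the argument. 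A secondary technical point is keeping track of which quantization (left vs.\ right) appears where, but the symbol bounds in $M_{\sgm}$ are stated symmetrically enough ($\rd_\xi^{(n)}$ bounds on both $O_{<h,\pm;t,x}$ and on the difference $(O_{<h,\pm}(x,\xi)O_{<h,\pm}^{-1}(y,\xi))_{;\xi}$) that both quantizations are handled identically via Schur's test. I would present this as a short paragraph invoking Lemma~\ref{lem:osc-int}(1) with $t=s$, the factorization above, and Lemma~\ref{lem:renrm-bdd-L2-bare}, deferring the $P_{<k}$-insertion to a one-line kernel perturbation remark.
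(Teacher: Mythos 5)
Your central claim---that one cannot directly run a $TT^{*}$ argument because $Op(Ad(O_{<h,\pm}^{-1}))(D,y)$ ``is not literally the adjoint'' of $Op(Ad(O_{<h,\pm}))(x,D)$---is false, and this misconception drives most of the proposal off the intended, two-line argument. The inner product on $\g$ is $Ad$-invariant (Section~\ref{subsec:setup}), so $Ad(O)$ is a unitary map of $\g$ for every $O\in\G$, hence $(Ad(O))^{*}=(Ad(O))^{-1}=Ad(O^{-1})$ pointwise. Combined with the standard fact that the $L^2$-adjoint of a left-quantized symbol is the right quantization of the pointwise adjoint symbol, one has exactly
\[
\bigl[\,Op(Ad(O_{<h,\pm}))(x,D)\,\bigr]^{*} \;=\; Op\bigl(Ad(O_{<h,\pm}^{-1})\bigr)(D,y).
\]
Writing $T=Op(Ad(O_{<h,\pm}))(x,D)$, one then has $(TP_{0})(TP_{0})^{*}=TP_{0}^{2}T^{*}=T\,a(D)\,T^{*}$ with $a=m_{0}^{2}$ a smooth bump on the unit scale, and Lemma~\ref{lem:renrm-bdd-L2-bare} gives $\nrm{T\,a(D)\,T^{*}-a(D)}_{L^2\to L^2}\aleq_{M_{\sgm}}2^{\dlt_{(0)}h}\leq 1$, hence $\nrm{TP_{0}}^{2}\aleq_{M_{\sgm}}1$. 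This \emph{is} the $TT^*$ argument, and it closes immediately.

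Your replacement route has genuine gaps. The displayed factorization $T\,a(D)=\bigl(T\,a(D)\,T^{*}\bigr)\,Op(Ad(O_{<h,\pm}))(D,y)$ is not an identity: composing two right-quantized operators with pointwise-reciprocal symbols does not give the identity operator, so the error must itself be controlled, and you do not do so. Moreover, Lemma~\ref{lem:osc-int} bounds the kernel of the full \emph{composition} $T\,a(D)\,e^{\pm i(t-s)\abs{D}}\,T^{*}$, not of $Op(Ad(O^{-1}_{<h,\pm}))(D,y)$ alone; specializing to $t=s$ does not separate out a kernel bound for the single right-quantized factor, so the boundedness of that factor is not established by the lemma you cite.

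For the second bound your sketch conflates the spatial-frequency cutoff $(\cdot)_{<k}$ on the symbol with a multiplier in the input's $\xi$-frequency: $(\cdot)_{<k}$ truncates the $x$-dependence of the symbol, which controls how far the \emph{output} frequency can move away from the input frequency, and since $Op(Ad(O_{<h,\pm})_{<k})(x,D)P_{0}$ carries no output frequency projection, there is no reason for $P_{<k}$ to interact harmlessly with $P_{0}$. The paper's argument is different and cleaner: $Ad(O_{<h,\pm})_{<k}(x,\xi)$ is a smooth $L^1$-average over $z$ of the translated symbols $Ad(O_{<h,\pm})(x-z,\xi)$, and since $Op(a(\cdot-z,\cdot))(x,D)=\tau_{z}\,Op(a)(x,D)\,\tau_{-z}$ while $P_{0}$ commutes with translations, \eqref{eq:renrm-L2-single} follows from \eqref{eq:renrm-L2-single-bare} by Minkowski's inequality and translation invariance of $L^2$, with no kernel-perturbation estimate required.
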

\begin{proof}
The first bound follows by a $T T^{\ast}$-argument from Lemma~\ref{lem:renrm-bdd-L2-bare}. Next, note that $Ad(O_{<h, \pm})_{<k}(x, \xi)$ is simply a smooth average of translates of $Ad(O_{<h, \pm})(x, \xi)$ in $x$. Therefore, the second bound follows from the first by translation invariance of $L^{2}$. \qedhere
\end{proof}

Next, we borrow a lemma from \cite{KT}, which handles $Ad(O_{<h, \pm})_{k}$ when $k$ is large compared to $h$.
\begin{lemma} \label{lem:renrm-bdd-hifreq}
Let $t \in \bbR$, $h \leq 0$ and $k \geq h + 10$. Then we have
\begin{equation}  \label{eq:renrm-L2-hifreq}
	\nrm{Op(Ad(O_{<h, \pm})_{k})(t, x, D) P_{0}}_{L^{2} \to L^{2}} \aleq_{M_{\sgm}} 2^{- 10(k - h)}
\end{equation}
Furthermore, for $1 \leq q \leq p \leq \infty$, $h \leq 0$ and $k \geq h + 10$, we have
\begin{equation} \label{eq:renrm-hifreq}
	\nrm{Op(Ad(O_{<h, \pm})_{k})(t, x, D) P_{0}}_{L^{p} L^{2} \to L^{q} L^{2}} \aleq_{M_{\sgm}} 2^{(\frac{1}{p} - \frac{1}{q}) h} 2^{- 10(k - h)}.
\end{equation}
Same estimates hold for the right quantization $Op(Ad(O_{<h, \pm})_{k}(D, s, y)$.
\end{lemma}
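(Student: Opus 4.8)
\textbf{Proof plan for Lemma~\ref{lem:renrm-bdd-hifreq}.}
The plan is to reduce the fixed-time $L^2 \to L^2$ bound \eqref{eq:renrm-L2-hifreq} to a kernel estimate, then to transfer that estimate to the mixed-norm bound \eqref{eq:renrm-hifreq} by a standard interpolation between the trivial $L^\infty_t L^2_x$ case and a gain coming from the effective time support. The symbol of $Op(Ad(O_{<h,\pm})_{k})(t,x,D) P_0$ is $Ad(O_{<h,\pm})_k(t,x,\xi) m_0(\xi)$, where $Ad(O_{<h,\pm})_k$ is the $k$-th Littlewood--Paley piece (in $x$, for fixed $t$) of $Ad(O_{<h,\pm})$. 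Since $Ad(O_{<h,\pm})$ is built from $\Psi_{\pm,<h}$ which is frequency-localized to $\lesssim 2^h$ in $x$, the nonlinear symbol $Ad(O_{<h,\pm})(t,x,\xi)$ has essentially the same frequency localization up to exponential tails; more precisely, differentiating the ODE \eqref{eq:O-def} and using the symbol bounds $\abs{\rd_x^{(m)} (O_{<h,\pm})_{;x}} \lesssim_{M_\sgm} 2^{(m+1)h}$ (which follow from \eqref{eq:DPsi} and Gr\"onwall, as recorded in \cite[Section~7.7]{KT}) gives $\abs{\rd_x^{(m)} Ad(O_{<h,\pm})} \lesssim_{m,M_\sgm} 2^{mh}$ for all $m$. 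Hence for $k \geq h+10$, applying $P_k$ in $x$ and integrating by parts $N$ times yields $\nrm{Ad(O_{<h,\pm})_k(t,\cdot,\xi)}_{L^\infty_x} \lesssim_{N,M_\sgm} 2^{-N(k-h)}$ uniformly in $\xi$, and similarly for all $\xi$-derivatives up to order $\dlt^{-1}$ (using \eqref{eq:symb-O-h}). Taking $N$ large (say $N = 20$) and applying a standard pseudodifferential $L^2$-boundedness estimate (Calder\'on--Vaillancourt, or simply a Schur test on the kernel obtained by the $N$-fold integration by parts in $\xi$), one obtains \eqref{eq:renrm-L2-hifreq}.

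For \eqref{eq:renrm-hifreq}, first note the trivial bound: by \eqref{eq:renrm-L2-hifreq} applied at each fixed time, $Op(Ad(O_{<h,\pm})_k)(t,x,D) P_0$ maps $L^p_t L^2_x \to L^p_t L^2_x$ with norm $\lesssim_{M_\sgm} 2^{-10(k-h)}$ for any $p$. To upgrade the time-integrability exponent from $p$ to $q \leq p$, I would use the fact that $Ad(O_{<h,\pm})_k$ is localized to \emph{temporal} frequencies $\lesssim 2^h$ as well: indeed $\Psi_{\pm,<h}$ is localized to spacetime frequency $\lesssim 2^h$ (as it is a multiplier applied to the homogeneous-wave extension of $\bfP A$, cf. \eqref{eq:Psi-def}), and the ODE \eqref{eq:O-def} propagates this. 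So $Op(Ad(O_{<h,\pm})_k)(t,x,D) P_0$ is, in the $t$-variable, a convolution-type operator with a kernel concentrated at time scale $\aeq 2^{-h}$; combining the $L^p_t L^2_x$ bound with a crude $L^1_t \to L^\infty_t$ type estimate (costing $2^{(\frac1p - \frac1q)h}$ by H\"older on the effective time interval of length $2^{-h}$) gives \eqref{eq:renrm-hifreq}. Concretely, one writes the operator kernel in $t$ as $K(t,s;x,y)$ with $\int \sup_{x,y}|K(t,s;x,y)| \, ds \lesssim$ (spatial $L^2 \to L^2$ norm) when the time frequency is $\lesssim 2^h$, and then Young's / Schur's inequality in $t$ interpolates the exponents with the stated loss.

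The main obstacle I anticipate is the careful bookkeeping of the \emph{nonlinear} frequency localization: $Ad(O_{<h,\pm})$ is not exactly frequency-localized, only approximately, and one must control the tails when applying $P_k$ for $k$ much larger than $h$. The clean way around this is to differentiate the defining ODE \eqref{eq:O-def} in $x$ (and in $t$), obtaining transport equations for $\rd_x^{(m)}(O_{<h,\pm} O_{<h,\pm}^{-1})_{;x}$ driven by $\rd_x^{(\leq m)}\Psi_{\pm,h}$, and then invoke the pointwise symbol bounds of Lemma~\ref{lem:symb-Psi} together with Gr\"onwall — exactly the argument of \cite[Section~7.7]{KT}, which already establishes $\abs{\rd_\xi^{(n)}(O_{<h,\pm})_{;t,x}} \lesssim 2^{(1-n\dlt)h} M_\sgm$ and its higher $x$-derivative analogues. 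Given those, the $P_k$-localization estimate $\nrm{Ad(O_{<h,\pm})_k}_{L^\infty_{t,x}} \lesssim_N 2^{-N(k-h)}$ is immediate, and the rest is routine. The final sentence of the lemma, concerning the right quantization $Op(Ad(O_{<h,\pm})_k)(D,s,y)$, follows by the same argument applied to the adjoint symbol, or simply by noting $Op(a)(D,s,y)^* = Op(\bar a^T)(x,D)$ and that the symbol bounds are symmetric under this operation (using that $Ad(O^{-1}) = Ad(O)^{-1}$ obeys the same estimates as $Ad(O)$).
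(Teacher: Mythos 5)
Your plan for the fixed-time $L^{2}\to L^{2}$ bound \eqref{eq:renrm-L2-hifreq} is sound and essentially the argument in the cited references. The key point — differentiate the ODE \eqref{eq:O-def} in $x$ (and $t$) to get $\rd_{h}(O_{<h,\pm})_{;\mu}=\rd_{\mu}\Psi_{\pm,h}+[\Psi_{\pm,h},(O_{<h,\pm})_{;\mu}]$, then iterate and apply Gr\"onwall to obtain $\abs{\rd_{t,x}^{(m)}Ad(O_{<h,\pm})}\lesssim_{m,M_{\sgm}}2^{mh}$ for all $m\geq 1$ — is exactly right, and then $P_{k}$ (with its vanishing moments) yields the pointwise smallness $\aleq 2^{-N(k-h)}$. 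Combining with the $\xi$-derivative bounds \eqref{eq:symb-O-h} (whose $2^{-n\dlt h}$ loss is absorbed for small $\dlt$ by the $2^{-N(k-h)}$ gain) and a Schur test on the spatial kernel gives \eqref{eq:renrm-L2-hifreq}.

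\textbf{Part (2):} Your argument for the mixed-norm bound \eqref{eq:renrm-hifreq} has a genuine conceptual gap. You assert that $Op(Ad(O_{<h,\pm})_{k})(t,x,D)\,P_{0}$ is ``a convolution-type operator in $t$''. It is not: for a symbol $a(t,x,\xi)$ that is independent of the temporal frequency $\tau$, the quantization $Op(a)(t,x,D)$ acts as a fixed-time spatial operator $T(t)=Op(a(t,\cdot,\cdot))(x,D)$, and as a spacetime operator its kernel in time is $\delta(t-s)\tilde K(t;x,y)$ — block-diagonal in $t$, not a convolution. In particular, $\int\sup_{x,y}\abs{K(t,s;x,y)}\,ds$ is not a finite quantity for such an operator, and no Young/Schur argument in time is available. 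A diagonal-in-$t$ operator can only map $L^{p}_{t}L^{2}_{x}\to L^{q}_{t}L^{2}_{x}$ with $q<p$ if the function $t\mapsto\nrm{T(t)}_{L^{2}\to L^{2}}$ itself lies in $L^{r}_{t}$ with $\frac{1}{r}=\frac{1}{q}-\frac{1}{p}$; temporal smoothness of the symbol at scale $2^{-h}$ alone does not furnish any such $L^{r}_{t}$-integrability, so the bound cannot be obtained this way.

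The actual source of the $2^{(\frac{1}{p}-\frac{1}{q})h}$ factor is the \emph{decomposability} of the logarithmic derivatives $O_{<h,\pm;t,x}$ in $D L^{q'}L^{\infty}$ (Lemma~\ref{lem:decomp-O}), which encodes genuine $L^{q'}_{t}$-decay inherited from the Strichartz decay of the homogeneous-wave extension of $\P A$. Schematically, one writes $Ad(O_{<h,\pm})_{k}\approx 2^{-Nk}\tilde P_{k}\,\rd_{t,x}^{(N)}Ad(O_{<h,\pm})$ using the vanishing moments of $P_{k}$, expands $\rd_{t,x}^{(N)}Ad(O_{<h,\pm})$ as products of factors $\rd^{(\cdot)}_{t,x}\,ad(O_{<h,\pm;t,x})\cdot Ad(O_{<h,\pm})$, places one factor of $ad(O_{<h,\pm;t,x})$ in $D L^{q'}L^{\infty}$ with $\frac{1}{q'}=\frac{1}{q}-\frac{1}{p}$ and bounds the rest pointwise by powers of $2^{h}$, and invokes the decomposability calculus (Lemma~\ref{lem:decomp-basic}(3)) together with the fixed-time $L^{2}$ bound. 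This produces precisely the factor $2^{-N(k-h)}2^{(\frac{1}{p}-\frac{1}{q})h}$. You should revise the second half of your proof along these lines rather than via a phantom convolution structure.
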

\begin{remark}
The specific factor $10$ in the gain $2^{- 10(k - h)}$ is not of any significance, but it is important to note that this number is much bigger than $1$; see the proof of Proposition~\ref{prop:renrm-bdd-lomod} below.
\end{remark}
For the proof, we refer to \cite[Proof of Lemma~8.4]{KT} or \cite[Proof of Lemma~9.11]{OT2}.

\begin{proof}[Proof of Proposition~\ref{prop:renrm-bdd-L2}]

Due to the frequency localization of the symbols in \eqref{eq:renrm-bdd-L2}, we can harmlessly insert a 
multiplier $a(D)$ whose symbol is a smooth bump function  $a(\xi)$ adapted to $\set{\abs{\xi} \aleq 1}$,
and then discard $P_0$ to replace \eqref{eq:renrm-bdd-L2} by 
\begin{equation*} 
	\nrm{Op(Ad(O_{<h, \pm})_{<k})(x, D) a(D) Op(Ad(O_{<h, \pm}^{-1})_{<k})(D, y) -  a(D)}_{L^{2} \to L^{2}} 
	\aleq_{M_{\sgm}} 2^{\dlt_{(0)} h} + 2^{- 10 (k - h)}.
\end{equation*}
Now it suffices to combine the last two Lemmas.
\end{proof}

\begin{proof}[Proof of \eqref{eq:renrm:bdd}, \eqref{eq:renrm:bdd-d},
  \eqref{eq:renrm:approx-uni} and \eqref{eq:renrm:bdd:imp} in the case
  $Z = L^{2}$]
By a $TT^*$ argument, the bo\-unds  \eqref{eq:renrm:bdd} and \eqref{eq:renrm:bdd:imp} are immediate consequences 
of \eqref{eq:renrm-bdd-L2}. Also from  \eqref{eq:renrm-bdd-L2} we obtain the estimate  \eqref{eq:renrm:approx-uni} with 
a constant $2^{-\dlt_{(0)} \kappa}$, which is less than $\epsilon$ if $\kappa$ is chosen large enough depending
only on $M_0$.

Finally, for \eqref{eq:renrm:bdd-d} we compute
\[
\partial_t (Ad(O))_{< 0} = ( ad(O_{;t}) Ad(O) )_{<0}
\]
therefore it suffices to combine the decomposability bound \eqref{eq:decomp-O} for $O_{;t}$ with $q = \infty$ 
with \eqref{eq:renrm-L2-single-bare}. The former bound yields a $2^{-\kappa}$ factor which again yields $\epsilon$ smallness
if $\kappa$ is large enough.
\end{proof}

\subsection{Spacetime \texorpdfstring{$L^{2} L^{2}$}{L2L2} bounds}
Next, we establish \eqref{eq:renrm:bdd}, \eqref{eq:renrm:bdd-d}, \eqref{eq:renrm:approx-uni} and \eqref{eq:renrm:bdd:imp} when $Z = N$ or $N^{\ast}$. As we will see below, \eqref{eq:renrm:bdd}, \eqref{eq:renrm:bdd-d} and \eqref{eq:renrm:bdd:imp} follow from the arguments in \cite{KT}. In the bulk of this subsection, we focus on the task of establishing \eqref{eq:renrm:approx-uni}.

To state the key estimates, it is convenient to set up some notation. We introduce the compound $\G$-valued symbol
\begin{equation*}
	\bfO_{<h, \pm}(t, x, s, y, \xi) = O_{<h, \pm}(t, x, \xi) O_{<h, \pm}^{-1}(s, y, \xi).
\end{equation*}
The quantization of $Ad(\bfO_{<h, \pm})$, which is a $\End(\g)$-valued compound symbol, takes the form
\begin{equation*}
	Op(Ad(\bfO_{<h, \pm}))(t, x, D, y, s) = Op(Ad(O_{<h, \pm}))(t, x, D) Op(Ad(O^{-1}_{<h, \pm}))(D, y, s).
\end{equation*}
Given a compound $\End(\g)$-valued symbol $a(t, x, s, y, \xi)$, we define the double spacetime frequency projection
\begin{equation*}
	(a)_{\ll k}(t, x, s, y, \xi) = S^{t, x}_{<k} S^{s, y}_{<k} a(t, x, s, y, \xi).
\end{equation*}
Therefore, according to our conventions, 
\begin{equation*}
	Ad (\bfO_{<h, \pm})_{\ll k}(t, x, s, y, \xi) = Ad(O_{<h, \pm})_{<k} (t, x, \xi) Ad(O_{<h, \pm}^{-1})_{<k}(s, y, \xi).
\end{equation*}

\begin{proposition} \label{prop:renrm-bdd-lomod}
For $\dlt > 0$ sufficiently small, there exists $\dlt_{(1)}$ such that the following bounds hold for any $h < -20$:
\begin{align} 
	\nrm{\left(Op(Ad(\bfO_{<h, \pm})_{\ll 0})(t, x, D, t, y) - 1\right) P_{0}}_{N^{\ast} \to X^{0, \frac{1}{2}}_{\infty}}
	\aleq_{M_{\sgm}} & 2^{\dlt_{(1)} h}. \label{eq:renrm-bdd-lomod-double}
\end{align}
\end{proposition}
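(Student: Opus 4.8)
The plan is to prove Proposition~\ref{prop:renrm-bdd-lomod} by a kernel analysis for the compound operator, exploiting cancellation between the left and right renormalization pieces when the output and input are both frequency-localized in $\set{\abs{\xi} \aeq 1}$. First I would reduce matters, as in the proof of Proposition~\ref{prop:renrm-bdd-L2}, to the operator with symbol $Ad(\bfO_{<h,\pm})_{\ll 0}(t,x,s,y,\xi) - 1$ after inserting a harmless multiplier $a(D)$ adapted to $\set{\abs{\xi}\aleq 1}$, with the diagonal time constraint $s=t$ (i.e., the term $Op(Ad(\bfO_{<h,\pm})_{\ll 0})(t,x,D,t,y)$). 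The goal is the bound into $X^{0,\frac12}_\infty$, which (since $P_0 N^\ast \hookrightarrow P_0 L^\infty L^2$) means we need to control the $L^2 L^2$ norm of $Q_j$ of the output by $2^{\dlt_{(1)}h} 2^{-j/2}$ uniformly in $j$, given an $L^\infty L^2$ bound on the input. The mechanism is exactly the one in \cite{KT}: the symbol $Ad(\bfO_{<h,\pm})_{\ll 0} - 1$ is, modulo acceptable errors controlled by Lemma~\ref{lem:renrm-bdd-hifreq}, localized (in its spacetime frequency) to $\abs{(\sgm,\eta)}\aleq 2^{h}$, so it only moves modulation by $O(2^h)$; combined with the almost-unitarity from Proposition~\ref{prop:renrm-bdd-L2} and the frequency gap $h < -20$, this produces the gain $2^{\dlt_{(1)}h}$.

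The key steps, in order, are as follows. (1) Split $Ad(\bfO_{<h,\pm})_{\ll 0} = Ad(\bfO_{<h,\pm})_{\ll h+C} + (\text{remainder})$, where the remainder involves at least one dyadic piece $Ad(O_{<h,\pm})_k$ with $k \geq h+C$; by Lemma~\ref{lem:renrm-bdd-hifreq} (in the $L^p L^2 \to L^q L^2$ form), each such piece contributes $2^{-10(k-h)}$, which after summation in $k$ is $\aleq 2^{-c(k_0 - h)}$-type smallness and certainly absorbable into $2^{\dlt_{(1)}h}$ after also accounting for the fixed-time $L^2$ bound from Corollary~\ref{cor:}. (2) For the main term $Ad(\bfO_{<h,\pm})_{\ll h+C}(t,x,t,y,\xi) - 1$, use the fundamental-theorem-of-calculus identity for $\frac{\ud}{\ud \ell}Ad(O_{<\ell,\pm}O_{<\ell,\pm}^{-1})$ exactly as in the proof of Lemma~\ref{lem:renrm-bdd-L2-bare}, rewriting the derivative as $Ad(O_{<\ell})\, ad(Ad(O_{<\ell}^{-1})\Psi_\ell(t,x,\xi) - Ad(O_{<\ell}^{-1})\Psi_\ell(t,y,\xi))\, Ad(O_{<\ell}^{-1})$, so the whole thing is bounded in $\End(\g)$-norm by the modulus of a difference of $ad$-symbols built from $\Psi_\ell$. (3) Apply the decomposability calculus (Lemma~\ref{lem:decomp-basic}) together with the decomposability bounds for $\Psi$ (Lemma~\ref{lem:decomp-Psi}, in particular \eqref{eq:decomp-Psi} and the high-modulation refinement \eqref{eq:decomp-Psi-himod}) and for $O$ (Lemma~\ref{lem:decomp-O}): since $\Psi_{\pm,h}$ has spacetime frequency $\aleq 2^h$, the operator obtained is bounded $L^\infty L^2 \to L^2 L^2$ with a gain of $2^{ch}$ coming from the time-integration over the interval of length $\aleq 2^{-h}$... no — rather, $Q_j$ of the output vanishes unless $2^j \aleq 2^h$, so $\nrm{Q_j(\cdot)}_{L^2 L^2} \aleq 2^{j/2} 2^{-h} \cdot (\text{small})$ is wrong; instead one uses that $\abs{Q_j}$ nonzero forces $j \leq h+O(1)$ and then $\sum_{j\leq h}2^{j/2}\nrm{Q_j u}_{L^2L^2}\aleq 2^{h/2}\nrm{u}_{L^\infty L^2}$ times the $L^\infty$-decomposability norm of the symbol. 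Putting these together with the almost-unitarity modulus from Proposition~\ref{prop:renrm-bdd-L2} gives the claimed $2^{\dlt_{(1)}h}$ bound into $X^{0,\frac12}_\infty$.

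I expect the main obstacle to be the bookkeeping in step (3): one must combine the $X^{0,\frac12}_\infty$ structure (which is a weighted $\ell^\infty_j$ sum of $L^2L^2$ modulation pieces) with the spacetime-frequency localization of the renormalization symbol to the $2^h$ ball, and carefully track that the $O$'s on either side of the $ad(\Psi)$ factor are handled via $L^2 \to L^2$ fixed-time boundedness (Corollary~\ref{cor:}) while only the $\Psi$-difference factor is measured in a divisible/decomposable norm that produces the $2^{ch}$ gain. There is also the subtlety, already present in \cite{KT}, that the naive symbol bounds for $O_{<h,\pm}$ degenerate near the cone-direction, so the decomposition into angular caps $\tht$ in Lemma~\ref{lem:decomp-A}--\ref{lem:decomp-O} must be used, and the $\tht$-summation must converge, which is why $q>4$ is required in \eqref{eq:decomp-O}; one needs to arrange the H\"older split of exponents so that this constraint is respected while still landing in $L^2 L^2$. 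Once these are set up, the estimate is a routine (if lengthy) application of the decomposability H\"older inequality and the fixed-time $L^2$ bounds, mirroring \cite[Section~8]{KT}, so I would not grind through it in full but rather indicate the reduction and cite the analogous computation there.
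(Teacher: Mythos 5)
Your proposal is in the right spirit — spacetime-frequency localization of the compound symbol $Ad(\bfO_{<h})_{\ll 0}$, a fundamental-theorem-of-calculus expansion in $\ell$, and decomposability calculus — but it misses the central mechanism, and the numerology in your step (3) contains an actual error.

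The easy regimes (high-modulation input, and low-modulation input with $j \geq \frac12 h$) are correctly identified by your splitting and Lemma~\ref{lem:renrm-double-hifreq}. The genuine difficulty, which the paper devotes Steps~3--6 to, is the regime $j < \frac12 h$ for the output modulation. Here your single FTC expansion $\frac{\ud}{\ud \ell} Ad(\bfO_{<\ell})$ produces a factor $ad(\Psi_\ell)$ which is fed to the decomposability bound \eqref{eq:decomp-Psi} with $q=2$, $r=\infty$; this gives $\nrm{\Psi_\ell^{(\tht)}}_{D L^2 L^\infty} \aleq 2^{-\ell/2}\tht^{-1/2}$, so after summing the $D L^2 L^\infty \to L^2 L^2$ estimate over $\ell < h$ the best one obtains is an $L^\infty L^2 \to L^2 L^2$ bound of size $\aeq 2^{-h/2}$. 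Multiplying by $2^{j/2}$ as required for $X^{0,\frac12}_\infty$ gives $2^{(j-h)/2}$, and for $j$ in the window $(h, \frac12 h)$ this is \emph{large}, not small — precisely because $h < 0$. So the naive single-step FTC argument fails in exactly the regime that matters.

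What the paper does instead is a three-fold iterated FTC expansion $Ad(\bfO_{<h}) - Ad(\bfO_{<j+\td h}) = \calL + \calQ + \calC$, with the intermediate cutoff at the $j$- and $h$-dependent scale $j + \td h$ (not $h+C$ as in your step~(1)). The linear piece $\calL$ is then split into four subparts according to the spacetime-frequency and angular localization of the single $ad(\Psi_\ell)$ factor: the crucial gain is the geometric-of-the-cone constraint (Steps~4.2--4.3) forcing the angle $\tht \ageq 2^{\frac12(j-\ell)}$ when the output has modulation $2^j$, the input has modulation $< 2^{j-5}$, and the residual $Ad(\bfO_{<j+\td h})_{\ll j-5}$ has spacetime frequency $< 2^{j-5}$. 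This converts the decomposability bound into a $2^{-j/2}$ factor (not $2^{-\ell/2}$), which is what the $X^{0,\frac12}_\infty$ norm demands. The quadratic and cubic pieces $\calQ$, $\calC$ are needed because the angle constraints relax after each peel; the final residual $Ad(\bfO_{<\ell''})$ in $\calC$ has frequency small enough to be killed by the outer $Q_j$ via Lemma~\ref{lem:renrm-double-hifreq}.

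Separately, your sentence ``$\sum_{j\leq h}2^{j/2}\nrm{Q_j u}_{L^2L^2}\aleq 2^{h/2}\nrm{u}_{L^\infty L^2}$'' is not a valid estimate: $X^{0,\frac12}_\infty$ is a $\sup_j$, not a sum, and $\nrm{Q_j u}_{L^2L^2}$ is not controlled by $\nrm{u}_{L^\infty L^2}$ on an unbounded time interval. The $L^2_t$ gain has to come from the decomposable norm of a specific $ad(\Psi_\ell^{(\tht)})$ factor inserted between two $L^2$-bounded operators, and the bookkeeping of which factor supplies it (and with what angle constraint) is exactly the content of the $\calL/\calQ/\calC$ analysis you are not performing.
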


Before we begin the proof, we state a lemma for passing to a double spacetime frequency localization of $Ad(\bfO_{<h, \pm})$, which is used several times in our argument below.
\begin{lemma} \label{lem:renrm-double-hifreq}
For $2 \leq q \leq \infty$ and $h + 10 \leq k \leq 0$, we have
\begin{equation} \label{eq:renrm-double-hifreq}
\nrm{\left( Op(Ad(\bfO_{<h, \pm})_{\ll 0}) -Op (Ad (\bfO_{<h, \pm})_{\ll k}) \right) P_{0}}_{L^{p} L^{2} \to L^{q} L^{2}} 
\aleq_{M_{\sgm}}  2^{(\frac{1}{p} - \frac{1}{q}) h} 2^{10(h - k)}.
\end{equation}
\end{lemma}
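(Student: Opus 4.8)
\textbf{Proof proposal for Lemma~\ref{lem:renrm-double-hifreq}.}
The plan is to reduce the difference operator to a telescoping sum of pieces that are amenable to the high-frequency decay estimate already recorded in Lemma~\ref{lem:renrm-bdd-hifreq}. First I would write $Ad(\bfO_{<h,\pm}) = Ad(O_{<h,\pm})(t,x,\xi)\, Ad(O_{<h,\pm}^{-1})(s,y,\xi)$ and expand the double spacetime projections. Using the telescoping identity
\begin{equation*}
	(ab)_{\ll 0} - (a)_{\ll k}(b)_{\ll k}
	= \big((a)_{\ll 0} - (a)_{\ll k}\big)(b)_{\ll 0} + (a)_{\ll k}\big((b)_{\ll 0} - (b)_{\ll k}\big)
\end{equation*}
(interpreted at the level of operators, with $a$ quantized on the left and $b$ on the right, which is harmless since $(a)_{\ll \cdot}$ acts only in $(t,x)$ and $(b)_{\ll \cdot}$ only in $(s,y)$), the problem splits into two terms of the same shape. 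For each, the ``high'' factor $(a)_{\ll 0} - (a)_{\ll k} = \sum_{k' \geq k} Ad(O_{<h,\pm})_{k'}$, and symmetrically for $b$, so that one is reduced to summing the dyadic bound \eqref{eq:renrm-hifreq} over $k' \geq k$.

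Concretely, for the first term I would use that $Op(Ad(O_{<h,\pm})_{\ll 0})(t,x,D)$ is bounded on $L^q L^2$ uniformly (Corollary~\ref{cor:} applied after summing over the dyadic pieces $\leq 0$, combined with Lemma~\ref{lem:renrm-bdd-hifreq} to control $k' > 0$), while $\sum_{k' \geq k} Op(Ad(O_{<h,\pm})_{k'})$, applied after $P_0$, maps $L^p L^2 \to L^q L^2$ with norm $\aleq_{M_\sgm} \sum_{k' \geq k} 2^{(\frac1p - \frac1q) h} 2^{-10(k'-h)} \aleq_{M_\sgm} 2^{(\frac1p - \frac1q)h} 2^{-10(k-h)}$ by the geometric sum in \eqref{eq:renrm-hifreq}. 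Here one must be slightly careful with where the $P_0$ frequency cutoff sits relative to the projections $S^{t,x}_{k'}$: since $h < -20$ and the output is frequency-localized near $\abs{\xi}\aeq 1$, the only obstruction to applying Lemma~\ref{lem:renrm-bdd-hifreq} directly is to insert an extra harmless bump multiplier $a(D)$ adapted to $\set{\abs{\xi}\aleq 1}$ between the two factors, exactly as in the proof of Proposition~\ref{prop:renrm-bdd-L2}; this does not affect the estimates. The factor $2^{(\frac1p-\frac1q)h}$ is produced by \eqref{eq:renrm-hifreq} itself (Bernstein in time, paying $2^{h}$ per unit of $\frac1p - \frac1q$), and the residual $2^{10(h-k)}$ is what \eqref{eq:renrm-double-hifreq} claims.

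For the second term, the ``low'' factor on the left is now $(a)_{\ll k} = Ad(O_{<h,\pm})_{<k}$, whose quantization is bounded $L^q L^2 \to L^q L^2$ by \eqref{eq:renrm-L2-single}, and the ``high'' factor on the right is $\sum_{k'\geq k} Op(Ad(O_{<h,\pm}^{-1})_{k'})(D,s,y)$, which by the right-quantization version of \eqref{eq:renrm-hifreq} maps $L^p L^2 \to L^q L^2$ (after $P_0$) with the same geometric bound $\aleq_{M_\sgm} 2^{(\frac1p-\frac1q)h} 2^{-10(k-h)}$. Adding the two contributions yields \eqref{eq:renrm-double-hifreq}. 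The main obstacle I anticipate is purely bookkeeping: ensuring that in each telescoping term the frequency supports are arranged so that Lemma~\ref{lem:renrm-bdd-hifreq} (which requires $k' \geq h + 10$ and the output near unit frequency) genuinely applies, and that the summation over $k' \geq k$ — rather than over all $k' \geq h+10$ — is the one that appears, so that the decay is measured from $k$ and not merely from $h$; this is exactly where the hypothesis $h + 10 \leq k \leq 0$ and the ``$10$'' in the exponent (much larger than $1$, hence dominating any $O(1)$ losses from inserting bump multipliers and from the time-Bernstein factor) are used.
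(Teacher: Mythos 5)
Your argument is correct and is precisely the ``straightforward consequence'' of Lemma~\ref{lem:renrm-bdd-hifreq} that the paper has in mind when it omits the proof. A couple of harmless slips in the write-up worth flagging: in the first telescoping term, the factor that one uses mere $L^{p}L^{2}\to L^{p}L^{2}$ boundedness for is the \emph{right}-quantized low-frequency factor $Op(Ad(O^{-1}_{<h,\pm})_{<0})(D,y,s)$ (which acts first, on the input side), not the left-quantized $Op(Ad(O_{<h,\pm})_{\ll 0})(t,x,D)$ as written; and the sum in $(a)_{\ll 0}-(a)_{\ll k}$ runs over $k\le k'<0$, not all $k'\ge k$. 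Neither affects the estimate, since the geometric series only sees the lower endpoint $k$. The key points you correctly identify --- factoring $Ad(\bfO)_{\ll j}=Ad(O)_{<j}\,Ad(O^{-1})_{<j}$ so that the two projections separate cleanly, inserting a harmless fattened bump $\tilde P_0$ between the two factors (justified because the symbols have spatial frequency $\lesssim 2^{h}\ll 1$), summing \eqref{eq:renrm-hifreq} over $k'\ge k$, and noting that the exponent $10$ in that bound swallows all $O(1)$ losses --- are exactly what make the lemma go through.
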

This lemma is a straightforward consequence of Lemma~\ref{lem:renrm-bdd-hifreq}; we omit the proof.

\begin{proof}[Proof of \eqref{eq:renrm-bdd-lomod-double}]
We follow \cite[Proof of Proposition~9.13]{OT2}. For simplicity, we omit $\pm$ in $O_{<h, \pm}$, $\bfO_{<h, \pm}$ etc. 

\pfstep{Step~1: High modulation input} For any $j \in \bbZ$ and $j' \geq j - 5$, we claim that
\begin{equation} \label{eq:renrm-bdd-spt-himod-in}
	\nrm{Q_{j}(Op(Ad(\bfO_{<h})_{\ll 0}) - 1)P_{0} Q_{j'}}_{N^{\ast} \to X^{0, \frac{1}{2}}_{\infty}} \aleq_{M_{\sgm}} 2^{\dlt_{(0)} h} 2^{\frac{1}{2}(j - j')}.
\end{equation}

\pfstep{Step~2: Low modulation input, $\frac{1}{2} h \leq j$} Here, we take care of the easy case $\frac{1}{2} h \leq j$. Under this assumption, we claim that
\begin{equation} \label{eq:renrm-bdd-spt-lomod-med}
	\nrm{Q_{j}(Op(Ad(\bfO_{<h})_{\ll 0}) - 1)P_{0} Q_{<j-5}}_{N^{\ast} \to X^{0, \frac{1}{2}}_{\infty}} \aleq_{M_{\sgm}} 2^{4 h} .
\end{equation}
Note that
\begin{equation*}
	Q_{j}(Op(Ad(\bfO_{<h})_{\ll j-5}) - 1)P_{0} Q_{<j-5} = 0.
\end{equation*}
Thus, using the $L^{\infty} L^{2}$ portion of $N^{\ast}$, it suffices to prove
\begin{equation*}
	\nrm{Q_{j}(Op(Ad(\bfO_{<h})_{\ll 0} - Ad(\bfO_{<h})_{\ll j-5}) P_{0} Q_{<j-5}}_{N^{\ast} \to X^{0, \frac{1}{2}}_{\infty}} \aleq_{M_{\sgm}} 2^{4 h} .
\end{equation*}
Since $Q_{j}$ and $Q_{<j-5}$ are disposable in $L^{2} L^{2}$ and $L^{\infty} L^{2}$, respectively, this estimate follows from Lemma~\ref{lem:renrm-double-hifreq}.

\pfstep{Step~3: Low modulation input, $j < \frac{1}{2} h$, main decomposition} The goal of Steps~3--6 is to establish
\begin{equation} \label{eq:LQC} 
	\nrm{Q_{j}(Op(Ad(\bfO_{<h})_{\ll 0}) -  Ad(\bfO_{<j+\td h})_{\ll 0}  )P_{0} Q_{<j-5}}_{N^{\ast} \to X^{0, \frac{1}{2}}_{\infty}} 
\aleq_{M_{\sgm}} 2^{\dlt_{(0)} h}.
\end{equation}
provided that $j+\td h \leq h$. 

At the level of $\End(\g)$-valued compound symbols, we expand
\begin{equation*}
	Ad(\bfO_{<h}) - Ad(\bfO_{<j + \tilde{\dlt} h}) 
	= \calL + \hM + \calC, 
\end{equation*}
where
\begin{align*}
	\calL= & \ \int_{j + \tilde{\dlt} h \leq \ell \leq h} \calL_{\ell, < j + \tilde{\dlt} h} \, \ud \ell \\
	\calQ =& \ \int_{j + \tilde{\dlt} h \leq \ell' \leq \ell \leq h} \calQ_{\ell, \ell', < j + \tilde{\dlt} h} \, \ud \ell' \, \ud \ell \\
	\calC = & \ \int_{j + \tilde{\dlt} h \leq \ell'' \leq \ell' \leq \ell \leq h} \calC_{\ell, \ell', \ell'', < \ell''} \, \ud \ell'' \, \ud \ell' \, \ud \ell
\end{align*}
and the integrands $\calL_{\ell, <k}$, $\calQ_{\ell, \ell', <k}$ and $\calC_{\ell, \ell', \ell'', <k}$ are defined recursively as
\begin{align*}
\calL_{\ell, < k}(t, x, s, y, \xi) 
= &  ad(\Psi_{\ell})(t, x, \xi) Ad(\bfO_{<k})(t, x, s, y, \xi) \\
& - Ad(\bfO_{<k})(t, x, s, y, \xi) ad(\Psi_{\ell})(s, y, \xi) \\
\calQ_{\ell, \ell', < k}(t, x, s, y, \xi) 
= &  ad(\Psi_{\ell})(t, x, \xi) \calL_{\ell', < k} (t, x, s, y, \xi) \\
& - \calL_{\ell', < k} (t, x, s, y, \xi) ad(\Psi_{\ell})(s, y, \xi) \\
\calC_{\ell, \ell', \ell'', <k}(t, x, s, y, \xi) 
= &  ad(\Psi_{\ell})(t, x, \xi) \calQ_{\ell', \ell'', < k} (t, x, s, y, \xi) \\
& - \calQ_{\ell', \ell'', < k} (t, x, s, y, \xi) ad(\Psi_{\ell})(s, y, \xi) \\
\end{align*}
The three terms $\calL_{\ell, <k}$, $\calQ_{\ell, \ell', <k}$ and
$\calC_{\ell, \ell', \ell'', <k}$ are successively considered in the
next three steps.

  \pfstep{Step~4: Low modulation input, $j <\frac{1}{2} h$, contribution of $\calL$} 
Our goal here is to prove
\begin{equation} \label{eq:L} 
	\nrm{Q_{j} \calL_{\ll 0}  P_{0} Q_{<j-5}}_{N^{\ast} \to X^{0, \frac{1}{2}}_{\infty}} 
\aleq_{M_{\sgm}} 2^{\dlt_{(0)} h}.
\end{equation}
We introduce
\begin{align*}
	\calL_{\ell, < k, \ll k'} 
	= & ad(\Psi_{\ell})(t, x, \xi) Ad(\bfO_{< k})_{\ll k'} (t, x, s, y, \xi) \\
	 & - Ad(\bfO_{< k})_{\ll k'} (t, x, s, y, \xi) ad(\Psi_{\ell})(s, y, \xi) \\
	\calL_{\ell, < -\infty} 
	= & ad(\Psi_{\ell})(t, x, \xi) -  ad(\Psi_{\ell})(s, y, \xi) \\
\end{align*}
and decompose
\begin{align*}
	\calL 
	= & \int_{j + \tilde{\dlt} h \leq \ell \leq h} \left( \calL_{\ell, < j + \tilde{\dlt} h} - \calL_{\ell, < j + \tilde{\dlt} h, \ll j - 5} \right)\, \ud \ell \\
	& + \int_{j - 10 \tilde{\dlt} h \leq \ell \leq h} \calL_{\ell, < j + \tilde{\dlt} h, \ll j - 5}\, \ud \ell \\
	& + \int_{j + \tilde{\dlt} h \leq \ell \leq j - 10 \tilde{\dlt} h} \left( \calL_{\ell, < j + \tilde{\dlt} h, \ll j - 5} - \calL_{\ell, <-\infty} \right)\, \ud \ell \\
	& + \int_{j + \tilde{\dlt} h \leq \ell \leq j - 10 \tilde{\dlt} h} \calL_{\ell, <-\infty} \, \ud \ell \\
	=: & \calL_{(1)} + \calL_{(2)} + \calL_{(3)} + \calL_{(4)}.
\end{align*}

 \pfstep{Step~4.1: Low modulation input, $j <  \frac{1}{2} h$, contribution of $\calL_{(1)}$} 
For this term we can add a double frequency localization $\ll C$ on $\calL_{\ell, < j + \tilde{\dlt} h}$
and then harmlessly discard the double $\ll 0$ localization in \eqref{eq:L}. Then it 
suffices to prove that for $\ell > j+ \dlt m$ we have
\[
\nrm{Q_{j} Op\left( \calL_{\ell, < j + \tilde{\dlt} h,\ll C} -
    \calL_{\ell, < j + \tilde{\dlt} h, \ll j - 5} \right) P_{0}
  Q_{<j-5}}_{L^\infty L^2 \to L^2}
\aleq_{M_{\sgm}} 2^{-\frac16[\ell-(j+\td h)]} 2^{(10+\frac12)\td h},
\]
and then integrate with respect to $\ell$. 
But this is a consequence of the decomposability bound \eqref{eq:decomp-Psi} with $q = 6$ and $r = \infty$, together with the bound \eqref{eq:renrm-hifreq}
with $p = 6$ and $q = 2$.

 \pfstep{Step~4.2: Low modulation input, $j <  \frac{1}{2} h$, contribution of $\calL_{(2)}$} 
Here as well as in the next two cases the $\ll 0$ localization in $\ell$ has no effect and is discarded.
The two terms in $ \calL_{\ell, < j + \tilde{\dlt} h, \ll j - 5}$  are similar; we restrict our attention to the first one. 
Consider now the operator
\[
Q_j Op( ad(\Psi_\ell) Ad(\bfO_{<j + \td h})_{\ll j-5}) Q_{<j-5} = \sum_\theta Q_j Op( ad(\Psi^{(\theta)}_\ell) Ad(\bfO_{<j + \td h})_{\ll j-5}) Q_{<j-5} 
\]
The important observation here is that, because of the geometry of the
cone, the frequency localizations for both $Ad(\bfO_{<j + tdh})_{\ll
  j-5})$ and $\Psi^{(\theta)}_\ell$ force a large angle $\theta > 2^{\frac12(j-\ell)}$, 
or else the above operator vanishes.

Given this bound for $\theta$, we can now use the decomposability bound \eqref{eq:decomp-Psi} with $q = 2$ and $r = \infty$
combined with  \eqref{eq:renrm-hifreq} with $p = \infty$ and $q = \infty$ to obtain
\[
\| Op( ad(\Psi^{(\theta)}_\ell) Ad(\bfO_{<j + \td h})_{\ll j-5})  P_0 \|_{L^\infty L^2 \to L^2}\aleq_{M_{\sgm}} 2^{-\frac12 j} 
2^{\frac12(j-\ell)} \theta^{-\frac12}
\]
which after $\theta$ summation in the range $\theta > 2^{\frac12(j-\ell)}$ yields
\[
\nrm{Q_{j}  Op (\calL_{(2)}) P_{0}
  Q_{<j-5}}_{L^\infty L^2 \to L^2} \aleq_{M_{\sgm}} 2^{-\frac12 j} 2^{\frac52 \td h}.
\]
which suffices.

 \pfstep{Step~4.3: Low modulation input, $j <  \frac{1}{2} h$, contribution of $\calL_{(3)}$} 
Here we have the same angle constraint as above but this levels off for $\ell < j$, namely $\theta > 2^{-\frac12(\ell-j)_+}$.
However, we can now replace  \eqref{eq:renrm-L2-single} with \eqref{eq:renrm-bdd-L2} to obtain
\[
\| Op( ad(\Psi^{(\theta)}_\ell) (Ad(\bfO_{<j + \td h})_{\ll j-5}-I))  P_0 \|_{L^\infty L^2 \to L^2}\aleq_{M_{\sgm}} 2^{-\frac12 j}  2^{-\frac12(\ell-j)} 
\theta^{-\frac12} (2^{\dlt_{(0)}(j+\td h)} + 2^{10 \td h}) 
\]
which after $\theta$ and $\ell$ summation yields
\[
\nrm{Q_{j}  Op (\calL_{(3)}) P_{0}
  Q_{<j-5}}_{L^\infty L^2 \to L^2} \aleq_{M_{\sgm}} 2^{-\frac12 j} (  2^{(\dlt_{(0)}-\frac14 \td)h}  +  2^{9 \td h}).
\]
This suffices provided that $\td$ is small enough $\td < \dlt_{(0)}$.

\pfstep{Step~4.4: Low modulation input, $j <  \frac{1}{2} h$, contribution of $\calL_{(4)}$} 
Here we have the same range  $j - \td h < \ell < j+ 10 \td h$ for $\ell$. We also have the same constraint on the angle
$\theta > 2^{-\frac12(\ell-j)_+}$ but this is no longer relevant in this case, as we will gain in frequency, and this can override
any angular losses.

This time we are able to take advantage of the difference structure for $\Psi$.
Precisely, it suffices to show that for $a$ localized at frequency $1$ we have
\begin{equation}\label{diff-adj}
\| Op( ad (\Psi^{(\theta)}_\ell))(t,x,D) a(D) - a(D)  Op( ad (\Psi^{(\theta)}_\ell))(t,x,D)\|_{L^\infty L^2 \to L^q L^2}   
\aleq_{M_{\sgm}} 2^{-\frac{1}{q} \ell} 2^\ell \theta^{-C} 
\end{equation}
But this was already proved in \cite{OT2}, (9.40).

\pfstep{Step~5: Low modulation input, $j < \frac{1}{2} h$, contribution of $\calQ$} 
We proceed in the same manner as in the case of $\calL$. Defining the symbols
\begin{align*}
	\calQ_{\ell, \ell', < k, \ll k'} 
	= & ad(\Psi_{\ell})(t, x, \xi) \calL_{\ell', < k, \ll k'} (t, x, s, y, \xi) \\
	& - \calL_{\ell', < k, \ll k'} (t, x, s, y, \xi) ad(\Psi_{\ell})(s, y, \xi) \\
	\calQ_{\ell, \ell', < -\infty} 
	= & ad(\Psi_{\ell})(t, x, \xi) \calL_{\ell', < -\infty} (t, x, s, y, \xi) \\
	& -  \calL_{\ell', < -\infty} (t, x, s, y, \xi) ad(\Psi_{\ell})(s, y, \xi) \\
\end{align*}
we decompose $\calQ$ as follows: 
\begin{align*}
	\calQ 
	= & \int_{j + \tilde{\dlt} h \leq \ell' \leq \ell \leq h} \left(\calQ_{\ell, \ell', < j + \tilde{\dlt} h} - \calQ_{\ell, \ell', < j + \tilde{\dlt} h, \ll j - 10} \right)\, \ud \ell' \ud \ell \\
	& + \int_{\substack{j + \tilde{\dlt} h \leq \ell' \leq \ell \leq h \\ j - 10 \tilde{\dlt} h \leq \ell} } \calQ_{\ell, \ell', < j + \tilde{\dlt} h, \ll j - 10} \, \ud \ell' \ud \ell \\
	& + \int_{j + \tilde{\dlt} h \leq \ell' \leq \ell \leq j - 10 \tilde{\dlt} h} \left( \calQ_{\ell, \ell', < j + \tilde{\dlt} h, \ll j - 10} - \calQ_{\ell, \ell', < -\infty} \right) \, \ud \ell' \ud \ell \\
	& + \int_{j + \tilde{\dlt} h \leq \ell' \leq \ell \leq j - 10 \tilde{\dlt} h} \calQ_{\ell, \ell', < -\infty}  \, \ud \ell' \ud \ell \\
	=: & \calQ_{(1)} + \calQ_{(2)} + \calQ_{(3)} + \calQ_{(4)}
\end{align*}
Then we consider each term separately.

\pfstep{Step~5.1: Low modulation input, $j < \frac{1}{2} h$, contribution of $\calQ_{(1)}$} 
Proceeding as in Step 4.1, we have
\[
\calQ_{\ll 1} = \int_{j + \tilde{\dlt} h \leq \ell' \leq \ell \leq h} \left( \calQ_{\ell, \ell', < j + \tilde{\dlt} h,\ll C}
 - \calQ_{\ell, \ell', < j + \tilde{\dlt} h, \ll j - 5} \right)_{\ll 0} \, \ud \ell' \ud \ell 
\]
and we can again harmlessly discard the outer $\ll 0$.  Applying the
decomposability bound \eqref{eq:decomp-Psi} with $q = 6$ for $\Psi_\ell$ and   with $q = \infty$ for
$\Psi_{\ell'}$  and $r = \infty$, together with the bound \eqref{eq:renrm-hifreq} with $p
= \infty$ and $q = 3$, we obtain 
\[
\| \calQ_{\ell, \ell', < j + \tilde{\dlt} h,\ll C} - \calQ_{\ell, \ell', < j + \tilde{\dlt} h, \ll j - 5}\|_{L^\infty L^2 \to L^2}
 \aleq_{M_{\sgm}}  2^{-\frac16[\ell-(j+\td h)]} 2^{(10+\frac12)\td h}.
\]
Summing up with respect to $\ell$ and $\ell'$ 
we obtain
\[
\nrm{Op (\calQ_{(1)}) P_{0}}_{L^\infty L^2 \to L^2} \aleq_{M_{\sgm}}  2^{10 \td h}.
\]
which suffices.

\pfstep{Step~5.2: Low modulation input, $j < \frac{1}{2} h$, contribution of $\calQ_{(2)}$} 
Here and also for $\calQ_{(3)}$ and $\calQ_{(4)}$ we can remove the outer frequency localization $\ll 0$
which does nothing. The expression $\calQ_{(2)}$ contains four terms depending on whether $\Psi_\ell$
and $\Psi_{\ell'}$ act on the left or on the right. We consider one of them, for which we need to bound 
the operator 
\[
Q_j Op(ad( \Psi_\ell) Ad(\calO_{<j+\td h})_{\ll j-5} ad( \Psi_{\ell'})) Q_{<j-5} P_0
\]
We decompose with respect to angles into 
\[
\sum_{\theta,\theta'} Q_j Op(ad( \Psi^{(\theta)}_\ell) Ad(\calO_{<j+\td h})_{\ll j-5} ad( \Psi_{\ell'}^{(\theta')})) Q_{<j-5} P_0
\]
and consider the nontrivial scenarios. This is as in Step 5.2 but now we have two angles, which must satisfy
non-exclusively
\[
\text{ either } \theta > 2^{\frac12(j-\ell)}, \qquad  \text{ or }  \theta' > 2^{\frac12(j-\ell')}.
\]
We can now use the decomposability bound \eqref{eq:decomp-Psi} with $q
= 3$ and $r = \infty$ for the large\footnote{i.e. which satisfies the
  bound on the previous line} angle respectively $q = 6$ and $r =
\infty$ for the other angle combined with \eqref{eq:renrm-hifreq} with
$p = \infty$ and $q = \infty$ to obtain either 
\[
\| Op(ad( \Psi^{(\theta)}_\ell) Ad(\calO_{<j+\td h})_{\ll j-5} ad( \Psi_{\ell'}^{(\theta')})) P_0 \|_{L^\infty L^2 \to L^2}
\aleq_{M_{\sgm}} 2^{-\frac12 j}  2^{\frac13(j-\ell)} \theta^{-\frac16} 2^{\frac16(j-\ell')}  {\theta'}^{\frac16}  
\]
or the same bound with the pairs $(l,\theta)$ and $(l',\theta')$ reversed.
Summing with respect to $\ell$, $\ell'$, and also with respect to $\theta$, $\theta'$ subject to the constraints above,
we obtain
\[
\nrm{Q_{j}  Op (\calQ_{(2)}) P_{0}
  Q_{<j-5}}_{L^\infty L^2 \to L^2} \aleq_{M_{\sgm}} 2^{-\frac12 j} 2^{\frac53 \td h}.
\]
which suffices.

\pfstep{Step~5.3: Low modulation input, $j < \frac{1}{2} h$, contribution of $\calQ_{(3)}$} 
We repeat the angle localization analysis in the previous step, but as in Step 4.3, we again replace 
  \eqref{eq:renrm-L2-single} with \eqref{eq:renrm-bdd-L2}. The outcome is similar to the one in 
 Step 4.3; details are omitted.

\pfstep{Step~5.4: Low modulation input, $j < \frac{1}{2} h$, contribution of $\calQ_{(4)}$} 
Again we apply the same angle localization analysis as in the previous two steps.
However, as in Step 4.4, we also need to exploit the difference between one of the 
two $\Psi$'s and its adjoint. Consider one such term, e.g.
\[
ad( \Psi^{(\theta)}_\ell)(t,x,\xi)  [ ad( \Psi_{\ell'}^{(\theta')})(t,x,\xi) - ad( \Psi_{\ell'}^{(\theta')})(\xi,y,s)]
\]
For this it suffices to apply the disposability bound \eqref{eq:decomp-Psi} for $\Psi^{(\theta)}_\ell$
combined with \eqref{diff-adj}. The choice of the exponents is no longer important. We obtain 
\[
\nrm{ Op (\calQ_{(4)}) P_{0}}_{L^\infty L^2 \to L^2} \aleq_{M_{\sgm}} 2^{-\frac12 j} 2^{(1-C\td)j}.
\]

\pfstep{Step~6: Low modulation input, $j < \frac{1}{2} h$, contribution of $\calC$} 
This repeats the analysis for $\calL$ and $\calQ$, but we no longer need to keep track 
of angular separation. Denoting
\begin{align*}
	\calC_{\ell, \ell', \ell'', < k, \ll k'} 
	= & ad(\Psi_{\ell})(t, x, \xi) \calQ_{\ell', \ell'', < k, \ll k'}  (t, x, s, y, \xi) \\
	& - \calQ_{\ell', \ell'', < k, \ll k'}  (t, x, s, y, \xi) ad(\Psi_{\ell})(s, y, \xi) \\
	\calC_{\ell, \ell', \ell'', < -\infty} 
	= & ad(\Psi_{\ell})(t, x, \xi) \calQ_{\ell', \ell'', < -\infty} (t, x, s, y, \xi) \\
	& -  \calQ_{\ell', \ell'', < -\infty} (t, x, s, y, \xi) ad(\Psi_{\ell})(s, y, \xi) \\
\end{align*}
we decompose $\calC$ as
\begin{align*}
	\calC 
	= & \int_{j + \tilde{\dlt} h \leq \ell'' \leq \ell' \leq \ell \leq h} \left(\calC_{\ell, \ell', \ell'', < \ell''} - \calC_{\ell, \ell', \ell'', < \ell'', \ll -5} \right)\, \ud \ell'' \ud \ell' \ud \ell \\
	& + \int_{\substack{j + \tilde{\dlt} h \leq \ell'' \leq \ell' \leq \ell \leq h \\ j - 10 \tilde{\dlt} h \leq \ell} } \calC_{\ell, \ell', \ell'', < \ell'', \ll -5} \, \ud \ell'' \ud \ell' \ud \ell \\
	& + \int_{j + \tilde{\dlt} h \leq \ell'' \leq \ell' \leq \ell \leq j - 10 \tilde{\dlt} h} \left( \calC_{\ell, \ell', \ell'', < \ell'', \ll j - 5} - \calC_{\ell, \ell', \ell'', < -\infty} \right) \, \ud \ell'' \ud \ell' \ud \ell \\
	& + \int_{j + \tilde{\dlt} h \leq \ell'' \leq \ell' \leq \ell \leq j - 10 \tilde{\dlt} h} \calC_{\ell, \ell', \ell'', < -\infty}  \, \ud \ell'' \ud \ell' \ud \ell \\
	=: & \calC_{(1)} + \calC_{(2)} + \calC_{(3)} + \calC_{(4)}
\end{align*}
and consider each of the terms separately.

\pfstep{Step~6.1: Low modulation input, $j < \frac{1}{2} h$, contribution of $\calC_{(1)}$} 
The same argument as in Steps 4.1 and 5.1 yields the bound
\[
\begin{split}
\| Op( ad(\Psi_{\ell}) ad(\Psi_{\ell'}) ad(\Psi_{\ell''}) ( Ad((\bfO_{<\ell''})- &   Ad((\bfO_{<\ell''})_{\ll -5})   )_{\ll 0}\|_{L^\infty L^2 \to L^2} 
\\ & \aleq_{M_{\sgm}}
2^{-\frac12 j} 2^{\frac16 (j + \td h - \ell)}    2^{\frac16 (j + \td h - \ell')}    2^{\frac16 (j + \td h - \ell')} 2^{10 \ell''} 2^{\frac12 \td h}   
\end{split}
\]
as well as for any of the other choices of left/right quantizations for the $\Psi$'s.
Integration over $j+\td h < \ell'' < \ell' < \ell < \frac{m}2$ is now harmless.
 
\pfstep{Step~6.2: Low modulation input, $j < \frac{1}{2} h$, contribution of $\calC_{(2)}$} 
 Applying the decomposability bound \eqref{eq:decomp-Psi} with $q = 6$ for each of the three $\Psi$'s 
in the $\calC_2$ integrand, as well as the $L^2$ bound for $Op(Ad((\bfO_{<\ell''})_{\ll -5})$ yields the bound
\[
\| Op( ad(\Psi_{\ell}) ad(\Psi_{\ell'}) ad(\Psi_{\ell''}) Ad((\bfO_{<j+\td h}))_{\ll -5}\|_{L^\infty L^2 \to L^2} \aleq_{M_{\sgm}}
2^{-\frac12 j} 2^{\frac16 (j  - \ell)}    2^{\frac16 (j  - \ell')}    2^{\frac16 (j  - \ell')}   
\]
which suffices after integration in $\ell > j - 10 \td h$ and $\ell', \ell'' > j+ \td h $.
  
\pfstep{Step~6.3: Low modulation input, $j < \frac{1}{2} h$, contribution of $\calC_{(3)}$}
This is the same argument as in the previous step, but using \eqref{eq:renrm-bdd-L2} instead of \eqref{eq:renrm-L2-single}.

\pfstep{Step~6.4: Low modulation input, $j < \frac{1}{2} h$, contribution of $\calC_{(4)}$} 
Here we are concerned with symbols of the form 
\[
ad( \Psi_\ell)(t,x,\xi) ad( \Psi_{\ell'})(t,x,\xi)  [ ad( \Psi_{\ell''}(t,x,\xi) - ad( \Psi_{\ell''}(\xi,y,s)]
\]
where one or both of $ ad(\Psi_\ell)$ and  $ ad( \Psi_{\ell'})$ may be switched to the right and in the right quantization.
Here we use again the decomposability bound \eqref{eq:decomp-Psi} with $q = 6$ for $\Psi_\ell$ and $ ad( \Psi_{\ell'}$,
respectively \eqref{diff-adj} for the  $\Psi_{\ell''}$ difference.

\pfstep{Step~7: Low modulation input, $j < \frac{1}{2} h$, low frequency $\bfO$}
To complete the proof of the estimate \eqref{eq:renrm-bdd-lomod-double} it remains to show that
\begin{equation} \label{eq:step7}
\nrm{Q_{j} Op(Ad(\bfO_{<j + \tilde{\dlt} h})_{\ll 0}(t, x, D, y, s) - 1) P_{0} Q_{<j - 5}}_{N^{\ast} \to X^{0, \frac{1}{2}}_{\infty}} 
\aleq_{M_{\sgm}} 2^{\dlt_{(1)} h}
\end{equation}
If $ j + \tilde{\dlt} h \leq h$ this is combined with the bound \eqref{eq:LQC}, which is the main outcome of Steps 3-6.
Else, this is used by itself, simply observing that we can harmlessly replace $j + \tilde{\dlt} h$ by $h$.

The above bound  is identical to 
\[
\nrm{Q_{j} Op(Ad(\bfO_{<j + \tilde{\dlt} h})_{\ll 0} -Ad(\bfO_{<j + \tilde{\dlt} h})_{\ll j-5})(t, x, D, y, s)  
P_{0} Q_{<j - 5}}_{N^{\ast} \to X^{0, \frac{1}{2}}_{\infty}} 
\aleq_{M_{\sgm}} 2^{\dlt_{(1)} h}
\]
which in turn would follow from
\[
\nrm{Op(Ad(\bfO_{<j + \tilde{\dlt} h})_{\ll 0} -Ad(\bfO_{<j + \tilde{\dlt} h})_{\ll j-5})(t, x, D, y, s) ) 
P_{0}}_{L^\infty L^2 \to L^2} 
\aleq_{M_{\sgm}} 2^{-\frac12 j} 2^{\dlt_{(1)} h}
\]
But this is a direct consequence of the bound \eqref{eq:renrm-hifreq}.
\end{proof}

\begin{proof}[Proof of \eqref{eq:renrm:approx-uni}  in the case $Z = N$ or $N^{\ast}$]
For the estimate \eqref{eq:renrm:approx-uni} with $Z = N^*$ we combine the $L^\infty L^2$ bound given by \eqref{eq:renrm-bdd-L2}
with \eqref{eq:renrm-bdd-lomod-double}. If on the other hand $Z = N$, then the same bound follows by duality.
\end{proof}

It remains to prove \eqref{eq:renrm:bdd}, \eqref{eq:renrm:bdd-d} and \eqref{eq:renrm:bdd:imp} when $Z = N$ or $N^{\ast}$. For this purpose, we recall the following result from \cite{KT}:
\begin{lemma} \label{lem:highO}
For $\ell \leq k' \pm O(1)$, we have
\begin{align} 
	\nrm{Q_{\ell} Op(Ad(O_{<h, \pm})_{k'})(t, x, D) Q_{<0} P_{0}}_{N^{\ast} \to X^{0, \frac{1}{2}}_{\infty}}
	\aleq_{M_{\sgm}} & 2^{\dlta (\ell - k')}, \label{eq:renrm-bdd-lomod} \\
	\nrm{Q_{\ell} Op(Ad(O_{<h, \pm}^{-1})_{k'})(D, y, s) Q_{<0} P_{0}}_{N^{\ast} \to X^{0, \frac{1}{2}}_{\infty}}
	\aleq_{M_{\sgm}} & 2^{\dlta (\ell - k')}. \label{eq:renrm-bdd-lomod*}
\end{align}
In particular, summing over all $(\ell, k')$ with $\ell \leq k$ and $k \leq k' + O(1)$, we have
\begin{align} 
	\nrm{Q_{<k} (Op(Ad(O_{<h, \pm})_{<0}) - Op(Ad(O_{<h, \pm})_{<k-C}))(t, x, D) Q_{<0} P_{0}}_{N^{\ast} \to X^{0, \frac{1}{2}}_{\infty}}
	\aleq_{M_{\sgm}} & 1, \label{eq:renrm-bdd-lomod-sum} \\
	\nrm{Q_{<k} (Op(Ad(O_{<h, \pm}^{-1})_{<0}) - Op(Ad(O_{<h, \pm}^{-1})_{<k-C}))(D, y, s) Q_{<0} P_{0}}_{N^{\ast} \to X^{0, \frac{1}{2}}_{\infty}}
	\aleq_{M_{\sgm}} & 1. \label{eq:renrm-bdd-lomod*-sum}
\end{align}
\end{lemma}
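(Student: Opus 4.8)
Lemma~\ref{lem:highO} is recorded as a result from \cite{KT}, and the plan is to deduce it from the corresponding estimate there, with the only bookkeeping changes being that all constants are tracked through the single quantity $M_{\sgm}$ (which by construction already contains every pointwise and decomposability bound on $\Psi$ and $O$ used below) rather than through $\nrm{A}_{S^{1}}$, and that no interval localization enters since we work with the global-in-time homogeneous-wave extension of $\bfP A$. I will describe the argument for \eqref{eq:renrm-bdd-lomod}; \eqref{eq:renrm-bdd-lomod*} is the same after interchanging left and right quantizations.

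First I would reduce the core bound to a fixed-modulation $L^{2}L^{2}$ estimate. The output of $Op(Ad(O_{<h,\pm})_{k'})(t,x,D) P_{0}$ is supported at frequencies $\aeq 1$ (up to an $O(1)$ spread absorbed harmlessly), so for the $Q_{\ell}$-localized output one has $\nrm{Q_{\ell}(\cdot)}_{X^{0,\frac12}_{\infty}} \aeq 2^{\ell/2}\nrm{Q_{\ell}(\cdot)}_{L^{2}L^{2}}$, while on the input side $N^{\ast} = L^{\infty}L^{2} \cap X^{0,\frac12}_{\infty}$ supplies both $\nrm{u}_{L^{\infty}L^{2}}$ and $\nrm{u}_{X^{0,\frac12}_{\infty}}$. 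Hence it suffices to prove
\begin{equation*}
\nrm{Q_{\ell} Op(Ad(O_{<h,\pm})_{k'})(t,x,D) Q_{<0} P_{0} u}_{L^{2}L^{2}} \aleq_{M_{\sgm}} 2^{-\ell/2} 2^{\dlta(\ell-k')} \nrm{u}_{N^{\ast}}
\end{equation*}
whenever $\ell \leq k' + O(1)$, and symmetrically for the right quantization.

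The core estimate is the heart of the matter and the step I expect to be the main obstacle. The symbol $Ad(O_{<h,\pm})_{k'}$ is the spacetime-frequency-$2^{k'}$ component of $Ad(O_{<h,\pm})$, which through the defining ODE \eqref{eq:O-def} is assembled from the phases $\Psi_{\pm,\ell''}$; its size, regularity, and angular structure are governed by the decomposability bounds of Lemmas~\ref{lem:decomp-Psi} and \ref{lem:decomp-O}, with the high-frequency decay of Lemma~\ref{lem:renrm-bdd-hifreq} available when $k'$ exceeds the symbol scale. The input $Q_{<0}P_{0}u$ sits within $O(1)$ of the light cone $\set{\tau = \mp\abs{\xi}}$ at $\abs{\xi}\aeq 1$; adding a symbol carrying spacetime frequency $2^{k'}$ can raise the modulation only to $\aeq 2^{k'+O(1)}$, which is the constraint $\ell \leq k' + O(1)$. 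To land at the lower modulation $2^{\ell}$ with $\ell < k'$, the geometry of the cone (Lemma~\ref{lem:geom-cone}) forces the symbol's spacetime frequency to lie in an angular sector of opening $\aeq 2^{\frac12(\ell-k')}$ about the light-cone direction, and restricting the decomposability bounds of Lemmas~\ref{lem:decomp-Psi}, \ref{lem:decomp-O} to that narrow sector produces the gain $2^{\dlta(\ell-k')}$. This angular-concentration mechanism is exactly the one carried out in \cite{KST, KT}; I would transcribe it, verifying only that each symbol input is of the form $\aleq_{M_{\sgm}} (\cdots)$.

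Finally, the ``in particular'' statement is a geometric summation. Telescoping, $Op(Ad(O_{<h,\pm})_{<0}) - Op(Ad(O_{<h,\pm})_{<k-C})$ equals, up to a sign, a sum of the dyadic pieces $Op(Ad(O_{<h,\pm})_{k'})$, and $Q_{<k} = \sum_{\ell<k}Q_{\ell}$; only pairs $(\ell,k')$ with $\ell \leq k \leq k' + O(1)$ contribute, for which \eqref{eq:renrm-bdd-lomod} gives each term a bound $\aleq_{M_{\sgm}} 2^{\dlta(\ell-k')}$. Since $\sum 2^{\dlta(\ell-k')}$ over such pairs is $\aleq_{\dlta} 1$, we obtain \eqref{eq:renrm-bdd-lomod-sum}, and \eqref{eq:renrm-bdd-lomod*-sum} follows identically from \eqref{eq:renrm-bdd-lomod*}.
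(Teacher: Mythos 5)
Your plan coincides with the paper's: the paper itself proves Lemma~\ref{lem:highO} by deferring to \cite[Proof of Proposition~8.5]{KT} (``It can be proved with exactly the same arguments as in \cite[Proof of Proposition~8.5]{KT} (there, $M_{\sgm}\aleq\eps$)''), then indicating how to read the argument off the proof of Proposition~\ref{prop:renrm-bdd-lomod} by replacing the compound symbol $Ad(\bfO_{<h})_{\ll k}-1$ with the single symbol $Ad(O_{<h})_{<k}$, and correspondingly replacing the ``cancellation'' estimates \eqref{eq:renrm-bdd-L2}, \eqref{eq:renrm-double-hifreq} by the single-operator ones \eqref{eq:renrm-L2-single}, \eqref{eq:renrm-hifreq} at the cost of the smallness factor. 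Your additional preliminary step (reducing to the fixed-modulation $L^2L^2$ estimate via $\nrm{Q_{\ell}(\cdot)}_{X^{0,\frac12}_{\infty}}\aeq 2^{\ell/2}\nrm{Q_{\ell}(\cdot)}_{L^2L^2}$) is harmless and matches what actually happens inside the cited proof, and the geometric summation for the ``in particular'' clause is correct.

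One substantive imprecision in your narration of the ``core estimate,'' which is worth correcting because it would derail you if you actually carried out the transcription yourself rather than copying it: you describe the mechanism as confining the symbol's spacetime frequency to a \emph{narrow} angular sector of opening $\aeq 2^{\frac12(\ell-k')}$ and then ``restricting the decomposability bounds to that sector.'' In fact the geometry of the cone (cf.\ Step~4.2/5.2 of the proof of Proposition~\ref{prop:renrm-bdd-lomod}) forces a \emph{lower} bound on the angular parameter of the $\Psi^{(\tht)}$ piece, $\tht\gtrsim 2^{\frac12(\ell-k')}$; the piece must sit at a \emph{large} angle, otherwise the output cannot have modulation $2^{\ell}$. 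The gain then arises because the decomposability norm $\nrm{\Psi^{(\tht)}_{k'}}_{DL^2L^\infty}\aleq 2^{-k'/2}\tht^{-1/2}M_{\sgm}$ has a $\tht^{-1/2}$ that, summed over the allowed range $\tht\gtrsim 2^{\frac12(\ell-k')}$, costs $2^{\frac14(k'-\ell)}$, and the remaining $2^{-k'/2}\cdot 2^{\ell/2}$ from the modulation conversion leaves $2^{\frac14(\ell-k')}$ overall; if the sector were genuinely narrow the $\tht^{-1/2}$ factor would blow up rather than help. Since your plan is to transcribe the \cite{KST,KT} argument verbatim (as the paper does), this does not create a gap in the proof, but it is the opposite of what you wrote.
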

\begin{proof}
The proof of this lemma is similar to that of
Proposition~\ref{prop:renrm-bdd-lomod}, but simpler in the sense the
frequency gap need not be exploited. It can be proved with exactly the
same arguments as in \cite[Proof of Proposition~8.5]{KT} (there,
$M_{\sgm} \aleq \eps$).
Because of this, we will merely indicate here how to modify the
preceding proof of \eqref{eq:renrm-bdd-lomod-double} to obtain
\eqref{eq:renrm-bdd-lomod}. We leave the details, as well as the
entire case of \eqref{eq:renrm-bdd-lomod*}, to the reader.

As before, we omit $\pm$ in the symbols. Throughout the proof of
\eqref{eq:renrm-bdd-lomod-double}, we replace $Ad(\bfO_{<h})_{\ll
  k}(t, x, s, y, \xi) - 1$ by $Ad(O_{<h})_{<k}(t, x, \xi)$. The main
decomposition (Step~4) now takes the form
\begin{align*}
	Ad(O_{<h}) (t, x, \xi) - Ad(O_{<j + \tilde{\dlt} h}) 
	=  \calL' + \calQ' + \calC' 
	= & \int_{j + \tilde{\dlt} h \leq \ell \leq h} \calL'_{\ell, < j + \tilde{\dlt} h} \, \ud \ell \\
	& + \int_{j + \tilde{\dlt} h \leq \ell' \leq \ell \leq h} \calQ'_{\ell, \ell', < j + \tilde{\dlt} h} \, \ud \ell' \ud \ell \\
	& + \int_{j + \tilde{\dlt} h \leq \ell'' \leq \ell' \leq \ell \leq h} \calC'_{\ell, \ell', \ell'', < j + \tilde{\dlt} h} \, \ud \ell'' \ud \ell' \, \ud \ell 
\end{align*}
where
\begin{align*}
	\calL'_{\ell, <k}(t, x, \xi) =& ad(\Psi_{\ell}) Ad(O_{<k})(t, x, \xi), \\
	\calQ'_{\ell, <k}(t, x, \xi) =& ad(\Psi_{\ell}) \calL'_{\ell', <k}(t, x, \xi) 
	= ad(\Psi_{\ell}) ad(\Psi_{\ell'}) Ad(O_{<k})(t, x, \xi), \\
	\calC'_{\ell, <k}(t, x, \xi) =& ad(\Psi_{\ell}) \calQ'_{\ell', \ell'', <k}(t, x, \xi) 
	= ad(\Psi_{\ell}) ad(\Psi_{\ell'}) ad(\Psi_{\ell''}) Ad(O_{<k})(t, x, \xi).
\end{align*}
For the expansion of $\calL$, $\calQ$ and $\calC$ in Steps~5, 6 and 7, we replace $\calL_{\ell, < k, \ll k'}$, $\calL_{\ell, < -\infty}$, $\calQ_{\ell, \ell', < k, \ll k'}$, $\calQ_{\ell, \ell', <-\infty}$, $\calC_{\ell, \ell', \ell'', < k, \ll k'}$ and $\calC_{\ell, \ell', \ell'', <-\infty}$  by, respectively,
\begin{align*}
	\calL'_{\ell, < k, < k'} 
	= & ad(\Psi_{\ell}) Ad(O_{< k})_{< k'} (t, x, \xi) , \\
	\calL'_{\ell, < -\infty} 
	= & ad(\Psi_{\ell})(t, x, \xi), \\
	\calQ'_{\ell, \ell', < k, < k'} 
	= & ad(\Psi_{\ell}) \calL'_{\ell', <k, <k'} (t, x, \xi) 
	= ad(\Psi_{\ell}) ad(\Psi_{\ell'}) Ad(O_{< k})_{< k'} (t, x, \xi) , \\
	\calQ'_{\ell, \ell', < -\infty} 
	= & ad(\Psi_{\ell}) \calL'_{\ell', <-\infty}(t, x, \xi)
	= ad(\Psi_{\ell}) ad(\Psi_{\ell'}) (t, x, \xi) , \\
	\calC'_{\ell, \ell', \ell'' < k, < k'} 
	= & ad(\Psi_{\ell}) \calQ'_{\ell', \ell'', <k, <k'} (t, x, \xi) 
	= ad(\Psi_{\ell}) ad(\Psi_{\ell'}) ad(\Psi_{\ell''}) Ad(O_{< k})_{< k'} (t, x, \xi) , \\
	\calC'_{\ell, \ell', \ell'', < -\infty} 
	= & ad(\Psi_{\ell}) \calQ'_{\ell', \ell'', <-\infty}(t, x, \xi)
	= ad(\Psi_{\ell}) ad(\Psi_{\ell'}) ad(\Psi_{\ell''}) (t, x, \xi).
\end{align*}
Accordingly, we replace the use of \eqref{eq:renrm-bdd-L2} and \eqref{eq:renrm-double-hifreq}  by \eqref{eq:renrm-L2-single} and \eqref{eq:renrm-hifreq}, respectively, which results in loss of the smallness factor $2^{\dlt_{(1)} h}$ in \eqref{eq:renrm-bdd-lomod} compared to \eqref{eq:renrm-bdd-lomod-double}. 
\end{proof}

\begin{proof}[Proof of \eqref{eq:renrm:bdd}, \eqref{eq:renrm:bdd-d} and \eqref{eq:renrm:bdd:imp}]
in the case $Z = N$ or $N^{\ast}$]  It suffices to consider the $Z = N^*$; then the case $Z = N$ follows by duality.
The $L^\infty L^2$ bound follows from the $Z = L^2$ case, so for \eqref{eq:renrm:bdd} and \eqref{eq:renrm:bdd:imp}
it remains to establish that
\[
\| Q_{j} Op(Ad(O_{<h,\pm})_{<0})P_0\|_{N^* \to L^2} \aleq_{M_{\sgm}} 2^{-\frac12 j}
\]
By Lemma~\ref{lem:highO} this reduces to
\[
\| Q_{j} Op(Ad(O_{<h,\pm})_{<j-5})P_0\|_{N^* \to L^2} \aleq_{M_{\sgm}} 2^{-\frac12 j}
\]
Now due to the frequency localization for $Op(Ad(O_{<h,\pm})_{<j-5}$ we can insert a (slight enlargement of) $Q_j$ on the right,
in which case we can simply use again the $Z = L^2$ case.

Similarly, in the case of \eqref{eq:renrm:bdd:imp} it suffices to show that 
\[
\| Q_{j} [\partial_t, Op( Ad(O_{<h,\pm})_{<0})] Q_{<j} P_0\|_{N^* \to L^2} \aleq_{M_{\sgm}} 2^{-\frac12 j} 2^{h} 
\]
We split into two cases. If $j \leq \frac34 h$ then we write
\[
\partial_t Ad(O_{<h,\pm}) = ad(O_{<h,\pm;t})   Ad(O_{<h,\pm})_{<0}).
\]
and then we can easily combine the decomposability bound \eqref{eq:decomp-O-2} with the $L^2$ boundedness of $
Op( Ad(O_{<h,\pm})_{<0})$. Else we have 
\[
Q_{j} [\partial_t, Op( Ad(O_{<h,\pm})_{<0})] Q_{<j} P_0 = Q_{j} [\partial_t, Op( Ad(O_{<h,\pm})_{[j-5,0]})] Q_{<j} P_0
\]
Now we discard $Q_j$, $Q_{< j-5}$ and $\partial_t$ and use directly \eqref{eq:renrm-hifreq} with $p = \infty$ and $q = 2$.

\end{proof}

\subsection{Dispersive estimates} \label{subsec:para-disp}
Finally, we sketch the proofs of \eqref{eq:renrm:disp} and
\eqref{eq:renrm:disp:imp}. As in \cite{KT}, we exactly follow the
argument in \cite[Section~11]{KST}.  In the case of
\eqref{eq:renrm:disp}, we replace the use of the oscillatory integral
estimates (108), (110) and (111) in \cite{KST} by \eqref{eq:osc-int},
\eqref{eq:osc-int-C} and \eqref{eq:osc-int-null}, respectively, the
fixed-time $L^{2}$ bound (114) in \cite{KST} by
\eqref{eq:renrm-L2-single}, (118) in \cite{KST} by
\eqref{eq:renrm-bdd-lomod-sum} etc. In case of
\eqref{eq:renrm:disp:imp}, observe that all the constants in these
bounds are \emph{universal} under the smallness assumption
\eqref{eq:renrm:imp-hyp} for a suitable choice of $\dlto(M)$, as we
may take $M_{\sgm} \aleq 1$.

There is one exception to the above strategy, namely the square 
function bound
\begin{equation}\label{sf-S}
 \left\| Op(Ad(O_{\pm})_{<0}(t,x,D) \right\|_{S_0^\sharp \to L^\frac{10}{3}_x L^2_t} \lesssim_{M_\sigma} 1.  
\end{equation}
This is due to the fact that the square function norm was not part of the $S_0$ norm in \cite{KST, KT},
and was added only here. The same approach as in \cite{KT} allows us, via a $TT^*$ type argument,
to reduce the problem to an estimate of the form
\[
\left\| \int \chi_{-l}(t-s)   \bfS(t,s) B(s) ds \right\|_{L^\frac{10}{3}_x L^2_t} \lesssim_{M_\sigma}  \| B\|_{ L^\frac{10}{7}_x L^2_t}
\]
where 
\[
\bfS(t,s) = Op(Ad(O_{\pm})_{<0}(t,x,D) e^{\pm i(t-s) |D|}   Op(Ad(O_{\pm})_{<0}(D,s,y)
\]
and the bump function $ \chi_{-l}$ corresponds to the modulation scale $2^{l}$ in $S_0^\sharp$.
It is easily seen that the bump function is disposable and can be harmlessly discarded.
Hence in order to prove \eqref{sf-S} it remains to show that
\begin{equation}\label{sf-TT}
\| \int   \bfS(t,s) B(s) ds \|_{L^\frac{10}{3}_x L^2_t} \lesssim_{M_\sigma}  \| B\|_{ L^\frac{10}{7}_x L^2_t}
\end{equation}
To prove this we use Stein's analytic interpolation theorem. We consider the analytic family of operators
\[
T_z B(t) =   e^{z^2}  \int (t-s)^{z}  \bfS(t,s) B(s) ds 
\]
for $z$ in the strip 
\[
-1 \leq \Im z \leq \frac32
\]
Then it suffices to establish the uniform bounds 
\begin{equation}\label{T-22}
\| T_z \|_{L^2 \to L^2} \lesssim_{M_\sigma} 1, \qquad \Re z = -1 
\end{equation}
respectively
\begin{equation}\label{T-1infty}
\| T_z \|_{L^1_x L^2_t \to L^\infty_x L^2_t} \lesssim_{M_\sigma} 1, \qquad \Re z = \frac32 
\end{equation}

For \eqref{T-22} we can use the bound \eqref{eq:renrm-L2-single-bare} to discard
the $L^2$ bounded operators 
\[
Op(Ad(O_{\pm})_{<0}(t,x,D) e^{\pm it |D|}, \qquad  e^{\mp is |D|}   Op(Ad(O_{\pm})_{<0}(D,s,y).
\]
 Then we are left 
with the time convolutions with the kernels $e^{z^2} t^{z}$. But these are easily seen to be 
multipliers with uniformly bounded symbols.

For \eqref{T-1infty}, on the other hand, we consider the kernel $K_z(t,x,s,y)$ of $T_z$.
This is given by 
\[
K_z(t,x,s,y) =   e^{z^2}  (t-s)^{z} K^a_{<0}(t,x,s,y) 
\]
with $a$ a smooth bump function on the unit scale. Hence by \eqref{eq:osc-int}
we have the kernel bound
\[
| K_z(t,x,s,y) | \lesssim_{M_\sigma} \la |t-s|-|x-y|\ra^{-100}, \qquad \Re z = \frac32
\]
Fixing $x$ and $y$ we have the obvious bound
\[
\| K_z(\cdot,x,\cdot ,y)\|_{L^2 \to L^2} \lesssim_{M_\sigma} 1.
\]
Then \eqref{T-1infty} easily follows.

\section{Renormalization error bounds} \label{sec:paradiff-err}
Without loss of generality, we fix the sign $\pm = +$. In this section, unless we specify otherwise, $Op(\cdot)$ denotes the left quantization. For the sake of simplicity, we also adopt the convention of simply writing $A_{x}$ for $\P_{x} A$.

\subsection{Preliminaries}
We collect here some technical tools for proving the renormalization error bound. 

We begin with a tool that allows us to split $Op(ab)$ into $Op(a) Op(b)$. The idea of the proof is based on the heuristic identity $Op(ab) - Op(a) Op(b) \approx Op(- i \rd_{\xi} a \cdot \rd_{x} b)$ for left-quantized pseudodifferential operators (cf. \cite[Lemma~7.2]{KST} and \cite[Lemma~7.2]{KT}).

\begin{lemma} [Composition via pseudodifferential calculus] \label{lem:PsDO}
Let $a(t, x, \xi)$ and $b(t, x, \xi)$ be $\End(\g)$-valued symbols on $I_{t} \times \bbR^{4}_{x} \times \bbR^{4}_{\xi}$ with bounded derivatives, such that $a(t, x, \xi)$ is homogeneous of degree $0$ in $\xi$ and $b(t, x, \xi) = P^{x}_{<h_{\tht} - 10} b(t, x, \xi)$ for some $0 < \tht < 1$ and $2^{h_{\tht}} = \tht$. Then we have
\begin{equation} \label{eq:PsDO-compose}
\begin{aligned}
	\nrm{(Op(a) Op(b)\!  -\! Op(ab)) P_{0}}_{L^{q} L^{2}[I] \to L^{r} L^{2}[I]} \aleq &  \nrm{\tht \rd_{\xi} a}_{D_{\tht} L^{p_{2}} L^{\infty} [I]}  
	\nrm{Op(\tht^{-1} \rd_{x} b) P_{0}}_{L^{q} L^{2}[I] \to L^{p_{1}} L^{2}[I]},
\end{aligned}
\end{equation}
where $r^{-1} = p_{1}^{-1} + p_{2}^{-1}$.
\end{lemma}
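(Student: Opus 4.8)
The plan is to reduce, via an \emph{exact} finite composition expansion for left-quantized pseudodifferential operators, to a finite sum of operators of the form $Op(\rd_\xi^{(k+1)} a) \, Op(\rd_x^{(k+1)} b)$ plus a remainder, and then to estimate each piece using the decomposability calculus of Lemma~\ref{lem:decomp-basic}. The argument is modeled on the proofs of \cite[Lemma~7.2]{KST} and \cite[Lemma~7.2]{KT}; the only new points are the $\End(\g)$-matrix structure of the symbols (harmless, since one merely keeps track of the left/right ordering of $a$ and $b$ throughout) and the interval localization (accommodated by the definition of $D_\tht L^q L^r[I]$).

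First I would move to the Fourier side. Writing Fourier kernels in $x$, a direct computation shows that $Op(a) Op(b) - Op(ab)$ has Fourier symbol $\iint \bigl[ \widehat{a}(\xi - \eta, \eta) - \widehat{a}(\xi - \eta, \zt) \bigr]\, \widehat{b}(\eta - \zt, \zt) \, \ud \eta$, acting on $\widehat{f}(\zt)$. Applying the fundamental theorem of calculus in the second argument of $\widehat{a}$, and then a finite Taylor expansion around $\eta$ together with the identity $(\eta - \zt)^{\otimes k} \widehat{b}(\eta - \zt, \zt) = (-i)^{k} \widehat{\rd_x^{(k)} b}(\eta - \zt, \zt)$, one obtains (for any fixed large $K$ bounded by the number of $\xi$-derivatives controlled in the $D_\tht$-norm)
\[
Op(a) Op(b) - Op(ab) = \sum_{k=0}^{K} c_{k} \, Op(\rd_\xi^{(k+1)} a) \, Op(\rd_x^{(k+1)} b) + \calR_{K},
\]
with $\abs{c_{k}} \aleq 1$, where $\calR_{K}$ is a remainder built from $\rd_\xi^{(K+2)} a$ (evaluated at a mildly shifted frequency) and $\rd_x^{(K+2)} b$.

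With this in hand, I would estimate each term. For the $k$-th main term, write $Op(\rd_\xi^{(k+1)} a)\, Op(\rd_x^{(k+1)} b) = Op(\tht^{k+1} \rd_\xi^{(k+1)} a)\, Op(\tht^{-(k+1)} \rd_x^{(k+1)} b)$, since $\tht$ is a constant. Because $b = P^x_{< h_\tht - 10} b$ and $P_{0}$ sits on the right, $Op(\tht^{-(k+1)} \rd_x^{(k+1)} b) P_{0}$ maps $L^q L^2[I] \to L^{p_1} L^2[I]$ with norm $\aleq 2^{-10 k} \nrm{Op(\tht^{-1} \rd_x b) P_{0}}_{L^q L^2[I] \to L^{p_1} L^2[I]}$, the gain coming from the frequency localization of $b$. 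Its output has frequency $\aeq 1$, so one may insert a fattened projection, on whose support $\tht^{k+1} \rd_\xi^{(k+1)} a$ is comparable to a zero-homogeneous symbol with $D_\tht L^{p_2} L^\infty[I]$-norm $\aleq \nrm{\tht \rd_\xi a}_{D_\tht L^{p_2} L^\infty[I]}$ (here $k+1$ is below the order of $\xi$-differentiation retained in the $D_\tht$-norm). Then Lemma~\ref{lem:decomp-basic}(3), with base operator a bounded bump multiplier, bounds $Op(\tht^{k+1} \rd_\xi^{(k+1)} a)$ as a map $L^{p_1} L^2[I] \to L^{r} L^2[I]$ with $r^{-1} = p_1^{-1} + p_2^{-1}$, with norm $\aleq \nrm{\tht \rd_\xi a}_{D_\tht L^{p_2} L^\infty[I]}$. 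Summing the geometrically decaying series in $k$ gives \eqref{eq:PsDO-compose}, the $k=0$ term being the dominant one; the remainder $\calR_{K}$ is treated the same way, using the highest retained $\xi$-derivative in the $D_\tht$-norm together with the $2^{-10(K+1)}$ gain from $b$.

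The step I expect to require the most care is the reorganization in the second paragraph: on the Fourier side the naive difference $Op(a) Op(b) - Op(ab)$ does not match $Op(\rd_\xi a)\, Op(\rd_x b)$ exactly, because of the mismatch between the ``second frequency'' $\zt$ of $b$ and the total output frequency $\eta = \zt + (\eta - \zt)$ entering $\widehat{a}$; the Taylor expansion is precisely what absorbs this mismatch, and one must verify that, on the unit frequency shell and at the single angular scale $\tht$, shifting this second frequency by $O(\tht 2^{-10})$ stays within $O(1)$ neighboring $\tht$-caps, so that all the shifted symbols are still controlled by the $D_\tht$-norm of $\tht \rd_\xi a$ and the limited (order $\aeq \dlt^{-1}$) smoothness of the symbols $a$ occurring in the applications suffices.
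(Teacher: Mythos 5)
Your algebra in the first two paragraphs is correct (in particular, expanding $\widehat a(\xi-\eta,\cdot)$ about $\eta$ does yield compositions $Op(\rd_\xi^{(k)}a)\,Op(\rd_x^{(k)}b)$ rather than products, as you need), but the route differs from the paper's, and the quantitative step in your third paragraph has a genuine gap that the hedge in your last paragraph does not repair.

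The paper does not Taylor expand at all: it applies the fundamental theorem of calculus exactly once along $s\xi+(1-s)\eta$, integrates by parts in the intermediate spatial variable $z$ to turn $(\xi-\eta)$ into $\rd_x b(z,\eta)$, then Fourier-expands $\rd_\xi a(x,\cdot)$ in $\xi$ alone and performs a change of variables which makes the \emph{entire} operator $Op(\rd_x b)$ (with a translated spatial argument) appear in the kernel. The price is an $L^1_\Xi$ norm of $\nrm{(\rd_\xi a^{\phi}_{\tht})^\vee(\cdot,\Xi)}_{L^\infty_x}$, controlled by $\aleq 20$ integrations by parts in $\xi$ --- well inside the $n\leq 40$ range built into \eqref{eq:decomp-tht} --- and the angular summation over $\phi$-caps is done by hand with Cauchy--Schwarz rather than by citing Lemma~\ref{lem:decomp-basic}(3).

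Your expansion, by contrast, produces for each $k\geq 1$ a term $Op(\tht^{k+1}\rd_\xi^{(k+1)}a)\,Op(\tht^{-(k+1)}\rd_x^{(k+1)}b)$. The $2^{-10k}$ gain on the $b$ factor is fine (each $\rd_x$ on a symbol with $x$-frequency $< \tht 2^{-10}$ costs an $x$-translation, which is free on translation-invariant norms, and a factor $\tht 2^{-10}$). But your claim that $\nrm{\tht^{k+1}\rd_\xi^{(k+1)}a}_{D_\tht L^{p_2}L^\infty[I]} \aleq \nrm{\tht\rd_\xi a}_{D_\tht L^{p_2}L^\infty[I]}$ fails for $k\geq 1$. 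Unwinding \eqref{eq:decomp-tht}: the left side involves $\tht^m\rd_\xi^{(m)}a$ for $m = k+1,\dots,k+41$, whereas the right side only controls $m = 1,\dots,41$; the orders $m > 41$ are simply not encoded in the stated right-hand side of \eqref{eq:PsDO-compose}. The parenthetical ``(here $k+1$ is below the order of $\xi$-differentiation retained in the $D_\tht$-norm)'' is where the argument slips: it is the \emph{top} of the range, $k+41$, not the bottom $k+1$, that must fit. The same issue, amplified, afflicts $\calR_K$, which needs $\rd_\xi^{(K+2)}a$ at a shifted frequency and hence derivatives up to order $K+42$ to be placed in the decomposability norm. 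One could of course repair this by enlarging $40$ in \eqref{eq:decomp-tht} to $40+K$ (harmless for the applications, where the symbols come from $Ad(O_{<h,\pm})$ and $ad(\Psi)$ with symbol estimates to order $\aeq\dlt^{-1}$), but the abstract estimate \eqref{eq:PsDO-compose} with the $D_\tht$-norm as defined is not proved by your argument as written. The paper's single-FTC plus change-of-variables route is the cleaner one precisely because it needs only $O(1)$ $\xi$-derivatives, independent of how far you would have to Taylor expand to kill the remainder.
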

\begin{proof}
For simplicity, in this proof we only present formal computation, which can be justified using the qualitative assumptions on $a$ and $b$. 

Let us fix $t \in I$. Thanks to the frequency localization condition $b(x, \xi) = P^{x}_{<h_{\tht} - 10} b(x, \xi)$, we may write
\begin{equation*}
(Op(a) Op(b) - Op(ab))P_{0} = \sum_{\phi} Op(a^{\phi}_{\tht}) Op(b^{\phi}_{\tht}) - Op(a^{\phi}_{\tht} b^{\phi}_{\tht}) 
\end{equation*}
where
\begin{equation*}
	a^{\phi}_{\tht}(x, \xi) = a(x, \xi) (m^{\phi}_{\tht})^{2}(\xi) \tilde{m}_{0}^{2}(\xi), \quad
	b^{\phi}_{\tht}(x, \xi) = b(x, \xi) \tilde{m}^{\phi}_{\tht}(\xi) m_{0}(\xi).
\end{equation*}
Here $\phi$ runs over caps of radius $\aeq \tht$ on $\bbS^{3}$ with uniformly finite overlaps, $(m^{\phi}_{\tht})^{2}(\xi) = (m^{\phi}_{\tht})^{2}(\xi/\abs{\xi})$ are the associated smooth partition of unity on $\bbS^{3}$ and $m_{0}(\xi)$ is the symbol for $P_{0}$. The functions $\tilde{m}^{\phi}_{\tht}(\xi)= \tilde{m}^{\phi}_{\tht}(\xi / \abs{\xi})$ and $\tilde{m}_{0}^{2}(\xi)$ are smooth cutoffs to the supports of $m^{\phi}_{\tht}$ and $m_{0}$, respectively, which can be inserted thanks to the frequency localization condition $b(x, \xi) = P^{x}_{<h_{\tht} - 10} b(x, \xi)$. 

For each $\phi$, we claim that
\begin{equation} \label{eq:PsDO-phi}
	\nrm{Op(a^{\phi}_{\tht}) Op(b^{\phi}_{\tht}) - Op(a^{\phi}_{\tht} b^{\phi}_{\tht})}_{L^{2} \to L^{2}}
	\aleq \big(\sum_{n=1}^{20} \sup_{\omg} m^{\phi}_{\tht}(\omg) \nrm{\tht^{n} \rd_{\xi}^{(n)} a(\cdot, \omg)}_{L^{\infty}}\big) 
	\nrm{Op(\tht^{-1} \rd_{x} b^{\phi}_{\tht})}_{L^{2} \to L^{2}}
\end{equation}
Assuming the claim, the proof can be completed as follows. Let us restore the dependence of the symbols on $t$. By the definition of $D_{\tht} L^{q} L^{r}$, we have
\begin{equation*}
\nrm{\Big( \sum_{\phi} \big(\sum_{n=1}^{20} \sup_{\omg} m^{\phi}_{\tht}(\omg)\nrm{\tht^{n} \rd_{\xi}^{(n)} a(t, \cdot, \omg)}_{L^{\infty}}\big)^{2} \Big)^{\frac{1}{2}}}_{L^{p_{2}}_{t}[I]}
\aleq \nrm{\tht \rd_{\xi} a}_{D_{\tht} L^{p_{2}} L^{\infty}[I]}
\end{equation*}
On the other hand, by $L^{2}$-almost orthogonality of $\tilde{m}^{\phi}_{\tht}(\xi)$ and H\"older in $t$, we have
\begin{equation*}
\nrm{\Big( \sum_{\phi} \nrm{Op(\tht^{-1} \rd_{x} b^{\phi}_{\tht})}_{L^{2} \to L^{2}}^{2}\Big)^{\frac{1}{2}}}_{L^{p_{0}}_{t}[I]} \aleq \nrm{Op(\tht^{-1} \rd_{x} b) P_{0}}_{L^{q} L^{2} \to L^{p_{1}} L^{2}[I]}
\end{equation*}
where $r^{-1} + p_{0}^{-1} = p_{1}^{-1}$. Therefore, by Cauchy--Schwarz in $\phi$ and H\"older in $t$, \eqref{eq:PsDO-compose} would follow.

We now turn to the proof of \eqref{eq:PsDO-phi}. For simplicity of notation, we use the shorthands $a = a_{\tht}^{\phi}$ and $b = b_{\tht}^{\phi}$ for now. Then the kernel of $Op(a) Op(b) - Op(ab)$ can be computed as follows:
\begin{align*}
	K(x, y) 
	& = \int e^{i(x-z) \cdot \xi} e^{i (z - y) \cdot \eta}( a (x, \xi) - a (x, \eta)) b(z, \eta) \, \ud z \frac{\ud \xi}{(2 \pi)^{4}} \frac{\ud \eta}{(2 \pi)^{4}} \\
	& = \int_{0}^{1} \int e^{i(x-z) \cdot \xi} e^{i (z - y) \cdot \eta} (\xi - \eta) \cdot (\rd_{\xi} a)(x, s \xi + (1-s) \eta)  b(z, \eta) \, \ud z \frac{\ud \xi}{(2 \pi)^{4}} \frac{\ud \eta}{(2 \pi)^{4}} \, \ud s \\
	& = - i \int_{0}^{1} \int e^{i(x-z) \cdot \xi} e^{i (z - y) \cdot \eta} (\rd_{\xi} a)(x, s \xi + (1-s) \eta)  (\rd_{x} b)(z, \eta) \, \ud z \frac{\ud \xi}{(2 \pi)^{4}} \frac{\ud \eta}{(2 \pi)^{4}} \, \ud s.
\end{align*}
Expanding $\rd_{\xi} a (x, \cdot)= \int e^{- i (\cdot) \cdot \Xi}(\rd_{\xi} a)^{\vee}(x, \Xi) \, \ud \Xi$ and making the change of variables $\tilde{z} = z - (1-s) \Xi$, we further compute
\begin{align*}
 K(x, y) 
& = - i \int_{0}^{1} \int e^{i(x- s \Xi - z) \cdot \xi} e^{i (z - (1-s) \Xi - y) \cdot \eta} (\rd_{\xi} a)^{\vee}(x, \Xi)   (\rd_{x} b)(z, \eta) \, \ud \Xi \, \ud z \frac{\ud \xi}{(2 \pi)^{4}} \frac{\ud \eta}{(2 \pi)^{4}} \, \ud s \\ 
& = - i \int_{0}^{1} \int e^{i(x -  \Xi -\tilde{z} ) \cdot \xi} e^{i (\tilde{z} - y) \cdot \eta} (\rd_{\xi} a)^{\vee}(x, \Xi)   (\rd_{x} b)(\tilde{z} + (1-s) \Xi, \eta) \, \ud \Xi \, \ud \tilde{z} \frac{\ud \xi}{(2 \pi)^{4}} \frac{\ud \eta}{(2 \pi)^{4}} \, \ud s \\ 
& = - i \int_{0}^{1} \int (\rd_{\xi} a)^{\vee}(x, \Xi) \left( \int e^{i (x - s \Xi - y) \cdot \eta}  (\rd_{x} b)(x - s \Xi, \eta)  \, \frac{\ud \eta}{(2 \pi)^{4}} \right)\, \ud \Xi   \, \ud s 
\end{align*}
On the last line, observe that the $\eta$-integral inside the parentheses is precisely the kernel of $Op(\rd_{x} b)(x - s \Xi, D)$. By translation invariance, we have
\begin{equation*}
\tht^{-1} \nrm{(\rd_{x} b)(x - s \Xi, D)}_{L^{2} \to L^{2}} = \nrm{(\tht^{-1} \rd_{x} b)(x, D) P_{0}}_{L^{2} \to L^{2}} 
\end{equation*}
On the other hand, returning to the full notation $a^{\phi}_{\tht} = a$ and rotating the axes so that $\phi = (1, 0, 0, 0)$, note that $a^{\phi}_{\tht}(x, \cdot)$ is supported on a rectangle of dimension $\aeq 1 \times \tht \times \tht \times \tht$, and smooth on the corresponding scale. Integrating by parts in $\xi$ to obtain rapid decay in $\Xi$ (of the form $\brk{\Xi^{1}}^{-N} \brk{\tht \Xi'}^{-N}$, where $\Xi' = (\Xi^{2}, \Xi^{3}, \Xi^{4})$), we may estimate
\begin{align*}
	\tht \int \nrm{(\rd_{\xi} a^{\phi}_{\tht})^{\vee}(\cdot, \Xi)}_{L^{\infty}} \ud \Xi 
	\leq & \int \nrm{\int e^{i \Xi \cdot \xi} \tht \rd_{\xi} a(\cdot, \xi) (m^{\phi}_{\tht})^{2}(\xi) \tilde{m}^{2}_{0}(\xi)  \, \frac{\ud \xi}{(2 \pi)^{4}}}_{L^{\infty}} \, \ud \Xi \\
	\aleq & \tht^{-3} \sum_{n=1}^{20} \int \nrm{\tht^{n} \rd_{\xi}^{(n)} a(\cdot, \xi) }_{L^{\infty}} m^{\phi}_{\tht}(\xi) \tilde{m}_{0}(\xi)   \, \ud \xi.
\end{align*}
Passing to the polar coordinates $\xi = \lmb \omg$ (where $\lmb = \abs{\xi}$), integrating out $\lmb$ and using H\"older in $\omg$ (which cancels the factor $\tht^{-3}$), we arrive at
\begin{equation*}
	\tht \int \nrm{(\rd_{\xi} a^{\phi}_{\tht})^{\vee}(\cdot, \Xi)}_{L^{\infty}} \ud \Xi 
	\aleq \sum_{n=1}^{20} \sup_{\omg} m^{\phi}_{\tht}(\omg) \nrm{\tht^{n} \rd_{\xi}^{(n)} a(\cdot , \omg) }_{L^{\infty}} ,
\end{equation*}
which proves \eqref{eq:PsDO-phi}.
\end{proof}
\begin{remark} 
As it is evident from the proof, we in fact have the simpler bound
\begin{gather} \tag{\ref{eq:PsDO-compose}$'$}\label{eq:PsDO-simple}
	\nrm{(Op(a) Op(b)  - Op(ab)) P_{0}}_{L^{q} L^{2}[I] \to L^{r} L^{2}[I]} \aleq  \nrm{a}_{D_{\tht} L^{p_{2}} L^{\infty} [I]}  
	\nrm{Op(\tht^{-1} \rd_{x} b) P_{0}}_{L^{q} L^{2}[I] \to L^{p_{1}} L^{2}[I]},
\end{gather}
In other words, control of the $D_{\tht} L^{p_{2}} L^{\infty}$-norm already encodes the fact that $a$ is smooth in $\xi$ on the scale $\tht$.
\end{remark}

In practice, Lemma~\ref{lem:PsDO} can be only be applied when we know that the symbol on the right ($b$ in Lemma~\ref{lem:PsDO}) is smooth in $x$ on the scale $\tht^{-1}$. Fortunately, when $b = Ad(O)$, the remainder can be controlled using decomposability bounds for $\Psi$. We therefore have the following useful composition lemma.
\begin{lemma}[Composition lemma] \label{lem:compose}
Let $G = G(t, x, \xi)$ be a smooth $\g$-valued symbol on $I \times \bbR^{4} \times \bbR^{4}$, which is homogeneous of degree $0$ in $\xi$ and admits a decomposition of the form $G = \sum_{\tht \in 2^{-\bbN}} G^{(\tht)}$, where
\begin{equation*}
	\nrm{G^{(\tht)}}_{D_{\tht} L^{2} L^{\infty}[I]} \leq \tht^{\alp} B
\end{equation*}
for some $B > 0$ and $\alp > \frac{1}{2} + \dlt$. Then for every $\ell \leq 0$ we have
\begin{equation} \label{eq:compose}
\nrm{Op(ad(G) Ad(O_{<\ell})) P_{0} - Op(ad(G)) Op(Ad(O_{<\ell})) P_{0}}_{N^{\ast}[I] \to N[I]} \aleq_{M} B.
\end{equation}
\end{lemma}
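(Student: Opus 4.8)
\textbf{Proof plan for Lemma~\ref{lem:compose}.}
The plan is to split the symbol $G$ into dyadic angular pieces $G^{(\tht)}$ and estimate the composition error for each piece separately, summing in $\tht$ at the end; the gain $\tht^{\alp}$ with $\alp > \frac12 + \dlt$ is exactly what makes the $\tht$-sum converge. For a fixed $\tht$, the essential idea is to further decompose $Ad(O_{<\ell})$ into a low-frequency part $Ad(O_{<h_\tht - 10})$ (frequency below $\tht^{-1}$, in $x$) on which Lemma~\ref{lem:PsDO} applies, and a high-frequency remainder. More precisely I would write
\[
Op(ad(G^{(\tht)}) Ad(O_{<\ell})) - Op(ad(G^{(\tht)})) Op(Ad(O_{<\ell}))
\]
as the sum of three contributions: (i) the genuine pseudodifferential error $Op(ad(G^{(\tht)}) Ad(O_{<\ell})_{<h_\tht - 10}) - Op(ad(G^{(\tht)})) Op(Ad(O_{<\ell})_{<h_\tht - 10})$, controlled by \eqref{eq:PsDO-compose} (or \eqref{eq:PsDO-simple}); (ii) the symbol-product remainder $Op(ad(G^{(\tht)}) Ad(O_{<\ell})_{\geq h_\tht - 10})$; and (iii) the operator-composition remainder $Op(ad(G^{(\tht)})) Op(Ad(O_{<\ell})_{\geq h_\tht - 10})$.

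For contribution (i), I would apply \eqref{eq:PsDO-simple} with $a = ad(G^{(\tht)})$ and $b = Ad(O_{<\ell})_{<h_\tht - 10}$, choosing Strichartz-type exponents $(p_1, p_2)$ so that the output lands in $N$ (which scales like $L^1 L^2$); the $D_\tht L^{p_2} L^\infty$-norm of $ad(G^{(\tht)})$ is bounded by $\tht^\alp B$ by hypothesis, while $Op(\tht^{-1} \rd_x Ad(O_{<\ell})_{<h_\tht - 10}) P_0 : N^\ast \to L^{p_1} L^2$ must be bounded with a $\tht$-loss no worse than $\tht^{-\frac12 - \dlt}$. This last point is where the decomposability bounds for $\Psi$ (Lemma~\ref{lem:decomp-Psi}) enter: differentiating $Ad(O_{<\ell})$ in $x$ produces $ad(\nb \Psi_{h})$ factors (at frequency $2^h < \tht^{-1}$), and $\tht^{-1} \nb \Psi_h$ at those frequencies is decomposable in $D L^{q} L^{r}$ with at most a polynomial loss in $\tht$, which combined with the $L^2$-boundedness of $Op(Ad(O_{<\ell}))$ (Corollary~\ref{cor:}, via $M_\sgm \aleq_M \nrm{A}_{\ell^\infty S^1}$) gives the needed operator bound. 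For contributions (ii) and (iii), the high-frequency pieces $Ad(O_{<\ell})_{\geq h_\tht - 10}$ are handled by Lemma~\ref{lem:renrm-bdd-hifreq}: that lemma gives a gain $2^{-10(k - h)}$ when the output frequency $k$ exceeds the symbol's internal frequency $h$, and since $G^{(\tht)}$ forces the output near frequency $1$ while the $Ad(O)$ piece sits at frequency $\geq h_\tht - 10 \aeq \tht^{-1}$, we get a large power of $\tht$, easily absorbing all losses and summing in $\tht$.

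The main obstacle I anticipate is bookkeeping the $\tht$-powers in contribution (i): one must verify that the combined loss from (a) the $x$-derivative falling on $Ad(O_{<\ell})$ (a factor $\tht^{-1}$ from $\tht^{-1}\rd_x$, but the $\Psi$ frequency is $\leq \tht^{-1}$ so this is controlled), (b) the angular concentration of $G^{(\tht)}$ when passing from $D_\tht L^{p_2} L^\infty$ to the usable decomposability norm, and (c) the Strichartz exponent juggling to reach the $L^1 L^2$-type target $N$, is strictly less than the available gain $\tht^{\alp}$ with $\alp > \frac12 + \dlt$. The condition $\alp > \frac12 + \dlt$ in the hypothesis is precisely calibrated so that this works, and the verification amounts to reproducing the exponent arithmetic already carried out in \cite[Lemma~7.2]{KT} for the global-in-time case, now keeping track that all implicit constants depend only on $\nrm{A}_{S^1}$ through $M_\sgm$. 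A secondary (but routine) point is to confirm that the interval-localized decomposability calculus of Lemma~\ref{lem:decomp-basic} and the extension bounds of Proposition~\ref{prop:ext} are compatible, so that all norms may be taken over $I$ throughout; since $G$ and $O$ are already the interval-restricted objects, this causes no difficulty.
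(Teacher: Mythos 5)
You correctly identify the angular decomposition $G = \sum_\tht G^{(\tht)}$, the role of $\alp > \frac12 + \dlt$ in summing the $\tht$-series, and that Lemma~\ref{lem:PsDO} is what handles the piece of $Ad(O_{<\ell})$ at spatial frequency below $h_\tht - 10$. But your treatment of the remainder $Ad(O_{<\ell})_{\geq h_\tht - 10}$ via Lemma~\ref{lem:renrm-bdd-hifreq} does not close, and is built on a frequency-accounting error.

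Since $2^{h_\tht} = \tht < 1$, we have $h_\tht < 0$ and $2^{h_\tht - 10} \aeq \tht$, not $\tht^{-1}$ as you write; the threshold $h_\tht - 10$ is a \emph{low} spatial frequency, not a high one. Lemma~\ref{lem:renrm-bdd-hifreq} yields a gain $2^{-10(k - \ell)}$ only when the spatial Littlewood--Paley frequency $k$ exceeds the $O_{<\cdot}$ parameter $\ell$ by at least $10$ — it is the relation of $k$ to $\ell$ that matters, not of $k$ to $h_\tht$ or to the output frequency. In the main case $\ell > h_\tht - 20$ (the case the paper treats; it dismisses $\ell \leq h_\tht - 20$ as "easier"), the remainder $Ad(O_{<\ell})_{\geq h_\tht - 10}$ contains spatial frequencies $k \in [h_\tht - 10,\, \ell + 10]$ for which Lemma~\ref{lem:renrm-bdd-hifreq} gives no gain; those pieces are genuinely $O(1)$ in $L^2\to L^2$, and contributions (ii)--(iii) of your splitting do not become small.

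The paper closes this by a fundamental theorem of calculus in the $O_{<h}$ \emph{parameter}, interpolating from $O_{<\ell}$ down to $O_{<h_\tht - 20}$: using $\rd_h Ad(O_{<h}) = ad(\Psi_h)\, Ad(O_{<h})$, the error $D^{(\tht)}$ is written as an $h$-integral over $h_\tht - 20 \leq h \leq \ell$ of commutator-type terms in $\Psi_h$, plus boundary terms at $h = h_\tht - 20$. The $\Psi_h$ factors in the integral are bounded by the decomposability estimate $\nrm{\Psi_h}_{D L^2 L^\infty[I]} \aleq 2^{(-\frac12 - \dlt)h}$, which integrates to $\tht^{-\frac12 - \dlt}$, matching the available $\tht^{\alp}$ from $G^{(\tht)}$. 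At the boundary, $Ad(O_{<h_\tht - 20})$ is what gets spatially split; every spatial frequency $k \geq h_\tht - 10 = (h_\tht - 20) + 10$ then satisfies the hypothesis of Lemma~\ref{lem:renrm-bdd-hifreq}, and the low-frequency boundary piece is exactly the PsDO error you handle. This FTC step — passing to $O_{<h_\tht - 20}$ \emph{before} the spatial split — is the missing ingredient in your argument.
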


\begin{proof}
Let us assume that $\ell > h_{\tht} - 20$, as the alternative case is easier.

We decompose the expression on the LHS of \eqref{eq:compose} into $\sum_{\tht \in 2^{-\bbN}} D^{(\tht)}$, where
\begin{equation*}
D^{(\tht)} = Op(ad(G^{(\tht)}) Ad(O_{<\ell})) P_{0} - Op(ad(G^{(\tht)})) Op(Ad(O_{<\ell})) P_{0}.
\end{equation*}
In order to reduce to the case when Lemma~\ref{lem:PsDO} is applicable, we introduce $h_{\tht} = \log_{2} \tht$ and further decompose $D^{(\tht)}$ as follows:
\begin{align*}
D^{(\tht)} 
= & \int_{h_{\tht}-20}^{\ell} Op(ad(G^{(\tht)}) ad(\Psi_{h}) Ad(O_{<h})) P_{0} \, \ud h \\
& - \int_{h_{\tht}-20}^{\ell} Op(ad(G^{(\tht)})) Op(ad(\Psi_{h}) Ad(O_{<h})) P_{0} \, \ud h \\
& + Op(ad(G^{(\tht)}) Ad(O_{<h_{\tht}-20})_{\geq h_{\tht}-10}) P_{0} - Op(ad(G^{(\tht)})) Op(Ad(O_{<h_{\tht}-20})_{h_{\tht} - 10}) P_{0} \\
& + Op(ad(G^{(\tht)}) Ad(O_{<h_{\tht}-20})_{< h_{\tht}-10}) P_{0} - Op(ad(G^{(\tht)})) Op(Ad(O_{<h_{\tht}-20})_{< h_{\tht} - 10}) P_{0}.
\end{align*}
We claim that
\begin{equation} \label{eq:compose-tht}
\nrm{D^{(\tht)}}_{L^{\infty} L^{2}[I] \to L^{1} L^{2}[I]} \aleq \tht^{\alp - \frac{1}{2}} B
\end{equation}
Assuming \eqref{eq:compose-tht}, the proof can be completed by simply summing up in $\tht \in 2^{-\bbN}$, which is possible since $\alp > \frac{1}{2} +\dlt$.

For the first term in the above splitting of $D^{(\tht)}$, we have
\begin{align*}
	& \int_{h_{\tht}-20}^{\ell} \nrm{Op(ad(G^{(\tht)}) ad(\Psi_{h}) Ad(O_{<h})) P_{0}}_{L^{\infty} L^{2}[I] \to L^{1} L^{2}[I]} \, \ud h \\
	& \aleq_{M} \int_{h_{\tht}-20}^{\ell} \nrm{G^{(\tht)}}_{D_{\tht} L^{2} L^{\infty}[I]} \nrm{\Psi_{h}}_{D L^{2} L^{\infty}[I]}  \, \ud h \\
	& \aleq_{M} \int_{h_{\tht}-20}^{\ell} \tht^{\alp} 2^{(-\frac{1}{2} - \dlt) h} B \aleq_{M} \tht^{\alp - \frac{1}{2} - \dlt} B.
\end{align*}
The second term can be handled similarly. For the third term, we use the $D L^{2} L^{\infty}$ bound for $G^{(\tht)}$ and apply Lemma~\ref{lem:renrm-bdd-hifreq} to $Ad(O_{<h_{\tht}-20})_{\geq h_{\tht}-10})$, which leads to the acceptable bounds
\begin{align*}
\nrm{Op(ad(G^{(\tht)}) Ad(O_{<h_{\tht}-20})_{\geq h_{\tht}-10}) P_{0}}_{L^{\infty} L^{2}[I] \to L^{1} L^{2}[I]} \aleq_{M} & \tht^{\alp} B, \\
\nrm{Op(ad(G^{(\tht)})) Op( Ad(O_{<h_{\tht}-20})_{\geq h_{\tht}-10}) P_{0}}_{L^{\infty} L^{2}[I] \to L^{1} L^{2}[I]} \aleq_{M} & \tht^{\alp} B.
\end{align*}
Finally, for the last term we use Lemma~\ref{lem:PsDO} (in fact, \eqref{eq:PsDO-simple}). \qedhere
\end{proof}

\subsection{Decomposition of the error}
Let
\begin{equation*}
	E = \Box_{A}^{p, \kpp} Op(Ad(O)_{<0}) - Op(Ad(O)_{<0}) \Box
\end{equation*}
We may decompose
\begin{equation*}
	E = E_{1} + \cdots + E_{6}
\end{equation*}
where
\begin{align*}
	E_{1}  = & \ 2 i Op\left( \left( ad(\omg \cdot A_{x, <-\kpp} + A_{0, <-\kpp} + L^{\omg}_{+} \Psi ) Ad(O) \right)_{<0} \right) \abs{D_{x}} \\
	E_{2}  = & \  2 i Op\left( \left( ad(\omg \cdot O_{; x} + O_{; t} - L^{\omg}_{+} \Psi) Ad(O) \right)_{<0} \right) \abs{D_{x}},  \\
	E_{3} = & \ 2 Op\left( ad(A_{\alp, <-\kpp}) \left( ad(O^{;\alp}) Ad(O) \right)_{<0} \right)
			+ Op\left( \left(ad(O_{; \alp}) ad(O^{; \alp}) Ad(O) \right)_{<0} \right), \\
	E_{4} = & \ Op\left( \left(ad(\rd^{\alp} O_{; \alp}) Ad(O) \right)_{<0} \right), \\
	E_{5} = &  \ - 2 i Op\left( ad (A_{0, <-\kpp}) Ad(O)_{<0} \right) (D_{t} + \abs{D_{x}})
			- 2 i Op\left( \left( ad (O_{<-\kpp; t}) Ad(O) \right)_{<0} \right) (D_{t} + \abs{D_{x}}),   \\
	E_{6} = & \  - 2 i Op\left( [S_{<0}, ad(\omg \cdot A_{x, <-\kpp} + A_{0, <-\kpp})] Ad(O) \right) \abs{D_{x}} .
\end{align*}

In the remainder of this section, we estimate each error term in order. 
\subsection{Estimate for \texorpdfstring{$E_{1}$}{E1}} \label{subsec:err-main}
Here, our goal is to prove
\begin{equation} \label{eq:renrm-err-E1}
	\nrm{E_{1} P_{0}}_{S^{\sharp}_{0}[I] \to N[I]} \leq \veps
\end{equation}
with $\kpp_{1}$ large enough and $\dltp$ sufficiently small. 

\subsubsection{Preliminary reduction}
For this term, we may simply work with $I = \bbR$ by extending the input by homogeneous waves outside $I$. The desired smallness comes from $\kpp$ and bounds for $\Box A_{x}$ and $\lap A_{0}$ on $I$, which controls the size of the symbol of $E_{1}$ through our extension of $A_{\alp}$ as in Section~\ref{subsec:ext}
 
We first dispose the symbol regularization $(\cdot)_{<0}$ by translation invariance, and also throw away $\abs{D_{x}}$ using $P_{0}$. Using \eqref{eq:Psi-def} and the identity $L^{\omg}_{+} L^{\omg}_{-} \lap_{\omg^{\perp}}^{_{1}} = - \lap_{\omg^{\perp}}^{-1} \Box  + 1$, \eqref{eq:renrm-err-E1} reduces to showing
\begin{equation*}
	\nrm{\int_{-\infty}^{-\kpp} 
		Op\left( ad ( G_{h} ) Ad(O))  \right) P_{0} \, \ud h}_{S^{\sharp}_{0} \to N} \ll \veps,
\end{equation*}
where
\begin{equation*}
G_{h} = \omg \cdot A_{x, h} - \omg \cdot A^{(\geq \abs{\eta}^{\dlt})}_{x, h, cone}
				+ \lap_{\omg^{\perp}}^{-1} \Box (\omg \cdot A^{(\geq \abs{\eta}^{\dlt})}_{x, h, cone} ) + A_{0, h}.
\end{equation*}
Note that each angular component $G_{h}^{(\tht)} = \Pi^{\omg, +}_{\tht} G_{h}$ obeys
\begin{equation*}
	\nrm{G_{h}^{(\tht)}}_{D L^{2} L^{\infty}} \aleq 2^{\frac{1}{2} h} \tht^{\frac{3}{2}} (\nrm{A_{x, h}}_{S^{1}} + \nrm{A_{0, h}}_{Y^{1}}).
\end{equation*}
Therefore, by Lemma~\ref{lem:compose}, we have
\begin{equation*}
	\nrm{\int_{-\infty}^{-\kpp} \left( Op(ad(G_{h}) Ad(O)) - Op(ad(G_{h})) Op(Ad(O)) \right) P_{0} \, \ud h}_{N^{\ast} \to N} \aleq_{M} 2^{-\frac{1}{2} \kpp},
\end{equation*}
which is acceptable. By Lemma~\ref{lem:renrm-bdd-hifreq} applied to $Op(Ad(O)_{\geq 0})$, we also have
\begin{align*}
	\nrm{\int_{-\infty}^{-\kpp} Op(ad(G_{h})) Op(Ad(O)_{\geq 0}) P_{0} \, \ud h}_{N^{\ast} \to N} 
	\aleq_{M} & \int_{-\infty}^{-\kpp} 2^{\frac{1}{2} h} \nrm{Op(Ad(O)_{\geq 0} ) P_{0}}_{L^{\infty} L^{2} \to L^{2} L^{2}} \, \ud h\\
	\aleq_{M} &  2^{-\frac{1}{2} \kpp}.
\end{align*}
Thus it suffices to show that
\begin{equation*}
	\nrm{\int_{-\infty}^{-\kpp} Op(ad(G_{h})) Op(Ad(O)_{< 0}) P_{0} \, \ud h}_{S^{\sharp}_{0} \to N} \ll \veps.
\end{equation*}
By \eqref{eq:renrm:disp}, we have $Op(Ad(O)_{<0}) P_{0} : S^{\sharp}_{0} \to S_{0}$. Thus, in order to prove \eqref{eq:renrm-err-E1}, we are left to establish 
\begin{equation} \label{eq:renrm-err-E1-core}
	\nrm{\int_{-\infty}^{-\kpp} Op(ad(G_{h})) P_{0} \, \ud h}_{S_{0} \to N} \ll \veps.
\end{equation}
where we abuse the notation a bit and denote by $P_{0}$ a frequency
projection to a slightly enlarged region of the form $\set{\abs{\xi}
  \aeq 1}$.  

At this point it is convenient to observe that the
contribution of $\tilde R_0$ to $A_0$ in \eqref{eq:A0-eq-global} is
easy to estimate in $L^1 L^\infty$ and can be harmlessly discarded. Thus from here on
we assume that 
\begin{equation}
\label{kill-R0}
\tilde R_0 = 0.
\end{equation}

 In order to proceed, we split 
\[
G_{h} = G_{h, cone} + G_{h, null} + G_{h, out},
\]
where
\begin{align*}
	G_{h, cone} =& \ \omg \cdot A_{x, h, cone}^{(< \abs{\eta}^{\dlt})} + \lap_{\omg^{\perp}}^{-1} \Box (\omg \cdot A_{x, h, cone}^{(\geq \abs{\eta}^{\dlt})}) + A_{0, h, cone}, \\
	G_{h, null} =& \ \omg \cdot A_{x, h, null} + A_{0, h, null}, \\
	G_{h, out} =& \ \omg \cdot A_{x, h, out} + A_{0, h, out}.
\end{align*}

\subsubsection{Estimate for \texorpdfstring{$G_{h, cone}$}{Gh}}
We claim that
\begin{equation} 
	\nrm{\int_{-\infty}^{-\kpp} Op(ad(G_{h, cone})) P_{0} \, \ud h}_{N^{\ast} \to N} \ll \veps.
\end{equation}
Let $G^{(\tht)}_{h, cone} = \Pi^{\omg, \pm}_{\tht} G_{h, cone}$ and consider the expression $Op(ad(G^{(\tht)}_{h, cone})) P_{0}$. By the Fourier support property of $G^{(\tht)}_{h, cone}$ (more precisely, the mismatch between its modulation $\aleq 2^{h} \tht^{2}$ and the angle $\tht$), it is impossible that both the input and the output have modulation $\ll 2^{h} \tht^{2}$. Using the $L^{2} L^{2}$ norm for the input or the output (whichever that has modulation $\ageq 2^{h} \tht^{2}$), we may estimate
\begin{align*}
& \hskip-2em
	\nrm{Op(G_{h, cone}) P_{0}}_{N^{\ast} \to N}  \\
	\aleq & \sum_{\tht < 1} 2^{-\frac{1}{2} h} \tht^{-1} \nrm{G_{h, cone}^{(\tht)}}_{D L^{2} L^{\infty}} \\
	\aleq & 2^{\frac{\dlt}{2} h} \nrm{A_{x, h}}_{S^{1}} 
		+ \sum_{\tht < 1} 2^{-\frac{1}{2}h} \tht^{-\frac{1}{2}} \nrm{Q_{<h + 2 \log_{2} \tht + C} \Box A_{x}}_{L^{2} L^{2}} 
		+ \sum_{\tht < 1} 2^{-\frac{1}{2} h} \tht^{\frac{1}{2}} \nrm{\lap A_{0, h}}_{L^{2} L^{2}}.
\end{align*}
We now treat each term separately.
\pfstep{Case 1: Contribution of small angle interaction}
The term $2^{\frac{\dlt}{2} h} \nrm{A_{x, h}}_{S^{1}} $ is acceptable since it is integrable in $-\infty < h < -\kpp$, and we gain a small factor $2^{-\frac{\dlt}{2} \kpp}$ as a result.

\pfstep{Case 2: Contribution of $\Box A_{x}$}
For the second term, we split the $\tht$-summation into $\tht < 2^{-\kpp}$ and $\tht \geq 2^{-\kpp}$. In the former case, note that
\begin{equation*}
	\nrm{Q_{<h + 2 \log_{2} \tht + C} \Box A_{x}}_{L^{2} L^{2}} 
	\aleq \tht^{2 b_{1}} \nrm{\Box A_{x, h}}_{X^{-\frac{1}{2} + b_{1}, -b_{1}}}.
\end{equation*}
Since $b_{1} > 1/4$, we may estimate
\begin{equation*}
	\sum_{\tht < 2^{-\kpp}} 2^{-\frac{1}{2}h} \tht^{-\frac{1}{2}} \nrm{Q_{<h + 2 \log_{2} \tht + C} \Box A_{x}}_{L^{2} L^{2}}
	\aleq 2^{- (2b_{1} - \frac{1}{2}) \kpp} \nrm{\Box A_{x, h}}_{X^{-\frac{1}{2} + b_{1}, -b_{1}}}.
\end{equation*}
The last line is acceptable, since it is integrable in $-\infty < h < -\kpp$, and it is small thanks to $2^{-(2b - \frac{1}{2}) \kpp}$. In the case $\tht \geq 2^{-\kpp}$, we estimate
\begin{equation*}
	\sum_{\tht \geq 2^{-\kpp}} 2^{-\frac{1}{2}h} \tht^{-\frac{1}{2}} \nrm{Q_{<h + 2 \log_{2} \tht + C} \Box A_{x}}_{L^{2} L^{2}}
	\aleq 2^{\frac{1}{2} \kpp} \nrm{\Box A_{x, h}}_{L^{2} \dot{H}^{-\frac{1}{2}}}.
\end{equation*}
After integration in $h$, this is acceptable thanks to \eqref{eq:DS-bnd-global}.

\pfstep{Case 3: Contribution of $A_{0}$}
In this case, we simply sum up in $\tht < 1$ and observe that
\begin{equation*}
\sum_{\tht < 1} 2^{-\frac{1}{2} h} \tht^{\frac{1}{2}} \nrm{\lap A_{0, h}}_{L^{2} L^{2}} \aleq \nrm{\lap A_{0, h}}_{L^{2} \dot{H}^{-\frac{1}{2}}}.
\end{equation*}
After integration in $h$, this term is then acceptable by \eqref{eq:DY-bnd}.

\subsubsection{Estimate for \texorpdfstring{$G_{h, out}$}{G-out}}
We claim that
\begin{equation} 
	\nrm{\int_{-\infty}^{-\kpp} Op(ad(G_{h, out})) P_{0} \, \ud h}_{N^{\ast} \to N} \ll \veps.
\end{equation}
As in the case of $G_{h, cone}$, the idea is again to make use of the mismatch between modulation of $G_{h, out}$ and the angle $\tht$. Let $G^{(\tht)}_{h, out} = \Pi^{\omg, \pm}_{\tht} G_{h, out}$, and consider the expression $Op(ad(G^{(\tht)}_{h, out})) P_{0}$. By definition, $G^{(\tht)}_{h, out}$ has modulation $\ageq 2^{h} \tht^{2}$. Thus, we decompose $G^{(\tht)}_{h, out} = \sum_{a : 2^{a} \ageq \tht} Q_{h + 2a} G^{(\tht)}_{h, out}$. By the Fourier support property of the symbol $Q_{h + 2a} G^{(\tht)}_{h, out}$ (more precisely, the mismatch between the angle $\tht$ and the modulation $2^{h + 2a}$), it is impossible that both the input and the output have modulation $\ll 2^{h + 2 a}$. Using the $L^{2} L^{2}$ norm for the input or the output, we have
\begin{align*}
& \hskip-2em
	\nrm{Op(ad(G_{h, out})) P_{0} }_{N^{\ast} \to N} \\
	\aleq & \sum_{a} \sum_{\tht < \min \set{C 2^{a}, 1}} 2^{-\frac{1}{2} (h + 2a)} \nrm{Q_{h + 2a} G_{h, out}^{(\tht)}}_{D L^{2} L^{\infty}} \\
	\aleq & \sum_{a} \sum_{\tht < \min \set{C 2^{a}, 1}} 
			\left( 2^{-\frac{1}{2} (h + 2a)} 2^{2h} \tht^{\frac{5}{2}} \nrm{Q_{h + 2a} A_{x, h}}_{L^{2} L^{2}} 
				+ 2^{-\frac{1}{2} (h + 2a)} 2^{2h} \tht^{\frac{3}{2}} \nrm{A_{0, h}}_{L^{2} L^{2}}  \right) \\
	\aleq & \sum_{a} 
			\left( 2^{\frac{5}{2} a_{-}} 2^{-3a} 2^{-\frac{1}{2}h} \nrm{Q_{h + 2a} \Box A_{x, h}}_{L^{2} L^{2}} 
				+ 2^{\frac{3}{2} a_{-}} 2^{-a} 2^{-\frac{1}{2}h} \nrm{\lap A_{0, h}}_{L^{2} L^{2}}  \right).
\end{align*}
We split the $a$-summation into $a < -\kpp$ and $a > -\kpp$. In the former case, the sum is bounded by
\begin{equation*}
	2^{- (2b_{1} - \frac{1}{2}) \kpp} \nrm{\Box A_{x, h}}_{X^{b_{1}-\frac{1}{2}, -b_{1}}} + 2^{-\frac{1}{2} \kpp} \nrm{\lap A_{0, h}}_{L^{2} \dot{H}^{-\frac{1}{2}}},
\end{equation*}
which is integrable in $h$ and small thanks to $2^{-(2b_{1} - \frac{1}{2}) \kpp}$; therefore it is acceptable. When $a > -\kpp$, the sum is bounded by
\begin{equation*}
	2^{\frac{1}{2} \kpp} \nrm{\Box A_{x, h}}_{L^{2} \dot{H}^{\frac{1}{2}}} + \nrm{\lap A_{0, h}}_{L^{2} \dot{H}^{-\frac{1}{2}}}.
\end{equation*}
After integrating in $h$, this term is therefore acceptable by \eqref{eq:DS-bnd-global} and \eqref{eq:DY-bnd}. 

\subsubsection{Estimate for \texorpdfstring{$G_{h, null}$}{G-null}}
We claim that
\begin{equation} 
	\nrm{\int_{-\infty}^{-\kpp} Op(ad(G_{h, null})) P_{0} \, \ud h}_{S_{0} \to N} \ll \veps.
\end{equation}
Let $G^{(\tht)}_{h, null} = \Pi^{\omg, \pm}_{\tht} G_{h, null}$. Note
that $G^{(\tht)}_{h, null}$ has modulation $\aeq 2^{h}
\tht^{2}$. Hence if either the input or the output have modulation
$\geq 2^{-C} 2^{h} \tht^{2}$, the same argument as in the case of
$G_{h, cone}$ applies. Writing $\tht = 2^{\ell}$, it remains to prove
\begin{equation} \label{last-case}
	\nrm{\sum_{\ell \in -\bbN} \int_{-\infty}^{-\kpp} Q_{< h + 2 \ell - C} Op(ad(\omg \cdot A_{x, h, null}^{(2^{\ell})} + A_{0, h, null}^{(2^{\ell})})) P_{0} Q_{< h + 2 \ell - C} \, \ud h}_{S_{0} \to N} \ll \veps.
\end{equation}
Our next simplification is to observe that we can harmlessly replace the symbols $A_{x, h, null}^{(2^{\ell})}$ and 
$A_{0, h, null}^{(2^{\ell})}$ with the functions $Q_{h+2\ell} A_{x,h}$ respectively $Q_{h+2\ell} A_{x,h}$. This is because 
the difference of the two is localized still at modulation $2^{h+2\ell} $, but also at distance $2^{h+2\ell}$
from the null plane $\{\sgm + \omega \cdot \eta = 0\}$. This would force either the input or the output modulation in 
\eqref{last-case} to be $\geq 2^{-C} 2^{h+2\ell}$, and again the same argument as in the case of $G_{h, cone}$ applies.
Thus with $j = h+ 2\ell$ we have reduced the problem to estimating
\begin{equation} \label{last-case+}
	\nrm{\sum_{j < h} \int_{-\infty}^{-\kpp} Q_{< j - C} ad(  Q_j A_{\alpha, h}) \partial^\alpha P_{0} Q_{< j - C} 
\, \ud h}_{S_{0} \to N} \ll \veps.
\end{equation}
respectively 
\begin{equation} \label{last-case+0}
	\nrm{\sum_{j < h} \int_{-\infty}^{-\kpp} Q_{< j- C} ad(  Q_j A_{0, h}) (D_0+|D_x|) P_{0} Q_{< j - C} 
\, \ud h}_{S_{0} \to N} \ll \veps.
\end{equation}
The second bound is straightforward since $(D_0+|D_x|)P_{0} Q_{< 0}:S_0 \to L^2$ and $A_0 \in L^2 \dot H^\frac32$.

Thus it remains to consider \eqref{last-case+}. From here on, we assume that $A$ is determined by the expressions
\eqref{eq:A0-eq-global} and \eqref{eq:Ax-eq-global} in terms of $\tilde A$.
By  \eqref{kill-R0} we have already set $\tR_0 = 0$. It is equally easy to see that we can set
$\tR_x = 0$. Indeed, by \eqref{eq:L1L2-Z-j} and \eqref{eq:nf*-H*-lomod} we have
\[
\| Q_{< j - C} ad(  \Box^{-1} P_h R_{\ell} )\partial^\ell P_{0} Q_{< j - C}\|_{S_{0} \to N} \lesssim 
2^{\dlta (j-h)} \| \Box^{-1} P_h R_{\ell}\|_{Z^1} \lesssim 2^{\dlta (j-h)} \| P_h R_{\ell}\|_{L^1 L^2} ,
\]
where $R = \chi_{I} \P \tR$. Now the summability in $j < h$ and the smallness is assured due to \eqref{eq:PtR}.

Once we have dispensed with the error terms, we are left with $A_{t,x}$ given by
\begin{align} 
A_{0} =& \lap^{-1} \bfO(\chi_{I} \tA^{\ell}, \rd_{t} \tA_{\ell})  \label{eq:A0-eq-global-re} \\
A =& \ \Box^{-1}  \P( \bfO(\chi_{I} \tA^{\ell}, \rd_{x} \tA_{\ell})  +  \bfO'(\P_{\ell} \tA, \chi_{I} \rd^{\ell} \tA) 
	 - \bfO'(\tA_{0}, \chi_{I} \rd_{t} \tA) +  \bfO'(\tG_{\ell}, \chi_{I} \rd^{\ell} \tilde{A}) ).
\end{align}
We consider the contributions of each of these terms in \eqref{last-case+}.

\medskip
\pfstepf{1. The contribution of $A_0 = \lap^{-1} \bfO(\chi_{I} \tA^{\ell}, \rd_{t} \tA_{\ell}) $ and  
$A_x =  \Box^{-1}  \P \bfO(\chi_{I} \tA^{\ell}, \rd_{x} \tA_{\ell})$} This is the main component, which we have to treat in a 
trilinear fashion. In particular we have to insure that we gain smallness. For this we use a trilinear Littlewood-Paley decomposition
to set
\[
A = \sum_{k,k_1,k_2} A(k,k_1,k_2)   =  \sum_{k,k_1,k_2} \calH A(k,k_1,k_2) + \sum (1-\calH^*) A(k,k_1,k_2)
\]
where 
\[
\begin{split}
\calH A(k,k_1,k_2) := & \ \calH P_k \P A ( P_{k_1} \chi_{I} \tA^{\ell}, P_{k_2} \rd_{t} \tA_{\ell}) \\
(1-\calH) A(k,k_1,k_2) := & \ (1 - \calH) P_k \P A ( P_{k_1} \chi_{I} \tA^{\ell}, P_{k_2} \rd_{t} \tA_{\ell})
\end{split}
\]
For the terms in the first sum  we  use the trilinear estimate \eqref{eq:tri-nf}, which gives
\[
\| Q_{< j - C} ad(  Q_j \calH   A_{\alpha}(k,k_1,k_2)) \partial^\alpha P_{0} Q_{< j - C} \|_{S_{0} \to L^1 L^2} 
\lesssim 2^{-\dlta |k_{max}-k_{min}|} 2^{\dlta(j-k)} \| P_{k_1} \tA\|_{S^1} \| P_{k_2} \tA\|_{S^1}
\]

For the $A_x$ terms in the second sum we first use \eqref{eq:nf-core} and \eqref{eq:nf-Z}, \eqref{eq:nf-1-H-Z} to obtain
\[
\|(1- \calH) A_x(k,k_1,k_2) \|_{Z^1} \lesssim 2^{-\dlta |k_{max} - k_{min}|}  \| P_{k_1} \tA\|_{S^1} \| P_{k_2} \tA\|_{S^1}
\]
and then use \eqref{eq:nf*-H*-lomod} to conclude that
\[
\| Q_{< j - C} ad(  Q_j (1-\calH)   A_{\ell}(k,k_1,k_2)) \partial^\ell P_{0} Q_{< j - C} \|_{S_{0} \to N} 
\! \lesssim \! 2^{-\dlta |k_{min} - k_{max}|} 2^{\dlta(j-k)} \| P_{k_1} \tA\|_{S^1}\! \| P_{k_2} \tA\|_{S^1}
\]
Similarly, for the $A_0$ terms in the second sum we use \eqref{eq:qf-1-H-Z-ell} and then \eqref{eq:qf-H*-lomod}
to obtain
\[
\| Q_{< j - C} ad(  Q_j (1-\calH)   A_{0}(k,k_1,k_2)) \partial^0 P_{0} Q_{< j - C} \|_{S_{0} \to N} \! \lesssim \!
2^{-\dlta |k_{min} - k_{max}|} 2^{\dlta(j-k)} \| P_{k_1} \tA\|_{S^1} \! \| P_{k_2} \tA\|_{S^1}\!
\]
Adding the last three bounds, we obtain
\[
\| Q_{< j - C} ad(  Q_j A_{\alpha}(k,k_1,k_2)) \partial^\alpha P_{0} Q_{< j - C} \|_{S_{0} \to N} \lesssim 2^{-\dlta |k_{max} -k_{min}|} 2^{\dlta(j-k)} \| P_{k_1} \tA\|_{S^1}  \| P_{k_2} \tA\|_{S^1}.\!
\]
This gives both summability in $k,k_1,k_2$ and smallness provided we exclude the range of indices 
$j,k_1,k_2 \in [k-\kappa', k+\kappa'] $ with $\kappa' \gg 1$. 

On the other hand, in the range excluded above the operator $P_k Q_j$ is disposable while both $\Box$ and $\Delta$ 
are elliptic, i.e. of size $2^{2k}$. Then we can estimate
\[
\|   Q_j A(k,k_1,k_2)\|_{L^1 L^\infty} \lesssim 2^{C\kappa'} \| P_{k_1} \tA\|_{DS^1} \| P_{k_2} \tA\|_{DS^1}
\]
therefore we gain smallness from the divisible norm, see \eqref{eq:DS-bnd-hyp-tA}.

\medskip

\pfstepf{2. The contribution of $A_x = \Box^{-1} \P  \bfO'(\P_{\ell} \tA, \chi_{I} \rd^{\ell} \tA) $}
This is a milder contribution, which we can deal with in a bilinear fashion. Decomposing again 
\[
A_x = \sum_{k,k_1,k_2} A(k,k_1,k_2)
\]
we use \eqref{eq:nf*-Z} to obtain
\[
\| A_x(k,k_1,k_2)\|_{Z^1} \lesssim 2^{-\dlta |k_{\max}-k_{min}|} \| P_{k_1} \tA\|_{\uS^1} \| P_{k_1} \tA\|_{S^1}
\]
Then by \eqref{eq:nf*-H*-lomod} it follows that
\begin{equation}\label{Gnull-step-2}
\| Q_{< j - C} ad(  Q_j \calH   A_{x}(k,k_1,k_2)) \partial^\alpha P_{0} Q_{< j - C}\|_{S_{0} \to L^1 L^2} 
\lesssim 2^{-\dlta |k_{max} -k_{min}|} 2^{\dlta(j-k)} \| P_{k_1} \tA\|_{\uS^1} \| P_{k_1} \tA\|_{S^1}
\end{equation}
Again this is suitable outside  the range $j,k_1,k_2 \in [k-\kappa', k+\kappa'] $ with $\kappa' \gg 1$,
whereas in this range we can use divisible norms as in the previous step. 

\medskip

\pfstepf{3. The contribution of $\P \bfO'(\tA_{0}, \chi_{I} \rd_{t} \tA) + \P \bfO'(\tG_{\ell}, \chi_{I} \rd^{\ell} \tilde{A})$}
These two terms are similar, as we have the same bounds available for $\tA_0$ and $\tG_l$. We will discuss $\tA_0$.
Setting 
\[
A_x = \Box^{-1} \P \bfO'(\tA_{0}, \chi_{I} \rd_{t} \tA), \qquad A_0 = 0,
\]
we decompose as before 
\[
A_x = \sum A_x(k,k_1,k_2)
\]
We can estimate the terms in the sum using \eqref{eq:qf-Z} to get
\[
\| A_x(k,k_1,k_2)\|_{Z^1} \lesssim2^{-\dlta |k_{\max}-k_{min}|}   \| P_{k_1} \tA_0\|_{Y^1} \| P_{k_1} \tA\|_{S^1}
\]
Then \eqref{Gnull-step-2} follows again from  \eqref{eq:nf*-H*-lomod}, and we conclude as in Step 2.

\subsection{Estimate for \texorpdfstring{$E_{2}$}{E2}} \label{subsec:err-sub}
Our next goal is to estimate the error term $E_{2}$, which arises from the multilinear error between $O_{;\alp}$ and $\rd_{\alp} \Psi$. 
For this purpose, we rely crucially on interval localization of decomposable norms (Lemma~\ref{lem:decomp-loc}).

\subsubsection{Expansion of \texorpdfstring{ $O_{; \alp}$}{O-alpha}}
We will prove that
\begin{equation} \label{eq:renrm-err-E2}
\nrm{E_{2} P_{0}}_{N^{\ast}[I] \to N[I]} \leq \veps
\end{equation}
provided that $\kpp_{1}$ is large enough, and $\dltp$ is sufficiently small.

As usual, we may dispose the symbol regularization $(\cdot)_{<0}$ by translation invariance. Also disposing $\abs{D_{x}}$ using $P_{0}$, it suffices to prove
\begin{equation} \label{eq:renrm-err-E2:goal}
\nrm{Op\left(  ad(\omg \cdot (O_{; x} - \rd_{x} \Psi) + (O_{; t} - \rd_{t} \Psi)) Ad(O)  \right) P_{0}}_{N^{\ast}[I] \to N[I]} \ll \veps.
\end{equation}

Recall that $\rd_{h} O_{< h;\alp} = \Psi_{h, \alp} + [\Psi_{h}, O_{<h ;\alp}]$. Therefore,
\begin{equation*}
	\rd_{h} \left( ad (O_{<h; \alp}) Ad(O_{<h}) \right) = ad(\rd_{\alp} \Psi_{h}) Ad(O_{<h}) + ad(\Psi_{h}) Ad(O_{<h; \alp}) Ad(O_{<h}).
\end{equation*}
Repeatedly applying the fundamental theorem of calculus and this equation, we obtain the expansion
\begin{align}
& \hskip-1em
	ad(O_{; \alp}) Ad(O) \notag \\ 
	= & \int_{-\infty}^{-\kpp} ad(\rd_{\alp} \Psi_{h_{1}}) Ad(O_{<h_{1}}) \, \ud h_{1} \label{eq:renrm-err-E2-O1} \\
	& +  \int_{-\infty}^{-\kpp} \int_{-\infty}^{h_{1}} ad(\Psi_{h_{1}}) ad(\rd_{\alp} \Psi_{h_{2}}) Ad(O_{<h_{2}}) \, \ud h_{2} \, \ud h_{1} \label{eq:renrm-err-E2-O2}\\
	& + \cdots \notag \\
	& + \int_{-\infty}^{-\kpp} \int_{-\infty}^{h_{1}} \cdots \int_{-\infty}^{h_{5}} ad(\Psi_{h_{1}}) ad(\Psi_{h_{2}}) \cdots ad(\rd_{\alp} \Psi_{h_{6}}) Ad(O_{<h_{6}}) \ud h_{6} \cdots \, \ud h_{2} \, \ud h_{1}. \label{eq:renrm-err-E2-O6}
\end{align}
On the other hand,
\begin{equation*}
	\rd_{h} \left( ad(\rd_{\alp} \Psi_{<h}) Ad(O_{<h}) \right) = ad(\rd_{\alp} \Psi_{h}) Ad(O_{<h}) + ad(\rd_{\alp} \Psi_{<h}) ad(\Psi_{h}) Ad(O_{<h}),
\end{equation*}
so we have
\begin{align}
	ad(\rd_{\alp} \Psi) Ad(O)
	= & \int_{-\infty}^{-\kpp} ad(\rd_{\alp} \Psi_{h_{1}}) Ad(O_{<h_{1}}) \, \ud h_{1} \label{eq:renrm-err-E2-dPsi1} \\
	& + \int_{-\infty}^{-\kpp} \int_{-\infty}^{h_{1}} ad(\rd_{\alp} \Psi_{h_{2}}) ad(\Psi_{h_{1}}) Ad(O_{<h_{1}}) \, \ud h_{2} \, \ud h_{1}. \label{eq:renrm-err-E2-dPsi2}
\end{align}
Observe that \eqref{eq:renrm-err-E2-O1} and \eqref{eq:renrm-err-E2-dPsi1} coincide. Thus, we only need to consider the contribution of \eqref{eq:renrm-err-E2-O2}--\eqref{eq:renrm-err-E2-O6} and \eqref{eq:renrm-err-E2-dPsi2} in \eqref{eq:renrm-err-E2:goal}.

\subsubsection{Estimate for quadratic expressions}
We begin with the contribution of the quadratic terms in $\Psi$,
namely \eqref{eq:renrm-err-E2-O2} and \eqref{eq:renrm-err-E2-dPsi2},
which are most delicate. We claim that
\begin{align}
  \nrm{\int_{-\infty}^{-\kpp} \int_{-\infty}^{h_{1}} Op \left( ad(\Psi_{h_{1}}) ad(L^{\omg}_{+} \Psi_{h_{2}}) Ad(O_{<h_{2}})  \right) P_{0} \, \ud h_{2} \, \ud h_{1}}_{N^{\ast}[I] \to N[I]} \leq & \veps, \label{eq:renrm-err-E2-2} \\
  \nrm{\int_{-\infty}^{-\kpp} \int_{-\infty}^{h_{1}} Op \left(
      ad(L^{\omg}_{+} \Psi_{h_{2}}) ad(\Psi_{h_{1}}) Ad(O_{<h_{1}})
    \right) P_{0} \, \ud h_{2} \, \ud h_{1}}_{N^{\ast}[I] \to N[I]}
  \leq & \veps,
\end{align}
provided that $\kpp_{1}$ is large enough and $\dltp$ is
sufficiently small. In what follows, we will focus on establishing
\eqref{eq:renrm-err-E2-2}, as the proof for the other claim is
analogous.

By \eqref{eq:Psi-def} and the identity $L^{\omg}_{+} L^{\omg}_{-}
\lap_{\omg^{\perp}}^{_{1}} = - \lap_{\omg^{\perp}}^{-1} \Box + 1$,
\eqref{eq:renrm-err-E2-2} would follow once we establish
\begin{align}
  \nrm{\int_{-\infty}^{-\kpp} \int_{-\infty}^{h_{1}} Op \left( ad(\Psi_{h_{1}}) ad(\omg \cdot A^{main}_{h_{2}}) Ad(O_{<h_{2}}) \right) P_{0} \, \ud h_{2} \, \ud h_{1}}_{N^{\ast}[I] \to N[I]} \ll & \veps,	\label{eq:renrm-err-E2-2-A} \\
  \nrm{\int_{-\infty}^{-\kpp} \int_{-\infty}^{h_{1}} Op \left(
      ad(\Psi_{h_{1}}) ad(\lap_{\omg^{\perp}}^{-1} \Box (\omg \cdot
      A^{main}_{h_{2}})) Ad(O_{<h_{2}}) \right) P_{0} \, \ud h_{2} \,
    \ud h_{1}}_{N^{\ast}[I] \to N[I]} \ll &
  \veps. \label{eq:renrm-err-E2-2-BoxA}
\end{align}
In Lemma~\ref{lem:decomp-A} and Lemma~\ref{lem:decomp-loc}, note that
$\omg \cdot A_{h}^{main, (\tht)} (= \omg \cdot A^{(\tht)}_{x, h, cone,
  +})$ and $\lap_{\omg^{\perp}}^{-1} \Box (\omg \cdot A_{h}^{main,
  (\tht)})$ obey the same bounds. Therefore,
\eqref{eq:renrm-err-E2-2-A} and \eqref{eq:renrm-err-E2-2-BoxA} are
proved in exactly the same way. In what follows, we only consider
\eqref{eq:renrm-err-E2-2-A}.

Our first task is to remove $Ad(O_{<h_{2}})$. For $\tht \in
2^{-\bbN}$, define
\begin{equation*}
  G^{(\tht)} = ad (\Psi_{h_{1}}^{(\tht)}) ad( \omg \cdot A_{h_{2}}^{main, (< \tht)}) + ad (\Psi_{h_{1}}^{(\leq \tht)}) ad(  \omg \cdot A_{h_{2}}^{main, (\tht)}).
\end{equation*}
so that $G := ad(\Psi_{h_{1}}) ad(\omg \cdot A^{main}_{h_{2}}) =
\sum_{\tht \in 2^{-\bbN}} G^{(\tht)}$.  Note that
\begin{equation*}
  \nrm{G^{(\tht)}}_{D L^{2} L^{\infty}} \aleq_{M} 2^{\frac{1}{2} h_{1}} 2^{\frac{1}{2} (h_{2} - h_{1})} \tht^{\frac{3}{2}},
\end{equation*}
by Lemma~\ref{lem:decomp-A} and Lemma~\ref{lem:decomp-Psi}.  Applying
Lemma~\ref{lem:compose}, then integrating $- \infty < h_{2} < h_{1} <
-\kpp$, it follows that
\begin{equation*}
  \nrm{\int_{-\infty}^{-\kpp} \int_{-\infty}^{h_{1}} \left( Op(ad(G) Ad(O_{<h_{2}})) - Op(ad(G)) Op(Ad(O_{<h_{2}})) \right) P_{0} \, \ud h_{2} \, \ud h_{1}}_{N^{\ast}[I] \to N[I]} \aleq 2^{ - \frac{1}{2} \kpp} 
\end{equation*}
which is acceptable. On the other hand, using the $D L^{2} L^{\infty}$
bound for $G$ and Lemma~\ref{lem:renrm-bdd-hifreq}, we have
\begin{align*}
  & \nrm{\int_{-\infty}^{-\kpp} \int_{-\infty}^{h_{1}} Op(ad(G)) Op(Ad(O_{<h_{2}})_{\geq 0}) P_{0} \, \ud h_{2} \, \ud h_{1}}_{N^{\ast}[I] \to N[I]} \\
  & \aleq_{M} \int_{-\infty}^{-\kpp} \int_{-\infty}^{h_{1}} 2^{\frac{1}{2} h_{1}} 2^{\frac{1}{2} (h_{2} - h_{1})}  \nrm{Op(Ad(O_{<h_{2}})_{\geq 0}) P_{0}}_{L^{\infty} L^{2}[I] \to L^{2} L^{2}[I]} \, \ud h_{2} \,\ud h_{1} \\
  & \aleq_{M} 2^{ - \frac{1}{2} \kpp}
\end{align*}
so we may replace $Op(Ad(O_{<h_{2}}))$ by
$Op(Ad(O_{<h_{2}}))_{<0}$. Finally, by \eqref{eq:renrm:bdd} we
have \begin{equation*} Op(Ad(O_{<h_{2}})_{< 0}) P_{0} : N^{\ast}[I]
  \to N^{\ast}[I],
\end{equation*}
so we are left to prove
\begin{equation} \label{eq:renrm-err-E2-2-core}
  \nrm{\int_{-\infty}^{0} \int_{-\infty}^{h_{1}} Op \left(
      ad(\Psi_{h_{1}}) ad( \omg \cdot A^{main}_{h_{2}}) \right) \, \ud
    h_{2} \, \ud h_{1}}_{N^{\ast}[I] \to N[I]} \ll \veps.
\end{equation}
In order to place ourselves in a context where we can apply Lemma~\ref{lem:decomp-loc},
we begin by dispensing with the case of short intervals
\[
\abs{I} \leq 2^{-h_{2} - C \kpp}
\]
For very short intervals  $\abs{I} \leq 2^{-h_{1} - C \kpp}$ we have 
 the bound
\[
  \nrm{\int_{-\infty}^{0} \int_{-\infty}^{h_{1}} Op \left(
      ad(\Psi_{h_{1}}) ad( \omg \cdot A^{main}_{h_{2}}) \right) \, \ud
    h_{2} \, \ud h_{1}}_{L^\infty L^2 \to L^1 L^2} \lesssim_M   2^{h_2} |I|,
\]
which is a consequence of fixed time decomposability bounds, namely
\eqref{eq:decomp-Ax} with $q = \infty$ and \eqref{eq:decomp-Psi} with $q =\infty$ and $ r = \infty$,
combined with Holder's inequality in time. This suffices for the integration with respect to $h_1$ and $h_2$
in this range.

For merely short intervals  $2^{-h_{1} - C \kpp}  \leq   \abs{I} \leq 2^{-h_{2} - C \kpp}$ we are allowed to use
spacetime decomposabilty bounds but only for $\Psi_{h_1}$. 
In this case we apply \eqref{eq:decomp-Ax} with $q = \infty$ and \eqref{eq:decomp-Psi} with $q =6$ and $ r = \infty$,
combined with Holder's inequality in time, to obtain
\[
  \nrm{\int_{-\infty}^{0} \int_{-\infty}^{h_{1}} Op \left(
      ad(\Psi_{h_{1}}) ad( \omg \cdot A^{main}_{h_{2}}) \right) \, \ud
    h_{2} \, \ud h_{1}}_{L^\infty L^2 \to L^1 L^2} \lesssim_M  2^{-\frac16 h_1}  2^{h_2} |I|^\frac56 
\]
This again suffices for the integration with respect to $h_1$ and $h_2$
in this range.

For large intervals, on the other hand, we will use
Lemma~\ref{lem:decomp-loc}.
We begin by decomposing  $\Psi_{h_{1}} =
\sum_{\tht_{1}} \Psi_{h_{1}}^{(\tht_{1})}$ and $A^{main}_{h_{2}} =
\sum_{\tht_{2}} A^{main, (\tht_{2})}_{h_{2}}$. First, we consider the
case $2^{h_{1}} \tht_{1}^{2} \geq 2^{- 2 \kpp} 2^{h_{2}}
\tht_{2}^{2}$. 
  For fixed $h_{1}$, $h_{2}$ and
$\tht_{2}$, we use interval localized decomposability calculus to
estimate
\begin{align*}
  & \hskip-2em
  \sum_{\tht_{1} \geq 2^{-\kpp} 2^{\frac{1}{2}(h_{2} - h_{1})} \tht_{2}} \nrm{Op \left( ad(\Psi_{h_{1}}^{(\tht_{1})}) ad( \omg \cdot A^{main, (\tht_{2})}_{h_{2}})   \right)   }_{L^{\infty} L^{2}[I] \to L^{1} L^{2}[I]} \\
  \aleq & \sum_{\tht_{1} \geq 2^{-\kpp} 2^{\frac{1}{2}(h_{2} - h_{1})} \tht_{2}} \nrm{\Psi_{h_{1}}^{(\tht_{1})}}_{D L^{2} L^{\infty}[I]} \nrm{ \omg \cdot A^{main, (\tht_{2})}_{h_{2}}}_{D L^{2} L^{\infty}[I]} \\
  \aleq & 2^{\kpp} 2^{\frac{1}{4}(h_{2} - h_{1})} \tht_{2}
  \nrm{A_{h_{1}}}_{S^{1}} \left( 2^{-\frac{1}{2} h_{2}}
    \tht_{2}^{-\frac{3}{2}}\nrm{\omg \cdot A^{main,
        (\tht_{2})}_{h_{2}}}_{D L^{2} L^{\infty}[I]}\right).
\end{align*}
Summing up in $\tht_{2} < 2^{-2 \kpp}$, we see that
\begin{align*}
  & \hskip-2em
  \sum_{\tht_{2} < 2^{- 2\kpp}} \sum_{\tht_{1} \geq 2^{-\kpp} 2^{\frac{1}{2}(h_{2} - h_{1})} \tht_{2}} \nrm{Op \left( ad(\Psi_{h_{1}}^{(\tht_{1})}) ad(\omg \cdot A^{main, (\tht_{2})}_{h_{2}}) Ad(O_{<h_{2}})  \abs{\xi} \right)   }_{L^{\infty} L^{2}[I] \to L^{1} L^{2}[I]} \\
  \aleq & 2^{-\kpp} 2^{\frac{1}{4}(h_{2} - h_{1})}
  \nrm{A_{h_{1}}}_{S^{1}} \nrm{A_{h_{2}}}_{S^{1}},
\end{align*}
which is acceptable. On the other hand, in the large angle case
$\tht_{2} \geq 2^{- 2 \kpp}$, we use Lemma~\ref{lem:decomp-loc} to
bound
\begin{equation*}
  2^{-\frac{1}{2} h_{2}} \tht_{2}^{-\frac{3}{2}}\nrm{\omg \cdot A^{main, (\tht_{2})}_{h_{2}}}_{D L^{2} L^{\infty}[I]} \aleq 2^{C \kpp} \nrm{A_{h_{2}}}_{D S^{1} [I]}.
\end{equation*}

When $2^{h_{1}} \tht_{1}^{2} < 2^{-2\kpp} 2^{h_{2}} \tht_{2}^{2}$, we
extend the input to $\bbR \times \bbR^{4}$ by zero outside $I$ and use
modulation localization. Here we do not apply
Lemma~\ref{lem:decomp-loc}, but rather gain smallness from $-\kpp$. In
this case, observe that it is impossible for the input, the output and
$\Psi_{h_{1}}^{(\tht_{1})}$ to all have modulation $\ll 2^{h_{2}}
\tht_{2}^{2} =: j_{2}$. Therefore, we split into three cases:
\begin{enumerate}[leftmargin=*, label={\bf Case \arabic*.}]
\item (High modulation input) We estimate
  \begin{align*}
    & \hskip-2em
    \sum_{\tht_{2}} \sum_{\tht_{1} < 2^{-\kpp/2} 2^{\frac{1}{2}(h_{2} - h_{1})} \tht_{2}} \nrm{Op\left( ad(\Psi_{h_{1}}^{(\tht_{1})}) ad( \omg \cdot A^{main, (\tht_{2})}_{h_{2}})\right) Q_{\geq j_{2} - C} }_{X^{\frac{1}{2}, \infty}_{0} \to L^{1} L^{2}} \\
    \aleq & \sum_{\tht_{2}} \sum_{\tht_{1} < 2^{-\kpp/2} 2^{\frac{1}{2}(h_{2} - h_{1})} \tht_{2}} 2^{-\frac{1}{2} h_{2}} \tht_{2}^{-1} \nrm{\Psi_{h_{1}}^{(\tht_{1})}}_{D L^{6} L^{\infty}} \nrm{\omg \cdot A^{main, (\tht_{2})}_{h_{2}}}_{D L^{3} L^{\infty}}  \\
    \aleq & \sum_{\tht_{2}} \sum_{\tht_{1} < 2^{-\kpp/2} 2^{\frac{1}{2}(h_{2} - h_{1})} \tht_{2}} 2^{\frac{1}{6} (h_{2} - h_{1})} \tht_{1}^{\frac{1}{6}} \tht_{2}^{\frac{5}{6}} \nrm{A_{x, h_{1}}}_{S^{1}} \nrm{A_{x, h_{2}}}_{S^{1}} \\
    \aleq & 2^{-\frac{1}{6} \kpp} 2^{\frac{1}{4} (h_{2} - h_{1})}
    \nrm{A_{x, h_{1}}}_{S^{1}} \nrm{A_{x, h_{2}}}_{S^{1}},
  \end{align*}
  which is acceptable.
\item (High modulation output) When the output has modulation $\geq
  2^{j_{2} - C}$, then we have exactly the same bound for $L^{\infty}
  L^{2} \to X^{-\frac{1}{2}, 1}_{0}$ (we use boundedness of
  $Q_{<j_{2}-C}$ on $L^{\infty} L^{2}$).
\item (High modulation for $\Psi_{h_{1}}$) By boundedness of
  $Q_{<j_{2}-C}$ on $L^{\infty} L^{2}$ and $L^{1} L^{2}$, it suffices
  to have the following estimate:
  \begin{align*}
    & \hskip-2em
    \sum_{\tht_{2}} \sum_{\tht_{1} < 2^{-\kpp/2} 2^{\frac{1}{2}(h_{2} - h_{1})} \tht_{2}} \nrm{Op\left( ad(Q_{\geq j_{2} - C} \Psi_{h_{1}}^{(\tht_{1})}) ad( \omg \cdot A^{main, (\tht_{2})}_{h_{2}})\right) }_{L^{\infty} L^{2} \to L^{1} L^{2}} \\
    \aleq & \sum_{\tht_{2}} \sum_{\tht_{1} < 2^{-\kpp/2} 2^{\frac{1}{2}(h_{2} - h_{1})} \tht_{2}} \nrm{Q_{\geq j_{2} - C} \Psi_{h_{1}}^{(\tht_{1})}}_{D L^{2} L^{\infty}} \nrm{ \omg \cdot A^{main, (\tht_{2})}_{h_{2}}}_{D L^{2} L^{\infty}} \\
    \aleq & \sum_{\tht_{2}} \sum_{\tht_{1} < 2^{-\kpp/2} 2^{\frac{1}{2}(h_{2} - h_{1})} \tht_{2}} \tht_{1}^{\frac{1}{2}} \tht_{2}^{\frac{1}{2}} \nrm{A_{x, h_{1}}}_{S^{1}} \nrm{A_{x, h_{2}}}_{S^{1}} \\
    \aleq & 2^{-\frac{1}{2} \kpp} 2^{\frac{1}{4}(h_{2} - h_{1})}
    \nrm{A_{x, h_{1}}}_{S^{1}} \nrm{A_{x, h_{2}}}_{S^{1}}.
  \end{align*}
  Here, we have use \eqref{eq:decomp-Psi-himod} for $\sum_{j \geq
    j_{2} - C} Q_{j} \Psi_{h_{1}}^{(\tht_{1})}$.
\end{enumerate}

\subsubsection{Estimate for higher order expressions}
The contribution of the cubic, quartic and quintic terms in $\Psi$ in the expansion of $O_{;\alp}$ are treated in a similar manner as in the quadratic case; therefore, we omit the proof. The only remaining case is the contribution of \eqref{eq:renrm-err-E2-O6}. For this term, we claim that
\begin{equation*}
\nrm{\int_{-\infty}^{-\kpp} \!\! \int_{-\infty}^{h_{1}} \! \cdots \! \int_{-\infty}^{h_{5}} \!\! Op \left( ad(\Psi_{h_{1}}) \cdots ad(\Psi_{h_{5}}) ad(O_{<h_{6}; \alp}) Ad(O_{<h_{6}}) \right)   \, \ud h_{6} \cdots \, \ud h_{2} \, \ud h_{1}}_{N^{\ast}[I] \to N[I]} \leq \veps,
\end{equation*}
for $\kpp_{1}$ large enough and $\dltp$ in \eqref{eq:DS-bnd-hyp} adequately small.  

As in the case of the quadratic part, we start with very short intervals and move up the line.
If $ |I| < 2^{-h_1-C\kpp}$
then we only apply fixed time decomposability estimates, namely  \eqref{eq:decomp-Psi} with $q =\infty$ and $ r = \infty$
and \eqref{eq:decomp-O} also with $q = \infty$, together with H\"older in time, to obtain
\[
\| Op \left( ad(\Psi_{h_{1}}) \cdots ad(\Psi_{h_{5}}) ad(O_{<h_{6}; \alp}) Ad(O_{<h_{6}}) \right)\|_{L^\infty L^2 \to L^1 L^2} \lesssim_M
2^{h_6} |I|,
\]
which suffices for the $h$ integration.

If $
2^{-h_1-C\kpp} \leq |I| < 2^{-h_2-C\kpp} $
then we switch to   \eqref{eq:decomp-Psi} with $q =6$ and $ r = \infty$ for $\Psi_{h_1}$, to obtain
\[
\| Op \left( ad(\Psi_{h_{1}}) \cdots ad(\Psi_{h_{5}}) ad(O_{<h_{6}; \alp}) Ad(O_{<h_{6}}) \right)\|_{L^\infty L^2 \to L^1 L^2} \lesssim_M
2^{-\frac16 h_1} 2^{h_6} |I|^{\frac56},
\]
which again suffices for the $h$ integration.

Repeating this procedure for increasingly large $I$ we eventually arrive at the last case $\abs{I} > 2^{-h_{6} - C \kpp}$.
There by Lemma~\ref{lem:decomp-basic} and boundedness of $Ad(O_{<h_{6}})$ on $L^{2}$, we have
\begin{align*}
& \hskip-2em
\nrm{Op \left( ad(\Psi_{h_{1}}) \cdots ad(\Psi_{h_{5}}) ad(O_{<h_{6}; \alp}) Ad(O_{<h_{2}}) \right) }_{L^{\infty} L^{2}[I] \to L^{1} L^{2}[I]} \\
\aleq & \nrm{\Psi_{h_{1}}}_{D L^{6} L^{\infty}[I]} \cdots \nrm{\Psi_{h_{5}}}_{D L^{6} L^{\infty}[I]} \nrm{O_{<h_{6}; \alp}}_{D L^{6} L^{\infty}[I]} .
\end{align*}
Using Lemma~\ref{lem:decomp-Psi} for $\Psi^{(\tht)}_{h}$ with $\tht < 2^{-\kpp}$ and Lemma~\ref{lem:decomp-loc} for the rest, we have
\begin{align*}
	\nrm{\Psi_{h}}_{D L^{6} L^{\infty}[I]} 
\leq 2^{-\frac{1}{6}  h} \left( 2^{-\kpp} \nrm{A_{x, h}}_{S^{1}[I]} + C 2^{C \kpp} \nrm{A_{x, h}}_{D S^{1}[I]}\right). 
\end{align*}
This bound provides us with the desired smallness. By the previous
estimate and \eqref{eq:decomp-O}, the $h$-integrals converge as well,
which proves our claim.
 
\subsection{Estimates for \texorpdfstring{$E_{3}$}{E3}, \ldots \texorpdfstring{$E_{6}$}{E6}} \label{subsec:err-hi}
We finally handle the error terms $E_{3}$, \ldots, $E_{6}$, for which we gain smallness from the frequency gap $\kpp$.

\subsubsection{ The estimate for \texorpdfstring{$E_{3}$}{E3+}}
It suffices to show that 
\[
\|E_3 P_0 \|_{L^\infty L^2 \to L^1 L^2} \lesssim_M 2^{-\frac12 \kpp}
\]
But this is a consequence of the $L^2$ boundedness for $Op(Ad(O))$, combined with the $L^2 L^\infty$ decomposability estimates
for $A_\alpha$ and $O_{;\alpha}$ in Lemmas~\ref{lem:decomp-A},\ref{lem:decomp-O}.

\subsubsection{ The estimate for \texorpdfstring{$E_{4}$}{E4+}}

We expand with respect to $h$,
\[
ad(\partial^\alpha O_{;\alpha}) Ad(O) = \int_{-\infty}^{-\kappa}   \partial^\alpha (ad(O_{< h;\alpha}) ad(\Psi_h)) Ad(O_{<h})
  ad( \Box \Psi_h)  Ad(O_{<h}) \, dh
\] 
For the first term we simply use two $L^2L^\infty$ decomposability estimates as in the case of $E_3$.
For the second term, in view of the bound \eqref{eq:decomp-Psi-Box}, we can apply Lemma~\ref{lem:compose} 
to discard the $Ad(O_{<h})$ factor.
Then it suffices to show that
\begin{equation*}
	\nrm{\int_{-\infty}^{-\kpp} Op(ad(\Box \Psi_{h})) P_{0} \, \ud h}_{S_{0} \to N} \lesssim_M 2^h
\end{equation*}
After expanding $\Psi_h$ in $\tht$, we note that, due to the frequency localization of $\Psi_h^{(\theta)}$,
either the input or the output has modulation $\ageq 2^{h} \tht^{2}$. We assume the former, as the other case
is similar. Then we only need  to prove the bound
\[
	\nrm{\int_{-\infty}^{-\kpp} Op(ad(\Box \Psi^{(\theta)}_{h})) P_{0} \, \ud h}_{L^2 \to L^1 L^2} \lesssim_M \theta 2^{\frac32 h} 
\]
which is an immediate consequence of  the decomposability bound \eqref{eq:decomp-Psi-Box} for $\Box \Psi^{(\theta)}_{h}$.

\subsubsection{ The estimate for \texorpdfstring{$E_{5}$}{E5+}}

It suffices to show that 
\[
\|E_3 P_0 \|_{S_0^\sharp \to L^1 L^2} \lesssim_M 2^{-\frac12 \kpp}
\]
Since $(D_t +|D_x|)P_0 : S_0^\sharp \to L^2$, this follows from the
$L^2$ boundedness for $Op(Ad(O))$, combined with the $L^2 L^\infty$
decomposability estimates for $A_\alpha$ in Lemma~\ref{lem:decomp-A},

\subsubsection{ The estimate for \texorpdfstring{$E_{6}$}{E6+}}

In view of the $L^2 L^\infty$ decomposability estimates for $A_\alpha$ in Lemma~\ref{lem:decomp-A}
and Lemma~\ref{lem:compose}, we can discard the $Ad(O)$ factor. In addition, as in Proposition~\ref{prop:diff-aux},
we can express the commutator $[S_0,A_h]$ in the form
\[
[S_0,A_h] f = 2^h \calO(A_h,f)
\]
Then we have reduced our problem to proving
\begin{align*}
	\nrm{\int_{-\infty}^{-\kpp} 2^{h} Op(ad(\omg \cdot \nb A_{x, h})) P_{0} \, \ud h}_{S_{0} \to N} \ll \veps, \\
	\nrm{\int_{-\infty}^{-\kpp} 2^{h} Op(ad(A_{0, h})) P_{0} \, \ud h}_{S_{0} \to N} \ll \veps.
\end{align*}
But then these follow, with the $2^{-\dlta \kpp}$ gain, from
\eqref{eq:nf-core} and \eqref{eq:qf-rem-ell}, thanks to the extra
derivative (i.e. the $2^h$ factor).

\bibliographystyle{ym}
\bibliography{ym}

\end{document}